\numberwithin{equation}{section}
\theoremstyle{plain}
\newtheorem{thm}{\protect\theoremname}[section]
\theoremstyle{definition}
\newtheorem{defn}[thm]{\protect\definitionname}
\theoremstyle{remark}
\newtheorem{rem}[thm]{\protect\remarkname}
\theoremstyle{plain}
\newtheorem{lem}[thm]{\protect\lemmaname}
\theoremstyle{plain}
\newtheorem{prop}[thm]{\protect\propositionname}
\theoremstyle{plain}
\newtheorem{cor}[thm]{\protect\corollaryname}
\providecommand{\corollaryname}{Corollary}
\providecommand{\definitionname}{Definition}
\providecommand{\lemmaname}{Lemma}
\providecommand{\propositionname}{Proposition}
\providecommand{\remarkname}{Remark}
\providecommand{\theoremname}{Theorem}
\begin{document}
\title[Blow-up solutions to CSS]{Construction of blow-up manifolds to the equivariant self-dual Chern--Simons--Schrödinger
equation}
\author{\noindent Kihyun Kim}
\email{khyun@ihes.fr}
\address{IHES, 35 route de Chartres, 91440 Bures-sur-Yvette, France}
\author{\noindent Soonsik Kwon}
\email{soonsikk@kaist.edu}
\address{Department of Mathematical Sciences, Korea Advanced Institute of Science
and Technology, 291 Daehak-ro, Yuseong-gu, Daejeon 34141, Korea}
\keywords{Chern--Simons--Schrödinger equation, equivariance, self-duality,
pseudoconformal blow-up, blow-up manifold, conjugation identity, conditional
stability}
\subjclass[2010]{35B44, 35Q55}
\begin{abstract}
We consider the self-dual Chern--Simons--Schrödinger equation (CSS)
under equivariance symmetry. Among others, (CSS) has a static solution
$Q$ and the pseudoconformal symmetry. We study the quantitative description
of pseudoconformal blow-up solutions $u$ such that 
\[
u(t,r)-\frac{e^{i\gamma_{\ast}}}{T-t}Q\Big(\frac{r}{T-t}\Big)\to u^{\ast}\quad\text{as }t\to T^{-}.
\]
When the equivariance index $m\geq1$, we construct a set of initial
data (under a codimension one condition) yielding pseudoconformal
blow-up solutions. Moreover, when $m\geq3$, we establish the codimension
one property and Lipschitz regularity of the initial data set, which
we call the \emph{blow-up manifold}.

This is a forward construction of blow-up solutions, as opposed to
authors' previous work \cite{KimKwon2019arXiv}, which is a backward
construction of blow-up solutions with prescribed asymptotic profiles.
In view of the instability result of \cite{KimKwon2019arXiv}, the
codimension one condition established in this paper is expected to
be optimal.

We perform the modulation analysis with a robust energy method developed
by Merle, Raphaël, Rodnianski, and others. One of our crucial inputs
is a remarkable \emph{conjugation identity}, which (with self-duality)
enables the method of supersymmetric conjugates as like Schrödinger
maps and wave maps. It suggests how we proceed to higher order derivatives
while keeping the Hamiltonian form and construct adapted function
spaces with their coercivity relations. More interestingly, it shows
a deep connection with the Schrödinger maps at the linearized level
and allows us to find a repulsivity structure for higher order derivatives.
\end{abstract}

\maketitle
\global\long\def\R{\mathbb{R}}%
\global\long\def\C{\mathbb{C}}%
\global\long\def\Z{\mathbb{Z}}%
\global\long\def\N{\mathbb{N}}%
\global\long\def\D{\mathbf{D}}%
\global\long\def\Im{\mathrm{Im}}%
\global\long\def\Re{\mathrm{Re}}%
\global\long\def\tilde#1{\widetilde{#1}}%
\global\long\def\hat#1{\widehat{#1}}%

\tableofcontents{}

\section{Introduction}

In this paper, we study a quantitative description of the blow-up
dynamics for the equivariant self-dual Chern--Simons--Schrödinger
equation. More precisely, we construct a codimension one manifold
of initial data yielding the pseudoconformal blow-up solutions. In
contrast to the backward construction in the authors' previous work
\cite{KimKwon2019arXiv}, here we take an initial value problem point
of view and investigate the forward construction of the blow-up dynamics.

\subsection{Self-dual Chern--Simons--Schrödinger equation}

We consider the \emph{self-dual Chern}--\emph{Simons}--\emph{Schrödinger}
equation
\begin{equation}
\left\{ \begin{aligned}\D_{t}\phi & =i\D_{j}\D_{j}\phi+i|\phi|^{2}\phi,\\
F_{01} & =-\Im(\overline{\phi}\D_{2}\phi),\\
F_{02} & =\Im(\overline{\phi}\D_{1}\phi),\\
F_{12} & =-\tfrac{1}{2}|\phi|^{2},
\end{aligned}
\right.\label{eq:CovariantCSS}
\end{equation}
where $\phi:\R^{1+2}\to\C$ is a scalar field, $\D_{\alpha}\coloneqq\partial_{\alpha}+iA_{\alpha}$
for $\alpha\in\{0,1,2\}$ are the covariant derivatives associated
with the real-valued $1$-form $A\coloneqq A_{0}dt+A_{1}dx_{1}+A_{2}dx_{2}$,
and $F_{\alpha\beta}\coloneqq\partial_{\alpha}A_{\beta}-\partial_{\beta}A_{\alpha}$
are the curvature components. Repeated index $j$ means that we sum
over $j\in\{1,2\}$.

The Chern--Simons theory is a gauge theory on a three-dimensional
domain (space-time) and can be used to describe planar physical phenomena,
i.e. the dynamics of particles restricted to a plane. Several examples
are the Quantum Hall effect and high temperature superconductivity.
This is a sharp contrast to the Yang Mills or Maxwell theory formulated
on the four-dimensional space-time. In the 1990s, Chern--Simons models
were introduced to investigate vortex solutions in the planar quantum
electromagnetics. Jackiw and Pi introduced in \cite{JackiwPi1990PRL,JackiwPi1990PRD,JackiwPi1991PRD,JackiwPi1992Progr.Theoret.}
the Chern--Simons--Schrödinger equations as non-relativistic quantum
models that describe the dynamics of a large number of interacting
charged particles in the plane with electromagnetic gauge field.

\subsubsection*{Coulomb gauge}

\eqref{eq:CovariantCSS} is \emph{gauge-invariant}; any solution $(\phi,A)$
to \eqref{eq:CovariantCSS} and a function $\chi:\R^{1+2}\to\R$ give
rise to a gauge-equivalent solution $(e^{i\chi}\phi,A-d\chi)$ to
\eqref{eq:CovariantCSS}. In order to consider the Cauchy problem
of \eqref{eq:CovariantCSS}, we need to fix a gauge. In other words,
we choose the representatives of gauge-equivalent classes. In this
paper, we fix the gauge by imposing the \emph{Coulomb gauge condition}:
\[
\partial_{1}A_{1}+\partial_{2}A_{2}=0.
\]
A brief computation yields 
\begin{align*}
A_{0}=A_{0}[\phi] & =\epsilon_{jk}\Delta^{-1}\partial_{j}\Im(\overline{\phi}\D_{k}\phi),\\
A_{j}=A_{j}[\phi] & =\tfrac{1}{2}\epsilon_{jk}\Delta^{-1}\partial_{k}|\phi|^{2},
\end{align*}
where $\epsilon_{jk}$ is the anti-symmetric tensor with $\epsilon_{12}=1$.
Now \eqref{eq:CovariantCSS} can be reduced into a single evolution
equation of $\phi$ by substituting the above formulae. As recognized
in \cite{JackiwPi1990PRD}, \eqref{eq:CovariantCSS} under the Coulomb
gauge has a Hamiltonian structure. We note that the gauge potentials
$A_{\alpha}$ are of long-range as the convolution kernel of $\Delta^{-1}\partial_{j}$
has a nontrivial tail of size $O(|x|^{-1})$.

\subsubsection*{Equivariance}

Under the Coulomb gauge condition, we focus on \emph{equivariant}
solutions. This means that we work on solutions $\phi$ of the form
\[
\phi(t,x)=e^{im\theta}u(t,r),
\]
where $(r,\theta)$ denotes the usual polar coordinate of $\R^{2}$,
and $m\in\Z$ is called the \emph{equivariance index}. Under equivariance,
it is also convenient to write the connection $A$ in the polar coordinates,
i.e. $A=A_{0}dt+A_{r}dr+A_{\theta}d\theta$. It turns out that $A_{0}$,
$A_{r}$, and $A_{\theta}$ are spherically symmetric and their radial
parts are given by 
\[
A_{r}=0,\quad A_{\theta}=A_{\theta}[u]=-\frac{1}{2}\int_{0}^{r}|u|^{2}r'dr',\quad A_{0}=A_{0}[u]=-\int_{r}^{\infty}(m+A_{\theta})|u|^{2}\frac{dr'}{r'}.
\]
As a consequence, we can write \eqref{eq:CovariantCSS} in terms of
$u$: 
\begin{equation}
i\partial_{t}u+(\partial_{rr}+\frac{1}{r}\partial_{r})u-\Big(\frac{m+A_{\theta}}{r}\Big)^{2}u-A_{0}u+|u|^{2}u=0.\tag{CSS}\label{eq:CSS}
\end{equation}
This is the main equation that we consider.

\subsubsection*{Symmetries and conservation laws}

\eqref{eq:CSS} has various symmetries and associated conservation
laws. From the time translation ($u(t,r)\mapsto u(t+t_{0},r)$ for
$t_{0}\in\R$) and phase rotation symmetry ($u(t,r)\mapsto e^{i\gamma}u(t,r)$
for $\gamma\in\R$), the \emph{energy} and \emph{charge} are conserved:
\begin{align*}
E[u] & \coloneqq\int\frac{1}{2}|\partial_{r}u|^{2}+\frac{1}{2}\Big(\frac{m+A_{\theta}}{r}\Big)^{2}|u|^{2}-\frac{1}{4}|u|^{4}, & \text{(Energy)}\\
M[u] & \coloneqq\int|u|^{2}, & \text{(Charge)}
\end{align*}
where we denoted $\int f\coloneqq2\pi\int_{0}^{\infty}f(r)rdr$. There
is also the time reversal symmetry $u(t,r)\mapsto\overline{u}(-t,r)$.\footnote{In fact, there is the time reversal symmetry for \eqref{eq:CovariantCSS}
without symmetry reductions. It is \emph{not} simply given by conjugating
the scalar field $\phi$; it reads $\phi(t,x_{1},x_{2})\mapsto\overline{\phi}(-t,x_{1},-x_{2})$,
$A_{\alpha}(t,x_{1},x_{2})\mapsto A_{\alpha}(-t,x_{1},-x_{2})$ for
$\alpha\in\{0,2\}$, and $A_{1}(t,x_{1},x_{2})\mapsto-A_{1}(-t,x_{1},-x_{2})$.
In particular, the time reversal symmetry preserves the equivariance
index, and hence \eqref{eq:CSS} still enjoys the time reversal symmetry.}

Of particular importance are the \emph{$L^{2}$-scaling} \emph{invariance}
\[
u(t,r)\mapsto\frac{1}{\lambda}u\Big(\frac{t}{\lambda^{2}},\frac{r}{\lambda}\Big),\qquad\lambda\in\R_{+},
\]
and the \emph{pseudoconformal symmetry}
\[
u(t,r)\mapsto\frac{1}{t}e^{i\frac{|x|^{2}}{4t}}u\Big(-\frac{1}{t},\frac{r}{t}\Big).
\]
The associated algebraic identities are the \emph{virial identities}
\[
\left\{ \begin{aligned}\partial_{t}\Big(\int r^{2}|u|^{2}\Big) & =4\int\Im(\overline{u}\,r\partial_{r}u),\\
\partial_{t}\int\Im(\overline{u}\,r\partial_{r}u) & =4E[u].
\end{aligned}
\right.
\]

Let us finally mention the \emph{Hamiltonian structure} of \eqref{eq:CSS}:
\begin{equation}
\partial_{t}u=-i\frac{\delta E}{\delta u},\label{eq:HamiltonianStructure}
\end{equation}
where $\frac{\delta}{\delta u}$ is the Fréchet derivative under the
real inner product $(f,g)_{r}\coloneqq\int\Re(\overline{f}g)$.

\subsubsection*{Self-duality}

One of the special features of \eqref{eq:CSS} is the \emph{self-duality}.
We introduce the \emph{Bogomol'nyi operator} 
\[
\tilde{\D}_{+}\coloneqq\D_{1}+i\D_{2}.
\]
Within Coulomb gauge (so that $A_{\alpha}=A_{\alpha}[\phi]$), $\tilde{\mathbf{D}}_{+}$
also depends on $\phi$ (i.e., $\tilde{\mathbf{D}}_{+}=\tilde{\mathbf{D}}_{+}^{(\phi)}$).
However, we will suppress this $\phi$-dependence if there is no confusion.
We define $\D_{+}=\mathbf{D}_{+}^{(u)}$ by the radial part of $\tilde{\D}_{+}$:
\begin{equation}
\begin{aligned}\tilde{\D}_{+}[u(r)e^{im\theta}] & =[\D_{+}^{(u)}u](r)e^{i(m+1)\theta}.\\{}
[\D_{+}^{(u)}u](r) & =\partial_{r}u-\tfrac{1}{r}(m+A_{\theta}[u])u.
\end{aligned}
\label{eq:Def-D+}
\end{equation}
We note that $\tilde{\D}_{+}$ shifts the equivariance index by $1$.

Now we obtain the \emph{self-dual expression} of the energy: 
\begin{equation}
E[u]=\frac{1}{2}\int|\D_{+}^{(u)}u|^{2}.\label{eq:SelfDualEnergy}
\end{equation}
Then the Hamiltonian structure \eqref{eq:HamiltonianStructure} yields
the \emph{self-dual form} of \eqref{eq:CSS}: 
\begin{equation}
\partial_{t}u=-iL_{u}^{\ast}\D_{+}^{(u)}u,\label{eq:SelfDualEquation}
\end{equation}
where $L_{u}^{\ast}$ is the adjoint of the linearized operator $L_{u}$
of the expression $\D_{+}^{(u)}u$. We note that this self-dual expression
is tied to the fact that the coupling constant of the nonlinearity
(i.e. the coefficient of $|u|^{2}u$) is equal to $1$. As a consequence,
the energy is always nonnegative. We will later see that this leads
to the self-dual factorization of the linearized operator at the static
solution.

\subsubsection*{Static solution $Q$}

Let us fix $m$ to be a nonnegative integer. It is of great physical
interest to study time independent solutions to \eqref{eq:CSS}, so
called \emph{vortex} (or, \emph{static}) solutions. To find such solutions,
we need to solve time-independent \eqref{eq:CSS} (setting $i\partial_{t}u=0$).
However, the self-dual factorization \eqref{eq:SelfDualEquation}
says that it suffices to solve the first-order \emph{Bogomol'nyi equation}
\begin{equation}
\D_{+}^{(Q)}Q=0.\label{eq:Bogomolnyi-Eq}
\end{equation}
Note that $Q$ must have zero energy. In fact, a solution (with enough
regularity and decay) has zero energy if and only if it is static;
see \cite{HuhSeok2013JMP} and also the discussion in \cite[Section 1.3]{KimKwon2019arXiv}.

One can find a solution $Q$ to \eqref{eq:Bogomolnyi-Eq} following
Jackiw--Pi \cite{JackiwPi1990PRD}. In fact it reduces to the Liouville
equation, whose solutions are known. Thus \eqref{eq:Bogomolnyi-Eq}
has the following explicit solution (when $m\geq0$) 
\[
Q(r)=\sqrt{8}(m+1)\frac{r^{m}}{1+r^{2(m+1)}}.
\]
Moreover, the solution $Q$ to \eqref{eq:Bogomolnyi-Eq} is unique
up to scaling and phase rotation symmetries; see \cite{ChouWan1994PacificJMath,ByeonHuhSeok2012JFA,ByeonHuhSeok2016JDE}
and also the discussion in \cite[Section 1.3]{KimKwon2019arXiv}.

\subsection{Known results}

The equation \eqref{eq:CSS} under equivariance is known to be well-posed
in the scaling critical space $L^{2}(\R^{2})$. Indeed, Liu--Smith
\cite[Section 2]{LiuSmith2016} showed the small data global well-posedness
and large data local well-posedness of \eqref{eq:CSS}.

We mention some previous works on the covariant Chern--Simons--Schrödinger
equation without symmetry (this allows a general coupling constant
$g\in\mathbb{R}$ multiplied to the cubic nonlinearity $i|\phi|^{2}\phi$
in the RHS of \eqref{eq:CovariantCSS}, where $g=1$ corresponds to
the self-dual case). The local well-posedness of \eqref{eq:CovariantCSS}
in the critical space $L^{2}$ (under whatever gauges) remains open.
Under the Coulomb gauge, the local well-posedness in $H^{1}$ is proved
by Lim \cite{Lim2018JDE}, after the preceding works \cite{BergeDeBouardSaut1995Nonlinearity,Huh2013Abstr.Appl.Anal}.
If one changes to the heat gauge, Liu--Smith--Tataru \cite{LiuSmithTataru2014IMRN}
were able to lower the regularity assumption and prove the small data
local well-posedness in $H^{0+}$.

There are also works in global-in-time behaviors. Bergé--de Bouard--Saut
\cite{BergeDeBouardSaut1995Nonlinearity} applied Glassey's convexity
argument \cite{Glassey1977JMP} to the virial identities and obtained
a sufficient condition for finite time blow-up. These authors also
performed in \cite{BergeDeBouardSaut1995PRL} a formal computation
of the log-log blow-up for negative energy solutions. We remark that
these two works only apply to the negative energy solutions, which
are available only when $g>1$. On the other hand, Oh--Pusateri \cite{OhPusateri2015}
showed the global existence and scattering for small data in weighted
Sobolev spaces.

We come back to the equivariant self-dual case \eqref{eq:CSS}. Having
local theory at hands, it is natural to study global-in-time behavior
of large solutions. One of the central objects in the global-in-time
analysis is the \emph{ground state}, a standing wave solution with
minimal charge, in our context. It is believed to be the smallest
(in some sense) example exhibiting nonlinear behavior of the equation.
In case of \eqref{eq:CSS}, the static solution $Q$ plays a role
of the ground state. Liu--Smith \cite{LiuSmith2016} proved the following
\emph{threshold theorem}: any solution having charge less than that
of $Q$ is global and scatters. We also remark that the result of
\cite{LiuSmith2016} applies to the non-self-dual case. In particular,
the global well-posedness and scattering for any solutions is proved
in the defocusing regime (where $E[u]\geq0$ and $E[u]=0$ if and
only if $u=0$) on which there are no ground states.

The next question is the dynamics at and above the threshold. At the
threshold charge $M[u]=M[Q]$, there are two fundamental examples
of nonscattering solutions. The first one is the static solution $Q$
and the other one is the explicit pseudoconformal blow-up solution
obtained by applying the pseudoconformal transform to $Q$ \cite{JackiwPi1990PRD,Huh2009Nonlinearity}:
\begin{equation}
S(t,r)\coloneqq\frac{1}{|t|}Q\Big(\frac{r}{|t|}\Big)e^{-i\frac{r^{2}}{4|t|}},\qquad t<0.\label{eq:Def-S(t,r)}
\end{equation}

Recently, the authors \cite{KimKwon2019arXiv} studied pseudoconformal
blow-up solutions with prescribed asymptotic profiles. Here, by a
\emph{pseudoconformal blow-up solution}, we mean a solution $u$ to
\eqref{eq:CSS} which decomposes as 
\[
u(t,r)\approx S(t,r)+z(t,r),
\]
where $z(t,r)$ is regular at and near the blow-up time. Let $m\geq1$.
Fix a small smooth $m$-equivariant function $z^{\ast}(r)$, called
an \emph{asymptotic profile}, satisfying an additional degeneracy
condition $\partial_{r}^{(m)}z^{\ast}(0)=0$. Then, there exists a
pseudoconformal blow-up solution $u$ to \eqref{eq:CSS} on $(-\infty,0)$
such that $z(0,r)=z^{\ast}(r)$. Moreover, they exhibited an instability
mechanism, the \emph{rotational instability}, (see Section \ref{sec:ModifiedProfile}
for more details) of these pseudoconformal blow-up solutions. This
result is a backward construction of the blow-up solutions. In the
(NLS) context
\begin{equation}
i\partial_{t}\psi+\Delta\psi+|\psi|^{2}\psi=0,\qquad\psi:I\times\R^{2}\to\C,\tag{NLS}\label{eq:NLS}
\end{equation}
they are referred to as Bourgain--Wang solutions \cite{BourgainWang1997}.
The instability of Bourgain--Wang solutions in \eqref{eq:NLS} is
settled by Merle--Rapha\"el--Szeftel \cite{MerleRaphaelSzeftel2013AJM}.

\subsection{\label{subsec:Main-results}Main results}

In this paper, we study the forward construction and codimension one
property of the pseudoconformal blow-up solutions. Compared to the
backward construction of blow-up solutions with prescribed asymptotic
profiles, here we investigate a quantitative description of the dynamics
starting near $Q$ and aim to characterize the initial data set yielding
the pseudoconformal blow-up.

The first part of our main results is the construction of an initial
data set (under a codimension one condition) resulting in pseudoconformal
blow-up. The second part is to establish the codimension one property
and Lipschitz regularity of the initial data set (the blow-up manifold).

To state our result, we need to introduce some technicalities. To
describe the sense of the codimension one manifold, we introduce the
relevant data sets and their coordinates.

We first describe the $H_{m}^{3}$-initial data set $\mathcal{O}_{init}$
and the coordinates $(\lambda_{0},\gamma_{0},b_{0},\eta_{0},\epsilon_{0})$.
We fix a codimension four linear subspace $\mathcal{Z}^{\perp}$ of
$H_{m}^{3}$ for $\epsilon_{0}$. Codimension four conditions are
given by four orthogonality conditions \eqref{eq:ortho-cond}. Here,
$H_{m}^{k}$ denotes the usual Sobolev space $H^{k}(\R^{2})$ restricted
on $m$-equivariant functions; see Section \ref{subsec:Adapted-function-spaces}
for details. For some $b^{\ast}>0$, we define 
\[
\tilde{\mathcal{U}}_{init}\coloneqq\{(\lambda_{0},\gamma_{0},b_{0},\epsilon_{0})\in\R_{+}\times\R/2\pi\Z\times\R\times\mathcal{Z}^{\perp}:b_{0}\in(0,b^{\ast}),\ \|\epsilon_{0}\|_{H_{m}^{3}}<b_{0}^{3}\}.
\]
Next we introduce one more coordinate, $\eta_{0}$, to describe the
full open set of initial data. For some large universal constant $K>1$,
we define the set of coordinates 
\[
\mathcal{U}_{init}\coloneqq\{(\lambda_{0},\gamma_{0},b_{0},\eta_{0},\epsilon_{0}):(\lambda_{0},\gamma_{0},b_{0},\epsilon_{0})\in\tilde{\mathcal{U}}_{init},\ \eta_{0}\in(-\tfrac{K}{2}b_{0}^{3/2},\tfrac{K}{2}b_{0}^{3/2})\}.
\]
The initial data set $\mathcal{O}_{init}$ is defined by the set of
images: 
\begin{equation}
\mathcal{O}_{init}\coloneqq\{\frac{e^{i\gamma_{0}}}{\lambda_{0}}[P(\cdot;b_{0},\eta_{0})+\epsilon_{0}]\Big(\frac{r}{\lambda_{0}}\Big):(\lambda_{0},\gamma_{0},b_{0},\eta_{0},\epsilon_{0})\in\mathcal{U}_{init}\}\subseteq H_{m}^{3},\label{eq:Def-O-init}
\end{equation}
where $P(y;b_{0},\eta_{0})$ is the modified profile defined in \eqref{eq:def-mod-profile}.
Note that $\mathcal{O}_{init}$ is invariant under $L^{2}$-scalings
and phase rotations. It will be shown (see Lemma \ref{lem:decomp})
that if $b^{\ast}$ is sufficiently small, then the set $\mathcal{O}_{init}$
is open in the $H_{m}^{3}$-topology and one can view $(\lambda_{0},\gamma_{0},b_{0},\eta_{0},\epsilon_{0})$
as coordinates on $\mathcal{O}_{init}$. Note that the static solution
$Q$ does not belong to $\mathcal{O}_{init}$ (due to $Q=P(\cdot;0,0)$),
but it lies in the boundary of $\mathcal{O}_{init}$.

Starting from some initial data $u_{0}$ in $\mathcal{O}_{init}$,
we will construct (in a codimension one sense) a so called $H_{m}^{3}$-\emph{trapped
solution} $u$, which has the decomposition on its maximal forward
lifespan 
\begin{equation}
u(t,r)=\frac{e^{i\gamma(t)}}{\lambda(t)}[P(\cdot;b(t),\eta(t))+\epsilon(t,\cdot)]\Big(\frac{r}{\lambda(t)}\Big)\label{eq:decomposition-intro}
\end{equation}
and satisfies $\epsilon\in\mathcal{Z}^{\perp}$, 
\begin{equation}
b\in(0,b^{\ast}),\ \eta<Kb^{\frac{3}{2}},\ \|\epsilon\|_{L^{2}}<K(b^{\ast})^{\frac{1}{4}},\ \|\epsilon_{1}\|_{L^{2}}<Kb,\ \|\epsilon_{3}\|_{L^{2}}<Kb^{\frac{5}{2}}.\label{eq:def-H3-trapped}
\end{equation}
Here, $\epsilon_{1}=L_{Q}\epsilon$ and $\epsilon_{3}=A_{Q}^{\ast}A_{Q}L_{Q}\epsilon$
are the adapted derivatives of $\epsilon$ defined in Section \ref{subsec:Conjugation-identities}.
It will be shown that $H_{m}^{3}$-trapped solutions blow up in finite
time with the pseudoconformal blow-up rate.

We can now state the construction part of our main results.
\begin{thm}[Existence of blow-up solutions]
\label{thm:Existence}Let $m\geq1$. There exist constants $K>1$
and $b^{\ast}>0$ with the following properties.\footnote{In fact, we should add ``there exists $M>1$'' in the statement. In
the definition \eqref{eq:ortho-cond} of $\mathcal{Z}^{\perp}$, one
has to introduce the large parameter $M>1$.}
\begin{itemize}
\item (Openness and coordinates) The set $\mathcal{O}_{init}$ is open in
$H_{m}^{3}$ and $(\lambda_{0},\gamma_{0},b_{0},\eta_{0},\epsilon_{0})\in\tilde{\mathcal{U}}_{init}$
serves as coordinates for $\mathcal{O}_{init}$ in the sense of Lemma
\ref{lem:decomp}.
\item (Existence of trapped solutions in codimension one sense) Let $(\lambda_{0},\gamma_{0},b_{0},\epsilon_{0})\in\tilde{\mathcal{U}}_{init}$.
Then, there exists $\eta_{0}\in(-\frac{K}{2}b_{0}^{3/2},\frac{K}{2}b_{0}^{3/2})$
such that the solution $u(t,r)$ starting from the initial data 
\begin{equation}
u_{0}(r)=\frac{e^{i\gamma_{0}}}{\lambda_{0}}[P(\cdot;b_{0},\eta_{0})+\epsilon_{0}]\Big(\frac{r}{\lambda_{0}}\Big)\in\mathcal{O}_{init}\label{eq:Def-u_0}
\end{equation}
is a $H_{m}^{3}$-trapped solution.
\item (Pseudoconformal blow-up) The $H_{m}^{3}$-trapped solution $u$ satisfies:
\begin{enumerate}
\item (Finite-time blow-up) $u$ blows up in finite time $T=T(u_{0})\in(0,\infty)$.
\item (Pseudoconformal blow-up) There exist $\ell=\ell(u_{0})\in(0,\infty)$,
$\gamma^{\ast}=\gamma^{\ast}(u_{0})\in\R$, and $u^{\ast}\in L^{2}$
such that 
\[
u(t,r)-\frac{e^{i\gamma^{\ast}}}{\ell(T-t)}Q\Big(\frac{r}{\ell(T-t)}\Big)\to u^{\ast}\quad\text{in }L^{2}
\]
as $t\uparrow T$.
\item (Further regularity of the radiation)  $u^{\ast}$ has further regularity
\[
u^{\ast}\in H_{m}^{1}.
\]
\end{enumerate}
\end{itemize}
\end{thm}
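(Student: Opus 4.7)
The plan is to combine a modulation analysis near the modulated profile family $P(\cdot;b,\eta)$ with an energy bootstrap in the adapted higher-derivative norms coming from the conjugation identity, to close the trapped regime \eqref{eq:def-H3-trapped} by a topological shooting in the unstable parameter $\eta_{0}$, and then to extract the pseudoconformal blow-up and the radiation by a dispersive limit analysis. First I would prove openness of $\mathcal{O}_{init}$ and the coordinate property by applying the implicit function theorem to the four orthogonality conditions that cut out $\mathcal{Z}^{\perp}$: the pairings against the modulation directions $\partial_{\lambda},\partial_{\gamma},\partial_{b},\partial_{\eta}$ are non-degenerate at $Q$, so the modulation map is a local $C^{1}$-diffeomorphism in $H_{m}^{3}$. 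Applied time-dependently while $u(t)$ stays near the modulation manifold, the same argument produces the decomposition \eqref{eq:decomposition-intro} with $\epsilon(t)\in\mathcal{Z}^{\perp}$. Differentiating the orthogonality conditions in $t$ and inserting \eqref{eq:CSS} yields a closed ODE system whose leading order enforces $-\tfrac{\lambda_{s}}{\lambda}\approx b$ and $\gamma_{s}=O(b^{2})$ in rescaled time $ds/dt=\lambda^{-2}$, with the coupled $(b,\eta)$-system exhibiting $\eta$ as its unique unstable direction, reflecting the ``rotational instability'' identified in \cite{KimKwon2019arXiv}.

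The heart of the argument is an a priori bootstrap for the adapted norms inside \eqref{eq:def-H3-trapped}. Using the conjugation identity together with the self-dual factorization, I would derive coercive Lyapunov-type estimates for $\epsilon$, $\epsilon_{1}=L_{Q}\epsilon$, and $\epsilon_{3}=A_{Q}^{\ast}A_{Q}L_{Q}\epsilon$ in sequence, exploiting the repulsivity of the higher-order conjugated operator at the $\epsilon_{3}$-level to produce monotonicity up to modulation and profile errors. Combined with the modulation equations, these estimates strictly improve every bootstrap bound in \eqref{eq:def-H3-trapped} \emph{except} the one on $\eta$. I expect the resulting topological step -- singling out an $\eta_{0}$ whose trajectory never exits the trapped regime -- to be the main obstacle. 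I would resolve it by a shooting argument: fix $(\lambda_{0},\gamma_{0},b_{0},\epsilon_{0})\in\tilde{\mathcal{U}}_{init}$, parametrize initial data by $\eta_{0}\in[-\tfrac{K}{2}b_{0}^{3/2},\tfrac{K}{2}b_{0}^{3/2}]$, and let $T^{\mathrm{exit}}(\eta_{0})$ be the first time at which \eqref{eq:def-H3-trapped} fails. The strict improvement of all non-$\eta$ bounds forces any exit to take place through the $\eta$-slot, so $\eta(T^{\mathrm{exit}})/(K b(T^{\mathrm{exit}})^{3/2})\in\{-1,+1\}$; the outgoing character of the $\eta$-ODE makes this exit-sign map continuous in $\eta_{0}$. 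Since the two endpoints $\eta_{0}=\pm\tfrac{K}{2}b_{0}^{3/2}$ already realize the values $\pm1$ at $t=0$, an intermediate-value argument delivers at least one interior $\eta_{0}$ for which no exit occurs on the maximal forward lifespan, yielding the $H_{m}^{3}$-trapped solution claimed in the theorem.

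For such a trapped solution, integrating $-\lambda\lambda_{t}\approx b$ together with two-sided control of $b$ forces $\lambda(t)=\ell(T-t)+o(T-t)$ for some $\ell>0$ and some finite $T>0$, which is exactly the pseudoconformal rate; integrability of $\gamma_{s}$ near $T$ produces the phase limit $\gamma(t)\to\gamma^{\ast}$. An $L^{2}$-Cauchy estimate on $u(t)-\tfrac{e^{i\gamma(t)}}{\lambda(t)}Q(\cdot/\lambda(t))$, derived from \eqref{eq:CSS} and the bootstrap bounds, extracts the radiation $u^{\ast}\in L^{2}$. The additional $H_{m}^{1}$-regularity of $u^{\ast}$ is the most delicate limit statement: I would establish it by converting, via the pseudoconformal transform, the regularity question for $u^{\ast}$ into a weighted decay/scattering question for the transformed solution, absorbing the long-range tails generated by the nonlocal gauge nonlinearities through the non-perturbative phase-correction scheme inherited from \cite{KimKwon2019arXiv}.
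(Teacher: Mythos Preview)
Your overall architecture matches the paper's: implicit function theorem for the decomposition (Lemma~\ref{lem:decomp}), a bootstrap closing everything except $\eta$ (Proposition~\ref{prop:main-bootstrap}) via a modified energy at the $\epsilon_{3}$-level with a localized virial correction built from the repulsivity of $A_{Q}A_{Q}^{\ast}$ acting on $\epsilon_{2}$ (not $\epsilon_{3}$), and a connectivity/shooting argument in $\eta_{0}$ (Proposition~\ref{prop:Sets-I-pm}). Two points where your sketch diverges from what the paper actually does are worth flagging.

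First, your shooting argument is phrased via continuity of an ``exit-sign map'', but the exit time $T_{\mathrm{exit}}$ is not obviously continuous in $\eta_{0}$, and the endpoints $\eta_{0}=\pm\tfrac{K}{2}b_{0}^{3/2}$ do \emph{not} realize $\pm1$ at $t=0$ (there $|\eta|/(Kb^{3/2})=\tfrac12$). The paper instead shows the sets $\mathcal{I}_{\pm}$ of $\eta_{0}$ with exit sign $\pm$ are each nonempty and open: nonemptiness from the monotone growth of $|\eta|/b^{3/2}$ once it exceeds a threshold, openness from continuous dependence on initial data. Connectedness of the interval then gives a non-exiting $\eta_{0}$.

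Second, and more substantively, your proposed route to $u^{\ast}\in H_{m}^{1}$ via the pseudoconformal transform and a weighted scattering problem is unnecessary and would be considerably harder. The paper obtains this regularity almost for free: the bootstrap gives $\|\epsilon_{1}\|_{L^{2}}\lesssim b$, hence $\|\epsilon^{\sharp}\|_{\dot H_{m}^{1}}=\lambda^{-1}\|\epsilon\|_{\dot H_{m}^{1}}\lesssim b/\lambda$, and \eqref{eq:closing-bootstrap-claim4} shows $b/\lambda$ stays bounded up to $T$. So $\epsilon^{\sharp}(t)$ is uniformly bounded in $H_{m}^{1}$; combined with the $L^{2}$-Cauchy property (obtained by bounding $\|\partial_{t}\epsilon^{\sharp}\|_{L^{2}}$ directly from the equation), the limit $u^{\ast}$ inherits $H_{m}^{1}$ by weak compactness. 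No pseudoconformal machinery is needed here.
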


We note that when $m\in\{1,2\}$, the profile $Q(y)e^{-ib\frac{y^{2}}{4}}$
for the explicit pseudoconformal blow-up solution $S(t,r)$ \emph{does
not} belong to $\mathcal{O}_{init}$, due to $Q(r)e^{-ib\frac{r^{2}}{4}}\notin H_{m}^{3}$.
Thus the trapped solutions constructed in Theorem \ref{thm:Existence}
are different from the blow-up solutions constructed in \cite{KimKwon2019arXiv}.

According to the instability result of \cite{KimKwon2019arXiv}, the
above trapped solutions are believed to be non-generic. In the next
theorem, we show that the set of initial data yielding trapped solutions
forms a codimension one manifold. Indeed, we obtain the \emph{uniqueness}
of $\eta_{0}$ to complement Theorem \ref{thm:Existence}. Moreover,
we establish the Lipschitz continuity of $\eta_{0}=\eta_{0}(b_{0},\epsilon_{0})$,
and hence the Lipschitz regularity of the blow-up manifold.

To achieve this, \emph{we further restrict the initial data set and
equivariance index: }
\[
u_{0}\in H_{m}^{5}\quad\text{and}\quad m\geq3.
\]
The restricted initial data set $\mathcal{O}_{init}^{5}$ is described
as follows. Let 
\begin{align*}
\tilde{\mathcal{U}}_{init}^{5} & \coloneqq\{(\lambda_{0},\gamma_{0},b_{0},\epsilon_{0})\in\R_{+}\times\R/2\pi\Z\times\R\times(\mathcal{Z}^{\perp}\cap H_{m}^{5}):b_{0}\in(0,b^{\ast}),\ \|\epsilon_{0}\|_{H_{m}^{5}}<b_{0}^{5}\},\\
\mathcal{U}_{init}^{5} & \coloneqq\{(\lambda_{0},\gamma_{0},b_{0},\eta_{0},\epsilon_{0}):(\lambda_{0},\gamma_{0},b_{0},\epsilon_{0})\in\tilde{\mathcal{U}}_{init}^{5},\ \eta_{0}\in(-\tfrac{K}{2}b_{0}^{3/2},\tfrac{K}{2}b_{0}^{3/2})\}.
\end{align*}
Next, the initial data set $\mathcal{O}_{init}^{5}$ is given by the
set of images: 
\begin{equation}
\mathcal{O}_{init}^{5}\coloneqq\{\frac{e^{i\gamma_{0}}}{\lambda_{0}}[P(\cdot;b_{0},\eta_{0})+\epsilon_{0}]\Big(\frac{r}{\lambda_{0}}\Big):(\lambda_{0},\gamma_{0},b_{0},\eta_{0},\epsilon_{0})\in\mathcal{U}_{init}^{5}\}\subseteq H_{m}^{5}.\label{eq:Def-O-init-5}
\end{equation}
Starting from some initial data $u_{0}$ in $\mathcal{O}_{init}^{5}$,
we construct $H_{m}^{5}$\emph{-trapped solutions} $u$, which enjoys
further smallness in the $\dot{H}_{m}^{5}$-norm: $u$ has the decomposition
\eqref{eq:decomposition-intro} on its maximal forward lifespan and
satisfies $\epsilon\in\mathcal{Z}^{\perp}\cap H_{m}^{5}$, 
\begin{equation}
b\in(0,b^{\ast}),\ \eta<Kb^{\frac{3}{2}},\ \|\epsilon\|_{L^{2}}<K(b^{\ast})^{\frac{1}{4}},\ \|\epsilon_{1}\|_{L^{2}}<Kb,\ \|\epsilon_{3}\|_{L^{2}}<Kb^{3},\ \|\epsilon_{5}\|_{L^{2}}<Kb^{\frac{9}{2}}.\label{eq:def-H5-trapped}
\end{equation}
Here, $\epsilon_{5}=A_{Q}^{\ast}A_{Q}A_{Q}^{\ast}A_{Q}L_{Q}\epsilon$
is an adapted derivative of $\epsilon$ defined in Section \ref{subsec:Conjugation-identities}.

We mean by a manifold, a subset of a Banach space, which can be locally
represented by a graph on a linear subspace. The regularity of the
manifold is described by the regularity of the graphs. Here we use
the following definition of a locally Lipschitz codimension one manifold
of a Banach space.
\begin{defn}[Locally Lipschitz codimension one manifold]
\label{def:localLipschitz}Let $\mathcal{M}$ be a subset of a Banach
space $X$. We say that $\mathcal{M}$ is a \emph{locally Lipschitz
codimension one manifold} of $X$ if it satisfies the following properties:
let $p\in\mathcal{M}$ be arbitrary.
\begin{itemize}
\item There exist codimension one closed subspace $X_{s}$ and one-dimensional
subspace $X_{u}$ of $X$ such that $X=X_{s}\oplus X_{u}$.
\item There exist open neighborhood $\mathcal{O}_{p,s}$ of $p$ in $p+X_{s}$,
open neighborhood $\mathcal{O}_{p}$ of $p$ in $X$, and a Lipschitz
map $f:\mathcal{O}_{p,s}\to X_{u}$ such that $[\mathrm{id}_{\mathcal{O}_{p,s}}\oplus f](\mathcal{O}_{p,s})=\mathcal{M}\cap\mathcal{O}_{p}$.
\end{itemize}
\end{defn}

We note that this property is invariant under $C^{1}$-diffeomorphisms.

There are several works in dispersive equations on establishing regularities
of the manifolds of objects exhibiting certain dynamics. To name a
few, Collot \cite{Collot2018MemAMS} constructed a Lipschitz manifold
of blow-up solutions to the energy-supercritical NLW. There is an
extensive literature on the study of similar manifolds, for example,
manifolds of global solutions that scatter to solitary waves, or blow-up
solutions. We refer to \cite{KriegerSchlag2006JAMS,Beceanu2012CPAM,BejenaruKriegerTataru2013AnalPDE,KriegerNakanishiSchlag2015MathAnn,DonningerKriegerSzeftelWong2016Duke,MartelMerleNakanishiRaphael2016CMP,BurzioKrieger2022MAMS}
and references therein. The most relevant one to this paper is the
work of Collot \cite{Collot2018MemAMS}.

Our next result is on the uniqueness of $\eta_{0}$ and the Lipschitz
regularity of the blow-up manifold.
\begin{thm}[Blow-up manifold when $m\geq3$]
\label{thm:BlowupManifold}Let $m\geq3$. There exist constants $K>0$
and $b^{\ast}>0$ with the following properties.
\begin{itemize}
\item (Existence of $H_{m}^{5}$-trapped solutions) The statements of Theorem
\ref{thm:Existence} hold when we replace $\tilde{\mathcal{U}}_{init},\mathcal{O}_{init},H_{m}^{3}$
by $\tilde{\mathcal{U}}_{init}^{5},\mathcal{O}_{init}^{5},H_{m}^{5}$.
Moreover, the radiation $u^{\ast}$ has further regularity $u^{\ast}\in H_{m}^{3}$.
\item (Uniqueness of $\eta_{0}$) Given $(\lambda_{0},\gamma_{0},b_{0},\epsilon_{0})\in\tilde{\mathcal{U}}_{init}^{5}$,
$\eta_{0}$ is unique in $(-\frac{K}{2}b_{0}^{3/2},\frac{K}{2}b_{0}^{3/2})$
such that the solution $u(t,r)$ starting from the initial data $u_{0}(r)$
in \eqref{eq:Def-u_0} is a $H_{m}^{5}$-trapped solution.
\item (Lipschitz blow-up manifold) Let $\mathcal{M}$ be the set of initial
data in $\mathcal{O}_{init}^{5}$ yielding $H_{m}^{5}$-trapped solutions.
Then, $\mathcal{M}$ is a locally Lipschitz codimension one manifold
in $H_{m}^{5}$.
\end{itemize}
\end{thm}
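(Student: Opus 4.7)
The plan is to combine an $H_m^5$-extension of Theorem~\ref{thm:Existence} with a Lipschitz-type difference estimate for pairs of $H_m^5$-trapped solutions. For the first bullet, I would repeat the modulation/shooting scheme of Theorem~\ref{thm:Existence} with one additional energy layer on the adapted derivative $\epsilon_5 = A_Q^\ast A_Q A_Q^\ast A_Q L_Q \epsilon$. Thanks to the conjugation identity, $\epsilon_5$ satisfies a Hamiltonian equation whose linear part is one further self-dual conjugate of the operator controlling $\epsilon_3$; the repulsivity of this further conjugate is precisely what forces the restriction $m \geq 3$. With repulsivity at hand, the $\|\epsilon_5\|_{L^2} < K b^{9/2}$ bootstrap closes in parallel to the $\epsilon_3$ bootstrap in Theorem~\ref{thm:Existence}, while the same topological shooting on $\eta_0$ selects a trapped solution for each choice of the remaining parameters. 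The improved $\epsilon_5$ decay then translates, via the pseudoconformal inversion used to produce the radiation, into the bound $u^\ast \in H_m^3$.

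For the Lipschitz manifold, fix $p \in \mathcal{M}$ corresponding to coordinates $(\lambda_0^\sharp, \gamma_0^\sharp, b_0^\sharp, \eta_0^\sharp, \epsilon_0^\sharp) \in \mathcal{U}_{init}^5$. I take $X_u$ to be the one-dimensional tangent line to the $\eta_0$-direction through $p$ in the $H_m^5$ coordinate picture, and $X_s$ the complementary closed codimension one subspace, namely the tangent to the $(\lambda_0,\gamma_0,b_0,\epsilon_0)$-slice at fixed $\eta_0^\sharp$ (viewed linearly via Lemma~\ref{lem:decomp}). What must be shown is that the shooting map
\[
\Phi : (\lambda_0, \gamma_0, b_0, \epsilon_0) \longmapsto \eta_0
\]
produced by the first bullet is single-valued and Lipschitz near $p$; both properties follow from one ingredient, a quantitative difference estimate for two trapped solutions.

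Concretely, let $u, \tilde u$ be two $H_m^5$-trapped solutions with decompositions $(\lambda,\gamma,b,\eta,\epsilon)$ and $(\tilde\lambda,\tilde\gamma,\tilde b,\tilde\eta,\tilde\epsilon)$. Working in the common rescaled time $s$, after re-expressing $\tilde u$ in the modulation frame of $u$, the parameter and profile differences $(\delta\lambda,\delta\gamma,\delta b,\delta\eta,\delta\epsilon)$ satisfy a linearized system around $P(\,\cdot\,;b,\eta)+\epsilon$ perturbed by quadratic-in-difference errors. Running the same adapted high-order energy method on $\delta\epsilon_5$ and its lower analogues, I expect a Gronwall-type inequality in which the unstable-mode projection satisfies $\partial_s\,\delta\eta = c\,\delta\eta + \mathcal{E}$ with $c > 0$ and $\mathcal{E}$ absorbed by the stable-mode energy. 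Since both solutions remain trapped as $s \to \infty$, the projection of the difference onto $X_u$ at $s=0$ is forced to be controlled by its projection onto $X_s$, yielding
\[
|\eta_0 - \tilde\eta_0| \lesssim \|u_0 - \tilde u_0\|_{H_m^5}
\]
for $u_0, \tilde u_0 \in \mathcal{M}$ close to $p$, and this is precisely the graph representation required by Definition~\ref{def:localLipschitz} with $f = \Phi$.

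The main obstacle will be closing this difference energy estimate at the $\delta\epsilon_5$ level while preserving the Hamiltonian/self-dual structure underpinning the repulsivity. The subtle point is that the nonlocal quantities $A_0[u]-A_0[\tilde u]$ and $A_\theta[u]-A_\theta[\tilde u]$ are bilinear in $u+\tilde u$ and $u-\tilde u$, so the coercivity and conjugation identities from the construction must be re-derived for the common linearization at $P+\epsilon$ rather than at $Q$ alone, with the resulting errors absorbed via the trapping bounds \eqref{eq:def-H5-trapped}. Once this difference analysis is available, the remaining step -- passing from the quantitative Lipschitz bound to the locally Lipschitz graph representation of Definition~\ref{def:localLipschitz} -- is soft and follows from the $C^1$ coordinate change provided by Lemma~\ref{lem:decomp}.
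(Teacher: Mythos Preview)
Your outline for the first bullet (existence of $H_m^5$-trapped solutions and $u^\ast\in H_m^3$) matches the paper's argument: one adds the $\epsilon_5$ energy layer, closes the bootstrap via the modified energy inequality at the $\dot{\mathcal H}_m^5$ level, and reads off the improved regularity of the radiation from the sharper $\|\epsilon_3\|_{L^2}$ bound.

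For the Lipschitz part your global strategy---identify $\eta_0$ as the unstable direction, run a difference estimate on two trapped solutions, and conclude a Lipschitz graph---is the paper's strategy, but two concrete choices in your plan would break the estimates.

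\textbf{Time synchronization.} Your phrase ``working in the common rescaled time $s$'' hides the real issue: the two solutions have different blow-up times, hence different $s$-clocks, and at the same $s$ their scales $\lambda(s)$ and $\tilde\lambda(s)$ differ. If you simply subtract the two $\epsilon$-equations, the term $\big(\tfrac{\lambda_s}{\lambda}-\tfrac{\tilde\lambda_s}{\tilde\lambda}\big)\Lambda\epsilon$ appears and cannot be estimated (the operator $\Lambda$ is unbounded). The paper's device is to introduce an \emph{adapted time} $s'=s'(s)$ defined by $\tilde\lambda(s'(s))=\lambda(s)$, so that the scaling terms cancel exactly. The price is a new linear term $\big(\tfrac{ds'}{ds}-1\big)i\mathcal L_Q\tilde\epsilon$ in the difference equation, which carries two extra derivatives of $\tilde\epsilon$.

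\textbf{Level of the difference energy.} Because of that two-derivative loss, the paper runs the difference energy at the $\delta\epsilon_3$ level, \emph{not} at $\delta\epsilon_5$: in the $\delta\epsilon_3$-equation the dangerous term becomes $\big(\tfrac{ds'}{ds}-1\big)i\tilde\epsilon_5$, which is controlled precisely by the a priori $\|\tilde\epsilon_5\|_{L^2}\lesssim b^{9/2}$ bound furnished by the first bullet. If you tried the difference estimate at $\delta\epsilon_5$ as you propose, you would need a priori control of $\tilde\epsilon_7$, which is unavailable. This is exactly why the Lipschitz result requires $H_m^5$-trapped solutions rather than $H_m^3$-trapped ones.

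Two smaller points. First, the paper does not re-derive the conjugation/coercivity identities at $P+\epsilon$; it linearizes at $Q$ throughout and treats the $(P-Q)$- and $\epsilon$-dependent pieces perturbatively via $\delta\tilde R_{\mathrm{L\text{-}L}}$, $\delta\tilde R_{\mathrm{NL}}$. Second, the unstable-mode equation is not $\partial_s\,\delta\eta=c\,\delta\eta$ with $c>0$: the formal law is $\eta_s=0$, and the instability is relative to the decay of $b$. The paper obtains $|(\delta\eta)_s|\lesssim s^{-5/2}D$ from the modulation estimate and integrates \emph{backward} from $s=\infty$ (where both $\eta,\tilde\eta\to0$) to bound $s^{3/2}|\delta\eta|$ by the stable-mode distance. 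Your backward-control intuition is correct, but the mechanism is this integral bound rather than an exponential dichotomy.
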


\emph{Comments on Theorems \ref{thm:Existence} and \ref{thm:BlowupManifold}.}

1. \emph{Codimension one condition}. One of the crucial observations
in authors' previous work \cite{KimKwon2019arXiv} is the finding
of \emph{unstable} modulation parameter $\eta$, which leads to the
rotational instability. This parameter $\eta$ is the source of codimension
one condition.

2. \emph{Comparison with the results of \cite{KimKwon2019arXiv}}.
In the authors' previous work \cite{KimKwon2019arXiv}, we constructed
and studied an instability mechanism of pseudoconformal blow-up solutions
with prescribed asymptotic profiles via the backward construction.
In particular, the authors exhibited an unstable direction (or, constructed
an one-parameter family of solutions) that prevents the pseudoconformal
blow-up. On the contrary, this work uses the forward construction.
Here we constructed a set of \emph{initial data} on that admits blow-up
solutions and showed that it is a locally Lipschitz codimension one
manifold. In view of \cite{KimKwon2019arXiv}, this work sheds light
on optimal conditional stability (codimension one) of pseudoconformal
blow-up solutions.

In \cite{KimKwon2019arXiv}, the authors imposed one extra degeneracy
condition on the asymptotic profiles. However, it is unclear how this
degeneracy condition on the asymptotic profiles is related to the
set of initial data.

3. \emph{The $m=0$ case}. Our proof breaks down in many places when
$m=0$. For example, $Q$ decays slower ($Q\sim\langle y\rangle^{-2}$).
Moreover, the Hardy controls of the adapted norms at $\dot{H}^{1}$
and $\dot{H}^{3}$ and repulsivity property of $A_{Q}A_{Q}^{\ast}$
are weakened near the spatial infinity. However, we believe that the
essential difficulty for the $m=0$ case is the construction of the
modified profile. In fact, the size of the radiation term (or, the
error from the modified profile) becomes critical and seems to require
further corrections on the modulation equations. Thus the current
profile seems not working. The $m=0$ case will be treated in a forthcoming
work. Experiences of energy-critical problems such as wave maps, Schrödinger
maps, or heat flows, suggest further refinement of the modified profiles
and logarithmic corrections to the modulation equations, exploiting
the resonance of the linearized operator and the tail computations
\cite{RaphaelRodnianski2012Publ.Math.,MerleRaphaelRodnianski2013InventMath,RaphaelSchweyer2013CPAM}.

4. \emph{Further regularity}. Although we stated $u^{\ast}\in H_{m}^{1}$
(or $u^{\ast}\in H_{m}^{3}$) in our main theorems, by inspecting
their proofs, for any $k\in\mathbb{N}$ one can prove $u^{\ast}\in H_{m}^{2k-1}$
if $m\geq2k-1$ and $\mathcal{O}_{init}$ is restricted to $H^{2k+1}$-functions.

5. \emph{Comparison with Schrödinger maps }(SM)\emph{ and nonlinear
Schrödinger equation \eqref{eq:NLS}}. \eqref{eq:CSS} shares many
features with energy-critical Schrödinger maps (SM) and \eqref{eq:NLS},
as they are critical Schrödinger-type equations. Earlier works on
those equations have become a nice guide for us to study \eqref{eq:CSS}.

\eqref{eq:CSS} and \eqref{eq:NLS} share many similarties such as
symmetries and conservation laws. Among others, the pseudoconformal
symmetry is the most crucial and motivates us to study pseudoconformal
blow-up solutions. However, they have essential differences between
the linearized operators, yielding completely different instability
mechanisms of pseudoconformal blow-up solutions as drawn in \cite{MerleRaphaelSzeftel2013AJM}
and \cite{KimKwon2019arXiv}. A construction of pseudoconformal blow-up
solutions for one-dimension quintic ($L^{2}$-critical) NLS in codimension
one sense is given in \cite{KriegerSchlag2009JEMS}.

One drastic difference between \eqref{eq:CSS} and \eqref{eq:NLS}
is that \eqref{eq:NLS} allows a \emph{stable} blow-up regime, namely,
the celebrated log-log blow-up studied by Merle and Raphaël \cite{MerleRaphael2005AnnMath,MerleRaphael2003GAFA,MerleRaphael2004InventMath,Raphael2005MathAnn,MerleRaphael2006JAMS,MerleRaphael2005CMP}.
As the formal analysis in \cite{BergeDeBouardSaut1995PRL} suggests,
it is believed that the focusing non self-dual CSS is similar to \eqref{eq:NLS}.

\eqref{eq:CSS} is also similar to the energy-critical Schrödinger
maps (SM). First of all, both of them are self-dual. The linearized
operators have self-dual factorizations and the method of supersymmetric
conjugates \cite{RodnianskiSterbenz2010Ann.Math.,RaphaelRodnianski2012Publ.Math.,MerleRaphaelRodnianski2013InventMath}
can be performed. A remarkable observation in this paper is that the
linearized operator of \eqref{eq:CSS} and that of (SM) are the same
after we go by one higher adapted derivative, due to the \emph{conjugation
identities} \eqref{eq:ConjugationIdentity}. See Remark \ref{rem:ConnectionSMWM}
for more details.

Furthermore, the modulation equations detected in the $1$-equivariant
(SM) \cite{MerleRaphaelRodnianski2013InventMath} (without log corrections)
are the same as those of \eqref{eq:CSS}. Therefore the codimension
one blow-up with rotational instability are expected in both cases.
As seen in \eqref{eq:Def-D+}, taking the first adapted derivative
shifts the equivariance index by $1$. Thus the $m=0$ case of \eqref{eq:CSS}
seems to share many features with the $1$-equivariant (SM) including
the logarithmic corrections to the modulation equations. In a forthcoming
work, the conjugation identities (Proposition \ref{prop:ConjugationIdentity})
will also be a key algebraic tool to attack the $m=0$ case.

However, (SM) with equivariance index $k\geq3$ has no blow-up near
the harmonic map, but the asymptotic stability is known \cite{GustafsonKangTsai2007CPAM,GustafsonKangTsai2008Duke,GustafsonNakanishiTsai2010CMP}.
This is in contrast to \eqref{eq:CSS}, where the pseudoconformal
blow-up occurs for all $m\geq1$.

6. \emph{Assumption $m\geq3$ for Theorem \ref{thm:BlowupManifold}}.
When establishing the Lipschitz regularity of the blow-up manifold,
we need to work with $H_{m}^{5}$-trapped solutions (more precisely,
the solutions with good $H_{m}^{5}$ a priori bounds). When $m\in\{1,2\}$,
current profiles and modulation parameters are insufficient to construct
$H_{m}^{5}$-trapped solutions. Due to slow decay of $Q$, there appear
exotic modes (of two real dimensions) from the coercivity relations.
We also note that when $m=2$, these exotic modes have worst decay
and there appear additional complications due to logarithmic weakening
of the Hardy controls near the spatial infinity. See Remark \ref{rem:Positivity-AAA-small-m}
for more details. Establishing the Lipschitz dependence of blow-up
solutions for $m\in\{1,2\}$ remains as an interesting open problem.

7. \emph{Rotational instability}. Our main theorems construct a codimension
one manifold of initial data yielding the pseudoconformal blow-up.
A natural question is to ask dynamics of solutions starting from initial
data that are close to, but do not belong to the constructed blow-up
manifold. In \cite{KimKwon2019arXiv}, the authors exhibited an instability
mechanism (the \emph{rotational instability}) for solutions that are
perturbed in a certain direction ($\rho$) from the pseudoconformal
blow-up solutions. In fact, this direction $\rho$ turns out to be
transversal to our blow-up manifold. We conjecture that the rotational
instability is universal for solutions near the blow-up manifold:
if the initial data $u_{0}$ lies in $\mathcal{O}_{init}\setminus\mathcal{M}$,
then they concentrate up to some small scale $\sim|\eta|>0$, then
stop concentrating but take an abrupt spatial rotation by the angle
$\mathrm{sgn}(\eta)(\frac{m+1}{m})\pi$ on the time scale $\sim|\eta|$,
and then expand out like $\overline{S(-t,r)}$ for $t\gtrsim|\eta|$
as observed in \cite{KimKwon2019arXiv}.

According to Theorem \ref{thm:BlowupManifold}, if one slightly perturbs
$\eta_{0}$ from $u_{0}\in\mathcal{M}$ with coordinates $(\lambda_{0},\gamma_{0},b_{0},\eta_{0},\epsilon_{0})$,
then the solution should escape the trapped regime by the uniqueness
of $\eta_{0}$. Our proof moreover shows that this solution exits
the trapped regime by $|\eta|\gtrsim o(b)$ at the exit time. The
nonlinear rotational instability requires the study of the forward-in-time
dynamics after the exit time.

It seems that the rotational instability is not particular for \eqref{eq:CSS}.
For the Landau--Lifschitz--Gilbert equation, which contains harmonic
heat flow and Schrödinger maps equations, authors in \cite{BergWilliams2013}
performed formal computations and provided numerical evidences for
a quick spatial rotation by the angle $\pi$, near the blow-up solution.

\subsection{\label{subsec:strategy}Strategy of the proof}

Our general scheme of the proof is the forward construction using
modulation analysis. As a main step of the proof, we perform a modified
energy method in higher derivatives. We found a \emph{conjugation
identity} (Proposition \ref{prop:ConjugationIdentity}) which enables
us to carry out the rest of the analysis. It also shows a deep connection
with the Schrödinger maps and wave maps.

To begin with, we decompose the solution into the blow-up profile
and the remainder. The former is a finite-dimensional object, whose
dynamics is described by a system of ODEs of modulation parameters.
The latter belongs to some finite codimension space described by orthogonality
conditions.

We first detect the evolution laws of the modulation parameters exhibiting
pseudoconformal blow-up and its rotational instability. Next, we control
the remainder part by the robust energy method combined with repulsivity
properties observed in higher Sobolev norms. This strategy was successfully
implemented in various contexts. To name a few relevant examples,
we refer to Rodnianski--Sterbenz \cite{RodnianskiSterbenz2010Ann.Math.}
and Raphaël--Rodnianski \cite{RaphaelRodnianski2012Publ.Math.} for
energy-critical wave maps, Merle--Raphaël--Rodnianski \cite{MerleRaphaelRodnianski2013InventMath}
for energy-critical Schrödinger maps. We also refer to \cite{HillairetRaphael2012AnalPDE,RaphaelSchweyer2013CPAM,RaphaelSchweyer2014AnalPDE}
for relevant works in the energy-critical equations. The strategy
also extends to energy-supercritical equations, for example in Merle--Raphaël--Rodnianski
\cite{MerleRaphaelRodnianski2015CambJMath} and Collot \cite{Collot2017AnalPDE,Collot2018MemAMS}.
This list is not exhaustive. The works \cite{MerleRaphaelRodnianski2013InventMath,MerleRaphaelRodnianski2015CambJMath,Collot2018MemAMS}
are the most relevant to our work.

\ 

1. \emph{Setup for the modulation analysis}. Let $(\lambda_{0},\gamma_{0},b_{0},\epsilon_{0})\in\tilde{\mathcal{U}}_{init}$
be given and let $\eta_{0}$ vary. Set our initial data 
\[
u_{0}(r)=\frac{e^{i\gamma_{0}}}{\lambda_{0}}[P(\cdot;b_{0},\eta_{0})+\epsilon_{0}]\Big(\frac{r}{\lambda_{0}}\Big),
\]
where $P(\cdot;b_{0},\eta_{0})$ is a modified profile to be used
in this paper, which deforms from $Q$. The construction of $P$ and
the roles of $b_{0}$ and $\eta_{0}$ will be explained soon. The
introduction of the modified profile $P$ and modulation parameters
$\lambda_{0},\gamma_{0},b_{0},\eta_{0}$ are motivated from the generalized
null space relations of the linearized operator.

Let $u$ be the forward-in-time evolution of $u_{0}$. Requiring certain
orthogonality conditions on $\epsilon$, we can decompose $u$ as
\[
u(t,r)=\frac{e^{i\gamma}}{\lambda}[P(\cdot;b,\eta)+\epsilon](t,\frac{r}{\lambda})
\]
as long as $u$ belongs to the \emph{trapped regime} \eqref{eq:def-H3-trapped}
or \eqref{eq:def-H5-trapped}, i.e. $\epsilon$ is kept small and
$|\eta|\ll b$. Here, $\lambda,\gamma,b,\eta$ are functions of $t$.
This decomposition will be clarified in Lemma \ref{lem:decomp}.

The proof of Theorem \ref{thm:Existence} essentially reduces to the
following assertions:
\begin{itemize}
\item (Main bootstrap) Smallness assumptions on $\epsilon$ is kept in the
trapped regime (Proposition \ref{prop:main-bootstrap}), and doing
so, the formal parameter equation \eqref{eq:FormalODE} approximately
holds;
\item There exists special $\eta_{0}$ such that $|\eta|\ll b$ holds for
the whole lifespan of $u$ (Proposition \ref{prop:Sets-I-pm}), so
$u$ is a trapped solution;
\item $u$ satisfies the statements of Theorem \ref{thm:Existence}.
\end{itemize}
The proof of Theorem \ref{thm:BlowupManifold} additionally requires
the difference estimates:
\begin{itemize}
\item The difference of the unstable parameter $\eta$ is controlled by
the difference of the stable parameters $b$ and $\epsilon$. (Proposition
\ref{prop:Lipschitz-estimate-modulo-scale-phase})
\end{itemize}
The heart of the proof is the main bootstrap part. Here we focus on
the main bootstrap argument.

\ 

2. \emph{Formal parameter ODEs and rotational instability}. Write
\eqref{eq:CSS} in the self-dual form \eqref{eq:SelfDualEquation}:
\[
\partial_{t}u+iL_{u}^{\ast}\D_{+}^{(u)}u=0.
\]
We renormalize $u$ by introducing the renormalized variables $(s,y)$
by 
\[
\frac{ds}{dt}=\frac{1}{\lambda^{2}},\quad y=\frac{r}{\lambda},\quad u^{\flat}(s,y)\coloneqq e^{-i\gamma}\lambda u(t,\lambda y).
\]
This yields 
\[
(\partial_{s}-\frac{\lambda_{s}}{\lambda}\Lambda+\gamma_{s}i)u^{\flat}+iL_{u^{\flat}}^{\ast}\D_{+}^{(u^{\flat})}u^{\flat}=0,
\]
where $\Lambda=r\partial_{r}+1$ is the $L^{2}$-scaling vector field.
From the identities (where we temporarily write $f_{b}(y)\coloneqq e^{-ib\frac{y^{2}}{4}}f(y)$)
\[
L_{f_{b}}^{\ast}\D_{+}^{(f_{b})}f_{b}=[L_{f}^{\ast}\D_{+}^{(f)}f]_{b}+ib\Lambda f_{b}-ib^{2}\partial_{b}(f_{b})\quad\text{and}\quad\D_{+}^{(Q)}Q=0,
\]
the pseudoconformal blow-up is encoded in the following ODE system
of modulation parameters 
\[
\frac{\lambda_{s}}{\lambda}+b=0,\qquad\gamma_{s}=0,\qquad b_{s}+b^{2}=0.
\]

If it were true that the decomposition of the form $\frac{e^{i\gamma}}{\lambda}[Q_{b}+\epsilon](\frac{r}{\lambda})$
is enough to prove that $\epsilon$ is kept small forward-in-time
and the above modulation equations are valid, then we would have a
\emph{stable blow-up}. However, this is not the case we observed in
\cite{KimKwon2019arXiv}, which asserts that pseudoconformal blow-up
solutions exhibit \emph{rotational instability}. This motivates us
to further introduce the fourth modulation parameter (denoted by $\eta$)
and modify our profile (denoted by $P(\cdot;b,\eta)$). The parameter
$\eta$ is chosen to generate the rotational instability. In \cite{KimKwon2019arXiv},
the authors introduced a fixed small parameter $\eta\geq0$ to generate
the rotational instability: 
\[
\frac{\lambda_{s}}{\lambda}+b=0,\quad\gamma_{s}=\eta\theta_{\eta},\quad b_{s}+b^{2}+\eta^{2}=0,
\]
where $\theta_{\eta}\approx m+1\neq0$.

In this work, we view $\eta$ as a dynamical parameter and write the
formal parameter ODE system as 
\begin{equation}
\frac{\lambda_{s}}{\lambda}+b=0,\quad\gamma_{s}=\eta\theta_{\eta},\quad b_{s}+b^{2}+\eta^{2}=0,\quad\eta_{s}=0.\label{eq:FormalODE}
\end{equation}
A typical example of solutions to this ODE system is 
\begin{gather*}
b(t)=|t|,\quad\lambda(t)=(t^{2}+\eta^{2})^{\frac{1}{2}},\quad\eta(t)=\eta_{0},\\
\gamma(t)=\begin{cases}
0 & \text{if }\eta_{0}=0,\\
\pm\mathrm{sgn}(\eta)(m+1)\tan^{-1}(\tfrac{t}{|\eta|}), & \text{if }\eta_{0}\neq0.
\end{cases}
\end{gather*}
The \emph{rotational instability} is exhibited when $\eta_{0}\neq0$,
where $\gamma(t)$ changes $(m+1)\pi$ on the time interval $|t|\lesssim|\eta|$.
When $\eta_{0}$ is small and nonnegative, an exact solution to \eqref{eq:CSS}
satisfying this law is constructed in \cite{KimKwon2019arXiv}. Such
solutions blow up only when $\eta_{0}=0$. This explains why we can
expect at best codimension one blow-up.

In our setting, $\eta$ is a dynamical parameter with $\eta_{s}\approx0$.
To ensure the pseudoconformal blow-up, we need $|\eta|\ll b$ for
trapped solutions. Due to $\eta_{s}\approx0$, the condition $|\eta|\ll b\to0$
does not propagate forward-in-time generically. In other words, $\eta$
is an \emph{unstable} parameter. The $\eta$-bound cannot be bootstrapped,
and we will show the existence of special $\eta_{0}$ by a soft connectivity
argument.

\ 

Our next goal is to construct the modified profile $P$ that admits
the formal ODEs \eqref{eq:FormalODE}.

3. \emph{Construction of the modified profile}. As many of the previous
works, one may try a Taylor expansion in small parameters $b$ and
$\eta$ to construct the modified profiles. However, this procedure
does not work very well, due to nonlocal nonlinearities; see Remarks
\ref{rem:TechnicalDifficulties} and \ref{rem:profile-nonlinearapproach}.
Instead, we use a nonlinear ansatz introduced in \cite{KimKwon2019arXiv}.
This is possible due to the explicit pseudoconformal symmetry and
self-duality. For the parameter $b$, we use the pseudoconformal phase
$e^{-ib\frac{y^{2}}{4}}$. For the parameter $\eta$, the authors
in \cite{KimKwon2019arXiv} found a remarkable nonlinear ansatz 
\begin{equation}
\D_{+}^{(Q^{(\eta)})}Q^{(\eta)}=-\eta\tfrac{y}{2}Q^{(\eta)}\quad\Rightarrow\quad L_{Q^{(\eta)}}^{\ast}\D_{+}^{(Q^{(\eta)})}Q^{(\eta)}+\eta\theta_{\eta}Q^{(\eta)}+\eta^{2}\tfrac{y^{2}}{4}Q^{(\eta)}=0\label{eq:ModifiedBogomol'nyi}
\end{equation}
motivated from the self-dual form \eqref{eq:SelfDualEquation} and
the Bogomol'nyi equation \eqref{eq:Bogomolnyi-Eq}. The profile $Q^{(\eta)}$
can be constructed (without divergence at infinity) when $\eta\geq0$.
Due to the factor $-\eta\tfrac{y}{2}Q^{(\eta)}$, it is easy to expect
that $Q^{(\eta)}$ has an exponential decay $e^{-\eta\frac{y^{2}}{4}}$.

In this work, as $\eta$ is a dynamical parameter, we need to consider
both positive and negative $\eta$. Thus we only use $Q^{(\eta)}(y)$
in the linearization regime $y\ll|\eta|^{-\frac{1}{2}}$, i.e. $e^{-\eta\frac{y^{2}}{4}}\approx1$.
As a consequence, we will use the profile 
\[
P(y;b,\eta)\coloneqq e^{-ib\frac{y^{2}}{4}}Q^{(\eta)}(y)\chi_{b^{-1/2}}(y),
\]
where $Q^{(\eta)}$ solves \eqref{eq:ModifiedBogomol'nyi} in the
region $y\ll|\eta|^{-\frac{1}{2}}$ and $\chi_{b^{-1/2}}$ is a smooth
cutoff to the region $y\lesssim b^{-\frac{1}{2}}$. As we will only
consider the regime $|\eta|\ll b$, this definition makes sense.

\ 

4. \emph{Propagation of smallness of $\epsilon$}. Having fixed the
profile $P$, we decompose 
\[
u(t,r)=\frac{e^{i\gamma}}{\lambda}[P(\cdot;b,\eta)+\epsilon](t,\frac{r}{\lambda})
\]
by the orthogonality conditions adapted to the generalized null space.
The equation of $\epsilon$ is given as 
\[
(\partial_{s}-\frac{\lambda_{s}}{\lambda}\Lambda+\gamma_{s}i)\epsilon+i\mathcal{L}_{Q}\epsilon=\mathbf{Mod}\cdot\mathbf{v}-iR_{\mathrm{L-L}}-iR_{\mathrm{NL}}-i\Psi,
\]
where $\mathcal{L}_{Q}=L_{Q}^{\ast}L_{Q}$ is the linearized operator
(see Section \ref{subsec:Linearization-of-CSS}), $\mathbf{Mod}$,
$\mathbf{v}$, $R_{\mathrm{L-L}}$, and $R_{\mathrm{NL}}$ are given
in \eqref{eq:prelim-eqn-e}, and $\Psi$ is given in \eqref{eq:decomp-P}.
Roughly speaking, $\mathbf{Mod}\cdot\mathbf{v}$ contains the modulation
equations, $R_{\mathrm{L-L}}$ is the difference of the linearized
operators at $P$ and $Q$, $R_{\mathrm{NL}}$ consists of quadratic
and higher order terms in $\epsilon$, and $\Psi$ is the error generated
from the profile $P$.

We will perform the energy method in the $H^{3}$-level of $\epsilon$.
To motivate this, the scaling argument says that we can expect $\|\epsilon\|_{\dot{H}^{k}}\lesssim\lambda^{k}$
at best. Since $\lambda\sim b$ in the pseudoconformal regime, this
reads $\|\epsilon\|_{\dot{H}^{k}}\lesssim b^{k}$. The standard modulation
estimate says $|b_{s}+b^{2}|\lesssim\|\epsilon\|_{\dot{H}^{k}}$,
so we need to work with $k\geq3$ not to disturb the law $b_{s}+b^{2}\approx0$.

When we go up to higher order derivatives of $\epsilon$, we will
take \emph{adaptive derivatives}. In view of the factorization $\mathcal{L}_{Q}=L_{Q}^{\ast}L_{Q}$,
we write the equation of $\epsilon_{1}=L_{Q}\epsilon$:
\[
\partial_{s}\epsilon_{1}+L_{Q}iL_{Q}^{\ast}\epsilon_{1}=\dots
\]
Here comes one of the novelties of this work. We observe a \emph{conjugation
identity} (see \eqref{eq:Def-AQ}) 
\[
L_{Q}iL_{Q}^{\ast}=iA_{Q}^{\ast}A_{Q}.
\]
Using this, we see that $\epsilon_{1}$ solves a Hamiltonian equation
\[
\partial_{s}\epsilon_{1}+iA_{Q}^{\ast}A_{Q}\epsilon_{1}=\dots
\]
with the associated energy $(\epsilon_{1},A_{Q}^{\ast}A_{Q}\epsilon_{1})_{r}=\|A_{Q}\epsilon_{1}\|_{L^{2}}^{2}$.
In contrast to $[L_{Q},i]\neq0$, we have $[A_{Q},i]=0$ so that we
can proceed further conjugation by $A_{Q}$ and $A_{Q}^{\ast}$ alternately.
In particular, $\epsilon_{2}=A_{Q}\epsilon_{1}$, $\epsilon_{3}=A_{Q}^{\ast}\epsilon_{2}$,
and so on. This also suggests how to define adapted function spaces
for higher derivatives of $\epsilon$, $\dot{\mathcal{H}}_{m}^{3}$
or $\dot{\mathcal{H}}_{m}^{5}$. More remarkably, this $A_{Q}$ is
equal to the linearized Bogomol'nyi operator in the wave maps and
Schrödinger maps; see Remark \ref{rem:ConnectionSMWM}.

With the above conjugation, the equation for $\epsilon_{2}=A_{Q}L_{Q}\epsilon$
enjoys a \emph{repulsive} dynamics. In the linearized dynamics $(\partial_{s}+i\mathcal{L}_{Q})\epsilon=0$,
there are four enemies preventing the repulsivity: the generalized
kernel elements $\{\Lambda Q,iQ,iy^{2}Q,\rho\}$ (See \eqref{eq:GenNullSpaceRelation}).
Interestingly enough, $A_{Q}L_{Q}$ kills all these elements and there
are no nontrivial static solutions to $(\partial_{s}+iA_{Q}A_{Q}^{\ast})\epsilon_{2}=0$.
Moreover, we have a formal monotonicity formula from a virial type
computation (see \eqref{eq:FormalMonotonicity}) 
\[
\tfrac{1}{2}\partial_{s}(\epsilon_{2},-i\Lambda\epsilon_{2})_{r}\geq(\epsilon_{2},A_{Q}A_{Q}^{\ast}\epsilon_{2})_{r}=\|\epsilon_{3}\|_{L^{2}}^{2}.
\]
We note that this type of repulsivity is already observed in \cite{RodnianskiSterbenz2010Ann.Math.}.
We will use a truncated version of this monotonicity to define the
corrective terms of our modified energy in a similar spirit of \cite{MerleRaphaelRodnianski2015CambJMath}.
See Sections \ref{subsec:Conjugation-identities} and \ref{subsec:energy-identity}
for more detailed exposition.

On the other hand, it is still required to extract nontrivial contributions
from $R_{\mathrm{L-L}}$ and $R_{\mathrm{NL}}$: 
\[
R_{\mathrm{L-L}}=-\theta_{\mathrm{L-L}}P+\tilde R_{\mathrm{L-L}}\quad\text{and}\quad R_{\mathrm{NL}}=-\theta_{\mathrm{NL}}P+\tilde R_{\mathrm{NL}}.
\]
The contributions $\theta_{\mathrm{L-L}}P$ and $\theta_{\mathrm{NL}}P$
are absorbed into the phase corrections. Such corrections arise from
the nonlocality of the gauge potential $A_{0}$. See Section \ref{subsec:Estimates-of-remainders-H3}
for more details. A similar idea was presented in \cite{KimKwon2019arXiv}.

\ 

Wrapping up the above strategies, we can close the bootstrap procedure
on $\epsilon$ by the modified energy method.

5. \emph{Existence of trapped solutions}. After closing the bootstrap
for $\epsilon$, we show the existence of special $\eta_{0}$ such
that $|\eta|\ll b$ holds on the whole forward-in-time lifespan of
$u$. Recall that $\eta$ is an unstable parameter, so the $\eta$-bound
cannot be bootstrapped. We show the existence of $\eta_{0}$ by a
standard connectivity argument. This shows the existence of trapped
solutions in a codimension one sense. Finally, the sharp asymptotics
of the blow-up rate $\lambda(t)$ and convergence of the phase parameter
$\gamma(t)$ for trapped solutions easily follow by integrating \eqref{eq:FormalODE}.
This concludes Theorem \ref{thm:Existence}.

\ 

6. \emph{Lipschitz regularity of the blow-up manifold} $\mathcal{M}$.
The strategy is quite similar to that of the previous bootstrap argument.
This time, we take two trapped solutions (say $u$ and $u'$) from
$\mathcal{M}$, and estimate the differences of their modulation parameters
and $\epsilon$'s. An important point is how we measure the differences
because the blow-up times of $u$ and $u'$ are in general different.
For this, we introduce the adapted time (as in Collot \cite{Collot2018MemAMS})
to compare these differences of modulation parameters and $\epsilon$'s
(say $\delta\epsilon(s)=\epsilon(s)-\epsilon'(s'(s))$ and similarly
for $\delta b,\delta\eta,\dots$). A trade-off to introducing this
adapted time is that we need to work with trapped solutions with two
higher derivatives (due to the linear term $(\frac{ds'}{ds}-1)\mathcal{L}_{Q}\epsilon'$).

For the stable parameters $b$ and $\epsilon$, we write the equation
of $\delta\epsilon$, prove modulation estimates for the difference
of modulation parameters, and perform energy estimates for adaptive
derivatives of $\delta\epsilon$. This yields forward-in-time controls
of $\delta b$ and $\delta\epsilon$. We will control the unstable
parameter difference $\delta\eta$ by integrating the modulation estimates
backwards in time. As a consequence, we will see that $\delta\eta_{0}$
is controlled by $\delta b_{0}$ and $\delta\epsilon_{0}$ in the
\emph{Lipschitz} sense. This essentially implies Theorem \ref{thm:BlowupManifold}.
\begin{rem}[Parameter dependence]
\label{rem:ParameterDependence}To realize the above strategy, we
will use a sequence of large (and small) parameters. We will presume
the order of choosing large parameters as follows: 
\[
1\ll K\ll M\ll M_{1}\ll M_{2}\ll(b^{\ast})^{-1}.
\]
The statements of our lemmas and propositions will omit the dependence
of these parameters. More precisely, our statements hold after adding
the following: ``for sufficiently large $K$, for sufficiently large
$M$ (depending on $K$), for sufficiently large $M_{1}$ (depending
on $M$), for sufficiently large $M_{2}$ (depending on $M_{1}$),
for sufficiently small $b^{\ast}>0$ (depending on $M_{2}$), the
following hold:''
\end{rem}

\subsection*{Notation}

For nonnegative quantities $A$ and $B$, we write $A\lesssim B$
if $A\leq CB$ holds for some implicit constant $C$. We write $A\gtrsim B$
if $B\lesssim A$; we write $A\sim B$ if $A\lesssim B$ and $B\lesssim A$.
If $C$ is allowed to depend on some parameters, then we write them
as subscripts of $\lesssim,\sim,\gtrsim$ to indicate the dependence.
We write $\langle x\rangle\coloneqq(1+|x|^{2})^{\frac{1}{2}}$. Classical
$L^{p}$ spaces and Sobolev spaces $H^{s}$ are also used on the domain
$\R^{2}$.

We use a smooth spherically symmetric cutoff function $\chi_{R}$,
such that $\chi_{R}(y)=\chi(R^{-1}y)$, $\chi(x)=1$ for $|x|\leq1$,
and $\chi(x)=0$ for $|x|\geq2$. We also denote the sharp cutoff
function on a set $A$ by $\mathbf{1}_{A}$.

For $x$ in a metric space $X$ and $\delta>0$, we use the notation
$B_{\delta}(x)$ to denote the ball of radius $\delta$ centered at
$x$.

Recall that a function $f:\R^{2}\to\C$ is \emph{$m$-equivariant}
if $f(x)=g(r)e^{im\theta}$ on the polar coordinates $(r,\theta)$
for some $g:\R_{+}\to\C$. We call $g$ as the \emph{radial part}
of $f$. We use an abuse of notation that $g$ is often considered
as an $m$-equivariant function. For example, we say that $g$ belongs
to some $m$-equivariant function space if its $m$-equivariant extension
belongs to that. We will later introduce what function spaces we use
in this paper.

We will use a shorthand for the integration of radial functions $f:(0,\infty)\to\C$
as 
\[
\int f\coloneqq\int_{\R^{2}}f(|x|)dx=2\pi\int_{0}^{\infty}f(r)rdr.
\]
We also use the \emph{real} $L^{2}$-inner product 
\[
(f,g)_{r}\coloneqq\Re\int\overline{f}g.
\]
For $s\in\R$, we need the\emph{ $\dot{H}^{s}$-scaling generator}
\begin{align*}
\Lambda_{s}f & \coloneqq\frac{d}{d\lambda}\Big|_{\lambda=1}\lambda^{1-s}f(\lambda\cdot)=[1-s+r\partial_{r}]f,\\
\Lambda & \coloneqq\Lambda_{0}.
\end{align*}

For $k\in\Z_{\geq0}$ and functions $f:(0,\infty)\to\C$, we use the
notation 
\begin{align*}
|f|_{k}(y) & \coloneqq\sup_{0\leq\ell\leq k}|y^{\ell}\partial_{y}^{\ell}f|,\\
|f|_{-k}(y) & \coloneqq\sup_{0\leq\ell\leq k}|y^{-\ell}\partial_{y}^{k-\ell}f|=y^{-k}|f|_{k}.
\end{align*}
The following  Leibniz rules hold: 
\[
|fg|_{k}\lesssim_{k}|f|_{k}|g|_{k}\quad\text{and}\quad|fg|_{-k}\lesssim|f|_{-k}|g|_{k}.
\]

\subsection*{Organization of the paper}

In Section \ref{sec:ConjugationIdentities}, we review the linearization
of \eqref{eq:CSS}, derive the conjugation identities, and develop
the adapted function spaces. In Section \ref{sec:ModifiedProfile},
we introduce the modified profile. In Section \ref{sec:TrappedSolutions},
we specify the decomposition of the solutions and reduce the proof
of the existence of trapped solutions to the main bootstrap proposition.
In Section \ref{sec:Main-bootstrap}, we close this boostrap procedure,
and finish the proof of Theorem \ref{thm:Existence} and the first
part of Theorem \ref{thm:BlowupManifold}. Finally in Section \ref{sec:Lipschitz-blow-up-manifold},
we establish the Lipschitz regularity of the blow-up manifold, thus
finishing the proof of Theorem \ref{thm:BlowupManifold}.

There is one appendix. In Appendix \ref{sec:HardyInequalities}, we
prove weighted Hardy's inequality, properties of the adapted function
spaces, and (sub-)coercivity estimates of the adaptive derivatives.

\subsection*{Acknowledgements}

The authors appreciate Sung-Jin Oh for helpful discussions and encouragement
to this work. The authors are partially supported by Samsung Science
\& Technology Foundation BA1701-01 and NRF-2019R1A5A1028324. Part
of this work was done while the first author was visiting Bielefeld
University through IRTG 2235. He would like to appreciate its kind
hospitality. The authors are grateful to anonymous referees for their
careful reading of this manuscript.

\section{\label{sec:ConjugationIdentities}Conjugation identities}

In this section, we discuss the linearized operators and their adapted
function spaces. In Section \ref{subsec:Linearization-of-CSS}, we
recall some facts on the linearization of \eqref{eq:CSS} at $Q$.
For instance, the self-dual factorization $i\mathcal{L}_{Q}=iL_{Q}^{\ast}L_{Q}$
and its generalized null space $\{\Lambda Q,iQ,ir^{2}Q,\rho\}$. In
Section \ref{subsec:Conjugation-identities}, we introduce \emph{conjugation
identities} and adapted derivatives. We observe a new factorization
$L_{Q}iL_{Q}^{\ast}=iA_{Q}^{\ast}A_{Q}$, which plays a key role in
our analysis. This suggests us how we proceed to higher adapted derivatives
and enables us to obtain the repulsivity. In Section \ref{subsec:Adapted-function-spaces},
we construct adapted function spaces and prove (sub-)coercivity estimates
for adapted derivatives.

\subsection{\label{subsec:Linearization-of-CSS}Linearization of \eqref{eq:CSS}}

In this subsection, we briefly recall some facts on linearization
made in \cite[Section 3]{KimKwon2019arXiv}. Motivated from the self-duality
\eqref{eq:SelfDualEnergy}, we first linearize the Bogomol'nyi operator:
\begin{align*}
\D_{+}^{(w+\epsilon)}(w+\epsilon) & =\D_{+}^{(w)}w+L_{w}\epsilon+N_{w}(\epsilon),\\
L_{w}\epsilon & \coloneqq\D_{+}^{(w)}+wB_{w},\\
N_{w}(\epsilon) & \coloneqq\epsilon B_{w}\epsilon+\tfrac{1}{2}wB_{\epsilon}\epsilon+\tfrac{1}{2}\epsilon B_{\epsilon}\epsilon,
\end{align*}
where 
\[
B_{f}g\coloneqq\frac{1}{r}\int_{0}^{r}\Re(\overline{f}g)r'dr'.
\]
One can rewrite \eqref{eq:CSS} as 
\[
i\partial_{t}u=\frac{\delta E}{\delta u}=L_{u}^{\ast}\D_{+}^{(u)}u,
\]
where $L_{u}^{\ast}$ denotes the adjoint of $L_{u}$. Now one can
linearize $L_{u}^{\ast}\D_{+}^{(u)}u$, as 
\begin{align*}
L_{w+\epsilon}^{\ast}\D_{+}^{(w+\epsilon)}(w+\epsilon) & =L_{w}^{\ast}\D_{+}^{(w)}w+\mathcal{L}_{w}\epsilon+(\text{h.o.t}),\\
\mathcal{L}_{w}\epsilon & =L_{w}^{\ast}L_{w}\epsilon+[(B_{w}\epsilon)+B_{w}^{\ast}[\overline{\epsilon}\cdot]+B_{\epsilon}^{\ast}[\overline{w}\cdot]](\D_{+}^{(w)}w),
\end{align*}
where $(\text{h.o.t})$ denotes quadratic and higher order terms in
$\epsilon$. In particular, we observe the self-dual factorization
of $\mathcal{L}_{Q}$ from $\D_{+}^{(Q)}Q=0$:
\[
\mathcal{L}_{Q}=L_{Q}^{\ast}L_{Q}.
\]
This identity was first observed by Lawrie, Oh, and Shahshahani in
their unpublished note and its derivation can be found in \cite{KimKwon2019arXiv}.
Then the linearized equation at $Q$ is given as 
\[
\partial_{t}\epsilon+i\mathcal{L}_{Q}\epsilon=0.
\]

Next, we recall the generalized null space relations of $i\mathcal{L}_{Q}$:
\cite[Proposition 3.4]{KimKwon2019arXiv}
\begin{equation}
\begin{aligned}i\mathcal{L}_{Q}\rho & =iQ; & i\mathcal{L}_{Q}ir^{2}Q & =4\Lambda Q;\\
i\mathcal{L}_{Q}iQ & =0; & i\mathcal{L}_{Q}\Lambda Q & =0;
\end{aligned}
\label{eq:GenNullSpaceRelation}
\end{equation}
where $\rho$ is given in Lemma \ref{lem:Def-rho} below. We remark
that $\mathcal{L}_{Q}$ is only $\R$-linear, \emph{not} $\C$-linear.
Note that except the relation $\mathcal{L}_{Q}\rho=Q$, other relations
can be derived either by direct computations or differentiating the
phase/scaling/pseudoconformal symmetries applied to the static solution
$Q$.
\begin{lem}[The generalized eigenmode $\rho$]
\label{lem:Def-rho}There exists a unique smooth function $\rho:(0,\infty)\to\R$
satisfying the following properties:
\begin{enumerate}
\item (Smoothness on the ambient space) The $m$-equivariant extension $\rho(x)\coloneqq\rho(r)e^{im\theta}$,
$x=re^{i\theta}$, is smooth on $\R^{2}$.
\item (Equation) $\rho(r)$ satisfies 
\[
L_{Q}\rho=\frac{1}{2(m+1)}rQ\quad\text{and}\quad\mathcal{L}_{Q}\rho=Q.
\]
\item (Pointwise bounds) We have 
\[
|\rho|_{k}\lesssim_{k}r^{2}Q,\qquad\forall k\in\N.
\]
\item (Nondegeneracy) We have 
\[
(\rho,Q)_{r}=\|L_{Q}\rho\|_{L^{2}}^{2}\neq0.
\]
\end{enumerate}
\end{lem}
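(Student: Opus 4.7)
The plan is to construct $\rho$ by first solving the first-order equation $L_{Q}\rho=\tfrac{1}{2(m+1)}rQ$, and then deducing $\mathcal{L}_{Q}\rho=Q$ as an automatic consequence of the adjoint identity $L_{Q}^{\ast}\bigl[\tfrac{rQ}{2(m+1)}\bigr]=Q$, which itself is verified by a direct integration by parts using $\D_{+}^{(Q)}Q=0$ and the explicit form of $B_{Q}$. This reduction is natural: since $\mathcal{L}_{Q}=L_{Q}^{\ast}L_{Q}$, the element $\rho$ is one step above $Q$ in a Jordan chain, so one should seek an intermediate element $\zeta$ with $L_{Q}\rho=\zeta$ and $L_{Q}^{\ast}\zeta=Q$, and $\zeta=\tfrac{rQ}{2(m+1)}$ is the natural candidate suggested by the pseudoconformal generator $ir^{2}Q$ appearing in \eqref{eq:GenNullSpaceRelation}.

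\textbf{Reduction to a scalar ODE and solution.} For the first-order step, the identity $\D_{+}^{(Q)}Q=0$ gives $\partial_{r}Q=\tfrac{m+A_{\theta}^{(Q)}}{r}Q$, so the ansatz $\rho=Q\phi$ with real $\phi$ yields
\[
L_{Q}(Q\phi)=Q\Bigl(\phi'+\tfrac{1}{r}\int_{0}^{r}Q^{2}\phi\,r'\,dr'\Bigr).
\]
Setting the right-hand side equal to $\tfrac{1}{2(m+1)}rQ$, dividing by $Q$, multiplying by $r$, and differentiating once reduces the problem to the scalar inhomogeneous equation
\[
\Delta\phi+Q^{2}\phi=\tfrac{1}{m+1},\qquad\Delta=\partial_{rr}+\tfrac{1}{r}\partial_{r}.
\]
Two independent solutions of the homogeneous equation $\Delta\psi+Q^{2}\psi=0$ are available: $\psi_{1}=\Lambda Q/Q$ (which solves it because $L_{Q}\Lambda Q=0$ follows by differentiating the scaling-invariance of $\D_{+}^{(Q)}Q=0$) and, by the Abel/Wronskian construction, a second solution $\psi_{2}$ that is $\log r$-singular at the origin. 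A particular solution $\phi_{p}$ is built by variation of parameters; the coefficient of $\psi_{2}$ must vanish to secure smoothness at the origin, and the one remaining free constant in front of $\psi_{1}$ is uniquely fixed by the requirement $\phi(0)=0$, which is the $m$-equivariant smoothness/decay constraint $\rho=Q\phi\sim r^{2}Q\sim r^{m+2}$ at the origin.

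\textbf{Conclusions and main obstacle.} Smoothness of the $m$-equivariant extension is then a standard power-series argument at $r=0$, using that $\psi_{1}$ is smooth and $Q\sim r^{m}$; the pointwise derivative bounds $|\rho|_{k}\lesssim_{k}r^{2}Q$ follow by combining the explicit decay of $Q$ with the infinity asymptotic $\phi\sim\tfrac{r^{2}}{4(m+1)}$ (forced by $Q^{2}\to 0$ in the ODE) and by differentiating the ODE to bootstrap higher-order bounds. Uniqueness comes from the same rigidity: any other candidate differs from $\rho$ by an element of the smooth part of $\ker L_{Q}\cap\R=\mathrm{span}\{\Lambda Q\}$, and the $r^{2}Q$ envelope at the origin excludes any nonzero multiple of $\Lambda Q$ (whose leading $r^{m}$ coefficient is nonzero). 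Nondegeneracy is the one-line computation
\[
(\rho,Q)_{r}=(\rho,L_{Q}^{\ast}L_{Q}\rho)_{r}=\|L_{Q}\rho\|_{L^{2}}^{2},
\]
which is strictly positive and finite because $rQ\in L^{2}$ for $m\geq 1$. The chief technical obstacle I anticipate is the simultaneous tracking of the two asymptotic regimes when verifying the envelope bound: one must kill the $\log r$-singular mode at the origin, cancel the constant term of $\phi$ at the origin with the correct multiple of $\psi_{1}$, and confirm that $\phi_{p}$ carries no spurious growth above $r^{2}/(4(m+1))$ at infinity—and then propagate all of this through higher derivatives of the nonlocal equation.
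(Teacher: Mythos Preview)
Your overall strategy coincides with the paper's: both reduce to solving the first-order problem $L_{Q}\rho=\tfrac{1}{2(m+1)}rQ$ (after checking $L_{Q}^{\ast}(rQ)=2(m+1)Q$), both write $\rho=Q\phi$ and arrive at the same integro-differential equation $\phi'+\tfrac{1}{r}\int_{0}^{r}Q^{2}\phi\,r'\,dr'=\tfrac{r}{2(m+1)}$, and both obtain nondegeneracy from $(\rho,Q)_{r}=\|L_{Q}\rho\|_{L^{2}}^{2}$. The execution diverges: the paper solves the first-order nonlocal equation directly by a contraction argument in the class $|\phi|\lesssim r^{2}$ (imported from \cite{KimKwon2019arXiv}), then gets smoothness of the equivariant extension by writing $L_{Q}$ in Cartesian coordinates and invoking ellipticity of $\partial_{1}+i\partial_{2}$, and finally obtains the higher $|\rho|_{k}$ bounds by iterating the first-order relation $r\partial_{r}\rho=(m+A_{\theta}[Q])\rho-Q\int_{0}^{r}Q\rho\,r'dr'+\tfrac{r^{2}Q}{2(m+1)}$. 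You instead differentiate once more to reach the local second-order ODE $\Delta\phi+Q^{2}\phi=\tfrac{1}{m+1}$ and run variation of parameters with the explicit homogeneous solution $\psi_{1}=\Lambda Q/Q$; smoothness and the envelope are then handled by asymptotic analysis at $0$ and $\infty$. Your route is more classical and makes the Jordan-chain picture explicit, at the cost of tracking two asymptotic regimes and the $\log$-singular mode by hand; the paper's contraction-plus-ellipticity route is shorter and avoids that bookkeeping. Both are correct; for the uniqueness clause, note that your argument (difference lies in $\mathrm{span}_{\R}\{\Lambda Q\}$ and is excluded by the $r^{2}Q$ envelope at the origin) matches the paper's, which phrases the same exclusion as uniqueness in the contraction class $|\phi|\lesssim r^{2}$.
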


\begin{proof}
We first recall the results proved in \cite[Section 3]{KimKwon2019arXiv}.
As $L_{Q}^{\ast}rQ=2(m+1)Q$ by an explicit computation, it suffices
to construct $\rho:(0,\infty)\to\R$ such that $L_{Q}\rho=\frac{1}{2(m+1)}rQ$.
In the proof of \cite[Lemma 3.6]{KimKwon2019arXiv}, such $\rho$
is contructed by solving the following integral equation for $\tilde{\rho}\coloneqq Q^{-1}\rho$
\[
\partial_{r}\tilde{\rho}+\frac{1}{r}\int_{0}^{r}Q^{2}\tilde{\rho}r'dr'=\frac{r}{2(m+1)},\qquad\forall r\in(0,\infty),
\]
in the class where $|\tilde{\rho}(r)|\lesssim r^{2}$. As the proof
relies on the contraction principle, uniqueness is shown in the same
class. Applying the above equation and coming back to $\rho$, we
have $\rho\in\dot{H}_{m}^{1}$, $L_{Q}\rho=\frac{1}{2(m+1)}rQ$, and
$|\rho|\lesssim r^{2}Q$. The nondegeneracy follows from 
\[
(\rho,Q)_{r}=(\rho,L_{Q}^{\ast}L_{Q}\rho)_{r}=\|L_{Q}\rho\|_{L^{2}}^{2}=\tfrac{1}{4(m+1)^{2}}\|rQ\|_{L^{2}}^{2}\neq0.
\]

To complete the proof, we first deal with smoothness. Writing the
equation $L_{Q}\rho=rQ$ in the ambient space $\R^{2}$ (see the proof
of Lemma \ref{lem:mapping-L-Appendix}, for instance), utilizing the
ellipticity of $\partial_{1}+i\partial_{2}$, and starting from $\rho\in\dot{H}_{m}^{1}$,
the $m$-equivariant extension $\rho(x)=\rho(r)e^{im\theta}$ is smooth.
To show the pointwise estimates, we come back to the radial part and
view $L_{Q}\rho=\frac{1}{2(m+1)}rQ$ of the form $r\partial_{r}\rho=-Q\int_{0}^{r}Q\rho r'dr'+\frac{1}{2(m+1)}r^{2}Q$.
This shows for any $k\geq0$ the recursive estimate 
\[
|\rho|_{k+1}\lesssim|Q|_{k+1}{\textstyle \int_{0}^{r}}Q\rho r'dr'+|r^{2}Q^{2}\rho|_{k}+r^{2}Q\lesssim r^{2}Q+|\rho|_{k}.
\]
Starting from the initial bound $|\rho|\lesssim r^{2}Q$, we get $|\rho|_{k}\lesssim_{k}r^{2}Q$
as desired.
\end{proof}
In the linearized equation 
\[
\partial_{t}\epsilon+i\mathcal{L}_{Q}\epsilon=0,
\]
there are two invariant subspaces: the generalized null space\footnote{When $m=1$, $ir^{2}Q$ and $\rho$ do not belong to $L^{2}$. We
mean by $ir^{2}Q,\rho\in N_{g}(i\mathcal{L}_{Q})$ the algebraic relations
\eqref{eq:GenNullSpaceRelation}.} of $i\mathcal{L}_{Q}$ 
\[
N_{g}(i\mathcal{L}_{Q})=\mathrm{span}_{\R}\{\Lambda Q,iQ,ir^{2}Q,\rho\}
\]
and the orthogonal complement of the generalized null space of the
adjoint $\mathcal{L}_{Q}i$
\[
N_{g}(\mathcal{L}_{Q}i)^{\perp}\coloneqq\{i\rho,r^{2}Q,Q,i\Lambda Q\}^{\perp}.
\]
Since the $4\times4$ matrix formed by taking the inner products\footnote{The inner products may not well-defined when $m=1$ due to the slow
decay of $r^{2}Q$ and $\rho$. We will resolve this technical issue
with introducing cutoffs. See Section \ref{subsec:decomposition}.} of $\Lambda Q,iQ,ir^{2}Q,\rho$ and $i\rho,r^{2}Q,Q,i\Lambda Q$
has nonzero determinant (c.f. \eqref{eq:Transversality}), $N_{g}(i\mathcal{L}_{Q})$
and $N_{g}(\mathcal{L}_{Q}i)^{\perp}$ are transversal. In other words,
$N_{g}(i\mathcal{L}_{Q})\oplus N_{g}(\mathcal{L}_{Q}i)^{\perp}$ is
equal to the whole space. The linearized evolution is decoupled into
its restriction on the subspaces $N_{g}(i\mathcal{L}_{Q})$ and $N_{g}(\mathcal{L}_{Q}i)^{\perp}$.
This motivates how we decompose the solution. When we write 
\[
u=\frac{e^{i\gamma}}{\lambda}[P(\cdot;b,\eta)+\epsilon]\Big(\frac{r}{\lambda}\Big),
\]
we introduce parameters for scaling $\lambda$, phase rotation $\gamma$,
pseudoconformal phase $b$, and the additional parameter $\eta$ associated
with $\rho$ responsible for the rotational instability. Then, our
modulated profile $P(\cdot;b,\eta)$ (applied with phase rotations
$\gamma$ and scalings $\lambda$) is tangent to $N_{g}(i\mathcal{L}_{Q})$.
On the other hand, we require orthogonality conditions for $\epsilon$
to lie in $N_{g}(\mathcal{L}_{Q}i)^{\perp}$.

\subsection{\label{subsec:Conjugation-identities}Conjugation identities}

In view of modulation analysis, one of the main ingredients of the
proof of our main theorems is the energy estimates of $\epsilon$.
More precisely, we need energy estimates for higher Sobolev norms
of $\epsilon$. Due to scaling considerations ($\lambda\to0$ as approaching
to the blow-up time), we expect that higher order derivatives of $\epsilon$
become smaller. For further discussions, see Section \ref{subsec:energy-identity}.

To perform an energy estimate for higher order derivatives, let us
start from the Hamiltonian flow 
\begin{align*}
\partial_{t}\epsilon+iH\epsilon & =0,
\end{align*}
for a symmetric differential operator $H$. The Hamiltonian structure
suggests the energy functional $(\epsilon,H\epsilon)_{r}$. To control
the higher derivatives, commuting $\partial_{r}$ (or the gradients
$\nabla$) will not work very well, because it simply does not commute
with $H$ and loses the Hamiltonian structure. In order to keep the
Hamiltonian structure, a natural conjugation is through $(iH)^{k}$:
\[
(\partial_{t}+iH)(iH)^{k}\epsilon=0.
\]
The associated higher order energy is $((iH)^{k}\epsilon,H(iH)^{k}\epsilon)_{r}$.

As in our case, when $H$ enjoys a special factorization $\mathcal{L}_{Q}=L_{Q}^{\ast}L_{Q}$,
the energy functional at the $\dot{H}^{1}$-level can be written as
$\|L_{Q}\epsilon\|_{L^{2}}^{2}$. To control the higher derivatives,
one can try the conjugation by one derivative (or, \emph{half} of
$H$) to get
\begin{align}
\partial_{t}\epsilon_{1}+L_{Q}iL_{Q}^{\ast}\epsilon_{1} & =0,\label{eq:LinEq-e1}\\
\epsilon_{1}\coloneqq L_{Q}\epsilon.\nonumber 
\end{align}
Due to $[L_{Q},i]\neq0$, this does not seem to be of Hamiltonian
form. However, surprisingly enough, it \emph{is} of Hamiltonian form.
\begin{prop}[Conjugation identity]
\label{prop:ConjugationIdentity}Let $\phi$ solve \eqref{eq:CovariantCSS}.
Then, $\tilde{\D}_{+}\phi$ solves 
\begin{equation}
(i\D_{t}+\tfrac{1}{2}|\phi|^{2})\tilde{\D}_{+}\phi-\tilde{\D}_{+}^{\ast}\tilde{\D}_{+}\tilde{\D}_{+}\phi=0.\label{eq:NonlinearConjugation}
\end{equation}
At the linearized level, we have the conjugation identity 
\begin{equation}
L_{Q}iL_{Q}^{\ast}=iA_{Q}^{\ast}A_{Q},\label{eq:ConjugationIdentity}
\end{equation}
where $A_{Q}$ is the radial part of $\tilde{\D}_{+}^{(Q)}$ acting
on $m+1$ equivariant functions, i.e. 
\begin{align}
\tilde{\D}_{+}^{(Q)}(f(r)e^{i(m+1)\theta}) & =[A_{Q}f](r)e^{i(m+2)\theta},\label{eq:Def-AQ}\\
A_{Q}\coloneqq\partial_{r} & -\tfrac{m+1+A_{\theta}[Q]}{r}.\nonumber 
\end{align}
\end{prop}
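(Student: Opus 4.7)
The plan is to establish the nonlinear identity \eqref{eq:NonlinearConjugation} first by an algebraic computation directly from \eqref{eq:CovariantCSS}, and then to extract the linearized identity \eqref{eq:ConjugationIdentity} by linearization around $\phi = Qe^{im\theta}$. The main tools for the first step are the covariant commutator formulas combined with the curvature constraints $F_{0j}$ and $F_{12}$ supplied by \eqref{eq:CovariantCSS}; the second step reduces to the crucial self-dual identity $A_0[Q] = \tfrac{1}{2}Q^2$ which, itself, is a direct manifestation of Bogomol'nyi.

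For the nonlinear identity, I first recast \eqref{eq:CovariantCSS} as $(i\D_t + \tfrac{1}{2}|\phi|^2)\phi = \tilde{\D}_+^\ast\tilde{\D}_+\phi$, by means of the two Bochner-type identities
\[
\tilde{\D}_+^\ast\tilde{\D}_+ = -\D_j\D_j - \tfrac{1}{2}|\phi|^2, \qquad \tilde{\D}_+\tilde{\D}_+^\ast = -\D_j\D_j + \tfrac{1}{2}|\phi|^2,
\]
both consequences of $[\D_1,\D_2] = iF_{12} = -\tfrac{i}{2}|\phi|^2$. Applying $\tilde{\D}_+$ to both sides and using the commutator $\tilde{\D}_+\tilde{\D}_+^\ast - \tilde{\D}_+^\ast\tilde{\D}_+ = |\phi|^2$ on the right produces $\tilde{\D}_+^\ast\tilde{\D}_+\tilde{\D}_+\phi + |\phi|^2\tilde{\D}_+\phi$. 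On the left, I compute $[\tilde{\D}_+,\D_t]\phi = iF_{10}\phi - F_{20}\phi$ from $[\D_\alpha,\D_\beta] = iF_{\alpha\beta}$, and convert the $\Im(\bar\phi\D_j\phi)$ expressions from the $F_{0j}$ constraints into $\tilde{\D}_\pm\phi$ form via the identity
\[
\Im(\bar\phi\D_1\phi) + i\Im(\bar\phi\D_2\phi) = \tfrac{1}{2i}\bigl(\bar\phi\tilde{\D}_+\phi + \phi\overline{\tilde{\D}_+^\ast\phi}\bigr),
\]
which yields $i[\tilde{\D}_+,\D_t]\phi = \tfrac{1}{2}|\phi|^2\tilde{\D}_+\phi + \tfrac{1}{2}\phi^2\overline{\tilde{\D}_+^\ast\phi}$. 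Simultaneously I expand $\tilde{\D}_+(\tfrac{1}{2}|\phi|^2\phi)$ by the gauge-invariant Leibniz rule $\partial_j|\phi|^2 = \bar\phi\D_j\phi + \phi\overline{\D_j\phi}$, giving $|\phi|^2\tilde{\D}_+\phi - \tfrac{1}{2}\phi^2\overline{\tilde{\D}_+^\ast\phi}$. The anti-holomorphic terms $\phi^2\overline{\tilde{\D}_+^\ast\phi}$ cancel exactly, and the remaining $|\phi|^2\tilde{\D}_+\phi$ coefficients combine to give precisely the factor $\tfrac{1}{2}$ required by \eqref{eq:NonlinearConjugation}.

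For the linearized identity, set $u = Q + \epsilon$ with small radial $\epsilon$. Since $\D_+^{(Q)}Q = 0$, the radial part of $\tilde{\D}_+\phi$ is $L_Q\epsilon + O(\epsilon^2)$, and $\tilde{\D}_+^\ast\tilde{\D}_+\tilde{\D}_+\phi$ linearizes (with $\tilde{\D}_+^\ast = -\tilde{\D}_-$ on the appropriate equivariance class) to $A_Q^\ast A_Q L_Q\epsilon$. Linearizing \eqref{eq:NonlinearConjugation} then reads
\[
(i\partial_t - A_0[Q] + \tfrac{1}{2}Q^2)L_Q\epsilon = A_Q^\ast A_Q L_Q\epsilon.
\]
The Bogomol'nyi relation $\partial_r Q = \tfrac{m + A_\theta[Q]}{r}Q$ yields
\[
A_0[Q](r) = -\int_r^\infty \tfrac{m + A_\theta[Q]}{r'}Q^2\,dr' = -\int_r^\infty Q\,\partial_{r'}Q\,dr' = \tfrac{1}{2}Q^2(r),
\]
so the potential contribution vanishes. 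On the other hand, applying $L_Q$ to the linearized \eqref{eq:CSS} equation $\partial_t\epsilon = -iL_Q^\ast L_Q\epsilon$ and multiplying by the scalar $i$ gives $i\partial_t(L_Q\epsilon) = -iL_QiL_Q^\ast(L_Q\epsilon)$. Equating the two expressions yields $L_QiL_Q^\ast(L_Q\epsilon) = iA_Q^\ast A_Q(L_Q\epsilon)$ for every admissible radial $\epsilon$. Since both sides are $\R$-linear operators (differential modulo the nonlocal $B_Q$-pieces in $L_Q,L_Q^\ast$) and the range of $L_Q$ is dense in the target function space, the operator identity \eqref{eq:ConjugationIdentity} follows by continuity.

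The principal obstacle is the algebraic bookkeeping in the nonlinear step: tracking how the $\Im(\bar\phi\D_j\phi)$ expressions from the \eqref{eq:CovariantCSS} curvature constraints reorganize into the $\tilde{\D}_\pm\phi$ language, and verifying that the anti-holomorphic pieces $\phi^2\overline{\tilde{\D}_+^\ast\phi}$ produced by $i[\tilde{\D}_+,\D_t]\phi$ exactly cancel the corresponding piece coming from $\tilde{\D}_+(|\phi|^2\phi)$. Once \eqref{eq:NonlinearConjugation} is secured the linearization is comparatively direct, the one genuinely non-cosmetic input being the identity $A_0[Q] = \tfrac{1}{2}Q^2$, a reflection of self-duality through the Bogomol'nyi equation.
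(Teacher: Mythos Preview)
Your proof is correct and follows the same route as the paper: apply $\tilde{\D}_+$ to the self-dual form $(i\D_t+\tfrac12|\phi|^2)\phi=\tilde{\D}_+^\ast\tilde{\D}_+\phi$, use the curvature commutators to obtain \eqref{eq:NonlinearConjugation}, then linearize around $Q$ and compare the two evolutions for $\epsilon_1=L_Q\epsilon$. The only organizational difference is that the paper packages the commutator as the single scalar $[\tilde{\D}_+,\,i\D_t+\tfrac12|\phi|^2]=\bar\phi\,\tilde{\D}_+\phi$ (so the anti-holomorphic pieces never appear separately), and it expresses the vanishing of the potential via $-A_0+\tfrac12|\phi|^2=-\int_r^\infty\Re(\bar u\,\D_+u)\,dr'$, which is zero at $u=Q$ directly from $\D_+Q=0$ --- equivalent to your computation $A_0[Q]=\tfrac12Q^2$.
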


Let us postpone the proof for a moment.

This new factorization is one of our novelties of this work. One advantage
of \eqref{eq:ConjugationIdentity} is that, as alluded to above, \eqref{eq:LinEq-e1}
is now written as a Hamiltonian form. There is another advantage,
which is more important. We will observe \emph{repulsivity} in the
variable $\epsilon_{2}\coloneqq A_{Q}L_{Q}\epsilon$, which ultimately
relies on the fact that $A_{Q}L_{Q}$ kills all the elements of the
generalized null space.

Using \eqref{eq:ConjugationIdentity}, we rewrite the $\epsilon_{1}$-equation
as 
\begin{equation}
\partial_{t}\epsilon_{1}+iA_{Q}^{\ast}A_{Q}\epsilon_{1}=0.\label{eq:LinEq-e1-ConjIdty}
\end{equation}
The associated energy functional is $\|A_{Q}\epsilon_{1}\|_{L^{2}}^{2}$
at the $\dot{H}^{2}$-level. Moreover, in contrast to $[L_{Q},i]\neq0$,
we have 
\[
[A_{Q},i]=0.
\]
Thus we can proceed by the method of supersymmetric conjugates (i.e.
alternating the conjugations $A_{Q}^{\ast}$ and $A_{Q}$) to keep
the Hamiltonian form $(\partial_{t}+iH)\epsilon_{k}=0$. For example,
$\epsilon_{1}=L_{Q}\epsilon$, $\epsilon_{2}=A_{Q}\epsilon_{1}$,
$\epsilon_{3}=A_{Q}^{\ast}\epsilon_{2}$, $\epsilon_{4}=A_{Q}\epsilon_{3}$,
$\epsilon_{5}=A_{Q}^{\ast}\epsilon_{4}$, and so on: 
\begin{align}
(\partial_{t}+iA_{Q}A_{Q}^{\ast})\epsilon_{2} & =0,\label{eq:LinEq-e2}\\
(\partial_{t}+iA_{Q}^{\ast}A_{Q})\epsilon_{3} & =0.\nonumber 
\end{align}
The associated energy functional at the $\dot{H}^{3}$-level is $\|\epsilon_{3}\|_{L^{2}}^{2}$.
In this fashion, we can naturally define associated higher order energies.

More remarkably, the conjugation identity \eqref{eq:ConjugationIdentity}
is crucial to obtain repulsivity properties. In earlier works of wave
maps and Schr\"odinger maps, which also enjoy self-duality, the authors
in \cite{RodnianskiSterbenz2010Ann.Math.,RaphaelRodnianski2012Publ.Math.,MerleRaphaelRodnianski2013InventMath}
used the method of supersymmetric conjugates to obtain the repulsivity
in the conjugated dynamics. We will later use repulsivity as a corrective
term for the energy identity, in the the main bootstrap argument.
See Section \ref{subsec:energy-identity}.

In order to obtain repulsivity properties of the linearized equation
$\partial_{t}\epsilon+i\mathcal{L}_{Q}\epsilon=0$, we need to restrict
ourselves on $N_{g}(\mathcal{L}_{Q}i)^{\perp}$ as the elements of
$N_{g}(i\mathcal{L}_{Q})$ do not decay in time. However, it is not
clear that one can obtain repulsivity working directly on $\partial_{t}\epsilon+i\mathcal{L}_{Q}\epsilon=0$,
as one has to use somehow the relation $\epsilon\in N_{g}(\mathcal{L}_{Q}i)^{\perp}$.

Motivated from the works \cite{RodnianskiSterbenz2010Ann.Math.,RaphaelRodnianski2012Publ.Math.,MerleRaphaelRodnianski2013InventMath},
we conjugate $L_{Q}$ to the linearized equation get 
\[
\partial_{t}\epsilon_{1}+L_{Q}iL_{Q}^{\ast}\epsilon_{1}=0.
\]
Unlike the wave and Schr\"odinger maps, taking $L_{Q}$ does not
kill all the elements of $N_{g}(i\mathcal{L}_{Q})$, so the repulsivity
of \eqref{eq:LinEq-e1} will not hold for general $\epsilon_{1}$.
For example, $\epsilon_{1}(t,r)=rQ=2(m+1)L_{Q}\rho$ is a static solution
to \eqref{eq:LinEq-e1}. Thus it is still not sufficient to obtain
repulsivity directly from \eqref{eq:LinEq-e1}.

Perhaps it would be natural to further conjugate by $iL_{Q}^{\ast}$.
However, the resulting equation is merely the original linearized
equation $(\partial_{t}+iL_{Q}^{\ast}L_{Q})\tilde{\epsilon}_{2}=(\partial_{t}+i\mathcal{L}_{Q})\tilde{\epsilon}_{2}=0$,
where $\tilde{\epsilon}_{2}=iL_{Q}^{\ast}\epsilon_{1}$.

Rather than conjugating $iL_{Q}^{\ast}$, we conjugate $A_{Q}$, which
is naturally suggested from the new factorization of $L_{Q}iL_{Q}^{\ast}=iA_{Q}^{\ast}A_{Q}$.
A further conjugation by $A_{Q}$ (and using $[A_{Q},i]=0$) to \eqref{eq:LinEq-e1-ConjIdty}
yields 
\begin{align*}
(\partial_{t}+iA_{Q}A_{Q}^{\ast})\epsilon_{2} & =0,\\
\epsilon_{2}=A_{Q}\epsilon_{1} & =A_{Q}L_{Q}\epsilon.
\end{align*}
The main enemy to \eqref{eq:LinEq-e1}, the static solution $rQ$,
is now ruled out in the $\epsilon_{2}$-equation, thanks to the identity
\[
A_{Q}(rQ)=0.
\]
To put it differently, $A_{Q}L_{Q}$ kills all the elements of $N_{g}(i\mathcal{L}_{Q})$,
which are the main enemies for the repulsivity. A crucial observation
now is that $A_{Q}A_{Q}^{\ast}$ has a repulsive potential. More precisely,
we can write 
\begin{align*}
A_{Q}A_{Q}^{\ast} & =-\partial_{rr}-\tfrac{1}{r}\partial_{r}+\tfrac{\tilde V}{r^{2}},\\
\tilde V & \coloneqq(m+2+A_{\theta}[Q])^{2}+\tfrac{1}{2}r^{2}Q^{2}
\end{align*}
satisfying 
\[
\tilde V\gtrsim1\qquad\text{and}\qquad r\partial_{r}\tilde V=-r^{2}Q^{2}\leq0.
\]
The repulsivity yields \emph{monotonicity} by the virial-type computation:
\begin{align}
\tfrac{1}{2}\partial_{t}(\epsilon_{2},-i\Lambda\epsilon_{2})_{r}=(\epsilon_{2},A_{Q}A_{Q}^{\ast}\epsilon_{2})_{r} & +(\epsilon_{2},\tfrac{-\partial_{r}\tilde V}{2r}\epsilon_{2})_{r}\label{eq:FormalMonotonicity}\\
 & \geq(\epsilon_{2},A_{Q}A_{Q}^{\ast}\epsilon_{2})_{r}=\|\epsilon_{3}\|_{L^{2}}^{2}.\nonumber 
\end{align}
In Section \ref{subsec:energy-identity}, we will use a localized
version of this monotonicity as a corrective term for the energy identity.

\begin{rem}[Connection with wave maps and Schr\"odinger maps]
\label{rem:ConnectionSMWM}It is remarkable that \eqref{eq:CSS}
has a connection with the wave and Schr\"odinger maps. It turns out
that $A_{Q}$ is identical to the linearized Bogomol'nyi operator
appearing in the wave and Schr\"odinger maps. A direct computation
using 
\[
A_{\theta}[Q]=-2(m+1)\frac{r^{2m+2}}{1+r^{2m+2}}
\]
shows that 
\[
A_{Q}=\partial_{r}-\frac{m+1+A_{\theta}[Q]}{r}=\partial_{r}-\frac{m+1}{r}\frac{1-r^{2m+2}}{1+r^{2m+2}}.
\]
This is equal to the linearized Bogomol'nyi operator in the wave and
Schr\"odinger maps with equivariance index $k=m+1$. (See \cite[(34) and (47)]{RodnianskiSterbenz2010Ann.Math.}
for the wave maps and \cite[(2.11)]{MerleRaphaelRodnianski2013InventMath}
for the Schr\"odinger maps with $k=1$) It tells us that at least
in the linearized level, \eqref{eq:CSS}-dynamics for $\epsilon_{1}=L_{Q}\epsilon$
\eqref{eq:LinEq-e1-ConjIdty} is closely related to the dynamics for
wave and Schr\"odinger maps. The shift of equivariance from $m$
to $m+1$ comes from \eqref{eq:Def-D+} and the fact that our variable
$\epsilon_{1}=L_{Q}\epsilon$ is obtained from linearizing it. Thus
$\epsilon_{1}$ is regarded as the radial part of an $(m+1)$-equivariant
function.
\end{rem}

\begin{proof}[Proof of Proposition \ref{prop:ConjugationIdentity}]
Note that \eqref{eq:ConjugationIdentity} can be proved by direct
computations. However, we will derive \eqref{eq:ConjugationIdentity}
by linearizing \eqref{eq:NonlinearConjugation}.

We first prove \eqref{eq:NonlinearConjugation}. We start from writing
\eqref{eq:CovariantCSS} as 
\begin{equation}
(i\D_{t}+\tfrac{1}{2}|\phi|^{2})\phi-\tilde{\D}_{+}^{\ast}\tilde{\D}_{+}\phi=0.\label{eq:CovariantCSS-selfdual}
\end{equation}
This form is adapted to the expression $\phi(t,x)=[Q+\epsilon(t,\cdot)](r)e^{im\theta}$.
Indeed, if $\epsilon$ is small, then $i\partial_{t}\phi\approx0$
and $\tilde{\D}_{+}\phi\approx0$ so we can expect that $i\D_{t}+\tfrac{1}{2}|\phi|^{2}\approx i\partial_{t}$.
Now we conjugate $\tilde{\D}_{+}$ to the equation \eqref{eq:CovariantCSS-selfdual}.
From the formulae 
\begin{align*}
[\tilde{\D}_{+},i\D_{t}+\tfrac{1}{2}|\phi|^{2}] & =i[\tilde{\D}_{+},\D_{t}]+(\partial_{1}+i\partial_{2})(\tfrac{1}{2}|\phi|^{2})\\
 & =(F_{01}+iF_{02})+(\Re(\overline{\phi}\D_{1}\phi)+i\Re(\overline{\phi}\D_{2}\phi))=\overline{\phi}\tilde{\D}_{+}\phi,\\{}
[\tilde{\D}_{+}^{\ast},\tilde{\D}_{+}] & =2i[\D_{2},\D_{1}]=2F_{12}=-|\phi|^{2},
\end{align*}
we get 
\begin{align*}
0 & =\tilde{\D}_{+}(i\D_{t}+\tfrac{1}{2}|\phi|^{2})\phi-\tilde{\D}_{+}\tilde{\D}_{+}^{\ast}\tilde{\D}_{+}\phi\\
 & =(i\D_{t}+\tfrac{1}{2}|\phi|^{2})\tilde{\D}_{+}\phi+(|\phi|^{2}-\tilde{\D}_{+}\tilde{\D}_{+}^{\ast})\tilde{\D}_{+}\phi\\
 & =(i\D_{t}+\tfrac{1}{2}|\phi|^{2})\tilde{\D}_{+}\phi-\tilde{\D}_{+}^{\ast}\tilde{\D}_{+}\tilde{\D}_{+}\phi,
\end{align*}
which shows \eqref{eq:NonlinearConjugation}.

We will write \eqref{eq:NonlinearConjugation} under the Coulomb gauge
with equivariance, $\phi(t,x)=u(t,r)e^{im\theta}$. For notational
convenience, let $q=\D_{+}^{(u)}u$ so that $[\tilde{\D}_{+}\phi](t,x)=q(t,r)e^{i(m+1)\theta}$,
where we recall by \eqref{eq:Def-D+} that $\tilde{\D}_{+}\phi$ is
$(m+1)$-equivariant. We need to know how $i\D_{t}+\frac{1}{2}|\phi|^{2}$
and $\tilde{\D}_{+}^{\ast}\tilde{\D}_{+}$ acts on $(m+1)$-equivariant
functions. First, the potential part of $i\D_{t}+\frac{1}{2}|\phi|^{2}=i\partial_{t}-A_{0}+\frac{1}{2}|\phi|^{2}$
is spherically symmetric and has the formula 
\[
[-A_{0}+\tfrac{1}{2}|\phi|^{2}](x)=-\int_{|x|}^{\infty}\Re(\overline{u}\D_{+}^{(u)}u)dr.
\]
In particular, $i\D_{t}+\frac{1}{2}|\phi|^{2}$ is merely a small
perturbation of $i\partial_{t}$, provided that $\D_{+}^{(u)}u$ is
small as observed above. We thus have 
\[
(i\D_{t}+\tfrac{1}{2}|\phi|^{2})\tilde{\D}_{+}\phi=\Big[i\partial_{t}q-\Big(\int_{r}^{\infty}\Re(\overline{u}q)dr'\Big)q\Big]e^{i(m+1)\theta}.
\]
Next, if we define $A_{u}$ by the radial part of $\tilde{\mathbf{D}}_{+}=\tilde{\D}_{+}^{(\phi)}$
acting on $(m+1)$-equivariant functions,\footnote{Note that $\tilde{\D}_{+}$ shifts the equivariance index from $m+1$
to $m+2$. Similarly $\tilde{\D}_{+}^{\ast}$ shifts the equivariance
index from $m+2$ to $m+1$. The following display shows how $\tilde{\D}_{+}$
acts on the radial parts of $m+1$ equivariant functions, which can
be seen as a variant of \eqref{eq:Def-D+}.} i.e., 
\begin{align*}
\tilde{\D}_{+}^{(\phi)}(f(r)e^{i(m+1)\theta}) & =[A_{u}f](r)e^{i(m+2)\theta},\\
A_{u}\coloneqq\partial_{r} & -\tfrac{m+1+A_{\theta}[u]}{r},
\end{align*}
then we have 
\[
\tilde{\D}_{+}^{\ast}\tilde{\D}_{+}\tilde{\D}_{+}\phi=\tilde{\D}_{+}^{\ast}\tilde{\D}_{+}(q(r)e^{i(m+1)\theta})=[A_{u}^{\ast}A_{u}q]e^{i(m+1)\theta}.
\]
Combining altogether, \eqref{eq:NonlinearConjugation} reads 
\begin{equation}
i\partial_{t}q-\Big(\int_{r}^{\infty}\Re(\overline{u}q)dr\Big)q-A_{u}^{\ast}A_{u}q=0.\label{eq:conj-idty-temp2}
\end{equation}

Now we consider the linearization 
\begin{align*}
u & =Q+\epsilon,\\
q & =L_{Q}\epsilon+O(\epsilon^{2}),\\
A_{u} & =A_{Q}+O(\epsilon).
\end{align*}
Substituting these into \eqref{eq:conj-idty-temp2} yields 
\[
(i\partial_{t}-A_{Q}^{\ast}A_{Q})L_{Q}\epsilon=O(\epsilon^{2}).
\]
Comparing this with \eqref{eq:LinEq-e1} explains why the conjugation
identity \eqref{eq:ConjugationIdentity} should hold.
\end{proof}

\subsection{\label{subsec:Adapted-function-spaces}Adapted function spaces}

So far, our discussion was purely algebraic. In this subsection, we
investigate the function spaces adapted to the aforementioned operators
like $L_{Q}$, $A_{Q}$, and $A_{Q}^{\ast}$. Our function spaces
will be defined for equivariant functions on $\R^{2}$, but by an
abuse of notations, we will identify equivariant functions (defined
on $\R^{2}$) with their radial parts (defined on $(0,\infty)$).
Strictly speaking, $L_{Q}$, $A_{Q}$, and $A_{Q}^{\ast}$ act on
radial parts. With this abuse of notations, we will regard $\D_{+}^{(u)}$
sending $m$-equivariant functions to $m+1$-equivariant functions
by \eqref{eq:Def-D+}. Similarly, $L_{Q}$, $A_{Q}$, $A_{Q}^{\ast}$
send $m$, $m+1$, $m+2$-equivariant functions to $m+1$, $m+2$,
$m+1$-equivariant functions, respectively. (c.f. \eqref{eq:Def-AQ})

The operators will be shown to be well-defined on \emph{adapted function
spaces}. Moreover, we study the (sub-)coercivity estimates, which
allow us to estimate $\epsilon$ from its adapted derivatives $\epsilon_{1}=L_{Q}\epsilon$,
$\epsilon_{2}=A_{Q}\epsilon_{1}$, or $\epsilon_{3}=A_{Q}^{\ast}\epsilon_{2}$
and motivate the choice of our function spaces. Of course, because
$A_{Q}$ and $L_{Q}$ have nontrivial kernels, coercivity estimates
are obtained after imposing suitable orthogonality conditions. Note
that we need to work at least in the $\dot{H}^{3}$-level, as explained
in Section \ref{subsec:strategy}. The choices of these adapted derivatives
are motivated in Section \ref{subsec:Conjugation-identities}.

Moreover, we will need to take into account the fact that $\epsilon$,
$\epsilon_{1}=L_{Q}\epsilon$, $\epsilon_{2}=A_{Q}\epsilon_{1}$,
and $\epsilon_{3}=A_{Q}^{\ast}\epsilon_{2}$ correspond to the radial
parts of $m$, $m+1$, $m+2$, and $m+1$ equivariant functions on
$\R^{2}$. Thus we need to develop our function spaces with various
regularities and equivariance indices.

\subsubsection*{Equivariant Sobolev spaces}

$ $\\ For $s\geq0$, we denote by $H_{m}^{s}$ the restriction of
the usual Sobolev space $H^{s}(\R^{2})$ on $m$-equivariant functions.
The set of $m$-equivariant Schwartz functions, denoted by $\mathcal{S}_{m}$,
is dense in $H_{m}^{s}$. We use the $H_{m}^{s}$-norms and $\dot{H}_{m}^{s}$-norms
to mean the usual $H^{s}$-norms and $\dot{H}^{s}$-norms, but the
subscript $m$ is used to emphasize that we are applying these norms
to $m$-equivariant functions.

One of the advantages of using equivariant Sobolev spaces is the generalized
Hardy's inequality \cite[Lemma A.7]{KimKwon2019arXiv}: whenever $0\leq k\leq m$,
we have 
\begin{equation}
\||f|_{-k}\|_{L^{2}}\sim\|f\|_{\dot{H}_{m}^{k}},\qquad\forall f\in\mathcal{S}_{m}.\label{eq:GenHardySection2}
\end{equation}
In particular, when $0\leq k\leq m$, we can define the \emph{homogeneous
equivariant Sobolev space} $\dot{H}_{m}^{k}$ by taking the completion
of $\mathcal{S}_{m}$ under the $\dot{H}_{m}^{k}$-norm and have the
embeddings 
\[
\mathcal{S}_{m}\hookrightarrow H_{m}^{k}\hookrightarrow\dot{H}_{m}^{k}\hookrightarrow L_{\mathrm{loc}}^{2}.
\]
Specializing this to $k=1$ and applying the fundamental theorem of
calculus to $\partial_{r}|f|^{2}=2\Re(\overline{f}\partial_{r}f)$,
we have the \emph{Hardy-Sobolev inequality} \cite[Lemma A.6]{KimKwon2019arXiv}:
whenever $m\geq1$, we have 
\begin{equation}
\|r^{-1}f\|_{L^{2}}+\|f\|_{L^{\infty}}\lesssim\|f\|_{\dot{H}_{m}^{1}}.\label{eq:HardySobolevSection2}
\end{equation}
The estimates \eqref{eq:GenHardySection2} and \eqref{eq:HardySobolevSection2}
also hold for negative $m$, requiring either $0\leq k\leq|m|$ or
$|m|\geq1$. Note that in general $H^{1}\hookrightarrow L^{\infty}$
is \emph{false} on $\R^{2}$.

\subsubsection*{Adapted function space at $\dot{H}^{1}$-level}

$ $\\ Here, we investigate the relation $\epsilon_{1}=L_{Q}\epsilon$
such that $\epsilon_{1}$ lies in $L^{2}$. Analyzing at this level
is well-suited for the original energy functional $E[u]$.
\begin{lem}[Boundedness and subcoercivity for $L_{Q}$ on $\dot{H}_{m}^{1}$]
\label{lem:Mapping-L-Section2}For $v\in\dot{H}_{m}^{1}$, we have
\[
\|L_{Q}v\|_{L^{2}}+\|Qv\|_{L^{2}}\sim\|v\|_{\dot{H}_{m}^{1}}.
\]
Moreover, the kernel of $L_{Q}:\dot{H}_{m}^{1}\to L^{2}$ is $\mathrm{span}_{\R}\{\Lambda Q,iQ\}$.
\end{lem}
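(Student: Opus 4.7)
The plan is to establish the upper bound, the lower bound, and the kernel characterization in turn.

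\emph{Upper bound.} Starting from $L_Q v=\partial_r v-\frac{m+A_\theta[Q]}{r}v+QB_Q v$, the gradient term is immediate. Since $A_\theta[Q]$ is bounded, transitioning monotonically from $0$ to $-2(m+1)$, one has $|(m+A_\theta[Q])/r|\lesssim 1/r$, so for $m\geq 1$ the generalized Hardy inequality \eqref{eq:GenHardySection2} gives $\|\tfrac{m+A_\theta[Q]}{r}v\|_{L^2}\lesssim\|v\|_{\dot{H}_m^1}$. For the nonlocal term, a weighted Cauchy--Schwarz on $B_Q v(r)=\frac{1}{r}\int_0^r Q\,\Re(v)\,r'dr'$ yields $|B_Q v(r)|^2\lesssim \int_0^r Q^2|v|^2 r'\,dr'\lesssim \|Qv\|_{L^2}^2$, whence $\|QB_Q v\|_{L^2}\lesssim \|Q\|_{L^2}\|Qv\|_{L^2}$; further, $\|Qv\|_{L^2}\leq\|Q\|_{L^2}\|v\|_{L^\infty}\lesssim\|v\|_{\dot{H}_m^1}$ by the Hardy--Sobolev embedding \eqref{eq:HardySobolevSection2}.

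\emph{Lower bound.} Decompose $L_Q v=\tilde{L}_Q v+QB_Q v$ with the $\mathbb{C}$-linear part $\tilde{L}_Q v:=\partial_r v-\frac{m+A_\theta[Q]}{r}v$. Integration by parts, using $\partial_r A_\theta[Q]=-\tfrac{1}{2}rQ^2$ and the vanishing of $m$-equivariant $\dot{H}^1$ functions at $r=0,\infty$, produces the algebraic identity
\begin{equation*}
\|\tilde{L}_Q v\|_{L^2}^2=\|v\|_{\dot{H}_m^1}^2+\int V_1|v|^2,\qquad V_1:=\frac{A_\theta[Q](2m+A_\theta[Q])}{r^2}-\frac{Q^2}{2}.
\end{equation*}
Inserting the explicit formula $A_\theta[Q]=-2(m+1)r^{2m+2}/(1+r^{2m+2})$ from Remark \ref{rem:ConnectionSMWM} yields $\frac{A_\theta[Q](2m+A_\theta[Q])}{r^2}=-\frac{4(m+1)r^{2m}(m-r^{2m+2})}{(1+r^{2m+2})^2}$, whose negative part (on $\{r^{2m+2}\leq m\}$) is pointwise dominated by $\tfrac{m}{2(m+1)}Q^2\leq\tfrac{1}{2}Q^2$ (using $m-r^{2m+2}\leq m$ and $Q^2=8(m+1)^2r^{2m}/(1+r^{2m+2})^2$). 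Hence $V_1\geq -Q^2$ pointwise, and therefore $\|v\|_{\dot{H}_m^1}^2\leq\|\tilde{L}_Q v\|_{L^2}^2+\|Qv\|_{L^2}^2$. The triangle inequality $\|\tilde{L}_Q v\|_{L^2}\leq\|L_Q v\|_{L^2}+\|QB_Q v\|_{L^2}$ combined with $\|QB_Q v\|_{L^2}\lesssim\|Qv\|_{L^2}$ from the upper bound then closes the estimate.

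\emph{Kernel.} Since $B_Q v$ depends only on $\Re v$, the decomposition $v=v_R+iv_I$ decouples $L_Q v=0$ into the integro-differential equation $\partial_r v_R-\frac{m+A_\theta[Q]}{r}v_R+QB_Q v_R=0$ and the first-order linear ODE $\partial_r v_I=\frac{m+A_\theta[Q]}{r}v_I$. The latter is solved by $v_I\propto Q$ (since $Q$ satisfies this ODE, a restatement of $\D_+Q=0$), and the $\dot{H}_m^1$ constraint forces $v_I=cQ$. For $v_R$, the substitution $\tilde{v}_R:=Q^{-1}v_R$ reduces the equation to $\partial_r\tilde{v}_R+\frac{1}{r}\int_0^r Q^2\tilde{v}_R r'\,dr'=0$, addressed by the same contraction framework as in Lemma \ref{lem:Def-rho}; the resulting one-parameter family in $\dot{H}_m^1$ must be $v_R=c\,\Lambda Q$ (known a priori to lie in $\ker L_Q$ by differentiating the scaling invariance applied to the static solution). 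Combining, $\ker L_Q=\mathrm{span}_{\mathbb{R}}\{\Lambda Q,iQ\}$. The most delicate step is the sharp pointwise bound $V_1\geq -Q^2$, which exploits the precise cancellation in the explicit formula for $A_\theta[Q]$; a weaker bound $V_1\geq -CQ^2$ with $C>1$ would fail to close the subcoercivity and force a detour through a less quantitative compactness argument.
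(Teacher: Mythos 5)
Your proof is correct and follows the same overall strategy as the paper: peel off the nonlocal $QB_Q$ piece as a perturbation controlled by $\|Qv\|_{L^2}$, then obtain the quadratic identity for $\D_+^{(Q)}=\tilde L_Q$ by one integration by parts, and read off the subcoercivity from the resulting potential. Where the paper argues qualitatively (the limits of $m+A_\theta[Q]$ at $0$ and $\infty$ are nonzero, and the potentially negative bump lives where $Q^2$ is bounded below), you derive the pointwise inequality $V_1\geq -Q^2$ directly from the explicit formula for $A_\theta[Q]$; your computation checks out. Your kernel argument also differs: you decouple the real and imaginary parts using that $B_Q$ sees only $\Re v$, solve the first-order ODE for the imaginary part, and reduce the real part to the scalar integral equation for $\tilde v_R=Q^{-1}v_R$, whereas the paper upgrades $f\in\dot H^1_m$ to a smooth function by elliptic bootstrap on $\R^2$ and then invokes \cite[Lemma 3.3]{KimKwon2019arXiv}.

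Two corrections are in order. First, your closing sentence claiming that $V_1\geq -CQ^2$ with $C>1$ ``would fail to close the subcoercivity'' is wrong. The lemma asserts an equivalence with implicit constants; any bound $V_1\geq -CQ^2$, $C>0$, gives $\|v\|_{\dot H^1_m}^2\leq\|\tilde L_Q v\|_{L^2}^2+C\|Qv\|_{L^2}^2$, and the constant is absorbed into the $\sim$. Your sharp value $C=1$ is pleasant but plays no structural role, and the paper's argument makes no attempt to track it. Second, in the kernel step the substitution $\tilde v_R=Q^{-1}v_R$ and the pointwise integro-differential equation presuppose that $v_R$ is smooth and has the correct vanishing order $O(r^m)$ at the origin; a priori $v_R\in\dot H^1_m$ only gives local Hölder continuity. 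You therefore still need the elliptic bootstrap (as the paper does, via the $\R^2$ formulation of $L_Q f=0$ and the ellipticity of $\partial_1+i\partial_2$) before the reduction to $\tilde v_R$ and the uniqueness-by-initial-data argument can be invoked. Once that regularity step is inserted, your ODE argument is a clean, self-contained alternative to citing the earlier paper.
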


The equivalence in Lemma \ref{lem:Mapping-L-Section2} explains why
the space $\dot{H}_{m}^{1}$ is the right function space at $\dot{H}^{1}$-level.
The main tool is the weighted Hardy's inequality adapted to the operator
$L_{Q}$, written in the form of Corollary \ref{cor:WeightedHardy}.
We postpone the proof in Appendix \ref{sec:HardyInequalities}. Here
we explain the heuristics of the subcoercivity estimate $(\gtrsim)$
and how it suggests the right function space.

First, the $QB_{Q}$-contribution of $L_{Q}$ is perturbative. We
focus on the $\D_{+}^{(Q)}$-part of $L_{Q}$. Notice that $\D_{+}^{(Q)}\approx\partial_{r}-\frac{m}{r}$
when $r$ is small and $\D_{+}^{(Q)}\approx\partial_{r}+\frac{m+2}{r}$
when $r$ is large. When $r$ is small, an application of the noncritical
case of Corollary \ref{cor:WeightedHardy} with $\ell=m$ and $k=0$
says that $\|\mathbf{1}_{r\leq1}\D_{+}^{(Q)}f\|_{L^{2}}$ plus a boundary
term at $r\sim1$ has a lower bound $\|\mathbf{1}_{r\leq1}r^{-1}f\|_{L^{2}}$.
Note that the assumption $m\geq1$ is necessary to control $r^{-1}f$
in the small $r$ regime. Similarly, when $r$ is large, Corollary
\ref{cor:WeightedHardy} with $\ell=-m-2$ and $k=0$ says that $\|\mathbf{1}_{r\geq1}\D_{+}^{(Q)}f\|_{L^{2}}$
plus a boundary term at $r\sim1$ has a lower bound $\|\mathbf{1}_{r\geq1}r^{-1}f\|_{L^{2}}$.

The $Qv$ term of Lemma \ref{lem:Mapping-L-Section2} can be safely
deleted after ruling out the kernel elements $\Lambda Q,iQ$ of $L_{Q}$.
\begin{lem}[Coercivity for $L_{Q}$ on $\dot{H}_{m}^{1}$]
\label{lem:Coercivity-L-Section2}Let $\psi_{1}$ and $\psi_{2}$
be elements of $(\dot{H}_{m}^{1})^{\ast}$, which is the dual space
of $\dot{H}_{m}^{1}$. If the $2\times2$ matrix $(a_{ij})$ defined
by $a_{i1}=(\psi_{i},\Lambda Q)_{r}$ and $a_{i2}=(\psi_{i},iQ)_{r}$
has nonzero determinant, then we have a coercivity estimate 
\[
\|v\|_{\dot{H}_{m}^{1}}\gtrsim\|L_{Q}v\|_{L^{2}}\gtrsim_{\psi_{1},\psi_{2}}\|v\|_{\dot{H}_{m}^{1}},\qquad\forall v\in\dot{H}_{m}^{1}\cap\{\psi_{1},\psi_{2}\}^{\perp}.
\]
\end{lem}

We postpone the proof in Appendix \ref{sec:HardyInequalities}.

\subsubsection*{Adapted function space at $\dot{H}^{3}$-level}

$ $\\ We will perform an energy method for $\epsilon_{3}=A_{Q}^{\ast}\epsilon_{2}=A_{Q}^{\ast}A_{Q}L_{Q}\epsilon$.
This is due to a scaling consideration, which is motivated in Section
\ref{subsec:strategy} and also detailed in Section \ref{subsec:energy-identity}.
Henceforth, we look for adapted function space for $\epsilon$ such
that $\epsilon_{3}=A_{Q}^{\ast}A_{Q}L_{Q}\epsilon$ lies in $L^{2}$.

It turns out that the space $\dot{H}_{m}^{3}$ is not the right choice
for $m\in\{1,2\}$. We need to introduce an adapted function space
$\dot{\mathcal{H}}_{m}^{3}$. It turns out that $\dot{\mathcal{H}}_{m}^{3}$
is slightly smaller than $\dot{H}_{m}^{3}$ when $m\in\{1,2\}$. For
a domain $\Omega\subseteq\R^{2}$, we define the (semi-)norm
\begin{multline}
\|f\|_{\dot{\mathcal{H}}_{m}^{3}(\Omega)}\coloneqq\||\partial_{+}f|_{-2}\|_{L^{2}(\Omega)}\\
+\begin{cases}
\||f|_{-3}\|_{L^{2}(\Omega)} & \text{if }m\geq3,\\
\|\partial_{rrr}f\|_{L^{2}(\Omega)}+\|r^{-1}\langle\log_{-}r\rangle^{-1}|f|_{-2}\|_{L^{2}(\Omega)} & \text{if }m=2,\\
\||\partial_{rr}f|_{-1}\|_{L^{2}(\Omega)}+\|r^{-1}\langle r\rangle^{-1}\langle\log_{-}r\rangle^{-1}|f|_{-1}\|_{L^{2}(\Omega)} & \text{if }m=1.
\end{cases}\label{eq:Def-Hdot3-Section2}
\end{multline}
We will use $\Omega\in\{\R^{2},B_{R}(0),B_{2R}(0)\setminus B_{R}(0)\}$
and denote 
\begin{align*}
\|f\|_{\dot{\mathcal{H}}_{m}^{3}} & \coloneqq\|f\|_{\dot{\mathcal{H}}_{m}^{3}(\R^{2})},\\
\|f\|_{\dot{\mathcal{H}}_{m,\leq R}^{3}} & \coloneqq\|f\|_{\dot{\mathcal{H}}_{m}^{3}(B(0;R))},\\
\|f\|_{\dot{\mathcal{H}}_{m,\sim R}^{3}} & \coloneqq\|f\|_{\dot{\mathcal{H}}_{m}^{3}(B(0;2R)\setminus B(0;R))},
\end{align*}
Define the space $\dot{\mathcal{H}}_{m}^{3}$ by taking the completion
of $\mathcal{S}_{m}$ under the $\dot{\mathcal{H}}_{m}^{3}$-norm.
Here, we recall that $\partial_{+}=\partial_{1}+i\partial_{2}$, which
acts on $m$-equivariant functions by $\partial_{r}-\frac{m}{r}$
and shifts the equivariance index from $m$ to $m+1$. Note by \eqref{eq:GenHardySection2}
that $\||\partial_{+}f|_{-2}\|_{L^{2}}$ is equivalent to $\|\partial_{+}f\|_{\dot{H}_{m+1}^{2}}$.
The terms $\partial_{rrr}f$ for $m=2$ and $|\partial_{rr}f|_{-1}$
for $m=1$ are in fact redundant, because $|\partial_{rrr}f|\lesssim|\partial_{+}f|_{-2}$
when $m=2$ and $|\partial_{rr}f|_{-1}\lesssim|\partial_{+}f|_{-2}$
when $m=1$. For convenience in referring, we keep them in the definition.
When $f$ is supported in $r\geq1$, we indeed have 
\[
\|f\|_{\dot{\mathcal{H}}_{m}^{3}}\sim\||f|_{-3}\|_{L^{2}}.
\]

We first compare $\dot{\mathcal{H}}_{m}^{3}$ with $\dot{H}_{m}^{3}$.
As the Laplacian $\Delta$ on $\R^{2}$ admits the decomposition $\Delta=\partial_{-}\partial_{+}$
and $\partial_{+}f$ is now $m+1$-equivariant (with $m+1\geq2$),
one has 
\[
\|f\|_{\dot{H}_{m}^{3}}\sim\|\partial_{+}f\|_{\dot{H}_{m+1}^{2}},\qquad\forall f\in\mathcal{S}_{m}.
\]
Thus the $\dot{\mathcal{H}}_{m}^{3}$-norm is stronger than (or equal
to) the $\dot{H}_{m}^{3}$-norm. When $m\geq3$, these are same, thanks
to \eqref{eq:GenHardySection2}. When $m\in\{1,2\}$, however, the
$\dot{\mathcal{H}}_{m}^{3}$-norm turns out to be strictly stronger
than the $\dot{H}_{m}^{3}$-norm. Despite this fact, we have 
\[
L^{2}\cap\dot{\mathcal{H}}_{m}^{3}=H_{m}^{3}.
\]
See Lemma \ref{lem:Comparison-H3-Hdot3} for details.

We now motivate the definition of $\dot{\mathcal{H}}_{m}^{3}$. We
again use the weighted Hardy's inequality, but we will use both the
noncritical and critical case of Corollary \ref{cor:WeightedHardy}.

Now recall that $\epsilon$, $\epsilon_{1}=L_{Q}\epsilon$, $\epsilon_{2}=A_{Q}\epsilon_{1}$,
and $\epsilon_{3}=A_{Q}^{\ast}\epsilon_{2}$ correspond to $m$, $m+1$,
$m+2$, and $m+1$ equivariant functions. We first consider $\epsilon_{3}=A_{Q}^{\ast}\epsilon_{2}$
with $\epsilon_{3}\in L^{2}$. Thanks to the positivity of $A_{Q}A_{Q}^{\ast}$,
we have 
\begin{equation}
\|A_{Q}^{\ast}v\|_{L^{2}}\sim\|v\|_{\dot{H}_{m+2}^{1}}.\label{eq:non-M-dep-Hardy}
\end{equation}
Thus the natural function space for $\epsilon_{2}$ is $\dot{H}_{m+2}^{1}$.
Next, we consider $\epsilon_{2}=A_{Q}\epsilon_{1}$ with $\epsilon_{2}\in\dot{H}_{m+2}^{1}$.
We will see that the natural function space for $\epsilon_{1}$ is
$\dot{H}_{m+1}^{2}$. Indeed, as $m+1\geq2$, we can apply \eqref{eq:GenHardySection2}
to see that $A_{Q}:\dot{H}_{m+1}^{2}\to\dot{H}_{m+2}^{1}$ bounded.
On the other hand, $A_{Q}\approx\partial_{r}-\frac{m+1}{r}$ for small
$r$ and $A_{Q}\approx\partial_{r}+\frac{m+1}{r}$ for large $r$,
by a similar application of the noncritical case of Corollary \ref{cor:WeightedHardy}
with $\ell=\pm(m+1)$ and $k=1$ says that $\|r^{-1}A_{Q}f\|_{L^{2}}$
plus a boundary term localized at $r\sim1$ has a lower bound $\|r^{-2}f\|_{L^{2}}$.
Here, the assumption $m\geq1$ is necessary to control $r^{-2}f$
in the small $r$ regime.

Finally, we consider $\epsilon_{1}=L_{Q}\epsilon$ with $\epsilon_{1}\in\dot{H}_{m+1}^{2}$.
Treating $QB_{Q}$ term perturbatively, it suffices to study the subcoercivity
property of $\|\D_{+}^{(Q)}v\|_{\dot{H}_{m+1}^{2}}$. Recall that
$\D_{+}^{(Q)}\approx\partial_{r}-\frac{m}{r}$ for small $r$ and
$\D_{+}^{(Q)}\approx\partial_{r}+\frac{m+2}{r}$ for large $r$. When
$r$ is large, $\|\D_{+}^{(Q)}v\|_{\dot{H}_{m+1}^{2}}$ plus a boundary
term at $r\sim1$ can control $\|\mathbf{1}_{r\geq1}r^{-3}v\|_{L^{2}}$
by Corollary \ref{cor:WeightedHardy} with $\ell=-m-2$ and $k=2$,
for all $m\geq1$. In view of 
\[
|(\partial_{r}+\tfrac{m+2}{r})v|_{-2}+r^{-3}|v|\sim|v|_{-3}\gtrsim|\partial_{+}v|_{-2},
\]
we conclude that $\|\D_{+}^{(Q)}v\|_{\dot{H}_{m+1}^{2}}$ plus a perturbative
term at $r\sim1$ can control the $r\geq1$ portion of \eqref{eq:Def-Hdot3-Section2}.

However, when $r$ is small, the situation is delicate and we should
introduce logarithmic terms in \eqref{eq:Def-Hdot3-Section2} when
$m\in\{1,2\}$. The main source of these logarithmic terms is the
\emph{logarithmic Hardy's inequality}, the critical case of Corollary
\ref{cor:WeightedHardy}. Note that the lower bound $\|\partial_{+}f\|_{\dot{H}_{m+1}^{2}}$
in \eqref{eq:Def-Hdot3-Section2} comes from $\D_{+}^{(Q)}\approx\partial_{+}=\partial_{r}-\frac{m}{r}$
for small $r$. We also saw before that the special terms $\partial_{rrr}f$
(when $m=2$) and $|\partial_{rr}f|_{-1}$ (when $m=1$) can be absorbed
into $|\partial_{+}f|_{-2}$. Henceforth, we focus on deriving the
Hardy terms of \eqref{eq:Def-Hdot3-Section2}:
\begin{align*}
\|\mathbf{1}_{r\leq1}r^{-3}f\|_{L^{2}} & \qquad\text{if }m\geq3,\\
\|\mathbf{1}_{r\leq1}r^{-3}\langle\log r\rangle^{-1}f\|_{L^{2}} & \qquad\text{if }m=2,\\
\|\mathbf{1}_{r\leq1}r^{-2}\langle\log r\rangle^{-1}f\|_{L^{2}} & \qquad\text{if }m=1.
\end{align*}
When $m\geq3$, we simply apply the noncritical case of Corollary
\ref{cor:WeightedHardy} with $\ell=m$ and $k=2$. When $m=2$, the
critical case of Corollary \ref{cor:WeightedHardy} with $\ell=k=2$
applies and yields a logarithmic loss. When $m=1$, $\ell=1$ and
$k=2$ will belong to the noncritical case, but $\ell<k$ \emph{does
not }give the boundary term localized at $r\sim1$. Instead, we get
a weaker control by applying $\ell=1$ and $k=1$. This is how we
construct the adapted function space $\dot{\mathcal{H}}_{m}^{3}$,
which gives a subcoercivity estimate for $A_{Q}^{\ast}A_{Q}L_{Q}$.

The precise version of the subcoercivity estimate of $A_{Q}^{\ast}A_{Q}L_{Q}$
is as follows.
\begin{lem}[Boundedness and subcoercivity for $A_{Q}^{\ast}A_{Q}L_{Q}$ on $\dot{\mathcal{H}}_{m}^{3}$]
\label{lem:Mapping-AAL-Section2}For $v\in\dot{\mathcal{H}}_{m}^{3}$,
we have 
\[
\|A_{Q}^{\ast}A_{Q}L_{Q}v\|_{L^{2}}+\|Q|v|_{-2}\|_{L^{2}}\sim\|v\|_{\dot{\mathcal{H}}_{m}^{3}}.
\]
Moreover, the kernel of $A_{Q}^{\ast}A_{Q}L_{Q}:\dot{\mathcal{H}}_{m}^{3}\to L^{2}$
is $\mathrm{span}_{\R}\{\Lambda Q,iQ,\rho,ir^{2}Q\}$.
\end{lem}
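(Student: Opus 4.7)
The plan is to prove the norm equivalence by decomposing $L_{Q}=\D_{+}^{(Q)}+QB_{Q}$ and writing $A_{Q}=\partial_{+}-(\tfrac{m+1+A_{\theta}[Q]}{r}-\tfrac{m+1}{r})$ modulo a smooth rapidly decaying potential, so that up to perturbative terms absorbed into $\|Q|v|_{-2}\|_{L^{2}}$, the composition $A_{Q}^{\ast}A_{Q}L_{Q}$ reduces to a chain of three first-order operators whose mapping properties are governed by Corollary \ref{cor:WeightedHardy}. The kernel characterization will follow from the strict positivity of $A_{Q}A_{Q}^{\ast}$ together with the explicit kernel $\mathrm{span}_{\R}\{rQ,irQ\}$ of $A_{Q}$ and explicit preimages of $rQ,irQ$ under $L_{Q}$.

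For the upper bound I would chain three operator estimates. First, from $A_{Q}A_{Q}^{\ast}=-\partial_{rr}-r^{-1}\partial_{r}+\tilde{V}/r^{2}$ with $\tilde{V}\gtrsim1$, pairing yields $\|A_{Q}^{\ast}w\|_{L^{2}}\sim\|w\|_{\dot{H}_{m+2}^{1}}$, which is \eqref{eq:non-M-dep-Hardy}. Second, $A_{Q}:\dot{H}_{m+1}^{2}\to\dot{H}_{m+2}^{1}$ is bounded since the potential $\tfrac{m+1+A_{\theta}[Q]}{r}$ differs from $\tfrac{m+1}{r}$ by a smooth Schwartz-class function, reducing to \eqref{eq:GenHardySection2} for $m+1\geq2$. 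Third, $L_{Q}:\dot{\mathcal{H}}_{m}^{3}\to\dot{H}_{m+1}^{2}$ is bounded: the $\D_{+}^{(Q)}$ part is controlled by the $\||\partial_{+}v|_{-2}\|_{L^{2}}$ term in \eqref{eq:Def-Hdot3-Section2}, while $QB_{Q}v$ is absorbed into $\|Q|v|_{-2}\|_{L^{2}}$ since $B_{Q}$ is a bounded smoothing operator and the weight $Q$ is Schwartz.

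For the subcoercivity I would invert each factor in turn. Set $\epsilon_{1}:=L_{Q}v$, $\epsilon_{2}:=A_{Q}\epsilon_{1}$, $\epsilon_{3}:=A_{Q}^{\ast}\epsilon_{2}$. Strict positivity of $A_{Q}A_{Q}^{\ast}$ gives $\|\epsilon_{2}\|_{\dot{H}_{m+2}^{1}}\lesssim\|\epsilon_{3}\|_{L^{2}}$. Applying Corollary \ref{cor:WeightedHardy} with $\ell=\pm(m+1)$, $k=1$ separately in $r\leq1$ and $r\geq1$ then recovers $\|\epsilon_{1}\|_{\dot{H}_{m+1}^{2}}$ up to a boundary contribution at $r\sim1$ that is absorbed into $\|Q|v|_{-2}\|_{L^{2}}$ by nondegeneracy of $Q$ on that annulus. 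The delicate step is the final inversion of $L_{Q}$ on $v$: after discarding $QB_{Q}v$ as error, apply Corollary \ref{cor:WeightedHardy} to $\D_{+}^{(Q)}$ in two regions. For $r\geq1$, $\D_{+}^{(Q)}\approx\partial_{r}+\tfrac{m+2}{r}$ and the noncritical case $\ell=-(m+2)$, $k=2$ delivers $\|\mathbf{1}_{r\geq1}|v|_{-3}\|_{L^{2}}$ for every $m\geq1$. For $r\leq1$, $\D_{+}^{(Q)}\approx\partial_{+}$: when $m\geq3$ the noncritical case $\ell=m$, $k=2$ recovers $\|\mathbf{1}_{r\leq1}|v|_{-3}\|_{L^{2}}$; when $m=2$ we sit on the critical case $\ell=k=2$, which forces the $\langle\log_{-}r\rangle^{-1}$ weight in \eqref{eq:Def-Hdot3-Section2}; when $m=1$, the $k=2$ inequality fails altogether and one retreats to $k=1$, which explains both the $\langle r\rangle^{-1}$ factor and the appearance of $|v|_{-1}$ in place of $|v|_{-2}$.

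The forward kernel inclusion is by direct computation. $L_{Q}\Lambda Q=L_{Q}(iQ)=0$ by Lemma \ref{lem:Mapping-L-Section2}. From Lemma \ref{lem:Def-rho}, $L_{Q}\rho=\tfrac{1}{2(m+1)}rQ$, and the identity $A_{Q}(rQ)=Q+rQ'-(m+1+A_{\theta}[Q])Q=0$ follows from $\D_{+}^{(Q)}Q=0$. For $ir^{2}Q$, the same cancellation gives $\D_{+}^{(Q)}(ir^{2}Q)=2irQ$, while $B_{Q}(ir^{2}Q)=0$ since the integrand $\Re(ir'^{2}Q^{2})$ vanishes; combined with $[A_{Q},i]=0$ and $A_{Q}(rQ)=0$, this yields $A_{Q}L_{Q}(ir^{2}Q)=0$. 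For the reverse inclusion, if $A_{Q}^{\ast}A_{Q}L_{Q}v=0$, pairing against $A_{Q}L_{Q}v$ and using positivity of $A_{Q}A_{Q}^{\ast}$ forces $A_{Q}L_{Q}v=0$, so $L_{Q}v\in\ker A_{Q}=\mathrm{span}_{\R}\{rQ,irQ\}$. Using the explicit preimages $2(m+1)\rho$ and $\tfrac{i}{2}r^{2}Q$ we write $v=c_{1}\rho+\tfrac{c_{2}i}{2}r^{2}Q+w$ with $L_{Q}w=0$, and Lemma \ref{lem:Mapping-L-Section2} identifies $w\in\mathrm{span}_{\R}\{\Lambda Q,iQ\}$. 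The main obstacle of the whole argument is the innermost $\D_{+}^{(Q)}$ inversion in the low-equivariance cases $m\in\{1,2\}$, where the critical and sub-noncritical Hardy inequalities force the tailored definition of $\dot{\mathcal{H}}_{m}^{3}$ and require careful absorption of all the $r\sim1$ boundary contributions into $\|Q|v|_{-2}\|_{L^{2}}$.
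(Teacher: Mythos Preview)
Your proposal is correct and follows essentially the same route as the paper: factor $A_{Q}^{\ast}A_{Q}L_{Q}$ into three pieces, use the strict positivity $\|A_{Q}^{\ast}w\|_{L^{2}}\sim\|w\|_{\dot{H}_{m+2}^{1}}$ coming from $A_{Q}A_{Q}^{\ast}=-\Delta+\tilde V/r^{2}$ with $\tilde V\gtrsim1$, then apply the weighted Hardy inequalities of Corollary~\ref{cor:WeightedHardy} in the regions $r\leq1$ and $r\geq1$ separately to invert $A_{Q}$ and $L_{Q}$, with the critical/subcritical cases $m\in\{1,2\}$ forcing the logarithmic and $\langle r\rangle^{-1}$ weights in \eqref{eq:Def-Hdot3-Section2}. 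One small point to tighten: when you invoke Lemma~\ref{lem:Mapping-L-Section2} for the kernel of $L_{Q}$ at the end, that lemma is stated on $\dot{H}_{m}^{1}$, not $\dot{\mathcal{H}}_{m}^{3}$; the paper bridges this via the local embedding $\dot{\mathcal{H}}_{m}^{3}\hookrightarrow\dot{H}_{m,\mathrm{loc}}^{1}$ together with a cutoff argument exploiting that the nonlocal part $QB_{Q}$ of $L_{Q}$ involves only $\int_{0}^{r}$, so smoothness and the kernel identification transfer.
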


Ruling out the elements $\Lambda Q,iQ,\rho,ir^{2}Q$ of the kernel,
the coercivity estimate follows.
\begin{lem}[Coercivity for $A_{Q}^{\ast}A_{Q}L_{Q}$ on $\dot{\mathcal{H}}_{m}^{3}$]
\label{lem:Coercivity-AAL-Section2}Let $\psi_{1},\psi_{2},\psi_{3},\psi_{4}$
be elements of $(\dot{\mathcal{H}}_{m}^{3})^{\ast}$, which is the
dual space of $\dot{\mathcal{H}}_{m}^{3}$. If the $4\times4$ matrix
$(a_{ij})$ defined by $a_{i1}=(\psi_{i},\Lambda Q)_{r}$, $a_{i2}=(\psi_{i},iQ)_{r}$,
$a_{i3}=(\psi_{i},ir^{2}Q)_{r}$, and $a_{i4}=(\psi_{i},\rho)_{r}$
has nonzero determinant, then we have a coercivity estimate 
\[
\|v\|_{\dot{\mathcal{H}}_{m}^{3}}\gtrsim\|A_{Q}^{\ast}A_{Q}L_{Q}v\|_{L^{2}}\gtrsim_{\psi_{1},\psi_{2},\psi_{3},\psi_{4}}\|v\|_{\dot{\mathcal{H}}_{m}^{3}},\qquad\forall v\in\dot{\mathcal{H}}_{m}^{3}\cap\{\psi_{1},\psi_{2},\psi_{3},\psi_{4}\}^{\perp}.
\]
\end{lem}

We postpone the proofs of the above lemmas to Appendix \ref{sec:HardyInequalities}.

\subsubsection*{Adapted Function Spaces at $\dot{H}^{5}$ when $m\protect\geq3$}

From now on, we assume $m\geq3$ until the end of this section.

For the difference estimate of trapped solutions, we will need not
only the $H_{m}^{3}$-controls, but also the $\dot{H}^{5}$-controls.
More precisely, to control the difference at the $\dot{H}^{3}$-level,
we will need a priori bounds of trapped solutions at the $\dot{H}^{5}$-level.
For this purpose, we will perform the construction analysis and develop
the adapted function spaces at the $\dot{H}^{5}$-level.

To construct the adapted function space at the $\dot{H}^{5}$-level,
say $\dot{\mathcal{H}}_{m}^{5}$, we need to prove subcoercivity estimates
for $A_{Q}^{\ast}A_{Q}A_{Q}^{\ast}A_{Q}L_{Q}:\dot{\mathcal{H}}_{m}^{5}\to L^{2}$.
We follow a similar analysis as in $\dot{\mathcal{H}}_{m}^{3}$. However,
we restrict to $m\geq3$. See Remark \ref{rem:Positivity-AAA-small-m}.

For a domain $\Omega\subseteq\R^{2}$, we define the (semi-)norm 
\begin{multline}
\|f\|_{\dot{\mathcal{H}}_{m}^{5}(\Omega)}\coloneqq\||\partial_{+}f|_{-4}\|_{L^{2}(\Omega)}\\
+\begin{cases}
\||f|_{-5}\|_{L^{2}(\Omega)} & \text{if }m\geq5,\\
\|\partial_{rrrrr}f\|_{L^{2}(\Omega)}+\|r^{-1}\langle\log_{-}r\rangle^{-1}|f|_{-4}\|_{L^{2}(\Omega)} & \text{if }m=4,\\
\||\partial_{rrrr}f|_{-1}\|_{L^{2}(\Omega)}+\|r^{-1}\langle r\rangle^{-1}\langle\log_{-}r\rangle^{-1}|f|_{-3}\|_{L^{2}(\Omega)} & \text{if }m=3.
\end{cases}\label{eq:Def-Hdot5-Section2}
\end{multline}
We define $\|f\|_{\dot{\mathcal{H}}_{m}^{5}}$, $\|f\|_{\dot{\mathcal{H}}_{m,\leq R}^{5}}$,
and $\|f\|_{\dot{\mathcal{H}}_{m,\sim R}^{5}}$ as in $\dot{\mathcal{H}}_{m}^{3}$.
Define the space $\dot{\mathcal{H}}_{m}^{5}$ by taking the completion
of $\mathcal{S}_{m}$ under the $\dot{\mathcal{H}}_{m}^{5}$-norm.

As like \eqref{eq:non-M-dep-Hardy}, we will have boundedness and
positivity of $A_{Q}^{\ast}A_{Q}A_{Q}^{\ast}:\dot{H}_{m+2}^{3}\to L^{2}$
when $m\geq3$: 
\begin{equation}
\|A_{Q}^{\ast}A_{Q}A_{Q}^{\ast}v\|_{L^{2}}\sim\|v\|_{\dot{H}_{m+2}^{3}}.\label{eq:positivity-AAA}
\end{equation}
Thanks to \eqref{eq:positivity-AAA}, the construction of $\dot{\mathcal{H}}_{m}^{5}$
and (sub-)coercivity estimates for $A_{Q}^{\ast}A_{Q}A_{Q}^{\ast}A_{Q}L_{Q}$
is very similar to those in the $\dot{\mathcal{H}}_{m}^{3}$ case.
When $m\in\{1,2\}$, it turns out that \eqref{eq:positivity-AAA}
becomes \emph{false}. This not only weakens the subcoercivity estimate,
but also seems to prevent modulation analysis with current profiles
and four modulation parameters. This is the main reason for the restriction
$m\geq3$.
\begin{rem}[Discussion when $m\in\{1,2\}$]
\label{rem:Positivity-AAA-small-m}When $m\in\{1,2\}$, $\|A_{Q}^{\ast}A_{Q}A_{Q}^{\ast}v\|_{L^{2}}\lesssim\|v\|_{\dot{H}_{m+2}^{3}}$
is still true, but the $(\gtrsim)$-direction becomes \emph{false}.
To see why, let us note that $A_{Q}^{\ast}A_{Q}A_{Q}^{\ast}h=0$ for
\[
h(r)=\frac{1}{r^{2}Q}\int_{0}^{r}(r')^{3}Q^{2}dr'\sim\begin{cases}
r^{m+2} & \text{if }r\leq1,\\
r^{m} & \text{if }r\geq1.
\end{cases}
\]

When $m=1$, we have $h\in\dot{H}_{3}^{3}$ due to its slow growth
$r$. In other words, $A_{Q}^{\ast}A_{Q}A_{Q}^{\ast}:\dot{H}_{3}^{3}\to L^{2}$
has the kernel $\mathrm{span}_{\C}\{h\}$. One can have \eqref{eq:Positivity-AAA}
only after ruling out this kernel.

When $m=2$, $h$ does not belong to $\dot{H}_{4}^{3}$ due to the
growth $r^{2}$. However, we \emph{cannot} expect that $\|A_{Q}^{\ast}A_{Q}A_{Q}^{\ast}v\|_{L^{2}}$
controls $\|\frac{1}{r^{3}}v\|_{L^{2}}$ near the infinity, due to
the example $v(x)=(1-\chi)(x_{1}+ix_{2})^{2}$. In other words, $\dot{H}_{4}^{3}$
is not the right choice for the subcoercivity estimates of $A_{Q}^{\ast}A_{Q}A_{Q}^{\ast}$.
To remedy this, we need to introduce a new adapted function space
by \emph{logarithmically weakening} the $\dot{H}_{4}^{3}$-norm (or,
enlarge the function space) near the infinity to have subcoercivity
estimates of $A_{Q}^{\ast}A_{Q}A_{Q}^{\ast}$. Then the problem is
that $h$ then belongs to this enlarged space, saying that $A_{Q}^{\ast}A_{Q}A_{Q}^{\ast}$
has the kernel $\mathrm{span}_{\C}\{h\}$.

Overall, $A_{Q}^{\ast}A_{Q}A_{Q}^{\ast}A_{Q}L_{Q}$ has the kernel
of six (real) dimensions. Thus four orthogonality conditions are not
sufficient to have coercivity of $A_{Q}^{\ast}A_{Q}A_{Q}^{\ast}A_{Q}L_{Q}$.
In the modulation analysis, this requires us to add two more modulation
parameters and change profiles.
\end{rem}

Thanks to \eqref{eq:positivity-AAA}, we obtain the following subcoercivity
estimate of $A_{Q}^{\ast}A_{Q}A_{Q}^{\ast}A_{Q}L_{Q}:\dot{\mathcal{H}}_{m}^{5}\to L^{2}$
(with the kernel of four dimensions).
\begin{lem}[Boundedness and subcoercivity for $A_{Q}^{\ast}A_{Q}A_{Q}^{\ast}A_{Q}L_{Q}$
on $\dot{\mathcal{H}}_{m}^{5}$]
\label{lem:Mapping-AAAAL-Section2}For $v\in\dot{\mathcal{H}}_{m}^{5}$,
we have 
\[
\|A_{Q}^{\ast}A_{Q}A_{Q}^{\ast}A_{Q}L_{Q}v\|_{L^{2}}+\|Q|v|_{-4}\|_{L^{2}}\sim\|v\|_{\dot{\mathcal{H}}_{m}^{5}}.
\]
Moreover, the kernel of $A_{Q}^{\ast}A_{Q}A_{Q}^{\ast}A_{Q}L_{Q}:\dot{\mathcal{H}}_{m}^{5}\to L^{2}$
is $\mathrm{span}_{\R}\{\Lambda Q,iQ,\rho,ir^{2}Q\}$.
\end{lem}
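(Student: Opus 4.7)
This lemma is the $\dot H^{5}$-analogue of Lemma \ref{lem:Mapping-AAL-Section2}, and our plan is to run the same peeling scheme, using the new positivity \eqref{eq:positivity-AAA} (which requires $m\geq 3$) in place of \eqref{eq:non-M-dep-Hardy} at the outermost step. The upper bound $\|A_{Q}^{\ast}A_{Q}A_{Q}^{\ast}A_{Q}L_{Q}v\|_{L^{2}}+\|Q|v|_{-4}\|_{L^{2}}\lesssim\|v\|_{\dot{\mathcal{H}}_{m}^{5}}$ follows by a direct operator estimate: $A_{Q}$, $A_{Q}^{\ast}$, $L_{Q}$ are first-order differential operators whose potential coefficients are smooth and decay faster than $Q$, so after expanding $B_{Q}$ the bound reduces to the generalized Hardy inequality \eqref{eq:GenHardySection2} and to the weighted Hardy inequality (Corollary \ref{cor:WeightedHardy}) applied to the norm \eqref{eq:Def-Hdot5-Section2}. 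Checking that $\{\Lambda Q,iQ,\rho,ir^{2}Q\}$ lies in the kernel is purely algebraic: $L_{Q}(\Lambda Q)=L_{Q}(iQ)=0$ by Lemma \ref{lem:Mapping-L-Section2}; $A_{Q}L_{Q}\rho=\frac{1}{2(m+1)}A_{Q}(rQ)=0$ by Lemma \ref{lem:Def-rho}; and for $ir^{2}Q$, since $B_{Q}(ir^{2}Q)=0$ and the Bogomol'nyi relation gives $rQ'=(m+A_{\theta}[Q])Q$, a direct computation yields $L_{Q}(ir^{2}Q)=2irQ$, hence $A_{Q}L_{Q}(ir^{2}Q)=2iA_{Q}(rQ)=0$ thanks to the $\C$-linearity $[A_{Q},i]=0$.

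For the subcoercivity we peel the composite operator from outside in, absorbing a localized boundary contribution at $r\sim 1$ at each step into $\|Q|v|_{-4}\|_{L^{2}}$. Set $\epsilon_{1}=L_{Q}v$ and $\epsilon_{2}=A_{Q}\epsilon_{1}$. First, applying \eqref{eq:positivity-AAA} to $\epsilon_{2}$ (which is $(m+2)$-equivariant) gives $\|\epsilon_{2}\|_{\dot{H}_{m+2}^{3}}\lesssim\|A_{Q}^{\ast}A_{Q}A_{Q}^{\ast}\epsilon_{2}\|_{L^{2}}=\|A_{Q}^{\ast}A_{Q}A_{Q}^{\ast}A_{Q}L_{Q}v\|_{L^{2}}$. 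Next, a weighted Hardy estimate for $A_{Q}:\dot{H}_{m+1}^{4}\to\dot{H}_{m+2}^{3}$, obtained via Corollary \ref{cor:WeightedHardy} with $\ell=\pm(m+1)$ and $k=3$ as in the $\dot{\mathcal{H}}_{m}^{3}$-case, yields $\|\epsilon_{1}\|_{\dot{H}_{m+1}^{4}}\lesssim\|\epsilon_{2}\|_{\dot{H}_{m+2}^{3}}+\|Q|v|_{-4}\|_{L^{2}}$, with the last term handling the boundary contribution. Finally, the subcoercivity of $L_{Q}$ propagated to the $\dot{H}^{4}$-level (the argument of Lemma \ref{lem:Mapping-L-Section2} iterated), using $\D_{+}^{(Q)}\approx\partial_{r}-m/r$ near $0$ and $\partial_{r}+(m+2)/r$ near infinity, produces $\|v\|_{\dot{\mathcal{H}}_{m}^{5}}\lesssim\|\epsilon_{1}\|_{\dot{H}_{m+1}^{4}}+\|Q|v|_{-4}\|_{L^{2}}$, where the logarithmic weights in \eqref{eq:Def-Hdot5-Section2} for $m\in\{3,4\}$ arise exactly from the critical case $\ell\leq k$ of Corollary \ref{cor:WeightedHardy} applied near the origin.

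For the reverse kernel inclusion, suppose $A_{Q}^{\ast}A_{Q}A_{Q}^{\ast}A_{Q}L_{Q}v=0$; pairing with $\epsilon_{2}$ and applying the positivity \eqref{eq:positivity-AAA} forces $A_{Q}L_{Q}v=0$, so $L_{Q}v\in\ker A_{Q}=\mathrm{span}_{\R}\{rQ,irQ\}$, which is two-real-dimensional because $[A_{Q},i]=0$. Writing $L_{Q}v=a\,rQ+b\,irQ$ and using the identities $L_{Q}(2(m+1)\rho)=rQ$ and $L_{Q}(\tfrac{1}{2}ir^{2}Q)=irQ$ established above, we reduce to $v-2(m+1)a\rho-\tfrac{b}{2}ir^{2}Q\in\ker L_{Q}=\mathrm{span}_{\R}\{\Lambda Q,iQ\}$, giving the claim. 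The main technical difficulty sits in the subcoercivity step: we must carefully track the localized boundary contributions at $r\sim 1$ through three successive conjugations and handle the critical logarithmic Hardy loss for $m\in\{3,4\}$, which is precisely what shapes the norm \eqref{eq:Def-Hdot5-Section2}. The hypothesis $m\geq 3$ is essential throughout, since \eqref{eq:positivity-AAA} fails for $m\in\{1,2\}$ as explained in Remark \ref{rem:Positivity-AAA-small-m}, and its failure would introduce additional kernel directions that cannot be closed with only four orthogonality conditions.
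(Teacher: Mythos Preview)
Your proposal is correct and follows essentially the same route as the paper's proof (Lemma~\ref{lem:subcoer-H5} in the appendix): peel the composite operator from outside in using the positivity \eqref{eq:positivity-AAA} for $A_{Q}^{\ast}A_{Q}A_{Q}^{\ast}$, then the subcoercivity of $A_{Q}:\dot{H}_{m+1}^{4}\to\dot{H}_{m+2}^{3}$ (Lemma~\ref{lem:mapping-AQ-1}) and of $L_{Q}:\dot{\mathcal{H}}_{m}^{5}\to\dot{H}_{m+1}^{4}$ (Lemma~\ref{lem:mapping-LQ-H5}), absorbing the intermediate perturbative term $\|r\langle r\rangle^{-2}Q|L_{Q}v|_{-2}\|_{L^{2}}$ into $\|Q|v|_{-4}\|_{L^{2}}$ exactly as in the $\dot{\mathcal{H}}_{m}^{3}$ case (Lemma~\ref{lem:subcoer-H3}). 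Your kernel argument---forcing $A_{Q}L_{Q}v=0$ via positivity, then $L_{Q}v\in\mathrm{span}_{\C}\{rQ\}$, then subtracting the explicit preimages $\rho$ and $ir^{2}Q$---is also the paper's argument verbatim.
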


Ruling out the elements $\Lambda Q,iQ,\rho,ir^{2}Q$ of the kernel,
the coercivity estimate follows.
\begin{lem}[Coercivity for $A_{Q}^{\ast}A_{Q}A_{Q}^{\ast}A_{Q}L_{Q}$ on $\dot{\mathcal{H}}_{m}^{5}$]
\label{lem:Coercivity-AAAAL-Section2}Let $\psi_{1},\psi_{2},\psi_{3},\psi_{4}$
be elements of $(\dot{\mathcal{H}}_{m}^{5})^{\ast}$, which is the
dual space of $\dot{\mathcal{H}}_{m}^{5}$. If the $4\times4$ matrix
$(a_{ij})$ defined by $a_{i1}=(\psi_{i},\Lambda Q)_{r}$, $a_{i2}=(\psi_{i},iQ)_{r}$,
$a_{i3}=(\psi_{i},ir^{2}Q)_{r}$, and $a_{i4}=(\psi_{i},\rho)_{r}$
has nonzero determinant, then we have a coercivity estimate 
\[
\|v\|_{\dot{\mathcal{H}}_{m}^{5}}\gtrsim\|A_{Q}^{\ast}A_{Q}A_{Q}^{\ast}A_{Q}L_{Q}v\|_{L^{2}}\gtrsim_{\psi_{1},\psi_{2},\psi_{3},\psi_{4}}\|v\|_{\dot{\mathcal{H}}_{m}^{5}},\qquad\forall v\in\dot{\mathcal{H}}_{m}^{5}\cap\{\psi_{1},\psi_{2},\psi_{3},\psi_{4}\}^{\perp}.
\]
\end{lem}

For details, see Appendix \ref{sec:HardyInequalities}.

\section{\label{sec:ModifiedProfile}Modified profile}

Our main goal is to study the pseudoconformal blow-up using the modulation
analysis. Namely, we decompose a blow-up solution into modulated blow-up
profile $P$ and the error $\epsilon$: 
\[
u(t,r)=\frac{e^{i\gamma(t)}}{\lambda(t)}[P(\cdot;b(t),\eta(t))+\epsilon(t,\cdot)]\Big(\frac{r}{\lambda(t)}\Big).
\]
Here, $P(\cdot;b,\eta)$ is a deformation of the static solution $Q$.
This section aims to construct $P$ and derive evolution equations
of the parameters $\lambda,\gamma,b,\eta$ such that $\frac{e^{i\gamma}}{\lambda}P(\cdot;b,\eta)(\frac{r}{\lambda})$
becomes an approximate solution to \eqref{eq:CSS}. Moreover, we aim
to capture the pseudoconformal blow-up (and its instability) using
the evolution equations of these parameters.

Introduction of $\lambda$ and $\gamma$ renormalizes our solution
$u$ to lie near $Q$. These parameters $\lambda$ and $\gamma$ correspond
to the kernel elements of $i\mathcal{L}_{Q}$, which are $\Lambda Q$
and $iQ$. In other words, the $\lambda$-curve $\frac{1}{\lambda}P(\frac{\cdot}{\lambda})$
has tangent vector $\Lambda Q$ and the $\gamma$-curve $e^{i\gamma}P$
has tangent vector $iQ$.

There are two more elements, $ir^{2}Q$ and $\rho$, in the generalized
null space $N_{g}(i\mathcal{L}_{Q})$ of $i\mathcal{L}_{Q}$. Turning
on these elements triggers the growth in $\Lambda Q$ and $iQ$ directions
in the linearized equation. At the nonlinear level, this changes the
scaling and phase parameters.

The pseudoconformal blow-up occurs by turning on the $ir^{2}Q$ direction.
Due to $i\mathcal{L}_{Q}ir^{2}Q=4\Lambda Q$, it corresponds to the
change in scales. At the nonlinear level, this can be done by introducing
the pseudoconformal phase $e^{-ib\frac{r^{2}}{4}}$, parametrized
by $b$ as in the (NLS) setting \cite{MerleRaphael2003GAFA,MerleRaphaelSzeftel2013AJM}.
To see how the pseudoconformal phase leads to blow-up, we consider
$P=Q_{b}$ with 
\[
Q_{b}(y)\coloneqq Q(y)e^{-ib\frac{y^{2}}{4}}.
\]
Note that $\partial_{b=0}Q_{b}=-i\frac{y^{2}}{4}Q$. Substituting
this $P$ into \eqref{eq:CSS} written in the renormalized variables
$(s,y)$, we get 
\[
(i\partial_{s}-i\frac{\lambda_{s}}{\lambda}\Lambda-\gamma_{s})Q_{b}-L_{Q_{b}}^{\ast}\D_{+}^{(Q_{b})}Q_{b}=0.
\]
From the identity \cite[Lemma 4.2]{KimKwon2019arXiv}
\[
L_{Q_{b}}^{\ast}\D_{+}^{(Q_{b})}Q_{b}=[L_{Q}^{\ast}\D_{+}^{(Q)}Q+ib\Lambda Q+b^{2}\tfrac{r^{2}}{4}Q]e^{-ib\frac{y^{2}}{4}}=ib\Lambda Q_{b}-b^{2}\tfrac{y^{2}}{4}Q_{b},
\]
we are led to 
\[
-i\Big(\frac{\lambda_{s}}{\lambda}+b\Big)\Lambda Q_{b}-\gamma_{s}Q_{b}+(b_{s}+b^{2})\tfrac{y^{2}}{4}Q_{b}=0.
\]
The pseudoconformal blow-up is derived by 
\[
\frac{\lambda_{s}}{\lambda}+b=0,\qquad\gamma_{s}=0,\qquad b_{s}+b^{2}=0.
\]
The explicit pseudoconformal blow-up solution $S(t,r)$ can be expressed
as 
\[
S(t,r)=\frac{e^{i\gamma(t)}}{\lambda(t)}Q_{b(t)}\Big(\frac{r}{\lambda(t)}\Big),
\]
with
\[
\lambda(t)=|t|,\qquad\gamma(t)=0,\qquad b(t)=|t|.
\]

Let us remark that the presence of $b^{2}$ in the $b$-equation is
a consequence of nonlinear algebras, not detected in the linearized
dynamics itself. Without $b^{2}$-term, it would be the self-similar
blow-up. The pseudoconformal blow-up rate is derived from the $b^{2}$-term.

The other element in the generalized null space, $\rho$, leads to
the change in phase parameter. At the linearized level, this is due
to the relation $i\mathcal{L}_{Q}\rho=iQ$. This motivates us to introduce
the parameter $\eta$ for $\rho$ and set 
\[
\gamma_{s}=\eta.
\]
At the nonlinear level, introducing the parameter $\eta$ is responsible
for the \emph{rotational instability} of pseudoconformal blow-up solutions.
This is observed in authors' previous work \cite{KimKwon2019arXiv}.

Indeed, we add one more degree of freedom to $Q_{b}$. We assume $Q_{b}^{(\eta)}=Q^{(\eta)}e^{-ib\frac{y^{2}}{4}}$
is a modified profile satisfying $\partial_{\eta}Q^{(\eta)}=-\rho$.
We again start from the \eqref{eq:CSS} on the renormalized variables
\[
(i\partial_{s}-i\frac{\lambda_{s}}{\lambda}\Lambda-\gamma_{s})Q_{b}^{(\eta)}-L_{Q_{b}^{(\eta)}}^{\ast}\D_{+}^{(Q_{b}^{(\eta)})}Q_{b}^{(\eta)}=0.
\]
From the conjugation properties by the pseudoconformal phase $e^{-ib\frac{r^{2}}{4}}$,
this reduces to 
\[
-i\Big(\frac{\lambda_{s}}{\lambda}+b\Big)\Lambda Q_{b}^{(\eta)}-\gamma_{s}Q_{b}^{(\eta)}+(b_{s}+b^{2})\tfrac{y^{2}}{4}Q_{b}^{(\eta)}-[L_{Q^{(\eta)}}^{\ast}\D_{+}^{(Q^{(\eta)})}Q^{(\eta)}]e^{-ib\frac{r^{2}}{4}}+i\eta_{s}\partial_{\eta}Q^{(\eta)}=0.
\]
As before, we set $\frac{\lambda_{s}}{\lambda}+b=0$. As we assumed
$\partial_{\eta}Q^{(\eta)}=-\rho$, this leads us to set $\gamma_{s}=\eta$.
It is also possible to set $\eta_{s}=0$, by forcing $Q^{(\eta)}(y)$
to be real-valued. Thus we may assume $\eta$ is a nonzero \emph{constant}.
Now we are led to 
\begin{equation}
L_{Q^{(\eta)}}^{\ast}\D_{+}^{(Q^{(\eta)})}Q^{(\eta)}+\eta Q^{(\eta)}-(b_{s}+b^{2})\tfrac{y^{2}}{4}Q^{(\eta)}=0.\label{eq:Qeta-motiv-temp1}
\end{equation}
One of the crucial observations in \cite[Section 4.2]{KimKwon2019arXiv}
is that, in the nonlinear equation \eqref{eq:Qeta-motiv-temp1}, $b_{s}+b^{2}$
has a nontrivial $\eta^{2}$-order term as 
\[
b_{s}+b^{2}=-c\eta^{2},\qquad c=\tfrac{1}{(m+1)^{2}}+o_{\eta\to0}(1).
\]
This was derived by the Pohozaev-type identity, i.e. taking the inner
product of \eqref{eq:Qeta-motiv-temp1} with $\Lambda Q^{(\eta)}$.
Thus our modulation equation becomes 
\[
\frac{\lambda_{s}}{\lambda}+b=0,\quad\gamma_{s}=\eta,\quad b_{s}+b^{2}+c\eta^{2}=0,\quad\eta_{s}=0.
\]

A typical example of the solutions to this system is (when $\eta\neq0$)
\[
\left\{ \begin{aligned}\lambda(t) & =\sqrt{t^{2}+c\eta^{2}},\\
\gamma(t) & =\tfrac{1}{\sqrt{c}}\tan^{-1}(\tfrac{t}{\eta\sqrt{c}}),\\
b(t) & =-t
\end{aligned}
\right.\quad\forall t\in\R
\]
For fixed $\eta\neq0$, it not only says that the phase rotation takes
place by turning on the $\rho$-direction of $N_{g}(i\mathcal{L}_{Q})$,
but the solution \emph{does not} blow up. In fact, it is \emph{global}
and \emph{scattering}. It moreover turns out that the phase rotation
takes place at the fixed amount of angle, which is $\mathrm{sgn}(\eta)(m+1)\pi$.
For $m$-equivariant functions, the phase rotation by $\pm(m+1)\pi$
reflects a spatial rotation by $\pm(\frac{m+1}{m})\pi$. This is the
main mechanism for instability of pseudoconformal blow-up solutions
in \cite{KimKwon2019arXiv}. This is the reason why we can expect
at most the codimension-one blow-up.

Then the remaining question is to construct $Q^{(\eta)}$ of 
\begin{equation}
L_{Q^{(\eta)}}^{\ast}\D_{+}^{(Q^{(\eta)})}Q^{(\eta)}+\eta Q^{(\eta)}+c\eta^{2}\tfrac{y^{2}}{4}Q^{(\eta)}=0\label{eq:Qeta-motiv-temp2}
\end{equation}
such that $Q^{(\eta)}$ deforms from $Q$ with a small parameter $\eta$.
We note that \eqref{eq:Qeta-motiv-temp2} is a \emph{nonlocal} second-order
ODE.
\begin{rem}
\label{rem:TechnicalDifficulties}A standard approach to construct
$Q^{(\eta)}$ solving \eqref{eq:Qeta-motiv-temp2} would be to make
a Taylor expansion $Q^{(\eta)}=Q+\sum_{j}\eta^{j}T_{j}$ with $T_{1}=-\rho$
and solve for $T_{j}$ iteratively, by inverting the linearized operator
$\mathcal{L}_{Q}$. There are several technical difficulties for doing
this. First, in order to invert the operator $\mathcal{L}_{Q}$, we
need to check the \emph{solvability condition}: $\mathcal{L}_{Q}T_{j}=\varphi$
would be solvable when $\varphi$ is orthogonal to the kernel elements
of $\mathcal{L}_{Q}$. To check this solvability condition, one needs
to develop a generalization of the Pohozaev-type identity, for instance
as in \cite{RaphaelRodnianski2012Publ.Math.}. Second, the nonlocal
terms from the gauge potential $A_{0}$ contain $\int_{r}^{\infty}$-integral.
As one usually loses $y^{2}$-decay in the inversion procedure, the
$\int_{r}^{\infty}$-integral may not be defined for higher order
terms. Adding cutoffs in each of the inversion step will complicate
the analysis.
\end{rem}

In \cite[Section 4.3]{KimKwon2019arXiv}, the authors get around the
technical difficulties in Remark \ref{rem:TechnicalDifficulties}
by introducing a simple but remarkable nonlinear ansatz for $Q^{(\eta)}$.
Motivated from the self-duality, or the Bogomol'nyi equation \eqref{eq:Bogomolnyi-Eq},
solving \eqref{eq:Qeta-motiv-temp2} can be reduced to solving a first
order (nonlocal) differential equation. More precisely, if $Q^{(\eta)}$
is a solution to the \emph{modified Bogomol'nyi equation} 
\begin{equation}
\D_{+}^{(Q^{(\eta)})}Q^{(\eta)}=-\eta\tfrac{y}{2}Q^{(\eta)},\label{eq:Qeta-motiv-temp3}
\end{equation}
then it is a solution to 
\begin{gather*}
L_{Q^{(\eta)}}^{\ast}\D_{+}^{(Q^{(\eta)})}Q^{(\eta)}+\eta\theta_{\eta}Q^{(\eta)}+\eta^{2}\tfrac{y^{2}}{4}Q^{(\eta)}=0,\\
\theta_{\eta}=\frac{1}{4\pi}\int|Q^{(\eta)}|^{2}-(m+1)\approx m+1.
\end{gather*}
With this new $Q^{(\eta)}$,\footnote{We naturally redefine $Q^{(\eta)}$ under $\eta\leftrightarrow\eta\theta_{\eta}$.}
the formal parameter ODEs become 
\[
\frac{\lambda_{s}}{\lambda}+b=0,\quad\gamma_{s}-\eta\theta_{\eta}=0,\quad b_{s}+b^{2}+\eta^{2}=0,\quad\eta_{s}=0.
\]
The authors solved the modified Bogomol'nyi equation for $\eta\geq0$
in \cite[Proposition 4.4]{KimKwon2019arXiv}. In the construction
of global decaying $Q^{(\eta)}$, $\eta\ge0$ is necessary. Indeed,
$Q^{(\eta)}\approx e^{-\eta\frac{y^{2}}{4}}Q$ is expected due to
the factor $-\eta\frac{y}{2}Q^{(\eta)}$ of \eqref{eq:Qeta-motiv-temp3}.
When $\eta$ is negative, there is no decaying solution $Q^{(\eta)}$.
Indeed, if $Q^{(\eta)}$ were to decay, then $\frac{1}{y}(m+A_{\theta}[Q^{(\eta)}])$
would be dominated by the factor $-\eta\frac{y}{2}$. This yields
a contradicting fact: exponential growth of $Q^{(\eta)}$ for $y\gtrsim|\eta|^{-\frac{1}{2}}$.

In contrast to the previous work \cite{KimKwon2019arXiv}, in this
paper we will use the parameter $\eta$ as a \emph{modulation parameter}
that can vary in time. Moreover, we need to allow $\eta$ to be either
positive or negative. In particular, we need to construct $Q^{(\eta)}(y)$
even for $\eta<0$. This motivates us to truncate the profile in the
region $y\ll|\eta|^{-\frac{1}{2}}$, which is the regime where the
exponential factor $e^{-\eta\frac{y^{2}}{4}}\lesssim1$.

When we construct pseudoconformal blow-up dynamics, in view of 
\[
\frac{\lambda_{s}}{\lambda}+b=0\quad\text{and}\quad b_{s}+b^{2}+\eta^{2}=0,
\]
we can expect that the pseudoconformal blow-up occurs when $\eta\to0$
and $|\eta|\ll b$, to guarantee $b_{s}+b^{2}\approx0$.

We recall the construction of the modified profile $Q^{(\eta)}$.
This is originally done in \cite[Proposition 4.4]{KimKwon2019arXiv}
with $\eta\geq0$, but here we will \emph{truncate} $Q^{(\eta)}$
for $y\ll|\eta|^{-\frac{1}{2}}$ to allow $\eta<0$.\footnote{In this paper, our $Q^{(\eta)}$ for $\eta\geq0$ is equal to that
in \cite[Proposition 4.4]{KimKwon2019arXiv} on the region $y<\delta\eta^{-\frac{1}{2}}$.} We record basic properties of $Q^{(\eta)}$.
\begin{lem}[Modified profile in the linearization regime]
\label{lem:mod-profile-linear-regime}There exist universal constants
$0<\delta<1$ and $\eta^{\ast}>0$ and unique one-parameter family
$\{Q^{(\eta)}\}_{\eta\in[-\eta^{\ast},\eta^{\ast}]}$ of smooth real-valued
functions on $(0,\delta|\eta|^{-\frac{1}{2}})$ satisfying the following.
\begin{enumerate}
\item (Smoothness on $\R^{2}$) The $m$-equivariant extension $Q^{(\eta)}(x)\coloneqq Q^{(\eta)}(y)e^{im\theta}$,
$x=ye^{i\theta}$, defined on $\{x\in\R^{2}:|x|<\delta|\eta|^{-\frac{1}{2}}\}$
is smooth.
\item (Equation) We have 
\[
\D_{+}^{(Q^{(\eta)})}Q^{(\eta)}=-\eta\tfrac{y}{2}Q^{(\eta)},\qquad\forall y\in(0,\delta|\eta|^{-\frac{1}{2}}).
\]
\item (Uniform bounds) For each $k\in\N$, we have 
\[
|Q^{(\eta)}|_{k}\lesssim_{k}Q.
\]
\item (Differentiability in $\eta$) $Q^{(\eta)}(y)$ for $y\in(0,\delta|\eta|^{-\frac{1}{2}})$
is differentiable with respect to $\eta$. Moreover, the $m$-equivariant
extension of $\partial_{\eta}Q^{(\eta)}$ defined on $\{x\in\R^{2}:|x|<\delta|\eta|^{-\frac{1}{2}}\}$
is smooth. For each $k\in\N$, it also satisfies the pointwise estimates
\begin{align*}
|\partial_{\eta}Q^{(\eta)}+(m+1)\rho|_{k} & \lesssim_{k}|\eta|y^{4}Q,\\
|\partial_{\eta\eta}Q^{(\eta)}|_{k} & \lesssim_{k}y^{4}Q.
\end{align*}
In particular, 
\begin{align*}
|Q^{(\eta)}+\eta(m+1)\rho|_{k} & \lesssim_{k}|\eta|^{2}y^{4}Q,\\
|\partial_{\eta}Q^{(\eta)}|_{k} & \lesssim_{k}y^{2}Q.
\end{align*}
\end{enumerate}
\end{lem}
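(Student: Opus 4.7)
The plan is to reduce the modified Bogomol'nyi equation to an integral equation solvable by a contraction argument, borrowing the construction of \cite{KimKwon2019arXiv} for $\eta\geq 0$ and extending to $\eta<0$ by truncating at $y\ll|\eta|^{-1/2}$. Writing $Q^{(\eta)}=Qg$ with $g(0)=1$ and using $\D_+^{(Q)}Q=0$ together with $A_\theta[Q^{(\eta)}]-A_\theta[Q]=-\tfrac12\int_0^y Q^2(g^2-1)y'dy'$, the equation $\D_+^{(Q^{(\eta)})}Q^{(\eta)}=-\eta\tfrac{y}{2}Q^{(\eta)}$ becomes
\begin{equation*}
\partial_y g\;=\;\frac{g}{2y}\!\int_0^y Q^2(g^2-1)\,y'dy'\;-\;\eta\tfrac{y}{2}\,g,\qquad g(0)=1.
\end{equation*}
On the truncated region $y<\delta|\eta|^{-1/2}$ the forcing term $\eta\tfrac{y}{2}g$ is uniformly small ($|\eta y|\lesssim\delta|\eta|^{1/2}$), so the right-hand side defines a contraction in the Banach space $\{g:|g-1|_k\lesssim_k 1\}$ with weight $y$ making the $\int_0^y Q^2(\cdot)y'dy'$ term small near $y=0$.

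First I would carry out this contraction for each fixed $\eta\in[-\eta^\ast,\eta^\ast]$, using that $\int_0^\infty Q^2 y dy<\infty$ and standard Gronwall-type estimates for the nonlocal integrodifferential equation. Uniqueness then forces $Q^{(0)}=Q$ (up to the trivial normalization $g(0)=1$) by the classification of Bogomol'nyi solutions recalled earlier. To get $|Q^{(\eta)}|_k\lesssim_k Q$, I would differentiate the fixed-point relation $k$ times and close the bootstrap iteratively, using that each $y\partial_y$ applied to the nonlocal integral produces controlled local terms. Smoothness of the $m$-equivariant extension follows as in Lemma \ref{lem:Def-rho}: rewriting the first-order equation on $\R^2$ via $\partial_+=\partial_1+i\partial_2$ and invoking elliptic regularity starting from $Q^{(\eta)}\in\dot H_m^1$.

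For the $\eta$-differentiability, I would formally differentiate the fixed-point equation in $\eta$ to obtain a \emph{linear} nonlocal equation for $\partial_\eta Q^{(\eta)}$ of the form $L_{Q^{(\eta)}}\partial_\eta Q^{(\eta)}=-\tfrac{y}{2}Q^{(\eta)}+(\text{lower order})$, which can again be solved by contraction and identified with the weak derivative. At $\eta=0$ this reduces to $L_Q(\partial_\eta Q^{(\eta)}|_{\eta=0})=-\tfrac{y}{2}Q$, and from Lemma \ref{lem:Def-rho} (which gives $L_Q\rho=\tfrac{1}{2(m+1)}yQ$) together with the real-valuedness and normalization $g(0)=1$ which kills the kernel element $\Lambda Q$, we obtain $\partial_\eta Q^{(\eta)}|_{\eta=0}=-(m+1)\rho$. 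Taylor expansion in $\eta$ then yields $|Q^{(\eta)}+\eta(m+1)\rho|_k\lesssim|\eta|^2 y^4 Q$; the $y^4Q$ weight comes from $|\rho|_k\lesssim y^2 Q$ combined with one further power of $y^2$ from integrating the $-\eta y/2$ forcing twice. The estimates on $\partial_{\eta\eta}Q^{(\eta)}$ follow by differentiating the linear equation again.

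The main obstacle I expect is a bookkeeping one: showing that the contraction closes on the truncated region with constants uniform in $\eta\in[-\eta^\ast,\eta^\ast]$ and, simultaneously, producing the sharp $y$-weighted pointwise bounds $|\partial_\eta Q^{(\eta)}+(m+1)\rho|_k\lesssim_k|\eta|y^4Q$ required downstream. The nonlocal term $\int_0^y Q^2(g^2-1)y'dy'$ does not immediately produce the $y^4 Q$ weight after one integration, so I would extract it by expanding $g^2-1=2\eta\partial_\eta g|_{\eta=0}\cdot\eta+O(\eta^2)$, substituting the explicit profile $-(m+1)\rho/Q$ for $\partial_\eta g|_{\eta=0}$, and using $|\rho|\lesssim y^2Q$ to gain the two missing powers of $y$. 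All higher-$k$ bounds then reduce to repeating this argument after applying $y\partial_y$, which is the same mechanism used in \cite[Proposition 4.4]{KimKwon2019arXiv} and which I would invoke directly on the overlap region $y\geq 1$ while redoing the small-$y$ analysis as above.
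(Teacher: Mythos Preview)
Your reduction to the scalar equation for $g=Q^{(\eta)}/Q$ is natural (modulo a sign: the nonlocal term in your equation for $\partial_y g$ should carry a minus sign), but the contraction does not close in the unweighted ball $\{|g-1|_k\lesssim_k 1\}$ you propose. Linearizing the fixed-point map around $g=1$, the dominant feedback is
\[
h\ \longmapsto\ \int_0^y \frac{1}{y'}\Bigl(\int_0^{y'}Q^2\,h\,y''\,dy''\Bigr)dy',
\]
and since $\int_0^{y'}Q^2 y''dy''\to 2(m+1)$ as $y'\to\infty$, this operator has sup-norm Lipschitz constant $\sim(m+1)\log y$ on $[0,y]$, hence $\sim|\log|\eta||$ on the truncated region. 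This blows up as $\eta\to0$, so the Banach fixed point fails uniformly in $\eta$, and Gronwall only gives $|g-1|\lesssim e^{C\log y}=y^C$, which is useless.

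The paper circumvents this by building the first-order expansion into the ansatz: it sets $Q^{(\eta)}=Q-\eta(m+1)\rho+\eta^2 Qv_1$ and derives an integral equation \eqref{eq:Qeta-def-temp2} for the remainder $v_1$ in the \emph{weighted} space $\{|v_1|\lesssim y^4\}$. In that norm the same linear operator now sees $Q^2(y'')^5$ in the inner integral, which grows only like $\log y'$; the outer integral then gives $(\log y)^2$, and $(\log y)^2/y^4$ is uniformly bounded, so the contraction closes (this is the content of \cite[Proposition~4.4]{KimKwon2019arXiv}). The remaining nonlinear terms carry explicit $\eta$ or $\eta^2$ prefactors and are small on $y<\delta|\eta|^{-1/2}$. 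This yields existence and the sharp bound $|v_1|_k\lesssim_k y^4$ simultaneously, which is exactly $|Q^{(\eta)}-Q+\eta(m+1)\rho|_k\lesssim_k\eta^2y^4Q$. For the $\eta$-derivatives the paper repeats the scheme with $v_2=\partial_\eta v_1$ and $v_3=\partial_\eta v_2$, obtaining $|v_2|_k\lesssim_k y^6$ and $|v_3|_k\lesssim_k y^8$ directly; no post-hoc Taylor expansion is required, and the bookkeeping obstacle you anticipate never arises.
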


\begin{proof}
Most of the assertions are proved in \cite{KimKwon2019arXiv}. The
construction of $Q^{(\eta)}$ (for $\eta\geq0$) is already done in
\cite[Proposition 4.4]{KimKwon2019arXiv}, but in the regime $y\ll|\eta|^{-\frac{1}{2}}$,
the sign of $\eta$ is irrelevant. Here, we focus on deriving the
estimates both on $Q^{(\eta)}$ and $\partial_{\eta}Q^{(\eta)}$.
For this purpose, we further look at $\eta$-variated equation. Nevertheless,
the proof will be very similar to \cite[Proposition 4.4]{KimKwon2019arXiv}.
We only sketch the proof and refer \cite{KimKwon2019arXiv} for details.

Formally differentiating the equation in $\eta$, we get 
\begin{align*}
\D_{+}^{(Q^{(\eta)})}Q^{(\eta)} & =-\eta\tfrac{y}{2}Q^{(\eta)},\\
L_{Q^{(\eta)}}\partial_{\eta}Q^{(\eta)} & =-\tfrac{y}{2}Q^{(\eta)}-\eta\tfrac{y}{2}\partial_{\eta}Q^{(\eta)}.
\end{align*}
Substituting $\eta=0$ suggests that $\partial_{\eta=0}Q^{(\eta)}=-(m+1)\rho$.

Motivated from this, we first introduce the unknown $v_{1}$ such
that 
\begin{equation}
Q^{(\eta)}=Q-\eta(m+1)\rho+\eta^{2}Qv_{1},\label{eq:Qeta-def-temp1}
\end{equation}
and write the system of integral equations for $v_{1},v_{2}$. The
equation for $v_{1}$ is derived in \cite[Proposition 4.4]{KimKwon2019arXiv}
and given as 
\begin{align}
v_{1}(y) & ={\textstyle \int_{0}^{y}}(\tfrac{m+1}{2}\cdot\tfrac{y'\rho}{Q}-(m+1)^{2}\tfrac{\rho}{Q}B_{Q}\rho)dy'+{\textstyle \int_{0}^{y}}B_{Q}(Qv_{1})dy'\label{eq:Qeta-def-temp2}\\
 & \quad+\eta{\textstyle \int_{0}^{y}}(-\tfrac{1}{2}y'+(m+1)B_{Q}\rho)v_{1}dy'+\eta^{2}{\textstyle \int_{0}^{y}}(\tfrac{1}{2}B_{Q}(Qv_{1}))v_{1}dy'.\nonumber 
\end{align}
Following the proof of \cite[Proposition 4.4]{KimKwon2019arXiv},
there exist $\delta>0$, $\eta^{\ast}>0$ such that one can construct
$v_{1}$ for $|\eta|\leq\eta^{\ast}$ in the region $y<\delta|\eta|^{-\frac{1}{2}}$.
Note that the sign of $\eta$ is irrelevant. Moreover, one has the
estimate $|v_{1}|\lesssim y^{4}$ uniformly in $\eta$ (and region
$y<\delta|\eta|^{-\frac{1}{2}}$). Iterating the integral equation
\eqref{eq:Qeta-def-temp2} gives $|v_{1}|_{k}\lesssim_{k}y^{4}$ for
any $k\in\N$.

Next, we want to know differentiability in $\eta$. For this purpose,
we formally differentiate \eqref{eq:Qeta-def-temp1} to have 
\[
\partial_{\eta}Q^{(\eta)}=-(m+1)\rho+2\eta Qv_{1}+\eta^{2}Q\partial_{\eta}v_{1}.
\]
Of course, this makes sense if we knew that $v_{1}$ is $\eta$-differentiable.
To ensure that $Q^{(\eta)}$ is differentiable in $\eta$, we construct
$\partial_{\eta}v_{1}$ similarly as $v_{1}$. Now we set $v_{2}=\partial_{\eta}v_{1}$
as \emph{unknown}, and derive the equation for $v_{2}$. Formally
taking $\partial_{\eta}$ to \eqref{eq:Qeta-def-temp2} yields the
$v_{2}$-equation: 
\begin{align}
v_{2} & ={\textstyle \int_{0}^{y}}B_{Q}(Qv_{2})dy'\label{eq:Qeta-def-temp3}\\
 & +\eta{\textstyle \int_{0}^{y}}(-\tfrac{1}{2}y'+(m+1)B_{Q}\rho)v_{2}dy'+{\textstyle \int_{0}^{y}}(-\tfrac{1}{2}y'+(m+1)B_{Q}\rho)v_{1}dy'\nonumber \\
 & +\eta^{2}{\textstyle \int_{0}^{y}}(\tfrac{1}{2}B_{Q}(Qv_{2}))v_{1}dy'+\eta^{2}{\textstyle \int_{0}^{y}}(\tfrac{1}{2}B_{Q}(Qv_{1}))v_{2}dy'+2\eta{\textstyle \int_{0}^{y}}(\tfrac{1}{2}B_{Q}(Qv_{1}))v_{1}dy'.\nonumber 
\end{align}
This is similar to \eqref{eq:Qeta-def-temp2} with some additional
terms. Possibly shrinking $\delta$ and $\eta^{\ast}$, one can construct
$v_{2}$ for $|\eta|\leq\eta^{\ast}$ in the regime $y<\delta|\eta|^{-\frac{1}{2}}$,
by the mimicking the proof of \cite[Proposition 4.4]{KimKwon2019arXiv}.
Moreover, the pointwise estimates $|v_{2}|_{k}\lesssim_{k}y^{6}$
hold uniformly in $\eta$ and in the region $y<\delta|\eta|^{-\frac{1}{2}}$.

To ensure that $Q^{(\eta)}$ is twice differentiable in $\eta$, we
set $v_{3}=\partial_{\eta}v_{2}$ as unknown, and derive the equation
for $v_{3}$ by formally taking $\partial_{\eta}$ to \eqref{eq:Qeta-def-temp3}.
Similarly proceeding as before, the pointwise estimates $|v_{3}|_{k}\lesssim_{k}y^{8}$
hold uniformly in $\eta$ and in the region $y<\delta|\eta|^{-\frac{1}{2}}$.
We omit the details.

Now define $Q^{(\eta)}=Q-\eta(m+1)\rho+\eta^{2}Qv_{1}$. Due to the
derivation of integral equations, $Q^{(\eta)}$ satisfies $\D_{+}^{(Q^{(\eta)})}Q^{(\eta)}=-\eta\frac{y}{2}Q^{(\eta)}$.
By the definition of $v_{2}$, $Q+\int_{0}^{\eta}[-(m+1)\rho+2\eta'Qv_{1}+\eta'Qv_{2}]d\eta'$
also satisfies the same equation and pointwise estimates on $(0,\delta|\eta|^{-\frac{1}{2}})$,
thus by uniqueness it must be equal to $Q^{(\eta)}$. In particular,
$\partial_{\eta}Q^{(\eta)}=-(m+1)\rho+2\eta Qv_{1}+\eta Qv_{2}$.
From the pointwise bound $|v_{1}|_{k}+|v_{2}|_{k}\lesssim_{k}y^{4}$,
the bounds for $Q^{(\eta)}$ and $\partial_{\eta}Q^{(\eta)}$ follow
from those of $v_{1}$, $v_{2}$, and Lemma \ref{lem:Def-rho} (bounds
for $\rho$). A similar argument using $v_{3}$ shows that twice differentaibility
of $Q^{(\eta)}$ the bounds for $\partial_{\eta\eta}Q^{(\eta)}$ follow.

The smoothness of $Q^{(\eta)}(x)$ and $\partial_{\eta}Q^{(\eta)}(x)$
can be shown by writing the operators $\D_{+}^{(Q^{(\eta)})}$ and
$L_{Q^{(\eta)}}$ on the ambient space $\R^{2}$ and utilizing the
ellipticity $\partial_{1}+i\partial_{2}$; see for instance the proof
of Lemma \ref{lem:mapping-L-Appendix}. This completes the proof.
\end{proof}
Let $\delta$ and $\eta^{\ast}$ be fixed universal constants, given
in the above lemma. In the sequel, we will assume $0<b<b^{\ast}$
and $|\eta|<(\frac{\delta}{2})^{2}b$. Here, $b^{\ast}$ is some small
constant to be chosen later, but at this moment, let us only require
$b^{\ast}<\eta^{\ast}$. We define the modified profile $P(y)=P(y;b,\eta)$
by applying the pseudoconformal phase to $Q^{(\eta)}$ and make a
cutoff at $y\lesssim b^{-\frac{1}{2}}$:
\begin{equation}
P(y)\coloneqq P(y;b,\eta)=\chi_{b^{-\frac{1}{2}}}(y)Q^{(\eta)}(y)e^{-ib\frac{y^{2}}{4}}.\label{eq:def-mod-profile}
\end{equation}
We may also define $P(y;0,0)\coloneqq Q(y)$. Here, we recall the
notation that $\chi_{b^{-\frac{1}{2}}}(y)$ is a smooth cutoff function
supported in the region $y\leq2b^{-\frac{1}{2}}$.
\begin{prop}[Properties of the modified profile]
\label{prop:modified-profile}Let $0<b<b^{\ast}$ and $|\eta|<(\frac{\delta}{2})^{2}b$.
The modified profile $P(y)=P(y;b,\eta)$ satisfies the following properties.
\begin{enumerate}
\item (Bounds for $y\leq b^{-\frac{1}{2}}$) For $y\leq b^{-\frac{1}{2}}$,
we have 
\begin{align*}
|P-Q|_{5}+|\Lambda P-\Lambda Q|_{5} & \lesssim by^{2}Q,\\
|\partial_{b}P+i\tfrac{y^{2}}{4}Q|_{5}+|\partial_{\eta}P+(m+1)\rho|_{5} & \lesssim by^{4}Q.
\end{align*}
\item (Global bounds) We have 
\begin{align*}
|P|_{5}+|\Lambda P|_{5} & \lesssim\mathbf{1}_{y\leq2b^{-1/2}}Q,\\
|\partial_{b}P|_{5}+|\partial_{\eta}P|_{5} & \lesssim\mathbf{1}_{y\leq2b^{-1/2}}y^{2}Q,\\
|\partial_{bb}P|_{5}+|\partial_{b\eta}P|_{5}+|\partial_{\eta\eta}P|_{5} & \lesssim\mathbf{1}_{y\leq2b^{-1/2}}y^{4}Q.
\end{align*}
\item (Approximate generalized null space)
\begin{equation}
\||A_{Q}L_{Q}v|_{-1}\|_{L^{2}}+\||A_{Q}L_{Q}v|_{-3}\|_{L^{2}}\lesssim b,\qquad\forall v\in\{\Lambda P,iP,\partial_{b}P,\partial_{\eta}P\}.\label{eq:gen-null-rel}
\end{equation}
\item (Approximate pseudoconformal profile) One can write 
\begin{equation}
L_{P}^{\ast}\D_{+}^{(P)}P-ib\Lambda P+(\eta\theta_{\eta}+\theta_{\Psi})P+(b^{2}+\eta^{2})i\partial_{b}P=\Psi,\label{eq:decomp-P}
\end{equation}
where the scalars $\theta_{\eta}$, $\theta_{\Psi}$, and function
$\Psi(y)=\Psi(y;b,\eta)$ satisfy\footnote{Note that $\theta_{\eta}$ also depends on $b$ (the cutoff distance
$b^{-1/2}$, in fact).} 
\begin{align}
|\theta_{\eta}-(m+1)| & \lesssim|\eta|+b^{m+1},\label{eq:decomp-P-temp1}\\
|\theta_{\Psi}| & \lesssim b^{m+2},\label{eq:decomp-P-temp2}\\
|\Psi|_{5} & \lesssim b^{\frac{m}{2}+2}\mathbf{1}_{b^{-1/2}\leq y\leq2b^{-1/2}}.\label{eq:Psi-Pointwise}
\end{align}
Moreover, we have the charge estimate 
\[
\|P\|_{L^{2}}^{2}=\|Q\|_{L^{2}}^{2}+O(|\eta|+b^{m+1}).
\]
\item (Difference estimate) We have 
\begin{align}
|\partial_{b}\theta_{\eta}| & \lesssim b^{m},\label{eq:db-theta-eta-est}\\
|\partial_{\eta}\theta_{\eta}| & \lesssim1,\label{eq:deta-theta-eta-est}\\
|\partial_{b}\theta_{\Psi}|+|\partial_{\eta}\theta_{\Psi}| & \lesssim b^{m+1},\label{eq:Diff-theta-Psi-est}\\
|\partial_{b}\Psi|_{5}+|\partial_{\eta}\Psi|_{5} & \lesssim b^{\frac{m}{2}+1}\mathbf{1}_{b^{-1/2}\leq y\leq2b^{-1/2}}.\label{eq:Diff-Psi-est}
\end{align}
\item (Size of $\Psi$) We have 
\begin{align}
\|\Psi\|_{\dot{\mathcal{H}}_{m}^{3}} & \lesssim b^{\frac{m}{2}+3},\label{eq:Radiation-Hdot3-est}\\
\|\Psi\|_{\dot{\mathcal{H}}_{m}^{5}} & \lesssim b^{\frac{m}{2}+4},\quad\text{if }m\geq3.\label{eq:Radiation-Hdot5-est}
\end{align}
\end{enumerate}
\end{prop}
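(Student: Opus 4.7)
The plan is to reduce everything to the untruncated profile $\tilde P(y) \coloneqq Q^{(\eta)}(y) e^{-iby^{2}/4}$, for which an exact algebraic identity holds, and then carefully quantify the errors introduced by the truncation $P = \chi_{b^{-1/2}} \tilde P$. The standing assumption $|\eta| < (\delta/2)^{2} b$ ensures $2b^{-1/2} < \delta |\eta|^{-1/2}$, so the support of $P$ lies entirely within the domain where Lemma~\ref{lem:mod-profile-linear-regime} is available. Items (1) and (2) follow routinely from the pointwise $Q^{(\eta)}$, $\partial_{\eta} Q^{(\eta)}$, $\partial_{\eta\eta} Q^{(\eta)}$ bounds of that lemma, combined with the algebra of multiplication by $e^{-iby^{2}/4}$ (each $\partial_{b}$ brings down a factor $-iy^{2}/4$, controlled pointwise by $by^{2}\lesssim 1$ on the support) and standard estimates on derivatives of $\chi_{b^{-1/2}}$. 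Item (3) follows from the exact relations $A_{Q}L_{Q}(\Lambda Q)=A_{Q}L_{Q}(iQ)=A_{Q}L_{Q}(ir^{2}Q)=A_{Q}L_{Q}(\rho)=0$ recalled in Section~\ref{subsec:Conjugation-identities}, applied to the approximations $\Lambda P \approx \Lambda Q$, $iP \approx iQ$, $\partial_{b}P \approx -i\tfrac{y^{2}}{4}Q$, $\partial_{\eta}P \approx -(m+1)\rho$ provided by (1)--(2), together with the boundedness of $A_{Q}L_{Q}$ from Lemma~\ref{lem:Mapping-AAL-Section2}.

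The heart of the proposition is item (4). For the untruncated $\tilde P$, the pseudoconformal conjugation formula
\[
L_{f_{b}}^{\ast}\D_{+}^{(f_{b})}f_{b} = [L_{f}^{\ast}\D_{+}^{(f)}f]_{b} + ib\Lambda f_{b} - ib^{2}\partial_{b}(f_{b})
\]
applied to $f=Q^{(\eta)}$, together with the modified Bogomol'nyi equation $\D_{+}^{(Q^{(\eta)})} Q^{(\eta)} = -\eta \tfrac{y}{2} Q^{(\eta)}$ and the Pohozaev-type identity (obtained by pairing with $\Lambda Q^{(\eta)}$, as in \cite{KimKwon2019arXiv}), produces the exact identity
\[
L_{\tilde P}^{\ast}\D_{+}^{(\tilde P)}\tilde P - ib\Lambda \tilde P + \eta\vartheta(\eta) \tilde P + (b^{2}+\eta^{2})\, i\partial_{b}\tilde P = 0,
\]
with $\vartheta(\eta) = \tfrac{1}{4\pi}\int|Q^{(\eta)}|^{2} - (m+1) = m+1 + O(\eta)$. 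Passing from $\tilde P$ to $P = \chi_{b^{-1/2}}\tilde P$, the difference $L_{P}^{\ast}\D_{+}^{(P)}P - L_{\tilde P}^{\ast}\D_{+}^{(\tilde P)}\tilde P$ splits into (i) a \emph{local} piece supported in the cutoff annulus $b^{-1/2}\leq y\leq 2b^{-1/2}$, coming from derivatives of $\chi_{b^{-1/2}}$ acting on $\tilde P$ and on the operators $\partial_{r},\D_{+}$, and (ii) a \emph{non-perturbative, constant-in-$y$} piece arising because the nonlocal gauge potentials differ, $A_{\theta}[P]-A_{\theta}[\tilde P] = -\tfrac{1}{2}\int_{0}^{r}(1-\chi^{2})|\tilde P|^{2} r'\,dr'$, which is $0$ for $r\leq b^{-1/2}$ but equals a nonzero constant of size $O(b^{m+1})$ for $r\geq 2b^{-1/2}$ (and similarly for $A_{0}$). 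The constant-in-$y$ contribution, multiplied by $P$, is precisely what is absorbed into $\theta_{\Psi} P$ and into a small shift of $\vartheta(\eta)$ to produce $\theta_{\eta}$ satisfying \eqref{eq:decomp-P-temp1}, giving \eqref{eq:decomp-P-temp2}. The local piece is $\Psi$, and \eqref{eq:Psi-Pointwise} follows from pointwise estimates of all cutoff-derivative terms at $y\sim b^{-1/2}$, where $Q(y)\sim b^{(m+2)/2}$; the charge estimate is an immediate integration.

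For item (5) I differentiate \eqref{eq:decomp-P} in $b$ and $\eta$; since $\theta_{\eta}$ depends on $b$ only through the cutoff scale $b^{-1/2}$ (entering an integral of $|Q^{(\eta)}|^{2}\sim y^{-2m-4}$ against the $b$-derivative of $\chi^{2}$), one obtains $\partial_{b}\theta_{\eta}=O(b^{m})$ and $\partial_{\eta}\theta_{\eta}=O(1)$ by direct computation; the $b$- and $\eta$-derivatives of $\Psi$ cost at most one extra $b^{-1/2}$ from differentiating the cutoff scale, yielding \eqref{eq:Diff-theta-Psi-est}, \eqref{eq:Diff-Psi-est}. For item (6), since $\Psi$ is supported in an annulus of measure $\sim b^{-1}$ on which $y \sim b^{-1/2}$, the $\dot{\mathcal{H}}_{m}^{3}$- and $\dot{\mathcal{H}}_{m}^{5}$-seminorms reduce (up to bounded factors) to $L^{2}$-norms weighted by $y^{-3}$ and $y^{-5}$ respectively, and the bounds $b^{m/2+3}$, $b^{m/2+4}$ follow from \eqref{eq:Psi-Pointwise} by direct calculation; the restriction $m\geq 3$ in \eqref{eq:Radiation-Hdot5-est} removes any logarithmic weakening of the $\dot{\mathcal{H}}_{m}^{5}$-weights and also ensures that the $y^{-5}$ weight is unconditionally admissible.

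The main obstacle is the extraction of the non-perturbative constant-in-$y$ correction in step (4). Because the gauge potential $A_{0}[P] = -\int_{r}^{\infty}(m+A_{\theta}[P])|P|^{2}\tfrac{dr'}{r'}$ is nonlocal with tail $O(|x|^{-1})$, naively truncating $\tilde P$ generates an error of size $O(b^{m+1})$ that is \emph{spread globally in $y$}, not localized to the cutoff annulus. The crucial observation, already foreshadowed in the introduction and in \cite{KimKwon2019arXiv}, is that the leading such error is a multiple of $P$ itself and can therefore be absorbed as a phase correction $\theta_{\Psi}P$; only after this absorption does the residual $\Psi$ become localized in $b^{-1/2}\leq y\leq 2b^{-1/2}$ with the sharp pointwise decay $b^{m/2+2}$ needed for the favorable $\dot{\mathcal{H}}_{m}^{3}$ and $\dot{\mathcal{H}}_{m}^{5}$ bounds used throughout the main bootstrap.
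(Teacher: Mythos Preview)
Your approach to item~(4) has a genuine gap. You propose to start from an ``exact identity'' for the untruncated profile $\tilde P = Q^{(\eta)} e^{-iby^{2}/4}$ and then perturb by the cutoff. But $Q^{(\eta)}$ is only constructed on $(0,\delta|\eta|^{-1/2})$ by Lemma~\ref{lem:mod-profile-linear-regime}; the paper explicitly allows $\eta<0$, and in that regime no global decaying $Q^{(\eta)}$ exists (this is precisely why the truncation is introduced). Consequently the quantities $A_{0}[\tilde P]$, $L_{\tilde P}^{\ast}$, and your $\vartheta(\eta)=\tfrac{1}{4\pi}\int|Q^{(\eta)}|^{2}-(m+1)$ are not defined, and there is no exact identity for $\tilde P$ to perturb from. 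Your description of the error as ``$A_{\theta}[P]-A_{\theta}[\tilde P]$'' suffers from the same problem.

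The paper's route avoids this entirely by working with the compactly supported object $\chi_{B}Q^{(\eta)}$ from the outset. One first computes $\D_{+}^{(\chi_{B}Q^{(\eta)})}[\chi_{B}Q^{(\eta)}]=-\eta\tfrac{y}{2}\chi_{B}Q^{(\eta)}+\phi Q^{(\eta)}$ with $\phi$ supported in the annulus, then applies $L_{\chi_{B}Q^{(\eta)}}^{\ast}$. The nonlocal part $B^{\ast}$ of $L^{\ast}$ (an $\int_{y}^{\infty}$ integral) acting on the localized $\phi Q^{(\eta)}$ is what produces the global constant $\theta_{\Psi}=-\int_{0}^{\infty}\chi_{B}\phi|Q^{(\eta)}|^{2}\,dy$ times $\chi_{B}Q^{(\eta)}$; this is the correct mechanism for the phase correction, not a gauge-potential difference with an undefined $\tilde P$. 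Similarly $\theta_{\eta}$ is $\tfrac{1}{4\pi}\int|\chi_{B}Q^{(\eta)}|^{2}-(m+1)$, which is perfectly well-defined. Only after this do you conjugate by $e^{-iby^{2}/4}$ to obtain \eqref{eq:decomp-P}. Your treatment of items~(1)--(3), (5), (6) is essentially in line with the paper (though note that differentiating the cutoff scale in $b$ costs a factor $b^{-1}$, not $b^{-1/2}$).
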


\begin{rem}
\label{rem:profile-nonlinearapproach}This \emph{nonlinear approach}
for the construction of modified profile is much simpler than the
linear expansion such as $P=Q-ib\tfrac{y^{2}}{4}Q-(m+1)\eta\rho+\text{(h.o.t)}$.
But it relies on a very special algebraic property, the \emph{self-duality}.
In view of the above discussions, we are led to solve \eqref{eq:Qeta-motiv-temp2}
for $Q^{(\eta)}$. However, it is a nonlocal second-order elliptic
equation. We used the self-duality to reduce it to a first-order nonlocal
equation \eqref{eq:Qeta-motiv-temp3}. This reduction is crucial in
our analysis. We are not sure if it is possible to solve \eqref{eq:Qeta-motiv-temp2}
without this reduction. There are similar nonlinear approaches for
construction of modified profiles in the context of mass-critical
NLS \cite{MerleRaphael2003GAFA,MerleRaphaelSzeftel2013AJM}. There,
the nonlinear terms are local and it is possible to solve it by variational
methods.
\end{rem}

\begin{rem}
There seems to be a slight flexibility for the cutoff radius $b^{-\frac{1}{2}}$.
We expect that one can use $b^{-\frac{1}{2}\pm}$. Here we chose $b^{-\frac{1}{2}}$
because it matches the radius where $|e^{-ib\frac{y^{2}}{4}}|_{k}\sim_{k}1$
and $|e^{-\eta\frac{y^{2}}{4}}|_{k}\lesssim_{k}1$.
\end{rem}

\begin{proof}[Proof of Proposition \ref{prop:modified-profile}]
For simplicity in notations, let us write 
\[
B\coloneqq b^{-\frac{1}{2}}.
\]
Note that $\partial_{b}\chi_{B}(y)=\frac{1}{2}b^{-\frac{1}{2}}y[\partial_{y}\chi](b^{\frac{1}{2}}y)$
so $|\partial_{b}\chi_{B}|_{4}\lesssim y^{2}\mathbf{1}_{B\leq y\leq2B}\sim b^{-1}\mathbf{1}_{B\leq y\leq2B}$.

(1) Since $\Lambda=y\partial_{y}+1$ and the cutoff function $\chi_{B}$
can be ignored for $y\leq B$, we have 
\begin{align*}
|P-Q|_{5}+|\Lambda P-\Lambda Q|_{5} & \lesssim|e^{-ib\frac{y^{2}}{4}}Q^{(\eta)}-Q|_{6}\\
 & \lesssim|e^{-ib\frac{y^{2}}{4}}-1|_{6}|Q^{(\eta)}|_{6}+|Q^{(\eta)}-Q|_{6}\lesssim by^{2}Q.
\end{align*}
Similarly, 
\begin{align*}
 & |\partial_{b}P+i\tfrac{y^{2}}{4}Q|_{5}+|\partial_{\eta}P+(m+1)\rho|_{5}\\
 & =|-i\tfrac{y^{2}}{4}e^{-ib\frac{y^{2}}{4}}Q^{(\eta)}+i\tfrac{y^{2}}{4}Q|_{5}+|e^{-ib\frac{y^{2}}{4}}\partial_{\eta}Q^{(\eta)}+(m+1)\rho|_{5}\\
 & \lesssim|e^{-ib\frac{y^{2}}{4}}-1|_{5}(|y^{2}Q^{(\eta)}|_{5}+|\partial_{\eta}Q^{(\eta)}|_{5})+|y^{2}(Q^{(\eta)}-Q)|_{5}+|\partial_{\eta}Q^{(\eta)}+(m+1)\rho|_{5}\\
 & \lesssim by^{4}Q.
\end{align*}

(2) This easily follows from the arguments in (1), uniform bounds
on $Q^{(\eta)}$ in Lemma \ref{lem:mod-profile-linear-regime}, $|\partial_{b}\chi_{B}|_{6}\lesssim y^{2}\mathbf{1}_{B\leq y\leq2B}$,
and $|\partial_{bb}\chi_{B}|_{6}\lesssim y^{4}\mathbf{1}_{B\leq y\leq2B}$.

(3) Using $L_{Q}\Lambda Q=0$, boundedness of $A_{Q}L_{Q}:\dot{\mathcal{H}}_{m}^{3}\to\dot{H}_{m+2}^{1}$
(Lemmas \ref{lem:mapping-AQ} and \ref{lem:mapping-LQ-H3}), (1) for
$y\leq B$, and (2) for $y>B$, we have 
\begin{align*}
\||A_{Q}L_{Q}\Lambda P|_{-1}\|_{L^{2}} & =\||A_{Q}L_{Q}(\Lambda P-\Lambda Q)|_{-1}\|_{L^{2}}\\
 & \lesssim\||\Lambda P-\Lambda Q|_{-3}\|_{L^{2}}\\
 & \lesssim\|\mathbf{1}_{y\leq B}by^{-1}Q+\mathbf{1}_{y>B}y^{-3}Q\|_{L^{2}}\lesssim b.
\end{align*}
A similar argument can be done for $iP$. Next, using $A_{Q}L_{Q}\rho=0$,
and similarly as before, we have 
\begin{align*}
\||A_{Q}L_{Q}\partial_{\eta}P|_{-1}\|_{L^{2}} & =\||A_{Q}L_{Q}(\partial_{\eta}P+(m+1)\rho)|_{-1}\|_{L^{2}}\\
 & \lesssim\||\partial_{\eta}P+(m+1)\rho|_{-3}\|_{L^{2}}\\
 & \lesssim\|\mathbf{1}_{y\leq B}byQ+\mathbf{1}_{y>B}y^{-1}Q\|_{L^{2}}\lesssim b.
\end{align*}
A similar argument can be done for $\partial_{b}P$. Estimates of
$\||A_{Q}L_{Q}v|_{-3}\|_{L^{2}}$ can be shown in a similar way using
boundedness of $A_{Q}L_{Q}:\dot{\mathcal{H}}_{m}^{5}\to\dot{H}_{m+2}^{3}$
(Lemmas \ref{lem:mapping-AQ-1} and \ref{lem:mapping-LQ-H5}).

(4, 5) By adding the cutoff $\chi_{B}$ to $Q^{(\eta)}$, we have
\begin{align*}
 & \D_{+}^{(\chi_{B}Q^{(\eta)})}[\chi_{B}Q^{(\eta)}]=-\eta\tfrac{y}{2}\chi_{B}Q^{(\eta)}+\phi Q^{(\eta)},\\
\phi & \coloneqq\partial_{y}\chi_{B}+\tfrac{1}{y}(A_{\theta}[Q^{(\eta)}]-A_{\theta}[\chi_{B}Q^{(\eta)}])\chi_{B}.
\end{align*}
We now take $L_{\chi_{B}Q^{(\eta)}}^{\ast}$. First we recall the
computation from \cite{KimKwon2019arXiv} 
\[
L_{\chi_{B}Q^{(\eta)}}^{\ast}(-\tfrac{y}{2}\chi_{B}Q^{(\eta)})=\tfrac{y}{2}\D_{+}^{(\chi_{B}Q^{(\eta)})}\chi_{B}Q^{(\eta)}+(m+1-\tfrac{1}{4\pi}{\textstyle \int}|\chi_{B}Q^{(\eta)}|^{2})\chi_{B}Q^{(\eta)}.
\]
Next we write 
\begin{align*}
L_{\chi_{B}Q^{(\eta)}}^{\ast}[\phi Q^{(\eta)}] & =\D_{+}^{(\chi_{B}Q^{(\eta)})\ast}[\phi Q^{(\eta)}]\\
 & \quad+({\textstyle \int_{0}^{\infty}}\chi_{B}\phi|Q^{(\eta)}|^{2}dy')\chi_{B}Q^{(\eta)}-({\textstyle \int_{0}^{y}}\chi_{B}\phi|Q^{(\eta)}|^{2}dy')\chi_{B}Q^{(\eta)}.
\end{align*}
Summing the above two displays, we have 
\[
L_{\chi_{B}Q^{(\eta)}}^{\ast}\D_{+}^{(\chi_{B}Q^{(\eta)})}\chi_{B}Q^{(\eta)}+(\eta\theta_{\eta}+\theta_{\Psi})\chi_{B}Q^{(\eta)}+\eta^{2}\tfrac{y^{2}}{4}\chi_{B}Q^{(\eta)}=\tilde{\Psi},
\]
where
\begin{align*}
\theta_{\eta} & =\tfrac{1}{4\pi}{\textstyle \int}|\chi_{B}Q^{(\eta)}|^{2}-(m+1),\\
\theta_{\Psi} & =-{\textstyle \int_{0}^{\infty}}\chi_{B}\phi|Q^{(\eta)}|^{2}dy,\\
\tilde{\Psi} & =(\eta\tfrac{y}{2}+\D_{+}^{(\chi_{B}Q^{(\eta)})\ast})\phi Q^{(\eta)}-({\textstyle \int_{0}^{y}}\chi_{B}\phi|Q^{(\eta)}|^{2}dy')\chi_{B}Q^{(\eta)}.
\end{align*}
Now we conjugate the pseudoconformal phase to get 
\[
L_{P}^{\ast}\D_{+}^{(P)}P-ib\Lambda P+(\eta\theta_{\eta}+\theta_{\Psi})P+(b^{2}+\eta^{2})\tfrac{y^{2}}{4}P=\tilde{\Psi}e^{-ib\frac{y^{2}}{4}}.
\]
The proof of \eqref{eq:decomp-P} follows by setting 
\begin{align*}
\Psi & =\tilde{\Psi}e^{-ib\frac{y^{2}}{4}}+(b^{2}+\eta^{2})(i\partial_{b}P-\tfrac{y^{2}}{4}P)\\
 & =[\tilde{\Psi}+i(b^{2}+\eta^{2})(\partial_{b}\chi_{B})Q^{(\eta)}]e^{-ib\frac{y^{2}}{4}}.
\end{align*}

We turn to show $\theta_{\eta}$ estimates. The estimate \eqref{eq:decomp-P-temp1}
follows from 
\begin{align*}
 & \big|{\textstyle \int}|\chi_{B}Q^{(\eta)}|^{2}-{\textstyle \int}\mathbf{1}_{y\leq B}|Q^{(\eta)}|^{2}\big|\leq{\textstyle \int}\mathbf{1}_{B\leq y\leq2B}|Q^{(\eta)}|^{2}\lesssim b^{m+1},\\
 & \big|{\textstyle \int}\mathbf{1}_{y\leq B}|Q^{(\eta)}|^{2}-{\textstyle \int}\mathbf{1}_{y\leq B}Q^{2}\big|\lesssim{\textstyle \int}\mathbf{1}_{y\leq B}Q|Q^{(\eta)}-Q|\lesssim{\textstyle \int}\mathbf{1}_{y\leq B}|\eta|y^{2}Q^{2}\lesssim|\eta|,\\
 & {\textstyle \int}\mathbf{1}_{y\leq B}Q^{2}={\textstyle \int}Q^{2}-{\textstyle \int}\mathbf{1}_{y>B}Q^{2}=8\pi(m+1)-O(b^{m+1}).
\end{align*}
The above display also shows the charge estimate. Next, \eqref{eq:db-theta-eta-est}
follows from 
\begin{align*}
|\partial_{b}\theta_{\eta}| & \lesssim{\textstyle \int}|\partial_{b}\chi_{B}|Q^{2}\lesssim b^{m},\\
|\partial_{\eta}\theta_{\eta}| & \lesssim{\textstyle \int}\chi_{B}Q\cdot y^{2}Q\lesssim1.
\end{align*}

We turn to show $\theta_{\Psi}$ estimates. We note that $\phi$ is
supported in the region $B\leq y\leq2B$ and satisfies 
\[
|\phi|_{6}\lesssim y^{-1}\mathbf{1}_{B\leq y\leq2B}\quad\text{and}\quad|\partial_{b}\phi|_{6}\lesssim y\mathbf{1}_{B\leq y\leq2B}.
\]
Thus we have \eqref{eq:decomp-P-temp2} and \eqref{eq:Diff-theta-Psi-est}:
\begin{align*}
|\theta_{\Psi}| & \lesssim{\textstyle \int_{0}^{\infty}}\mathbf{1}_{y\sim B}y^{-1}Q^{2}dy\lesssim b^{m+2},\\
|\partial_{b}\theta_{\Psi}|+|\partial_{\eta}\theta_{\Psi}| & \lesssim{\textstyle \int_{0}^{\infty}}\mathbf{1}_{y\sim B}yQ^{2}dy\lesssim b^{m+1}.
\end{align*}

Next, we show the $\Psi$ estimates. We note that $\tilde{\Psi}$
is supported in the region $B\leq y\leq2B$. We also note that 
\[
|A_{\theta}[\chi_{B}Q^{(\eta)}]|_{5}+|\partial_{b}A_{\theta}[\chi_{B}Q^{(\eta)}]|_{5}+|\partial_{\eta}A_{\theta}[\chi_{B}Q^{(\eta)}]|_{5}\lesssim1,
\]
thus 
\[
|\D_{+}^{(\chi_{B}Q^{(\eta)})}f|_{5}+|\partial_{b}\D_{+}^{(\chi_{B}Q^{(\eta)})}f|_{5}+|\partial_{\eta}\D_{+}^{(\chi_{B}Q^{(\eta)})}f|_{5}\lesssim y^{-1}|f|_{5}.
\]
Using $|\eta|\leq b$ and the above $|\phi|_{6}$-estimates, we have
\begin{align*}
|\tilde{\Psi}|_{5} & \lesssim\mathbf{1}_{B\leq y\leq2B}y^{-2}Q\lesssim b^{\frac{m}{2}+2}\mathbf{1}_{B\leq y\leq2B},\\
|\partial_{b}\tilde{\Psi}|_{5}+|\partial_{\eta}\tilde{\Psi}|_{5} & \lesssim\mathbf{1}_{B\leq y\leq2B}Q\lesssim b^{\frac{m}{2}+1}\mathbf{1}_{B\leq y\leq2B}.
\end{align*}
On the other hand, we have 
\begin{align*}
|(\partial_{b}\chi_{B})Q^{(\eta)}|_{5} & \lesssim b^{-1}\mathbf{1}_{B\leq y\leq2B}Q\lesssim b^{\frac{m}{2}}\mathbf{1}_{B\leq y\leq2B},\\
|(\partial_{bb}\chi_{B})Q^{(\eta)}|_{5}+|(\partial_{b}\chi_{B})\partial_{\eta}Q^{(\eta)}|_{5} & \lesssim b^{-1}\mathbf{1}_{B\leq y\leq2B}y^{2}Q\lesssim b^{\frac{m}{2}-1}\mathbf{1}_{B\leq y\leq2B}.
\end{align*}
Adding the pseudoconformal phase completes the proof of \eqref{eq:Psi-Pointwise}
and \eqref{eq:Diff-Psi-est}.

(6) By the definitions \eqref{eq:Def-Hdot3-Section2} and \eqref{eq:Def-Hdot5-Section2},
and pointwise estimates \eqref{eq:Psi-Pointwise}, we have 
\begin{align*}
\|\Psi\|_{\dot{\mathcal{H}}_{m}^{3}} & \lesssim\||\Psi|_{-3}\|_{L^{2}}\lesssim b^{\frac{m}{2}+3},\\
\|\Psi\|_{\dot{\mathcal{H}}_{m}^{5}} & \lesssim\||\Psi|_{-5}\|_{L^{2}}\lesssim b^{\frac{m}{2}+4},\qquad\text{if }m\geq3.
\end{align*}
\end{proof}

\section{\label{sec:TrappedSolutions}Trapped solutions}

So far, we have constructed the modified profile $P$, which formally
suggested the pseudoconformal blow-up. In this section, we present
how we decompose our solution $u$ into the blow-up part $P$ and
the error $\epsilon$. The dynamics of the blow-up part $P$ will
be governed by four modulation parameters $\lambda,\gamma,b,\eta$.
As illustrated above, we will fix the modulation parameters through
orthogonality conditions on $\epsilon$. The choice of orthogonality
conditions is motivated to almost decouple the dynamics of $P$ and
$\epsilon$. It will be shown that we can decompose our solution $u$
as long as $u$ stays in a soliton tube $Q$ \emph{with} $|\eta|\ll b$.
This is done in Lemma \ref{lem:decomp}.

Having fixed the decomposition, we discuss how to reduce the existence
part of our main theorems into Proposition \ref{prop:main-bootstrap}
(main bootstrap) and Proposition \ref{prop:Sets-I-pm} (existence
of special $\eta_{0}$). Here we motivate the notion of \emph{trapped
solutions} defined in Introduction. In fact the conditions of trapped
solutions are exactly the bootstrap hypotheses.

\subsection{\label{subsec:decomposition}Decomposition of solutions}

We will decompose our solution of the form 
\begin{equation}
u(t,r)=\frac{e^{i\gamma(t)}}{\lambda(t)}[P(\cdot;b(t),\eta(t))+\epsilon(t,\cdot)]\Big(\frac{r}{\lambda(t)}\Big).\label{eq:u-decomp}
\end{equation}
For $P$ and $\epsilon$, we denote the rescaled spatial variable
by $y=\frac{r}{\lambda(t)}$. To get an idea of this decomposition,
let us recall the discussion in Section \ref{subsec:Linearization-of-CSS}.
The linearized evolution has two invariant subspaces $N_{g}(i\mathcal{L}_{Q})$
and $N_{g}(\mathcal{L}_{Q}i)^{\perp}$, and the whole space decomposes
as $N_{g}(i\mathcal{L}_{Q})\oplus N_{g}(\mathcal{L}_{Q}i)^{\perp}$.
Note that this decomposition is not orthogonal, but is possible because
\eqref{eq:ortho-jacobian} is nonsingular. This fact will be a key
to our decomposition \eqref{eq:u-decomp} (Lemma \ref{lem:decomp}).
Roughly speaking, $\frac{e^{i\gamma}}{\lambda}P(\frac{\cdot}{\lambda};b,\eta)$
corresponds to the $N_{g}(i\mathcal{L}_{Q})$ part and $\epsilon$
corresponds to $N_{g}(\mathcal{L}_{Q}i)^{\perp}$ part of the solution.

For the $N_{g}(i\mathcal{L}_{Q})$ part, we introduce four modulation
parameters $\lambda(t)$, $\gamma(t)$, $b(t)$, and $\eta(t)$ corresponding
to the generalized null space $N_{g}(i\mathcal{L}_{Q})=\mathrm{span}_{\R}\{\Lambda Q,iQ,iy^{2}Q,\rho\}$.
The scaling $\lambda(t)$ and phase rotation $\gamma(t)$ correspond
to the kernel elements $\Lambda Q$ and $iQ$. As \eqref{eq:CSS}
has explicit scaling and phase rotation symmetries, we use $\lambda(t)$
and $\gamma(t)$ to renormalize our solution. More precisely, this
renormalization enables us to transform the blow-up dynamics to a
near-$Q$ dynamics. The pseudoconformal phase $b(t)$ and an additional
parameter $\eta(t)$ correspond to the generalized null space elements
$iy^{2}Q$ and $\rho$. These two parameters are used in the modified
profile in the previous section.\footnote{We note that \eqref{eq:CSS} has explicit pseudoconformal symmetry
(in continuous form), but we will not apply it directly to our solution
since it will require us to work with the weighted Sobolev spaces.
Instead, we apply the pseudoconformal phase to our modified profile.
The parameter $\eta(t)$ does not seem to correspond to an explicit
symmetry.} Due to the generalized null space relations $i\mathcal{L}_{Q}iy^{2}Q=4\Lambda Q$
and $i\mathcal{L}_{Q}\rho=iQ$, turning on the parameters $b(t)$
and $\eta(t)$ change the scaling $\lambda(t)$ and phase rotation
$\gamma(t)$. It is observed in \cite{KimKwon2019arXiv} that the
$\eta$-parameter accounts for the instability of pseudoconformal
blow-up. In \cite{KimKwon2019arXiv} we fixed $\eta$ as a small constant
in time, but in this paper we allow $\eta(t)$ to vary in time, to
take advantages of additional degree of freedom for choosing the modulation
parameters. This extra degree of freedom will sharpen our modulation
analysis.

For the $N_{g}(\mathcal{L}_{Q}i)^{\perp}$ part, we impose four orthogonality
conditions on $\epsilon$. Since we will carry out energy estimates
for $\epsilon$ in $\dot{\mathcal{H}}_{m}^{3}$ and $\dot{\mathcal{H}}_{m}^{5}$
levels, we will regard $\epsilon$ as an element of the adapted function
space $\dot{\mathcal{H}}_{m}^{3}$ and $\dot{\mathcal{H}}_{m}^{5}$,
respectively (see Section \ref{subsec:Adapted-function-spaces}).
However, if $m$ is small, $N_{g}(\mathcal{L}_{Q}i)^{\perp}=\{i\Lambda Q,Q,y^{2}Q,i\rho\}^{\perp}$
is not well-defined in the adapted function spaces, due to lack of
decay $|y^{2}Q|,|\rho|\sim y^{-m}$ for $y\gg1$. Thus we will use
localized versions of these orthogonality conditions defined as follows.

Define the codimension-four linear subspace $\mathcal{Z}^{\perp}\subset H_{m}^{3}$
by 
\begin{equation}
\mathcal{Z}^{\perp}\coloneqq\{\epsilon\in H_{m}^{3}:(\epsilon,\mathcal{Z}_{k})_{r}=0\text{ for all }k\in\{1,2,3,4\}\},\label{eq:ortho-cond}
\end{equation}
where 
\begin{equation}
\left\{ \begin{aligned}\mathcal{Z}_{1} & \coloneqq y^{2}Q\chi_{M}-\frac{(\rho,y^{2}Q\chi_{M})_{r}}{(Q,\rho\chi_{M})_{r}}\mathcal{L}_{Q}(\rho\chi_{M}),\\
\mathcal{Z}_{2} & \coloneqq i\rho\chi_{M}-\frac{(y^{2}Q,\rho\chi_{M})_{r}}{4(\Lambda Q,y^{2}Q\chi_{M})_{r}}\mathcal{L}_{Q}(iy^{2}Q\chi_{M}),\\
\mathcal{Z}_{3} & \coloneqq\mathcal{L}_{Q}i\mathcal{Z}_{1},\\
\mathcal{Z}_{4} & \coloneqq\mathcal{L}_{Q}i\mathcal{Z}_{2}.
\end{aligned}
\right.\label{eq:def-Zk}
\end{equation}
Thus in the decomposition \eqref{eq:u-decomp}, we will require $\epsilon\in\mathcal{Z}^{\perp}$.
For $H_{m}^{5}$ solutions, we will require $\epsilon\in\mathcal{Z}^{\perp}\cap H_{m}^{5}$.
Note that $\mathcal{Z}_{k}$'s deform from the basis elements of the
generalized null space $N_{g}(\mathcal{L}_{Q}i)$ by some large cutoff
parameter $M$. The additional factors in $\mathcal{Z}_{1}$ and $\mathcal{Z}_{2}$
help us to see transversality of $\mathcal{Z}^{\perp}$ and $N_{g}(i\mathcal{L}_{Q})$.
More precisely, this results in that the inner product matrix of $iQ,\Lambda Q,\rho,iy^{2}Q$
(which correspond to tangent vectors to the manifold of our modified
profiles) and $\mathcal{Z}_{1},\mathcal{Z}_{2},\mathcal{Z}_{3},\mathcal{Z}_{4}$
becomes diagonal; see \eqref{eq:Transversality}. In other words,
we can impose the orthogonality conditions \eqref{eq:ortho-cond}.
\begin{lem}[Estimates of $\mathcal{Z}_{k}$]
We have the following estimates of $\mathcal{Z}_{k}$:
\begin{enumerate}
\item (Pointwise estimates) We have 
\begin{equation}
\left.\begin{aligned}|\mathcal{Z}_{1}|+|\mathcal{Z}_{2}| & \lesssim\mathbf{1}_{y\leq2M}(y^{2}+\log M)Q+\mathbf{1}_{y\geq2M}(\log M)y^{-2}Q,\\
|\mathcal{Z}_{3}|+|\mathcal{Z}_{4}| & \lesssim\mathbf{1}_{y\leq2M}(1+\langle y\rangle^{-2}\log M)Q+\mathbf{1}_{y\geq2M}y^{-2}Q,
\end{aligned}
\right.\label{eq:Orthog-Pointwise}
\end{equation}
\item (Sobolev estimates) There exists a universal constant $C$ such that
\begin{align}
\sup_{k\in\{1,2,3,4\}}\|\mathcal{Z}_{k}\|_{L^{2}} & \lesssim\log M,\label{eq:Orthog-L2-est}\\
\sup_{k\in\{1,2,3,4\}}\|\mathcal{Z}_{k}\|_{(\dot{\mathcal{H}}_{m}^{3})^{\ast}} & \lesssim M^{C},\label{eq:Orthog-Hdot3-est}\\
\sup_{k\in\{1,2,3,4\}}\|\mathcal{Z}_{k}\|_{(\dot{\mathcal{H}}_{m}^{5})^{\ast}} & \lesssim M^{C},\qquad\text{if }m\geq3.\label{eq:Orthog-Hdot5-est}
\end{align}
\item (Approximate null space relations) We have 
\begin{align}
\sup_{k\in\{1,2\}}\|yA_{Q}L_{Q}i\mathcal{Z}_{k}\|_{L^{2}} & \lesssim M^{-1},\label{eq:Orthogonality-approx-gen-null-H3}\\
\sup_{k\in\{1,2\}}\|y^{3}A_{Q}L_{Q}i\mathcal{Z}_{k}\|_{L^{2}} & \lesssim M^{-1},\qquad\text{if }m\geq3.\label{eq:Orthogonality-approx-gen-null-H5}
\end{align}
\item (Transversality) We have 
\begin{equation}
\left.\begin{aligned}(\Lambda Q,\mathcal{Z}_{k})_{r} & =(-\|yQ\|_{L^{2}}^{2}+o_{M\to\infty}(1))\delta_{1k},\\
(iQ,\mathcal{Z}_{k})_{r} & =(\|L_{Q}\rho\|_{L^{2}}^{2}+o_{M\to\infty}(1))\delta_{2k},\\
(iy^{2}Q,\mathcal{Z}_{k})_{r} & =(4\|yQ\|_{L^{2}}^{2}+o_{M\to\infty}(1))\delta_{3k},\\
(\rho,\mathcal{Z}_{k})_{r} & =(-\|L_{Q}\rho\|_{L^{2}}^{2}+o_{M\to\infty}(1))\delta_{4k},
\end{aligned}
\right.\label{eq:Transversality}
\end{equation}
where $\delta_{jk}$ is the Kronecker-delta symbol.
\end{enumerate}
\end{lem}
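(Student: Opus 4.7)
My plan is to reduce each of the four assertions to direct computation by exploiting three algebraic inputs: (i) the generalized null-space identities from \eqref{eq:GenNullSpaceRelation}, in particular $\mathcal{L}_Q\rho=Q$ and $\mathcal{L}_Q(iy^2Q)=-4i\Lambda Q$; (ii) the pointwise bound $|\rho|_k\lesssim y^2Q$ and the nondegeneracy $(Q,\rho)_r=\|L_Q\rho\|_{L^2}^2$ from Lemma \ref{lem:Def-rho}; (iii) the observation (Section \ref{subsec:Conjugation-identities}) that $A_QL_Q$ annihilates every element of $N_g(i\mathcal{L}_Q)=\mathrm{span}_{\R}\{\Lambda Q,iQ,iy^2Q,\rho\}$. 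Each computation then proceeds by peeling off the cutoff $\chi_M$ and invoking one of (i)--(iii), with the resulting errors controlled via the decay of $Q$ and $\rho$.

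For the pointwise bounds \eqref{eq:Orthog-Pointwise} and the $L^2$-bound \eqref{eq:Orthog-L2-est}, I would write
\[
\mathcal{L}_Q(\rho\chi_M)=\chi_MQ+[\mathcal{L}_Q,\chi_M]\rho,\qquad \mathcal{L}_Q(iy^2Q\chi_M)=-4i\chi_M\Lambda Q+[\mathcal{L}_Q,\chi_M](iy^2Q),
\]
and use $\mathcal{L}_Q=L_Q^\ast L_Q$ to inspect each commutator. The differential part contributes terms supported in $M\le y\le 2M$ of size $M^{-1}|\rho|_1+M^{-2}|\rho|$, while the nonlocal part coming from the $B_Q$-pieces in $L_Q,L_Q^\ast$ produces a globally-supported tail of the form $(\int_{M}^{\infty}Q\rho\,r\,dr)\cdot Q$, which decays polynomially in $M$. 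Combined with $|\rho|_k\lesssim y^2Q$ and the coefficient estimates $|c_1|+|c_2|\lesssim\log M$ (the logarithm arising from $\int y^4Q^2\chi_M$ when $m=1$), this yields \eqref{eq:Orthog-Pointwise}, and integration against $r\,dr$ gives \eqref{eq:Orthog-L2-est}. The dual-norm bounds \eqref{eq:Orthog-Hdot3-est} and \eqref{eq:Orthog-Hdot5-est} then follow by pairing $\mathcal{Z}_k$ with a test function and invoking the generalized Hardy inequality \eqref{eq:GenHardySection2}, absorbing all $M$-dependence into a polynomial-in-$M$ constant (sharpness in $M$ is not needed here).

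For the approximate generalized null-space relations \eqref{eq:Orthogonality-approx-gen-null-H3}-\eqref{eq:Orthogonality-approx-gen-null-H5}, I would use that $A_QL_Q$ kills $iy^2Q$ and $\rho$ to rewrite
\begin{align*}
A_QL_Q(i\mathcal{Z}_1)&=A_QL_Q\bigl[iy^2Q(\chi_M-1)\bigr]-c_1\,A_QL_Q\,i\mathcal{L}_Q(\rho\chi_M),\\
A_QL_Q(i\mathcal{Z}_2)&=-A_QL_Q\bigl[\rho(\chi_M-1)\bigr]-c_2\,A_QL_Q\,i\mathcal{L}_Q(iy^2Q\chi_M).
\end{align*}
The first term in each line is sourced on $\{y\ge M\}$, so the explicit decay of $Q,\rho$ together with boundedness of $A_QL_Q$ on the appropriate weighted spaces (Lemma \ref{lem:Mapping-L-Section2} and the $A_Q$-mapping lemmas) gives the $M^{-1}$ gain after weighting by $y$, and analogously after weighting by $y^3$ when $m\ge 3$. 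For the second term, the conjugation identity \eqref{eq:ConjugationIdentity} rewrites $A_QL_Q\,i\mathcal{L}_Q=iA_QA_Q^\ast A_QL_Q$, and one more application of $A_QL_Q\rho=A_QL_Q(iy^2Q)=0$ reduces the expression to $A_QA_Q^\ast$ acting on a function again sourced on $\{y\ge M\}$, giving the same gain.

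Finally, the transversality \eqref{eq:Transversality} is by direct pairing: for each $f\in\{\Lambda Q,iQ,iy^2Q,\rho\}$, the self-adjointness of $\mathcal{L}_Q$ moves it onto $f$ and we apply the generalized null-space relations. Off-diagonal entries vanish either exactly by construction of the coefficients $c_1,c_2$ (for instance, $(\rho,\mathcal{Z}_1)_r=(\rho,y^2Q\chi_M)_r-c_1(Q,\rho\chi_M)_r=0$) or by parity (real vs.\ imaginary integrands, together with $\mathcal{L}_Q\Lambda Q=\mathcal{L}_QiQ=0$); the diagonal entries reduce as $M\to\infty$ to the universal constants $-\|yQ\|_{L^2}^2$, $\|L_Q\rho\|_{L^2}^2$, $4\|yQ\|_{L^2}^2$, and $-\|L_Q\rho\|_{L^2}^2$ by the computation $(\Lambda Q,y^2Q)_r=-\|yQ\|_{L^2}^2$ (integration by parts) and by Lemma \ref{lem:Def-rho}(4). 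I expect the main technical obstacle to be the \emph{nonlocal} contributions to $[\mathcal{L}_Q,\chi_M]$ from the $B_Q$ and $B_Q^\ast$ pieces: these are not confined to the annulus $M\le y\le 2M$ but spread polynomially across the whole domain, and must be tracked carefully so as not to spoil either the pointwise bounds \eqref{eq:Orthog-Pointwise} or the $M^{-1}$ gain required in \eqref{eq:Orthogonality-approx-gen-null-H3}-\eqref{eq:Orthogonality-approx-gen-null-H5}.
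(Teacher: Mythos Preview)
Your proposal is correct and follows essentially the same approach as the paper. The only cosmetic difference is that the paper bypasses writing out the commutators $[\mathcal{L}_Q,\chi_M]$ explicitly and instead obtains pointwise bounds such as $|L_Q(\rho\chi_M)|_3\lesssim\mathbf{1}_{y\le 2M}yQ+\mathbf{1}_{y\ge 2M}y^{-1}Q$ directly (the far-field tail being exactly the nonlocal $QB_Q$-contribution you were worried about), then feeds these into $|\mathcal{L}_Q(\cdot)|_2$ and, via the conjugation identity, into $|A_QL_Qi\mathcal{L}_Q(\cdot)|$; for the dual-norm bounds it simply uses $\|\mathcal{Z}_k\|_{(\dot{\mathcal{H}}_m^\ell)^\ast}\lesssim\|\langle y\rangle^{\ell}\mathcal{Z}_k\|_{L^2}$ together with the pointwise estimates.
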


\begin{proof}
(1) We first show pointwise estimates on $\mathcal{Z}_{k}$. We note
\begin{align*}
|\rho\chi_{M}|_{4}+|y^{2}Q\chi_{M}|_{4} & \lesssim\mathbf{1}_{y\leq2M}y^{2}Q,\\
|L_{Q}(\rho\chi_{M})|_{3}+|L_{Q}(iy^{2}Q\chi_{M})|_{3} & \lesssim\mathbf{1}_{y\leq2M}yQ+\mathbf{1}_{y\geq2M}y^{-1}Q,\\
|\mathcal{L}_{Q}(\rho\chi_{M})|_{2}+|\mathcal{L}_{Q}(iy^{2}Q\chi_{M})|_{2} & \lesssim\mathbf{1}_{y\leq2M}Q+\mathbf{1}_{y\geq2M}y^{-2}Q.
\end{align*}
Using $|(y^{2}Q,\rho\chi_{M})_{r}|=|(\rho,y^{2}Q\chi_{M})_{r}|\lesssim\log M$,
we get the first estimate of \eqref{eq:Orthog-Pointwise}. Next, we
take $L_{Q}i$ and use $L_{Q}iL_{Q}^{\ast}=iA_{Q}^{\ast}A_{Q}$ to
have\footnote{The gain at the origin relies on the following two facts: $\rho\chi_{M}$
and $y^{2}Q\chi_{M}$ are \emph{smooth} $m$-equivariant function
on $\R^{2}$, and $L_{Q}i\mathcal{L}_{Q}(\rho\chi_{M})$ and $L_{Q}i\mathcal{L}_{Q}(iy^{2}Q\chi_{M})$
must be smooth $(m+1)$-equivariant function.} 
\begin{align*}
|L_{Q}i\mathcal{L}_{Q}(\rho\chi_{M})|_{1}+|L_{Q}i\mathcal{L}_{Q}(iy^{2}Q\chi_{M})|_{1} & \lesssim y(|L_{Q}(\rho\chi_{M})|_{-3}+|L_{Q}(iy^{2}Q\chi_{M})|_{-3})\\
 & \lesssim\mathbf{1}_{y\leq2M}y\langle y\rangle^{-2}Q+\mathbf{1}_{y\geq2M}y^{-3}Q,\\
|\mathcal{L}_{Q}i\mathcal{L}_{Q}(\rho\chi_{M})|+|\mathcal{L}_{Q}i\mathcal{L}_{Q}(iy^{2}Q\chi_{M})| & \lesssim\mathbf{1}_{y\leq2M}\langle y\rangle^{-2}Q+\mathbf{1}_{y\geq2M}y^{-4}Q.
\end{align*}
Combining the above estimates completes the proof of \eqref{eq:Orthog-Pointwise}.

(2) The Sobolev estimates on $\mathcal{Z}_{k}$ follow from applying
the pointwise estimates to
\[
\|\mathcal{Z}_{k}\|_{(\dot{\mathcal{H}}_{m}^{\ell})^{\ast}}\lesssim\|\langle y\rangle^{\ell}\mathcal{Z}_{k}\|_{L^{2}},\qquad\forall\ell\in\{0,3,5\}.
\]

(3) We turn to \eqref{eq:Orthogonality-approx-gen-null-H3} and \eqref{eq:Orthogonality-approx-gen-null-H5}.
From $A_{Q}L_{Q}\rho=0$, we have 
\begin{align*}
|A_{Q}L_{Q}(\rho\chi_{M})| & =|A_{Q}L_{Q}(\rho-\rho\chi_{M})|\lesssim|L_{Q}(\rho-\rho\chi_{M})|_{-1}\lesssim\mathbf{1}_{y\geq M}Q,\\
|A_{Q}L_{Q}(iy^{2}Q\chi_{M})| & \lesssim|y^{2}Q-y^{2}Q\chi_{M}|_{-2}\lesssim\mathbf{1}_{y\geq M}Q.
\end{align*}
Further utilizing $L_{Q}iL_{Q}^{\ast}=iA_{Q}^{\ast}A_{Q}$, we have
\begin{align*}
|A_{Q}L_{Q}i\mathcal{L}_{Q}(\rho\chi_{M})| & \lesssim|L_{Q}(\rho-\rho\chi_{M})|_{-3}\lesssim\mathbf{1}_{y\geq M}y^{-2}Q,\\
|A_{Q}L_{Q}i\mathcal{L}_{Q}(iy^{2}Q-iy^{2}Q\chi_{M})| & \lesssim|y^{2}Q-y^{2}Q\chi_{M}|_{-4}\lesssim\mathbf{1}_{y\geq M}y^{-2}Q.
\end{align*}
Thus 
\[
\sup_{k\in\{1,2\}}|A_{Q}L_{Q}i\mathcal{Z}_{k}|\lesssim\mathbf{1}_{y\geq M}Q.
\]
This pointwise estimate immediately implies \eqref{eq:Orthogonality-approx-gen-null-H3}
and \eqref{eq:Orthogonality-approx-gen-null-H5}.

(4) Finally, we verify the transversality condition. First, 
\begin{align*}
(\Lambda Q,\mathcal{Z}_{k})_{r} & =(\Lambda Q,y^{2}Q\chi_{M})_{r}\delta_{1k},\\
(iQ,\mathcal{Z}_{k})_{r} & =(Q,\rho\chi_{M})_{r}\delta_{2k},
\end{align*}
because $\mathcal{Z}_{1},\mathcal{Z}_{4}$ are real, $\mathcal{Z}_{2},\mathcal{Z}_{3}$
are imaginary, and $\mathcal{L}_{Q}\Lambda Q=\mathcal{L}_{Q}iQ=0$.
Next, due to the additional terms in the definition of $\mathcal{Z}_{1}$
and $\mathcal{Z}_{2}$, and the algebras $\mathcal{L}_{Q}iy^{2}Q=-4i\Lambda Q$
and $\mathcal{L}_{Q}\rho=Q$, we have 
\begin{align*}
(iy^{2}Q,\mathcal{Z}_{k})_{r} & =(-4\Lambda Q,y^{2}Q\chi_{M})_{r}\delta_{3k},\\
(\rho,\mathcal{Z}_{k})_{r} & =(Q,-\rho\chi_{M})_{r}\delta_{4k}.
\end{align*}
Now the computations 
\begin{align*}
(\Lambda Q,y^{2}Q\chi_{M})_{r} & =(\Lambda Q,y^{2}Q)_{r}+O(M^{-2})=-\|yQ\|_{L^{2}}^{2}+O(M^{-2}),\\
(Q,\rho\chi_{M})_{r} & =(Q,\rho)_{r}+O(M^{-2})=\|L_{Q}\rho\|_{L^{2}}^{2}+O(M^{-2}),
\end{align*}
complete the proof.
\end{proof}
In the sequel, we will perform a bootstrap analysis. In the decomposition
\eqref{eq:u-decomp}, we will assume $|\eta|\ll b$ and $\epsilon$
stay small to guarantee the pseudoconformal blow-up. A quantitative
version of this smallness will be given as our bootstrap assumptions
and in fact used in the definition of trapped solutions.

Having fixed $\mathcal{Z}^{\perp}$ \eqref{eq:ortho-cond}, let $\mathcal{U}$
be a subset of $\R_{+}\times\R/2\pi\Z\times\R\times\R\times\mathcal{Z}^{\perp}$
consisting of $(\lambda,\gamma,b,\eta,\epsilon)$ satisfying 
\begin{equation}
b\in(0,b^{\ast}),\ |\eta|<Kb^{\frac{3}{2}},\ \|\epsilon\|_{L^{2}}<K(b^{\ast})^{\frac{1}{4}},\ \|\epsilon_{1}\|_{L^{2}}<Kb,\ \|\epsilon_{3}\|_{L^{2}}<Kb^{\frac{5}{2}}.\label{eq:bootstrap-hyp-H3}
\end{equation}
\eqref{eq:bootstrap-hyp-H3} will be used as a bootstrap hypothesis.
The largeness of $K$ and smallness of $b^{\ast}$ will be chosen
in the proof. See also Remark \ref{rem:ParameterDependence} for the
parameter dependence. We now define $\mathcal{O}\subseteq H_{m}^{3}$
by the set of images 
\begin{equation}
u(r)=\frac{e^{i\gamma}}{\lambda}[P(\cdot;b,\eta)+\epsilon]\Big(\frac{r}{\lambda}\Big)\in H_{m}^{3}.\label{eq:u-formula}
\end{equation}
By the definition \eqref{eq:Def-O-init}, $\mathcal{O}$ contains
$\mathcal{O}_{init}$. Moreover, by the definition \eqref{eq:def-H3-trapped},
the $H_{m}^{3}$-trapped solutions lies in $\mathcal{O}$ in their
forward lifespan. Next, we define $\overline{\mathcal{U}}$ and $\overline{\mathcal{O}}$
by the closure of $\mathcal{U}$ and $\mathcal{O}$, respectively.
Note that the case $b=\eta=0$ belongs to $\overline{\mathcal{U}}$,
and we will use $P(\cdot;0,0)=Q$. As Lemma \ref{lem:decomp} shows,
the set $\mathcal{O}$ is open and $(\lambda,\gamma,b,\eta,\epsilon)$
can be considered as coordinates of $u$.

Similarly, we can describe $H_{m}^{5}$ bootstrap assumptions when
$m\geq3$. Let $\mathcal{U}^{5}$ be a subset of $\R_{+}\times\R/2\pi\Z\times\R\times\R\times(\mathcal{Z}^{\perp}\cap H_{m}^{5})$
consisting of $(\lambda,\gamma,b,\eta,\epsilon)$ satisfying
\begin{equation}
b\in(0,b^{\ast}),\ |\eta|<Kb^{\frac{3}{2}},\ \|\epsilon\|_{L^{2}}<K(b^{\ast})^{\frac{1}{4}},\ \|\epsilon_{1}\|_{L^{2}}<Kb,\ \|\epsilon_{3}\|_{L^{2}}<Kb^{3},\ \|\epsilon_{5}\|_{L^{2}}<Kb^{\frac{9}{2}}.\label{eq:bootstrap-hyp-H5}
\end{equation}
Define $\mathcal{O}^{5}\subseteq H_{m}^{5}$ by the set of images
according to the formula \eqref{eq:u-formula}. Thus $H_{m}^{5}$-trapped
solutions lies in $\mathcal{O}^{5}$ in their forward lifespan.
\begin{lem}[Decomposition]
\label{lem:decomp}Consider 
\[
\Phi:\overline{\mathcal{U}}\to H_{m}^{3}
\]
that maps $(\lambda,\gamma,b,\eta,\epsilon)\in\overline{\mathcal{U}}$
to $u$ via \eqref{eq:u-formula}. Then, $\Phi(\overline{\mathcal{U}})=\overline{\mathcal{O}}$
and $\Phi$ is a homeomorphism onto $\overline{\mathcal{O}}$. The
subset $\mathcal{O}$ is open in $H_{m}^{3}$. Moreover, $\Phi|_{\mathcal{U}}$
is $C^{1}$ in $(\gamma,b,\eta,\epsilon)$ and continuous in $\lambda$.
The $(\lambda,\gamma,b,\eta)$-components of $\Phi^{-1}|_{\mathcal{O}}$
is $C^{1}$ and $\epsilon$-component is continuous. The analogous
statements also hold when $\mathcal{U},\mathcal{O},H_{m}^{3}$ are
replaced by $\mathcal{U}^{5},\mathcal{O}^{5},H_{m}^{5}$.
\end{lem}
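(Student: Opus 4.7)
The strategy is a direct application of the implicit function theorem, with the transversality relations \eqref{eq:Transversality} providing the non-degeneracy input.

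First I would reformulate the decomposition \eqref{eq:u-formula} as solving an orthogonality system. For $u\in H_m^3$ and modulation parameters in a neighborhood of a base point, define
\[
G_k(u;\lambda,\gamma,b,\eta) \coloneqq \bigl(e^{-i\gamma}\lambda u(\lambda\cdot) - P(\cdot;b,\eta),\ \mathcal{Z}_k\bigr)_r, \qquad k=1,2,3,4.
\]
Since $\mathcal{Z}_k$ is smooth with rapid decay by \eqref{eq:Orthog-Pointwise}, the map $\lambda\mapsto\lambda^{-1}\mathcal{Z}_k(\cdot/\lambda)$ is smooth from $\R_+$ into $L^2$; hence $G$ is $C^1$ jointly in $(u;\lambda,\gamma,b,\eta)\in H_m^3\times\overline{\mathcal{U}}$. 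Solving $G=0$ and setting $\epsilon\coloneqq e^{-i\gamma}\lambda u(\lambda\cdot)-P(\cdot;b,\eta)$ recovers the decomposition \eqref{eq:u-formula} with $\epsilon\in\mathcal{Z}^{\perp}$.

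Second, I would compute the Jacobian of $G$ in $(\lambda,\gamma,b,\eta)$ at the normalized base point $(Q,1,0,0,0)$. Using $\partial_\lambda[\lambda u(\lambda\cdot)]|_{\lambda=1,u=Q}=\Lambda Q$, $\partial_\gamma[e^{-i\gamma}Q]|_{\gamma=0}=-iQ$, $\partial_b P|_{b=\eta=0}=-\tfrac{iy^2}{4}Q$, and $\partial_\eta P|_{b=\eta=0}=-(m+1)\rho$ (from Lemma~\ref{lem:mod-profile-linear-regime} and Proposition~\ref{prop:modified-profile}(1)), the $k$-th row of the Jacobian is
\[
\Bigl((\Lambda Q,\mathcal{Z}_k)_r,\ -(iQ,\mathcal{Z}_k)_r,\ \tfrac{1}{4}(iy^2 Q,\mathcal{Z}_k)_r,\ (m+1)(\rho,\mathcal{Z}_k)_r\Bigr).
\]
By \eqref{eq:Transversality} this matrix is, up to an $o_{M\to\infty}(1)$ error, diagonal with nonzero diagonal entries $-\|yQ\|_{L^2}^2$, $-\|L_Q\rho\|_{L^2}^2$, $\|yQ\|_{L^2}^2$, $-(m+1)\|L_Q\rho\|_{L^2}^2$; hence it is invertible for $M$ large, with a uniformly bounded inverse. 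At a generic base point of $\overline{\mathcal{U}}$, after reducing to $\lambda=1,\gamma=0$ via the scaling and phase symmetries of the problem, the Jacobian differs from the above by an amount of size $O(b^{\ast}+K(b^{\ast})^{1/4})$: the $\epsilon$-perturbation is estimated by \eqref{eq:Orthog-L2-est} paired with $\|\epsilon\|_{L^2}<K(b^{\ast})^{1/4}$, and the $P-Q$ perturbation by the pointwise bounds of Proposition~\ref{prop:modified-profile}(1)-(2). Choosing $b^{\ast}$ sufficiently small in the ordering of Remark~\ref{rem:ParameterDependence} preserves invertibility with a uniform bound on the inverse throughout $\overline{\mathcal{U}}$.

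Third, the implicit function theorem then produces, near each base point, a unique $C^1$ solution $(\lambda,\gamma,b,\eta)=(\lambda(u),\gamma(u),b(u),\eta(u))$ of $G=0$, yielding the $C^1$-regularity of the first four components of $\Phi^{-1}$. The $\epsilon$-component, $\epsilon(u)=e^{-i\gamma(u)}\lambda(u)u(\lambda(u)\cdot)-P(\cdot;b(u),\eta(u))$, is only continuous from $H_m^3$ into $H_m^3$ because the $L^2$-isometric scaling $f\mapsto \lambda f(\lambda\cdot)$ is strongly continuous but not $C^1$ on general $H_m^3$ data. Local uniqueness combined with the scaling and phase symmetries globalizes to a homeomorphism $\Phi\colon\overline{\mathcal{U}}\to\overline{\mathcal{O}}$, and openness of $\mathcal{O}$ in $H_m^3$ is immediate from the implicit function theorem. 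Forward regularity of $\Phi|_{\mathcal{U}}$ is then direct: $e^{i\gamma}$ is smooth in $\gamma$, $P(\cdot;b,\eta)$ is $C^1$ in $(b,\eta)$ into $H_m^3$ by Proposition~\ref{prop:modified-profile}(2), and the map is affine in $\epsilon$, while the $\lambda$-dependence is only continuous for the same scaling reason. The main technical obstacle is the uniform invertibility of the Jacobian throughout the trapped regime \eqref{eq:bootstrap-hyp-H3}, which requires the careful parameter hierarchy $K\ll M\ll(b^{\ast})^{-1}$; the boundary case $b=\eta=0$ is included in the base-point analysis, so extension to $\overline{\mathcal{U}}$ is automatic. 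The $H_m^5$ statement for $m\geq 3$ is obtained by the identical argument with $H_m^3$, $\dot{\mathcal{H}}_m^3$, \eqref{eq:Orthog-Hdot3-est} replaced by $H_m^5$, $\dot{\mathcal{H}}_m^5$, \eqref{eq:Orthog-Hdot5-est}.
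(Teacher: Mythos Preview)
Your overall strategy---define an orthogonality functional $G$, compute its Jacobian via the transversality relations \eqref{eq:Transversality}, and invoke the implicit function theorem---is exactly the paper's, and your Jacobian matches \eqref{eq:ortho-jacobian}.

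There is, however, a genuine technical gap. The profile $P(\cdot;b,\eta)$ in \eqref{eq:def-mod-profile} is only defined on the cusp-shaped region $\{0<b<b^{\ast},\ |\eta|<(\tfrac{\delta}{2})^{2}b\}$, with the single point $(0,0)$ adjoined by the convention $P(\cdot;0,0)=Q$. This is \emph{not} an open neighborhood of $(b,\eta)=(0,0)$, so your map $G$ is not defined on any open set containing the base point $(Q,1,0,0,0)$, and the implicit function theorem cannot be applied there as stated. Likewise, your appeal to Proposition~\ref{prop:modified-profile}(1) for $\partial_{b}P|_{b=\eta=0}$ and $\partial_{\eta}P|_{b=\eta=0}$ is circular: that proposition is only formulated for $0<b<b^{\ast}$ with $|\eta|<(\tfrac{\delta}{2})^{2}b$, so the derivatives at $(0,0)$ are not furnished directly. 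The paper resolves this by constructing an artificial $C^{1}$ extension $\tilde P(\cdot;b,\eta)$ to a full box $|b|,|\eta|<\delta_{1}$, agreeing with $P$ on the trapped region $|\eta|\leq|b|^{5/4}$, via a cutoff $\psi_{b}$ in the $\eta$-variable together with the explicit correction $-(m+1)(\eta-\psi_{b}(\eta))\rho\chi_{2M}$; the $\partial_{b}$ and $\partial_{\eta}$ computations must then be carried out for $\tilde P$ and shown to be continuous across $b=0$. This extension step is not a cosmetic detail---without it the IFT hypothesis fails---and it is the main technical content of the paper's proof that your sketch omits.
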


\begin{proof}
Let us introduce some notations that will be used only in this proof.
For $\lambda\in\R_{+}$ and $\gamma\in\R/2\pi\Z$, let us denote 
\[
f_{\lambda,\gamma}(y)\coloneqq\frac{e^{i\gamma}}{\lambda}f\Big(\frac{y}{\lambda}\Big).
\]
We equip $\R_{+}$ with the metric $\mathrm{dist}(\lambda_{1},\lambda_{2})=|\log(\lambda_{1}/\lambda_{2})|$,
and equip $\R/2\pi\Z$ with the induced metric from $\R$. We will
use small parameters $\delta_{1},\delta_{2}>0$ (to be chosen later
in this proof) such that $0<\delta_{2}\ll\delta_{1}\ll\min\{\delta,\eta^{\ast},M^{-1}\}$.\footnote{Recall from Remark \ref{rem:ParameterDependence} that this implies
$\delta_{1}\leq M^{-C}$.}

Now we turn to $P(y;b,\eta)$. Notice that the profile $P(y;b,\eta)$
is only defined for $(b,\eta)$ with $|\eta|\ll b$, \emph{not} for
$|(b,\eta)|\ll1$. We extend it for $b<0$ with $|\eta|\leq(\tfrac{\delta}{2})^{2}|b|$
as $P(y;b,\eta)\coloneqq\chi_{|b|^{-\frac{1}{2}}}(y)Q^{(\eta)}(y)e^{-ib\frac{y^{2}}{4}}$.
We hope to apply the implicit function theorem near $Q=P(\cdot;0,0)$,
so we will define an artificial extension of the profile $P(y;b,\eta)$
for $(b,\eta)$ in a neighborhood of $(0,0)$. We introduce a smooth
function $\psi:\R\to\R$ such that $\psi(\tilde{\eta})=\tilde{\eta}$
for $|\tilde{\eta}|\leq1$ and $\sup(|\psi|+|\tilde{\eta}\psi'|)\leq2$.
For $|b|<\delta_{1}$, we define $\psi_{b}(\tilde{\eta})=|b|^{\frac{5}{4}}\psi(|b|^{-\frac{5}{4}}\tilde{\eta})$
if $b\neq0$ and $\psi_{0}(\tilde{\eta})=0$. Thus $\partial_{b}\psi_{b}(\tilde{\eta})=-\mathrm{sgn}(b)\tfrac{5}{4}|b|^{\frac{1}{4}}[\Lambda_{2}\psi](|b|^{-\frac{5}{4}}\tilde{\eta})$
if $b\neq0$ and $\partial_{b=0}\psi_{b}(\tilde{\eta})=0$. Finally,
we define an artificial extension $\tilde P(\cdot;b,\eta)$ of $P(y;b,\eta)$
by 
\[
\tilde P(\cdot;b,\eta)\coloneqq P(\cdot;b,\psi_{b}(\eta))-(m+1)(\eta-\psi_{b}(\eta))\rho\chi_{2M}
\]
for $|\eta|,|b|<\delta_{1}$.\footnote{Note here that smallness condition on $\delta_{1}$ depending on $\delta$
and $\eta^{\ast}$ (defined in Lemma \ref{lem:mod-profile-linear-regime})
is necessary to define the object $\tilde P(\cdot;b,\eta)$.} Thus $\tilde P(\cdot;b,\eta)=P(\cdot;b,\eta)$ for $|\eta|\leq|b|^{\frac{5}{4}}$
and hence for $(b,\eta)$ in our bootstrap hypothesis \eqref{eq:bootstrap-hyp-H3}.

We now come to our main part of the proof. We hope to apply the implicit
function theorem to the map $\mathbf{F}=(F_{1},F_{2},F_{3},F_{4})^{t}:\R_{+}\times\R/2\pi\Z\times B_{\delta_{1}}(0)\times B_{\delta_{1}}(0)\times L^{2}\to\R^{4}$
defined by 
\begin{align*}
F_{k}(\lambda,\gamma,b,\eta,u) & \coloneqq(\epsilon,\mathcal{Z}_{k})_{r},\qquad\forall k\in\{1,2,3,4\}\\
\epsilon & =u_{\lambda^{-1},-\gamma}(y)-\tilde P(y;b,\eta).
\end{align*}
We claim that $\mathbf{F}$ is $C^{1}$ and $\partial_{\lambda,\gamma,b,\eta}\mathbf{F}$
is invertible at $(\lambda,\gamma,b,\eta,u)=(1,0,0,0,Q)$. To see
this, we compute using \eqref{eq:Transversality} 
\begin{align*}
\partial_{\lambda}F_{k} & =([\Lambda Q]_{\lambda^{-1},-\gamma},\mathcal{Z}_{k})_{r}-(u-Q,[\Lambda\mathcal{Z}_{k}]_{\lambda,\gamma})_{r}\\
 & =-\delta_{1k}\|yQ\|_{L^{2}}^{2}+M^{C}O(\mathrm{dist}((\lambda,\gamma),(1,0))+\|u-Q\|_{L^{2}})+o_{M\to\infty}(1),\\
\partial_{\gamma}F_{k} & =-([iQ]_{\lambda^{-1},-\gamma},\mathcal{Z}_{k})_{r}+(u-Q,[i\mathcal{Z}_{k}]_{\lambda,\gamma})_{r}\\
 & =-\delta_{2k}\|L_{Q}\rho\|_{L^{2}}^{2}+M^{C}O(\mathrm{dist}((\lambda,\gamma),(1,0))+\|u-Q\|_{L^{2}})+o_{M\to\infty}(1).
\end{align*}
Next, by pointwise estimates of Proposition \ref{prop:modified-profile}
and using $\|\partial_{b}\psi_{b}\|_{L^{\infty}}\lesssim|b|^{\frac{1}{4}}$,
we have\footnote{Note that Proposition \ref{prop:modified-profile} holds for $0<|b|<b^{\ast}$.
When $b=0$, one directly computes the limit of $b^{-1}(\tilde P(y;b,\eta)-\tilde P(y;0,\eta))$
as $b\to0$, using $\tilde P(y;0,\eta)=Q-(m+1)\eta\rho\chi_{M}$.
As a result, one gets $\partial_{b=0}\tilde P(0;b,\eta)+i\tfrac{y^{2}}{4}Q=0$
pointwisely and also in $L_{\mathrm{loc}}^{2}$. In particular, $\partial_{b=0}F_{k}=(i\tfrac{y^{2}}{4}Q,\mathcal{Z}_{k})_{r}=\delta_{4k}(\Lambda Q,y^{2}Q\chi_{M})_{r}$.} 
\begin{align*}
 & |\partial_{b}\tilde P(0;b,\eta)+i\tfrac{y^{2}}{4}Q|\\
 & =\Big|\Big(\partial_{b}P(\cdot;b,\tilde{\eta})|_{\tilde{\eta}=\psi_{b}(\eta)}+i\tfrac{y^{2}}{4}Q\Big)+\partial_{b}\psi_{b}(\eta)\cdot\partial_{\tilde{\eta}=\psi_{b}(\eta)}P(\cdot;b,\tilde{\eta})+(m+1)\partial_{b}\psi_{b}(\eta)\rho\chi_{2M}\Big|\\
 & \lesssim\mathbf{1}_{y\leq2|b|^{-1/2}}(|b|y^{4}Q+|b|^{\frac{1}{4}}y^{2}Q)+\mathbf{1}_{y\geq2|b|^{-1/2}}y^{2}Q.
\end{align*}
Combining this with the pointwise estimates on $\mathcal{Z}_{k}$
and \eqref{eq:Transversality}, we have 
\[
\partial_{b}F_{k}=(i\tfrac{y^{2}}{4}Q,\mathcal{Z}_{k})_{r}+O(M^{C}|b|^{\frac{1}{4}})=\delta_{3k}\|yQ\|_{L^{2}}^{2}+O(M^{C}|b|^{\frac{1}{4}})+o_{M\to\infty}(1).
\]
Next, again by pointwise estimates of Proposition \ref{prop:modified-profile}
\begin{align*}
 & |\partial_{\eta}\tilde P(\cdot;b,\eta)+(m+1)\rho|\\
 & =\Big|\psi_{b}'(\eta)\Big(\partial_{\tilde{\eta}=\psi_{b}(\eta)}P(\cdot;b,\tilde{\eta})+(m+1)\rho\Big)+(1-\psi_{b}'(\eta))(m+1)(\rho-\rho\chi_{2M})\Big|\\
 & \lesssim\mathbf{1}_{y\leq2|b|^{-1/2}}|b|y^{4}Q+\mathbf{1}_{y\geq2M}y^{2}Q.
\end{align*}
Combining this with the above pointwise estimates on $\mathcal{Z}_{k}$
and \eqref{eq:Transversality}, we have 
\begin{align*}
\partial_{\eta}F_{k} & =(m+1)(\rho,\mathcal{Z}_{k})_{r}+O(M^{C}|b|+M^{-4}\log M)\\
 & =-\delta_{4k}(m+1)\|L_{Q}\rho\|_{L^{2}}^{2}+O(M^{C}|b|)+o_{M\to\infty}(1).
\end{align*}
Finally, we have 
\[
\frac{\delta F_{k}}{\delta u}=(\mathcal{Z}_{k})_{\lambda,\gamma}\in L^{2}.
\]
Gathering the above computations, we have $\mathbf{F}\in C^{1}$ and
moreover 
\begin{align}
 & \partial_{\lambda,\gamma,b,\eta}\mathbf{F}|_{(1,0,0,0,Q)}+o_{M\to\infty}(1)\nonumber \\
 & =\begin{pmatrix}-\|yQ\|_{L^{2}}^{2} & 0 & 0 & 0\\
0 & -\|L_{Q}\rho\|_{L^{2}}^{2} & 0 & 0\\
0 & 0 & \|yQ\|_{L^{2}}^{2} & 0\\
0 & 0 & 0 & -(m+1)\|L_{Q}\rho\|_{L^{2}}^{2}
\end{pmatrix}.\label{eq:ortho-jacobian}
\end{align}

Therefore, by the implicit function theorem, provided that $M\gg1$,
there exist $0<\delta_{2}\ll\delta_{1}$ and $C^{1}$-map $\mathbf{G}^{1,0}:B_{\delta_{2}}(Q)\to B_{\delta_{1}}(1,0,0,0)$
such that for given $u\in B_{\delta_{2}}(Q)\subseteq L^{2}$, $\mathbf{G}^{1,0}(u)$
is a unique solution to $\mathbf{F}(\mathbf{G}^{1,0}(u),u)=0$ in
$B_{\delta_{1}}(1,0,0,0)$. In particular, we have a Lipschitz bound
\[
\mathrm{dist}(\mathbf{G}^{1,0}(u),(1,0,0,0))\lesssim\|u-Q\|_{L^{2}}.
\]

We now use scale/phase invariances to cover the $\delta_{2}$-neighborhood
of $\{Q_{\lambda,\gamma}:\lambda\in\R_{+},\gamma\in\R/2\pi\Z\}$ in
$L^{2}$. For $\lambda\in\R_{+}$ and $\gamma\in\R/2\pi\Z$, define
$\mathbf{G}^{\lambda,\gamma}:B_{\delta_{2}}(Q_{\lambda,\gamma})\to B_{\delta_{1}}(\lambda,\gamma,0,0)$
in an obvious way, by applying the scale/phase invariances to $\mathbf{G}^{1,0}$.
Thus uniqueness property of $\mathbf{G}^{\lambda,\gamma}$ holds for
values in $B_{\delta_{1}}(\lambda,\gamma,0,0)$.

We now claim that the family $\{\mathbf{G}^{\lambda,\gamma}\}_{\lambda,\gamma}$
is compatible, i.e. 
\[
\mathbf{G}\coloneqq{\textstyle \bigcup_{\lambda,\gamma}}\mathbf{G}^{\lambda,\gamma}:{\textstyle \bigcup_{\lambda,\gamma}}B_{\delta_{2}}(Q_{\lambda,\gamma})\to\R_{+}\times\R/2\pi\Z\times B_{\delta_{1}}(0)\times B_{\delta_{1}}(0)
\]
is well-defined. Indeed, if $u\in B_{\delta_{2}}(Q_{\lambda_{1},\gamma_{1}})\cap B_{\delta_{2}}(Q_{\lambda_{2},\gamma_{2}})$,
then $\mathrm{dist}((\lambda_{1},\gamma_{1}),(\lambda_{2},\gamma_{2}))\lesssim\delta_{2}$
thus $\mathrm{dist}(\mathbf{G}^{\lambda_{2},\gamma_{2}}(u),(\lambda_{1},\gamma_{1},0,0))\lesssim\delta_{2}\ll\delta_{1}$.
Since $\mathbf{G}^{\lambda_{2},\gamma_{2}}(u)$ satisfies the equation
$\mathbf{F}(\mathbf{G}^{\lambda_{2},\gamma_{2}}(u),u)=0$, the uniqueness
property of $\mathbf{G}^{\lambda_{1},\gamma_{1}}(u)$ in $B_{\delta_{1}}(\lambda_{1},\gamma_{1},0,0)$
gives $\mathbf{G}^{\lambda_{2},\gamma_{2}}(u)=\mathbf{G}^{\lambda_{1},\gamma_{1}}(u)$,
showing the claim.

We further need the following uniqueness property of $\mathbf{G}$:
given $u\in B_{\delta_{2}}(Q)\subseteq L^{2}$, $\mathbf{G}(u)$ is
a unique solution to $\mathbf{F}(\mathbf{G}(u),u)=0$ in $\R_{+}\times\R/2\pi\Z\times B_{\delta_{1}}(0)\times B_{\delta_{1}}(0)$
with $\|\epsilon\|_{L^{2}}\lesssim\delta_{2}$. Indeed, suppose that
$\mathbf{G}'=(\lambda',\gamma',b',\eta')$ is a solution to $\mathbf{F}(\mathbf{G}',u)=0$
such that $\epsilon'=u_{(\lambda')^{-1},-\gamma'}-P(\cdot;b',\eta')$
satisfies $\|\epsilon'\|_{L^{2}}\lesssim\delta_{2}$. When $\mathrm{dist}(\mathbf{G}',\mathbf{G}(u))<\delta_{1}$,
then the uniqueness of $\mathbf{G}^{\lambda,\gamma}$ yields $\mathbf{G}'=\mathbf{G}(u)$.
If $\mathrm{dist}(\mathbf{G}',\mathbf{G}(u))>\delta_{1}$, then $\|\tilde P(y;b',\eta')_{\lambda',\gamma'}-\tilde P(y;b,\eta)_{\lambda,\gamma}\|_{L^{2}}\gtrsim\delta_{1}$
but $\|\epsilon'\|_{L^{2}},\|\epsilon\|_{L^{2}}\lesssim\delta_{2}$.
This contradicts to the identity $[\tilde P(y;b',\eta')+\epsilon']_{\lambda',\gamma'}=u=[P(\cdot;b,\eta)+\epsilon]_{\lambda,\gamma}$.

We now conclude the proof of this lemma on the $H_{m}^{3}$-topology.
Notice that the domain of $\mathbf{G}$ contains $\overline{\mathcal{O}}$.
As $\mathbf{G}$ is $C^{1}$ with respect to the $L^{2}$-topology
on $u$, it is also $C^{1}$ with respect to the $H_{m}^{3}$-topology
on $u$. Note that the map $u\mapsto(\mathbf{G}(u),u)\mapsto\epsilon$
is only continuous. Thus the map $(\mathbf{G},\epsilon)$ restricted
on $\overline{\mathcal{O}}$ is continuous. By the definition of $\epsilon$,
$(\mathbf{G},\epsilon)$ is the right inverse of $\Phi$. By the uniqueness
property of $\mathbf{G}$, one also has that $(\mathbf{G},\epsilon)$
is the left inverse of $\Phi$. These facts show that $\Phi(\overline{\mathcal{U}})=\overline{\mathcal{O}}$
and $\Phi$ is a homeomorphism with the inverse $(\mathbf{G},\epsilon)$.
Openness of $\mathcal{O}$ now follows from openness of $\mathcal{U}$.
Finally, when we restrict $\Phi$ on $\mathcal{U}$, then it is $C^{1}$
in $(\gamma,b,\eta,\epsilon)$ and continuous in $\lambda$.

The argument of the previous paragraph is still valid when $\mathcal{U},\mathcal{O},H_{m}^{3}$
are replaced by $\mathcal{U}^{5},\mathcal{O}^{5},H_{m}^{5}$. This
finishes the proof for the $H_{m}^{5}$-topology.
\end{proof}
\begin{rem}
Since the curve $\lambda\mapsto\lambda u(\lambda\cdot)$ is continuous
but not $C^{1}$, $\Phi$ is merely continuous in $\lambda$.
\end{rem}

\subsection{\label{subsec:Existence-of-trapped-solutions}Existence of trapped
solutions}

In this section, we reduce the existence of trapped solutions in Theorems
\ref{thm:Existence} and \ref{thm:BlowupManifold} into two propositions.
The heart of the proof is the main bootstrap argument, provided in
Proposition \ref{prop:main-bootstrap}. It roughly says that all the
assumptions except the $\eta$-bound of \eqref{eq:bootstrap-hyp-H3}
and \eqref{eq:bootstrap-hyp-H5} can be bootstrapped. Next, in Proposition
\ref{prop:Sets-I-pm}, we use a connectivity argument to show that
there exists a \emph{trapped solution}, which satisfies \eqref{eq:bootstrap-hyp-H3}
and \eqref{eq:bootstrap-hyp-H5} on its forward lifespan. Such a combination
of bootstrap and connectivity arguments is rather standard. Here we
follow \cite{MerleRaphaelRodnianski2013InventMath}.
\begin{proof}[Proof of the existence of trapped solutions, assuming Propositions
\ref{prop:main-bootstrap} and \ref{prop:Sets-I-pm}]
Here we prove the existence of $H_{m}^{3}$-trapped solutions in
Theorem \ref{thm:Existence} and $H_{m}^{5}$-trapped solutions in
Theorem \ref{thm:BlowupManifold}. Proofs of those are soft arguments
and very similar, so we only prove the former.

Let $(\lambda_{0},\gamma_{0},b_{0},\epsilon_{0})\in\tilde{\mathcal{U}}_{init}$
and consider $\eta_{0}$ which varies in the range $(-\frac{K}{2}b_{0}^{3/2},\frac{K}{2}b_{0}^{3/2})$.
Define $u_{0}\in\mathcal{O}_{init}$ via \eqref{eq:Def-u_0} and let
$u$ be the forward-in-time maximal solution to \eqref{eq:CSS} with
the initial data $u_{0}$. Let $[0,T)$ with $0<T\leq+\infty$ be
the lifespan of $u$.

We claim that $u(t)\in\mathcal{O}$ for all $t\in[0,T)$ for a well-chosen
$\eta_{0}$. As $u(0)=u_{0}\in\mathcal{O}$ initially and $\mathcal{O}$
is open, we can define the exit time 
\[
T_{\mathrm{exit}}\coloneqq\sup\{\tau\in[0,T):u(\tau')\in\mathcal{O}\text{ for }\tau'\in[0,\tau]\}\in(0,T].
\]
If $T_{\mathrm{exit}}=T$ for some $\eta_{0}$, then we are done.
Now assume $T_{\mathrm{exit}}<T$ for all $\eta_{0}$. Note that $u(T_{\mathrm{exit}})\in\overline{\mathcal{O}}\setminus\mathcal{O}$
due to openness of $\mathcal{O}$ and maximality of $T_{\mathrm{exit}}$.
We associate the modulation parameters and remainder $(\lambda,\gamma,b,\eta,\epsilon)$
with $u$ for each time $t\in[0,T_{\mathrm{exit}}]$ according to
Lemma \ref{lem:decomp}. The following proposition is shown in Section
\ref{sec:Main-bootstrap}, and is the heart of the proof of Theorem
\ref{thm:Existence}:
\begin{prop}[Main bootstrap]
\label{prop:main-bootstrap}\ 
\begin{itemize}
\item Let $m\geq1$; let $u$ be a solution to \eqref{eq:CSS} with initial
data $u_{0}\in\mathcal{O}_{init}$. If $u(t)\in\mathcal{O}$ (i.e.,
the $H^{3}$-boostrap hypothesis \eqref{eq:bootstrap-hyp-H3} is satisfied)
for $t\in[0,\tau_{\ast}]$ for some $\tau_{\ast}>0$, then the bootstrap
conclusion holds for $t\in[0,\tau_{\ast}]$: 
\[
b\in(0,b_{0}],\quad\|\epsilon\|_{L^{2}}<\tfrac{K}{2}(b^{\ast})^{\frac{1}{4}},\quad\|\epsilon_{1}\|_{L^{2}}<\tfrac{K}{2}b,\quad\|\epsilon_{3}\|_{L^{2}}<\tfrac{K}{2}b^{\frac{5}{2}}.
\]
\item Let $m\geq3$; let $u$ be a solution to \eqref{eq:CSS} with initial
data $u_{0}\in\mathcal{O}_{init}^{5}$. If $u(t)\in\mathcal{O}^{5}$
(i.e., the $H^{5}$-boostrap hypothesis \eqref{eq:bootstrap-hyp-H5}
is satisfied) for $t\in[0,\tau_{\ast}]$ for some $\tau_{\ast}>0$,
then the bootstrap conclusion holds for $t\in[0,\tau_{\ast}]$: 
\[
b\in(0,b_{0}],\ \|\epsilon\|_{L^{2}}<\tfrac{K}{2}(b^{\ast})^{\frac{1}{4}},\ \|\epsilon_{1}\|_{L^{2}}<\tfrac{K}{2}b,\ \|\epsilon_{3}\|_{L^{2}}<\tfrac{K}{2}b^{3},\ \|\epsilon_{5}\|_{L^{2}}<\tfrac{K}{2}b^{\frac{9}{2}}.
\]
\end{itemize}
\end{prop}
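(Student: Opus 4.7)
\textbf{Proof plan for Proposition \ref{prop:main-bootstrap}.}

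The plan is to run a standard modulational/modified energy scheme, organized by Sobolev level, following the general template of \cite{MerleRaphaelRodnianski2013InventMath,MerleRaphaelRodnianski2015CambJMath,Collot2018MemAMS}. First I would derive the modulation equations by differentiating the four orthogonality conditions $(\epsilon,\mathcal Z_k)_r=0$ in time, inserting the $\epsilon$-equation
\[
(\partial_s-\tfrac{\lambda_s}{\lambda}\Lambda+\gamma_s i)\epsilon+i\mathcal L_Q\epsilon=\mathbf{Mod}\cdot\mathbf v-iR_{\mathrm{L-L}}-iR_{\mathrm{NL}}-i\Psi,
\]
and using the nondegeneracy \eqref{eq:Transversality} to invert the resulting $4\times 4$ linear system. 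This yields pointwise-in-$s$ estimates of the form
\[
\Big|\tfrac{\lambda_s}{\lambda}+b\Big|+|\gamma_s-\eta\theta_\eta|+|b_s+b^2+\eta^2|+|\eta_s|\lesssim M^{C}\|\epsilon_3\|_{L^2}+b^{m+2},
\]
(sharper on $\eta_s$, exploiting $A_Q L_Q i\mathcal Z_k$ being $O(M^{-1})$ by \eqref{eq:Orthogonality-approx-gen-null-H3}). Once these are in hand, the law $b_s=-b^2-\eta^2+O(\cdots)\le 0$ combined with the hypothesis $|\eta|<Kb^{3/2}$ and the bootstrap bound on $\|\epsilon_3\|_{L^2}$ gives monotone decay of $b$, hence $b(t)\in(0,b_0]$.

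Next I would close the low-order bounds. For $\|\epsilon\|_{L^2}$ I would use conservation of charge $M[u]$, comparing it to $\|P(\cdot;b,\eta)\|_{L^2}^{2}=\|Q\|_{L^2}^2+O(|\eta|+b^{m+1})$ from Proposition \ref{prop:modified-profile}; since $\|\epsilon_0\|_{L^2}\le b_0^3$, this immediately gives $\|\epsilon\|_{L^2}\ll(b^*)^{1/4}$. For $\|\epsilon_1\|_{L^2}$ I would use conservation of energy together with the self-dual identity $E[u]=\tfrac12\|D_+u\|_{L^2}^2$. Writing $D_+u$ in terms of $L_Q\epsilon$ plus quadratic terms and the profile contribution, and invoking the coercivity Lemma \ref{lem:Coercivity-L-Section2} on $\mathcal Z^\perp$, one obtains $\|\epsilon_1\|_{L^2}^2 \lesssim b^2+(\text{smaller})$, closing the bootstrap on $\|\epsilon_1\|_{L^2}$.

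The heart of the argument is the $\dot{\mathcal H}_m^3$-level modified energy. Using the conjugation identity \eqref{eq:ConjugationIdentity} and $[A_Q,i]=0$, the variable $\epsilon_3=A_Q^*A_QL_Q\epsilon$ satisfies a Hamiltonian equation $(\partial_s+iA_Q^*A_Q)\epsilon_3=\cdots$, so the natural energy is $\|\epsilon_3\|_{L^2}^2$. I will set up a modified energy
\[
\mathcal E_3(s)\coloneqq\tfrac12\|\epsilon_3\|_{L^2}^2+\lambda^{2}b\cdot\mathcal C_3(s),
\]
where $\mathcal C_3$ is a truncated virial correction built on $(\epsilon_2,-i\Lambda\epsilon_2)_r$; by the formal monotonicity \eqref{eq:FormalMonotonicity} and repulsivity of $A_QA_Q^*$, the time derivative of $\mathcal C_3$ produces a good term $-\|\epsilon_3\|_{L^2}^2$ that absorbs the worst contribution from the rescaling $\tfrac{\lambda_s}{\lambda}\Lambda$. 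The remainders $R_{\mathrm{L-L}}$ and $R_{\mathrm{NL}}$ are treated as in \cite{KimKwon2019arXiv}: their non-perturbative pieces $-\theta_{\mathrm{L-L}}P-\theta_{\mathrm{NL}}P$ are absorbed as phase corrections in the modulation system, while the harmless parts are estimated in $L^2$ via weighted Hardy controls (Section \ref{subsec:Adapted-function-spaces}) and the bounds on $\Psi$ from Proposition \ref{prop:modified-profile}. The monotonicity identity, in rescaled time and after integrating in $t=\lambda^2 s$, yields
\[
\frac{d}{dt}\Big(\frac{\mathcal E_3}{\lambda^{6}}\Big)\lesssim\frac{b^{2m+4}+(\text{l.o.t.})}{\lambda^{8}},
\]
which, integrated from $0$ and combined with the smallness $\|\epsilon_0\|_{H_m^3}<b_0^{3}$ and $\lambda\sim b$, closes the bootstrap $\|\epsilon_3\|_{L^2}<\tfrac{K}{2}b^{5/2}$.

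The $H_m^5$ version in the second bullet runs the same machinery one step higher, with $\epsilon_5=A_Q^*A_QA_Q^*A_QL_Q\epsilon$ as the Hamiltonian variable (requiring $m\ge 3$ to have the positivity/coercivity of Lemma \ref{lem:Coercivity-AAAAL-Section2} and \eqref{eq:positivity-AAA}), a second truncated virial corrector based on the monotonicity for $A_QA_Q^*$ at higher order, and the improved profile error \eqref{eq:Radiation-Hdot5-est}. The main obstacles I expect are (i) identifying precisely which nonlocal pieces of $R_{\mathrm{L-L}}+R_{\mathrm{NL}}$ must be siphoned into $\theta_{\mathrm{L-L}},\theta_{\mathrm{NL}}$ to retain the sharp $b^{m+2}$ modulation estimate, (ii) choosing the cut-off radius and weight in the virial correction $\mathcal C_3$ (and its $H^5$-analogue) so that the repulsivity beats the scaling term while the boundary flux remains subcritical, and (iii) handling the nonlocal quadratic nonlinearity $N_w(\epsilon)$ and the $B_w^{*}[\overline\epsilon\,\cdot]$-type terms, which do not commute with $A_Q$ and must be bounded via the Hardy-Sobolev embedding \eqref{eq:HardySobolevSection2} and the $\dot{\mathcal H}^3/\dot{\mathcal H}^5$ subcoercivity of Lemmas \ref{lem:Mapping-AAL-Section2} and \ref{lem:Mapping-AAAAL-Section2}.
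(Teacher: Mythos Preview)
Your overall architecture matches the paper's: modulation estimates from differentiating the orthogonality conditions, energy conservation for $\|\epsilon_1\|_{L^2}$, a modified energy with a virial correction for $\|\epsilon_3\|_{L^2}$, and the same one level higher for $\epsilon_5$. Two points, however, are not quite right and would prevent the argument from closing as written.

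First, the modulation estimate you state, $|\mathbf{Mod}|\lesssim M^{C}\|\epsilon_3\|_{L^2}+\cdots$, is too crude. The linear contribution $(\epsilon,\mathcal L_Q i\mathcal Z_k)_r$ must come with a \emph{small} coefficient $o_{M\to\infty}(1)$ in front of $\|\epsilon_3\|_{L^2}$, not $M^{C}$; otherwise the modulation term in the $\epsilon_3$-energy identity contributes $b\cdot M^{C}\|\epsilon_3\|_{L^2}^2$, which cannot be absorbed since $M\gg K$. The smallness is obtained not only for $\eta_s$ but for all four components, using that $\mathcal Z_3=\mathcal L_Q i\mathcal Z_1$, $\mathcal Z_4=\mathcal L_Q i\mathcal Z_2$ kill the linear term for $k=1,2$, and for $k=3,4$ one rewrites $(\epsilon,\mathcal L_Q i\mathcal L_Q i\mathcal Z_{k-2})_r=(\epsilon_2, iA_QL_Q i\mathcal Z_{k-2})_r$ and invokes \eqref{eq:Orthogonality-approx-gen-null-H3}.

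Second, the role of the virial correction is not to produce ``$-\|\epsilon_3\|_{L^2}^2$ absorbing the scaling term''. The scaling is already built into $(\partial_s-6\tfrac{\lambda_s}{\lambda})\|\epsilon_3\|_{L^2}^2$. The bad term that survives is $Cb\,\|\epsilon_3\|_{L^2}\,\|\epsilon\|_{\dot{\mathcal H}^3_{m,\le M_2}}$, coming from the scaling \emph{commutator} $\partial_\lambda(A_{Q_\lambda}^{*}A_{Q_\lambda}L_{Q_\lambda})\epsilon$ and from $\tilde R_{\mathrm{L-L}}$; the point is that these are bounded by \emph{localized} $\dot{\mathcal H}^3$-norms (Lemmas~\ref{lem:commutator} and~\ref{lem:degenerate-linear}). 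The localized virial $(\epsilon_2,-i\Lambda_{M_2}\epsilon_2)_r$ then yields, via localized coercivity (Lemma~\ref{lem:LocalizedRepulsivity}), a good term $c_M b\,\|\epsilon\|_{\dot{\mathcal H}^3_{m,\le M_2}}^{2}$. One multiplies the virial by a large constant $M_1$ (with $M\ll M_1\ll M_2$) so that $M_1c_M$ beats $C$, and the cross term is absorbed by Cauchy--Schwarz. A further subtlety you do not mention is that truncating $\Lambda\to\Lambda_{M_2}$ creates a boundary error $\|\mathbf 1_{y\sim M_2}|\epsilon|_{-3}\|_{L^2}^{2}$, which is only borderline; the paper removes it by \emph{averaging} the virial over $M_2'\in[M_2,M_2^2]$ with measure $dM_2'/M_2'$, turning it into $o_{M_2\to\infty}(1)\|\epsilon\|_{\dot{\mathcal H}^3_m}^{2}$. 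Without these two devices (the $M_1$-amplification and the $M_2$-averaging), obstacle~(ii) in your list is genuinely unresolved.
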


Compared to \eqref{eq:bootstrap-hyp-H3}, all the assumptions except
the $\eta$-bound are improved. It is indeed impossible to improve
the $\eta$-bound, because the $\eta$-parameter accounts of the instability.

Back to the proof, $u(T_{\mathrm{exit}})\in\overline{\mathcal{O}}\setminus\mathcal{O}$
and Proposition \ref{prop:main-bootstrap} says that either $b=0$
or $|\eta|=Kb^{3/2}$ at $t=T_{\mathrm{exit}}$. It is easy to exclude
the case $b=0$. If $b=0$ at $t=T_{\mathrm{exit}}$, then $u(T_{\mathrm{exit}})$
must be a scaled $Q$, which is a static solution. Hence $u_{0}=u(T_{\mathrm{exit}})$
is a scaled $Q$, which does not belong to $\mathcal{O}_{init}$,
yielding a contradiction.

Thus $|\eta|=Kb^{3/2}$ at $t=T_{\mathrm{exit}}$. To derive a contradiction,
we use a basic connectivity argument. Let $\mathcal{I}_{\pm}$ be
the set of $\eta_{0}$ such that $\eta(T_{\mathrm{exit}})=\pm Kb^{3/2}(T_{\mathrm{exit}})$.
Note that $\mathcal{I}_{\pm}$ partitions $(-\frac{K}{2}b_{0}^{3/2},\frac{K}{2}b_{0}^{3/2})$.
The following proposition is shown in Section \ref{subsec:Existence-of-special-eta}.
\begin{prop}[The sets $\mathcal{I}_{\pm}$]
\label{prop:Sets-I-pm}The sets $\mathcal{I}_{\pm}$ are nonempty
and open.
\end{prop}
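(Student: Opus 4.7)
The plan is to combine a soft continuity/connectivity argument with the sharp modulation estimates that will be established in Section \ref{sec:Main-bootstrap}. Under the standing hypothesis of the existence proof (that $T_{\mathrm{exit}}(\eta_{0}) < T$ for every $\eta_{0}$), Proposition \ref{prop:main-bootstrap} strictly improves every bootstrap bound except the one on $|\eta|$, and the case $b(T_{\mathrm{exit}}) = 0$ is already ruled out; hence at exit $\eta(T_{\mathrm{exit}}) = \pm K b^{3/2}(T_{\mathrm{exit}})$, so the parameter interval is partitioned as $\mathcal{I}_{+} \sqcup \mathcal{I}_{-}$. It therefore suffices to verify that both sets are nonempty and open.

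\emph{Transversality of the exit.} The modulation estimates derived in Section \ref{sec:Main-bootstrap} (of the type $|\eta_{s}| + |b_{s} + b^{2} + \eta^{2}| \lesssim b^{5/2 + \kappa}$ for some $\kappa > 0$, valid on any time interval where $u \in \mathcal{O}$) together with $|\eta| \leq K b^{3/2}$ imply $b_{s} = -b^{2}(1 + O(b))$. At an exit point (where $|\eta|^{2} = K^{2} b^{3}$) a direct computation gives
\[
\frac{d}{ds}\bigl(|\eta|^{2} - K^{2} b^{3}\bigr) \;=\; 2\eta\,\eta_{s} - 3K^{2} b^{2} b_{s} \;=\; 3K^{2} b^{4} + O(b^{4+\kappa}),
\]
which is strictly positive for $b^{\ast}$ small. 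Hence $|\eta|^{2} - K^{2} b^{3}$ crosses zero transversally at $s = s_{\mathrm{exit}}$, and by the implicit function theorem $\eta_{0} \mapsto T_{\mathrm{exit}}(\eta_{0})$ is continuous.

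\emph{Openness.} The $H_{m}^{3}$ local well-posedness of \eqref{eq:CSS} yields continuous dependence on initial data on any compact subinterval of the lifespan, and the continuity of the decomposition map from Lemma \ref{lem:decomp} transfers this to the modulation parameters $(\lambda,\gamma,b,\eta)(t)$. Combined with transversality, both $\eta_{0} \mapsto \eta(T_{\mathrm{exit}}(\eta_{0}))$ and $\eta_{0} \mapsto b(T_{\mathrm{exit}}(\eta_{0}))$ are continuous, so $\mathcal{I}_{\pm} = \{\eta_{0} : \pm \eta(T_{\mathrm{exit}}(\eta_{0})) > 0\}$ are open.

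\emph{Nonemptyness.} Consider the ratio $f(s) \coloneqq \eta(s)/b(s)^{3/2}$, which takes values in $[-K, K]$ throughout the trapped regime. The estimates above give
\[
f_{s} \;=\; \frac{\eta_{s}}{b^{3/2}} - \frac{3}{2}\,\eta\,\frac{b_{s}}{b^{5/2}} \;=\; \tfrac{3}{2}\,f\,b + O(b^{1+\kappa}).
\]
Choose $\eta_{0}$ with $f(0) = \eta_{0}/b_{0}^{3/2}$ slightly less than $K/2$. As long as $f \geq K/4$, the display above forces $f_{s} \geq \tfrac{1}{4}Kb > 0$ for $b^{\ast}$ small, so $f$ increases monotonically until $f = K$, i.e., $\eta_{0} \in \mathcal{I}_{+}$. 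The same argument with reversed signs shows $\mathcal{I}_{-} \neq \emptyset$. The main obstacle in the whole scheme is the sharp modulation bound $|\eta_{s}| \lesssim b^{5/2 + \kappa}$: this is precisely what the orthogonality conditions \eqref{eq:ortho-cond} and the conjugation-identity-based energy method of Section \ref{sec:ConjugationIdentities} are designed to deliver, and it will be produced as part of the main bootstrap argument in Section \ref{sec:Main-bootstrap}.
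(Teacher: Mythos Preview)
Your proof is correct and shares its core computation with the paper: both establish nonemptyness by differentiating the ratio $f=\eta/b^{3/2}$ and observing that once $|f|$ exceeds a threshold of order $K$ it increases monotonically to $K$. The difference lies in the openness argument. You prove transversality of the exit and invoke the implicit function theorem to make $\eta_0\mapsto T_{\mathrm{exit}}(\eta_0)$ continuous; the paper instead sidesteps the exit time entirely by noting that for $\eta_0\in\mathcal{I}_{\pm}$ there is a time $t^{(\eta_0)}$ strictly \emph{before} $T_{\mathrm{exit}}$ at which $\pm\eta>\tfrac{K}{2}b^{3/2}$, applies continuous dependence at that fixed time to nearby $\tilde{\eta}_0$, and then lets the already-established monotonicity of $f$ force $\tilde{\eta}_0\in\mathcal{I}_{\pm}$. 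The paper's route is shorter because it reuses the monotonicity rather than proving a separate transversality statement, and it avoids having to verify that the IFT zero is the \emph{first} exit time.

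One small correction: the modulation bound that Lemma~\ref{lem:mod-est} actually delivers under the $H_m^3$ bootstrap hypothesis is $|\eta_s|\lesssim o_{M\to\infty}(1)\,Kb^{5/2}$, not $b^{5/2+\kappa}$. Your transversality display then becomes $\tfrac{d}{ds}(|\eta|^2-K^2b^3)=3K^2b^4+o_{M\to\infty}(1)\,K^2b^4$, which is still positive for $M$ large, so the conclusion is unaffected.
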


We have a contradiction from connectivity of $(-\frac{K}{2}b_{0}^{3/2},\frac{K}{2}b_{0}^{3/2})$.
Thus our claim is proved: $u(t)\in\mathcal{O}$ for all $t\in[0,T)$,
for a well-chosen $\eta_{0}$.
\end{proof}
It remains to show that the trapped solutions are pseudoconformal
blow-up solutions satisfying the statements of Theorem \ref{thm:Existence}
(and those of Theorem \ref{thm:BlowupManifold}). We will complete
the proof in Section \ref{subsec:Sharp-description}.

\section{\label{sec:Main-bootstrap}Main Bootstrap}

In this section, we perform modulation analysis to prove Theorem \ref{thm:Existence}
(and the first part of Theorem \ref{thm:BlowupManifold}). The heart
of the proof is the main bootstrap procedure (Proposition \ref{prop:main-bootstrap}).
The proof of Proposition \ref{prop:main-bootstrap} will be finished
in Section \ref{subsec:ClosingBootstrap}. In Sections \ref{subsec:Existence-of-special-eta}
and \ref{subsec:Sharp-description}, we finish the proof of Theorem
\ref{thm:Existence} (and the first part of Theorem \ref{thm:BlowupManifold})\@.

In Section \ref{subsec:decomposition}, we decomposed our solution
$u$ at each time $t$ of the form 
\[
u(t,r)=\frac{e^{i\gamma(t)}}{\lambda(t)}[P(\cdot;b(t),\eta(t))+\epsilon(t,\cdot)]\Big(\frac{r}{\lambda(t)}\Big).
\]
The main goal of this section is to study the dynamics of $\epsilon$
and show that $\epsilon$ satisfies the bootstrap conclusions.

\subsection{Equation of $\epsilon$}

Here we derive the equation of $\epsilon$. As usual, we want to view
the dynamics of $u$ as a near soliton evolution. For this purpose,
we renormalize $u$ by setting the renormalized variables $(s,y)$
\[
y=\frac{r}{\lambda}\quad\text{and}\quad\frac{ds}{dt}=\frac{1}{\lambda^{2}},
\]
where $\frac{1}{\lambda^{2}}$ is motivated by the scaling symmetry
of \eqref{eq:CSS}. Under this relation, we will freely change the
variables either by $s=s(t)$ or $t=t(s)$, and by abuse of notations,
we denote functions of $t$ or $s$ identically. For example, we write
$\lambda(s)=\lambda(t(s))$, $\epsilon(s,y)=\epsilon(t(s),y)$, and
so on. We define 
\[
u^{\flat}(s,y)=e^{-i\gamma}\lambda u(t,\lambda y)=P(y;b,\eta)+\epsilon(s,y).
\]
Then $u^{\flat}$ satisfies\footnote{In \cite{KimKwon2019arXiv}, we intensively used $\sharp/\flat$ notations
when switching the dynamical variables $(t,x)\leftrightarrow(s,y)$.
See \cite{KimKwon2019arXiv} for more details. In this paper, we will
not heavily rely on $\sharp/\flat$ notations.} 
\[
(\partial_{s}-\frac{\lambda_{s}}{\lambda}\Lambda+\gamma_{s}i)u^{\flat}+iL_{u^{\flat}}^{\ast}\D_{+}^{(u^{\flat})}u^{\flat}=0.
\]
On the other hand, by \eqref{eq:decomp-P}, we have 
\begin{align*}
 & \partial_{s}P+iL_{P}^{\ast}\D_{+}^{(P)}P\\
 & =-b\Lambda P-(\eta\theta_{\eta}+\theta_{\Psi})iP+(b_{s}+b^{2}+\eta^{2})\partial_{b}P+\eta_{s}\partial_{\eta}P+i\Psi.
\end{align*}
Here, $\theta_{\eta},\theta_{\Psi},\Psi$ are defined in Proposition
\ref{prop:modified-profile}. Subtracting the second from the first,
we arrive at the preliminary version of $\epsilon$-equation:
\begin{equation}
(\partial_{s}-\frac{\lambda_{s}}{\lambda}\Lambda+\gamma_{s}i)\epsilon+i\mathcal{L}_{Q}\epsilon=\mathbf{Mod}\cdot\mathbf{v}-iR_{\mathrm{L-L}}-iR_{\mathrm{NL}}-i\Psi,\label{eq:prelim-eqn-e}
\end{equation}
where 
\begin{align*}
\mathbf{Mod} & \coloneqq(\frac{\lambda_{s}}{\lambda}+b,\gamma_{s}-\eta\theta_{\eta}-\theta_{\Psi},b_{s}+b^{2}+\eta^{2},\eta_{s})^{t},\\
\mathbf{v} & \coloneqq(\Lambda P,-iP,-\partial_{b}P,-\partial_{\eta}P)^{t}\\
R_{\mathrm{L-L}} & \coloneqq(\mathcal{L}_{P}-\mathcal{L}_{Q})\epsilon,\\
R_{\mathrm{NL}} & \coloneqq L_{u^{\flat}}^{\ast}\D_{+}^{(u^{\flat})}u^{\flat}-L_{P}^{\ast}\D_{+}^{(P)}P-\mathcal{L}_{P}\epsilon.
\end{align*}

The terms on the RHS of \eqref{eq:prelim-eqn-e} are viewed as the
remainder terms. The last term $i\Psi$ is the error from the modified
profile and is independent of $\epsilon$. It is already estimated
in Proposition \ref{prop:modified-profile} and determines the optimal
bootstrap assumptions on $\epsilon$. The term $iR_{\mathrm{L-L}}$
is the difference of two linearized operators $i\mathcal{L}_{P}\epsilon$
and $i\mathcal{L}_{Q}\epsilon$. It is linear in $\epsilon$ and enjoys
further degeneracy from $P-Q$, so roughly speaking it is $O((P-Q)\epsilon)=O(b\epsilon)$.
The term $iR_{\mathrm{NL}}$ collects the quadratic and higher terms
in $\epsilon$ of the expression $iL_{P+\epsilon}^{\ast}\D_{+}^{(P+\epsilon)}(P+\epsilon)$,
and is roughly of size $O(\epsilon^{2})$. Lastly, the smallness of
$\mathbf{Mod}\cdot\mathbf{v}$ indicates the formal evolution of the
modulation parameters \eqref{eq:FormalODE}.

In fact, we will reorganize the RHS of \eqref{eq:prelim-eqn-e} by
moving the phase correction terms from $iR_{\mathrm{L-L}}$ and $iR_{\mathrm{NL}}$
into $\mathbf{Mod}\cdot\mathbf{v}$. They will be written by $i\tilde R_{\mathrm{L-L}}$,
$i\tilde R_{\mathrm{NL}}$, and $\tilde{\mathbf{Mod}}\cdot\mathbf{v}$;
see \eqref{eq:e-eq-cor}. These remainder terms are estimated in the
following sections.

\subsection{\label{subsec:Estimates-of-remainders-H3}Estimates of remainders}

In this subsection, we provide estimates of remainders $R_{\mathrm{L-L}}$
and $R_{\mathrm{NL}}$. The term $R_{\mathrm{L-L}}$ contains $\epsilon$
and also $P-Q$ part from $\mathcal{L}_{P}-\mathcal{L}_{Q}$. The
term $R_{\mathrm{NL}}$ is quadratic or higher order in $\epsilon$.
These estimates are crucial ingeredients of the modulation estimates
and energy estimates. We treat $R_{\mathrm{L-L}}$ and $R_{\mathrm{NL}}$
in Lemmas \ref{lem:degenerate-linear} and \ref{lem:nonlinear}, respectively.

Let us introduce some notations to be used only in this subsection.
We recall the nonlinearity of \eqref{eq:CSS}
\[
\mathcal{N}(u)=-|u|^{2}u+\tfrac{2m}{y^{2}}A_{\theta}[u]+\tfrac{1}{y^{2}}A_{\theta}^{2}[u]u+A_{0}[u]u.
\]
Terms in $R_{\mathrm{L-L}}$ and $R_{\mathrm{NL}}$ are multilinear
terms as in $\mathcal{N}(u)$. We recognize them as a form of $V\psi$,
where $\psi\in\{\epsilon,P,P-Q\}$ and $V$ is a (nonlinear) potential
part of $\mathcal{N}$. We define the potential part of the cubic
nonlinearity by 
\[
V_{3}[\psi_{1},\psi_{2}]\coloneqq-\Re(\overline{\psi_{1}}\psi_{2})+\tfrac{2m}{y^{2}}A_{\theta}[\psi_{1},\psi_{2}]+A_{0,3}[\psi_{1},\psi_{2}]
\]
and that of the quintic nonlinearity by 
\[
V_{5}[\psi_{1},\psi_{2},\psi_{3},\psi_{4}]=\tfrac{1}{y^{2}}A_{\theta}[\psi_{1},\psi_{2}]A_{\theta}[\psi_{3},\psi_{4}]+A_{0,5}[\psi_{1},\psi_{2},\psi_{3},\psi_{4}].
\]
Here, 
\begin{align*}
A_{\theta}[\psi_{1},\psi_{2}] & \coloneqq-\frac{1}{2}\int_{0}^{y}\Re(\overline{\psi_{1}}\psi_{2})y'dy',\\
A_{0,3}[\psi_{1},\psi_{2}] & \coloneqq-m\int_{y}^{\infty}\Re(\overline{\psi_{1}}\psi_{2})\frac{dy'}{y'},\\
A_{0,5}[\psi_{1},\psi_{2},\psi_{3},\psi_{4}] & \coloneqq-\int_{y}^{\infty}A_{\theta}[\psi_{1},\psi_{2}]\Re(\overline{\psi_{3}}\psi_{4})\frac{dy'}{y'}.
\end{align*}
We further let 
\[
V_{3}[\psi]\coloneqq V_{3}[\psi,\psi]\quad\text{and}\quad V_{5}[\psi]\coloneqq V_{5}[\psi,\psi,\psi,\psi].
\]
Thus 
\begin{equation}
L_{u}^{\ast}\D_{+}^{(u)}u=-\Delta_{m}u+\mathcal{N}(u)=-\Delta_{m}u+V_{3}[u]u+V_{5}[u]u.\label{eq:nonlinearity-expansion}
\end{equation}

We start with estimates for $iR_{\mathrm{L-L}}$. \eqref{eq:CSS}
contains two types of long-range interactions, one from $A_{\theta}$
and the other from $A_{0}$. In this work, we need to recognize the
long-range effect of $A_{0}$, which contains $\int_{y}^{\infty}$-integral.
It turns out that this effect can be identified with the phase correction
to our solution. For instance, $iR_{\mathrm{L-L}}$ contains the term
\[
-\Big(\int_{y}^{\infty}m\Re((\overline{P-Q})\epsilon)\frac{dy'}{y'}\Big)iP.
\]
Due to the weak Hardy control of $\dot{\mathcal{H}}_{m}^{3}$ (or
$\dot{\mathcal{H}}_{m}^{5}$) at spatial infinity (see \eqref{eq:Def-Hdot3-Section2}
and \eqref{eq:Def-Hdot5-Section2}), it is more efficient that we
pull out the $\int_{0}^{\infty}$-integral and rewrite the above as
\[
\Big(\int_{0}^{y}m\Re((\overline{P-Q})\epsilon)\frac{dy'}{y'}\Big)iP-\Big(\int_{0}^{\infty}m\Re((\overline{P-Q})\epsilon)\frac{dy}{y}\Big)iP.
\]
For the first term, we can use the decay property of $iP$ outside
the integral. The latter term is of the form $-\theta\cdot iP$, whose
role is the \emph{phase correction}. One may compare with $\theta_{\mathrm{cor}}$
in \cite{KimKwon2019arXiv}. This phase correction also arises in
$R_{\mathrm{NL}}$.

For $k\in\{3,5\}$, we further introduce 
\[
\tilde A_{0,k}(y)\coloneqq A_{0,k}(y)-A_{0,k}(y=0)
\]
and define $\tilde V_{k}$ by replacing $A_{0,k}$-part of $V_{k}$
by $\tilde A_{0,k}$.
\begin{lem}[Various estimates for degenerate linear terms]
\label{lem:degenerate-linear}We decompose $R_{\mathrm{L-L}}$ as
\[
R_{\mathrm{L-L}}=-\theta_{\mathrm{L-L}}P+\tilde R_{\mathrm{L-L}}
\]
such that the following estimates hold.
\begin{itemize}
\item (Estimate of $\theta_{\mathrm{L-L}}$) We have 
\begin{equation}
|\theta_{\mathrm{L-L}}|\lesssim b^{\frac{1}{2}}\|\epsilon\|_{\dot{\mathcal{H}}_{m}^{3}}.\label{eq:LL-theta-est}
\end{equation}
\item ($\dot{\mathcal{H}}_{m}^{3}$-estimate of $\tilde R_{\mathrm{L-L}}$)
We have 
\begin{equation}
\|\tilde R_{\mathrm{L-L}}\|_{\dot{\mathcal{H}}_{m}^{3}}\lesssim b(\|\epsilon\|_{\dot{\mathcal{H}}_{m,\leq M_{2}}^{3}}+o_{M_{2}\to\infty}(1)\|\epsilon\|_{\dot{\mathcal{H}}_{m}^{3}}).\label{eq:LL-Hdot3-est}
\end{equation}
In particular, 
\begin{align}
\sup_{k\in\{1,2,3,4\}}|(i\tilde R_{\mathrm{L-L}},\mathcal{Z}_{k})_{r}| & \lesssim M^{C}b\|\epsilon\|_{\dot{\mathcal{H}}_{m}^{3}},\label{eq:LL-mod-est}\\
\||A_{Q}L_{Q}i\tilde R_{L-L}|_{-1}\|_{L^{2}} & \lesssim b(\|\epsilon\|_{\dot{\mathcal{H}}_{m,\leq M_{2}}^{3}}+o_{M_{2}\to\infty}(1)\|\epsilon\|_{\dot{\mathcal{H}}_{m}^{3}}),\label{eq:LL-E3-est}
\end{align}
where the implicit constant $M^{C}$ comes from the bounds of $\mathcal{Z}_{k}$
\eqref{eq:Orthog-Hdot3-est}.
\end{itemize}
In addition, if $m\geq3$, we have estimates in terms of the higher
Sobolev norm $\dot{\mathcal{H}}_{m}^{5}$:
\begin{itemize}
\item ($\dot{H}_{m}^{3}$-estimate of $\tilde R_{\mathrm{L-L}}$) We have
\begin{equation}
\|\tilde R_{\mathrm{L-L}}\|_{\dot{H}_{m}^{3}}\lesssim b\|\epsilon\|_{\dot{\mathcal{H}}_{m}^{5}}.\label{eq:LL-Hdot3-est-m-geq-3}
\end{equation}
In particular, 
\begin{align}
\sup_{k\in\{1,2,3,4\}}|(i\tilde R_{\mathrm{L-L}},\mathcal{Z}_{k})_{r}| & \lesssim M^{C}b\|\epsilon\|_{\dot{\mathcal{H}}_{m}^{5}},\label{eq:LL-mod-est-m-geq-3}\\
\||A_{Q}L_{Q}i\tilde R_{L-L}|_{-1}\|_{L^{2}} & \lesssim b\|\epsilon\|_{\dot{\mathcal{H}}_{m}^{5}}.\label{eq:LL-E3-est-m-geq-3}
\end{align}
\item ($\dot{\mathcal{H}}_{m}^{5}$-estimate of $\tilde R_{\mathrm{L-L}}$)
We have 
\begin{equation}
\|\tilde R_{\mathrm{L-L}}\|_{\dot{\mathcal{H}}_{m}^{5}}\lesssim b(\|\epsilon\|_{\dot{\mathcal{H}}_{m,\leq M_{2}}^{5}}+o_{M_{2}\to\infty}(1)\|\epsilon\|_{\dot{\mathcal{H}}_{m}^{5}}).\label{eq:LL-Hdot5-est}
\end{equation}
In particular, 
\begin{align}
\||A_{Q}L_{Q}i\tilde R_{L-L}|_{-3}\|_{L^{2}} & \lesssim b(\|\epsilon\|_{\dot{\mathcal{H}}_{m,\leq M_{2}}^{5}}+o_{M_{2}\to\infty}(1)\|\epsilon\|_{\dot{\mathcal{H}}_{m}^{5}}).\label{eq:LL-E5-est}
\end{align}
\end{itemize}
\end{lem}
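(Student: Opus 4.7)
The core identity is the multilinear expansion \eqref{eq:nonlinearity-expansion}, which lets me write
\[
R_{\mathrm{L-L}}=(\mathcal{L}_P-\mathcal{L}_Q)\epsilon
\]
as a finite sum of trilinear and quintilinear expressions in $(\epsilon,P,Q,\overline{P},\overline{Q})$ where in every term at least one slot is occupied by the difference $P-Q$ and at least one by $\epsilon$. By Proposition~\ref{prop:modified-profile}(1)--(2) one has the crucial pointwise bound $|P-Q|_k\lesssim by^2 Q$ on $y\leq b^{-1/2}$ and $|P-Q|_k\lesssim \mathbf{1}_{y\leq 2b^{-1/2}}Q$ globally, which is the source of the $b$ gain throughout.

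The first task is to isolate the phase correction arising from the long‑range gauge potential $A_0$. In each term of the form $A_{0,3}[\psi_1,\psi_2]\psi_3$ or $A_{0,5}[\psi_1,\psi_2,\psi_3,\psi_4]\psi_5$ appearing in $R_{\mathrm{L-L}}$, I decompose $A_{0,k}(y)=\tilde A_{0,k}(y)+A_{0,k}(0)$ using the constant $A_{0,k}(0)=-\int_0^\infty(\cdots)\tfrac{dy'}{y'}$. In every such term at least one slot is $P-Q$, so $A_{0,k}(0)$ is finite. The remaining $\psi_5$ (or $\psi_3$) slot, upon collecting, is always $P$ up to a factor that can be turned into $-\theta_{\mathrm{L-L}}P$ by grouping all the $A_{0,k}(0)$ contributions; the remainder $\tilde R_{\mathrm{L-L}}$ contains only the truncated kernels $\tilde A_{0,k}$, which vanish at the origin and consequently obey the pointwise bound $|\tilde A_{0,k}[\psi_1,\psi_2](y)|\lesssim \int_0^y|\psi_1||\psi_2|\tfrac{dy'}{y'}$. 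Combined with the $A_\theta$ contributions (which are already of $\int_0^y$ type), this localization is exactly what is needed to control $\tilde R_{\mathrm{L-L}}$ in $\dot{\mathcal{H}}_m^3$.

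For \eqref{eq:LL-theta-est}: $\theta_{\mathrm{L-L}}$ is a sum of terms of the shape $\int_0^\infty \Re(\overline{\psi_1}\psi_2)\tfrac{dy'}{y'}$ with one $\psi_i=\epsilon$ and (at least) one $\psi_j=P-Q$, the other slots being $P$ or $Q$. Using $|P-Q|\lesssim \mathbf{1}_{y\leq 2b^{-1/2}}Q$, Cauchy--Schwarz, and the Hardy control $\|Q\langle y\rangle |\epsilon|_{-3}\|_{L^2}\lesssim \|\epsilon\|_{\dot{\mathcal{H}}_m^3}$ from Section~\ref{subsec:Adapted-function-spaces}, each piece is bounded by $b\cdot b^{-1/2}\|\epsilon\|_{\dot{\mathcal{H}}_m^3}\lesssim b^{1/2}\|\epsilon\|_{\dot{\mathcal{H}}_m^3}$ (the $b^{-1/2}$ reflects the volume of the $y\lesssim b^{-1/2}$ support). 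The higher multilinear terms are strictly better. For \eqref{eq:LL-Hdot3-est} I apply the norm $\dot{\mathcal{H}}_m^3$ as described in \eqref{eq:Def-Hdot3-Section2}, distributing derivatives via the Leibniz rule for $|\cdot|_k$ and the fact that $|P|_k$, $|P-Q|_k$, and the nonlocal potentials with pure $P,Q$ inputs have uniform bounds independent of $b$ on their support. The compactness of the cutoff $|P-Q|_k\lesssim \mathbf{1}_{y\leq 2b^{-1/2}}Q$ then concentrates $\tilde R_{\mathrm{L-L}}$ on $y\lesssim b^{-1/2}$, and splitting $\epsilon=\mathbf{1}_{y\leq M_2}\epsilon+\mathbf{1}_{y>M_2}\epsilon$ produces the $o_{M_2\to\infty}(1)$ factor on the outer piece, with the inner piece retaining the full $b$ gain. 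The consequences \eqref{eq:LL-mod-est} and \eqref{eq:LL-E3-est} then follow immediately by testing against $\mathcal{Z}_k$ (using \eqref{eq:Orthog-Hdot3-est}) and by the boundedness $A_QL_Q:\dot{\mathcal{H}}_m^3\to \dot H_{m+2}^1$ (Lemmas~\ref{lem:mapping-AQ} and \ref{lem:mapping-LQ-H3}), respectively.

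When $m\geq 3$, the $\dot{H}_m^3$ estimate \eqref{eq:LL-Hdot3-est-m-geq-3} follows from the same multilinear decomposition, placing three derivatives on $\epsilon$ (controlled in $\dot{\mathcal{H}}_m^5$ via Hardy) while the remaining factors containing $P-Q$ absorb the $b$. For \eqref{eq:LL-Hdot5-est} I repeat the $\dot{\mathcal{H}}_m^3$ argument one level higher using \eqref{eq:Def-Hdot5-Section2}; the stronger decay of $Q$ at infinity (since $m\geq 3$) is what lets the phase correction argument close without extra complications near spatial infinity. Estimate \eqref{eq:LL-E5-est} follows from $A_QL_Q:\dot{\mathcal{H}}_m^5\to \dot H_{m+2}^3$ (Lemmas~\ref{lem:mapping-AQ-1} and \ref{lem:mapping-LQ-H5}). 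The main obstacle throughout is bookkeeping: to avoid the logarithmic weakenings in the $\dot{\mathcal{H}}_m^3$ (and $\dot{\mathcal{H}}_m^5$) norms when $m$ is small, one must carefully group every appearance of $A_0$-type nonlocality with its $y=0$ value, ensuring that the surviving kernels $\tilde A_{0,k}$ supply the $y$-factor needed to compensate for the weakened Hardy control—this is precisely why the phase correction $\theta_{\mathrm{L-L}}P$ must be extracted rather than estimated as part of $\tilde R_{\mathrm{L-L}}$.
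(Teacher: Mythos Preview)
Your overall strategy matches the paper's: expand $R_{\mathrm{L-L}}$ multilinearly, peel off the $A_{0,k}(0)$ contributions from terms ending in $P$ to define $\theta_{\mathrm{L-L}}$, and estimate the remainder. The deductions of \eqref{eq:LL-mod-est}, \eqref{eq:LL-E3-est}, \eqref{eq:LL-E5-est} from the main norm estimates via \eqref{eq:Orthog-Hdot3-est} and the mapping lemmas are also correct.

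There is, however, a real gap in your argument for the $o_{M_2\to\infty}(1)$ structure in \eqref{eq:LL-Hdot3-est}. You claim that the compact support of $P-Q$ on $\{y\leq 2b^{-1/2}\}$ ``concentrates $\tilde R_{\mathrm{L-L}}$ on $y\lesssim b^{-1/2}$,'' and that splitting $\epsilon$ at scale $M_2$ then yields the small outer piece. This fails on two counts. First, $\tilde R_{\mathrm{L-L}}$ is \emph{not} supported in $\{y\lesssim b^{-1/2}\}$: terms such as $(V_3[P]-V_3[Q])\epsilon$ involve $A_{0,3}[P]-A_{0,3}[Q]=-m\int_y^\infty(|P|^2-Q^2)\tfrac{dy'}{y'}$, which for $y>2b^{-1/2}$ equals $+m\int_y^\infty Q^2\tfrac{dy'}{y'}\neq 0$; likewise the slot $\psi=\epsilon$ has no localization. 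Second, even if $\tilde R_{\mathrm{L-L}}$ were supported in $\{y\leq 2b^{-1/2}\}$, this region is \emph{much larger} than $\{y\leq M_2\}$ (recall $M_2\ll b^{-1}$), so it gives no $M_2$-localization whatsoever.

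The paper's actual mechanism is different and sharper: writing each term as $V\psi$, one shows the pointwise potential bound $|V|+y^{-1}\langle y\rangle^2|\partial_yV|_{2}\lesssim b\langle y\rangle^{-2}$ (this uses $|P-Q|_5\lesssim by^2Q$, not merely the support of $P-Q$). The $\langle y\rangle^{-2}$ decay of $V$ is what produces, after pairing with the $\dot{\mathcal H}_m^3$-weights on $\epsilon$, a quantity of the form $\|\langle y\rangle^{-2}\cdot(\text{weighted }\epsilon)\|_{L^2}$, which splits as $\|\cdot\|_{L^2(y\leq M_2)}+O(M_2^{-2})\|\cdot\|_{L^2}$. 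The paper then organizes this into three cases ($\psi=\epsilon$, $\psi=P-Q$, $\psi=P$), and in the last case exploits that $\tilde A_{0,k}$ replaces $A_{0,k}$ so the $by^2$ factor can be pulled outside the $\int_0^y$-integral. Your sketch of the $\theta_{\mathrm{L-L}}$ bound is also imprecise (the $b^{1/2}$ arises from $\|y^{-2}\min\{by^2,1\}A_\theta[Q]\|_{L^2}\lesssim b^{1/2}$, not from ``$b\cdot b^{-1/2}$ volume''), but that is secondary to the $M_2$ issue above.
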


\begin{rem}
According to the parameter dependence (Remark \ref{rem:ParameterDependence}),
more precise statement of \eqref{eq:LL-Hdot3-est} is as follows.
For all sufficiently large $M_{2}$, \eqref{eq:LL-Hdot3-est} holds
for all sufficiently small $b<b^{\ast}(M_{2})$. As mentioned in the
beginning of this section, we are also assuming the bootstrap hypothesis,
and in particular $|\eta|\lesssim b^{\frac{3}{2}}$. The same applies
for \eqref{eq:LL-Hdot5-est}. As explained in Introduction (Section
\ref{subsec:strategy}), having a smallness factor $o_{M_{2}\to\infty}(1)$
is crucial in our argument.
\end{rem}

\begin{rem}
In fact, \eqref{eq:LL-E3-est} also holds for $iR_{\mathrm{L-L}}$
instead of $i\tilde R_{\mathrm{L-L}}$, due to \eqref{eq:LL-theta-est}
and \eqref{eq:gen-null-rel} (which relies heavily on the fact $L_{Q}iP=O(b)$).
However, \eqref{eq:LL-Hdot3-est} does not hold for $iR_{\mathrm{L-L}}$.
\end{rem}

\begin{rem}
One can apply the bootstrap hypotheses \eqref{eq:bootstrap-hyp-H3}
or \eqref{eq:bootstrap-hyp-H5} to write the bounds for $\tilde R_{\mathrm{L-L}}$
in terms of $b$. The upper bounds are written of the form $O(b\epsilon)$
in order to keep the original form of $\tilde R_{\mathrm{L-L}}$.
A similar remark applies for $iR_{\mathrm{NL}}$ below.
\end{rem}

\begin{rem}
Instead of bounds of the form $b\|\epsilon\|_{\dot{\mathcal{H}}_{m}^{3}}$,
it is crucial to have improved upper bounds of the form $b(\|\epsilon\|_{\dot{\mathcal{H}}_{m,\leq M_{2}}^{3}}+o_{M_{2}\to\infty}(1)\|\epsilon\|_{\dot{\mathcal{H}}_{m}^{3}})$.
This will be detailed in Section \ref{subsec:energy-identity}.
\end{rem}

\begin{proof}
\textbf{Step 1.} Decomposition $R_{\mathrm{L-L}}=-\theta_{\mathrm{L-L}}P+\tilde R_{\mathrm{L-L}}$.

In view of \eqref{eq:nonlinearity-expansion}, we can write $\mathcal{L}_{P}\epsilon$
as 
\begin{align*}
\mathcal{L}_{P}\epsilon & =-\Delta_{m}\epsilon+V_{3}[P]\epsilon+2V_{3}[P,\epsilon]P\\
 & \quad+V_{5}[P]\epsilon+2V_{5}[P,\epsilon,P,P]P+2V_{5}[P,P,P,\epsilon]P
\end{align*}
and similarly for $\mathcal{L}_{Q}\epsilon$. Thus $R_{\mathrm{L-L}}=\mathcal{L}_{P}\epsilon-\mathcal{L}_{Q}\epsilon$
decomposes as
\begin{align*}
R_{\mathrm{L-L}} & =(V_{3}[P]-V_{3}[Q]+V_{5}[P]-V_{5}[Q])\epsilon\\
 & \quad+2(V_{3}[P,\epsilon]P-V_{3}[Q,\epsilon]Q)\\
 & \quad+2(V_{5}[P,\epsilon,P,P]P-V_{5}[Q,\epsilon,Q,Q]Q)\\
 & \quad+2(V_{5}[P,P,P,\epsilon]P-V_{5}[Q,Q,Q,\epsilon]Q).
\end{align*}
We rearrange the above as 
\begin{align*}
R_{\mathrm{L-L}} & =(V_{3}[P]-V_{3}[Q]+V_{5}[P]-V_{5}[Q])\epsilon\\
 & \quad+2(V_{3}[Q,\epsilon]+V_{5}[Q,\epsilon,Q,Q]+V_{5}[Q,Q,Q,\epsilon])(P-Q)\\
 & \quad+2V_{3}[P-Q,\epsilon]P+2(V_{5}[P,\epsilon,P,P]-V_{5}[Q,\epsilon,Q,Q])P\\
 & \quad+2(V_{5}[P,P,P,\epsilon]-V_{5}[Q,Q,Q,\epsilon])P
\end{align*}
We focus on the terms that end in $P$. As illustrated above, we capture
the phase correction term $\theta_{\mathrm{L-L}}P$. We replace the
outermost integral $\int_{y}^{\infty}$ of $A_{0}$ by $\int_{0}^{y}$
at cost of introducing $\theta_{\mathrm{L-L}}$: 
\begin{align}
\tilde R_{\mathrm{L-L}} & \coloneqq R_{\mathrm{L-L}}+\theta_{\mathrm{L-L}}P\nonumber \\
 & =(V_{3}[P]-V_{3}[Q])\epsilon+(V_{5}[P]-V_{5}[Q])\epsilon+2\tilde V_{3}[P-Q,\epsilon]P+2V_{3}[Q,\epsilon](P-Q)\label{eq:R-tilde-L-L}\\
 & \quad+2(\tilde V_{5}[P,\epsilon,P,P]-\tilde V_{5}[Q,\epsilon,Q,Q])P+2V_{5}[Q,\epsilon,Q,Q](P-Q)\nonumber \\
 & \quad+2(\tilde V_{5}[P,P,P,\epsilon]-\tilde V_{5}[Q,Q,Q,\epsilon])P+2V_{5}[Q,Q,Q,\epsilon](P-Q),\nonumber 
\end{align}
where 
\begin{align*}
\theta_{\mathrm{L-L}} & \coloneqq{\textstyle \int_{0}^{\infty}}2(m+A_{\theta}[P])\Re((\overline{P-Q})\epsilon)\tfrac{dy}{y}\\
 & \quad+{\textstyle \int_{0}^{\infty}}2(A_{\theta}[P]-A_{\theta}[Q])\Re(Q\epsilon)\tfrac{dy}{y}\\
 & \quad+{\textstyle \int_{0}^{\infty}}2(A_{\theta}[P,\epsilon]|P|^{2}-A_{\theta}[Q,\epsilon]Q^{2})\tfrac{dy}{y}.
\end{align*}

\ 

\textbf{Step 2.} The $\theta_{\mathrm{L-L}}$-estimate \eqref{eq:LL-theta-est}.

We use $|Q-P|\lesssim\min\{by^{2},1\}Q$ (see Proposition \ref{prop:modified-profile}),
$\frac{dy}{y}=y^{-2}\cdot ydy$, and 
\[
{\textstyle \int_{0}^{y}}\min\{b(y')^{2},1\}|f(y')|dy'\lesssim\min\{by^{2},1\}{\textstyle \int_{0}^{y}}|f(y')|dy'
\]
to reduce 
\begin{align*}
\|y^{-2}\min\{by^{2},1\}A_{\theta}[Q,\epsilon]Q^{2}\|_{L^{1}} & \lesssim b^{\frac{1}{2}}\|\epsilon\|_{\dot{\mathcal{H}}_{m}^{3}},\\
\|y^{-2}\min\{by^{2},1\}A_{\theta}[Q]Q\epsilon\|_{L^{1}} & \lesssim b^{\frac{1}{2}}\|\epsilon\|_{\dot{\mathcal{H}}_{m}^{3}}.
\end{align*}
The first estimate follows from 
\[
\|y^{-2}A_{\theta}[Q,\epsilon]\|_{L^{2}}\lesssim\|Q\epsilon\|_{L^{2}}\lesssim\|\epsilon\|_{\dot{\mathcal{H}}_{m}^{3}},\qquad\|\min\{by^{2},1\}Q^{2}\|_{L^{2}}\lesssim b.
\]
The second estimate follows from 
\[
\|y^{-2}\min\{by^{2},1\}A_{\theta}[Q]\|_{L^{2}}\lesssim b^{\frac{1}{2}},\qquad\|Q\epsilon\|_{L^{2}}\lesssim\|\epsilon\|_{\dot{\mathcal{H}}_{m}^{3}}.
\]
This completes the proof of \eqref{eq:LL-theta-est}.

\ 

\textbf{Step 3.} The estimates of $\tilde R_{\mathrm{L-L}}$.

We turn to the estimates of $\tilde R_{\mathrm{L-L}}$. Note that
\eqref{eq:LL-mod-est} follows from \eqref{eq:LL-Hdot3-est} and \eqref{eq:Orthog-Hdot3-est}.
Next, positivity of $A_{Q}A_{Q}^{\ast}$ and boundedness of $A_{Q}^{\ast}A_{Q}L_{Q}:\dot{\mathcal{H}}_{m}^{3}\to L^{2}$
(Lemma \ref{lem:Mapping-AAL-Section2}) say that \eqref{eq:LL-E3-est}
follows from \eqref{eq:LL-Hdot3-est}. Similarly, \eqref{eq:LL-mod-est-m-geq-3}
and \eqref{eq:LL-E3-est-m-geq-3} follow from \eqref{eq:LL-Hdot3-est-m-geq-3}.
Finally, using positivity of $(A_{Q}A_{Q}^{\ast})^{3}$ (Lemma \ref{lem:Positivity-AAA-Appendix})
and boundedness of $A_{Q}^{\ast}A_{Q}A_{Q}^{\ast}A_{Q}L_{Q}:\dot{\mathcal{H}}_{m}^{5}\to L^{2}$
(Lemma \ref{lem:Mapping-AAAAL-Section2}), \eqref{eq:LL-E5-est} follows
from \eqref{eq:LL-Hdot5-est}.

Henceforth, we focus on \eqref{eq:LL-Hdot3-est}, \eqref{eq:LL-Hdot3-est-m-geq-3},
and \eqref{eq:LL-Hdot5-est}. For simplicity of notations, we recognize
each term of \eqref{eq:R-tilde-L-L} as $V\psi$, where $V$ is the
potential part and $\psi\in\{\epsilon,P,P-Q\}$. For instance, the
first term of \eqref{eq:R-tilde-L-L} corresponds to $V=V_{3}[P]-V_{3}[Q]$
and $\psi=\epsilon$, the seventh term of \eqref{eq:R-tilde-L-L}
corresponds to $V=2(\tilde V_{5}[P,P,P,\epsilon]-\tilde V_{5}[Q,Q,Q,\epsilon])$
and $\psi=P$, and so on.

With these notations, we claim 
\begin{multline}
\|y^{-3}\mathbf{1}_{y\geq1}V\psi\|_{L^{2}}+\||V\partial_{+}\psi|_{-2}\|_{L^{2}}+\||(\partial_{y}V)\psi|_{-2}\|_{L^{2}}\\
\lesssim\begin{cases}
b(\|\epsilon\|_{\dot{\mathcal{H}}_{m,\leq M_{2}}^{3}}+o_{M_{2}\to\infty}(1)\|\epsilon\|_{\dot{\mathcal{H}}_{m}^{3}}), & \text{if }m\geq1,\\
b\|\epsilon\|_{\dot{\mathcal{H}}_{m}^{5}} & \text{if }m\geq3,
\end{cases}\label{eq:deg-lin-claim}
\end{multline}
and 
\begin{multline}
\|y^{-5}\mathbf{1}_{y\geq1}V\psi\|_{L^{2}}+\||V\partial_{+}\psi|_{-4}\|_{L^{2}}+\||(\partial_{y}V)\psi|_{-4}\|_{L^{2}}\\
\lesssim b(\|\epsilon\|_{\dot{\mathcal{H}}_{m,\leq M_{2}}^{5}}+o_{M_{2}\to\infty}(1)\|\epsilon\|_{\dot{\mathcal{H}}_{m}^{5}})\quad\text{if }m\geq3.\label{eq:deg-lin-claim-1}
\end{multline}
Let us finish the proof of \eqref{eq:LL-Hdot3-est}, \eqref{eq:LL-Hdot3-est-m-geq-3},
and \eqref{eq:LL-Hdot5-est} assuming the claims \eqref{eq:deg-lin-claim}
and \eqref{eq:deg-lin-claim-1}. As $P,Q,\epsilon\in H_{m}^{3}$ (for
each fixed time), we have $V\psi\in H_{m}^{3}$.\footnote{One can show that the nonlinearities of \eqref{eq:CSS} for $H_{m}^{3}$-solutions
also belong to $C_{T}H_{m}^{3}$ using Littlewood-Paley decomposition.
See for example \cite[Appendix B]{KimKwon2019arXiv}.} Thus we can apply Lemma \ref{lem:Comparison-H3-Hdot3} to get 
\begin{align*}
\|V\psi\|_{\dot{\mathcal{H}}_{m}^{3}} & \lesssim\|y^{-3}\mathbf{1}_{y\geq1}V\psi\|_{L^{2}}+\||\partial_{+}(V\psi)|_{-2}\|_{L^{2}}\\
 & \lesssim\|y^{-3}\mathbf{1}_{y\geq1}V\psi\|_{L^{2}}+\||V\partial_{+}\psi|_{-2}\|_{L^{2}}+\||(\partial_{y}V)\psi|_{-2}\|_{L^{2}}.
\end{align*}
Thus \eqref{eq:LL-Hdot3-est} and \eqref{eq:LL-Hdot3-est-m-geq-3}
follow from the claim \eqref{eq:deg-lin-claim}. Similarly, using
Lemma \ref{lem:ComparisonH5H5Appendix}, we see that \eqref{eq:LL-Hdot5-est}
follows from the claim \eqref{eq:deg-lin-claim-1}.

It remains to show the claims \eqref{eq:deg-lin-claim} and \eqref{eq:deg-lin-claim-1}.
We divide into three cases: when $\psi=\epsilon$, $\psi=P-Q$, or
$\psi=P$.

\uline{Case A:} $\psi=\epsilon$.

In this case, $V=V_{3}[P]-V_{3}[Q]$ or $V=V_{5}[P]-V_{5}[Q]$.

From the pointwise estimates 
\begin{align*}
|y^{-3}\mathbf{1}_{y\geq1}V\epsilon| & \lesssim|V|\cdot y^{-3}\mathbf{1}_{y\geq1}\epsilon,\\
|V\partial_{+}\epsilon|_{-2} & \lesssim|V|_{2}|\partial_{+}\epsilon|_{-2}\lesssim(|V|+y^{-1}\langle y\rangle^{2}|\partial_{y}V|_{1})\cdot|\partial_{+}\epsilon|_{-2},\\
|(\partial_{y}V)\epsilon|_{-2} & \lesssim|\partial_{y}V|_{2}\cdot|\epsilon|_{-2}\lesssim y^{-1}\langle y\rangle^{2}|\partial_{y}V|_{2}\cdot y\langle y\rangle^{-2}|\epsilon|_{-2},
\end{align*}
definition of the $\dot{\mathcal{H}}_{m}^{3}$-norm \eqref{eq:Def-Hdot3-Section2}
and $\dot{\mathcal{H}}_{m}^{5}$-norm \eqref{eq:Def-Hdot5-Section2}\footnote{As one can see in \eqref{eq:Def-Hdot3-Section2}, when $m\in\{1,2\}$,
$\|y^{-1}|\epsilon|_{-2}\|_{L^{2}}$ is not controlled by $\|\epsilon\|_{\dot{\mathcal{H}}_{m}^{3}}$
due to the singularity at the origin. Thus we use $y\langle y\rangle^{-2}|\epsilon|_{-2}$
instead of $y^{-1}|\epsilon|_{-2}$. The price to pay is that we need
to control $y^{-1}\langle y\rangle^{2}|\partial_{y}V|_{2}$, which
is more singular than $y|\partial_{y}V|_{2}$ at the origin.} 
\begin{align*}
\|\langle y\rangle^{-2}\cdot y\langle y\rangle^{-2}|\epsilon|_{-2}\|_{L^{2}} & +\|\langle y\rangle^{-2}\cdot|\partial_{+}\epsilon|_{-2}\|_{L^{2}}\\
 & \lesssim\begin{cases}
\|\epsilon\|_{\dot{\mathcal{H}}_{m,\leq M_{2}}^{3}}+o_{M_{2}\to\infty}(1)\|\epsilon\|_{\dot{\mathcal{H}}_{m}^{3}} & \text{if }m\geq1,\\
\|\epsilon\|_{\dot{\mathcal{H}}_{m}^{5}} & \text{if }m\geq3,
\end{cases}
\end{align*}
\eqref{eq:deg-lin-claim} would follow from the pointwise estimates\footnote{Indeed, one can easily improve the factor $b$ by $|\eta|+b^{2}$.
Notice that the pseudoconformal phase $e^{-ib\frac{y^{2}}{4}}$ plays
no role in the gauge potential, so $V_{k}[P]=V_{k}[Q^{(\eta)}\chi_{b^{-1/2}}]$
for $k\in\{3,5\}$. Thus one can use $|Q^{(\eta)}\chi_{b^{-1/2}}-Q|_{3}\lesssim(|\eta|y^{2}\mathbf{1}_{y\leq b^{-1/2}}+\mathbf{1}_{y\geq b^{-1/2}})Q$
instead of the cruder estimate $|P-Q|_{3}\lesssim by^{2}Q$. However,
we chose to use the crude bound $|P-Q|_{3}\lesssim by^{2}Q$ because
it is better suited for the difference estimate; see the proof of
Lemma \ref{lem:Diff-RLL-Est}.}
\[
|V|+y^{-1}\langle y\rangle^{2}|\partial_{y}V|_{2}\lesssim b\langle y\rangle^{-2}.
\]
Similarly, \eqref{eq:deg-lin-claim-1} would follow from the pointwise
estimates 
\[
|V|+y^{-1}\langle y\rangle^{2}|\partial_{y}V|_{4}\lesssim b\langle y\rangle^{-2}.
\]
Thus it suffices to show the latter.

Starting from the pointwise bound $|P-Q|_{5}\lesssim by^{2}Q$, we
have 
\[
|P^{2}-Q^{2}|_{5}\lesssim by^{2}Q^{2}\lesssim by^{2}\langle y\rangle^{-4}.
\]
Thus 
\begin{align*}
 & |V_{3}[P]-V_{3}[Q]|+y^{-1}\langle y\rangle^{2}|\partial_{y}(V_{3}[P]-V_{3}[Q])|_{4}\\
 & \lesssim y^{-2}\langle y\rangle^{2}(|P^{2}-Q^{2}|_{5}+\tfrac{1}{y^{2}}{\textstyle \int_{0}^{y}}|P^{2}-Q^{2}|y'dy')+{\textstyle \int_{y}^{\infty}}|P^{2}-Q^{2}|\tfrac{dy'}{y'}\\
 & \lesssim b\langle y\rangle^{-2}.
\end{align*}
Estimates of $V_{5}[P]-V_{5}[Q]$ follow from combining the bounds
\[
|A_{\theta}[P]|_{5}+|A_{\theta}[Q]|_{5}+|y^{2}P^{2}|_{5}+|y^{2}Q^{2}|_{5}\lesssim1
\]
and the previous estimates of $V_{3}[P]-V_{3}[Q]$ with the help of
Leibniz's rule.

\uline{Case B:} $\psi=P-Q$.

According to \eqref{eq:R-tilde-L-L}, $V$ is either $2V_{3}[Q,\epsilon]$,
$2V_{5}[Q,\epsilon,Q,Q]$, or $2V_{5}[Q,Q,Q,\epsilon]$.

Substituting the pointwise bounds $|P-Q|_{5}\lesssim by^{2}Q$ and
$Q\lesssim y^{m}\langle y\rangle^{-(2m+2)}$ into $\psi$ of \eqref{eq:deg-lin-claim}
and \eqref{eq:deg-lin-claim-1}, it suffices to show 
\begin{align*}
\|\langle y\rangle^{-4}|V|_{3}\|_{L^{2}} & \lesssim\|\epsilon\|_{\dot{\mathcal{H}}_{m,\leq M_{2}}^{3}}+o_{M_{2}\to\infty}(1)\|\epsilon\|_{\dot{\mathcal{H}}_{m}^{3}}\quad\text{if }m\geq1,\\
\|\langle y\rangle^{-4}|V|_{5}\|_{L^{2}} & \lesssim\|\epsilon\|_{\dot{\mathcal{H}}_{m,\leq M_{2}}^{5}}+o_{M_{2}\to\infty}(1)\|\epsilon\|_{\dot{\mathcal{H}}_{m}^{5}}\quad\text{if }m\geq3.
\end{align*}

We first estimate $V_{3}$: 
\begin{align*}
 & \|\langle y\rangle^{-4}|V_{3}[Q,\epsilon]|_{3}\|_{L^{2}}\\
 & \lesssim\|\langle y\rangle^{-4}(|Q\epsilon|_{3}+|y^{-2}A_{\theta}[Q,\epsilon]|_{3}+|A_{0,3}[Q,\epsilon]|_{3})\|_{L^{2}}\\
 & \lesssim\|\langle y\rangle^{-4}|Q\epsilon|_{3}\|_{L^{2}}+\|y^{-2}Q\epsilon\|_{L^{1}}\\
 & \lesssim\|y^{-2}\langle y\rangle^{\frac{3}{2}}|Q\epsilon|_{3}\|_{L^{2}}\lesssim\|\epsilon\|_{\dot{\mathcal{H}}_{m,\leq M_{2}}^{3}}+o_{M_{2}\to\infty}(1)\|\epsilon\|_{\dot{\mathcal{H}}_{m}^{3}},
\end{align*}
where in the second inequality we put $\langle y\rangle^{-4}$ inside
of $A_{\theta}$ and use $\|y^{-2}\int_{0}^{y}fy'dy'\|_{L^{2}}\lesssim\|f\|_{L^{2}}$
for $A_{\theta}$-term; we put $\langle y\rangle^{-4}\in L^{2}$ and
$\|\int_{y}^{\infty}f\frac{dy'}{y'}\|_{L^{\infty}}\lesssim\|y^{-2}f\|_{L^{1}}$
for $A_{0,3}$-term. Similarly, 
\begin{align*}
 & \|\langle y\rangle^{-4}|V_{3}[Q,\epsilon]|_{5}\|_{L^{2}}\\
 & \lesssim\|\langle y\rangle^{-4}|Q\epsilon|_{5}\|_{L^{2}}+\|y^{-2}Q\epsilon\|_{L^{1}}\\
 & \lesssim\|y^{-2}\langle y\rangle^{\frac{3}{2}}|Q\epsilon|_{5}\|_{L^{2}}\lesssim\|\epsilon\|_{\dot{\mathcal{H}}_{m,\leq M_{2}}^{5}}+o_{M_{2}\to\infty}(1)\|\epsilon\|_{\dot{\mathcal{H}}_{m}^{5}}\quad\text{if }m\geq3.
\end{align*}

We turn to $V_{5}$. For the $y^{-2}A_{\theta}^{2}$-type, we use
$|A_{\theta}[Q]|_{k}\lesssim1$ and the above $\langle y\rangle^{-4}|y^{-2}A_{\theta}[Q,\epsilon]|_{k}$
bound for $k\in\{3,5\}$ to get 
\begin{align*}
\|\langle y\rangle^{-4}|y^{-2}A_{\theta}[Q,\epsilon]A_{\theta}[Q]|_{3}\|_{L^{2}} & \lesssim\|\epsilon\|_{\dot{\mathcal{H}}_{m,\leq M_{2}}^{3}}+o_{M_{2}\to\infty}(1)\|\epsilon\|_{\dot{\mathcal{H}}_{m}^{3}}\quad\text{if }m\geq1,\\
\|\langle y\rangle^{-4}|y^{-2}A_{\theta}[Q,\epsilon]A_{\theta}[Q]|_{5}\|_{L^{2}} & \lesssim\|\epsilon\|_{\dot{\mathcal{H}}_{m,\leq M_{2}}^{5}}+o_{M_{2}\to\infty}(1)\|\epsilon\|_{\dot{\mathcal{H}}_{m}^{5}}\quad\text{if }m\geq3.
\end{align*}
For the $A_{0,5}$-type, 
\begin{align*}
 & \|\langle y\rangle^{-4}|A_{0,5}[Q,\epsilon,Q,Q]+A_{0,5}[Q,Q,Q,\epsilon]|_{3}\|_{L^{2}}\\
 & \lesssim\|y^{-2}(A_{\theta}[Q,\epsilon]Q^{2}+A_{\theta}[Q]Q\epsilon)\|_{L^{1}}+\|\langle y\rangle^{-4}|A_{\theta}[Q,\epsilon]Q^{2}+A_{\theta}[Q]Q\epsilon|_{2}\|_{L^{2}}\\
 & \lesssim\|\langle y\rangle^{-\frac{1}{2}}|Q\epsilon|_{2}\|_{L^{2}}\lesssim\|\epsilon\|_{\dot{\mathcal{H}}_{m,\leq M_{2}}^{3}}+o_{M_{2}\to\infty}(1)\|\epsilon\|_{\dot{\mathcal{H}}_{m}^{3}}\quad\text{if }m\geq1
\end{align*}
and similarly 
\begin{align*}
 & \|\langle y\rangle^{-4}|A_{0,5}[Q,\epsilon,Q,Q]+A_{0,5}[Q,Q,Q,\epsilon]|_{5}\|_{L^{2}}\\
 & \lesssim\|\langle y\rangle^{-\frac{1}{2}}|Q\epsilon|_{4}\|_{L^{2}}\lesssim\|\epsilon\|_{\dot{\mathcal{H}}_{m,\leq M_{2}}^{5}}+o_{M_{2}\to\infty}(1)\|\epsilon\|_{\dot{\mathcal{H}}_{m}^{5}}\quad\text{if }m\geq3.
\end{align*}

\uline{Case C:} $\psi=P$.

In this case, $V$ contains $P-Q$-part and $\epsilon$. According
to \eqref{eq:R-tilde-L-L}, $V$ is either $2\tilde V_{3}[P-Q,\epsilon]$,
$2(\tilde V_{5}[P,\epsilon,P,P]-\tilde V_{5}[Q,\epsilon,Q,Q])$, or
$2(\tilde V_{5}[P,P,P,\epsilon]-\tilde V_{5}[Q,Q,Q,\epsilon])$. Notice
that $A_{0,k}$ is replaced by $\tilde A_{0,k}$ for $k\in\{3,5\}$.
Using $|P|_{3}\lesssim Q$, it suffices to show 
\begin{align*}
\|y^{-2}\langle y\rangle^{-4}|V|_{3}\|_{L^{2}} & \lesssim b(\|\epsilon\|_{\dot{\mathcal{H}}_{m,\leq M_{2}}^{3}}+o_{M_{2}\to\infty}(1)\|\epsilon\|_{\dot{\mathcal{H}}_{m}^{3}})\quad\text{if }m\geq1,\\
\|y^{-2}\langle y\rangle^{-4}|V|_{5}\|_{L^{2}} & \lesssim b(\|\epsilon\|_{\dot{\mathcal{H}}_{m,\leq M_{2}}^{5}}+o_{M_{2}\to\infty}(1)\|\epsilon\|_{\dot{\mathcal{H}}_{m}^{5}})\quad\text{if }m\geq3.
\end{align*}
One can observe that there is an extra factor $y^{-2}$ in the LHS
compared to that of Case B. Since $V$ contains $P-Q$-parts, we can
exploit the degeneracy $|P-Q|_{3}\lesssim by^{2}Q$ to handle the
$y^{-2}$ factor. We can estimate $\tilde V_{3}$ by 
\begin{align*}
 & y^{-2}\langle y\rangle^{-4}|\tilde V_{3}[P-Q,\epsilon]|_{3}\\
 & \lesssim y^{-2}\langle y\rangle^{-4}(|(P-Q)\epsilon|_{3}+|y^{-2}A_{\theta}[P-Q,\epsilon]|+|\tilde A_{0,3}[P-Q,\epsilon]|)\\
 & \lesssim b\cdot\langle y\rangle^{-4}(|Q\epsilon|_{3}+\tfrac{1}{y^{2}}{\textstyle \int_{0}^{y}}Q|\epsilon|y'dy'+{\textstyle \int_{0}^{y}}Q|\epsilon|\tfrac{dy'}{y'}).
\end{align*}
In the last inequality, we pulled out the $by^{2}$ factor from $\int_{0}^{y}$-integrals
in $A_{\theta}$ and $\tilde A_{0,3}$. This was possible since we
replaced $A_{0,3}$ by $\tilde A_{0,3}$. The last line is essentially
same as $b\langle y\rangle^{-4}|V_{3}[Q,\epsilon]|_{3}$, though we
replaced $A_{0,3}$ by $\tilde A_{0,3}$. Now we can apply the arguments
in Case B to estimate $L^{2}$-norm of the last line. A similar argument
applies to $\tilde V_{5}$ and also $|V|_{5}$.
\end{proof}
We now turn to $R_{\mathrm{NL}}$.
\begin{lem}[Various estimates for nonlinear terms]
\label{lem:nonlinear}We decompose $R_{\mathrm{NL}}$ as 
\[
R_{\mathrm{NL}}=-\theta_{\mathrm{NL}}P+\tilde R_{\mathrm{NL}}
\]
such that the following estimates hold.
\begin{itemize}
\item (Estimate of $\theta_{\mathrm{NL}}$) We have 
\begin{equation}
|\theta_{\mathrm{NL}}|\lesssim\|\epsilon\|_{\dot{H}_{m}^{1}}^{2}.\label{eq:NL-theta-est}
\end{equation}
\item ($\dot{\mathcal{H}}_{m}^{3}$-estimate of $\tilde R_{\mathrm{NL}}$)
We have 
\begin{equation}
\|\tilde R_{\mathrm{NL}}\|_{\dot{\mathcal{H}}_{m}^{3}}\lesssim\|\epsilon\|_{\dot{\mathcal{H}}_{m}^{3}}^{2}+\|\epsilon\|_{\dot{H}_{m}^{1}}^{2}\|\epsilon\|_{\dot{\mathcal{H}}_{m}^{3}}+\|\epsilon\|_{\dot{H}_{m}^{1}}^{5}.\label{eq:NL-Hdot3-est}
\end{equation}
In particular, 
\begin{align}
\sup_{k\in\{1,2,3,4\}}|(i\tilde R_{\mathrm{NL}},\mathcal{Z}_{k})_{r}| & \lesssim M^{C}(\|\epsilon\|_{\dot{\mathcal{H}}_{m}^{3}}^{2}+\|\epsilon\|_{\dot{H}_{m}^{1}}^{2}\|\epsilon\|_{\dot{\mathcal{H}}_{m}^{3}}+\|\epsilon\|_{\dot{H}_{m}^{1}}^{5}),\label{eq:NL-mod-est}\\
\||A_{Q}L_{Q}i\tilde R_{\mathrm{NL}}|_{-1}\|_{L^{2}} & \lesssim\|\epsilon\|_{\dot{\mathcal{H}}_{m}^{3}}^{2}+\|\epsilon\|_{\dot{H}_{m}^{1}}^{2}\|\epsilon\|_{\dot{\mathcal{H}}_{m}^{3}}+\|\epsilon\|_{\dot{H}_{m}^{1}}^{5},\label{eq:NL-E3-est}
\end{align}
where the implicit constant $M^{C}$ comes from the bounds of $\mathcal{Z}_{k}$
\eqref{eq:Orthog-Hdot3-est}.
\end{itemize}
In addition, if $m\geq3$, we have estimates in terms of the higher
Sobolev norm $\dot{\mathcal{H}}_{m}^{5}$:
\begin{itemize}
\item ($\dot{\mathcal{H}}_{m}^{5}$-estimate of $\tilde R_{\mathrm{NL}}$)
We have
\begin{equation}
\|\tilde R_{\mathrm{NL}}\|_{\dot{\mathcal{H}}_{m}^{5}}\lesssim\|\epsilon\|_{\dot{\mathcal{H}}_{m}^{5}}(\|\epsilon\|_{\dot{H}_{m}^{3}}+\|\epsilon\|_{\dot{H}_{m}^{1}}^{2})+\|\epsilon\|_{\dot{H}_{m}^{3}}\|\epsilon\|_{\dot{H}_{m}^{1}}^{4}\label{eq:NL-Hdot5-est}
\end{equation}
In particular, 
\begin{align}
\sup_{k\in\{1,2,3,4\}}|(i\tilde R_{\mathrm{NL}},\mathcal{Z}_{k})_{r}| & \lesssim M^{C}(\|\epsilon\|_{\dot{\mathcal{H}}_{m}^{5}}(\|\epsilon\|_{\dot{H}_{m}^{3}}+\|\epsilon\|_{\dot{H}_{m}^{1}}^{2})+\|\epsilon\|_{\dot{H}_{m}^{3}}\|\epsilon\|_{\dot{H}_{m}^{1}}^{4}),\label{eq:NL-mod-est-m-geq-3}\\
\||A_{Q}L_{Q}i\tilde R_{\mathrm{NL}}|_{-3}\|_{L^{2}} & \lesssim\|\epsilon\|_{\dot{\mathcal{H}}_{m}^{5}}(\|\epsilon\|_{\dot{H}_{m}^{3}}+\|\epsilon\|_{\dot{H}_{m}^{1}}^{2})+\|\epsilon\|_{\dot{H}_{m}^{3}}\|\epsilon\|_{\dot{H}_{m}^{1}}^{4}.\label{eq:NL-E5-est}
\end{align}
\end{itemize}
\end{lem}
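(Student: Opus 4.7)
The strategy mirrors the proof of Lemma~\ref{lem:degenerate-linear}. First I would expand
\[
R_{\mathrm{NL}} = \bigl\{V_{3}[P+\epsilon](P+\epsilon) - V_{3}[P]P - V_{3}[P]\epsilon - 2V_{3}[P,\epsilon]P\bigr\} + \bigl\{\text{$V_{5}$-analogue}\bigr\},
\]
using multilinearity of $V_{3}$ and $V_{5}$, and collect all resulting monomials that are quadratic, cubic, quartic, or quintic in $\epsilon$. Every such monomial is of the form $V\psi$ with $\psi \in \{\epsilon, P\}$ and $V$ a multilinear potential in $P$ and $\epsilon$, built from $-\Re(\overline{\psi_{1}}\psi_{2})$, $y^{-2}A_{\theta}[\cdot,\cdot]$, and $A_{0,k}$. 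Whenever $\psi = P$ and $V$ contains a factor of $A_{0,k}$, I would split $A_{0,k} = A_{0,k}(y=0) + \tilde A_{0,k}$ and group all the constant contributions $A_{0,k}(y=0)\,P$ into $-\theta_{\mathrm{NL}} P$; the remainder $\tilde R_{\mathrm{NL}}$ is obtained by replacing each such $A_{0,k}$ by $\tilde A_{0,k}$, which now vanishes at the origin.

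To prove \eqref{eq:NL-theta-est}, the leading quadratic-in-$\epsilon$ contributions to $\theta_{\mathrm{NL}}$ are of the form $\int_0^\infty |\epsilon|^2 \tfrac{dy}{y}$ and $\int_0^\infty y^{-2}A_\theta[Q]|\epsilon|^2\tfrac{dy}{y}$, both of which are $\lesssim \|\epsilon\|_{\dot H_m^1}^{2}$ by the Hardy-Sobolev inequality \eqref{eq:HardySobolevSection2} (which needs $m\geq 1$). The cubic, quartic, and quintic contributions in $\epsilon$ gain additional factors of $\|\epsilon\|_{L^{\infty}} \lesssim \|\epsilon\|_{\dot H_m^1}$ and $Q\in L^{\infty}\cap L^{2}$, so under the bootstrap hypothesis they are absorbed into the same quadratic bound.

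For \eqref{eq:NL-Hdot3-est}, as in Cases A/B/C of the proof of Lemma~\ref{lem:degenerate-linear}, I would apply
\[
\|V\psi\|_{\dot{\mathcal H}_m^{3}} \lesssim \|y^{-3}\mathbf{1}_{y\geq 1}V\psi\|_{L^{2}} + \bigl\||V\partial_{+}\psi|_{-2}\bigr\|_{L^{2}} + \bigl\||(\partial_{y}V)\psi|_{-2}\bigr\|_{L^{2}}
\]
via Lemma~\ref{lem:Comparison-H3-Hdot3} to each monomial separately. For the purely quadratic terms one $\epsilon$-factor carries the $\dot{\mathcal H}_m^{3}$-derivative while the other is placed in $L^{\infty}$ via Hardy-Sobolev, yielding $\|\epsilon\|_{\dot{\mathcal H}_m^{3}}^{2}$. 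For cubic, quartic, and quintic terms I would keep exactly one $\epsilon$ in $\dot{\mathcal H}_m^{3}$ and place all other $\epsilon$-factors in $L^{\infty}\hookleftarrow \dot H_m^{1}$, producing the mixed term $\|\epsilon\|_{\dot H_m^{1}}^{2}\|\epsilon\|_{\dot{\mathcal H}_m^{3}}$; the purely quintic monomial with $\psi=P$ uses instead $Q\in L^{\infty}$ for the outermost factor and distributes five $L^{\infty}$ factors of $\epsilon$, giving the $\|\epsilon\|_{\dot H_m^{1}}^{5}$ term. For the $\psi = P$ terms the replacement $A_{0,k}\to\tilde A_{0,k}$ supplies the $y^{2}$-degeneracy at the origin needed to offset the $y^{-2}$ appearing in the $|V|_{2}$ bound, exactly as in Case C of Lemma~\ref{lem:degenerate-linear}. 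Then \eqref{eq:NL-mod-est} and \eqref{eq:NL-E3-est} follow by pairing the $\dot{\mathcal H}_m^{3}$-bound with \eqref{eq:Orthog-Hdot3-est} and with positivity of $A_Q A_Q^{\ast}$ combined with Lemma~\ref{lem:Mapping-AAL-Section2}.

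The $\dot{\mathcal H}_m^{5}$ part for $m\geq 3$ proceeds analogously using Lemma~\ref{lem:ComparisonH5H5Appendix}, Lemma~\ref{lem:Mapping-AAAAL-Section2}, and positivity of $(A_Q A_Q^{\ast})^{3}$ (Lemma~\ref{lem:Positivity-AAA-Appendix}), now distributing derivatives so that exactly one $\epsilon$-factor carries a $\dot{\mathcal H}_m^{5}$-derivative, at most one other carries a $\dot H_m^{3}$-derivative, and the remaining factors are placed in $L^{\infty}\lesssim \dot H_m^{1}$. The main obstacle is the bookkeeping of the many multilinear terms produced by $V_{5}$ and the careful allocation of derivatives: a crude estimate placing all factors in $\dot{\mathcal H}_m^{3}$ (resp.\ $\dot{\mathcal H}_m^{5}$) would yield $\|\epsilon\|_{\dot{\mathcal H}_m^{3}}^{3}$ in place of the mixed product $\|\epsilon\|_{\dot H_m^{1}}^{2}\|\epsilon\|_{\dot{\mathcal H}_m^{3}}$, which is insufficient to close the bootstrap under \eqref{eq:bootstrap-hyp-H3}; only the sharp distribution of Hardy-Sobolev factors extracts the correct scalings $b^{\frac{1}{2}}$ and $b^{5/2}$ required downstream.
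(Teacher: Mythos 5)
Your high-level strategy matches the paper's: expand $R_{\mathrm{NL}}$ into multilinear monomials $V_k[\psi_1,\dots,\psi_{k-1}]\psi_k$ with at least two $\epsilon$'s, pull the $A_{0,k}(0)$ contributions out of the $\psi=P$ terms to form $\theta_{\mathrm{NL}}$, replace $A_{0,k}$ by $\tilde A_{0,k}$ in what remains, and then estimate each monomial in $\dot{\mathcal H}_m^3$ (resp.\ $\dot{\mathcal H}_m^5$) via Lemma~\ref{lem:Comparison-H3-Hdot3} (resp.\ Lemma~\ref{lem:ComparisonH5H5Appendix}) by distributing derivatives so that one $\epsilon$-factor carries the top norm and the remaining factors are put in $L^\infty$. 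That is essentially the paper's proof.

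There are, however, two points of imprecision worth flagging. First, for the quadratic $P\epsilon^2$ monomial you claim that placing one $\epsilon$-factor in $L^\infty$ ``via Hardy-Sobolev'' yields $\|\epsilon\|_{\dot{\mathcal H}_m^3}^2$; but the plain Hardy-Sobolev inequality~\eqref{eq:HardySobolevSection2} only gives $\|\epsilon\|_{L^\infty}\lesssim\|\epsilon\|_{\dot H_m^1}$, so your argument as stated produces $\|\epsilon\|_{\dot{\mathcal H}_m^3}\|\epsilon\|_{\dot H_m^1}$. Under the bootstrap this is $\sim b\|\epsilon_3\|_{L^2}$, which is exactly \emph{not} small compared to $b\|\epsilon_3\|_{L^2}$ and hence fails to close the energy estimate. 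The paper extracts the genuine $\|\epsilon\|_{\dot{\mathcal H}_m^3}^2$ by using the \emph{weighted} $L^\infty$ control~\eqref{eq:L-infty-weighted-hardy}, namely $\|y^{-2}\langle y\rangle^2 Q|\epsilon|_2\|_{L^\infty}\lesssim\|\epsilon\|_{\dot{\mathcal H}_m^3}$, after carefully splitting $|P\epsilon^2|_{-\ell}$ so that no $|\epsilon|_{-\ell}$ (problematic at the origin for $m\in\{1,2\}$) appears; you need this stronger tool, not the plain embedding. Second, the observation in your last paragraph has the inequality backwards: a bound $\|\epsilon\|_{\dot{\mathcal H}_m^3}^3$ would actually be \emph{smaller} (hence \emph{better}) than $\|\epsilon\|_{\dot H_m^1}^2\|\epsilon\|_{\dot{\mathcal H}_m^3}$ under the bootstrap hierarchy $\|\epsilon\|_{\dot{\mathcal H}_m^3}\ll\|\epsilon\|_{\dot H_m^1}$; the real danger is placing too many factors in the low norm $\dot H_m^1$ (yielding e.g.\ $\|\epsilon\|_{\dot H_m^1}^3\sim b^3$, which is not $\ll b\|\epsilon_3\|_{L^2}\sim b^{7/2}$), and the cubic upper bound in the top norm is not achievable in the first place since the $L^\infty$ embedding itself costs a $\dot H_m^1$-factor.
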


\begin{rem}
As will be detailed in Section \ref{subsec:energy-identity}, we need
bounds of the form $\||A_{Q}L_{Q}i\tilde R_{\mathrm{NL}}|_{-1}\|_{L^{2}}\ll b\|\epsilon_{3}\|_{L^{2}}$
after applying the bootstrap hypotheses on $\epsilon$. Similarly,
we need $\||A_{Q}A_{Q}^{\ast}A_{Q}L_{Q}i\tilde R_{\mathrm{NL}}|_{-1}\|_{L^{2}}\ll b\|\epsilon_{5}\|_{L^{2}}$
to close our bootstrap for $\|\epsilon_{5}\|_{L^{2}}$.
\end{rem}

\begin{rem}
In contrast to $iR_{\mathrm{L-L}}$, the estimates \eqref{eq:gen-null-rel}
and \eqref{eq:NL-theta-est} \emph{do not} imply that \eqref{eq:NL-E3-est}
for $iR_{\mathrm{NL}}$ instead of $i\tilde R_{\mathrm{NL}}$ holds.
In other words, it seems necessary to extract the phase correction
term $\theta_{\mathrm{NL}}iP$ from $iR_{\mathrm{NL}}$ to have \eqref{eq:NL-E3-est}.
\end{rem}

\begin{proof}
\textbf{Step 1.} Decomposition $R_{\mathrm{NL}}=-\theta_{\mathrm{NL}}P+\tilde R_{\mathrm{NL}}$.

To derive the decomposition, we proceed similarly as before. As $R_{\mathrm{NL}}$
collects the quadratic and higher terms in $\epsilon$ of the nonlinearity
$\mathcal{N}(P+\epsilon)$, we have 
\begin{align*}
R_{\mathrm{NL}} & =\sum_{k\in\{3,5\}}\sum_{\substack{\psi_{1},\dots,\psi_{k}\in\{P,\epsilon\}\\
\#\{j:\psi_{j}=\epsilon\}\geq2
}
}V_{k}[\psi_{1},\dots,\psi_{k-1}]\psi_{k}\\
 & =\sum_{k\in\{3,5\}}\sum_{\substack{\psi_{1},\dots,\psi_{k-1}\in\{P,\epsilon\}\\
\#\{j:\psi_{j}=\epsilon\}\geq1
}
}V_{k}[\psi_{1},\dots,\psi_{k-1}]\epsilon+\sum_{k\in\{3,5\}}\sum_{\substack{\psi_{1},\dots,\psi_{k-1}\in\{P,\epsilon\}\\
\#\{j:\psi_{j}=\epsilon\}\geq2
}
}V_{k}[\psi_{1},\dots,\psi_{k-1}]P.
\end{align*}
We focus on the term that ends in $P$. As in $R_{\mathrm{L-L}}$,
we capture the phase correction term $\theta_{\mathrm{NL}}P$. We
replace the outermost integral $\int_{y}^{\infty}$ of $A_{0}$ by
$\int_{0}^{y}$ at cost of introducing $\theta_{\mathrm{NL}}$: 
\begin{align*}
 & R_{\mathrm{NL}}+\theta_{\mathrm{NL}}P\\
 & =\sum_{k\in\{3,5\}}\sum_{\substack{\psi_{1},\dots,\psi_{k-1}\in\{P,\epsilon\}\\
\#\{j:\psi_{j}=\epsilon\}\geq1
}
}V_{k}[\psi_{1},\dots,\psi_{k-1}]\epsilon+\sum_{k\in\{3,5\}}\sum_{\substack{\psi_{1},\dots,\psi_{k-1}\in\{P,\epsilon\}\\
\#\{j:\psi_{j}=\epsilon\}\geq2
}
}\tilde V_{k}[\psi_{1},\dots,\psi_{k-1}]P,
\end{align*}
where 
\begin{align*}
\theta_{\mathrm{NL}} & \coloneqq{\textstyle \int_{0}^{\infty}}m|\epsilon|^{2}\tfrac{dy}{y}+\sum_{\substack{\psi_{1},\dots,\psi_{4}\in\{P,\epsilon\}\\
\#\{j:\psi_{j}=\epsilon\}\geq2
}
}{\textstyle \int_{0}^{\infty}}A_{\theta}[\psi_{1},\psi_{2}]\Re(\overline{\psi_{3}}\psi_{4})\tfrac{dy}{y}\\
 & ={\textstyle \int_{0}^{\infty}}(m+A_{\theta}[P+\epsilon])|\epsilon|^{2}\tfrac{dy}{y}\\
 & \quad+2{\textstyle \int_{0}^{\infty}}(A_{\theta}[P+\epsilon]-A_{\theta}[P])\Re(\overline{P}\epsilon)\tfrac{dy}{y}+{\textstyle \int_{0}^{\infty}}A_{\theta}[\epsilon]|P|^{2}\tfrac{dy}{y}.
\end{align*}

\ 

\textbf{Step 2.} The $\theta_{\mathrm{NL}}$-estimate \eqref{eq:NL-theta-est}.
\begin{align*}
|{\textstyle \int_{0}^{\infty}}(m+A_{\theta}[P+\epsilon])|\epsilon|^{2}\tfrac{dy}{y}| & \lesssim(1+\|P+\epsilon\|_{L^{2}}^{2})\|\tfrac{1}{y}\epsilon\|_{L^{2}}^{2}\lesssim\|\epsilon\|_{\dot{H}_{m}^{1}}^{2},\\
|{\textstyle \int_{0}^{\infty}}(A_{\theta}[P+\epsilon]-A_{\theta}[P])\Re(\overline{P}\epsilon)\tfrac{dy}{y}| & \lesssim\|\tfrac{1}{y^{2}}(A_{\theta}[P+\epsilon]-A_{\theta}[P])\|_{L^{2}}\|P\epsilon\|_{L^{2}}\lesssim\|\epsilon\|_{\dot{H}_{m}^{1}}^{2},\\
|{\textstyle \int_{0}^{\infty}}A_{\theta}[\epsilon]|P|^{2}\tfrac{dy}{y}| & \lesssim|{\textstyle \int_{0}^{\infty}}A_{\theta}[\tfrac{1}{y}\epsilon]Q^{2}ydy|\lesssim\|\epsilon\|_{\dot{H}_{m}^{1}}^{2}.
\end{align*}
This completes the proof of \eqref{eq:NL-theta-est}.

\ 

\textbf{Step 3.} The estimates of $\tilde R_{\mathrm{NL}}$.

For the $\dot{\mathcal{H}}_{m}^{3}$-estimate, as in the proof of
Lemma \ref{lem:degenerate-linear}, we need to show 
\begin{align}
\|y^{-3}\mathbf{1}_{y\geq1}V\psi\|_{L^{2}}+\||V\partial_{+}\psi|_{-2}\|_{L^{2}} & +\||(\partial_{y}V)\psi|_{-2}\|_{L^{2}}\label{eq:R-NL-Hdot3-claim1}\\
 & \lesssim\|\epsilon\|_{\dot{\mathcal{H}}_{m}^{3}}^{2}+\|\epsilon\|_{\dot{H}_{m}^{1}}^{2}\|\epsilon\|_{\dot{\mathcal{H}}_{m}^{3}}+\|\epsilon\|_{\dot{H}_{m}^{1}}^{5}.\nonumber 
\end{align}
In what follows, we will show a stronger estimate 
\begin{equation}
\||V\psi|_{-3}\|_{L^{2}}\lesssim\|\epsilon\|_{\dot{\mathcal{H}}_{m}^{3}}^{2}+\|\epsilon\|_{\dot{H}_{m}^{1}}^{2}\|\epsilon\|_{\dot{\mathcal{H}}_{m}^{3}}+\|\epsilon\|_{\dot{H}_{m}^{1}}^{5}\label{eq:R-NL-Hdot3-claim2}
\end{equation}
when $V$ is not of $A_{0}$-type, i.e. $V$ is of $|\phi|^{2}$,
$\frac{1}{y^{2}}A_{\theta}$, and $\frac{1}{y^{2}}A_{\theta}^{2}$.
For $V$ of $A_{0}$-type, we will directly show \eqref{eq:R-NL-Hdot3-claim1}.\footnote{In this case, we cannot hope to estimate $\||A_{0,3}\phi|_{-3}\|_{L^{2}}$.
Indeed, $A_{0,3}$ can have nonzero value at the origin, so we can
only expect $|A_{0,3}\phi|\lesssim y^{m}$ near the origin, even if
$\phi$ is smooth $m$-equivariant. Thus $y^{-3}A_{0,3}\phi$ may
not belong to $L^{2}$ when $m\in\{1,2\}$, due to the singularity
at the origin. The same discussion applies for $R_{\mathrm{L-L}}$
\eqref{eq:deg-lin-claim}.}

Similarly, for the $\dot{\mathcal{H}}_{m}^{5}$-estimate when $m\geq3$,
we need to show 
\begin{align}
\|y^{-5}\mathbf{1}_{y\geq1}V\psi\|_{L^{2}} & +\||V\partial_{+}\psi|_{-4}\|_{L^{2}}+\||(\partial_{y}V)\psi|_{-4}\|_{L^{2}}\label{eq:R-NL-Hdot5-claim1}\\
 & \lesssim\|\epsilon\|_{\dot{\mathcal{H}}_{m}^{5}}(\|\epsilon\|_{\dot{H}_{m}^{3}}+\|\epsilon\|_{\dot{H}_{m}^{1}}^{2})+\|\epsilon\|_{\dot{H}_{m}^{3}}\|\epsilon\|_{\dot{H}_{m}^{1}}^{4}\quad\text{if }m\geq3.\nonumber 
\end{align}
For $V$ of $|\phi|^{2}$, $\frac{1}{y^{2}}A_{\theta}$, and $\frac{1}{y^{2}}A_{\theta}^{2}$-types,
we will show a stronger estimate 
\begin{equation}
\||V\psi|_{-5}\|_{L^{2}}\lesssim\|\epsilon\|_{\dot{\mathcal{H}}_{m}^{5}}(\|\epsilon\|_{\dot{H}_{m}^{3}}+\|\epsilon\|_{\dot{H}_{m}^{1}}^{2})+\|\epsilon\|_{\dot{H}_{m}^{3}}\|\epsilon\|_{\dot{H}_{m}^{1}}^{4}\quad\text{if }m\geq3.\label{eq:R-NL-Hdot5-claim2}
\end{equation}

In the sequel, we prove the estimates \eqref{eq:R-NL-Hdot3-claim1}-\eqref{eq:R-NL-Hdot5-claim2}.
We proceed by types of nonlinearity. In order to reduce redundant
expressions, we use index $\ell\in\{3,5\}$ to denote the regularity
of $\dot{\mathcal{H}}_{m}^{\ell}$. When $\ell=5$, we further assume
$m\geq3$.

\uline{Case A:} $|\phi|^{2}\phi$-type.

It suffices to show \eqref{eq:R-NL-Hdot3-claim2} and \eqref{eq:R-NL-Hdot5-claim2},
i.e. 
\begin{align*}
\||P\epsilon^{2}|_{-\ell}\|_{L^{2}}+\||\epsilon^{3}|_{-\ell}\|_{L^{2}} & \lesssim\|\epsilon\|_{\dot{\mathcal{H}}_{m}^{\ell}}(\|\epsilon\|_{\dot{\mathcal{H}}_{m}^{3}}+\|\epsilon\|_{\dot{H}_{m}^{1}}^{2}).
\end{align*}
For $P\epsilon^{2}$, using $|P|_{5}\lesssim Q$, 
\begin{align*}
|P\epsilon^{2}|_{-3} & \lesssim Q|\partial_{yyy}\epsilon||\epsilon|+|\epsilon|_{-2}\cdot y^{-1}Q|\epsilon|_{1},\\
|P\epsilon^{2}|_{-5} & \lesssim Q|\partial_{yyyyy}\epsilon||\epsilon|+|\epsilon|_{-4}\cdot y^{-1}Q|\epsilon|_{2},
\end{align*}
thus 
\begin{align*}
\||P\epsilon^{2}|_{-\ell}\|_{L^{2}} & \lesssim(\|\partial_{y}^{\ell}\epsilon\|_{L^{2}}+\|y\langle y\rangle^{-2}|\epsilon|_{-(\ell-1)}\|_{L^{2}})\|y^{-2}\langle y\rangle^{2}Q|\epsilon|_{2}\|_{L^{\infty}}\lesssim\|\epsilon\|_{\dot{\mathcal{H}}_{m}^{\ell}}\|\epsilon\|_{\dot{\mathcal{H}}_{m}^{3}},
\end{align*}
where we estimated the $L^{\infty}$-term using \eqref{eq:L-infty-weighted-hardy}.

For $\epsilon^{3}$, we have 
\begin{align*}
|\epsilon^{3}|_{-3} & \lesssim|\partial_{yyy}\epsilon||\epsilon|^{2}+|\partial_{yy}\epsilon||\epsilon|_{-1}|\epsilon|+|\epsilon|_{-1}^{3},\\
|\epsilon^{3}|_{-5} & \lesssim|\partial_{yyyyy}\epsilon||\epsilon|^{2}+|\partial_{yyyy}\epsilon||\epsilon|_{-1}|\epsilon|+|\epsilon|_{-3}(|\epsilon|_{-2}|\epsilon|+|\epsilon|_{-1}^{2}).
\end{align*}
For $|\epsilon^{3}|_{-3}$, we estimate 
\begin{align*}
\||\epsilon^{3}|_{-3}\|_{L^{2}} & \lesssim\|\partial_{yyy}\epsilon\|_{L^{2}}\|\epsilon\|_{L^{\infty}}^{2}+\|\partial_{yy}\epsilon\|_{L^{2}}\||\epsilon|_{-1}\|_{L^{\infty}}\|\epsilon\|_{L^{\infty}}+\||\epsilon|_{-1}\|_{L^{\infty}}^{2}\||\epsilon|_{-1}\|_{L^{2}}\\
 & \lesssim\|\epsilon\|_{\dot{\mathcal{H}}_{m}^{3}}\|\epsilon\|_{\dot{H}_{m}^{1}}^{2},
\end{align*}
where in the last inequality we used \eqref{eq:HardySobolevSection2}
and \eqref{eq:interpolation}. For $|\epsilon^{3}|_{-5}$, we estimate
\begin{align*}
\||\epsilon^{3}|_{-5}\|_{L^{2}} & \lesssim\|\partial_{yyyyy}\epsilon\|_{L^{2}}\|\epsilon\|_{L^{\infty}}^{2}+\|\partial_{yyyy}\epsilon\|_{L^{2}}\||\epsilon|_{-1}\|_{L^{\infty}}\|\epsilon\|_{L^{\infty}}\\
 & \qquad\qquad+\||\epsilon|_{-3}\|_{L^{2}}\||\epsilon|_{-2}|\epsilon|+|\epsilon|_{-1}^{2}\|_{L^{\infty}}\lesssim\|\epsilon\|_{\dot{\mathcal{H}}_{m}^{5}}\|\epsilon\|_{\dot{H}_{m}^{1}}^{2},
\end{align*}
where in the last inequality we used \eqref{eq:interpolation}, \eqref{eq:interpolation-H5},
and the interpolation of $\dot{H}_{m}^{3}$ by $\dot{H}_{m}^{1}$
and $\dot{\mathcal{H}}_{m}^{5}$.

\uline{Case B:} $\frac{A_{\theta}}{y^{2}}\phi$-type.

It suffices to show \eqref{eq:R-NL-Hdot3-claim2} and \eqref{eq:R-NL-Hdot5-claim2}
\begin{align*}
\||\tfrac{A_{\theta}}{y^{2}}\phi|_{-\ell}\|_{L^{2}} & \lesssim\|\epsilon\|_{\dot{\mathcal{H}}_{m}^{\ell}}(\|\epsilon\|_{\dot{\mathcal{H}}_{m}^{3}}+\|\epsilon\|_{\dot{H}_{m}^{1}}^{2}).
\end{align*}
Since 
\[
|\tfrac{1}{y^{2}}A_{\theta}[\psi_{1},\psi_{2}]\psi_{3}|_{-\ell}\lesssim\tfrac{1}{y^{2}}A_{\theta}[\psi_{1},\psi_{2}]|\psi_{3}|_{-\ell}+\tfrac{1}{y}|\psi_{1}\psi_{2}\psi_{3}|_{-(\ell-1)}
\]
and $\tfrac{1}{y}|\psi_{1}\psi_{2}\psi_{3}|_{-(\ell-1)}\lesssim|\psi_{1}\psi_{2}\psi_{3}|_{-\ell}$
is treated in Case A, it suffices to show 
\begin{align*}
\|\tfrac{1}{y^{2}}A_{\theta}[\psi_{1},\psi_{2}]|\psi_{3}|_{-3}\|_{L^{2}} & \lesssim\|\epsilon\|_{\dot{\mathcal{H}}_{m}^{3}}(\|\epsilon\|_{\dot{\mathcal{H}}_{m}^{3}}+\|\epsilon\|_{\dot{H}_{m}^{1}}^{2}),\\
\|\tfrac{1}{y^{2}}A_{\theta}[\psi_{1},\psi_{2}]|\psi_{3}|_{-5}\|_{L^{2}} & \lesssim\|\epsilon\|_{\dot{\mathcal{H}}_{m}^{5}}(\|\epsilon\|_{\dot{H}_{m}^{3}}+\|\epsilon\|_{\dot{H}_{m}^{1}}^{2}).
\end{align*}
If $\psi_{3}=\epsilon$, then by \eqref{eq:L-infty-weighted-hardy}
and \eqref{eq:HardySobolevSection2} we have 
\begin{align*}
 & \|\tfrac{1}{y^{2}}A_{\theta}[\psi_{1},\psi_{2}]|\epsilon|_{-\ell}\|_{L^{2}}\\
 & \lesssim\|y^{-4}\langle y\rangle^{2}A_{\theta}[\psi_{1},\psi_{2}]\|_{L^{\infty}}\|y^{2}\langle y\rangle^{-2}|\epsilon|_{-\ell}\|_{L^{2}}\\
 & \lesssim\|y^{-2}\langle y\rangle^{2}\psi_{1}\psi_{2}\|_{L^{\infty}}\|\epsilon\|_{\dot{\mathcal{H}}_{m}^{\ell}}\\
 & \lesssim\|\epsilon\|_{\dot{\mathcal{H}}_{m}^{\ell}}\cdot\begin{cases}
\|y^{-2}\langle y\rangle^{2}Q\epsilon\|_{L^{\infty}} & \text{if }\{\psi_{1},\psi_{2}\}=\{P,\epsilon\},\\
\|y^{-1}\langle y\rangle\epsilon\|_{L^{\infty}}^{2} & \text{if }\psi_{1}=\psi_{2}=\epsilon,
\end{cases}\\
 & \lesssim\|\epsilon\|_{\dot{\mathcal{H}}_{m}^{\ell}}(\|\epsilon\|_{\dot{\mathcal{H}}_{m}^{3}}+\|\epsilon\|_{\dot{H}_{m}^{1}}^{2}).
\end{align*}
Note that we used $m\geq3$ when $\ell=5$ to get $\|y^{2}\langle y\rangle^{-2}|\epsilon|_{-5}\|_{L^{2}}\lesssim\|\epsilon\|_{\dot{\mathcal{H}}_{m}^{5}}$.
If $\psi_{3}=P$, then by $|P|_{-\ell}\lesssim y^{-2}\langle y\rangle^{-(1+\ell)}$
(here we also used $m\geq3$ when $\ell=5$ at the origin) and \eqref{eq:L-infty-weighted-hardy},
we have 
\begin{align*}
\|\tfrac{1}{y^{2}}A_{\theta}[\epsilon]|P|_{-\ell}\|_{L^{2}} & \lesssim\|y^{-2}\langle y\rangle^{-(1+\ell)}\epsilon^{2}\|_{L^{2}}\\
 & \lesssim\|y^{-1}\langle y\rangle^{-\ell}\epsilon\|_{L^{2}}\|y^{-1}\langle y\rangle^{-1}\epsilon\|_{L^{\infty}}\lesssim\|\epsilon\|_{\dot{\mathcal{H}}_{m}^{\ell}}\|\epsilon\|_{\dot{\mathcal{H}}_{m}^{3}}.
\end{align*}

\uline{Case C:} $A_{0,3}\phi$-type.

Here, when $\psi=P$, by subtracting $\theta_{\mathrm{NL}}P$, $A_{0,3}$
is replaced by $\tilde A_{0,3}$. In other words, $V\psi$ is of the
form $A_{0,3}[\psi_{1},\psi_{2}]\epsilon$ or $\tilde A_{0,3}[\epsilon]P$.

We directly show \eqref{eq:R-NL-Hdot3-claim1} and \eqref{eq:R-NL-Hdot5-claim1}.
We note that if $\partial_{y}$ hits $A_{0,3}$ (or $\tilde A_{0,3}$),
then it reduces to the $\tfrac{1}{y}|\phi|^{2}\phi$-type, which is
already estimated in Case A. Thus we may assume that $\partial_{y}$
never hits $A_{0,3}$ (or $\tilde A_{0,3}$). It suffices to show
\begin{align*}
\|A_{0,3}[\psi_{1},\psi_{2}](|\partial_{+}\epsilon|_{-(\ell-1)}+\mathbf{1}_{y\geq1}\tfrac{1}{y^{\ell}}\epsilon)\|_{L^{2}} & \lesssim\|\epsilon\|_{\dot{\mathcal{H}}_{m}^{\ell}}(\|\epsilon\|_{\dot{\mathcal{H}}_{m}^{3}}+\|\epsilon\|_{\dot{H}_{m}^{1}}^{2}),\\
\|\tilde A_{0,3}[\epsilon]|P|_{-\ell}\|_{L^{2}} & \lesssim\|\epsilon\|_{\dot{\mathcal{H}}_{m}^{\ell}}\|\epsilon\|_{\dot{\mathcal{H}}_{m}^{3}}.
\end{align*}
For the first one, we estimate by 
\begin{align*}
\|A_{0,3}[\psi_{1},\psi_{2}]\|_{L^{\infty}}\|\epsilon\|_{\dot{\mathcal{H}}_{m}^{\ell}} & \lesssim\|y^{-2}\psi_{1}\psi_{2}\|_{L^{1}}\|\epsilon\|_{\dot{\mathcal{H}}_{m}^{\ell}}\\
 & \lesssim\|\epsilon\|_{\dot{\mathcal{H}}_{m}^{\ell}}\cdot\begin{cases}
\|y^{-2}Q\epsilon\|_{L^{1}} & \text{if }\{\psi_{1},\psi_{2}\}=\{P,\epsilon\},\\
\|y^{-2}\epsilon^{2}\|_{L^{1}} & \text{if }\psi_{1}=\psi_{2}=\epsilon,
\end{cases}\\
 & \lesssim\|\epsilon\|_{\dot{\mathcal{H}}_{m}^{\ell}}(\|\epsilon\|_{\dot{\mathcal{H}}_{m}^{3}}+\|\epsilon\|_{\dot{H}_{m}^{1}}^{2}).
\end{align*}
For the second one, we simply use $|P|_{-\ell}\lesssim y^{-2}\langle y\rangle^{-(1+\ell)}$
and \eqref{eq:L-infty-weighted-hardy} to estimate 
\begin{align*}
\|\tilde A_{0,3}[\epsilon]|P|_{-\ell}\|_{L^{2}} & \lesssim\|y^{-2}\langle y\rangle^{-(1+\ell)}\epsilon^{2}\|_{L^{2}}\\
 & \lesssim\|y^{-1}\langle y\rangle^{-\ell}\epsilon\|_{L^{2}}\|y^{-1}\langle y\rangle^{-1}\epsilon\|_{L^{\infty}}\lesssim\|\epsilon\|_{\dot{\mathcal{H}}_{m}^{\ell}}\|\epsilon\|_{\dot{\mathcal{H}}_{m}^{3}}.
\end{align*}

\uline{Case D:} $\tfrac{1}{y^{2}}A_{\theta}^{2}\phi$-type.

It suffices to show \eqref{eq:R-NL-Hdot3-claim2} and \eqref{eq:R-NL-Hdot5-claim2}
\begin{align*}
\||\tfrac{1}{y^{2}}A_{\theta}[\psi_{1},\psi_{2}]A_{\theta}[\psi_{3},\psi_{4}]\psi_{5}|_{-3}\|_{L^{2}} & \lesssim\|\epsilon\|_{\dot{\mathcal{H}}_{m}^{3}}(\|\epsilon\|_{\dot{\mathcal{H}}_{m}^{3}}+\|\epsilon\|_{\dot{H}_{m}^{1}}^{2})+\|\epsilon\|_{\dot{H}_{m}^{1}}^{5},\\
\||\tfrac{1}{y^{2}}A_{\theta}[\psi_{1},\psi_{2}]A_{\theta}[\psi_{3},\psi_{4}]\psi_{5}|_{-5}\|_{L^{2}} & \lesssim\|\epsilon\|_{\dot{\mathcal{H}}_{m}^{5}}(\|\epsilon\|_{\dot{\mathcal{H}}_{m}^{3}}+\|\epsilon\|_{\dot{H}_{m}^{1}}^{2})+\|\epsilon\|_{\dot{\mathcal{H}}_{m}^{3}}\|\epsilon\|_{\dot{H}_{m}^{1}}^{4}.
\end{align*}
From the pointwise bound
\[
|\tfrac{1}{y^{2}}A_{\theta}^{2}\psi_{5}|_{-\ell}\lesssim\tfrac{1}{y^{2}}A_{\theta}^{2}|\psi_{5}|_{-\ell}+|\tfrac{1}{y}A_{\theta}\psi_{1}\psi_{2}\psi_{5}|_{-(\ell-1)}+|\tfrac{1}{y}A_{\theta}\psi_{3}\psi_{4}\psi_{5}|_{-(\ell-1)},
\]
it suffices to show 
\begin{align*}
\|\tfrac{1}{y^{2}}A_{\theta}^{2}|\psi_{5}|_{-\ell}\|_{L^{2}} & \lesssim\|\epsilon\|_{\dot{\mathcal{H}}_{m}^{\ell}}(\|\epsilon\|_{\dot{\mathcal{H}}_{m}^{3}}+\|\epsilon\|_{\dot{H}_{m}^{1}}^{2}),\\
\||\tfrac{1}{y}A_{\theta}\psi_{3}\psi_{4}\psi_{5}|_{-(\ell-1)}\|_{L^{2}} & \lesssim\|\epsilon\|_{\dot{\mathcal{H}}_{m}^{\ell}}(\|\epsilon\|_{\dot{\mathcal{H}}_{m}^{3}}+\|\epsilon\|_{\dot{H}_{m}^{1}}^{2})+\|\epsilon\|_{\dot{H}_{m}^{(\ell-2)}}\|\epsilon\|_{\dot{H}_{m}^{1}}^{4}.
\end{align*}

We first estimate $\tfrac{1}{y^{2}}A_{\theta}^{2}|\psi_{5}|_{-\ell}$.
By symmetry, it suffices to estimate 
\begin{align*}
 & \|\tfrac{1}{y^{2}}A_{\theta}[\epsilon,\psi_{2}]A_{\theta}[\psi_{3},\psi_{4}]|\epsilon|_{-\ell}\|_{L^{2}},\\
 & \|\tfrac{1}{y^{2}}A_{\theta}[\epsilon]A_{\theta}[\psi_{3},\psi_{4}]|P|_{-\ell}\|_{L^{2}},\\
 & \|\tfrac{1}{y^{2}}A_{\theta}^{2}[P,\epsilon]|P|_{-\ell}\|_{L^{2}}.
\end{align*}
For the the first two terms, we merely estimate $|A_{\theta}[\psi_{3},\psi_{4}]|\lesssim1$
and use the estimate in Case B. For the last term, we estimate 
\[
\|\tfrac{1}{y^{2}}A_{\theta}[P,\epsilon]A_{\theta}[P,\epsilon]|P|_{-\ell}\|_{L^{2}}\lesssim\|P\epsilon\|_{L^{\infty}}^{2}\|y^{2}|P|_{-\ell}\|_{L^{2}}\lesssim\|\epsilon\|_{\dot{\mathcal{H}}_{m}^{\ell}}\|\epsilon\|_{\dot{\mathcal{H}}_{m}^{3}},
\]
where in the last inequality we used \eqref{eq:L-infty-weighted-hardy}
for $\ell=3$ and \eqref{eq:interpolation-H5} for $\ell=5$.

Next, we estimate $|\tfrac{1}{y}A_{\theta}\psi_{3}\psi_{4}\psi_{5}|_{-(\ell-1)}$.
From the pointwise estimate 
\[
|\tfrac{1}{y}A_{\theta}\psi_{3}\psi_{4}\psi_{5}|_{-(\ell-1)}\lesssim|A_{\theta}[\psi_{1},\psi_{2}]|\cdot\tfrac{1}{y}|\psi_{3}\psi_{4}\psi_{5}|_{-(\ell-1)}+|\psi_{1}\psi_{2}\psi_{3}\psi_{4}\psi_{5}|_{-(\ell-2)},
\]
we estimate these two terms.

We estimate $|A_{\theta}[\psi_{1},\psi_{2}]|\cdot\tfrac{1}{y}|\psi_{3}\psi_{4}\psi_{5}|_{-(\ell-1)}$.
If $\psi_{3},\psi_{4},\psi_{5}$ contain at least two $\epsilon$,
then we bound $|A_{\theta}[\psi_{1},\psi_{2}]|\lesssim1$ and use
the $L^{2}$-estimate of $\tfrac{1}{y}|\psi_{3}\psi_{4}\psi_{5}|_{-(\ell-1)}$
done in Case A. Otherwise, there are $j,k\in\{3,4,5\}$ such that
$j\neq k$, $\psi_{1},\psi_{2},\psi_{j}$ contain at least two $\epsilon$,
and $\psi_{k}=P$. Say $j=3$ and $k=4$. We estimate as 
\begin{align*}
\|A_{\theta}[\psi_{1},\psi_{2}]\tfrac{1}{y}|\psi_{3}P\psi_{5}|_{-(\ell-1)}\|_{L^{2}} & \lesssim\|\tfrac{1}{y^{2}}A_{\theta}[\psi_{1},\psi_{2}]\tfrac{1}{y}|\psi_{3}|_{-(\ell-1)}\|_{L^{2}}\|y^{2}Q|\psi_{5}|_{\ell-1}\|_{L^{\infty}}\\
 & \lesssim\|\epsilon\|_{\dot{\mathcal{H}}_{m}^{\ell}}(\|\epsilon\|_{\dot{H}_{m}^{3}}+\|\epsilon\|_{\dot{H}_{m}^{1}}^{2}),
\end{align*}
where in the last inequality we used $\|y^{2}Q|\psi_{5}|_{\ell-1}\|_{L^{\infty}}\lesssim1$
and the $L^{2}$-estimate of $\tfrac{1}{y^{2}}A_{\theta}[\psi_{1},\psi_{2}]|\psi_{3}|_{-\ell}$
done in Case B.

Finally, we estimate $|\psi_{1}\psi_{2}\psi_{3}\psi_{4}\psi_{5}|_{-(\ell-2)}$.
If $\psi_{1}=\cdots=\psi_{5}=\epsilon$, then we have 
\begin{align*}
|\epsilon^{5}|_{-1} & \lesssim|\epsilon|_{-1}|\epsilon|^{4},\\
|\epsilon^{5}|_{-3} & \lesssim(|\partial_{yyy}\epsilon||\epsilon|^{2}+|\partial_{yy}\epsilon||\epsilon|_{-1}|\epsilon|+|\epsilon|_{-1}^{3})|\epsilon|^{2}.
\end{align*}
Thus as in Case A 
\begin{align*}
\||\epsilon^{5}|_{-1}\|_{L^{2}} & \lesssim\|\epsilon\|_{L^{\infty}}^{4}\||\epsilon|_{-1}\|_{L^{2}}\lesssim\|\epsilon\|_{\dot{H}_{m}^{1}}^{5},\\
\||\epsilon^{5}|_{-3}\|_{L^{2}} & \lesssim\||\partial_{yyy}\epsilon||\epsilon|^{2}+|\partial_{yy}\epsilon||\epsilon|_{-1}|\epsilon|+|\epsilon|_{-1}^{3}\|_{L^{2}}\|\epsilon\|_{L^{\infty}}^{2}\lesssim\|\epsilon\|_{\dot{H}_{m}^{3}}\|\epsilon\|_{\dot{H}_{m}^{1}}^{4}.
\end{align*}
Otherwise, there exist at least one $P$ and two $\epsilon$'s. When
$\ell=3$, we estimate this contribution by 
\[
\|Q\epsilon|\epsilon|_{-1}\|_{L^{\infty}}\|Q^{2}+Q|\epsilon|+|\epsilon|^{2}\|_{L^{2}}\lesssim\|\langle y\rangle^{-1}|\epsilon|_{-1}\|_{L^{\infty}}\|\langle y\rangle^{-2}\epsilon\|_{L^{\infty}}\lesssim\|\epsilon\|_{\dot{\mathcal{H}}_{m}^{3}}^{2},
\]
where we bounded the $L^{2}$-term by $1$ and used \eqref{eq:L-infty-weighted-hardy}.
When $\ell=5$, we further subdivide the case. If there is exactly
one $P$, then we estimate this contribution using $\||\epsilon^{3}|_{-3}\|_{L^{2}}$
estimate of Case A and \eqref{eq:Weighted-L-infty-H5} by 
\[
\||P\epsilon|_{3}\|_{L^{\infty}}\||\epsilon^{3}|_{-3}\|_{L^{2}}\lesssim\|\epsilon\|_{\dot{\mathcal{H}}_{m}^{5}}\|\epsilon\|_{\dot{H}_{m}^{3}}\|\epsilon\|_{\dot{H}_{m}^{1}}^{2}.
\]
Otherwise, there exist at least two $P$'s and two $\epsilon$'s,
say $\psi_{4}=\psi_{5}=P$. We estimate this contribution using $\||\psi_{1}\psi_{2}\psi_{3}|_{-5}\|_{L^{2}}$
estimate of Case A by 
\[
\|y^{2}|P^{2}|_{3}\|_{L^{\infty}}\|y^{-2}|\psi_{1}\psi_{2}\psi_{3}|_{-3}\|_{L^{2}}\lesssim\|\epsilon\|_{\dot{\mathcal{H}}_{m}^{5}}(\|\epsilon\|_{\dot{H}_{m}^{3}}+\|\epsilon\|_{\dot{H}_{m}^{1}}^{2}).
\]

\uline{Case E:} $A_{0,5}\phi$-type.

Here, when $\psi=P$, by subtracting $\theta_{\mathrm{NL}}P$, $A_{0,5}$
is replaced by $\tilde A_{0,5}$. In other words, $V\psi$ is of the
form $A_{0,5}[\psi_{1},\psi_{2},\psi_{3},\psi_{4}]\epsilon$ or $\tilde A_{0,5}[\psi_{1},\psi_{2},\psi_{3},\psi_{4}]P$.

We directly show \eqref{eq:R-NL-Hdot3-claim1} and \eqref{eq:R-NL-Hdot5-claim1}.
We note that if $\partial_{y}$ hits $A_{0,5}$ (or $\tilde A_{0,5}$),
then it reduces to the $\tfrac{1}{y}A_{\theta}|\phi|^{2}\phi$-type,
which is estimated in Case D. Thus we may assume that $\partial_{y}$
never hits $A_{0,5}$ (or $\tilde A_{0,5}$). It suffices to show
\begin{align*}
\|A_{0,5}[\psi_{1},\psi_{2},\psi_{3},\psi_{4}](|\partial_{+}\epsilon|_{-(\ell-1)}+\mathbf{1}_{y\geq1}\tfrac{1}{y^{\ell}}\epsilon)\|_{L^{2}} & \lesssim\|\epsilon\|_{\dot{\mathcal{H}}_{m}^{\ell}}(\|\epsilon\|_{\dot{\mathcal{H}}_{m}^{3}}+\|\epsilon\|_{\dot{H}_{m}^{1}}^{2}),\\
\|\tilde A_{0,5}[\psi_{1},\psi_{2},\psi_{3},\psi_{4}]|P|_{-\ell}\|_{L^{2}} & \lesssim\|\epsilon\|_{\dot{\mathcal{H}}_{m}^{\ell}}(\|\epsilon\|_{\dot{\mathcal{H}}_{m}^{3}}+\|\epsilon\|_{\dot{H}_{m}^{1}}^{2}).
\end{align*}

For the first one, it suffices to show (as in Case C)
\[
\|y^{-2}A_{\theta}[\psi_{1},\psi_{2}]\psi_{3}\psi_{4}\|_{L^{1}}\lesssim\|\epsilon\|_{\dot{\mathcal{H}}_{m}^{3}}+\|\epsilon\|_{\dot{H}_{m}^{1}}^{2}.
\]
If $\epsilon\in\{\psi_{3},\psi_{4}\}$, then 
\[
\|y^{-2}A_{\theta}[\psi_{1},\psi_{2}]\psi_{3}\psi_{4}\|_{L^{1}}\lesssim\|y^{-2}\psi_{3}\psi_{4}\|_{L^{1}}\lesssim\|\epsilon\|_{\dot{\mathcal{H}}_{m}^{3}}+\|\epsilon\|_{\dot{H}_{m}^{1}}^{2},
\]
where we used $\|A_{\theta}[\psi_{1},\psi_{2}]\|_{L^{\infty}}\lesssim1$
and the $\|y^{-2}\psi_{3}\psi_{4}\|_{L^{1}}$-estimate done in Case
C. Otherwise, $\psi_{3}=\psi_{4}=P$ and $\epsilon\in\{\psi_{1},\psi_{2}\}$.
We estimate 
\[
\|y^{-2}A_{\theta}[\psi_{1},\psi_{2}]P^{2}\|_{L^{1}}\lesssim\|A_{\theta}[y^{-2}\psi_{1},\psi_{2}]\|_{L^{\infty}}\|P^{2}\|_{L^{1}}\lesssim\|y^{-2}\psi_{1}\psi_{2}\|_{L^{1}}\lesssim\|\epsilon\|_{\dot{\mathcal{H}}_{m}^{3}}+\|\epsilon\|_{\dot{H}_{m}^{1}}^{2},
\]
where we used the $\|y^{-2}\psi_{1}\psi_{2}\|_{L^{1}}$-estimate done
in Case C.

For the second one, using $|P|_{-\ell}\lesssim y^{-2}\langle y\rangle^{-(1+\ell)}$,
it suffices to show 
\[
\|y^{-2}\langle y\rangle^{-(1+\ell)}A_{\theta}[\psi_{1},\psi_{2}]\psi_{3}\psi_{4}\|_{L^{2}}\lesssim\|\epsilon\|_{\dot{\mathcal{H}}_{m}^{\ell}}(\|\epsilon\|_{\dot{\mathcal{H}}_{m}^{3}}+\|\epsilon\|_{\dot{H}_{m}^{1}}^{2}).
\]
If $\psi_{3}=\psi_{4}=\epsilon$, then we simply bound $\|A_{\theta}[\psi_{1},\psi_{2}]\|_{L^{\infty}}\lesssim1$
to estimate 
\[
\|y^{-2}\langle y\rangle^{-(1+\ell)}A_{\theta}[\psi_{1},\psi_{2}]\epsilon^{2}\|_{L^{2}}\lesssim\|y^{-1}\langle y\rangle^{-(\ell-1)}\epsilon\|_{L^{2}}\|y^{-1}\langle y\rangle^{-2}\epsilon\|_{L^{\infty}}\lesssim\|\epsilon\|_{\dot{\mathcal{H}}_{m}^{\ell}}\|\epsilon\|_{\dot{\mathcal{H}}_{m}^{3}}.
\]
If $P\in\{\psi_{3},\psi_{4}\}$, say $\psi_{4}=P$, then we estimate
\[
\|y^{-2}\langle y\rangle^{-(1+\ell)}A_{\theta}[\psi_{1},\psi_{2}]\psi_{3}P\|_{L^{2}}\lesssim\|y^{-(2+\ell)}A_{\theta}[\psi_{1},\psi_{2}]\psi_{3}\|_{L^{2}}\lesssim\|\epsilon\|_{\dot{\mathcal{H}}_{m}^{\ell}}(\|\epsilon\|_{\dot{\mathcal{H}}_{m}^{3}}+\|\epsilon\|_{\dot{H}_{m}^{1}}^{2}),
\]
where in the last inequality we used $\|\tfrac{1}{y^{2}}A_{\theta}[\psi_{1},\psi_{2}]|\psi_{3}|_{-\ell}\|_{L^{2}}$-estimate
of Case B.
\end{proof}

\subsection{Modulation estimates}

As discussed in Section \ref{subsec:Estimates-of-remainders-H3},
we reorganize the remainder terms of \eqref{eq:prelim-eqn-e}. Our
$\epsilon$-equation becomes 
\begin{equation}
(\partial_{s}-\frac{\lambda_{s}}{\lambda}\Lambda+\gamma_{s}i)\epsilon+i\mathcal{L}_{Q}\epsilon=\tilde{\mathbf{Mod}}\cdot\mathbf{v}-i\tilde R_{\mathrm{L-L}}-i\tilde R_{\mathrm{NL}}-i\Psi,\label{eq:e-eq-cor}
\end{equation}
where
\[
\tilde{\mathbf{Mod}}\coloneqq(\frac{\lambda_{s}}{\lambda}+b,\gamma_{s}-\eta\theta_{\eta}-\theta_{\Psi}-\theta_{\mathrm{L-L}}-\theta_{\mathrm{NL}},b_{s}+b^{2}+\eta^{2},\eta_{s})^{t}.
\]

In this subsection, we estimate $\tilde{\mathbf{Mod}}$. This says
that our modulation parameters follow the formal parameter ODEs \eqref{eq:FormalODE}.
\begin{lem}[Modulation estimates]
\label{lem:mod-est}We have 
\[
|\tilde{\mathbf{Mod}}|\lesssim\begin{cases}
o_{M\to\infty}(1)\|\epsilon_{3}\|_{L^{2}}+M^{C}(b\|\epsilon\|_{\dot{\mathcal{H}}_{m}^{3}}+b^{4}) & \text{if }m\geq1,\\
o_{M\to\infty}(1)\|\epsilon_{5}\|_{L^{2}}+M^{C}(b\|\epsilon\|_{\dot{\mathcal{H}}_{m}^{5}}+b^{6}) & \text{if }m\geq3.
\end{cases}
\]
In particular, $\lambda$ and $b$ are decreasing, and 
\[
\Big|\frac{\lambda_{s}}{\lambda}\Big|\lesssim b,\quad|\gamma_{s}|\lesssim|\eta|+b^{\frac{3}{2}}.
\]
\end{lem}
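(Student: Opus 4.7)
\medskip

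\noindent\textbf{Proof proposal for Lemma \ref{lem:mod-est}.} The plan is the standard modulation-ODE scheme: differentiate the four orthogonality conditions $(\epsilon,\mathcal{Z}_k)_r=0$ in $s$, pair the $\epsilon$-equation \eqref{eq:e-eq-cor} against $\mathcal{Z}_k$, and read off $\tilde{\mathbf{Mod}}$ from the resulting almost-diagonal linear system. Since $\mathcal{Z}_k$ does not depend on $s$, differentiating orthogonality gives $(\partial_s\epsilon,\mathcal{Z}_k)_r=0$, and inserting \eqref{eq:e-eq-cor} yields for each $k\in\{1,2,3,4\}$
\[
\sum_{j=1}^{4}\tilde{\mathbf{Mod}}_{j}(\mathbf{v}_{j},\mathcal{Z}_{k})_{r}=\tfrac{\lambda_{s}}{\lambda}(\Lambda\epsilon,\mathcal{Z}_{k})_{r}-\gamma_{s}(i\epsilon,\mathcal{Z}_{k})_{r}-(i\mathcal{L}_{Q}\epsilon,\mathcal{Z}_{k})_{r}+(i\tilde R_{\mathrm{L-L}},\mathcal{Z}_{k})_{r}+(i\tilde R_{\mathrm{NL}},\mathcal{Z}_{k})_{r}+(i\Psi,\mathcal{Z}_{k})_{r}.
\]
By the transversality \eqref{eq:Transversality} together with the profile estimates (Proposition~\ref{prop:modified-profile}(1)--(2)) giving $(\mathbf{v}_{j},\mathcal{Z}_{k})_{r}=c_{j}\delta_{jk}+o_{M\to\infty}(1)+O(b)$ with $c_{j}\neq 0$, the matrix is invertible and its inverse is uniformly bounded, so $|\tilde{\mathbf{Mod}}|$ is controlled by the maximum over $k$ of the RHS.

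\smallskip

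\noindent The two terms on the RHS involving $\lambda_{s}/\lambda$ and $\gamma_{s}$ are handled by writing $\tfrac{\lambda_{s}}{\lambda}=-b+\tilde{\mathbf{Mod}}_{1}$ and $\gamma_{s}=\eta\theta_{\eta}+\theta_{\Psi}+\theta_{\mathrm{L-L}}+\theta_{\mathrm{NL}}+\tilde{\mathbf{Mod}}_{2}$; the $\tilde{\mathbf{Mod}}$-pieces combine with the LHS matrix (which remains invertible by a perturbation since $|(\Lambda\epsilon,\mathcal{Z}_{k})_{r}|+|(i\epsilon,\mathcal{Z}_{k})_{r}|\lesssim M^{C}\|\epsilon\|_{L^{2}}\ll 1$), while the remaining explicit pieces contribute $O(bM^{C}\|\epsilon\|_{L^2})+O((|\eta|+b^{\frac{3}{2}})M^{C}\|\epsilon\|_{L^{2}})$, which is absorbed in the target bound. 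The source terms $(i\tilde R_{\mathrm{L-L}},\mathcal{Z}_{k})_{r}$, $(i\tilde R_{\mathrm{NL}},\mathcal{Z}_{k})_{r}$ are directly bounded by \eqref{eq:LL-mod-est}--\eqref{eq:LL-mod-est-m-geq-3} and \eqref{eq:NL-mod-est}--\eqref{eq:NL-mod-est-m-geq-3}; after inserting the bootstrap hypotheses \eqref{eq:bootstrap-hyp-H3} (resp.\ \eqref{eq:bootstrap-hyp-H5}) these are $\lesssim M^{C}b\|\epsilon\|_{\dot{\mathcal{H}}_{m}^{3}}$ (resp.\ $M^C b\|\epsilon\|_{\dot{\mathcal{H}}_m^5}$). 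For $(i\Psi,\mathcal{Z}_{k})_{r}$, the support of $\Psi$ is the annulus $[b^{-1/2},2b^{-1/2}]$, which lies in $y\geq 2M$ under our parameter ordering; using \eqref{eq:Orthog-Pointwise} there gives $|\mathcal{Z}_{k}|\lesssim(\log M)y^{-2}Q\lesssim b\cdot b^{(m+2)/2}$, and combined with $\|\Psi\|_{L^{2}}\lesssim b^{(m+3)/2}$ (from \eqref{eq:Psi-Pointwise} and $|\text{annulus}|\sim b^{-1}$) yields $|(i\Psi,\mathcal{Z}_{k})_{r}|\lesssim b^{m+3}\lesssim b^{4}$ for $m\geq 1$ (resp.\ $\lesssim b^{6}$ for $m\geq 3$), accounting for the $M^{C}b^{4}$ (resp.\ $b^{6}$) term in the conclusion.

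\smallskip

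\noindent The main obstacle, and the place where the conjugation identity enters, is the linear term $(i\mathcal{L}_{Q}\epsilon,\mathcal{Z}_{k})_{r}$. Writing it as $(\epsilon,\mathcal{L}_{Q}i\mathcal{Z}_{k})_{r}$, for $k\in\{1,2\}$ one simply uses $\mathcal{L}_{Q}i\mathcal{Z}_{k}=\mathcal{Z}_{k+2}$ (by construction \eqref{eq:def-Zk}) and the orthogonality $(\epsilon,\mathcal{Z}_{k+2})_{r}=0$ to kill it exactly. For $k\in\{3,4\}$, the cancellation is only approximate and must be quantified. The plan is to factor through the conjugation identity \eqref{eq:ConjugationIdentity}: writing $\mathcal{L}_{Q}=L_{Q}^{\ast}L_{Q}$ twice and commuting $i$ through $A_{Q}^{\ast}$ yields
\[
(\epsilon,\mathcal{L}_{Q}i\mathcal{Z}_{k+2})_{r}=(\epsilon_{1},L_{Q}i\mathcal{L}_{Q}i\mathcal{Z}_{k})_{r}=(\epsilon_{1},iA_{Q}^{\ast}A_{Q}L_{Q}i\mathcal{Z}_{k})_{r}=(\epsilon_{2},iA_{Q}L_{Q}i\mathcal{Z}_{k})_{r}
\]
for $k\in\{1,2\}$. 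Applying Cauchy--Schwarz with weight $y$, the approximate null-space estimate \eqref{eq:Orthogonality-approx-gen-null-H3} $\|yA_{Q}L_{Q}i\mathcal{Z}_{k}\|_{L^{2}}\lesssim M^{-1}$, and the Hardy bound $\|y^{-1}\epsilon_{2}\|_{L^{2}}\lesssim\|A_{Q}^{\ast}\epsilon_{2}\|_{L^{2}}=\|\epsilon_{3}\|_{L^{2}}$ (from positivity of $A_{Q}A_{Q}^{\ast}$, \eqref{eq:non-M-dep-Hardy}) yields $|(i\mathcal{L}_{Q}\epsilon,\mathcal{Z}_{k+2})_{r}|\lesssim M^{-1}\|\epsilon_{3}\|_{L^{2}}$, which is precisely the $o_{M\to\infty}(1)\|\epsilon_{3}\|_{L^{2}}$ contribution. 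For the $H^{5}$ version under $m\geq 3$, the same computation coupled with \eqref{eq:Orthogonality-approx-gen-null-H5} and $\|y^{-3}\epsilon_{2}\|_{L^{2}}\lesssim\|\epsilon_{5}\|_{L^{2}}$ (from positivity of $(A_{Q}A_{Q}^{\ast})^{3}$, \eqref{eq:positivity-AAA}) gives $o_{M\to\infty}(1)\|\epsilon_{5}\|_{L^{2}}$.

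\smallskip

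\noindent Collecting all contributions and inverting the almost-diagonal matrix yields the claimed $|\tilde{\mathbf{Mod}}|$ bound. The consequences are then immediate: $|\tfrac{\lambda_{s}}{\lambda}+b|\leq|\tilde{\mathbf{Mod}}|\ll b$ gives $\tfrac{\lambda_{s}}{\lambda}\sim -b<0$, hence $\lambda$ decreases and $|\lambda_{s}/\lambda|\lesssim b$; $|b_{s}+b^{2}+\eta^{2}|\leq|\tilde{\mathbf{Mod}}|\ll b^{2}$ with $|\eta|\ll b$ gives $b_{s}<0$; and bounding $\eta\theta_{\eta}$, $\theta_{\Psi}$, $\theta_{\mathrm{L-L}}$, $\theta_{\mathrm{NL}}$ via \eqref{eq:decomp-P-temp1}, \eqref{eq:decomp-P-temp2}, \eqref{eq:LL-theta-est}, \eqref{eq:NL-theta-est} combined with the bootstrap yields $|\gamma_{s}|\lesssim|\eta|+b^{\frac{3}{2}}$.
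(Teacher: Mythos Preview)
Your proof follows essentially the same approach as the paper's own proof: differentiate the orthogonality conditions, identify the almost-diagonal matrix via \eqref{eq:Transversality}, kill the linear term $(\epsilon,\mathcal{L}_{Q}i\mathcal{Z}_{k})_{r}$ exactly for $k\in\{1,2\}$ via $\mathcal{Z}_{k+2}=\mathcal{L}_{Q}i\mathcal{Z}_{k}$, and for $k\in\{3,4\}$ route through the conjugation identity to $(\epsilon_{2},iA_{Q}L_{Q}i\mathcal{Z}_{k-2})_{r}$ and apply \eqref{eq:Orthogonality-approx-gen-null-H3} (resp.\ \eqref{eq:Orthogonality-approx-gen-null-H5}) together with the positivity \eqref{eq:non-M-dep-Hardy} (resp.\ \eqref{eq:positivity-AAA}). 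The treatment of $\tilde R_{\mathrm{L-L}}$, $\tilde R_{\mathrm{NL}}$, $\Psi$ and the deduction of the corollaries are all correct.

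There is one genuine slip. You estimate the ``remaining explicit pieces'' $b(\epsilon,\Lambda\mathcal{Z}_{k})_{r}$ and $(\eta\theta_{\eta}+\cdots)(\epsilon,i\mathcal{Z}_{k})_{r}$ by $O(bM^{C}\|\epsilon\|_{L^{2}})$ via $L^{2}$-duality, and claim this is ``absorbed in the target bound''. It is not: the target is $M^{C}(b\|\epsilon\|_{\dot{\mathcal{H}}_{m}^{3}}+b^{4})$, and under the bootstrap $\|\epsilon\|_{L^{2}}\lesssim K(b^{\ast})^{1/4}$ is far larger than both $\|\epsilon\|_{\dot{\mathcal{H}}_{m}^{3}}$ and $b^{3}$. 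The fix (and what the paper does) is to pair via the $(\dot{\mathcal{H}}_{m}^{3})^{\ast}$ duality: since $\Lambda\mathcal{Z}_{k},i\mathcal{Z}_{k}\in(\dot{\mathcal{H}}_{m}^{3})^{\ast}$ with norm $\lesssim M^{C}$ (by the pointwise bounds \eqref{eq:Orthog-Pointwise}, cf.\ \eqref{eq:Orthog-Hdot3-est}), one gets $|b(\epsilon,\Lambda\mathcal{Z}_{k})_{r}|\lesssim bM^{C}\|\epsilon\|_{\dot{\mathcal{H}}_{m}^{3}}$, which fits directly. The same remark applies for the $H^{5}$ version via \eqref{eq:Orthog-Hdot5-est}.
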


\begin{rem}
As will be detailed in Section \ref{subsec:energy-identity}, we need
the bound $|\tilde{\mathbf{Mod}}|\ll\|\epsilon_{3}\|_{L^{2}}$ (or
$\|\epsilon_{5}\|_{L^{2}}$) to close the bootstrap.
\end{rem}

\begin{proof}
We differentiate the orthogonality conditions in the renormalized
time variable $s$. In other words, we take the inner product of \eqref{eq:e-eq-cor}
with each $\mathcal{Z}_{k}$, $k\in\{1,2,3,4\}$ to get 
\begin{align*}
 & \frac{\lambda_{s}}{\lambda}(\epsilon,\Lambda\mathcal{Z}_{k})_{r}-\gamma_{s}(\epsilon,i\mathcal{Z}_{k})_{r}-(\epsilon,\mathcal{L}_{Q}i\mathcal{Z}_{k})_{r}\\
 & =\tilde{\mathbf{Mod}}\cdot(\mathbf{v},\mathcal{Z}_{k})_{r}-(i\tilde R_{\mathrm{L-L}},\mathcal{Z}_{k})_{r}-(i\tilde R_{\mathrm{NL}},\mathcal{Z}_{k})_{r}-(i\Psi_{b}^{(\eta)},\mathcal{Z}_{k})_{r}.
\end{align*}
Using $\frac{\lambda_{s}}{\lambda}=\tilde{\mathrm{Mod}}_{1}-b$ and
$\gamma_{s}=\tilde{\mathrm{Mod}}_{2}+\eta\theta_{\eta}+\theta_{\Psi}+\theta_{\mathrm{L-L}}+\theta_{\mathrm{NL}}$,
we can rewrite the above as 
\begin{align}
 & \sum_{j=1}^{4}\big\{(v_{j},\mathcal{Z}_{k})_{r}-\delta_{j1}(\epsilon,\Lambda\mathcal{Z}_{k})_{r}+\delta_{j2}(\epsilon,i\mathcal{Z}_{k})_{r}\big\}\tilde{\mathrm{Mod}}_{j}\nonumber \\
 & =-(\epsilon,\mathcal{L}_{Q}i\mathcal{Z}_{k})_{r}-b(\epsilon,\Lambda\mathcal{Z}_{k})_{r}-(\eta\theta_{\eta}+\theta_{\Psi}+\theta_{\mathrm{L-L}}+\theta_{\mathrm{NL}})(\epsilon,i\mathcal{Z}_{k})_{r}\label{eq:mod-temp}\\
 & \quad+(i\tilde R_{\mathrm{L-L}},\mathcal{Z}_{k})_{r}+(i\tilde R_{\mathrm{NL}},\mathcal{Z}_{k})_{r}+(i\Psi,\mathcal{Z}_{k})_{r},\nonumber 
\end{align}
where $\delta_{jk}$ denotes the Kronecker-delta symbol. By the choice
of $\mathcal{Z}_{k}$, the matrix 
\[
((v_{j},\mathcal{Z}_{k})_{r}-\delta_{j1}(\epsilon,\Lambda\mathcal{Z}_{k})_{r}+\delta_{j2}(\epsilon,i\mathcal{Z}_{k})_{r})_{1\leq j,k\leq4}
\]
is invertible with uniformly bounded inverse (cf. \eqref{eq:ortho-jacobian}).
It now suffices to estimate all terms of the RHS of \eqref{eq:mod-temp}.

The main contribution comes from the linear part $(\epsilon,\mathcal{L}_{Q}i\mathcal{Z}_{k})_{r}$.
By the definition of $\mathcal{Z}_{3}$ and $\mathcal{Z}_{4}$, we
have $(\epsilon,\mathcal{L}_{Q}i\mathcal{Z}_{k})_{r}=0$ for $k\in\{1,2\}$.
For $k\in\{3,4\}$, $(\epsilon,\mathcal{L}_{Q}i\mathcal{Z}_{k})_{r}$
does not necessarily vanish, but we can exploit the \emph{degeneracy}
of $\mathcal{L}_{Q}i\mathcal{Z}_{k}$ \eqref{eq:Orthogonality-approx-gen-null-H3}
or \eqref{eq:Orthogonality-approx-gen-null-H5}. We start with 
\begin{align*}
(\epsilon,\mathcal{L}_{Q}i\mathcal{Z}_{k})_{r} & =(\epsilon,\mathcal{L}_{Q}i\mathcal{L}_{Q}i\mathcal{Z}_{k-2})_{r}\\
 & =(\epsilon,L_{Q}^{\ast}iA_{Q}^{\ast}A_{Q}L_{Q}i\mathcal{Z}_{k-2})_{r}=(\epsilon_{2},iA_{Q}L_{Q}i\mathcal{Z}_{k-2})_{r}.
\end{align*}
Applying \eqref{eq:Orthogonality-approx-gen-null-H3} and positivity
of $A_{Q}^{\ast}A_{Q}$ \eqref{eq:non-M-dep-Hardy}, we have 
\[
|(\epsilon,\mathcal{L}_{Q}i\mathcal{Z}_{k})_{r}|\lesssim M^{-1}\|\tfrac{1}{y}\epsilon_{2}\|_{L^{2}}\lesssim M^{-1}\|\epsilon_{3}\|_{L^{2}}=o_{M\to\infty}(1)\|\epsilon_{3}\|_{L^{2}}.
\]
Similarly applying \eqref{eq:Orthogonality-approx-gen-null-H5} and
the positivity Lemma \ref{lem:Positivity-AAA-Appendix}, we have 
\[
|(\epsilon,\mathcal{L}_{Q}i\mathcal{Z}_{k})_{r}|=o_{M\to\infty}(1)\|\epsilon_{5}\|_{L^{2}}\qquad\text{if }m\geq3.
\]

Next, as $\mathcal{Z}_{k}\in(\dot{\mathcal{H}}_{m}^{3})^{\ast}$ and
$\mathcal{Z}_{k}\in(\dot{\mathcal{H}}_{m}^{5})^{\ast}$ if $m\geq3$
(see either \eqref{eq:Orthog-Hdot3-est} or \eqref{eq:Orthog-Hdot5-est}),
we have 
\begin{align*}
|b(\epsilon,\Lambda\mathcal{Z}_{k})_{r}|+|(\eta\theta_{\eta}+\theta_{\Psi}+\theta_{\mathrm{L-L}}+\theta_{\mathrm{NL}})(\epsilon,i\mathcal{Z}_{k})_{r}| & \lesssim bM^{C}\|\epsilon\|_{\dot{\mathcal{H}}_{m}^{3}}.
\end{align*}
The remaining contributions are already estimated in Section \ref{subsec:Estimates-of-remainders-H3}
and Proposition \ref{prop:modified-profile}. Indeed, the contribution
from $i\tilde R_{\mathrm{L-L}}$ is estimated in \eqref{eq:LL-mod-est}
(or \eqref{eq:LL-mod-est-m-geq-3} if $m\geq3$). The contribution
from $i\tilde R_{\mathrm{NL}}$ can be estimated by \eqref{eq:NL-mod-est}
(or \eqref{eq:NL-mod-est-m-geq-3} if $m\geq3$), substituting the
bootstrap hypotheses, and using the parameter dependence (Remark \ref{rem:ParameterDependence}).
Finally, the contribution from $i\Psi$ can be estimated using pointwise
estimates of $i\Psi$ \eqref{eq:Psi-Pointwise} and $\mathcal{Z}_{k}$
\eqref{eq:Orthog-Pointwise}, yielding the bound $(\log M)b^{m+3}$.
\end{proof}

\subsection{\label{subsec:energy-identity}Local virial control and modified
energy inequality}

In this subsection, we prove monotonicity of $\epsilon$, which enables
us to control $\epsilon$ forward in time. The main idea is to prove
a modified energy inequality in higher Sobolev norms. As explained
in Section \ref{subsec:Conjugation-identities}, we are able to take
Hamiltonian equations of higher order adapted derivatives. Recall
that adapted derivatives are $\epsilon_{1}=L_{Q}\epsilon$, $\epsilon_{2}=A_{Q}\epsilon_{1}$,
$\epsilon_{3}=A_{Q}^{\ast}\epsilon_{2}$, $\epsilon_{4}=A_{Q}\epsilon_{3}$,
$\epsilon_{5}=A_{Q}^{\ast}\epsilon_{4}$, and so on. We also observed
repulsivity in the equations of $\epsilon_{2}$ (and $\epsilon_{4}$).
With this repulsivity, we will be able to obtain monotonicity for
the modified energies $\mathcal{F}_{3}$ and $\mathcal{F}_{5}$ at
the $\dot{H}^{3}$ and $\dot{H}^{5}$-levels, i.e. 
\begin{align*}
\mathcal{F}_{3} & \approx\|\epsilon_{3}\|_{L^{2}}^{2}=\|A_{Q}^{\ast}\epsilon_{2}\|_{L^{2}}^{2},\\
\mathcal{F}_{5} & \approx\|\epsilon_{5}\|_{L^{2}}^{2}=\|A_{Q}^{\ast}\epsilon_{4}\|_{L^{2}}^{2}.
\end{align*}

Let us explain why we work at least in the $\dot{H}^{3}$-level. Due
to scaling considerations, the optimal bound (what we can expect)
for $\|\epsilon_{k}\|_{L^{2}}$ is $O(\lambda^{k})$. In the pseudoconformal
blow-up regime, $\lambda\sim b$ is expected so this bound reads $\|\epsilon_{k}\|_{L^{2}}\lesssim b^{k}$.
We now recall that the main contribution to the modulation estimate
$b_{s}+b^{2}$ was the linear term, say $o_{M\to\infty}(1)\|\epsilon_{k}\|_{L^{2}}$.
This says that in order to justify $b_{s}+b^{2}\approx0$, we need
to get $\|\epsilon_{k}\|_{L^{2}}\lesssim b^{2+}$. In other words,
we need to work at least with $k>2$.\footnote{If $m$ is large, one may use untruncated orthogonality conditions
(i.e., putting $\epsilon\in N_{g}(\mathcal{L}_{Q}i)^{\perp}$) to
improve the modulation estimate for $b_{s}+b^{2}$ (say, by the factor
of $b$). Then,  working at the $\dot{H}^{2}$-level would be sufficient.
However, we do not know how to perform an energy estimate at the $\dot{H}^{2}$-level
due to the lack of the repulsivity structure (the truncated virial
functional in this paper is defined for the $\epsilon_{2}$-variable
and would require at least $\dot{H}^{1/2}$-regularity of $\epsilon_{2}$).}

Next, we explain how we choose the powers of $b$ in the bootstrap
hypothesis. One of the restrictions comes from the above scaling considerations
$\|\epsilon_{k}\|_{L^{2}}\lesssim b^{k}$. There is the other source
of the restrictions, the error $i\Psi$ from the modified profile.
To see this, let us consider the toy model 
\begin{align*}
\tfrac{1}{2}(\partial_{s}-2k\tfrac{\lambda_{s}}{\lambda})\|\epsilon_{k}\|_{L^{2}}^{2} & =(\epsilon_{k},-(i\Psi)_{k})_{r}\lesssim\|\epsilon_{k}\|_{L^{2}}\|(i\Psi)_{k}\|_{L^{2}},
\end{align*}
where $(i\Psi)_{k}$ is the $k$-th adapted derivative of $i\Psi$.
Let us formally rewrite this as 
\begin{align*}
(\partial_{s}-k\tfrac{\lambda_{s}}{\lambda})\|\epsilon_{k}\|_{L^{2}} & \lesssim\|(i\Psi)_{k}\|_{L^{2}}.
\end{align*}
Assuming the size $\|(i\Psi)_{k}\|_{L^{2}}\lesssim b^{p}$ and using
$\partial_{t}=\lambda^{-2}\partial_{s}$, we integrate $\lambda^{-k}\|\epsilon_{k}\|_{L^{2}}$
as
\[
\frac{\|\epsilon_{k}(t)\|_{L^{2}}}{\lambda^{k}(t)}=\frac{\|\epsilon_{k}(0)\|_{L^{2}}}{\lambda^{k}(0)}+O\Big(\int_{0}^{t}\frac{b^{p}(\tau)}{\lambda^{k+2}(\tau)}d\tau\Big).
\]
Using the pseudoconformal regime $\lambda\sim b$ and the ansatz $b=-\frac{\lambda_{s}}{\lambda}=-\lambda\lambda_{t}$,
we can integrate 
\[
\int_{0}^{t}\frac{b^{p}}{\lambda^{k+2}}d\tau\sim\int_{0}^{t}\frac{-\lambda_{t}}{\lambda^{k-p+2}}d\tau\sim\Big(\frac{1}{k-p+1}\Big)\frac{1}{\lambda^{k-p+1}}\Big|_{0}^{t}\lesssim\begin{cases}
b^{p-1-k}(t) & \text{if }k>p-1,\\
b^{p-1-k}(0) & \text{if }k<p-1.
\end{cases}
\]
Thus 
\[
\|\epsilon_{k}(t)\|_{L^{2}}\lesssim\begin{cases}
(\lambda^{-k}(0)\|\epsilon_{k}(0)\|_{L^{2}})b^{k}(t)+b^{p-1}(t) & \text{if }k>p-1,\\
(\lambda^{-k}(0)\|\epsilon_{k}(0)\|_{L^{2}}+b^{p-1-k}(0))b^{k}(t) & \text{if }k<p-1.
\end{cases}
\]
In other words, we can only expect 
\[
\|\epsilon_{k}(t)\|_{L^{2}}\lesssim b^{\min\{k,p-1\}}(t).
\]
Recalling that we need to get $\|\epsilon_{k}\|_{L^{2}}\lesssim b^{2+}$,
the error $i\Psi$ from the modified profile should satisfy $\|(i\Psi)_{k}\|_{L^{2}}\lesssim b^{3+}$.
Our choice of the modified profile satisfies this bound, see \eqref{eq:Radiation-Hdot3-est}
when $m\geq1$ and $k=3$. Our bootstrap bounds \eqref{eq:bootstrap-hyp-H3}
for $m\geq1$ and \eqref{eq:bootstrap-hyp-H5} for $m\geq3$ are motivated
from the $(i\Psi)_{k}$ bounds \eqref{eq:Radiation-Hdot3-est} and
\eqref{eq:Radiation-Hdot5-est}.

When we compute $\tfrac{1}{2}(\partial_{s}-6\frac{\lambda_{s}}{\lambda})\|\epsilon_{3}\|_{L^{2}}^{2}$
(and similarly for $\epsilon_{5}$), we will meet the error terms
coming from the $\epsilon_{3}$-equation. We hope that such error
terms do not disturb our bootstrap procedure. We show some heuristics
to determine how much errors we can allow. We claim that $Cb\|\epsilon_{3}\|_{L^{2}}^{2}$
error (with possibly large constant $C\gtrsim1$) is not perturbative.
Indeed, the previous computations say that we roughly lose one $b$
when we integrate in time. In other words, 
\[
\tfrac{1}{2}(\partial_{s}-6\tfrac{\lambda_{s}}{\lambda})\|\epsilon_{3}\|_{L^{2}}^{2}\leq Cb\|\epsilon_{3}\|_{L^{2}}^{2}+C_{2}b^{p}
\]
would yield 
\begin{align*}
\|\epsilon_{3}\|_{L^{2}}^{2} & \lesssim C\|\epsilon_{3}\|_{L^{2}}^{2}+C_{2}b^{\min\{p-1,3\}}.
\end{align*}
This says that we may not close the bootstrap if $C$ is large.

Such error terms of size $Cb\|\epsilon_{3}\|_{L^{2}}^{2}$ can appear
from $(\epsilon_{3},A_{Q}^{\ast}A_{Q}L_{Q}i\tilde R_{\mathrm{L-L}})_{r}$
because $i\tilde R_{\mathrm{L-L}}$ is roughly $O(b\epsilon)$-like
term (see \eqref{eq:LL-E3-est}). (In fact, there is another such
term induced from scalings.) From the above heuristics, we know that
using a cruder bound $b\|\epsilon\|_{\dot{\mathcal{H}}_{m}^{3}}$
for $i\tilde R_{\mathrm{L-L}}$ may not be sufficient.

To get around this difficulty, we need to use correction terms for
the energy. We now recall that the repulsive nature of $\epsilon_{2}$-equation
allows us to control the energy $\|\epsilon_{3}\|_{L^{2}}^{2}=\|A_{Q}^{\ast}\epsilon_{2}\|_{L^{2}}^{2}$
via the virial identity \eqref{eq:FormalMonotonicity}. So it would
be natural to use $M_{1}b(\epsilon_{2},-i\Lambda\epsilon_{2})_{r}$
with large $M_{1}$ to dominate the error $Cb\|\epsilon_{3}\|_{L^{2}}^{2}$.
However, the virial functional is not bounded on our function spaces.
Thus we need to localize it, say $M_{1}b(\epsilon_{2},-i\Lambda_{M_{2}}\epsilon_{2})_{r}$,
and guarantee that at least the local portion ($y\leq M_{2}$) of
$Cb\|\epsilon_{3}\|_{L^{2}}^{2}$ can be dominated. This motivates
the localized form of the estimates \eqref{eq:LL-E3-est}. Such idea
was used in \cite{MerleRaphaelRodnianski2015CambJMath,Collot2018MemAMS}.

Finally, to deal with the technical error coming from localizing virial
functionals, we use an averaging argument over the parameter $M_{2}$
as in \cite{KimKwon2019arXiv}.

From now on, we prove a modified energy inequality for $\dot{\mathcal{H}}_{m}^{3}$
and $\dot{\mathcal{H}}_{m}^{5}$. We start from the equation of $\epsilon_{2}$:
\begin{align}
 & (\partial_{s}-\frac{\lambda_{s}}{\lambda}\Lambda_{-2}+\gamma_{s}i)\epsilon_{2}+iA_{Q}A_{Q}^{\ast}\epsilon_{2}\nonumber \\
 & =\tilde{\mathbf{Mod}}\cdot A_{Q}L_{Q}\mathbf{v}+\frac{\lambda_{s}}{\lambda}\partial_{\lambda}(A_{Q_{\lambda}}L_{Q_{\lambda}})\epsilon-\gamma_{s}A_{Q}[L_{Q},i]\epsilon\label{eq:e2-eq}\\
 & \quad-A_{Q}L_{Q}i\tilde R_{\mathrm{L-L}}-A_{Q}L_{Q}i\tilde R_{\mathrm{NL}}-A_{Q}L_{Q}i\Psi,\nonumber 
\end{align}
where we denote $Q_{\lambda}(y)=\frac{1}{\lambda}Q(\frac{y}{\lambda})$
and used the computation (at $\lambda=1$)
\begin{align*}
\Lambda_{-2}A_{Q}L_{Q}\epsilon & =-\partial_{\lambda}A_{Q_{\lambda}}L_{Q_{\lambda}}\epsilon_{\lambda}=-\partial_{\lambda}(A_{Q_{\lambda}}L_{Q_{\lambda}})\epsilon+A_{Q}L_{Q}\Lambda\epsilon,\\{}
[A_{Q}L_{Q},i]\epsilon & =A_{Q}[L_{Q},i]\epsilon.
\end{align*}
Similarly, we have the equation of $\epsilon_{4}$: 
\begin{align}
 & (\partial_{s}-\frac{\lambda_{s}}{\lambda}\Lambda_{-4}+\gamma_{s}i)\epsilon_{4}+iA_{Q}A_{Q}^{\ast}\epsilon_{4}\nonumber \\
 & =\tilde{\mathbf{Mod}}\cdot A_{Q}A_{Q}^{\ast}A_{Q}L_{Q}\mathbf{v}+\frac{\lambda_{s}}{\lambda}\partial_{\lambda}(A_{Q_{\lambda}}A_{Q_{\lambda}}^{\ast}A_{Q_{\lambda}}L_{Q_{\lambda}})\epsilon-\gamma_{s}A_{Q}A_{Q}^{\ast}A_{Q}[L_{Q},i]\epsilon\label{eq:e4-eq}\\
 & \quad-A_{Q}A_{Q}^{\ast}A_{Q}L_{Q}i\tilde R_{\mathrm{L-L}}-A_{Q}A_{Q}^{\ast}A_{Q}L_{Q}i\tilde R_{\mathrm{NL}}-A_{Q}A_{Q}^{\ast}A_{Q}L_{Q}i\Psi.\nonumber 
\end{align}

We first estimate the commutator terms. From $|\frac{\lambda_{s}}{\lambda}|\approx b$,
the scaling terms $\frac{\lambda_{s}}{\lambda}\partial_{\lambda}(A_{Q_{\lambda}}L_{Q_{\lambda}})\epsilon$
and $\frac{\lambda_{s}}{\lambda}\partial_{\lambda}(A_{Q_{\lambda}}A_{Q_{\lambda}}^{\ast}A_{Q_{\lambda}}L_{Q_{\lambda}})\epsilon$
are of $O(b\epsilon)$-like terms. Thus we aim to estimate these  by
local $\dot{\mathcal{H}}_{m}^{3}$ or $\dot{\mathcal{H}}_{m}^{5}$
norms, as in $\tilde R_{\mathrm{L-L}}$.
\begin{lem}[Commutator terms]
\label{lem:commutator}We have 
\begin{align*}
 & \|\partial_{\lambda=1}(A_{Q_{\lambda}}^{\ast}A_{Q_{\lambda}}L_{Q_{\lambda}})\epsilon\|_{L^{2}}+\|y^{-1}\partial_{\lambda=1}(A_{Q_{\lambda}}L_{Q_{\lambda}})\epsilon\|_{L^{2}}+\||[L_{Q},i]\epsilon|_{-2}\|_{L^{2}}\\
 & \lesssim\begin{cases}
\|\epsilon\|_{\dot{\mathcal{H}}_{m,\leq M_{2}}^{3}}+o_{M_{2}\to\infty}(1)\|\epsilon\|_{\dot{\mathcal{H}}_{m}^{3}}, & \text{if }m\geq1,\\
\|\epsilon\|_{\dot{\mathcal{H}}_{m}^{5}} & \text{if }m\geq3,
\end{cases}
\end{align*}
and when $m\geq3$ 
\begin{multline*}
\|\partial_{\lambda=1}(A_{Q_{\lambda}}^{\ast}A_{Q_{\lambda}}A_{Q_{\lambda}}^{\ast}A_{Q_{\lambda}}L_{Q_{\lambda}})\epsilon\|_{L^{2}}+\|y^{-1}\partial_{\lambda=1}(A_{Q_{\lambda}}A_{Q_{\lambda}}^{\ast}A_{Q_{\lambda}}L_{Q_{\lambda}})\epsilon\|_{L^{2}}\\
+\||[L_{Q},i]\epsilon|_{-4}\|_{L^{2}}\lesssim\|\epsilon\|_{\dot{\mathcal{H}}_{m,\leq M_{2}}^{5}}+o_{M_{2}\to\infty}(1)\|\epsilon\|_{\dot{\mathcal{H}}_{m}^{5}}.
\end{multline*}
\end{lem}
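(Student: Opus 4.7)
The plan is to treat each of the three terms on the left-hand side as a (multilinear) expression in $Q$ and $\epsilon$ built from differentiations, multiplications, and at most one $B_Q$-type nonlocal average, and then recycle the pointwise estimates from the proof of Lemma \ref{lem:degenerate-linear} (Case A in particular), which already shows how a ``potential $V$ times $\epsilon$'' structure with $|V|_k \lesssim \langle y\rangle^{-2}$-type decay is bounded by the right-hand side.

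First, I would explicitly compute the scaling derivatives. Using $Q_\lambda(y) = \lambda^{-1}Q(y/\lambda)$ and $A_\theta[Q_\lambda](y)=A_\theta[Q](y/\lambda)$, a direct computation yields
\[
\partial_{\lambda=1}A_{Q_\lambda} = -\tfrac{1}{y}\partial_{\lambda=1}A_\theta[Q_\lambda] = \tfrac{y}{2}Q^2,\qquad \partial_{\lambda=1}\mathbf{D}_+^{(Q_\lambda)} = \tfrac{y}{2}Q^2\cdot\tfrac{1}{y}=\tfrac{1}{2}yQ^2,
\]
and $\partial_{\lambda=1}(Q_\lambda B_{Q_\lambda}\epsilon) = -(\Lambda Q)B_Q\epsilon + Q\cdot\partial_{\lambda=1}B_{Q_\lambda}\epsilon$, where $\partial_{\lambda=1}B_{Q_\lambda}\epsilon$ is again a $B_Q$-type average with kernel built from $Q\Lambda Q$. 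Composing, $\partial_{\lambda=1}(A_{Q_\lambda}^* A_{Q_\lambda} L_{Q_\lambda})\epsilon$ expands as a finite sum of terms of the schematic form $V(y)\cdot D^j\epsilon$ with $j\in\{0,1,2\}$ and $V$ a smooth function (possibly one nonlocal $B_Q$-type average of $Q\epsilon$) bounded pointwise by $\langle y\rangle^{-2}Q$, i.e.\ $|V|_k\lesssim\langle y\rangle^{-2}Q$ for every relevant $k$.

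Second, for $[L_Q,i]\epsilon$: since $\mathbf{D}_+^{(Q)}$ has real coefficients it commutes with $i$, so only the $QB_Q$ part contributes. Writing $B_Q(i\epsilon)-iB_Q\epsilon = -i\tfrac{1}{y}\int_0^y Q\bar\epsilon\, y' dy'$ gives
\[
[L_Q,i]\epsilon = -\tfrac{iQ}{y}\int_0^y Q\bar\epsilon\, y' dy',
\]
a pure potential-times-$\epsilon$ nonlocal expression whose kernel has the same $\langle y\rangle^{-2}Q$ pointwise size as the scaling terms above (and even better behavior at the origin by the $m$-equivariant structure). A similar short computation treats the $\dot{\mathcal{H}}_m^5$ variant by replacing $A_Q^*A_QL_Q$ with $A_Q^*A_QA_Q^*A_QL_Q$ and by taking $|\cdot|_{-4}$.

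Third, with these explicit pointwise structures in hand, I would invoke exactly the machinery of Case A in Lemma \ref{lem:degenerate-linear}: bounds of the form
\[
\| |V\epsilon|_{-2}\|_{L^2}+\|y^{-1}|V\epsilon|_{-1}\|_{L^2}\lesssim \|\langle y\rangle^{-2}|V|_2\|_{L^\infty}\bigl(\|\epsilon\|_{\dot{\mathcal{H}}_{m,\leq M_2}^3}+o_{M_2\to\infty}(1)\|\epsilon\|_{\dot{\mathcal{H}}_m^3}\bigr)
\]
for $m\geq1$, and the corresponding $\dot{\mathcal{H}}_m^5$ bound for $m\geq3$ using \eqref{eq:Def-Hdot5-Section2}. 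The key input is that the ``potential'' factor $V$ carries a $Q$ (so Gaussian-like decay), which both kills the logarithmic weakening at infinity present in \eqref{eq:Def-Hdot3-Section2} for $m\in\{1,2\}$ and, when $m\geq3$, removes the need for the local/global splitting entirely.

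The main obstacle will be the bookkeeping in the expansion of $\partial_{\lambda=1}(A_{Q_\lambda}^*A_{Q_\lambda}A_{Q_\lambda}^*A_{Q_\lambda}L_{Q_\lambda})\epsilon$: one must verify that every surviving term, after possibly integrating by parts inside $B_Q$-type nonlocal factors, still carries an extra $Q$ (not just $\langle y\rangle^{-2}$) in the potential, so that the $|\cdot|_{-4}$ norm is controlled without losses at the origin when $m=3$. I expect this to follow from the identity $\D_+^{(Q)}Q=0$ applied repeatedly to remove spurious singularities at the origin, exactly as in the subcoercivity analysis in Section \ref{subsec:Adapted-function-spaces}.
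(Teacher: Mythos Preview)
Your approach is essentially the paper's: compute the scaling derivatives of $A_{Q_\lambda}$, $A_{Q_\lambda}^\ast A_{Q_\lambda}$, and $L_{Q_\lambda}$, observe that every differentiated factor produces a $Q^2$-type potential, and then reduce to a weighted estimate of the form $\|\langle y\rangle^{-2}Q|\epsilon|_{-2}\|_{L^2}$ (resp.\ $|\epsilon|_{-4}$), which is exactly the endpoint of Case~A in Lemma~\ref{lem:degenerate-linear}. The paper reaches this endpoint slightly more directly, using Lemma~\ref{lem:Contribution-QBQ} for the $QB_Q$-type pieces rather than re-running the Case~A machinery, but the logic is the same. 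Two minor points: the paper gets $\partial_{\lambda=1}A_{Q_\lambda}=-\tfrac12 yQ^2$ (opposite sign to yours, irrelevant for bounds), and it records $[L_Q,i]f=-iQB_Q(\Re f)-QB_Q(\Im f)$, which is exactly your formula after separating real and imaginary parts.

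The ``main obstacle'' you anticipate is not one. In the expansion of $\partial_{\lambda=1}(A_{Q_\lambda}^\ast A_{Q_\lambda}A_{Q_\lambda}^\ast A_{Q_\lambda}L_{Q_\lambda})\epsilon$ by the product rule, the single differentiated factor automatically carries a $Q^2$ (from $\partial_{\lambda=1}A_\theta[Q_\lambda]=\tfrac12 y^2Q^2$ or from $\partial_{\lambda=1}L_{Q_\lambda}=-\tfrac12 yQ^2-\Lambda Q\,B_Q-QB_{\Lambda Q}$), while the undifferentiated factors are handled by the boundedness parts of Lemmas~\ref{lem:mapping-AastQ}--\ref{lem:mapping-LQ-H5}. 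No appeal to $\D_+^{(Q)}Q=0$ is needed, and the paper does not use it here; the extra $Q$ in the potential is structural, not the result of a cancellation.
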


\begin{proof}
Note the computations 
\begin{align*}
\partial_{\lambda=1}A_{Q_{\lambda}} & =-\tfrac{1}{2}yQ^{2},\\
\partial_{\lambda=1}(A_{Q_{\lambda}}^{\ast}A_{Q_{\lambda}}) & =2(m+1+A_{\theta}[Q])Q^{2},\\
\partial_{\lambda=1}L_{Q_{\lambda}} & =-\tfrac{1}{2}yQ^{2}-\Lambda QB_{Q}-QB_{\Lambda Q}.
\end{align*}
Moreover, 
\[
[L_{Q},i]f=-iQB_{Q}(\Re(f))-QB_{Q}(\Im(f)).
\]
Here, the operators $\Lambda QB_{Q}$ and $QB_{\Lambda Q}$ are amenable
to Lemma \ref{lem:Contribution-QBQ}, so we may regard them as $QB_{Q}$.
Thus 
\begin{align*}
 & \|\partial_{\lambda=1}(A_{Q_{\lambda}}^{\ast}A_{Q_{\lambda}}L_{Q_{\lambda}})\epsilon\|_{L^{2}}+\|y^{-1}\partial_{\lambda=1}(A_{Q_{\lambda}}L_{Q_{\lambda}})\epsilon\|_{L^{2}}+\||[L_{Q},i]\epsilon|_{-2}\|_{L^{2}}\\
 & \lesssim\|Q^{2}L_{Q}\epsilon\|_{L^{2}}+\||yQ^{2}\epsilon|_{-2}\|_{L^{2}}+\||QB_{Q}\epsilon|_{-2}\|_{L^{2}}\\
 & \lesssim\|\langle y\rangle^{-2}Q|\epsilon|_{-2}\|_{L^{2}}\\
 & \lesssim\begin{cases}
\|\epsilon\|_{\dot{\mathcal{H}}_{m,\leq M_{2}}^{3}}+o_{M_{2}\to\infty}(1)\|\epsilon\|_{\dot{\mathcal{H}}_{m}^{3}} & \text{if }m\geq1,\\
\|\epsilon\|_{\dot{\mathcal{H}}_{m}^{5}} & \text{if }m\geq3,
\end{cases}
\end{align*}
as desired. Similarly, we use Lemma \ref{lem:Contribution-QBQ-H5}
to get 
\begin{align*}
 & \|\partial_{\lambda=1}(A_{Q_{\lambda}}^{\ast}A_{Q_{\lambda}}A_{Q_{\lambda}}^{\ast}A_{Q_{\lambda}}L_{Q_{\lambda}})\epsilon\|_{L^{2}}+\|y^{-1}\partial_{\lambda=1}(A_{Q_{\lambda}}A_{Q_{\lambda}}^{\ast}A_{Q_{\lambda}}L_{Q_{\lambda}})\epsilon\|_{L^{2}}\\
 & \qquad+\||[L_{Q},i]\epsilon|_{-4}\|_{L^{2}}\\
 & \lesssim\|Q^{2}|L_{Q}\epsilon|_{-2}\|_{L^{2}}+\||yQ^{2}\epsilon|_{-4}\|_{L^{2}}+\||QB_{Q}\epsilon|_{-4}\|_{L^{2}}\\
 & \lesssim\|\langle y\rangle^{-2}Q|\epsilon|_{-4}\|_{L^{2}}\\
 & \lesssim\|\epsilon\|_{\dot{\mathcal{H}}_{m,\leq M_{2}}^{5}}+o_{M_{2}\to\infty}(1)\|\epsilon\|_{\dot{\mathcal{H}}_{m}^{5}}.
\end{align*}
This completes the proof.
\end{proof}
First, we prove preliminary energy estimates. We will compute 
\[
\Big(\partial_{s}-6\frac{\lambda_{s}}{\lambda}\Big)\|\epsilon_{3}\|_{L^{2}}^{2}\quad\text{and}\quad\Big(\partial_{s}-10\frac{\lambda_{s}}{\lambda}\Big)\|\epsilon_{5}\|_{L^{2}}^{2}.
\]

\begin{lem}[Energy identity for $\dot{\mathcal{H}}_{3}^{m}$]
\label{lem:E3-Identity}We have 
\begin{equation}
\Big|\Big(\partial_{s}-6\frac{\lambda_{s}}{\lambda}\Big)\|\epsilon_{3}\|_{L^{2}}^{2}\Big|\lesssim b\|\epsilon_{3}\|_{L^{2}}\Big(o_{M\to\infty}(1)\|\epsilon_{3}\|_{L^{2}}+\|\epsilon\|_{\dot{\mathcal{H}}_{m,\leq M_{2}}^{3}}+o_{M_{2}\to\infty}(1)\|\epsilon\|_{\dot{\mathcal{H}}_{m}^{3}}+b^{\frac{5}{2}}\Big).\label{eq:EnergyIdentityHdot3}
\end{equation}
If $m\geq3$, we further have 
\begin{equation}
\Big|\Big(\partial_{s}-6\frac{\lambda_{s}}{\lambda}\Big)\|\epsilon_{3}\|_{L^{2}}^{2}\Big|\lesssim b\|\epsilon_{3}\|_{L^{2}}\cdot b^{\frac{7}{2}}.\label{eq:EnergyIdentityHdot3-m-geq-3}
\end{equation}
\end{lem}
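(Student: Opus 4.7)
The plan is to apply the Hamiltonian structure revealed by the conjugation identity (Proposition \ref{prop:ConjugationIdentity}) to the variable $\epsilon_2$ and differentiate the energy $\|\epsilon_3\|_{L^2}^2 = (\epsilon_2, A_Q A_Q^* \epsilon_2)_r$ directly. Since $A_Q A_Q^*$ is symmetric and independent of $s$, one has $\partial_s \|\epsilon_3\|_{L^2}^2 = 2(A_Q A_Q^* \epsilon_2, \partial_s \epsilon_2)_r$. Substituting \eqref{eq:e2-eq}, three contributions from its left-hand side vanish or produce the main scaling term: the Hamiltonian piece $-iA_Q A_Q^* \epsilon_2$ pairs trivially by anti-symmetry of $i$ in $(\cdot,\cdot)_r$; the phase piece $-\gamma_s i\epsilon_2$ vanishes via $(A_Q A_Q^*\epsilon_2, i\epsilon_2)_r = (\epsilon_3, A_Q^* i\epsilon_2)_r = (\epsilon_3, i\epsilon_3)_r = 0$ using $[A_Q^*, i] = 0$; and the scaling piece $\frac{\lambda_s}{\lambda}\Lambda_{-2}\epsilon_2$ yields $2\frac{\lambda_s}{\lambda}(A_Q A_Q^* \epsilon_2, \Lambda_{-2}\epsilon_2)_r$, handled below by a virial identity.

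For the scaling pairing I would use the representation $A_Q A_Q^* = -\partial_{rr} - \tfrac{1}{r}\partial_r + \tilde V/r^2$ with $r\partial_r \tilde V = -r^2 Q^2 \leq 0$ from Section \ref{subsec:Conjugation-identities}, compute $[\Lambda, A_Q A_Q^*] = -2 A_Q A_Q^* - Q^2$ as a multiplication operator, and use $\Lambda^* = -\Lambda$ on $L^2(r\,dr)$ to deduce
\[
2(A_Q A_Q^* \epsilon_2, \Lambda_{-2}\epsilon_2)_r = 6\|\epsilon_3\|_{L^2}^2 + (\epsilon_2, Q^2 \epsilon_2)_r.
\]
This gives the core identity
\[
\bigl(\partial_s - 6\tfrac{\lambda_s}{\lambda}\bigr)\|\epsilon_3\|_{L^2}^2 = \tfrac{\lambda_s}{\lambda}(\epsilon_2, Q^2 \epsilon_2)_r + 2(\epsilon_3, A_Q^*\mathcal{R})_r,
\]
where $\mathcal{R}$ collects the right-hand side of \eqref{eq:e2-eq}.

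To bound $(\epsilon_3, A_Q^*\mathcal{R})_r$, Cauchy--Schwarz reduces to $\|\epsilon_3\|_{L^2}\|A_Q^*\mathcal{R}\|_{L^2}$, and the uniform principle $\|A_Q^* w\|_{L^2} \lesssim \||w|_{-1}\|_{L^2}$ (positivity of $A_Q A_Q^*$ plus generalized Hardy on $(m{+}2)$-equivariant functions) is the key estimate. The modulation contribution combines $\|A_Q^* A_Q L_Q \mathbf{v}\|_{L^2} \lesssim b$ from \eqref{eq:gen-null-rel} with Lemma \ref{lem:mod-est}; the parameter hierarchy $M^C b \leq o_{M_2\to\infty}(1)$ and $M^C b^4 \leq b^{5/2}$ (for $b \leq b^\ast(M_2)$) absorbs the $M^C$ factors into the claimed error. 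The scaling- and phase-commutator terms use Lemma \ref{lem:commutator}, with $|\lambda_s/\lambda| \lesssim b$ resp. $|\gamma_s| \lesssim |\eta| + b^{3/2}$ providing the $b$ prefactor. The degenerate-linear part uses \eqref{eq:LL-E3-est} directly, while the nonlinear part uses \eqref{eq:NL-E3-est} with the bootstrap bounds \eqref{eq:bootstrap-hyp-H3} to yield $\lesssim K^C b^{9/2}\|\epsilon_3\|_{L^2}$, absorbed into $b\|\epsilon_3\|_{L^2} \cdot b^{5/2}$ for $b$ small. The radiation part follows from $\|A_Q^* A_Q L_Q i\Psi\|_{L^2} \lesssim \|\Psi\|_{\dot{\mathcal{H}}_m^3} \lesssim b^{m/2+3} \leq b^{7/2}$ via \eqref{eq:Radiation-Hdot3-est}. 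For the residual $\frac{\lambda_s}{\lambda}(\epsilon_2, Q^2\epsilon_2)_r$, I would split at $y = M_2$: for $y > M_2$ the decay $Q \lesssim \langle y\rangle^{-(m+2)}$ and $\|y^{-2}\epsilon_2\|_{L^2} \lesssim \|\epsilon_3\|_{L^2} \lesssim \|\epsilon\|_{\dot{\mathcal{H}}_m^3}$ yield the $o_{M_2\to\infty}(1)\|\epsilon\|_{\dot{\mathcal{H}}_m^3}$ piece; for $y \leq M_2$, since $\epsilon_2$ is schematically two derivatives of $\epsilon$, $\|Q\epsilon_2\|_{L^2(y\leq M_2)} \lesssim \|\epsilon\|_{\dot{\mathcal{H}}_{m,\leq 2M_2}^3}$, and squaring while pairing one factor through coercivity $\|\epsilon\|_{\dot{\mathcal{H}}^3_m} \lesssim \|\epsilon_3\|_{L^2}$ produces the required bound. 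For \eqref{eq:EnergyIdentityHdot3-m-geq-3} when $m \geq 3$, the same skeleton applies with sharpened \eqref{eq:LL-E3-est-m-geq-3}, the $H_m^5$-bootstrap \eqref{eq:bootstrap-hyp-H5} giving $\|\epsilon_3\|_{L^2} \leq Kb^3$ and $\|\epsilon_5\|_{L^2} \leq Kb^{9/2}$, and crucially the stronger bound $\|Q\epsilon_2\|_{L^2} \lesssim \|\epsilon_5\|_{L^2}$ (via $\|y^{-3}\epsilon_2\|_{L^2} \lesssim \|\epsilon_2\|_{\dot{H}^3_{m+2}} \sim \|\epsilon_5\|_{L^2}$) for the $(g)$ term.

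The main obstacle will be the careful bookkeeping across the parameter hierarchy $K \ll M \ll M_1 \ll M_2 \ll (b^\ast)^{-1}$ of Remark \ref{rem:ParameterDependence}: every $M^C$ factor arising from orthogonality estimates must be absorbed via this hierarchy, and one must correctly distinguish $o_{M\to\infty}(1)$ factors (originating from the approximate null-space relations \eqref{eq:Orthogonality-approx-gen-null-H3}) from $o_{M_2\to\infty}(1)$ factors (originating from cutoff localization of the $\dot{\mathcal{H}}^3_m$-norm).
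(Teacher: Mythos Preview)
Your approach is correct and reaches the same conclusion, but it organizes the scaling piece differently from the paper. The paper first applies $A_Q^*$ to \eqref{eq:e2-eq} to derive the $\epsilon_3$-equation \eqref{eq:e3-eq}, then simply pairs with $\epsilon_3$ and estimates $\|\text{RHS of \eqref{eq:e3-eq}}\|_{L^2}$; in that formulation \emph{all} scaling commutators are bundled into the single term $\tfrac{\lambda_s}{\lambda}\partial_{\lambda}(A_{Q_\lambda}^*A_{Q_\lambda}L_{Q_\lambda})\epsilon$, which Lemma~\ref{lem:commutator} bounds in one stroke. Your route stays at the $\epsilon_2$ level and computes $[\Lambda,A_QA_Q^*]=-2A_QA_Q^*-Q^2$ explicitly, which cleanly isolates the virial structure but leaves you with an extra quadratic term $\tfrac{\lambda_s}{\lambda}\|Q\epsilon_2\|_{L^2}^2$ and, inside $A_Q^*\mathcal{R}$, only the partial commutator $A_Q^*\partial_\lambda(A_{Q_\lambda}L_{Q_\lambda})\epsilon$. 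To invoke Lemma~\ref{lem:commutator} for the latter you must in effect add back $(\partial_\lambda A_{Q_\lambda}^*)\epsilon_2=-\tfrac12 yQ^2\epsilon_2$ and then subtract it, so the two bookkeepings are equivalent. One small correction: for the factor $\|Q\epsilon_2\|_{L^2}\lesssim\|\epsilon_3\|_{L^2}$ you should not cite coercivity (which carries an $M$-dependent constant) but simply $Q\lesssim y^{-1}$ together with the $M$-independent positivity $\|y^{-1}\epsilon_2\|_{L^2}\lesssim\|A_Q^*\epsilon_2\|_{L^2}$ from \eqref{eq:non-M-dep-Hardy}; the paper's packaging avoids this issue entirely by never splitting off the quadratic term.
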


One can observe that the contributions of $O(b\epsilon)$-like terms,
$\tilde R_{\mathrm{L-L}}$ and scaling induced terms, are estimated
by 
\begin{equation}
Cb\|\epsilon_{3}\|_{L^{2}}(\|\epsilon\|_{\dot{\mathcal{H}}_{m,\leq M_{2}}^{3}}+o_{M_{2}\to\infty}(1)\|\epsilon\|_{\dot{\mathcal{H}}_{m}^{3}}).\label{eq:local H3}
\end{equation}
Later this will be dominated by adding a correction term into the
energy.

In the display \eqref{eq:EnergyIdentityHdot3}, the terms $o_{M\to\infty}(1)\|\epsilon_{3}\|_{L^{2}}$
and $o_{M_{2}\to\infty}(1)\|\epsilon\|_{\dot{\mathcal{H}}_{m}^{3}}$
can be absorbed by $b^{\frac{5}{2}}$ using the bootstrap hypothesis
\eqref{eq:bootstrap-hyp-H3} and the linear coercivity Lemma \ref{lem:Coercivity-AAL-Section2}.
However, we chose to keep those terms, which motivate our choice of
parameters as in Remark \ref{rem:ParameterDependence}.
\begin{proof}
Taking $A_{Q}^{\ast}$ to \eqref{eq:e2-eq}, we get the equation of
$\epsilon_{3}$: 
\begin{align}
 & (\partial_{s}-\frac{\lambda_{s}}{\lambda}\Lambda_{-3}+\gamma_{s}i)\epsilon_{3}+iA_{Q}^{\ast}A_{Q}\epsilon_{3}\nonumber \\
 & =\tilde{\mathbf{Mod}}\cdot A_{Q}^{\ast}A_{Q}L_{Q}\mathbf{v}+\frac{\lambda_{s}}{\lambda}\partial_{\lambda}(A_{Q_{\lambda}}^{\ast}A_{Q_{\lambda}}L_{Q_{\lambda}})\epsilon-\gamma_{s}A_{Q}^{\ast}A_{Q}[L_{Q},i]\epsilon\label{eq:e3-eq}\\
 & \quad-A_{Q}^{\ast}A_{Q}L_{Q}i\tilde R_{\mathrm{L-L}}-A_{Q}^{\ast}A_{Q}L_{Q}i\tilde R_{\mathrm{NL}}-A_{Q}^{\ast}A_{Q}L_{Q}i\Psi.\nonumber 
\end{align}
We take the inner product of \eqref{eq:e3-eq} with $\epsilon_{3}$
to get 
\[
\Big|\frac{1}{2}\Big(\partial_{s}-6\frac{\lambda_{s}}{\lambda}\Big)\|\epsilon_{3}\|_{L^{2}}^{2}\Big|=\Big|(\epsilon_{3},\text{RHS of }\eqref{eq:e3-eq})_{r}\Big|\lesssim\|\epsilon_{3}\|_{L^{2}}\|\text{RHS of }\eqref{eq:e3-eq}\|_{L^{2}}.
\]
To show \eqref{eq:EnergyIdentityHdot3}, we keep $\|\epsilon_{3}\|_{L^{2}}$
and estimate $\|\text{RHS of }\eqref{eq:e3-eq}\|_{L^{2}}$. To show
\eqref{eq:EnergyIdentityHdot3-m-geq-3} when $m\geq3$, it is enough
to show that $\|\text{RHS of }\eqref{eq:e3-eq}\|_{L^{2}}\lesssim b^{\frac{9}{2}}$.

Henceforth, we estimate $\|\text{RHS of }\eqref{eq:e3-eq}\|_{L^{2}}$.
By Lemma \ref{lem:mod-est} and \eqref{eq:gen-null-rel}, we have
\[
\|\tilde{\mathbf{Mod}}\cdot A_{Q}^{\ast}A_{Q}L_{Q}\mathbf{v}\|_{L^{2}}\lesssim\begin{cases}
b(o_{M\to\infty}(1)\|\epsilon_{3}\|_{L^{2}}+M^{C}(b\|\epsilon\|_{\dot{\mathcal{H}}_{m}^{3}}+b^{4})) & \text{if }m\geq1,\\
b(o_{M\to\infty}(1)\|\epsilon_{5}\|_{L^{2}}+M^{C}(b\|\epsilon\|_{\dot{\mathcal{H}}_{m}^{5}}+b^{6})) & \text{if }m\geq3,
\end{cases}
\]
which are enough in view of the bootstrap hypotheses \eqref{eq:bootstrap-hyp-H3}
and \eqref{eq:bootstrap-hyp-H5} and the coercivity Lemmas \ref{lem:Coercivity-AAL-Section2}
and \ref{lem:Coercivity-AAAAL-Section2}. By Lemmas \ref{lem:mod-est}
and \ref{lem:commutator}, we have 
\begin{align*}
 & \|\frac{\lambda_{s}}{\lambda}\partial_{\lambda}(A_{Q_{\lambda}}^{\ast}A_{Q_{\lambda}}L_{Q_{\lambda}})\epsilon\|_{L^{2}}+\|\gamma_{s}A_{Q}^{\ast}A_{Q}[L_{Q},i]\epsilon\|_{L^{2}}\\
 & \lesssim\begin{cases}
b(\|\epsilon\|_{\dot{\mathcal{H}}_{m,\leq M_{2}}^{3}}+o_{M_{2}\to\infty}(1)\|\epsilon\|_{\dot{\mathcal{H}}_{m}^{3}}) & \text{if }m\geq1,\\
b\|\epsilon\|_{\dot{\mathcal{H}}_{m}^{5}} & \text{if }m\geq3,
\end{cases}
\end{align*}
which is enough. The contribution from $\tilde R_{\mathrm{L-L}}$
is estimated by \eqref{eq:LL-E3-est} if $m\geq1$ and \eqref{eq:LL-E3-est-m-geq-3}
if $m\geq3$. The contribution from $\tilde R_{\mathrm{NL}}$ is estimated
in \eqref{eq:NL-E3-est}. Finally, the contribution from $\Psi$ is
estimated in \eqref{eq:Radiation-Hdot3-est} (take $m=1$ for $m\geq1$
and $m=3$ for $m\geq3$).
\end{proof}
When $\epsilon$ is a $H_{m}^{5}$-solution (i.e. $\epsilon\in\mathcal{O}^{5}$),
then the energy estimate \eqref{eq:EnergyIdentityHdot3-m-geq-3} for
$\epsilon_{3}$ suffices. No virial corrections are needed for $\|\epsilon_{3}\|_{L^{2}}^{2}$.
In fact, such a good estimate can be obtained because we were able
to use higher Sobolev norm $\dot{\mathcal{H}}_{m}^{5}$ (thus having
more power of $b$).

Similarly to \eqref{eq:EnergyIdentityHdot3}, we need an energy estimate
for $\epsilon_{5}$.
\begin{lem}[Energy identity for $\dot{\mathcal{H}}_{m}^{5}$]
\label{lem:E5-identity}Let $m\geq3$. We have
\begin{equation}
\Big|\Big(\partial_{s}-10\frac{\lambda_{s}}{\lambda}\Big)\|\epsilon_{5}\|_{L^{2}}^{2}\Big|\lesssim b\|\epsilon_{5}\|_{L^{2}}\Big(o_{M\to\infty}(1)\|\epsilon_{5}\|_{L^{2}}+\|\epsilon\|_{\dot{\mathcal{H}}_{m,\leq M_{2}}^{5}}+o_{M_{2}\to\infty}(1)\|\epsilon\|_{\dot{\mathcal{H}}_{m}^{5}}+b^{\frac{9}{2}}\Big).\label{eq:EnergyIdentityHdot5}
\end{equation}
\end{lem}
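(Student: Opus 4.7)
The proof will mirror that of Lemma \ref{lem:E3-Identity}, but one level up. The plan is to derive a Hamiltonian equation for $\epsilon_5$ by applying $A_Q^\ast$ to \eqref{eq:e4-eq}, take the $(\cdot,\cdot)_r$ inner product with $\epsilon_5$ to kill the linear part, and bound each remainder term using the estimates already developed in Sections \ref{subsec:Estimates-of-remainders-H3} and \ref{subsec:energy-identity}.

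First I would apply $A_Q^\ast$ to \eqref{eq:e4-eq} to obtain
\begin{align*}
(\partial_s-\tfrac{\lambda_s}{\lambda}\Lambda_{-5}+\gamma_s i)\epsilon_5 + iA_Q^\ast A_Q\epsilon_5
&= \tilde{\mathbf{Mod}}\cdot A_Q^\ast A_Q A_Q^\ast A_Q L_Q\mathbf{v} \\
&\quad + \tfrac{\lambda_s}{\lambda}\partial_\lambda(A_{Q_\lambda}^\ast A_{Q_\lambda}A_{Q_\lambda}^\ast A_{Q_\lambda}L_{Q_\lambda})\epsilon \\
&\quad -\gamma_s A_Q^\ast A_Q A_Q^\ast A_Q[L_Q,i]\epsilon \\
&\quad - A_Q^\ast A_Q A_Q^\ast A_Q L_Q i\tilde R_{\mathrm{L-L}} \\
&\quad - A_Q^\ast A_Q A_Q^\ast A_Q L_Q i\tilde R_{\mathrm{NL}} - A_Q^\ast A_Q A_Q^\ast A_Q L_Q i\Psi.
\end{align*}
Taking the $(\cdot,\cdot)_r$ inner product with $\epsilon_5$, the self-adjointness of $A_Q^\ast A_Q$ together with the skew-symmetry of multiplication by $i$ annihilates the linear term $(\epsilon_5,iA_Q^\ast A_Q\epsilon_5)_r=0$, and the scaling vector field $\Lambda_{-5}$ contributes the asserted factor $-10\frac{\lambda_s}{\lambda}$. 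By Cauchy--Schwarz, the problem reduces to estimating $\|\text{RHS}\|_{L^2}$.

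Now I would bound each term of the RHS and show each is dominated by the right side of \eqref{eq:EnergyIdentityHdot5}. For the modulation contribution, Lemma \ref{lem:mod-est} (the $m\geq3$ case) combined with the approximate generalized null space relation \eqref{eq:gen-null-rel} absorbed by the outer operator $A_Q^\ast A_Q$ yields the $o_{M\to\infty}(1)b\|\epsilon_5\|_{L^2}$ and the $b\cdot b^{9/2}$ contributions, using the bootstrap bounds in \eqref{eq:def-H5-trapped}. The scaling commutator and the phase commutator are exactly the second and third displays controlled in Lemma \ref{lem:commutator} (the $m\geq3$ statement), giving the local/off-local split $b(\|\epsilon\|_{\dot{\mathcal{H}}_{m,\leq M_2}^5}+o_{M_2\to\infty}(1)\|\epsilon\|_{\dot{\mathcal{H}}_m^5})$. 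The linear-in-$\epsilon$ remainder $A_Q^\ast A_Q A_Q^\ast A_Q L_Q i\tilde R_{\mathrm{L-L}}$ is estimated by subcoercivity of $A_Q^\ast A_Q A_Q^\ast A_Q L_Q$ on $\dot{\mathcal{H}}_m^5$ (Lemma \ref{lem:Mapping-AAAAL-Section2}) combined with \eqref{eq:LL-E5-est}, yielding again the same local/off-local split. Finally, the profile error is estimated by \eqref{eq:Radiation-Hdot5-est}: $\|A_Q^\ast A_Q A_Q^\ast A_Q L_Q i\Psi\|_{L^2}\lesssim\|\Psi\|_{\dot{\mathcal{H}}_m^5}\lesssim b^{m/2+4}\lesssim b^{11/2}$ for $m\geq3$, which is $\lesssim b\cdot b^{9/2}$.

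The only place that requires some care is the nonlinear contribution $A_Q^\ast A_Q A_Q^\ast A_Q L_Q i\tilde R_{\mathrm{NL}}$, which by \eqref{eq:NL-E5-est} is bounded by $\|\epsilon\|_{\dot{\mathcal{H}}_m^5}(\|\epsilon\|_{\dot{H}_m^3}+\|\epsilon\|_{\dot{H}_m^1}^2)+\|\epsilon\|_{\dot{H}_m^3}\|\epsilon\|_{\dot{H}_m^1}^4$. Under the $H_m^5$-trapped bounds \eqref{eq:def-H5-trapped}, interpolation between $\|\epsilon_1\|_{L^2}\lesssim b$ and $\|\epsilon_3\|_{L^2}\lesssim b^3$ gives $\|\epsilon\|_{\dot{H}_m^3}\lesssim b^3$, and similarly $\|\epsilon\|_{\dot{H}_m^1}\lesssim b$, so this term is $\lesssim b^{9/2}(b^3+b^2)+b^3\cdot b^4\ll b\cdot b^{9/2}$, so it is absorbed in the $b^{9/2}$ tail of \eqref{eq:EnergyIdentityHdot5}. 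The main technical obstacle is keeping the high-order adapted derivatives $A_Q^\ast A_Q A_Q^\ast A_Q L_Q$ under control when they fall on nonlocal nonlinearities; this is already handled by the careful phase-correction decomposition $R_{\mathrm{NL}}=-\theta_{\mathrm{NL}}P+\tilde R_{\mathrm{NL}}$ that makes \eqref{eq:NL-E5-est} available.
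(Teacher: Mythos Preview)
Your proposal is correct and follows exactly the paper's approach: apply $A_Q^\ast$ to \eqref{eq:e4-eq}, take the inner product with $\epsilon_5$, and bound the right-hand side in $L^2$ term by term using Lemma~\ref{lem:mod-est} with \eqref{eq:gen-null-rel}, Lemma~\ref{lem:commutator}, \eqref{eq:LL-E5-est}, \eqref{eq:NL-E5-est}, and \eqref{eq:Radiation-Hdot5-est}. Two minor terminological slips that do not affect correctness: for $\tilde R_{\mathrm{L-L}}$ you want the \emph{boundedness} direction of Lemma~\ref{lem:Mapping-AAAAL-Section2} (or equivalently \eqref{eq:LL-Hdot5-est}), not subcoercivity; and the bound $\|\epsilon\|_{\dot H_m^3}\lesssim b^3$ follows from the coercivity Lemma~\ref{lem:Coercivity-AAL-Section2} applied to $\|\epsilon_3\|_{L^2}\lesssim b^3$, not from interpolation.
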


As above, the terms $o_{M\to\infty}(1)\|\epsilon_{5}\|_{L^{2}}$ and
$o_{M_{2}\to\infty}(1)\|\epsilon\|_{\dot{\mathcal{H}}_{m}^{5}}$ can
be absorbed into the term $b^{\frac{9}{2}}$.
\begin{proof}
One proceeds similarly as in the proof of \eqref{eq:EnergyIdentityHdot3}.
Taking $A_{Q}^{\ast}$ to the equation, we get the equation of $\epsilon_{5}$:
\begin{align}
 & (\partial_{s}-\frac{\lambda_{s}}{\lambda}\Lambda_{-5}+\gamma_{s}i)\epsilon_{5}+iA_{Q}^{\ast}A_{Q}\epsilon_{5}\nonumber \\
 & =\tilde{\mathbf{Mod}}\cdot A_{Q}^{\ast}A_{Q}A_{Q}^{\ast}A_{Q}L_{Q}\mathbf{v}+\frac{\lambda_{s}}{\lambda}\partial_{\lambda}(A_{Q}^{\ast}A_{Q_{\lambda}}A_{Q_{\lambda}}^{\ast}A_{Q_{\lambda}}L_{Q_{\lambda}})\epsilon\label{eq:e5-eq}\\
 & \quad-\gamma_{s}A_{Q}^{\ast}A_{Q}A_{Q}^{\ast}A_{Q}[L_{Q},i]\epsilon-A_{Q}^{\ast}A_{Q}A_{Q}^{\ast}A_{Q}L_{Q}(i\tilde R_{\mathrm{L-L}}+i\tilde R_{\mathrm{NL}}+i\Psi).\nonumber 
\end{align}
As before, it suffices to estimate $\|\text{RHS of }\eqref{eq:e5-eq}\|_{L^{2}}$.
For the modulation term, use $m\geq3$ case of Lemma \ref{lem:mod-est}.
For the commutator terms, use the last estimate of Lemma \ref{lem:commutator}.
For $\tilde R_{\mathrm{L-L}}$, $\tilde R_{\mathrm{NL}}$, and $\Psi$,
one may use \eqref{eq:LL-E5-est}, \eqref{eq:NL-E5-est}, and \eqref{eq:Radiation-Hdot5-est},
respectively.
\end{proof}
For the highest Sobolev norms, the estimates \eqref{eq:EnergyIdentityHdot3}
and \eqref{eq:EnergyIdentityHdot5} seem to be optimal. As explained
above, the local $\dot{\mathcal{H}}_{m}^{3}$ and $\dot{\mathcal{H}}_{m}^{5}$
terms (e.g. \eqref{eq:local H3}) can be controlled by the monotonicity,
namely the repulsivity from the virial functional.

Let us recall the formal computation \eqref{eq:FormalMonotonicity}
\[
\tfrac{1}{2}\partial_{s}(\epsilon_{2},-i\Lambda\epsilon_{2})_{r}\approx(\epsilon_{2},A_{Q}A_{Q}^{\ast}\epsilon_{2})_{r}+(\epsilon_{2},\tfrac{-\partial_{y}\tilde V}{2y}\epsilon_{2})_{r}\geq(\epsilon_{2},A_{Q}A_{Q}^{\ast}\epsilon_{2})_{r}=\|\epsilon_{3}\|_{L^{2}}^{2}.
\]
However, the virial functional $(\epsilon_{2},i\Lambda\epsilon_{2})_{r}$
is unbounded, so we will truncate this. In order to mimic the original
virial functional in a large compact region and keep symmetricity
of $i\Lambda$, we truncate as follows.

Let $\phi:\R\to\R$ be a smooth function such that $\phi'(y)=y$ for
$|y|\leq1$ and $\phi'(y)=0$ for $y\geq2$. For $A\geq1$, define
the smooth radial function $\phi_{A}:\R^{2}\to\R$ such that $\phi_{A}(x)\coloneqq A^{2}\phi(\frac{|x|}{A})$.
Denote $\phi_{A}'\coloneqq\partial_{r}\phi_{A}$. The \emph{deformed
$L^{2}$-scaling vector field} is defined by 
\[
\Lambda_{A}\coloneqq\phi_{A}'\partial_{r}+\frac{\Delta\phi_{A}}{2},
\]
where $\Delta$ is the Laplacian on $\R^{2}$ (thus $\Delta\phi_{A}=(\partial_{yy}+\frac{1}{y}\partial_{y})\phi_{A}$).
We note that $i\Lambda_{A}=i\Lambda$ in the region $y\leq A$ and
it is symmetric.

Now we prove a localized version of the repulsivity.
\begin{lem}[Localized repulsivity]
\label{lem:LocalizedRepulsivity}We have 
\begin{equation}
(A_{Q}A_{Q}^{\ast}\epsilon_{2},\Lambda_{M_{2}}\epsilon_{2})_{r}\geq c_{M}\|\epsilon\|_{\dot{\mathcal{H}}_{m,\leq M_{2}}^{3}}^{2}-O\big(\|\mathbf{1}_{y\sim M_{2}}|\epsilon|_{-3}\|_{L^{2}}^{2}\big)-o_{M_{2}\to\infty}(1)\|\epsilon\|_{\dot{\mathcal{H}}_{m}^{3}}^{2}.\label{eq:LocalizedRepulsivityH3}
\end{equation}
If $m\geq3$, we have 
\begin{equation}
(A_{Q}A_{Q}^{\ast}\epsilon_{4},\Lambda_{M_{2}}\epsilon_{4})_{r}\geq c_{M}\|\epsilon\|_{\dot{\mathcal{H}}_{m,\leq M_{2}}^{5}}^{2}-O\big(\|\mathbf{1}_{y\sim M_{2}}|\epsilon|_{-5}\|_{L^{2}}^{2}\big)-o_{M_{2}\to\infty}(1)\|\epsilon\|_{\dot{\mathcal{H}}_{m}^{5}}^{2}.\label{eq:LocalizedRepulsivityH5}
\end{equation}
\end{lem}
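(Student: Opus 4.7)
The plan is to make the formal virial identity \eqref{eq:FormalMonotonicity} quantitative with the smooth truncation $\phi_{M_{2}}$ replacing the unbounded weight $r^{2}/2$, and then to upgrade the resulting local energy bound to the desired $\dot{\mathcal{H}}_{m,\leq M_{2}}^{3}$ control via the coercivity machinery from Section~\ref{subsec:Adapted-function-spaces}. Concretely, I would start from the representation
\[
A_{Q}A_{Q}^{\ast}=-\partial_{rr}-\tfrac{1}{r}\partial_{r}+\tfrac{\tilde V}{r^{2}},\qquad \tilde V\gtrsim 1,\qquad r\partial_{r}\tilde V=-r^{2}Q^{2}\leq 0,
\]
and compute $(A_{Q}A_{Q}^{\ast}\epsilon_{2},\Lambda_{M_{2}}\epsilon_{2})_{r}$ by the standard Morawetz integration by parts. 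The kinetic commutator yields $\int \phi_{M_{2}}''|\partial_{r}\epsilon_{2}|^{2}$ plus a bi-Laplacian term $-\tfrac14\int(\Delta^{2}\phi_{M_{2}})|\epsilon_{2}|^{2}$, while the potential commutator produces $\tfrac12\int\phi_{M_{2}}'\,\partial_{r}\!\bigl(-\tilde V/r^{2}\bigr)|\epsilon_{2}|^{2}r\,dr$, which by $2\tilde V-r\partial_{r}\tilde V\gtrsim 1$ is pointwise $\gtrsim \phi_{M_{2}}'/r^{3}\cdot|\epsilon_{2}|^{2}$.

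On $\{y\leq M_{2}\}$ one has $\phi_{M_{2}}''=1$ and $\phi_{M_{2}}'/r=1$, so these contributions add up to $\gtrsim \|\partial_{r}\epsilon_{2}\|_{L^{2}(y\leq M_{2})}^{2}+\|\epsilon_{2}/y\|_{L^{2}(y\leq M_{2})}^{2}$; in view of $\epsilon_{3}=A_{Q}^{\ast}\epsilon_{2}=\partial_{r}\epsilon_{2}-\tfrac{m+2+A_{\theta}[Q]}{r}\epsilon_{2}$, after absorbing the lower-order term (which is controlled by $\|\epsilon_{2}/y\|_{L^{2}}$ up to the perturbative weight $A_{\theta}[Q]/r$), this dominates $\|\epsilon_{3}\|_{L^{2}(y\leq M_{2})}^{2}$ plus the full Hardy term $\|\epsilon_{2}/y\|_{L^{2}(y\leq M_{2})}^{2}$. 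All contributions coming from derivatives of $\phi_{M_{2}}$ on the transition $y\sim M_{2}$, as well as $\Delta^{2}\phi_{M_{2}}\sim M_{2}^{-2}\mathbf{1}_{y\sim M_{2}}$, are crudely controlled by $\|\mathbf{1}_{y\sim M_{2}}|\epsilon|_{-3}\|_{L^{2}}^{2}$ using $|\epsilon_{2}|\lesssim|\epsilon|_{-2}$, $|\partial_{r}\epsilon_{2}|\lesssim|\epsilon|_{-3}$ (from boundedness of $A_{Q}L_{Q}:\dot{\mathcal{H}}_{m}^{3}\to\dot H_{m+2}^{1}$).

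The final step is to pass from the local energy to $\|\epsilon\|_{\dot{\mathcal{H}}_{m,\leq M_{2}}^{3}}$. By Lemma~\ref{lem:Coercivity-AAL-Section2}, the orthogonality of $\epsilon$ to $\{\mathcal{Z}_{1},\mathcal{Z}_{2},\mathcal{Z}_{3},\mathcal{Z}_{4}\}$ together with the transversality \eqref{eq:Transversality} gives the global coercivity $\|\epsilon_{3}\|_{L^{2}}^{2}\gtrsim\|\epsilon\|_{\dot{\mathcal{H}}_{m}^{3}}^{2}$. Splitting this into the pieces on $\{y\leq M_{2}\}$ and $\{y>M_{2}\}$ and using that $\mathcal{Z}_{k}$ is localized at the scale $M\ll M_{2}$, the tail $\|\epsilon\|_{\dot{\mathcal{H}}_{m,>M_{2}}^{3}}^{2}$ is absorbed as $o_{M_{2}\to\infty}(1)\|\epsilon\|_{\dot{\mathcal{H}}_{m}^{3}}^{2}$, while the local piece inherits a constant $c_{M}>0$ depending on the fixed parameter $M$ (through the determinant in \eqref{eq:Transversality}). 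This yields \eqref{eq:LocalizedRepulsivityH3}. The $m\geq 3$ case \eqref{eq:LocalizedRepulsivityH5} is a verbatim repetition: the equation for $\epsilon_{4}=A_{Q}\epsilon_{3}$ has the same principal operator $iA_{Q}A_{Q}^{\ast}$ acting on $(m+2)$-equivariant functions, so the identical Morawetz computation produces $\|\epsilon_{5}\|_{L^{2}(y\leq M_{2})}^{2}+\|\epsilon_{4}/y\|_{L^{2}(y\leq M_{2})}^{2}$; invoking Lemma~\ref{lem:Coercivity-AAAAL-Section2} in place of Lemma~\ref{lem:Coercivity-AAL-Section2} closes the bound.

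The main obstacle I expect is the last step: the passage from local $\|\epsilon_{3}\|_{L^{2}(y\leq M_{2})}$ control to a genuine $\dot{\mathcal{H}}_{m,\leq M_{2}}^{3}$ lower bound with an independently fixed constant $c_{M}$ is not automatic, because the coercivity uses the \emph{global} orthogonality profile whereas we only have local energy on the left. One must verify that the orthogonality profiles $\mathcal{Z}_{k}$, being essentially supported in $\{y\lesssim M\}$, together with the pointwise decay of $Q$, force the tail $\|\epsilon\|_{\dot{\mathcal{H}}_{m,>M_{2}}^{3}}$ to enter only through the stated $o_{M_{2}\to\infty}(1)$ factor; this is where the weighted Hardy structure built into \eqref{eq:Def-Hdot3-Section2} (with the logarithmic weights when $m\in\{1,2\}$) plays its role, and it is the only place where the ordering $M\ll M_{2}$ is used in an essential way.
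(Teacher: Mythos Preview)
Your Morawetz computation in the first half is correct and matches the paper. The gap is precisely where you flag it, and it is a genuine one: the ``split the global coercivity'' argument does not close. Writing $\|\epsilon_{3}\|_{L^{2}}^{2}=\|\epsilon_{3}\|_{L^{2}(y\leq M_{2})}^{2}+\|\epsilon_{3}\|_{L^{2}(y>M_{2})}^{2}$ and applying Lemma~\ref{lem:Coercivity-AAL-Section2} to the left side yields
\[
\|\epsilon_{3}\|_{L^{2}(y\leq M_{2})}^{2}\geq c_{M}\|\epsilon\|_{\dot{\mathcal{H}}_{m}^{3}}^{2}-\|\epsilon_{3}\|_{L^{2}(y>M_{2})}^{2},
\]
but the subtracted term is in general of size $O(\|\epsilon\|_{\dot{\mathcal{H}}_{m}^{3}}^{2})$, not $o_{M_{2}\to\infty}(1)$: nothing forces $\epsilon_{3}$ to be concentrated at small $y$. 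Concretely, if $\epsilon$ agrees with a kernel element (say $\Lambda Q$) on $y\leq M_{2}$ and is modified only on $y\gg M_{2}$, then $\epsilon_{3}$ is essentially supported on $y>M_{2}$ while $\|\epsilon\|_{\dot{\mathcal{H}}_{m,\leq M_{2}}^{3}}$ is large; the orthogonality to $\mathcal{Z}_{k}$ rules this out, but your argument never invokes it in a usable way.

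The resolution the paper uses is to commute the cutoff \emph{inside} rather than split. Using $\phi_{M_{2}}''=\phi_{M_{2}}'/y$ on $\{y\leq M_{2}\}$, one rewrites the Morawetz output as
\[
\int\phi_{M_{2}}''|\partial_{y}\epsilon_{2}|^{2}+\int\tfrac{\phi_{M_{2}}'}{y}\tfrac{\tilde V}{y^{2}}|\epsilon_{2}|^{2}=\|A_{Q}^{\ast}A_{Q}L_{Q}(\phi_{M_{2}}''\epsilon)\|_{L^{2}}^{2}+O\big(\|\mathbf{1}_{y\sim M_{2}}|\epsilon|_{-3}\|_{L^{2}}^{2}\big),
\]
the commutators $[\phi_{M_{2}}'',A_{Q}]$, $[\phi_{M_{2}}'',\D_{+}^{(Q)}]$ being localized at $y\sim M_{2}$ and $[\phi_{M_{2}}'',QB_{Q}]\epsilon$ depending only on $\epsilon|_{y\sim M_{2}}$. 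Now one applies the \emph{global} coercivity to the single function $\phi_{M_{2}}''\epsilon$: since $\mathcal{Z}_{k}$ is supported at scale $M\ll M_{2}$, one has $|(\phi_{M_{2}}''\epsilon,\mathcal{Z}_{k})_{r}|=|(\epsilon,(\phi_{M_{2}}''-1)\mathcal{Z}_{k})_{r}|=o_{M_{2}\to\infty}(1)\|\epsilon\|_{\dot{\mathcal{H}}_{m}^{3}}$, so Lemma~\ref{lem:Coercivity-AAL-Section2} applies with that defect and gives $\gtrsim c_{M}\|\phi_{M_{2}}''\epsilon\|_{\dot{\mathcal{H}}_{m}^{3}}^{2}-o_{M_{2}\to\infty}(1)\|\epsilon\|_{\dot{\mathcal{H}}_{m}^{3}}^{2}\geq c_{M}\|\epsilon\|_{\dot{\mathcal{H}}_{m,\leq M_{2}}^{3}}^{2}-o_{M_{2}\to\infty}(1)\|\epsilon\|_{\dot{\mathcal{H}}_{m}^{3}}^{2}$. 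This is the missing mechanism; once you have it, the $\ell=4$ case is indeed verbatim.
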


\begin{rem}
The error term in the region $y\sim M_{2}$ can be easily managed
using an averaging argument in $M_{2}$, see Lemma \ref{lem:LocalVirialControl}
below.
\end{rem}

\begin{proof}
By a direct computation, we have for $\ell\in\{2,4\}$ that 
\begin{align*}
(-\partial_{yy}\epsilon_{\ell}-\tfrac{1}{y}\partial_{y}\epsilon_{\ell},\Lambda_{M_{2}}\epsilon_{\ell})_{r} & ={\textstyle \int}\phi_{M_{2}}''|\partial_{y}\epsilon_{\ell}|^{2}-{\textstyle \int}\tfrac{\Delta^{2}\phi_{M_{2}}}{4}|\epsilon_{\ell}|^{2},\\
(\tfrac{\tilde V}{y^{2}}\epsilon_{\ell},\Lambda_{M_{2}}\epsilon_{\ell})_{r} & ={\textstyle \int}\tfrac{\phi_{M_{2}}'}{y}\cdot\tfrac{\tilde V}{y^{2}}|\epsilon_{\ell}|^{2}-{\textstyle \int}\tfrac{\phi_{M_{2}}'}{2y}\cdot\tfrac{y\partial_{y}\tilde V}{y^{2}}|\epsilon_{\ell}|^{2}.
\end{align*}
Using $-y\partial_{y}\tilde V\geq0$ and $|\Delta^{2}\phi_{M_{2}}|\lesssim\tfrac{1}{y^{2}}\mathbf{1}_{y\sim M_{2}}$,
we get 
\[
(A_{Q}A_{Q}^{\ast}\epsilon_{\ell},\Lambda_{M_{2}}\epsilon_{\ell})_{r}\geq{\textstyle \int}\phi_{M_{2}}''|\partial_{y}\epsilon_{\ell}|^{2}+{\textstyle \int}\tfrac{\phi_{M_{2}}'}{y}\cdot\tfrac{\tilde V}{y^{2}}|\epsilon_{\ell}|^{2}+O\big(\|\mathbf{1}_{y\sim M_{2}}\tfrac{1}{y}\epsilon_{\ell}\|_{L^{2}}^{2}\big).
\]
If one formally substitutes $M_{2}=\infty$, one would get the full
monotonicity as \eqref{eq:FormalMonotonicity}.

We now propagate the above lower bound on $\epsilon_{\ell}$ to that
in terms of $\epsilon$. Using $\phi_{M_{2}}''=\tfrac{1}{y}\phi_{M_{2}}'$
on the region $\{y\leq M_{2}\}$ and $\phi_{M_{2}}'=0$ on the region
$\{y\geq2M_{2}\}$, we have (when $\ell=2$) 
\begin{align*}
 & {\textstyle \int}\phi_{M_{2}}''|\partial_{y}\epsilon_{2}|^{2}+{\textstyle \int}\tfrac{\phi_{M_{2}}'}{y}\cdot\tfrac{\tilde V}{y^{2}}|\epsilon_{2}|^{2}\\
 & ={\textstyle \int}|\partial_{y}A_{Q}(\phi_{M_{2}}''\epsilon_{1})|^{2}+{\textstyle \int}\tfrac{\tilde V}{y^{2}}|A_{Q}(\phi_{M_{2}}''\epsilon_{1})|^{2}+O\big(\|\mathbf{1}_{y\sim M_{2}}|\epsilon_{1}|_{-2}\|_{L^{2}}^{2}\big)\\
 & ={\textstyle \int}|A_{Q}^{\ast}A_{Q}(\phi_{M_{2}}''\epsilon_{1})|^{2}+O\big(\|\mathbf{1}_{y\sim M_{2}}|\epsilon_{1}|_{-2}\|_{L^{2}}^{2}\big)
\end{align*}
and (when $\ell=4$)
\begin{align*}
 & {\textstyle \int}\phi_{M_{2}}''|\partial_{y}\epsilon_{4}|^{2}+{\textstyle \int}\tfrac{\phi_{M_{2}}'}{y}\cdot\tfrac{\tilde V}{y^{2}}|\epsilon_{4}|^{2}\\
 & ={\textstyle \int}|\partial_{y}A_{Q}A_{Q}^{\ast}A_{Q}(\phi_{M_{2}}''\epsilon_{1})|^{2}+{\textstyle \int}\tfrac{\tilde V}{y^{2}}|A_{Q}A_{Q}^{\ast}A_{Q}(\phi_{M_{2}}''\epsilon_{1})|^{2}+O\big(\|\mathbf{1}_{y\sim M_{2}}|\epsilon_{1}|_{-4}\|_{L^{2}}^{2}\big)\\
 & ={\textstyle \int}|A_{Q}^{\ast}A_{Q}A_{Q}^{\ast}A_{Q}(\phi_{M_{2}}''\epsilon_{1})|^{2}+O\big(\|\mathbf{1}_{y\sim M_{2}}|\epsilon_{1}|_{-4}\|_{L^{2}}^{2}\big).
\end{align*}
We now write $\epsilon_{1}=L_{Q}\epsilon$ and commute $\phi_{M_{2}}''$
and $L_{Q}$. Recall $L_{Q}=\D_{+}^{(Q)}+QB_{Q}$. Because of the
local nature of $\D_{+}^{(Q)}$, we can easily commute $\phi_{M_{2}}''$
and $\D_{+}^{(Q)}$ with an error localized in $\{y\sim M_{2}\}$.
Thus 
\[
\||[\phi_{M_{2}}'',\D_{+}^{(Q)}]\epsilon|_{-\ell}\|_{L^{2}}\lesssim\|\mathbf{1}_{y\sim M_{2}}|\epsilon|_{-(\ell+1)}\|_{L^{2}}.
\]
When we commute $\phi_{M_{2}}''$ and $QB_{Q}$, the error is not
necessarily supported in $\{y\sim M_{2}\}$. However, $[\phi_{M_{2}}'',QB_{Q}]\epsilon$
only uses the information of $\epsilon$ in $\{y\sim M_{2}\}$. Combining
this observation with either Lemma \ref{lem:Contribution-QBQ} or
\ref{lem:Contribution-QBQ-H5}, we have 
\[
\||[\phi_{M_{2}}'',QB_{Q}]\epsilon|_{-\ell}\|_{L^{2}}\lesssim\|\mathbf{1}_{y\sim M_{2}}|\epsilon|_{-(\ell+1)}\|_{L^{2}}.
\]
Thus we have 
\[
(A_{Q}A_{Q}^{\ast}\epsilon_{2},\Lambda_{M_{2}}\epsilon_{2})_{r}\geq\|A_{Q}^{\ast}A_{Q}L_{Q}(\phi_{M_{2}}''\epsilon)\|_{L^{2}}^{2}-O\big(\|\mathbf{1}_{y\sim M_{2}}|\epsilon|_{-3}\|_{L^{2}}^{2}\big)
\]
and when $m\geq3$ 
\begin{align*}
 & (A_{Q}A_{Q}^{\ast}\epsilon_{4},\Lambda_{M_{2}}\epsilon_{4})_{r}\\
 & \geq\|A_{Q}^{\ast}A_{Q}A_{Q}^{\ast}A_{Q}L_{Q}(\phi_{M_{2}}''\epsilon)\|_{L^{2}}^{2}-O\big(\|\mathbf{1}_{y\sim M_{2}}|\epsilon|_{-5}\|_{L^{2}}^{2}\big).
\end{align*}

To complete the proof, it suffices to derive a lower bound for the
adapted derivatives of $\phi_{M_{2}}''\epsilon$. The function $\phi_{M_{2}}''\epsilon$
does not necessarily satisfy the orthogonality conditions \eqref{eq:ortho-cond},
but it \emph{almost} satisfies them in the sense that 
\begin{align*}
|(\phi_{M_{2}}''\epsilon,\mathcal{Z}_{k})_{r}| & =|(\epsilon,(\phi_{M_{2}}''-1)\mathcal{Z}_{k})_{r}|\\
 & \lesssim\begin{cases}
\|\epsilon\|_{\dot{\mathcal{H}}_{m}^{3}}\|\langle y\rangle^{3}\mathbf{1}_{y\geq M_{2}}\mathcal{Z}_{k}\|_{L^{2}}=o_{M_{2}\to\infty}(1)\|\epsilon\|_{\dot{\mathcal{H}}_{m}^{3}} & \text{if }m\geq1,\\
\|\epsilon\|_{\dot{\mathcal{H}}_{m}^{5}}\|\langle y\rangle^{5}\mathbf{1}_{y\geq M_{2}}\mathcal{Z}_{k}\|_{L^{2}}=o_{M_{2}\to\infty}(1)\|\epsilon\|_{\dot{\mathcal{H}}_{m}^{5}} & \text{if }m\geq3,
\end{cases}
\end{align*}
for each $k\in\{1,2,3,4\}$. Thus we can apply the coercivity estimates
(Lemma \ref{lem:Coercivity-AAL-Section2} or \ref{lem:Coercivity-AAAAL-Section2})
with an additional error either $o_{M_{2}\to\infty}(1)\|\epsilon\|_{\dot{\mathcal{H}}_{m}^{3}}$
or $o_{M_{2}\to\infty}(1)\|\epsilon\|_{\dot{\mathcal{H}}_{m}^{5}}$.
This finishes the proof.
\end{proof}
In view of \eqref{eq:LocalizedRepulsivityH3} and \eqref{eq:LocalizedRepulsivityH5},
$\partial_{s}\{b(\epsilon_{2},i\Lambda_{M_{2}}\epsilon_{2})_{r}\}$
will have an error of the form 
\[
b\cdot O(\|\mathbf{1}_{y\sim M_{2}}|\epsilon|_{-3}\|_{L^{2}}^{2}).
\]
If one crudely discards the localization $\mathbf{1}_{y\sim M_{2}}$,
this error is on the borderline of the acceptable errors. In order
to make use of this localization, we use an averaging argument over
$M_{2}$, as in \cite{KimKwon2019arXiv}.
\begin{lem}[Local virial control]
\label{lem:LocalVirialControl}We have 
\begin{align}
 & \Big|\frac{b}{\log M_{2}}\int_{M_{2}}^{M_{2}^{2}}(\epsilon_{2},-i\Lambda_{M_{2}'}\epsilon_{2})_{r}\frac{dM_{2}'}{M_{2}'}\Big|\lesssim bM_{2}^{C}\|\epsilon_{3}\|_{L^{2}}^{2},\label{eq:LocalVirialBdryH3}\\
 & \Big(\partial_{s}-6\frac{\lambda_{s}}{\lambda}\Big)\Big[\frac{b}{\log M_{2}}\int_{M_{2}}^{M_{2}^{2}}(\epsilon_{2},-i\Lambda_{M_{2}'}\epsilon_{2})_{r}\frac{dM_{2}'}{M_{2}'}\Big]\label{eq:LocalVirialDerivH3}\\
 & \geq b\Big(c_{M}\|\epsilon\|_{\dot{\mathcal{H}}_{m,\leq M_{2}}^{3}}^{2}-o_{M_{2}\to\infty}(1)\|\epsilon\|_{\dot{\mathcal{H}}_{m}^{3}}^{2}-M_{2}^{C}\|\epsilon_{3}\|_{L^{2}}(b\|\epsilon\|_{\dot{\mathcal{H}}_{m}^{3}}+b^{\frac{7}{2}})\Big).\nonumber 
\end{align}
If $m\geq3$, then
\begin{align}
 & \Big|\frac{b}{\log M_{2}}\int_{M_{2}}^{M_{2}^{2}}(\epsilon_{4},-i\Lambda_{M_{2}'}\epsilon_{4})_{r}\frac{dM_{2}'}{M_{2}'}\Big|\lesssim bM_{2}^{C}\|\epsilon_{5}\|_{L^{2}}^{2},\label{eq:LocalVirialBdryH5}\\
 & \Big(\partial_{s}-10\frac{\lambda_{s}}{\lambda}\Big)\Big[\frac{b}{\log M_{2}}\int_{M_{2}}^{M_{2}^{2}}(\epsilon_{4},-i\Lambda_{M_{2}'}\epsilon_{4})_{r}\frac{dM_{2}'}{M_{2}'}\Big]\label{eq:LocalVirialDerivH5}\\
 & \geq b\Big(c_{M}\|\epsilon\|_{\dot{\mathcal{H}}_{m,\leq M_{2}}^{5}}^{2}-o_{M_{2}\to\infty}(1)\|\epsilon\|_{\dot{\mathcal{H}}_{m}^{5}}^{2}-M_{2}^{C}\|\epsilon_{5}\|_{L^{2}}(b\|\epsilon\|_{\dot{\mathcal{H}}_{m}^{5}}+b^{\frac{11}{2}})\Big).\nonumber 
\end{align}
\end{lem}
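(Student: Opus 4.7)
\medskip

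\noindent\textbf{Proof proposal.} The plan is to prove the $\dot{\mathcal H}_m^3$ statements \eqref{eq:LocalVirialBdryH3}--\eqref{eq:LocalVirialDerivH3} in detail; the $\dot{\mathcal H}_m^5$ statements \eqref{eq:LocalVirialBdryH5}--\eqref{eq:LocalVirialDerivH5} will follow by the same scheme with $\epsilon_2,\epsilon_3$ replaced by $\epsilon_4,\epsilon_5$, using Lemma~\ref{lem:LocalizedRepulsivity} in its $m\geq3$ form and the $\dot{\mathcal H}_m^5$-versions of Lemmas~\ref{lem:degenerate-linear}, \ref{lem:nonlinear}, \ref{lem:mod-est}, \ref{lem:commutator}. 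The boundary bound \eqref{eq:LocalVirialBdryH3} is essentially pointwise in $M_2'$: since $\phi'_{M_2'}$ is supported in $\{y\leq 2M_2'\}$ with $|\phi'_{M_2'}|\lesssim M_2'$, one computes $(\epsilon_2,-i\Lambda_{M_2'}\epsilon_2)_r=\int\phi'_{M_2'}\Im(\overline{\epsilon_2}\,\partial_y\epsilon_2)$ and estimates by
\[
|(\epsilon_2,-i\Lambda_{M_2'}\epsilon_2)_r|\lesssim (M_2')^2\|y^{-1}\epsilon_2\|_{L^2}\|\partial_y\epsilon_2\|_{L^2}\lesssim (M_2')^2\|\epsilon_3\|_{L^2}^2,
\]
invoking the positivity \eqref{eq:non-M-dep-Hardy} $\|y^{-1}\epsilon_2\|_{L^2}+\|\partial_y\epsilon_2\|_{L^2}\sim\|A_Q^\ast\epsilon_2\|_{L^2}=\|\epsilon_3\|_{L^2}$. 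Averaging in $M_2'\in[M_2,M_2^2]$ loses at most $M_2^C$, yielding \eqref{eq:LocalVirialBdryH3}.

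For \eqref{eq:LocalVirialDerivH3}, I would substitute the $\epsilon_2$-equation \eqref{eq:e2-eq} into the symmetric identity $\partial_s(\epsilon_2,-i\Lambda_{M_2'}\epsilon_2)_r=2(\partial_s\epsilon_2,-i\Lambda_{M_2'}\epsilon_2)_r$. The linear term $-iA_Q A_Q^\ast\epsilon_2$ produces the quantity $2(A_Q A_Q^\ast\epsilon_2,\Lambda_{M_2'}\epsilon_2)_r$, for which Lemma~\ref{lem:LocalizedRepulsivity} supplies the lower bound
\[
c_M\|\epsilon\|_{\dot{\mathcal H}_{m,\leq M_2'}^3}^2-O(\|\mathbf 1_{y\sim M_2'}|\epsilon|_{-3}\|_{L^2}^2)-o_{M_2'\to\infty}(1)\|\epsilon\|_{\dot{\mathcal H}_m^3}^2.
\]
The scaling term $\frac{\lambda_s}{\lambda}\Lambda_{-2}\epsilon_2$ and the phase term $-\gamma_s i\epsilon_2$ contribute factors of the form $\frac{\lambda_s}{\lambda}\cdot(\text{quadratic in }\epsilon_2)$ and $\gamma_s\cdot(\text{quadratic in }\epsilon_2)$, both of which are $O(b\cdot M_2^C\|\epsilon_3\|_{L^2}^2)$ after Cauchy--Schwarz, and combine with the prefactor $b$ in front and the action of $\partial_s-6\tfrac{\lambda_s}{\lambda}$ on $b$ (using $b_s\approx -b^2$ and $\tfrac{\lambda_s}{\lambda}\approx -b$ from Lemma~\ref{lem:mod-est}) to give acceptable errors. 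For the remaining terms on the right-hand side of \eqref{eq:e2-eq}, I pair each with $-i\Lambda_{M_2'}\epsilon_2$ and use Cauchy--Schwarz in the form $|(f,-i\Lambda_{M_2'}\epsilon_2)_r|\lesssim M_2^C\|f\|_{L^2}\|\epsilon_3\|_{L^2}$: for the modulation term apply Lemma~\ref{lem:mod-est} together with \eqref{eq:gen-null-rel}; for the commutators apply Lemma~\ref{lem:commutator} with the size $|\tfrac{\lambda_s}{\lambda}|,|\gamma_s|\lesssim b$; for $\tilde R_{\mathrm{L\text{-}L}}$ apply \eqref{eq:LL-E3-est}; for $\tilde R_{\mathrm{NL}}$ apply \eqref{eq:NL-E3-est} and the bootstrap bounds; for $\Psi$ apply \eqref{eq:Radiation-Hdot3-est}. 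Each such contribution is of size $b\|\epsilon_3\|_{L^2}\cdot M_2^C(b\|\epsilon\|_{\dot{\mathcal H}_m^3}+b^{7/2})$ after absorbing $o_{M\to\infty}(1)$-factors into $\|\epsilon\|_{\dot{\mathcal H}_m^3}$, matching the right-hand side of \eqref{eq:LocalVirialDerivH3}.

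The main obstacle, and the reason the $M_2'$-averaging is introduced, is the borderline error $O(\|\mathbf 1_{y\sim M_2'}|\epsilon|_{-3}\|_{L^2}^2)$ produced by the truncation in Lemma~\ref{lem:LocalizedRepulsivity}: dropping the indicator gives a full $\|\epsilon\|_{\dot{\mathcal H}_m^3}^2$-bound, which cannot be absorbed by the local coercivity $\|\epsilon\|_{\dot{\mathcal H}_{m,\leq M_2}^3}^2$. The key observation is that for any fixed $y$ the measure of $\{M_2'\in[M_2,M_2^2]:y\sim M_2'\}$ in $dM_2'/M_2'$ is at most $\log 2$, so Fubini gives
\[
\frac{1}{\log M_2}\int_{M_2}^{M_2^2}\|\mathbf 1_{y\sim M_2'}|\epsilon|_{-3}\|_{L^2}^2\,\frac{dM_2'}{M_2'}\lesssim\frac{1}{\log M_2}\|\mathbf 1_{y\geq M_2}|\epsilon|_{-3}\|_{L^2}^2=o_{M_2\to\infty}(1)\|\epsilon\|_{\dot{\mathcal H}_m^3}^2,
\]
which is precisely the acceptable error in \eqref{eq:LocalVirialDerivH3}. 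A further technical point is that the coercive part $\|\epsilon\|_{\dot{\mathcal H}_{m,\leq M_2'}^3}^2$ is monotone in $M_2'$, so replacing the $M_2'$-average by its value at the left endpoint $M_2$ from below preserves the lower bound and produces the claimed constant $c_M$; the constant depending on $M$ (through $\mathcal Z_k$) arises from the coercivity Lemma~\ref{lem:Coercivity-AAL-Section2}, which is why we keep $c_M$ rather than a universal constant. Combining these ingredients and grouping the scaling-corrected derivative $(\partial_s-6\tfrac{\lambda_s}{\lambda})$ yields \eqref{eq:LocalVirialDerivH3}.
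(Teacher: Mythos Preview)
Your proposal is correct and follows essentially the same route as the paper: establish the unaveraged virial bound and lower bound via Lemma~\ref{lem:LocalizedRepulsivity}, pair the right-hand side of \eqref{eq:e2-eq} with $-i\Lambda_{M_{2}'}\epsilon_{2}$ using the $y^{-1}$-weighted $L^{2}$ estimates from Lemmas~\ref{lem:degenerate-linear}--\ref{lem:commutator}, and then invoke the Fubini averaging in $M_{2}'$ to convert the borderline $\|\mathbf{1}_{y\sim M_{2}'}|\epsilon|_{-3}\|_{L^{2}}^{2}$ error into $o_{M_{2}\to\infty}(1)\|\epsilon\|_{\dot{\mathcal{H}}_{m}^{3}}^{2}$. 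One minor remark: the contribution $(-\gamma_{s}i\epsilon_{2},-i\Lambda_{M_{2}'}\epsilon_{2})_{r}=-\gamma_{s}(\epsilon_{2},\Lambda_{M_{2}'}\epsilon_{2})_{r}$ actually vanishes by the skew-symmetry of $\Lambda_{M_{2}'}$, so no estimate is needed there.
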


\begin{proof}
Here we only show how we can get \eqref{eq:LocalVirialBdryH3} and
\eqref{eq:LocalVirialDerivH3} from the computations in Lemma \ref{lem:E3-Identity},
\eqref{eq:LocalizedRepulsivityH3}, and the averaging argument. The
proof of \eqref{eq:LocalVirialBdryH5} and \eqref{eq:LocalVirialDerivH5}
would follow from the same argument using the computations in Lemma
\ref{lem:E5-identity} and \eqref{eq:LocalizedRepulsivityH5} instead
of Lemma \ref{lem:E3-Identity} and \eqref{eq:LocalizedRepulsivityH3}.

We claim the unaveraged version (and without $b$-factor) of the estimates
\begin{align*}
|(\epsilon_{2},-i\Lambda_{M_{2}}\epsilon_{2})_{r}| & \lesssim M_{2}^{2}\|\epsilon_{3}\|_{L^{2}}^{2},\\
\Big(\partial_{s}-6\frac{\lambda_{s}}{\lambda}\Big)(\epsilon_{2},-i\Lambda_{M_{2}}\epsilon_{2})_{r} & \geq c_{M}\|\epsilon\|_{\dot{\mathcal{H}}_{m,\leq M_{2}}^{3}}^{2}-O\big(\|\mathbf{1}_{y\sim M_{2}}|\epsilon|_{-3}\|_{L^{2}}^{2}\big)-o_{M_{2}\to\infty}(1)\|\epsilon\|_{\dot{\mathcal{H}}_{m}^{3}}^{2}\\
 & \quad-M_{2}\|\epsilon_{3}\|_{L^{2}}\cdot O(b\|\epsilon\|_{\dot{\mathcal{H}}_{m}^{3}}+b^{\frac{7}{2}})
\end{align*}
Assuming the claim, \eqref{eq:LocalVirialBdryH3} follows from 
\[
\Big|\frac{b}{\log M_{2}}\int_{M_{2}}^{M_{2}^{2}}(\epsilon_{2},-i\Lambda_{M_{2}'}\epsilon_{2})_{r}\frac{dM_{2}'}{M_{2}'}\Big|\lesssim\frac{b}{\log M_{2}}\int_{M_{2}}^{M_{2}^{2}}(M_{2}')^{2}\|\epsilon_{3}\|_{L^{2}}^{2}\frac{dM_{2}'}{M_{2}'}\lesssim M_{2}^{C}\|\epsilon_{3}\|_{L^{2}}^{2}.
\]
Next, \eqref{eq:LocalVirialDerivH3} follows from the identities 
\[
\Big(\partial_{s}-6\frac{\lambda_{s}}{\lambda}\Big)\Big(b(\epsilon_{2},-i\Lambda_{M_{2}'}\epsilon_{2})_{r}\Big)=b\Big(\partial_{s}-6\frac{\lambda_{s}}{\lambda}\Big)(\epsilon_{2},-i\Lambda_{M_{2}'}\epsilon_{2})_{r}+b_{s}(\epsilon_{2},-i\Lambda_{M_{2}'}\epsilon_{2})_{r},
\]
applying $|b_{s}|\lesssim b^{2}$, and the \emph{averaging argument}
(using Fubini):
\begin{align*}
\frac{1}{\log M_{2}}\int_{M_{2}}^{M_{2}^{2}}\frac{dM_{2}'}{M_{2}'} & =1,\\
\frac{1}{\log M_{2}}\int_{M_{2}}^{M_{2}^{2}}\Big(\int\mathbf{1}_{y\sim M_{2}}|\epsilon|_{-3}^{2}\Big)\frac{dM_{2}'}{M_{2}'} & \lesssim\frac{1}{\log M_{2}}\int\mathbf{1}_{M_{2}\lesssim y\lesssim M_{2}^{2}}|\epsilon|_{-3}^{2}=o_{M_{2}\to\infty}(1)\|\epsilon\|_{\dot{\mathcal{H}}_{m}^{3}}^{2}.
\end{align*}

From now on, we prove the above claim. We first note 
\[
\|y\Lambda_{M_{2}}\epsilon_{2}\|_{L^{2}}+\|[\Lambda_{M_{2}},\Lambda]\epsilon_{2}\|_{L^{2}}\lesssim M_{2}^{2}\|\epsilon_{3}\|_{L^{2}},
\]
which is a consequence of the crude estimate $|y\Lambda_{M_{2}}\epsilon_{2}|+|[\Lambda_{M_{2}},\Lambda]\epsilon_{2}|\lesssim\mathbf{1}_{y\leq2M_{2}}y^{2}|\epsilon_{2}|_{-1}$
and positivity of $A_{Q}^{\ast}A_{Q}$. In particular, the boundedness
property of the local virial functional is clear from 
\[
|(\epsilon_{2},-i\Lambda_{M_{2}}\epsilon_{2})_{r}|\lesssim\|\tfrac{1}{y}\epsilon_{2}\|_{L^{2}}\|y\Lambda_{M_{2}}\epsilon_{2}\|_{L^{2}}\lesssim M_{2}^{2}\|\epsilon_{3}\|_{L^{2}}^{2}.
\]
We now turn to the monotonicity estimate. As $-i\Lambda_{M_{2}}$
is symmetric, we have 
\begin{align*}
 & \frac{1}{2}\Big(\partial_{s}-6\frac{\lambda_{s}}{\lambda}\Big)(\epsilon_{2},-i\Lambda_{M_{2}}\epsilon_{2})_{r}=(\partial_{s}\epsilon_{2},-i\Lambda_{M_{2}}\epsilon_{2})_{r}-3\frac{\lambda_{s}}{\lambda}(\epsilon_{2},-i\Lambda_{M_{2}}\epsilon_{2})_{r}\\
 & =(A_{Q}^{\ast}A_{Q}\epsilon_{2},\Lambda_{M_{2}}\epsilon_{2})_{r}+\frac{\lambda_{s}}{\lambda}(y\partial_{y}\epsilon_{2},-i\Lambda_{M_{2}}\epsilon_{2})_{r}+(\text{RHS of }\eqref{eq:e2-eq},-i\Lambda_{M_{2}}\epsilon_{2})_{r}
\end{align*}
As seen in Lemma \ref{lem:LocalizedRepulsivity}, the monotonicity
comes from the first term with acceptable errors. We now show that
the remaining terms can be treated as errors. First, from the estimate
\begin{align*}
(y\partial_{y}\epsilon_{2},-i\Lambda_{M_{2}}\epsilon_{2})_{r} & =(\Lambda\epsilon_{2},-i\Lambda_{M_{2}}\epsilon_{2})_{r}-(\epsilon_{2},-i\Lambda_{M_{2}}\epsilon_{2})_{r}\\
 & =\tfrac{1}{2}([-i\Lambda_{M_{2}},\Lambda]\epsilon_{2},\epsilon_{2})_{r}-(\epsilon_{2},-i\Lambda_{M_{2}}\epsilon_{2})_{r}=O(M_{2}^{2}\|\epsilon_{3}\|_{L^{2}}^{2}),
\end{align*}
we have 
\[
\Big|\frac{\lambda_{s}}{\lambda}(y\partial_{y}\epsilon_{2},-i\Lambda_{M_{2}}\epsilon_{2})_{r}\Big|\lesssim b(M_{2})^{2}\|\epsilon_{3}\|_{L^{2}}^{2}.
\]
Next, proceeding as in the proof of Lemma \ref{lem:E3-Identity},
we have 
\[
\|y^{-1}(\text{RHS of }\eqref{eq:e2-eq})\|_{L^{2}}\lesssim b\|\epsilon\|_{\dot{\mathcal{H}}_{m}^{3}}+b^{\frac{7}{2}}.
\]
Thus 
\begin{align*}
|(\text{RHS of }\eqref{eq:e2-eq},-i\Lambda_{M_{2}}\epsilon_{2})_{r}| & \lesssim\|y^{-1}(\text{RHS of }\eqref{eq:e2-eq})\|_{L^{2}}\cdot M_{2}\|\epsilon_{3}\|_{L^{2}}\\
 & \lesssim M_{2}\|\epsilon_{3}\|_{L^{2}}(b\|\epsilon\|_{\dot{\mathcal{H}}_{m}^{3}}+b^{\frac{7}{2}}).
\end{align*}
Summing up the above estimates, we obtain the monotonicity.
\end{proof}
As explained above, the lower bounds 
\[
c_{M}b\|\epsilon\|_{\dot{\mathcal{H}}_{m,\leq M_{2}}^{3}}^{2}\quad\text{and}\quad c_{M}b\|\epsilon\|_{\dot{\mathcal{H}}_{m,\leq M_{2}}^{5}}^{2}
\]
from the local virial controls dominate the dangerous contributions
\[
O(b\|\epsilon_{3}\|_{L^{2}}\|\epsilon\|_{\dot{\mathcal{H}}_{m,\leq M_{2}}^{3}})\quad\text{and}\quad O(b\|\epsilon_{5}\|_{L^{2}}\|\epsilon\|_{\dot{\mathcal{H}}_{m,\leq M_{2}}^{5}})
\]
of the preliminary energy estimates \eqref{eq:EnergyIdentityHdot3}
and \eqref{eq:EnergyIdentityHdot5}. For this purpose, let us define
the \emph{modified energies} by 
\begin{align*}
\mathcal{F}_{3} & \coloneqq\|\epsilon_{3}\|_{L^{2}}^{2}-M_{1}\frac{b}{\log M_{2}}\int_{M_{2}}^{M_{2}^{2}}(\epsilon_{2},-i\Lambda_{M_{2}'}\epsilon_{2})_{r}\frac{dM_{2}'}{M_{2}'},\\
\mathcal{F}_{5} & \coloneqq\|\epsilon_{5}\|_{L^{2}}^{2}-M_{1}\frac{b}{\log M_{2}}\int_{M_{2}}^{M_{2}^{2}}(\epsilon_{4},-i\Lambda_{M_{2}'}\epsilon_{4})_{r}\frac{dM_{2}'}{M_{2}'}.
\end{align*}

\begin{prop}[Modified energy inequality for $\dot{\mathcal{H}}_{m}^{3}$ and $\dot{\mathcal{H}}_{m}^{5}$]
\label{prop:ModifiedEnergyInequalityH3}There exists some universal
constant $C$ such that the following estimates hold.
\begin{enumerate}
\item (Modified energy inequality for $\dot{\mathcal{H}}_{m}^{3}$)
\begin{align}
|\mathcal{F}_{3}-\|\epsilon_{3}\|_{L^{2}}^{2}| & \leq\frac{1}{100}\|\epsilon_{3}\|_{L^{2}}^{2},\label{eq:ModifiedEnergyBdryH3}\\
\Big(\partial_{s}-6\frac{\lambda_{s}}{\lambda}\Big)\mathcal{F}_{3} & \leq b\Big(\frac{1}{100}\|\epsilon_{3}\|_{L^{2}}^{2}+Cb^{5}\Big).\label{eq:ModifiedEnergyDerivH3}
\end{align}
\item (Energy identity for $\dot{H}_{m}^{3}$ when $m\geq3$)
\begin{equation}
\Big|\Big(\partial_{s}-6\frac{\lambda_{s}}{\lambda}\Big)\|\epsilon_{3}\|_{L^{2}}^{2}\Big|\leq Cb\|\epsilon_{3}\|_{L^{2}}\cdot b^{\frac{7}{2}}.\label{eq:EnergyIdentityH3-low-Sobolev}
\end{equation}
\item (Modified energy inequality for $\dot{\mathcal{H}}_{m}^{5}$ when
$m\geq3$)
\begin{align}
|\mathcal{F}_{5}-\|\epsilon_{5}\|_{L^{2}}^{2}| & \leq\frac{1}{100}\|\epsilon_{5}\|_{L^{2}}^{2},\label{eq:ModifiedEnergyBdryH5}\\
\Big(\partial_{s}-10\frac{\lambda_{s}}{\lambda}\Big)\mathcal{F}_{5} & \leq b\Big(\frac{1}{100}\|\epsilon_{5}\|_{L^{2}}^{2}+Cb^{9}\Big).\label{eq:ModifiedEnergyDerivH5}
\end{align}
\end{enumerate}
\end{prop}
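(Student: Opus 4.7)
\smallskip

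\noindent\textbf{Proof proposal.} My plan is to combine the raw energy identities from Lemmas \ref{lem:E3-Identity} and \ref{lem:E5-identity} with the localized virial monotonicity of Lemma \ref{lem:LocalVirialControl}, using the parameter hierarchy $1\ll K\ll M\ll M_1\ll M_2\ll (b^{\ast})^{-1}$ (Remark \ref{rem:ParameterDependence}) to absorb dangerous cross terms via Young's inequality. The boundary bound \eqref{eq:ModifiedEnergyBdryH3} is immediate from \eqref{eq:LocalVirialBdryH3}: the virial correction term is bounded by $M_1 M_2^C b\,\|\epsilon_3\|_{L^2}^2$, and since $b\leq b^{\ast}$ with $b^{\ast}$ chosen sufficiently small relative to $M_1,M_2$, this is strictly less than $\tfrac{1}{100}\|\epsilon_3\|_{L^2}^2$. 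The same argument gives \eqref{eq:ModifiedEnergyBdryH5}.

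\smallskip

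For the derivative estimate \eqref{eq:ModifiedEnergyDerivH3}, I combine \eqref{eq:EnergyIdentityHdot3} with $M_1$ times \eqref{eq:LocalVirialDerivH3}. Observing that the $\mathcal{F}_3$-definition carries a $-M_1$ in front of the virial term, the outcome is
\[
(\partial_s-6\tfrac{\lambda_s}{\lambda})\mathcal{F}_3\leq Cb\|\epsilon_3\|_{L^2}\!\left(\text{errors from Lem.~\ref{lem:E3-Identity}}\right)-M_1 c_M b\,\|\epsilon\|_{\dot{\mathcal{H}}_{m,\leq M_2}^3}^2+M_1 b\cdot o_{M_2\to\infty}(1)\|\epsilon\|_{\dot{\mathcal{H}}_m^3}^2+\ldots
\]
The only term from Lemma \ref{lem:E3-Identity} not already small is the cross term $Cb\|\epsilon_3\|_{L^2}\|\epsilon\|_{\dot{\mathcal{H}}_{m,\leq M_2}^3}$, which I absorb by Young's inequality into $\tfrac{C^2}{M_1 c_M}b\|\epsilon_3\|_{L^2}^2+\tfrac{M_1c_M}{2}b\|\epsilon\|_{\dot{\mathcal{H}}_{m,\leq M_2}^3}^2$; the second piece is eaten by half of the negative $-M_1 c_M b\|\epsilon\|_{\dot{\mathcal{H}}_{m,\leq M_2}^3}^2$ contribution from the virial, and the first piece is $\leq \tfrac{b}{400}\|\epsilon_3\|_{L^2}^2$ once $M_1$ is large enough relative to $c_M$. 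The remaining error contributions (terms with $o_{M\to\infty}(1)$ or $o_{M_2\to\infty}(1)$ prefactors, and the $M_1 M_2^C b^2\|\epsilon_3\|_{L^2}\|\epsilon\|_{\dot{\mathcal{H}}_m^3}$ from the last line of \eqref{eq:LocalVirialDerivH3}) are reduced to multiples of $b\|\epsilon_3\|_{L^2}^2$ using the coercivity bound $\|\epsilon\|_{\dot{\mathcal{H}}_m^3}\lesssim \|\epsilon_3\|_{L^2}$ of Lemma \ref{lem:Coercivity-AAL-Section2} (valid since $\epsilon\in\mathcal{Z}^{\perp}$), followed by a further application of the parameter hierarchy to make their combined prefactor $\leq \tfrac{1}{100}$. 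Finally, the pure forcing $b^{5/2}$ and $b^{7/2}$ terms are handled by Young's inequality with $b^5$: e.g.\ $b\|\epsilon_3\|_{L^2}\cdot b^{5/2}\leq \tfrac{b}{200}\|\epsilon_3\|_{L^2}^2+Cb^6$, which fits inside $b\cdot Cb^5$.

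\smallskip

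Part (2) is nothing more than a direct quotation of \eqref{eq:EnergyIdentityHdot3-m-geq-3}, since when $m\geq 3$ the $H_m^5$-bootstrap gives a stronger $\|\epsilon\|_{\dot{\mathcal{H}}_m^5}\lesssim b^{9/2}$ control on the input of Lemma \ref{lem:E3-Identity}, so the right-hand side is already $\lesssim b\|\epsilon_3\|_{L^2}\cdot b^{7/2}$ without any virial correction. Part (3) repeats the argument of part (1) verbatim, replacing Lemma \ref{lem:E3-Identity} by Lemma \ref{lem:E5-identity}, the estimates \eqref{eq:LocalVirialBdryH3}--\eqref{eq:LocalVirialDerivH3} by \eqref{eq:LocalVirialBdryH5}--\eqref{eq:LocalVirialDerivH5}, and the coercivity of Lemma \ref{lem:Coercivity-AAL-Section2} by that of Lemma \ref{lem:Coercivity-AAAAL-Section2}; the only structural change is that the forcing now sits at the $b^{9/2}$-level, so Young's inequality produces $Cb^{10}=b\cdot Cb^9$ as required. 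The main obstacle is purely bookkeeping: one must verify that each $o_{M\to\infty}(1)$ or $o_{M_2\to\infty}(1)$ factor remains small even after being multiplied by $M_1$ or $M_2^C$, which is exactly why the parameter hierarchy $K\ll M\ll M_1\ll M_2\ll (b^{\ast})^{-1}$ is chosen in this order.
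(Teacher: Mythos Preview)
Your proposal is correct and follows essentially the same approach as the paper: combine the raw energy identity (Lemma~\ref{lem:E3-Identity}) with $-M_1$ times the virial lower bound (Lemma~\ref{lem:LocalVirialControl}), use Young's inequality so that the dangerous cross term $Cb\|\epsilon_3\|_{L^2}\|\epsilon\|_{\dot{\mathcal{H}}_{m,\leq M_2}^3}$ produces a piece absorbed by the negative $-M_1 c_M b\|\epsilon\|_{\dot{\mathcal{H}}_{m,\leq M_2}^3}^2$, and then invoke the coercivity Lemma~\ref{lem:Coercivity-AAL-Section2} together with the parameter hierarchy to handle the remaining $o_{M\to\infty}(1)$, $o_{M_2\to\infty}(1)$, and $M_2^C b$ errors. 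Your treatment of parts~(2) and~(3) also matches the paper's, which simply restates \eqref{eq:EnergyIdentityHdot3-m-geq-3} and repeats the argument with the $\dot{\mathcal{H}}_m^5$-versions of the inputs.
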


\begin{rem}
The number $\frac{1}{100}$ can be replaced by any small number.
\end{rem}

\begin{rem}
In the proof, we will see why we require $M\ll M_{1}\ll M_{2}$ for
the large parameters $M$, $M_{1}$, $M_{2}$. (Remark \ref{rem:ParameterDependence})
\end{rem}

\begin{proof}
(1) \eqref{eq:ModifiedEnergyBdryH3} follows from $M_{2}^{C}b\ll1$.
We now show \eqref{eq:ModifiedEnergyDerivH3}. Using \eqref{eq:EnergyIdentityHdot3},
\eqref{eq:LocalVirialDerivH3}, and parameter dependence (Remark \ref{rem:ParameterDependence}),
we have 
\begin{align*}
\Big(\partial_{s}-6\frac{\lambda_{s}}{\lambda}\Big)\mathcal{F}_{3} & \leq b\Big(C-M_{1}c_{M}\Big)\|\epsilon\|_{\dot{\mathcal{H}}_{m,\leq M_{2}}^{3}}^{2}\\
 & \quad+b\Big((\frac{1}{200}+o_{M\to\infty}(1))\|\epsilon_{3}\|_{L^{2}}^{2}+M_{1}o_{M_{2}\to\infty}(1)\|\epsilon\|_{\dot{\mathcal{H}}_{m}^{3}}^{2}+Cb^{\frac{5}{2}}\Big),
\end{align*}
where $C$ is some universal constant. Notice that the first term
of the RHS becomes \emph{negative} due to $M\ll M_{1}$. The proof
is now finished by applying the coercivity estimate (Lemma \ref{lem:Coercivity-AAL-Section2})
and parameter dependence (Remark \ref{rem:ParameterDependence}) for
the remaining terms.

(2) This is merely a restatement of \eqref{eq:EnergyIdentityHdot3-m-geq-3}.

(3) This follows as the proof of (1) using \eqref{eq:EnergyIdentityHdot5},
\eqref{eq:LocalVirialDerivH5}, and Lemma \ref{lem:Coercivity-AAAAL-Section2}
instead of their $\dot{\mathcal{H}}_{m}^{3}$-versions. We omit the
proof.
\end{proof}

\subsection{\label{subsec:EnergyEstimateL2}Energy estimate for $L^{2}$}

In this subsection, we consider the time-variation of $\|\epsilon\|_{L^{2}}^{2}$.
To close the bootstrap hypothesis for $\|\epsilon\|_{L^{2}}$, we
use the energy method for the $\epsilon$-equation. Thanks to the
mass conservation, one can simply close the bootstrap for $\|\epsilon\|_{L^{2}}$.
But here we provide the proof not relying on the mass conservation.
There are two reasons for this. First, in Section \ref{sec:Lipschitz-blow-up-manifold},
we will need $L^{2}$-estimates of the difference of $\epsilon$'s
(say $\delta\epsilon=\epsilon-\epsilon'$), where the conservation
laws cannot be applied. Second, the argument in this subsection will
in fact estimate $\partial_{t}\epsilon^{\sharp}$ (i.e. the time-variation
of $\epsilon$ in the original scalings $(t,r)$). This will be used
in Section \ref{subsec:Sharp-description}. Henceforth, we start with
measuring the time-variation of $\|\epsilon\|_{L^{2}}$.

Recall the equation \eqref{eq:e-eq-cor} of $\epsilon$. In contrast
to the equations for $\epsilon_{1},\dots,\epsilon_{5}$, the linear
part $i\mathcal{L}_{Q}\epsilon$ is \emph{not} anti-symmetric due
to $[\mathcal{L}_{Q},i]\neq0$. We will merely replace the linear
part of $\epsilon$-equation by $-i\Delta_{m}\epsilon$, where $\Delta_{m}=\partial_{yy}+\tfrac{1}{y}\partial_{y}-\tfrac{m^{2}}{y^{2}}$
is the Laplacian acting on $m$-equivariant functions. The resulting
error $i(\mathcal{L}_{Q}+\Delta_{m})\epsilon$ is linear in $\epsilon$
so it should be worse than $R_{\mathrm{L-L}}$ or $R_{\mathrm{NL}}$.
However, we have a good \emph{higher Sobolev control} so we can roughly
estimate this by $\|\epsilon\|_{\dot{H}_{m}^{2}}\lesssim b^{\frac{7}{4}}$
(by interpolation). Since $b^{\frac{7}{4}}\ll b$, this is safe.

In this regard, we will look at a rougher version of the $\epsilon$-equation:
\begin{equation}
(\partial_{s}-\frac{\lambda_{s}}{\lambda}\Lambda+\gamma_{s}i)\epsilon-i\Delta_{m}\epsilon=\mathbf{Mod}\cdot\mathbf{v}-i[\mathcal{N}(P+\epsilon)-\mathcal{N}(P)]-i\Psi.\label{eq:e-equation-L2}
\end{equation}
We estimate the RHS of \eqref{eq:e-equation-L2}.
\begin{lem}[$L^{2}$-estimate for the remainder terms]
\label{lem:L2-EstimateRemainders}We have 
\begin{align}
\|\mathbf{Mod}\cdot\mathbf{v}\|_{L^{2}} & \lesssim|\log b|^{\frac{1}{2}}(\|\epsilon\|_{\dot{\mathcal{H}}_{m}^{3}}+\|\epsilon\|_{\dot{H}_{m}^{1}}^{2}),\label{eq:L2-Mod-est}\\
\|\mathcal{N}(P+\epsilon)-\mathcal{N}(P)\|_{L^{2}} & \lesssim\|\epsilon\|_{\dot{H}_{m}^{1}}^{\frac{1}{2}}\|\epsilon\|_{\dot{\mathcal{H}}_{m}^{3}}^{\frac{1}{2}}+\|\epsilon\|_{\dot{H}_{m}^{1}}^{2},\label{eq:L2-nonlinear-est}\\
\|\Psi\|_{L^{2}} & \lesssim b^{\frac{m}{2}+\frac{3}{2}}.\label{eq:L2-Radiation-est}
\end{align}
\end{lem}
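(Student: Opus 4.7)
The lemma collects three essentially independent $L^{2}$-estimates, which I attack separately. The bound on $\Psi$ is immediate from the pointwise estimate \eqref{eq:Psi-Pointwise}: since $|\Psi|^{2} \lesssim b^{m+4}\mathbf{1}_{\{b^{-1/2}\leq y \leq 2b^{-1/2}\}}$ and the annulus has area $\sim b^{-1}$, one gets $\|\Psi\|_{L^{2}}^{2} \lesssim b^{m+3}$.

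For $\|\mathbf{Mod}\cdot\mathbf{v}\|_{L^{2}}$, I first compute the $L^{2}$ size of the components of $\mathbf{v} = (\Lambda P, -iP, -\partial_{b}P, -\partial_{\eta}P)^{t}$. From Proposition \ref{prop:modified-profile}, $\|\Lambda P\|_{L^{2}}, \|P\|_{L^{2}} \lesssim 1$, while $|\partial_{b}P|, |\partial_{\eta}P| \lesssim \mathbf{1}_{y\leq 2b^{-1/2}}y^{2}Q$, and integrating $y^{2}Q$ (which behaves like $y^{-m+2}$ at infinity) with cutoff at $y \sim b^{-1/2}$ yields $\|\partial_{b}P\|_{L^{2}}+\|\partial_{\eta}P\|_{L^{2}} \lesssim |\log b|^{1/2}$ (the logarithm being saturated only at $m = 1$). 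Then decompose $\mathbf{Mod} = \tilde{\mathbf{Mod}} - (0,\theta_{\mathrm{L-L}}+\theta_{\mathrm{NL}},0,0)$, bound $|\tilde{\mathbf{Mod}}|$ via Lemma \ref{lem:mod-est} and the phase corrections via \eqref{eq:LL-theta-est} and \eqref{eq:NL-theta-est} (noting these latter scalars multiply $-iP$ only and therefore escape the logarithm), and invoke the bootstrap to absorb $b^{4}$-level remainders into $\|\epsilon\|_{\dot{\mathcal{H}}_{m}^{3}}$.

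The nonlinear bound is the technical core. I expand multilinearly,
\[
\mathcal{N}(P+\epsilon) - \mathcal{N}(P) = \sum_{k\in\{3,5\}}\sum_{\substack{\psi_{1},\dots,\psi_{k}\in\{P,\epsilon\}\\ \exists j:\,\psi_{j}=\epsilon}}V_{k}[\psi_{1},\dots,\psi_{k-1}]\psi_{k},
\]
and group by the number $j \geq 1$ of $\epsilon$-factors. For $j \geq 2$ I pair the Hardy-Sobolev embedding $\|\epsilon\|_{L^{\infty}} \lesssim \|\epsilon\|_{\dot{H}_{m}^{1}}$ with the $L_{Q}$-coercivity $\|P\epsilon\|_{L^{2}} \lesssim \|\epsilon\|_{\dot{H}_{m}^{1}}$ (Lemma \ref{lem:Mapping-L-Section2}); for instance $\|P\epsilon^{2}\|_{L^{2}} \leq \|P\epsilon\|_{L^{2}}\|\epsilon\|_{L^{\infty}} \lesssim \|\epsilon\|_{\dot{H}_{m}^{1}}^{2}$, with analogous bounds for the cubic/quintic pure $\epsilon$-power terms and their nonlocal $A_{\theta}, A_{0}$-variants (using the bootstrap bound on $\|\epsilon\|_{L^{2}}$ to absorb any free factor of $\epsilon$). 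For $j = 1$, each contribution has the form $V[P]\epsilon$ or $V[P,\epsilon]P$ with $|V[P]|, |V[P,\epsilon]P/P| \lesssim \langle y\rangle^{-2}$; apply the generalized Hardy $\|y^{-2}\epsilon\|_{L^{2}} \lesssim \|\epsilon\|_{\dot{H}_{m}^{2}}$ (available for $m \geq 2$) and then interpolate $\|\epsilon\|_{\dot{H}_{m}^{2}} \leq \|\epsilon\|_{\dot{H}_{m}^{1}}^{1/2}\|\epsilon\|_{\dot{H}_{m}^{3}}^{1/2} \leq \|\epsilon\|_{\dot{H}_{m}^{1}}^{1/2}\|\epsilon\|_{\dot{\mathcal{H}}_{m}^{3}}^{1/2}$.

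The main obstacle will be the linear-in-$\epsilon$ estimate when $m = 1$, where the Hardy inequality $\|y^{-2}\epsilon\|_{L^{2}} \lesssim \|\epsilon\|_{\dot{H}_{m}^{2}}$ breaks down at the origin. I resolve this by splitting the integral at $y = 1$: for $y \geq 1$, use $|V[P]| \lesssim y^{-2}$ and the single Hardy $\|y^{-1}\epsilon\|_{L^{2}} \lesssim \|\epsilon\|_{\dot{H}_{m}^{1}}$, losing one power of $y$; for $y \leq 1$, exploit the additional vanishing $|P|^{2} \sim y^{2m}$ and $y^{-2}A_{\theta}[P] \sim y^{2m}$ of the pseudoconformal pieces of $V[P]$ (only the $A_{0,3}[P]$-contribution remains bounded and nonvanishing at the origin, and is controlled via $\|\epsilon\|_{L^{\infty}}\lesssim\|\epsilon\|_{\dot H_m^1}$ together with the bootstrap smallness of $\|\epsilon\|_{L^{2}}$), and then trade the result into the desired geometric-mean form by Gagliardo-Nirenberg interpolation between $\dot{H}_{m}^{1}$ and $\dot{\mathcal{H}}_{m}^{3}$.
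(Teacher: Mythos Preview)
Your treatment of \eqref{eq:L2-Radiation-est} and \eqref{eq:L2-Mod-est} is fine and matches the paper. The issue is \eqref{eq:L2-nonlinear-est} when $m=1$: your workaround for the linear-in-$\epsilon$ terms does not reach the stated geometric mean $\|\epsilon\|_{\dot H_m^1}^{1/2}\|\epsilon\|_{\dot{\mathcal H}_m^3}^{1/2}$. On the region $y\ge 1$ you end up with $\|y^{-2}\epsilon\|_{L^2(y\ge 1)}\le \|y^{-1}\epsilon\|_{L^2}\lesssim\|\epsilon\|_{\dot H_m^1}$, and for the $A_{0,3}[P]$-piece your $L^\infty$/bootstrap argument again produces at best $\|\epsilon\|_{\dot H_m^1}$ or $\|\epsilon\|_{L^2}$. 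Neither of these is $\lesssim\|\epsilon\|_{\dot H_m^1}^{1/2}\|\epsilon\|_{\dot{\mathcal H}_m^3}^{1/2}+\|\epsilon\|_{\dot H_m^1}^{2}$, and the final ``trade by Gagliardo--Nirenberg'' cannot rescue this: interpolation gives $\|\epsilon\|_{\dot H_m^2}\lesssim\|\epsilon\|_{\dot H_m^1}^{1/2}\|\epsilon\|_{\dot H_m^3}^{1/2}$, not the reverse inequality $\|\epsilon\|_{\dot H_m^1}\lesssim\|\epsilon\|_{\dot H_m^1}^{1/2}\|\epsilon\|_{\dot{\mathcal H}_m^3}^{1/2}$ you would need. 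This matters for the application: under the bootstrap $\|\epsilon\|_{\dot H_m^1}\sim b$ while the geometric mean is $\sim b^{7/4}$, and only the latter gives the $b^{1+}$ decay required to close the $L^2$-bootstrap (Lemma~\ref{lem:L2-EnergyEstimate}).

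The paper avoids the $m=1$ case-splitting entirely. For the $V[P]\epsilon$ terms it uses the Japanese-bracket weight and Cauchy--Schwarz in $L^2$:
\[
\|V\epsilon\|_{L^2}\lesssim\|\langle y\rangle^{-2}\epsilon\|_{L^2}\le\|\langle y\rangle^{-3}\epsilon\|_{L^2}^{1/2}\|\langle y\rangle^{-1}\epsilon\|_{L^2}^{1/2}\lesssim\|\epsilon\|_{\dot{\mathcal H}_m^3}^{1/2}\|\epsilon\|_{\dot H_m^1}^{1/2},
\]
which works uniformly for all $m\ge 1$ because $\langle y\rangle^{-3}$ has no origin singularity (the $\dot{\mathcal H}_m^3$-norm controls $\|\langle y\rangle^{-3}\epsilon\|_{L^2}$ even when $m=1$). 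For the $V[P,\epsilon]P$ terms, where your sketch is also vague (the claim ``$|V[P,\epsilon]P/P|\lesssim\langle y\rangle^{-2}$'' is not a pointwise bound independent of $\epsilon$), the paper instead bounds $\|VP\|_{L^2}\lesssim\|V\|_{L^\infty}\lesssim\|P\epsilon\|_{L^\infty}+\|y^{-2}P\epsilon\|_{L^1}$ and then invokes the weighted $L^\infty$ interpolation $\||\epsilon|_{-1}\|_{L^\infty}\lesssim\|\epsilon\|_{\dot H_m^1}^{1/2}\|\epsilon\|_{\dot{\mathcal H}_m^3}^{1/2}$ from Lemma~\ref{lem:interpolation-L-infty}.
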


\begin{rem}
It is only important to have bounds of the form $b^{1+}$.
\end{rem}

\begin{proof}
The estimate \eqref{eq:L2-Mod-est} follows from 
\begin{align}
|\mathbf{Mod}| & \lesssim|\tilde{\mathbf{Mod}}|+|\theta_{\mathrm{L-L}}+\theta_{\mathrm{NL}}|\lesssim\|\epsilon\|_{\dot{\mathcal{H}}_{m}^{3}}+\|\epsilon\|_{\dot{H}_{m}^{1}}^{2},\label{eq:non-tilde-mod-est}\\
\|\mathbf{v}\|_{L^{2}} & \lesssim|\log b|^{\frac{1}{2}}.\nonumber 
\end{align}
The estimate \eqref{eq:L2-Radiation-est} follows from the pointwise
estimates of $\Psi$ \eqref{eq:Psi-Pointwise}.

It remains to show \eqref{eq:L2-nonlinear-est}. View 
\[
\mathcal{N}(P+\epsilon)-\mathcal{N}(P)=\sum_{\substack{\psi_{1},\psi_{2},\psi_{3}\in\{P,\epsilon\}\\
\#\{j:\psi_{j}=\epsilon\}\geq1
}
}\mathcal{N}_{3}(\psi_{1},\psi_{2},\psi_{3})+\sum_{\substack{\psi_{1},\dots,\psi_{5}\in\{P,\epsilon\}\\
\#\{j:\psi_{j}=\epsilon\}\geq1
}
}\mathcal{N}_{5}(\psi_{1},\dots,\psi_{5}).
\]
Next, we recall the result of \cite[Lemma 2.3]{KimKwon2019arXiv}:
the $L^{2}$-norm of $\mathcal{N}$ can be estimated by the product
of $\dot{H}_{m}^{1}$-norms of any two of its arguments and $L^{2}$-norms
for the remaining arguments. In particular, the nonlinear terms with
$\#\{j:\psi_{j}=\epsilon\}\geq2$ can be estimated by $\|\epsilon\|_{\dot{H}_{m}^{1}}^{2}$.

The argument of the previous paragraph does not apply for the terms
with $\#\{j:\psi_{j}=\epsilon\}=1$. In this case, we view $\mathcal{N}$
as a $V\psi$ form as in Section \ref{subsec:Estimates-of-remainders-H3}.
If $\psi=\epsilon$, then $V$ is either $V_{3}[P]$ or $V_{5}[P]$,
which are estimated by $|V|\lesssim\langle y\rangle^{-2}$. Therefore,
\[
\|V\epsilon\|_{L^{2}}\lesssim\|\langle y\rangle^{-3}\epsilon\|_{L^{2}}^{\frac{1}{2}}\|\langle y\rangle^{-1}\epsilon\|_{L^{2}}^{\frac{1}{2}}\lesssim\|\epsilon\|_{\dot{\mathcal{H}}_{m}^{3}}^{\frac{1}{2}}\|\epsilon\|_{\dot{H}_{m}^{1}}^{\frac{1}{2}}.
\]
If $\psi=P$, then $V$ is either $V_{3}[P,\epsilon]$, $V_{5}[P,P,P,\epsilon]$,
or $V_{5}[P,\epsilon,P,P]$, where we always find the product $\Re(P\epsilon)$
in the expression of $V$. Thus we estimate using \eqref{eq:interpolation}
\[
\|VP\|_{L^{2}}\lesssim\|V\|_{L^{\infty}}\lesssim\|P\epsilon\|_{L^{\infty}}+\|y^{-2}P\epsilon\|_{L^{1}}\lesssim\|\epsilon\|_{\dot{H}_{m}^{1}}^{\frac{1}{2}}\|\epsilon\|_{\dot{\mathcal{H}}_{m}^{3}}^{\frac{1}{2}}.
\]
This completes the proof.
\end{proof}
\begin{lem}[$L^{2}$-energy estimate]
\label{lem:L2-EnergyEstimate}We have 
\[
\partial_{s}\|\epsilon\|_{L^{2}}^{2}\lesssim b^{\frac{3}{2}}.
\]
\end{lem}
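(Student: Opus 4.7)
\medskip

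The plan is to take the real inner product of the rougher equation \eqref{eq:e-equation-L2} with $\epsilon$ and read off $\frac{1}{2}\partial_s\|\epsilon\|_{L^2}^2$ on the left. Every skew-symmetric piece of the linear part drops out: $(\Lambda\epsilon,\epsilon)_r=0$ since $\Lambda$ is anti-self-adjoint on $L^2(\R^2)$ (an integration by parts using $\nabla\cdot(x|\epsilon|^2)=2|\epsilon|^2+x\cdot\nabla|\epsilon|^2$), $(i\epsilon,\epsilon)_r=0$ trivially, and $(-i\Delta_m\epsilon,\epsilon)_r=0$ because $\Delta_m$ is self-adjoint on $H_m^2$. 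What remains is
\[
\tfrac{1}{2}\partial_s\|\epsilon\|_{L^2}^2=\bigl(\mathbf{Mod}\cdot\mathbf{v}-i[\mathcal{N}(P+\epsilon)-\mathcal{N}(P)]-i\Psi,\,\epsilon\bigr)_r,
\]
which I will estimate by Cauchy--Schwarz.

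Next I would insert the bootstrap bounds \eqref{eq:bootstrap-hyp-H3}, converted via the coercivity statements in Lemmas \ref{lem:Mapping-L-Section2} and \ref{lem:Coercivity-AAL-Section2} into $\|\epsilon\|_{\dot H_m^1}\lesssim Kb$ and $\|\epsilon\|_{\dot{\mathcal H}_m^3}\lesssim K b^{5/2}$, together with $\|\epsilon\|_{L^2}\lesssim K(b^\ast)^{1/4}$. Feeding these into Lemma \ref{lem:L2-EstimateRemainders} gives
\[
\|\mathbf{Mod}\cdot\mathbf{v}\|_{L^2}\lesssim|\log b|^{\tfrac{1}{2}}(b^{\tfrac{5}{2}}+b^{2})\lesssim b^{\tfrac{7}{4}},\qquad\|\Psi\|_{L^2}\lesssim b^{\tfrac{m}{2}+\tfrac{3}{2}}\lesssim b^{2},
\]
while \eqref{eq:L2-nonlinear-est} interpolates to
\[
\|\mathcal{N}(P+\epsilon)-\mathcal{N}(P)\|_{L^2}\lesssim\|\epsilon\|_{\dot H_m^1}^{\tfrac{1}{2}}\|\epsilon\|_{\dot{\mathcal H}_m^3}^{\tfrac{1}{2}}+\|\epsilon\|_{\dot H_m^1}^{2}\lesssim b^{\tfrac{1}{2}}\cdot b^{\tfrac{5}{4}}+b^{2}\lesssim b^{\tfrac{7}{4}}.
\]

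Combining, the $L^2$-norm of the right-hand side of \eqref{eq:e-equation-L2} is $\lesssim b^{7/4}$, so Cauchy--Schwarz yields
\[
\bigl|\partial_s\|\epsilon\|_{L^2}^2\bigr|\lesssim b^{\tfrac{7}{4}}\cdot(b^\ast)^{\tfrac{1}{4}}=b^{\tfrac{3}{2}}\cdot b^{\tfrac{1}{4}}(b^\ast)^{\tfrac{1}{4}}\le b^{\tfrac{3}{2}}(b^\ast)^{\tfrac{1}{2}}\lesssim b^{\tfrac{3}{2}},
\]
which is the asserted estimate. No step is delicate: the only points that require mild care are checking the skew-symmetry of $\Lambda$ in the real inner product $(\cdot,\cdot)_r$ and tracking that the gain $b^{1/4}(b^\ast)^{1/4}$ from combining the $L^2$-smallness of $\epsilon$ with the $b^{7/4}$-bound from the nonlinearity suffices to absorb the residual power of $b$. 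The same scheme will later be reused, with essentially no modification, for the difference estimate $\delta\epsilon=\epsilon-\epsilon'$ in Section \ref{sec:Lipschitz-blow-up-manifold} and to control $\partial_t\epsilon^\sharp$ in Section \ref{subsec:Sharp-description}.
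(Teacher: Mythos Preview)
Your proof is correct and follows essentially the same approach as the paper: take the inner product of \eqref{eq:e-equation-L2} with $\epsilon$, use the skew-symmetry of the linear pieces, apply Cauchy--Schwarz and Lemma~\ref{lem:L2-EstimateRemainders}, and substitute the bootstrap bounds. The only minor point is that to convert $\|\epsilon_1\|_{L^2}\lesssim Kb$ into $\|\epsilon\|_{\dot H_m^1}\lesssim Kb$ you need the \emph{coercivity} Lemma~\ref{lem:Coercivity-L-Section2} (not the subcoercivity Lemma~\ref{lem:Mapping-L-Section2}), and the $M$-dependent coercivity constants are absorbed exactly as the paper does via the parameter dependence $K,M\lesssim b^{0-}$.
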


\begin{proof}
Note that 
\[
\tfrac{1}{2}\partial_{s}\|\epsilon\|_{L^{2}}^{2}=(\epsilon,\text{RHS of }\eqref{eq:e-equation-L2})_{r}\lesssim\|\epsilon\|_{L^{2}}\|\text{RHS of }\eqref{eq:e-equation-L2}\|_{L^{2}}.
\]
Applying Lemma \ref{lem:L2-EstimateRemainders}, substituting the
bootstrap hypotheses, and using $K\lesssim b^{0-}$, the proof follows.
\end{proof}

\subsection{\label{subsec:ClosingBootstrap}Closing the bootstrap}

Here we finish the proof of Proposition \ref{prop:main-bootstrap}
by gathering the modulation estimates and (modified) energy estimates
for adaptive derivatives of $\epsilon$.
\begin{lem}[Consequences of modulation estimates]
We have 
\begin{align}
\int_{0}^{t}\frac{b}{\lambda^{2}}\cdot\frac{b^{5}}{\lambda^{6}}d\tau & \leq\Big(1+O((b^{\ast})^{\frac{1}{2}})\Big)\frac{b^{5}(t)}{\lambda^{6}(t)},\label{eq:closing-bootstrap-claim1}\\
\frac{\lambda^{\frac{6}{5}}(t)}{b(t)} & \leq\frac{\lambda_{0}^{\frac{6}{5}}}{b_{0}},\label{eq:closing-bootstrap-claim2}\\
\int_{0}^{t}\frac{b^{\frac{3}{2}}}{\lambda^{2}}d\tau & \leq2\Big(1+O((b^{\ast})^{\frac{1}{2}}))\Big)b_{0}^{\frac{1}{2}},\label{eq:closing-bootstrap-claim3}\\
\frac{b(t)}{\lambda(t)} & =\Big(1+O((b^{\ast})^{\frac{1}{2}})\Big)\frac{b_{0}}{\lambda_{0}}.\label{eq:closing-bootstrap-claim4}
\end{align}
\end{lem}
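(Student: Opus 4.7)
The plan is to first distill the modulation estimate (Lemma \ref{lem:mod-est}) together with the bootstrap hypotheses \eqref{eq:bootstrap-hyp-H3} and the coercivity of Lemma \ref{lem:Coercivity-AAL-Section2} into the two clean ODE relations
\begin{equation*}
\frac{\lambda_{s}}{\lambda}=-b+O(b^{5/2}),\qquad b_{s}=-b^{2}+O(b^{5/2}).
\end{equation*}
Indeed, the coercivity gives $\|\epsilon\|_{\dot{\mathcal H}_m^3}\lesssim\|\epsilon_3\|_{L^2}<Kb^{5/2}$, so the bound in Lemma \ref{lem:mod-est} together with $|\eta|<Kb^{3/2}$ (hence $\eta^{2}\lesssim b^{3}\ll b^{5/2}$) collapses to $|\tilde{\mathbf{Mod}}|\lesssim b^{5/2}$ after absorbing the parameters $M,M_1,M_2$ by smallness of $b^{\ast}$. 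Once these two ODE relations are in hand, each of the four claims reduces to a direct manipulation in the renormalized time $s$ (recall $ds/dt=1/\lambda^{2}$).

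For claim \eqref{eq:closing-bootstrap-claim4} I would simply compute
\begin{equation*}
\partial_{s}\log\frac{b}{\lambda}=\frac{b_{s}}{b}-\frac{\lambda_{s}}{\lambda}=(-b+O(b^{3/2}))-(-b+O(b^{5/2}))=O(b^{3/2}),
\end{equation*}
and for claim \eqref{eq:closing-bootstrap-claim2} the analogous computation gives
\begin{equation*}
\partial_{s}\log\frac{\lambda^{6/5}}{b}=-\tfrac{6}{5}b+b+O(b^{3/2})=-\tfrac{1}{5}b+O(b^{3/2})\le 0
\end{equation*}
for $b^{\ast}$ small, which immediately yields monotone decay of $\lambda^{6/5}/b$ in $s$, hence in $t$. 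Claim \eqref{eq:closing-bootstrap-claim3} is obtained by the change of variables $ds=-(1+O(b^{1/2}))\,db/b^{2}$ coming from $b_{s}=-b^{2}(1+O(b^{1/2}))$, so that
\begin{equation*}
\int_{0}^{t}\frac{b^{3/2}}{\lambda^{2}}\,d\tau=\int_{0}^{s(t)}b^{3/2}\,ds=\int_{b(t)}^{b_{0}}b^{-1/2}(1+O(b^{1/2}))\,db\le 2(1+O((b^{\ast})^{1/2}))b_{0}^{1/2}.
\end{equation*}
Combining \eqref{eq:closing-bootstrap-claim3} with the above bound on $\partial_{s}\log(b/\lambda)$ gives $|\log(b(t)/\lambda(t))-\log(b_{0}/\lambda_{0})|=O((b^{\ast})^{1/2})$, which is \eqref{eq:closing-bootstrap-claim4}.

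The slightly more delicate piece is \eqref{eq:closing-bootstrap-claim1}. Here I would compute directly in physical time: using $\lambda\lambda_{t}=\lambda_{s}/\lambda=-b+O(b^{5/2})$ and $b_{t}=b_{s}/\lambda^{2}=-b^{2}/\lambda^{2}+O(b^{5/2}/\lambda^{2})$,
\begin{equation*}
\frac{d}{dt}\Big(\frac{b^{5}}{\lambda^{6}}\Big)=\frac{5b^{4}b_{t}}{\lambda^{6}}-\frac{6b^{5}\lambda_{t}}{\lambda^{7}}=\frac{-5b^{6}+6b^{6}}{\lambda^{8}}+O\Big(\frac{b^{13/2}}{\lambda^{8}}\Big)=\frac{b^{6}}{\lambda^{8}}(1+O(b^{1/2})).
\end{equation*}
Integrating in $t$ from $0$ and discarding the nonnegative contribution at $t=0$ yields
\begin{equation*}
(1-O((b^{\ast})^{1/2}))\int_{0}^{t}\frac{b^{6}}{\lambda^{8}}\,d\tau\le\frac{b^{5}(t)}{\lambda^{6}(t)},
\end{equation*}
which is exactly \eqref{eq:closing-bootstrap-claim1} after rearrangement.

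The only real pitfall is bookkeeping: one must check that every error term absorbed into $O((b^{\ast})^{1/2})$ really is that small after substituting the bootstrap bounds and the coercivity, and in particular that the constants arising from $M,M_{1},M_{2},K$ are dominated by a single small power of $b^{\ast}$ thanks to the parameter hierarchy of Remark \ref{rem:ParameterDependence}. Once this bookkeeping is done, all four estimates follow from the two-line ODE system above by the manipulations sketched.
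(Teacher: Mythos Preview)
Your proof is correct and follows essentially the same approach as the paper: both reduce the modulation estimates plus bootstrap hypotheses to the clean relations $\frac{\lambda_s}{\lambda}=-b+O(b^{5/2})$ and $b_s=-b^2+O(b^{5/2})$, then prove \eqref{eq:closing-bootstrap-claim2} via monotonicity of $\log(\lambda^{6/5}/b)$, \eqref{eq:closing-bootstrap-claim3} by changing variables to $b$, and \eqref{eq:closing-bootstrap-claim4} by integrating $\partial_s\log(b/\lambda)=O(b^{3/2})$ against \eqref{eq:closing-bootstrap-claim3}. The only cosmetic difference is in \eqref{eq:closing-bootstrap-claim1}, where you compute $\frac{d}{dt}(b^5/\lambda^6)$ directly while the paper obtains the same identity via integration by parts in $\lambda_t$; the two computations are equivalent.
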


\begin{proof}
The assertions will be obtained by integrating in time using the modulation
estimates 
\[
b=-\frac{\lambda_{s}}{\lambda}+O(b^{\frac{5}{2}})\quad\text{and}\quad b_{s}=-b^{2}+O(b^{\frac{5}{2}}),
\]
which are obtained by Lemma \ref{lem:mod-est} and applying the bootstrap
hypothesis with $K\ll M$.

Now \eqref{eq:closing-bootstrap-claim1} follows from $\frac{b}{\lambda^{2}}=-\frac{\lambda_{t}}{\lambda}+O(\frac{b^{5/2}}{\lambda^{2}})$
and integration by parts: 
\begin{align*}
\int_{0}^{t}\frac{b}{\lambda^{2}}\cdot\frac{b^{5}}{\lambda^{6}}d\tau & =\frac{1}{6}\Big[\frac{b^{5}}{\lambda^{6}}\Big]_{0}^{t}-\frac{5}{6}\int_{0}^{t}\frac{b_{t}b^{4}}{\lambda^{6}}d\tau+O\Big((b^{\ast})^{\frac{3}{2}}\int_{0}^{t}\frac{b}{\lambda^{2}}\cdot\frac{b^{5}}{\lambda^{6}}d\tau\Big)\\
 & =\frac{1}{6}\Big[\frac{b^{5}}{\lambda^{6}}\Big]_{0}^{t}+\frac{5}{6}\int_{0}^{t}\frac{b}{\lambda^{2}}\cdot\frac{b^{5}}{\lambda^{6}}d\tau+O\Big((b^{\ast})^{\frac{1}{2}}\int_{0}^{t}\frac{b}{\lambda^{2}}\cdot\frac{b^{5}}{\lambda^{6}}d\tau\Big).
\end{align*}
\eqref{eq:closing-bootstrap-claim2} follows from 
\[
\partial_{s}\log\Big(\frac{\lambda^{\frac{6}{5}}}{b}\Big)=\frac{\lambda_{s}}{5\lambda}+\Big(\frac{\lambda_{s}}{\lambda}+b\Big)-\Big(\frac{b_{s}+b^{2}}{b}\Big)=-\frac{b}{5}+O(b^{\frac{3}{2}})\leq0.
\]
\eqref{eq:closing-bootstrap-claim3} follows from 
\[
\int_{0}^{t}\frac{b^{\frac{3}{2}}}{\lambda^{2}}d\tau=\int_{0}^{t}\frac{1}{b^{\frac{1}{2}}}\Big(-b_{t}+\frac{b_{s}+b^{2}}{\lambda^{2}}\Big)d\tau=-2[b^{\frac{1}{2}}]_{0}^{t}+O\Big((b^{\ast})^{\frac{1}{2}}\int_{0}^{t}\frac{b^{\frac{3}{2}}}{\lambda^{2}}d\tau\Big).
\]
\eqref{eq:closing-bootstrap-claim4} follows from integrating 
\begin{equation}
\Big|\partial_{t}\log\Big(\frac{b}{\lambda}\Big)\Big|=\frac{1}{\lambda^{2}}\Big|\Big(\frac{b_{s}+b^{2}}{b}\Big)-\Big(\frac{\lambda_{s}}{\lambda}+b\Big)\Big|\lesssim\frac{b^{\frac{3}{2}}}{\lambda^{2}}\label{eq:closing-bootstrap-claim4-1}
\end{equation}
using \eqref{eq:closing-bootstrap-claim3}.
\end{proof}
We finish the proof of Proposition \ref{prop:main-bootstrap}.
\begin{proof}[End of the proof of Proposition \ref{prop:main-bootstrap}]
We note that $b\leq b_{0}$ is clear from $b_{s}=-b^{2}+O(b^{\frac{5}{2}})$.
Thus we focus on the estimates on $\epsilon$.

We first close the $\|\epsilon_{3}\|_{L^{2}}$-bound for $H_{m}^{3}$-solutions.
By the modified energy inequality \eqref{eq:ModifiedEnergyDerivH3},
bootstrap hypothesis \eqref{eq:bootstrap-hyp-H3}, and \eqref{eq:closing-bootstrap-claim1},
we have 
\begin{align*}
\frac{\mathcal{F}_{3}(t)}{\lambda^{6}(t)} & \leq\frac{\mathcal{F}_{3}(0)}{\lambda_{0}^{6}}+\Big(\frac{K^{2}}{100}+C\Big)\int_{0}^{t}\frac{b}{\lambda^{2}}\cdot\frac{b^{5}}{\lambda^{6}}d\tau\\
 & \leq\frac{\mathcal{F}_{3}(0)}{\lambda_{0}^{6}}+\big(1+O((b^{\ast})^{\frac{1}{2}})\big)\Big(\frac{K^{2}}{100}+C\Big)\frac{b^{5}(t)}{\lambda^{6}(t)}.
\end{align*}
Applying \eqref{eq:ModifiedEnergyBdryH3} and \eqref{eq:closing-bootstrap-claim2},
we have 
\[
\frac{99}{100}\|\epsilon_{3}(t)\|_{L^{2}}^{2}\leq\frac{101}{100}\Big(\frac{b(t)}{b_{0}}\Big)^{5}\|\epsilon_{3}(0)\|_{L^{2}}^{2}+\big(1+O((b^{\ast})^{\frac{1}{2}})\big)\Big(\frac{K^{2}}{100}+C\Big)b^{5}.
\]
Using the initial bound $\|\epsilon_{3}(0)\|_{L^{2}}\leq b_{0}^{3}$,
this closes the $\|\epsilon_{3}\|_{L^{2}}$-bound for $H_{m}^{3}$-solutions,
due to $K\gg1$.

Next, we close the $\|\epsilon_{3}\|_{L^{2}}$-bound for $H_{m}^{5}$-solutions
when $m\geq3$. Here, we merely apply the energy identity \eqref{eq:EnergyIdentityH3-low-Sobolev}
and the claims \eqref{eq:closing-bootstrap-claim4} and \eqref{eq:closing-bootstrap-claim3}:
\[
\Big|\frac{\|\epsilon_{3}(t)\|_{L^{2}}^{2}}{\lambda^{6}(t)}-\frac{\|\epsilon_{3}(0)\|_{L^{2}}^{2}}{\lambda^{6}(0)}\Big|\lesssim K\int_{0}^{t}\frac{b}{\lambda^{8}}\cdot b^{6+\frac{1}{2}}d\tau\lesssim K\Big(\frac{b_{0}}{\lambda_{0}}\Big)^{6}\int_{0}^{t}\frac{b^{\frac{3}{2}}}{\lambda^{2}}d\tau\lesssim K\Big(\frac{b_{0}}{\lambda_{0}}\Big)^{6}b_{0}^{\frac{1}{2}}.
\]
Again by \eqref{eq:closing-bootstrap-claim4}, we get 
\[
\|\epsilon_{3}(t)\|_{L^{2}}^{2}\lesssim\Big(\frac{b(t)}{b_{0}}\Big)^{6}\|\epsilon_{3}(0)\|_{L^{2}}^{2}+Kb_{0}^{\frac{1}{2}}\cdot b^{6}(t).
\]
Using the initial bound $\|\epsilon_{3}(0)\|_{L^{2}}\leq b_{0}^{3}$,
this closes the $\|\epsilon_{3}\|_{L^{2}}$-bound for $H_{m}^{5}$-solutions
when $m\geq3$, due to $K\gg1$.

Next, we close the $\|\epsilon_{5}\|_{L^{2}}$-bound for $H_{m}^{5}$-solutions
when $m\geq3$. Applying the modified energy inequality \eqref{eq:ModifiedEnergyDerivH3}
and analogues of the claims \eqref{eq:closing-bootstrap-claim1} and
\eqref{eq:closing-bootstrap-claim2} (replace $\frac{b^{5}}{\lambda^{6}}$
by $\frac{b^{9}}{\lambda^{10}}$), we have 
\[
\frac{99}{100}\|\epsilon_{5}(t)\|_{L^{2}}^{2}\leq\frac{101}{100}\Big(\frac{b(t)}{b_{0}}\Big)^{9}\|\epsilon_{5}(0)\|_{L^{2}}^{2}+\big(1+O((b^{\ast})^{\frac{1}{2}})\big)\Big(\frac{K^{2}}{100}+C\Big)b^{9}.
\]
Using the initial bound $\|\epsilon_{5}(0)\|_{L^{2}}\leq b_{0}^{5}$,
this closes the $\|\epsilon_{3}\|_{L^{2}}$-bound for $H_{m}^{3}$-solutions,
due to $K\gg1$.

Next, we close the $\|\epsilon_{1}\|_{L^{2}}$-bound. We use the energy
conservation 
\[
\frac{1}{\lambda_{0}^{2}}\int|\D_{+}^{(P(\cdot;b_{0},\eta_{0})+\epsilon_{0})}(P(\cdot;b_{0},\eta_{0})+\epsilon_{0})|^{2}=\frac{1}{\lambda^{2}}\int|\D_{+}^{(P+\epsilon)}(P+\epsilon)|^{2}.
\]
We write 
\[
\D_{+}^{(P+\epsilon)}(P+\epsilon)=\D_{+}^{(P)}P+L_{Q}\epsilon+(L_{P}\epsilon-L_{Q}\epsilon)+N_{P}(\epsilon).
\]
Note that $\|\D_{+}^{(P)}P\|_{L^{2}}\lesssim b$. We note that the
last two terms are linear combinations of the expression 
\begin{equation}
\tfrac{1}{y}({\textstyle \int_{0}^{y}}\Re(\overline{\psi_{1}}\psi_{2})y'dy')\psi_{3},\label{eq:H1-expression}
\end{equation}
whose $L^{2}$-norm can be bounded by (see for instance \cite[Lemma 2.3]{KimKwon2019arXiv})
\[
\|\psi_{j_{1}}\|_{L^{2}}\|\psi_{j_{2}}\|_{L^{2}}\|\psi_{j_{3}}\|_{\dot{H}_{m}^{1}},
\]
where $\{j_{1},j_{2},j_{3}\}=\{1,2,3\}$ can be arbitrarily chosen.
We note that each term of $L_{P}\epsilon-L_{Q}\epsilon$ is of the
form \eqref{eq:H1-expression} with $\psi_{j_{1}}=P-Q$, $\psi_{j_{2}}\in\{P,Q\}$,
$\psi_{j_{3}}=\epsilon$, so its contribution is bounded by $b|\log b|^{\frac{1}{2}}\|\epsilon\|_{\dot{H}_{m}^{1}}=O(b)$.
Similarly, each term of $N_{P}(\epsilon)$ is of the form \eqref{eq:H1-expression}
with $\psi_{j_{1}}\in\{P,\epsilon\}$ and $\psi_{j_{2}}=\psi_{j_{3}}=\epsilon$,
so its contribution is bounded by $\|\epsilon\|_{\dot{H}_{m}^{1}}\|\epsilon\|_{L^{2}}\lesssim K(b^{\ast})^{\frac{1}{4}}\|\epsilon\|_{\dot{H}_{m}^{1}}=O(b)$.
Therefore, we have 
\begin{align*}
\lambda\|\epsilon_{1}\|_{L^{2}} & =\|\D_{+}^{(P+\epsilon)}(P+\epsilon)\|_{L^{2}}+O(b)\\
 & =\|\D_{+}^{(P(\cdot;b_{0},\eta_{0})+\epsilon_{0})}(P(\cdot;b_{0},\eta_{0})+\epsilon_{0})\|_{L^{2}}+O(b_{0})\\
 & =\lambda_{0}\|\epsilon_{1}(0)\|_{L^{2}}+O(b_{0}).
\end{align*}
Applying \eqref{eq:closing-bootstrap-claim4} yields
\[
\|\epsilon_{1}\|_{L^{2}}\lesssim b.
\]
As $K\gg1$, this closes the $\|\epsilon_{1}\|_{L^{2}}$-bound.

Finally, we close the $\|\epsilon\|_{L^{2}}$-bound. One can use the
mass conservation (similarly as in the $\dot{H}_{m}^{1}$-estimate
above), but as mentioned in Section \ref{subsec:EnergyEstimateL2},
we will rely on $\partial_{t}\|\epsilon\|_{L^{2}}^{2}$. By Lemma
\ref{lem:L2-EnergyEstimate}, we have 
\[
\partial_{t}\|\epsilon(t)\|_{L^{2}}^{2}\lesssim\lambda^{-2}b^{\frac{3}{2}}.
\]
Integrating this using \eqref{eq:closing-bootstrap-claim3} and the
initial bound yields 
\[
\|\epsilon(t)\|_{L^{2}}^{2}\lesssim\|\epsilon_{0}\|_{L^{2}}^{2}+\int_{0}^{t}\frac{b^{\frac{3}{2}}}{\lambda^{2}}d\tau\lesssim b_{0}^{\frac{1}{2}}.
\]
This closes the $\|\epsilon\|_{L^{2}}$-bound due to $K\gg1$.
\end{proof}

\subsection{\label{subsec:Existence-of-special-eta}Existence of special $\eta_{0}$}

Here we prove Proposition \ref{prop:Sets-I-pm}.
\begin{proof}[Proof of Proposition \ref{prop:Sets-I-pm}]
To show nonempty-ness of $\mathcal{I}_{\pm}$, we show $\pm\frac{K}{10}b_{0}^{3/2}\in\mathcal{I}_{\pm}$.
We compute the variation of the ratio $\frac{\eta}{b^{3/2}}$: 
\[
\partial_{s}\Big(\frac{\eta}{b^{\frac{3}{2}}}\Big)=\frac{3}{2}\frac{\eta}{b^{\frac{3}{2}}}\Big(b-\frac{b_{s}+b^{2}}{b}\Big)+\frac{\eta_{s}}{b^{\frac{3}{2}}}=\frac{3}{2}\Big(\frac{\eta}{b^{\frac{3}{2}}}\Big)b(1+O(b^{\frac{1}{2}}))+O(b).
\]
Thus if $|\frac{\eta}{b^{3/2}}|\geq\frac{K}{10}$ (recall that $K\gg1$)
holds at some time, then $|\frac{\eta}{b^{3/2}}|$ starts to increase.
In particular, if $\eta_{0}=\pm\frac{K}{10}b_{0}^{3/2}$, then $\eta(T_{\mathrm{exit}})$
must have same sign with $\eta_{0}$, saying that $\pm\frac{K}{10}b_{0}^{3/2}\in\mathcal{I}_{\pm}$.

We now show that $\mathcal{I}_{\pm}$ is open. Fix $\eta_{0}\in\mathcal{I}_{\pm}$
and let us denote by $u^{(\eta_{0})}$ the corresponding evolution.
Let us add a superscript $(\eta_{0})$ for clarification. Since $\eta_{0}\in\mathcal{I}_{\pm}$,
there exists $t^{(\eta_{0})}\in[0,T_{\mathrm{exit}}^{(\eta_{0})})$
such that $\pm\eta^{(\eta_{0})}(t^{(\eta_{0})})>\frac{K}{2}b^{\frac{3}{2}}(t^{(\eta_{0})})$.
Now, by continuous dependence, (obtained by combining the local well-posedness
and Lemma \ref{lem:decomp}) we should have $t^{(\eta_{0})}\in[0,T_{\mathrm{exit}}^{(\tilde{\eta}_{0})})$
and $\pm\eta^{(\tilde{\eta}_{0})}(t^{(\eta_{0})})>\frac{K}{2}b^{\frac{3}{2}}(t^{(\eta_{0})})$
for all $\tilde{\eta}_{0}$ near $\eta_{0}$. Such $\tilde{\eta}_{0}$
belongs to $\mathcal{I}_{\pm}$ due to the argument in the previous
paragraph. This completes the proof.
\end{proof}

\subsection{\label{subsec:Sharp-description}Pseudoconformal blow-up of trapped
solutions}

Here, we conclude the proof of Theorem \ref{thm:Existence} and the
first part of Theorem \ref{thm:BlowupManifold}. By the reduction
in Section \ref{subsec:Existence-of-trapped-solutions}, it only remains
to show that trapped solutions are pseudoconformal blow-up solutions.
Both the proofs for $H_{m}^{3}$-trapped solutions and $H_{m}^{5}$-trapped
solutions are quite similar. The only difference for $H_{m}^{3}$-trapped
solutions and $H_{m}^{5}$-trapped solutions is the regularity of
the radiation $u^{\ast}$, i.e. $u^{\ast}\in H_{m}^{1}$ or $u^{\ast}\in H_{m}^{3}$.
Here we prove it for $H_{m}^{3}$-trapped solutions. For $H_{m}^{5}$-solutions,
see Remark \ref{rem:forH5solutions}.
\begin{proof}[End of the proof of Theorem \ref{thm:Existence}]
By the claim \eqref{eq:closing-bootstrap-claim4}, we have 
\[
\partial_{t}\lambda=-\frac{b}{\lambda}+\frac{1}{\lambda}\Big(\frac{\lambda_{s}}{\lambda}+b\Big)=-\frac{b}{\lambda}(1+O((b^{\ast})^{\frac{1}{2}}))=-\frac{b_{0}}{\lambda_{0}}(1+O((b^{\ast})^{\frac{1}{2}}))<-\frac{b_{0}}{2\lambda_{0}}.
\]
This implies $T<+\infty$. By the standard blow-up criterion,\footnote{The standard Cauchy theory of \eqref{eq:CSS} says that the solution
blows up at finite time $T<+\infty$ if and only if $\lim_{t\uparrow T}\|u(t)\|_{\dot{H}_{m}^{1}}=\infty$.} we have $\lambda(T)\coloneqq\lim_{t\uparrow T}\lambda(t)=0$.

We now rewrite the claim \eqref{eq:closing-bootstrap-claim4} as 
\begin{equation}
\frac{b}{\lambda}=\ell(1+O(b^{\frac{1}{2}})),\qquad\ell\coloneqq\lim_{t\uparrow T}\frac{b(t)}{\lambda(t)}\in(0,\infty).\label{eq:closing-bootstrap-claim5}
\end{equation}
The existence of $\ell\in(0,\infty)$ follows from \eqref{eq:closing-bootstrap-claim4-1}
and \eqref{eq:closing-bootstrap-claim3}. The error bound $O(b^{\frac{1}{2}})$
follows from integrating \eqref{eq:closing-bootstrap-claim4-1} on
$[t,T)$ instead of $[0,t]$. In particular, $b(T)\coloneqq\lim_{t\uparrow T}b(t)=0$.

We now derive the asymptotics of the modulation parameters. We again
compute $\partial_{t}\lambda$ and $\partial_{t}b$, but with the
help of \eqref{eq:closing-bootstrap-claim5}: 
\begin{align*}
\partial_{t}\lambda & =-\frac{b}{\lambda}(1+O(b^{\frac{1}{2}}))=-\ell(1+O(b^{\frac{1}{2}})),\\
\partial_{t}b & =\frac{b_{s}+b^{2}}{\lambda^{2}}-\frac{b^{2}}{\lambda^{2}}=\frac{-b^{2}(1+O(b^{\frac{1}{2}}))}{\lambda^{2}}=-\ell^{2}(1+O(b^{\frac{1}{2}})).
\end{align*}
Integrating the above relations from backward in time shows 
\[
\lim_{t\uparrow T}\frac{\lambda(t)}{T-t}=\ell,\qquad\lim_{t\uparrow T}\frac{b(t)}{T-t}=\ell^{2}.
\]
From the modulation estimate (Lemma \ref{lem:mod-est}) and the estimates
of $\theta_{\eta}$, $\theta_{\Psi}$, $\theta_{\mathrm{L-L}}$, and
$\theta_{\mathrm{NL}}$ estimates (see \eqref{eq:decomp-P-temp1},
\eqref{eq:decomp-P-temp2}, \eqref{eq:LL-theta-est}, and \eqref{eq:NL-theta-est}),
we have 
\[
|\partial_{t}\gamma|\lesssim\lambda^{-2}(Kb^{\frac{3}{2}}+K^{2}b^{2})\lesssim\lambda^{-2}Kb^{\frac{3}{2}}\lesssim K\ell(T-t)^{-\frac{1}{2}}
\]
for $t$ near $T$. This shows that $\gamma(t)$ converges as $t\to T$,
say $\gamma^{\ast}$. Finally, we note that $\eta(t)\to0$ as $t\to T$
due to $b(t)\to0$ and the definition of $\mathcal{O}_{trap}$.

It only remains to show that $u$ decomposes as in Theorem \ref{thm:Existence}.
Let us define 
\[
\epsilon^{\sharp}(t,r)\coloneqq\frac{e^{i\gamma(t)}}{\lambda(t)}\epsilon\Big(t,\frac{r}{\lambda(t)}\Big).
\]
We should show that 
\[
\tilde{\epsilon}^{\sharp}(t)\coloneqq\bigg\{\frac{e^{i\gamma(t)}}{\lambda(t)}P\Big(\frac{r}{\lambda(t)};b(t),\eta(t)\Big)-\frac{e^{i\gamma^{\ast}}}{\ell(T-t)}Q\Big(\frac{r}{\ell(T-t)}\Big)\bigg\}+\epsilon^{\sharp}(t)
\]
converges in $L^{2}$ as $t\to T$ and the limit belongs to $H_{m}^{1}$.
Since $(\gamma,b,\eta)\to(\gamma^{\ast},0,0)$ and $\frac{\ell(T-t)}{\lambda(t)}\to1$
as $t\to T$, the first term of the above display converges to $0$
in $L^{2}$. The second term $\epsilon^{\sharp}(t)$ is uniformly
bounded in $H_{m}^{1}$, thanks to the boundedness of $\frac{b}{\lambda}$
shown in \eqref{eq:closing-bootstrap-claim4} and $\|\epsilon^{\sharp}\|_{\dot{H}_{m}^{1}}=\lambda^{-1}\|\epsilon\|_{\dot{H}_{m}^{1}}$.
Thus it only remains to show that $\{\epsilon^{\sharp}(t)\}_{t\to T}$
is Cauchy in $L^{2}$.

To show that $\{\epsilon^{\sharp}(t)\}_{t\to T}$ is Cauchy in $L^{2}$,
it suffices to show that $\|\partial_{t}\epsilon^{\sharp}\|_{L^{2}}$
is integrable in time. From the $\epsilon$-equation \eqref{eq:e-equation-L2},
we have 
\[
\partial_{t}\epsilon^{\sharp}=\frac{1}{\lambda^{2}}\cdot\frac{e^{i\gamma(t)}}{\lambda(t)}[i\Delta_{m}\epsilon+\mathbf{Mod}\cdot\mathbf{v}-i(\mathcal{N}(u^{\flat})-\mathcal{N}(P))-i\Psi]\Big(t,\frac{r}{\lambda(t)}\Big).
\]
By Lemma \ref{lem:L2-EstimateRemainders}, we have 
\begin{align*}
\|\partial_{t}\epsilon^{\sharp}\|_{L^{2}} & \lesssim\lambda^{-2}(\|\Delta_{m}\epsilon\|_{L^{2}}+\|\text{RHS of }\eqref{eq:e-equation-L2}\|_{L^{2}})\\
 & \lesssim\lambda^{-2}(\|\epsilon\|_{\dot{H}_{m}^{1}}^{\frac{1}{2}}\|\epsilon\|_{\dot{\mathcal{H}}_{m}^{3}}^{\frac{1}{2}}+b^{\frac{3}{2}})\lesssim\ell(T-t)^{-\frac{1}{2}}.
\end{align*}
Thus $\|\partial_{t}\epsilon^{\sharp}\|_{L^{2}}$ is integrable in
time. This ends the proof of Theorem \ref{thm:Existence}.
\end{proof}
\begin{rem}
\label{rem:forH5solutions}For $H_{m}^{5}$-trapped solutions, thanks
to the improved bound for $\epsilon_{3}$, we see that $\epsilon^{\sharp}(t)$
is uniformly bounded in $H_{m}^{3}$. This says that the radiation
enjoys further regularity $H_{m}^{3}$.
\end{rem}

\section{\label{sec:Lipschitz-blow-up-manifold}Lipschitz blow-up manifold}

Throughout this section, we assume 
\[
m\geq3
\]
and only deal with $H_{m}^{5}$-trapped solutions.

So far, we constructed trapped solutions. Due to a soft connectivity
argument, we were only able to guarantee the existence of pseudoconformal
blow-up solutions, with a weak codimension one condition. Recall that
$\mathcal{M}$ is the set of initial data in $\mathcal{O}_{init}^{5}$
yielding $H_{m}^{5}$-trapped solutions. Hence $\mathcal{M}$ contains
the blow-up solutions constructed in the first part of Theorem \ref{thm:BlowupManifold}.
In this section we aim to prove more quantitative information on $\mathcal{M}$.
First, it includes the uniqueness statement saying that $\mathcal{M}$
is equal to the solutions constructed by the proof of the first part
of Theorem \ref{thm:BlowupManifold}. It also includes the Lipschitz
dependence on initial data. These will also finish the proof of Theorem
\ref{thm:BlowupManifold}.

Among many works on establishing the regularity of blow-up manifolds
(or, stable/unstable manifolds), the most relevant one to this paper
is the work of Collot \cite{Collot2018MemAMS}. Our proof uses the
idea of \cite{Collot2018MemAMS}.

Recall from the proof of Theorem \ref{thm:Existence} that the codimension
one condition stems from the \emph{unstable parameter} $\eta$. In
order to finish the proof of Theorem \ref{thm:BlowupManifold}, we
have to ensure \emph{uniqueness} of $\eta_{0}$ yielding trapped solutions
and also the Lipschitz dependence on (the stable modes of) initial
data. Thus the heart of the proof will be to control the difference
of unstable parameter $\eta$ by the difference of \emph{stable parameters}
($b$ and $\epsilon$ if we modulo out the scaling and phase rotation
symmetries).

\subsection{Reduction of Theorem \ref{thm:BlowupManifold}}

In this subsection, we reduce the proof of Theorem \ref{thm:BlowupManifold}
into Proposition \ref{prop:Lipschitz-estimate-modulo-scale-phase},
which is the control of the difference of unstable parameters by the
difference of stable parameters.

Let $u$ and $u'$ be two $H_{m}^{5}$-trapped solutions. By Lemma
\ref{lem:decomp}, $u$ has associated modulation parameters $\lambda,\gamma,b,\eta$
and the error part $\epsilon$. Similarly, $u'$ has parameters $\lambda',\gamma',b',\eta'$
and the error $\epsilon'$. At the initial time $t=0$, we add a subscript
$0$ to these: $b(0)=b_{0}$, $\eta'(0)=\eta_{0}'$, and so on.
\begin{prop}[Lipschitz estimate modulo scaling/phase invariances]
\label{prop:Lipschitz-estimate-modulo-scale-phase}Let $u$ and $u'$
be two $H_{m}^{5}$-trapped solutions. If $b_{0}'$ is sufficiently
close to $b_{0}$, then 
\[
|\eta_{0}-\eta_{0}'|\lesssim_{b_{0}}|b_{0}-b_{0}'|+\|\epsilon_{0}-\epsilon_{0}'\|_{H_{m}^{3}}.
\]
In particular, for given $b_{0}\in(0,b^{\ast})$ and $\|\epsilon_{0}\|_{H_{m}^{5}}<b_{0}^{5}$,
there exists unique $\tilde{\eta}_{prem}=\tilde{\eta}_{prem}(b_{0},\epsilon_{0})$
such that $P(\cdot;b_{0},\tilde{\eta}_{prem})+\epsilon_{0}\in\mathcal{M}$.
Moreover, $\tilde{\eta}$ is locally Lipschitz in $b_{0},\epsilon_{0}$.
\end{prop}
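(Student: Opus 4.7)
The argument is a difference analysis of two $H_m^5$-trapped solutions $u,u'$, structured as a second bootstrap running in parallel to Section \ref{sec:Main-bootstrap}. By the scaling and phase invariances of \eqref{eq:CSS} we may normalize $\lambda_0=\lambda_0'=1$ and $\gamma_0=\gamma_0'=0$. Since the blow-up times $T,T'$ differ in general, we first introduce an \emph{adapted time reparameterization} $s\mapsto s'(s)$ in the spirit of \cite{Collot2018MemAMS}, pinned for instance by $\lambda(s)=\lambda'(s'(s))$, so that the two solutions can be compared at every renormalized time, and $s\to\infty$ corresponds to both solutions reaching blow-up simultaneously. Set
\[
\delta b(s):=b(s)-b'(s'(s)),\quad \delta\eta(s):=\eta(s)-\eta'(s'(s)),\quad \delta\epsilon(s,y):=\epsilon(s,y)-\epsilon'(s'(s),y),
\]
and similarly $\delta\gamma$. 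A direct computation shows that $\delta\epsilon$ satisfies the same structural equation \eqref{eq:e-eq-cor} as $\epsilon$, with the remainder terms replaced by their differences $\delta\tilde R_{\mathrm{L-L}}$, $\delta\tilde R_{\mathrm{NL}}$, a difference modulation vector $\delta\tilde{\mathbf{Mod}}\cdot\mathbf{v}$, and the \emph{extra linear term} $\bigl(\tfrac{ds'}{ds}-1\bigr)\,i\mathcal{L}_Q\epsilon'$ coming from differentiating $\epsilon'(s'(s),\cdot)$. This last term forces us to work with $\epsilon'\in\dot{\mathcal{H}}_m^5$ (so that $\mathcal{L}_Q\epsilon'\in\dot{\mathcal{H}}_m^3$), which is precisely the reason for restricting to $H_m^5$-trapped solutions and $m\geq3$.

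The main step is to run the forward bootstrap on stable modes. I would prove analogues of Lemmas \ref{lem:degenerate-linear}, \ref{lem:nonlinear}, and \ref{lem:mod-est} for the difference objects, using the smoothness of $(b,\eta)\mapsto P(\cdot;b,\eta)$ recorded in Proposition \ref{prop:modified-profile}(5) and the multilinear structure of the nonlinearities, obtaining
\[
|\delta\tilde{\mathbf{Mod}}|\lesssim o_{M\to\infty}(1)\|\delta\epsilon_3\|_{L^2}+M^C\bigl(b\|\delta\epsilon\|_{\dot{\mathcal{H}}_m^3}+b(|\delta b|+|\delta\eta|)\bigr),
\]
together with a modified energy inequality for $\|\delta\epsilon_3\|_{L^2}^2$ carrying the same localized virial correction as in Proposition \ref{prop:ModifiedEnergyInequalityH3}. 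The extra term $(\tfrac{ds'}{ds}-1)i\mathcal{L}_Q\epsilon'$ is absorbed using $|\tfrac{ds'}{ds}-1|\lesssim b(|\delta b|+|\delta\eta|+\|\delta\epsilon\|_{\dot{\mathcal{H}}_m^3})$ and the a priori $\dot{\mathcal{H}}_m^5$-bound $\|\epsilon'\|_{\dot{\mathcal{H}}_m^5}\lesssim b^{9/2}$ coming from the trapped hypothesis \eqref{eq:def-H5-trapped}. Closing the difference bootstrap yields forward Lipschitz control
\[
|\delta b(s)|+\|\delta\epsilon(s)\|_{\dot{\mathcal{H}}_m^3}\lesssim_{b_0} |\delta b_0|+\|\delta\epsilon_0\|_{H_m^3}+\sup_{\tau\in[0,s]}b(\tau)|\delta\eta(\tau)|.
\]

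Finally, $\delta\eta$ is controlled by backward integration. Since both solutions are trapped, $\eta(s),\eta'(s'(s))\to 0$ at blow-up, hence $\delta\eta(s)\to 0$ as $s\to\infty$. The difference modulation estimate gives $|(\delta\eta)_s|\lesssim b\bigl(|\delta b|+\|\delta\epsilon\|_{\dot{\mathcal{H}}_m^3}+|\delta\eta|\bigr)$; integrating from $s=\infty$ down to $s=0$ and using $\int_0^\infty b\,ds<\infty$ via the ansatz $b_s+b^2\approx 0$, a Grönwall argument absorbs the $|\delta\eta|$ self-coupling and couples with the forward bound to produce $|\delta\eta_0|\lesssim_{b_0}|\delta b_0|+\|\delta\epsilon_0\|_{H_m^3}$. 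Uniqueness of $\tilde\eta_{prem}(b_0,\epsilon_0)$ follows at once by taking $\delta b_0=0$ and $\delta\epsilon_0=0$, and the locally Lipschitz dependence is the full inequality. The main obstacle is the forward bootstrap: one must match the precise $b$-powers in the difference estimates so that the extra linear term $(\tfrac{ds'}{ds}-1)i\mathcal{L}_Q\epsilon'$ is perturbative, which forces the loss of two Sobolev derivatives and explains the $H_m^5$/$H_m^3$ gap in the statement.
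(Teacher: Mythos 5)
Your proposal follows essentially the same route as the paper: normalize out scaling/phase, introduce the adapted time $s'(s)$ pinned by $\lambda'(s'(s))=\lambda(s)$, write the difference equation with the extra linear term $(\tfrac{ds'}{ds}-1)i\mathcal{L}_Q\epsilon'$, run a forward bootstrap (modulation estimates, modified energy with local virial correction, $L^2$ control) on $\delta b,\delta\epsilon$ in weighted form, and then integrate the $\delta\eta$-modulation equation backward from $\delta\eta(\infty)=0$.

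One intermediate bound you state is dimensionally inconsistent and would be wrong if used as written: you claim $|\tfrac{ds'}{ds}-1|\lesssim b\bigl(|\delta b|+|\delta\eta|+\|\delta\epsilon\|_{\dot{\mathcal{H}}_m^3}\bigr)$, but since $\tfrac{ds'}{ds}-1=\tfrac{1}{b'}\bigl(\delta b - (\tilde{\mathrm{Mod}}_1-\tfrac{ds'}{ds}\tilde{\mathrm{Mod}}_1')\bigr)$, the leading contribution is the ratio $\delta b/b'$, so the correct bound is $|\tfrac{ds'}{ds}-1|\lesssim |\delta b|/b' + (\text{errors})\lesssim D$, which is larger than your bound by a factor $\sim b^{-2}$. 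This does not break the argument — the weaker (correct) bound, combined with the a priori $\|\epsilon_5'\|_{L^2}\lesssim b^{9/2}$, still renders $(\tfrac{ds'}{ds}-1)i\epsilon_5'$ perturbative at the $\dot{H}^3$-level energy — but the power counting should be corrected before the bootstrap is closed, since several other terms also carry the factor $\tfrac{ds'}{ds}-1$. Note also that the need for the $H_m^5$ control arises regardless of the strength of the scalar bound on $\tfrac{ds'}{ds}-1$: one needs $\mathcal{L}_Q\epsilon'$ at the $\dot{H}^3$-level, i.e.\ $\epsilon'\in\dot{\mathcal{H}}_m^5$, simply because the extra term lives in the $\delta\epsilon_3$ equation.
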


We remark that Proposition \ref{prop:Lipschitz-estimate-modulo-scale-phase}
implies the \emph{uniqueness} of $\eta_{0}$ (for given $b_{0}$ and
$\epsilon_{0}$) of Theorem \ref{thm:BlowupManifold}. Hence $\mathcal{M}$
is equal to the solutions constructed by the proof of the first part
of Theorem \ref{thm:BlowupManifold}. Due to the scaling and phase
rotation symmetries, once $P(\cdot;b_{0},\eta_{0})+\epsilon_{0}\in\mathcal{M}$,
we have $\tfrac{e^{i\gamma_{0}}}{\lambda_{0}}[P(\cdot;b_{0},\eta_{0})+\epsilon_{0}(\cdot)](\frac{\cdot}{\lambda_{0}})\in\mathcal{M}$.
Thus $\eta_{0}$ does not depend on the scaling and phase parameters.

Notice also that the statement of Proposition \ref{prop:Lipschitz-estimate-modulo-scale-phase}
is insensitive to the scales and phases of $u$ and $u'$. Thus this
measures the difference of solutions modulo scaling/phase symmetries.

If one fixes the scale $1$ and phase $0$, then one can construct
a codimension three blow-up manifold, say $\mathcal{M}_{1,0}$. A
naive try to recover the codimension one manifold $\mathcal{M}$ is
to apply scaling and phase rotation symmetries to $\mathcal{M}_{1,0}$
since $\mathcal{M}=\bigcup_{\lambda,\gamma}\mathcal{M}_{\lambda,\gamma}$,
where $\mathcal{M}_{\lambda,\gamma}=\{\frac{e^{i\gamma}}{\lambda}u_{0}(\frac{\cdot}{\lambda}):u_{0}\in\mathcal{M}_{1,0}\}$.
However, the scaling symmetry is not Lipschitz continuous on any Sobolev
spaces. More precisely, the map
\[
(\lambda_{0},\gamma_{0},b_{0},\eta_{0},\epsilon_{0})\mapsto\frac{e^{i\gamma_{0}}}{\lambda_{0}}[P(\cdot;b_{0},\eta_{0})+\epsilon_{0}]\Big(\frac{\cdot}{\lambda_{0}}\Big)
\]
is not Lipschitz continuous in $\lambda_{0}$, due to the $\frac{1}{\lambda_{0}}\epsilon_{0}(\frac{\cdot}{\lambda_{0}})$-part.
This says that Lipschitz property of $\mathcal{M}$ does not immediately
follow from applying the scaling and phase rotation symmetries to
$\mathcal{M}_{1,0}$.

The remedy is to consider the decomposition formula 
\[
(\lambda_{0},\gamma_{0},b_{0},\eta_{0},\epsilon_{0}^{\sharp})\mapsto\frac{e^{i\gamma_{0}}}{\lambda_{0}}P(\frac{\cdot}{\lambda_{0}};b_{0},\eta_{0})+\epsilon_{0}^{\sharp}
\]
instead, where we denoted $\epsilon_{0}^{\sharp}(r)\coloneqq\frac{e^{i\gamma_{0}}}{\lambda_{0}}\epsilon_{0}(\frac{r}{\lambda_{0}})$.
Once we use $\epsilon_{0}^{\sharp}$ as an independent variable instead
of $\epsilon_{0}$, the above formula is Lipschitz continuous in terms
of $(\lambda_{0},\gamma_{0},b_{0},\eta_{0},\epsilon_{0}^{\sharp})$.
Viewing $\epsilon_{0}=e^{-i\gamma_{0}}\lambda_{0}\epsilon_{0}^{\sharp}(\lambda_{0}\cdot)$,
we can write $\tilde{\eta}_{prem}$ as a function of $\lambda_{0},\gamma_{0},b_{0},\epsilon_{0}^{\sharp}$,
say $\tilde{\eta}_{prem}(b_{0},\epsilon_{0})=\tilde{\eta}(\lambda_{0},\gamma_{0},b_{0},\epsilon_{0}^{\sharp})$.
Then the Lipschitz regularity of $\mathcal{M}$ would follow if we
show that $\tilde{\eta}=\tilde{\eta}(\lambda_{0},\gamma_{0},b_{0},\epsilon_{0}^{\sharp})$
is Lipschitz continuous.

From this, we need a variant of Proposition \ref{prop:Lipschitz-estimate-modulo-scale-phase},
where we measure the difference in the original variables.
\begin{prop}[Lipschitz estimate in the original variables]
\label{prop:Lipschitz-diff-orig-variables}Let $u$ and $u'$ two
$H_{m}^{5}$-trapped solutions. If $b_{0}'$ is sufficiently close
to $b_{0}$, and $\lambda_{0}$ and $\lambda_{0}'$ are sufficiently
close to $1$, then 
\[
|\eta_{0}-\eta_{0}'|\lesssim_{b_{0}}|\lambda_{0}-\lambda_{0}'|+|\gamma_{0}-\gamma_{0}'|+|b_{0}-b_{0}'|+\|\epsilon_{0}^{\sharp}-(\epsilon_{0}')^{\sharp'}\|_{H_{m}^{3}},
\]
where $\epsilon_{0}^{\sharp}=\frac{e^{i\gamma_{0}}}{\lambda_{0}}\epsilon_{0}(\frac{\cdot}{\lambda_{0}})$
and $(\epsilon_{0}')^{\sharp'}=\frac{e^{i\gamma'_{0}}}{\lambda_{0}'}\epsilon_{0}'(\frac{\cdot}{\lambda_{0}'})$.
In particular, for given $(\lambda_{0},\gamma_{0},b_{0},\epsilon_{0})\in\tilde{\mathcal{U}}_{init}^{5}$,
there exists unique $\tilde{\eta}=\tilde{\eta}(\lambda_{0},\gamma_{0},b_{0},\epsilon_{0}^{\sharp})$\footnote{In fact, $\tilde{\eta}(\lambda_{0},\gamma_{0},b_{0},\epsilon_{0}^{\sharp})=\tilde{\eta}_{prem}(b_{0},\epsilon_{0})$
under the relation $\epsilon_{0}^{\sharp}=\frac{e^{i\gamma_{0}}}{\lambda_{0}}\epsilon_{0}(\frac{\cdot}{\lambda_{0}})$.} such that $\frac{e^{i\gamma_{0}}}{\lambda_{0}}P(\frac{\cdot}{\lambda_{0}};b_{0},\tilde{\eta})+\epsilon_{0}^{\sharp}\in\mathcal{M}$.
Moreover, $\tilde{\eta}$ is a locally Lipschitz function of $\lambda_{0},\gamma_{0},b_{0},\epsilon_{0}^{\sharp}$.
\end{prop}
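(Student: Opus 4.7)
The plan is to reduce the statement to Proposition~\ref{prop:Lipschitz-estimate-modulo-scale-phase}, which controls $|\eta_0-\eta_0'|$ by $|b_0-b_0'|+\|\epsilon_0-\epsilon_0'\|_{H_m^3}$ in the \emph{renormalized} errors $\epsilon_0,\epsilon_0'$ and is insensitive to the scales and phases of the two trapped solutions. Thus the whole task boils down to the transfer estimate
\begin{equation*}
\|\epsilon_0-\epsilon_0'\|_{H_m^3}\lesssim_{b_0}|\lambda_0-\lambda_0'|+|\gamma_0-\gamma_0'|+\|\epsilon_0^\sharp-(\epsilon_0')^{\sharp'}\|_{H_m^3}
\end{equation*}
converting closeness of the sharp errors into closeness of the renormalized errors.

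\textbf{Main estimate.} Using $\epsilon_0(y)=e^{-i\gamma_0}\lambda_0\epsilon_0^\sharp(\lambda_0 y)$ and the analogous formula for $\epsilon_0'$, I would split
\begin{align*}
\epsilon_0(y)-\epsilon_0'(y) &= e^{-i\gamma_0}\lambda_0\bigl[\epsilon_0^\sharp(\lambda_0 y)-(\epsilon_0')^{\sharp'}(\lambda_0 y)\bigr] \\
&\quad+\bigl\{e^{-i\gamma_0}\lambda_0(\epsilon_0')^{\sharp'}(\lambda_0 y)-e^{-i\gamma_0'}\lambda_0'(\epsilon_0')^{\sharp'}(\lambda_0' y)\bigr\}.
\end{align*}
Since $\lambda_0,\lambda_0'\sim 1$, the first bracket contributes $\lesssim\|\epsilon_0^\sharp-(\epsilon_0')^{\sharp'}\|_{H_m^3}$. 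For the second bracket I would use the Lipschitz dependence of the map $(\lambda,\gamma)\mapsto e^{-i\gamma}\lambda f(\lambda\,\cdot)$ applied to $f=(\epsilon_0')^{\sharp'}$: the identity $\partial_\lambda[\lambda f(\lambda y)]=f(\lambda y)+\lambda y f'(\lambda y)$ shows this map is $C^1$ from a neighborhood of $(\lambda_0,\gamma_0)$ into $H^{k-1}_m$ whenever $f\in H^k_m$, losing exactly one derivative. Applied to $(\epsilon_0')^{\sharp'}$, which is $H^5_m$-controlled uniformly on a common neighborhood by trappedness (combined with $\lambda_0'\sim 1$ so that sharp and renormalized norms are comparable), this yields Lipschitz continuity into $H^3_m$ with constant $\lesssim_{b_0}1$, so the second bracket contributes $\lesssim_{b_0}|\lambda_0-\lambda_0'|+|\gamma_0-\gamma_0'|$.

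\textbf{Conclusion, the second part, and the main obstacle.} Plugging the transfer estimate into Proposition~\ref{prop:Lipschitz-estimate-modulo-scale-phase} yields the displayed Lipschitz bound. For the ``in particular'' statement, given $(\lambda_0,\gamma_0,b_0,\epsilon_0)\in\tilde{\mathcal{U}}_{init}^5$, Proposition~\ref{prop:Lipschitz-estimate-modulo-scale-phase} provides a unique $\tilde{\eta}_{prem}(b_0,\epsilon_0)$ with $P(\cdot;b_0,\tilde{\eta}_{prem})+\epsilon_0\in\mathcal{M}$; set $\tilde{\eta}(\lambda_0,\gamma_0,b_0,\epsilon_0^\sharp)\coloneqq\tilde{\eta}_{prem}(b_0,\epsilon_0)$, which is well-defined under the substitution $\epsilon_0=e^{-i\gamma_0}\lambda_0\epsilon_0^\sharp(\lambda_0\,\cdot)$ and encodes an initial datum in $\mathcal{M}$ by scale--phase invariance. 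Uniqueness of $\tilde{\eta}$ is immediate by taking $u_0=u_0'$ in the inequality just proved, and local Lipschitz continuity in $(\lambda_0,\gamma_0,b_0,\epsilon_0^\sharp)$ is precisely that inequality. The \emph{main obstacle} is the one-derivative loss in $\partial_\lambda[\lambda f(\lambda\,\cdot)]$: without the $H^5_m$-regularity of $H^5_m$-trapped solutions, the second bracket above would only be continuous—not Lipschitz—in $\lambda_0$, so Lipschitz regularity of $\mathcal{M}$ would fail in the sharp coordinates. This is exactly why the second part of Theorem~\ref{thm:BlowupManifold} requires $m\ge 3$ together with the $H^5_m$ a priori control built into the definition of $H^5_m$-trapped solutions.
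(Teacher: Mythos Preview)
Your argument is correct and takes a genuinely different route from the paper's. The paper proves this proposition by a \emph{dynamical synchronization}: assuming (without loss of generality) $\lambda_0\ge\lambda_0'$, it evolves $u$ forward to the unique time $\tilde t_0$ at which $\lambda(\tilde t_0)=\lambda_0'$, applies Proposition~\ref{prop:Lipschitz-estimate-modulo-scale-phase} to $u(\tilde t_0)$ and $u_0'$ (now at a common scale), and then proves the claim $|\eta(\tilde t_0)-\eta_0|+|b(\tilde t_0)-b_0|+\|\epsilon^\sharp(\tilde t_0)-\epsilon_0^\sharp\|_{H_m^3}\lesssim\lambda_0-\lambda_0'$ by integrating the modulation estimates and the $(t,r)$-variable equations for $\epsilon^\sharp$ and $\epsilon_3^{\sharp_{-3}}$ over $[0,\tilde t_0]$; the latter is where the $H_m^5$ control enters, via $\|\partial_t\epsilon_3^{\sharp_{-3}}\|_{L^2}\lesssim\|\epsilon_5\|_{L^2}+\cdots$. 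Your approach is purely \emph{static}: you stay at $t=0$ and directly transfer between sharp and renormalized coordinates, absorbing the one-derivative loss from $\partial_\lambda[\lambda f(\lambda\cdot)]=(\Lambda f)(\lambda\cdot)$ with the spare $H_m^5$ regularity of $\epsilon_0'$ (available from $\mathcal{O}_{init}^5$). Both arguments therefore spend the extra two derivatives in the same place, but yours is shorter and avoids re-invoking the evolution equations and modulation estimates; the paper's route has the minor advantage that it reuses machinery already built for the bootstrap and may feel more natural if one thinks of $\lambda$ as a time-like parameter.
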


Let us show how one can prove Theorem \ref{thm:BlowupManifold} assuming
Proposition \ref{prop:Lipschitz-diff-orig-variables}.
\begin{proof}[Proof of Theorem \ref{thm:BlowupManifold} assuming Proposition \ref{prop:Lipschitz-diff-orig-variables}]
We remind the reader that the existence of $H_{m}^{5}$-trapped solutions
is proved in Section \ref{subsec:Existence-of-trapped-solutions}.
The uniqueness of $\eta_{0}$ is an immediate consequence of Proposition
\ref{prop:Lipschitz-estimate-modulo-scale-phase}. Henceforth, we
show that $\mathcal{M}$ has the Lipschitz regularity.

Let $\hat u\in\mathcal{M}$ be a reference element. It suffices to
show that a neighborhood of $\hat u$ in $\mathcal{M}$ can be expressed
as a Lipschitz graph of some codimension one subspace of $H_{m}^{5}$.

Denote by $(\hat{\lambda},\hat{\gamma},\hat b,\hat{\eta},\hat{\epsilon})$
the associated parameters and error part. Denote by $\hat{\epsilon}^{\sharp}\coloneqq\frac{e^{i\hat{\gamma}}}{\hat{\lambda}}\hat{\epsilon}(\frac{\cdot}{\hat{\lambda}})$.
By scaling and phase rotation symmetries, we may assume that $\hat{\lambda}=1$
and $\hat{\gamma}=0$.\footnote{Thus $\hat{\epsilon}^{\sharp}=\hat{\epsilon}$, but we will use the
notation $\hat{\epsilon}^{\sharp}$ to emphasize that we are working
with the original variable $r$.} Denote by $\hat P=P(\cdot;\hat b,\hat{\eta})$, $\partial_{b}\hat P=[\partial_{b}P](\cdot;\hat b,\hat{\eta})$,
and $\partial_{\eta}\hat P=[\partial_{\eta}P](\cdot;\hat b,\hat{\eta})$.
Similarly denote the modulation vector at this reference point by
$\hat{\mathbf{v}}=(\Lambda\hat P,-i\hat P,-\partial_{b}\hat P,-\partial_{\eta}\hat P)^{t}$.
Define the stable/unstable subspaces 
\begin{align*}
X_{s} & \coloneqq\mathrm{span}_{\R}\{\Lambda\hat P,i\hat P,\partial_{b}\hat P\}\oplus(\mathcal{Z}^{\perp}\cap H_{m}^{5}),\\
X_{u} & \coloneqq\mathrm{span}_{\R}\{\partial_{\eta}\hat P\},
\end{align*}
such that 
\[
H_{m}^{5}=X_{s}\oplus X_{u}.
\]
In the proof we also use the notation 
\[
f_{\lambda,\gamma}\coloneqq\frac{e^{i\gamma}}{\lambda}f\Big(\frac{\cdot}{\lambda}\Big).
\]

On a small neighborhood of $\hat u$ in $\hat u+X_{s}$, define the
map $h$ taking values in $\mathcal{M}$ by 
\[
\hat u+(\delta\lambda)\Lambda\hat P+(\delta\gamma)i\hat P+(\delta b)\partial_{b}\hat P+(\delta\epsilon_{0}^{\sharp})\mapsto P(\cdot;b_{0},\tilde{\eta})_{\lambda_{0},\gamma_{0}}+\epsilon_{0}^{\sharp},
\]
where $\delta\lambda,\delta\gamma,\delta b,\delta\epsilon_{0}^{\sharp}$
are considered as independent variables and 
\begin{gather}
\lambda_{0}\coloneqq1+\delta\lambda,\quad\gamma_{0}\coloneqq\delta\gamma,\quad b_{0}\coloneqq\hat b+\delta b,\quad\tilde{\eta}\coloneqq\tilde{\eta}(\lambda_{0},\gamma_{0},b_{0},\epsilon_{0}^{\sharp}),\nonumber \\
\epsilon_{0}^{\sharp}\coloneqq\hat{\epsilon}^{\sharp}+\delta\epsilon_{0}^{\sharp}-\sum_{j,k}(A^{-1})_{jk}(\hat{\epsilon}^{\sharp}+\delta\epsilon_{0}^{\sharp},[\mathcal{Z}_{k}]_{\lambda_{0},\gamma_{0}})_{r}[\hat v_{j}]_{\lambda_{0},\gamma_{0}}.\label{eq:Diff-est-e-sharp-formula}
\end{gather}
Here, $A^{-1}$ is the inverse matrix of $A$ whose components are
defined by $A_{jk}\coloneqq(\mathcal{Z}_{j},\hat v_{k})_{r}$,\footnote{$A$ is an almost diagonal matrix, as seen in \eqref{eq:ortho-jacobian}.}
and $\tilde{\eta}(\lambda_{0},\gamma_{0},b_{0},\epsilon_{0}^{\sharp})$
is defined in Proposition \ref{prop:Lipschitz-diff-orig-variables}.
The (seemingly complicated) additional term in the definition \eqref{eq:Diff-est-e-sharp-formula}
is designed to guarantee that $\epsilon_{0}^{\sharp}$ satisfies the
orthogonality conditions at the scale $\lambda_{0}$ and phase $\gamma_{0}$,
i.e. 
\[
(\epsilon_{0}^{\sharp},[\mathcal{Z}_{k}]_{\lambda_{0},\gamma_{0}})_{r}=0\qquad\forall k\in\{1,2,3,4\}.
\]
By the definition of $\tilde{\eta}$, we verify that $h$ maps into
$\mathcal{M}$ and $h(\hat u)=\hat u$.

On a small neighborhood of $\hat u$ in $H_{m}^{5}=(\hat u+X_{s})\oplus X_{u}$,
we define a $C^{1}$-diffeomorphism $H$ by 
\[
\hat u+(\delta\lambda)\Lambda\hat P+(\delta\gamma)i\hat P+(\delta b)\partial_{b}\hat P+(\delta\eta)\partial_{\eta}\hat P+(\delta\epsilon_{0}^{\sharp})\mapsto P(\cdot;b_{0},\eta_{0})_{\lambda_{0},\gamma_{0}}+\epsilon_{0}^{\sharp},
\]
where $b_{0},\lambda_{0},\gamma_{0},\epsilon_{0}^{\sharp}$ are as
in the definition of $h$ and $\eta_{0}\coloneqq\hat{\eta}+\delta\eta$.
Note that $H(\hat u)=\hat u$. To see the $C^{1}$-diffeomorphism
property, we observe the following. The partial derivatives of $H$
along the $\delta\lambda_{0},\delta\gamma_{0},\delta b_{0},\delta\eta_{0}$
directions are $-\frac{1}{\lambda_{0}}[\Lambda P]_{\lambda_{0},\gamma_{0}}$,
$iP_{\lambda_{0},\gamma_{0}}$, $[\partial_{b}P]_{\lambda_{0},\gamma_{0}}$,
$[\partial_{\eta}P]_{\lambda_{0},\gamma_{0}}$ evaluated at $(b_{0},\eta_{0})$
plus some $O(\epsilon_{0}^{\sharp})$ error. Next, the functional
derivative of $H$ along the $\delta\epsilon_{0}^{\sharp}$ is $\mathrm{id}_{(\mathcal{Z}^{\perp}\cap H_{m}^{5})}$
plus a rank four operator whose range lies in the span of $[\Lambda P]_{\lambda_{0},\gamma_{0}},iP_{\lambda_{0},\gamma_{0}},[\partial_{b}P]_{\lambda_{0},\gamma_{0}},[\partial_{\eta}P]_{\lambda_{0},\gamma_{0}}$.
In summary, 
\begin{align*}
 & \delta H/\delta((\delta\lambda),(\delta\gamma),(\delta b),(\delta\eta),(\delta\epsilon_{0}^{\sharp}))\\
 & =\begin{pmatrix}-1 & 0 & 0 & 0 & \ast\\
0 & 1 & 0 & 0 & \ast\\
0 & 0 & 1 & 0 & \ast\\
0 & 0 & 0 & 1 & \ast\\
0 & 0 & 0 & 0 & \mathrm{id}_{(\mathcal{Z}^{\perp}\cap H_{m}^{5})}
\end{pmatrix}+O_{\mathcal{L}(H_{m}^{5},H_{m}^{5})}(|\delta\lambda|+\cdots+|\delta\eta|+\|\epsilon_{0}^{\sharp}\|_{L^{2}}),
\end{align*}
where the first four rows are written in the directions $\Lambda\hat P,i\hat P,\partial_{b}\hat P,\partial_{\eta}\hat P$.
Applying the inverse function theorem at $\hat u$ says that $H$
is a $C^{1}$-diffeomorphism on a neighborhood of $\hat u$ in $H_{m}^{5}$.

Therefore, it suffices to show that there is an open neighborhood
$\hat{\mathcal{O}}$ of $\hat u$ such that $H^{-1}(\mathcal{M}\cap\hat{\mathcal{O}})$
is a locally Lipschitz codimension one manifold. We note that $H^{-1}\circ h$
is given by 
\begin{align*}
 & \hat u+(\delta\lambda)\Lambda\hat P+(\delta\gamma)i\hat P+(\delta b)\partial_{b}\hat P+(\delta\epsilon_{0}^{\sharp})\\
 & \mapsto\hat u+(\delta\lambda)\Lambda\hat P+(\delta\gamma)i\hat P+(\delta b)\partial_{b}\hat P+(\delta\epsilon_{0}^{\sharp})+(\tilde{\eta}-\hat{\eta})\partial_{\eta}\hat P,
\end{align*}
where $\tilde{\eta}=\tilde{\eta}(\lambda_{0},\gamma_{0},b_{0},\epsilon_{0}^{\sharp})$.
Notice that this looks like the form $\mathrm{id}_{\hat u+X_{s}}\oplus f$,
where $f=(\tilde{\eta}-\hat{\eta})\partial_{\eta}\hat P$. According
to Definition \ref{def:localLipschitz}, it suffices to show that
(1) there are small neighborhoods $\hat{\mathcal{O}}$ of $\hat u$
in $H_{m}^{5}$ and $\tilde{\mathcal{O}}$ of $\hat u$ in $\hat u+X_{s}$
such that $\mathcal{M}\cap\hat{\mathcal{O}}=h(\tilde{\mathcal{O}})$,
and (2) the map $(\delta\lambda,\delta\gamma,\delta b,\delta\epsilon_{0}^{\sharp})\mapsto(\tilde{\eta}-\hat{\eta})\partial_{\eta}\hat P$
is Lipschitz continuous.

(1) For $\hat{\mathcal{O}}$ to be chosen small, let $u_{0}\in\mathcal{M}\cap\hat{\mathcal{O}}$.
Because $u_{0}$ is near $\hat u$, we can perform the decomposition
according to Lemma \ref{lem:decomp} and the associated $\lambda_{0},\gamma_{0},\eta_{0},b_{0},\epsilon_{0}^{\sharp}$
are all close to the reference data $\hat{\lambda},\dots,\hat{\epsilon}^{\sharp}$.
Since $u_{0}\in\mathcal{M}$ and $u_{0}$ is near $\hat u$, the uniqueness
statement of Proposition \ref{prop:Lipschitz-diff-orig-variables}
says that $\eta_{0}=\tilde{\eta}(\lambda_{0},\gamma_{0},b_{0},\epsilon_{0}^{\sharp})$.
We then define $\delta\epsilon_{0}^{\sharp}$ by inverting the formula
\eqref{eq:Diff-est-e-sharp-formula}. Note that the inversion is possible
because the summation part is small, thanks to the orthogonality condition.
If $\hat{\mathcal{O}}$ is sufficiently small, then this $(\lambda_{0},\gamma_{0},b_{0},\delta\epsilon_{0}^{\sharp})$
lies in the domain of $h$. In other words, $u_{0}$ lies in the image
of $h$. Thus $\mathcal{M}\cap\hat{\mathcal{O}}$ is contained in
the image of $h$. Finally set $\tilde{\mathcal{O}}\coloneqq h^{-1}(\hat{\mathcal{O}})$.

(2) It suffices to show that $(\delta\lambda,\delta\gamma,\delta b,\delta\epsilon_{0}^{\sharp})\mapsto\tilde{\eta}(\lambda_{0},\gamma_{0},b_{0},\epsilon_{0}^{\sharp})$
is Lipschitz continuous. This follows from Proposition \ref{prop:Lipschitz-diff-orig-variables}
and the $\epsilon_{0}^{\sharp}$-formula \eqref{eq:Diff-est-e-sharp-formula}.

This ends the proof of Theorem \ref{thm:BlowupManifold}.
\end{proof}
One can also deduce Proposition \ref{prop:Lipschitz-diff-orig-variables}
from Proposition \ref{prop:Lipschitz-estimate-modulo-scale-phase}.
\begin{proof}[Proof of Proposition \ref{prop:Lipschitz-diff-orig-variables} assuming
Proposition \ref{prop:Lipschitz-estimate-modulo-scale-phase}]
Without loss of generality, we may assume that $\lambda_{0}\geq\lambda_{0}'$.
As $\lambda(t)$ is strictly decreasing to $0$ as $t$ goes to the
blow-up time, there exists unique $\tilde t_{0}\geq0$ such that $\lambda(\tilde t_{0})=\lambda_{0}'$.
Now $u(\tilde t_{0})$ and $u_{0}'$ are located at the \emph{same
scale} $\lambda_{0}'\approx1$ so that 
\[
\|\epsilon(\tilde t_{0})-\epsilon_{0}'\|_{H_{m}^{3}}\lesssim\|\epsilon^{\sharp}(\tilde t_{0})-(\epsilon_{0}')^{\sharp'}\|_{H_{m}^{3}}+|\gamma_{0}-\gamma_{0}'|.
\]
Applying Proposition \ref{prop:Lipschitz-estimate-modulo-scale-phase}
for $u(\tilde t_{0})$ and $u_{0}'$ yields 
\begin{equation}
|\eta(\tilde t_{0})-\eta_{0}'|\lesssim_{b_{0}}|b(\tilde t_{0})-b_{0}'|+|\gamma_{0}-\gamma_{0}'|+\|\epsilon^{\sharp}(\tilde t_{0})-(\epsilon_{0}')^{\sharp'}\|_{H_{m}^{3}},\label{eq:Lipschitz-diff-temp1}
\end{equation}
provided that $b_{0}(\tilde t_{0})$ is near $b_{0}$.

We claim that 
\begin{equation}
|\eta(\tilde t_{0})-\eta_{0}|+|b(\tilde t_{0})-b_{0}|+\|\epsilon^{\sharp}(\tilde t_{0})-\epsilon_{0}^{\sharp}\|_{H_{m}^{3}}\lesssim\lambda_{0}-\lambda_{0}'.\label{eq:Lipschitz-diff-claim1}
\end{equation}
Let us assume the claim \eqref{eq:Lipschitz-diff-claim1} and finish
the proof. Note that the claim guarantees $b_{0}(\tilde t_{0})\approx b_{0}$,
because we assumed that $\lambda_{0}$ and $\lambda_{0}'$ are sufficiently
close. The proof is completed by \eqref{eq:Lipschitz-diff-temp1}
and the claim \eqref{eq:Lipschitz-diff-claim1}: 
\begin{align*}
|\eta_{0}-\eta_{0}'| & \leq|\eta_{0}-\eta(\tilde t_{0})|+|\eta(\tilde t_{0})-\eta_{0}'|\\
 & \lesssim_{b_{0}}|\eta(\tilde t_{0})-\eta_{0}|+|b(\tilde t_{0})-b_{0}'|+|\gamma_{0}-\gamma_{0}'|+\|\epsilon^{\sharp}(\tilde t_{0})-(\epsilon_{0}')^{\sharp'}\|_{H_{m}^{3}}\\
 & \lesssim_{b_{0}}|\eta(\tilde t_{0})-\eta_{0}|+|b(\tilde t_{0})-b_{0}|+\|\epsilon^{\sharp}(\tilde t_{0})-\epsilon_{0}^{\sharp}\|_{H_{m}^{3}}\\
 & \qquad+|b_{0}-b_{0}'|+|\gamma_{0}-\gamma_{0}'|+\|\epsilon_{0}^{\sharp}-(\epsilon_{0}')^{\sharp'}\|_{H_{m}^{3}}\\
 & \lesssim_{b_{0}}|\lambda_{0}-\lambda_{0}'|+|b_{0}-b_{0}'|+|\gamma_{0}-\gamma_{0}'|+\|\epsilon_{0}^{\sharp}-(\epsilon_{0}')^{\sharp'}\|_{H_{m}^{3}}.
\end{align*}
It now remains to prove the claim \eqref{eq:Lipschitz-diff-claim1}.

We first show $|\eta(\tilde t_{0})-\eta_{0}|+|b(\tilde t_{0})-b_{0}|\lesssim\lambda_{0}-\lambda_{0}'$
by a bootstrap argument. From the modulation estimate, we have 
\[
(\lambda^{2})_{t}=-(2+O(b^{\frac{3}{2}}))b.
\]
Since both $\lambda_{0}$ and $\lambda_{0}'$ are close to $1$, we
have $\lambda_{0}^{2}-(\lambda_{0}')^{2}\approx2(\lambda_{0}-\lambda_{0}')$.
Thus 
\[
\tilde t_{0}\approx b_{0}^{-1}(\lambda_{0}-\lambda_{0}'),
\]
\emph{provided that} $b(t)\approx b_{0}$ on $[0,\tilde t_{0}]$.
Then the modulation estimates $|\eta_{t}|\lesssim b^{\frac{5}{2}}$
and $|b_{t}|\lesssim b^{2}$ (recall that $\lambda\approx1$ on $[0,\tilde t_{0}]$)
say that 
\[
|\eta(\tilde t_{0})-\eta_{0}|+\sup_{t\in[0,\tilde t_{0}]}|b(t)-b_{0}|\lesssim b^{2}\tilde t_{0}\lesssim\lambda_{0}-\lambda_{0}',
\]
\emph{provided that} $b(t)\approx b_{0}$ on $[0,\tilde t_{0}]$.
By a standard bootstrap argument, we obtain the following: if $\lambda_{0}$
and $\lambda_{0}'$ are close to $1$, then $|\eta(\tilde t_{0})-\eta_{0}|+|b(\tilde t_{0})-b_{0}|\lesssim\lambda_{0}-\lambda_{0}'$,
$b(t)\approx b_{0}$ on $[0,\tilde t_{0}]$, and $\tilde t_{0}\approx b_{0}^{-1}(\lambda_{0}-\lambda_{0}')$.

Finally, we need to measure the difference of $\epsilon_{0}^{\sharp}$
and $\epsilon^{\sharp}(\tilde t_{0})$. For this, we will rewrite
the equations of $\epsilon$ and $\epsilon_{3}$ in the $(t,r)$-variables.
Let 
\[
\epsilon^{\sharp}(t,r)\coloneqq\frac{e^{i\gamma(t)}}{\lambda(t)}\epsilon\Big(t,\frac{r}{\lambda(t)}\Big)\qquad\text{and}\qquad\epsilon_{3}^{\sharp_{-3}}(t,r)\coloneqq\frac{e^{i\gamma(t)}}{\lambda^{4}(t)}\epsilon_{3}\Big(t,\frac{r}{\lambda(t)}\Big),
\]
where $\sharp_{-3}$ means the $\dot{H}^{-3}$-scaling. Then we have
\begin{align*}
\partial_{t}\epsilon^{\sharp} & =\frac{1}{\lambda^{2}}\Big(i\Delta_{m}\epsilon+\text{RHS of }\eqref{eq:e-equation-L2}\Big)^{\sharp},\\
\partial_{t}\epsilon_{3}^{\sharp_{-3}} & =\frac{1}{\lambda^{2}}\Big(-iA_{Q}^{\ast}A_{Q}\epsilon_{3}+\text{RHS of }\eqref{eq:e3-eq}\Big)^{\sharp_{-3}}.
\end{align*}
Using $\lambda\approx1$, we get 
\begin{align*}
\|\partial_{t}\epsilon^{\sharp}\|_{L^{2}} & \lesssim\|\Delta_{m}\epsilon\|_{L^{2}}+\|\text{RHS of }\eqref{eq:e-equation-L2}\|_{L^{2}},\\
\|\partial_{t}\epsilon_{3}^{\sharp_{-3}}\|_{L^{2}} & \lesssim\|\epsilon_{5}\|_{L^{2}}+\|\text{RHS of }\eqref{eq:e3-eq}\|_{L^{2}}.
\end{align*}
For the first term of each RHS, we can apply a priori $H_{m}^{5}$-estimates
of $\epsilon$. For the second term of each RHS, we recall that these
are estimated in Lemma \ref{lem:L2-EnergyEstimate} and \ref{lem:E3-Identity},
respectively. As a result, 
\[
\|\partial_{t}\epsilon^{\sharp}\|_{L^{2}}+\|\partial_{t}\epsilon_{3}^{\sharp_{-3}}\|_{L^{2}}\lesssim b^{\frac{3}{2}}.
\]
Integrating this on $[0,\tilde t_{0}]$ and applying the coercivity
Lemma \ref{lem:Coercivity-AAL-Section2} for $\epsilon_{3}$, we finally
get 
\[
\|\epsilon^{\sharp}(\tilde t_{0})-\epsilon_{0}^{\sharp}\|_{H_{m}^{3}}\lesssim\tilde t_{0}b\lesssim\lambda_{0}-\lambda_{0}'.
\]
This completes the proof of \eqref{eq:Lipschitz-diff-claim1}.
\end{proof}
Therefore, it only remains to show Proposition \ref{prop:Lipschitz-estimate-modulo-scale-phase}.

\subsection{Further reduction of Proposition \ref{prop:Lipschitz-estimate-modulo-scale-phase}}

So far, we have seen how the Lipschitz control on the difference of
unstable parameter $\eta$ by that of stable parameters $b$ and $\epsilon$
yields Lipschitz regularity of the manifold $\mathcal{M}$. From now
on, we focus on proving this Lipschitz control, Proposition \ref{prop:Lipschitz-estimate-modulo-scale-phase}.

Roughly speaking, the difference $|\eta_{0}-\eta_{0}'|$ can be controlled
backwards in time because $\eta$ is an unstable parameter, and $\eta$
and $\eta'$ have zero limit (with sufficient decay) at their blow-up
times. Thus $|\eta_{0}-\eta_{0}'|$ will be controlled by the differences
of all parameters in the future times. The differences of $b$ and
$\epsilon$ can be controlled forwards in time, because they are stable
parameters. Thus they will be controlled by $|b_{0}-b_{0}'|$ and
$\|\epsilon_{0}-\epsilon_{0}'\|_{H_{m}^{3}}$ (propagation of the
initial data difference), and the differences of all parameters in
the past times. Combining the above controls, one may expect that
$|\eta_{0}-\eta_{0}'|$ can be controlled by $|b_{0}-b_{0}'|$ and
$\|\epsilon_{0}-\epsilon_{0}'\|_{H_{m}^{3}}$. This idea is formulated
in Proposition \ref{prop:ForwardBackwardControl}. In this subsection,
we reduce Proposition \ref{prop:Lipschitz-estimate-modulo-scale-phase}
to Proposition \ref{prop:ForwardBackwardControl}.

In order to realize this idea, an important issue is the meaning of
the difference. Given any two $H_{m}^{5}$-trapped solutions $u$
and $u'$, their blow-up times are in general different. Thus taking
the difference in the original time variables $t$ will not work.
However, as observed in \cite{Collot2018MemAMS}, there is a natural
choice of the time for measuring the difference of $u$ and $u'$.

To be more precise, we first apply the dynamic rescaling for $u$
and $u'$. First, fix any $s_{0}\in\R$ and define $s=s(t):[0,T_{+}(u))\to[s_{0},\infty)$
such that $\frac{ds}{dt}=\frac{1}{\lambda^{2}(t)}$ and $s(0)=s_{0}$.
Write $\lambda,\gamma,b,\eta,\epsilon$ as functions of the renormalized
time $s\in[s_{0},\infty)$. Then, $\epsilon=\epsilon(s,y):[s_{0},\infty)\times(0,\infty)\to\C$
satisfies the equation 
\[
\partial_{s}\epsilon-\frac{\lambda_{s}}{\lambda}\Lambda\epsilon=-i\mathcal{L}_{Q}\epsilon+\dots.
\]
Similarly, fix any $s_{0}'$ and define $s'=s'(t):[0,T_{+}(u'))\to[s_{0}',\infty)$
such that $\frac{ds'}{dt}=\frac{1}{(\lambda')^{2}(t)}$ and $s'(0)=s_{0}'$.
Write $\lambda',\gamma',b',\eta',\epsilon'$ as functions of $s'\in[s_{0}',\infty)$.
Then, $\epsilon'=\epsilon'(s',y):[s_{0}',\infty)\times(0,\infty)\to\C$
satisfies the equation 
\[
\partial_{s'}\epsilon'-\frac{\lambda_{s'}'}{\lambda'}\Lambda\epsilon'=-i\mathcal{L}_{Q}\epsilon'+\dots.
\]

We now forget the original time variable $t$ and regard $s$ and
$s'$ as independent time variables. A priori $s\in[s_{0},\infty)$
and $s'\in[s_{0}',\infty)$ live in different spaces. In order to
compare $\epsilon$ and $\epsilon'$, we introduce a transform $s'=s'(s):[s_{0},\infty)\to[s_{0}',\infty)$
with $s'(s_{0})=s_{0}'$ to put $\epsilon$ and $\epsilon'$ on the
same time domain $[s_{0},\infty)$ and measure their differences by
$\epsilon(s)-\epsilon'(s'(s))$. To choose a right transform $s'=s'(s)$,
let us rewrite the equation of $\epsilon'$ in the $s$-variable (abusing
the notation $\epsilon'(s,y)=\epsilon'(s'(s),y)$) using $\partial_{s}=\frac{ds'}{ds}\partial_{s'}$:
\[
\partial_{s}\epsilon'-\frac{ds'}{ds}\frac{\lambda_{s'}'}{\lambda'}\Lambda\epsilon'=\frac{ds'}{ds}(-i\mathcal{L}_{Q}\epsilon'+\dots).
\]
When we write the equation for the difference $\delta\epsilon(s)=\epsilon(s)-\epsilon'(s'(s))$,
the difference of $\frac{\lambda_{s}}{\lambda}\Lambda\epsilon$ and
$\frac{ds'}{ds}\frac{\lambda_{s'}'}{\lambda'}\Lambda\epsilon'$ will
contain the term $(\frac{\lambda_{s}}{\lambda}-\frac{ds'}{ds}\frac{\lambda_{s'}'}{\lambda'})\Lambda\epsilon$.
As the scaling vector field $\Lambda$ is unbounded, we want to delete
this term. This motivates us to choose $s'=s'(s)$ such that 
\[
\frac{ds'}{ds}\frac{\lambda_{s'}'}{\lambda'}=\frac{\lambda_{s}}{\lambda}.
\]
Integrating this, we have 
\[
\frac{\lambda'(s'(s))}{\lambda(s)}=\frac{\lambda_{0}'}{\lambda_{0}}.
\]
This says that we choose $s'=s'(s)$ such that the ratio between $\lambda'(s')$
and $\lambda(s)$ remains constant. The price to pay is that the difference
equation has the term $(\frac{ds'}{ds}-1)i\mathcal{L}_{Q}\epsilon'$.
In the energy estimate of $\delta\epsilon$, this term requires a
priori control of $i\mathcal{L}_{Q}\epsilon'$ having two more derivatives.

From now on, we reduce Proposition \ref{prop:Lipschitz-estimate-modulo-scale-phase}
to Proposition \ref{prop:ForwardBackwardControl}. Set 
\[
s_{0}=s_{0}'=b_{0}^{-1}.
\]
As the statement of Proposition \ref{prop:Lipschitz-estimate-modulo-scale-phase}
is invariant under scalings, we may assume that 
\[
\lambda_{0}=\lambda_{0}'=s_{0}^{-1}.
\]
As motivated from the previous paragraph, we define a strictly increasing
function $s'=s'(s):[s_{0},\infty)\to[s_{0}',\infty)$ such that $s'(s_{0})=s_{0}$
and
\[
\lambda'(s'(s))=\lambda(s).
\]
Differentiating this relation, we have 
\[
\frac{ds'}{ds}=\frac{\lambda_{s}}{\lambda'_{s'}}=\Big(\frac{\lambda'_{s'}}{\lambda'}\Big)^{-1}\Big(\frac{\lambda_{s}}{\lambda}\Big).
\]

\begin{lem}[Rough asymptotics]
\label{lem:RoughAsymptotics}For sufficiently large $s_{0}$,\footnote{As we have set $s_{0}=b_{0}^{-1}$, this is equivalent to saying ``for
sufficiently large $b_{0}^{-1}>(b^{\ast})^{-1}$.''} we have for $s\in[s_{0},\infty)$
\begin{align*}
s'(s) & =s(1+O(s_{0}^{-\frac{1}{2}}))\\
b(s) & =s^{-1}(1+O(s_{0}^{-\frac{1}{2}}))=b'(s'(s)),\\
\lambda(s) & =s^{-1}(1+O(s_{0}^{-\frac{1}{2}}))=\lambda'(s'(s)).
\end{align*}
\end{lem}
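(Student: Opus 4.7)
The plan is to integrate the modulation ODEs $b_s = -b^2 + O(b^{5/2})$ and $\lambda_s/\lambda = -b + O(b^{5/2})$, which follow from Lemma~\ref{lem:mod-est} combined with the bootstrap bounds \eqref{eq:bootstrap-hyp-H5}, starting from the common initial data $b(s_0) = \lambda(s_0) = s_0^{-1}$, and then to transfer the resulting asymptotics for $u'$ back to the $s$-variable through the defining identity $\lambda'(s'(s)) = \lambda(s)$.

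For $u$, I would rewrite $b_s = -b^2 + O(b^{5/2})$ as $(b^{-1})_s = 1 + O(b^{1/2})$ and integrate from $s_0$ to obtain $b^{-1}(s) = s + \int_{s_0}^{s} O(b^{1/2})\,ds'$. A short bootstrap based on the coarser consequence $b_s \leq -\tfrac{1}{2}b^2$ yields the a priori bound $b(s) \lesssim s^{-1}$ on $[s_0,\infty)$, so the error integral is $O(s^{1/2})$ and hence
\[
b(s) = s^{-1}\bigl(1 + O(s^{-1/2})\bigr) = s^{-1}\bigl(1 + O(s_0^{-1/2})\bigr)
\]
since $s \geq s_0$. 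For $\lambda$, rather than integrating $\lambda_s/\lambda$ directly (which would produce a logarithmically growing error), I would use the sharper ratio bound \eqref{eq:closing-bootstrap-claim4-1}: in the $s$-variable it reads $|\partial_s \log(b/\lambda)| \lesssim b^{3/2}$, and using $b \lesssim s^{-1}$ the integral $\int_{s_0}^{s} b^{3/2}\,ds'$ is $O(s_0^{-1/2})$. Therefore $b(s)/\lambda(s) = (b_0/\lambda_0)(1 + O(s_0^{-1/2})) = 1 + O(s_0^{-1/2})$, which combined with the $b$-asymptotic gives $\lambda(s) = s^{-1}(1 + O(s_0^{-1/2}))$.

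The identical argument applied to $u'$ in the $s'$-variable starts from $\lambda'(s_0') = s_0^{-1}$, which matches the initial condition for $\lambda$, and yields $\lambda'(s') = (s')^{-1}(1 + O(s_0^{-1/2}))$ with no modification. For $b'$ the initial value $b_0'$ differs from $s_0^{-1}$, and the same integration gives $(b')^{-1}(s') = s' + ((b_0')^{-1} - s_0) + O((s')^{1/2})$; concluding $b'(s') = (s')^{-1}(1 + O(s_0^{-1/2}))$ therefore requires $|(b_0')^{-1} - s_0| \lesssim s_0^{1/2}$, equivalently $|b_0 - b_0'| \lesssim s_0^{-3/2}$. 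The defining identity $\lambda'(s'(s)) = \lambda(s)$ combined with both $\lambda$-asymptotics then forces $s'(s) = s(1 + O(s_0^{-1/2}))$, and substitution into the $b'$-asymptotic completes the proof.

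The main and essentially only obstacle is the quantitative reading of ``sufficiently close'': the cleanest argument asks for $|b_0 - b_0'| \lesssim s_0^{-3/2}$, which is narrower than the $O(b_0)$ one might naively take. This is legitimate in the context of Proposition~\ref{prop:Lipschitz-estimate-modulo-scale-phase} because the Lipschitz constant there is allowed to depend on $b_0$, so the neighborhood of $b_0$ on which the proposition is proved can be shrunk accordingly; all other error terms are routine consequences of the bootstrap and of the $O(b^{5/2})$ modulation remainders in Lemma~\ref{lem:mod-est}.
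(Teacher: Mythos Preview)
Your proof is correct and follows essentially the same route as the paper: integrate $(b^{-1})_s = 1 + O(b^{1/2})$ to get $b(s) = s^{-1}(1+O(s^{-1/2}))$, pass to $\lambda$ via a logarithmic derivative, then transfer to $u'$ and recover $s'(s)$ from $\lambda'(s'(s))=\lambda(s)$. The technical variations are minor: where the paper controls $\int b^{1/2}\,d\sigma$ by integration by parts (rewriting $b^{1/2}=-b^{-3/2}b_s+O(b)$), you use a direct bootstrap $b\lesssim s^{-1}$; and where the paper integrates $(\log(s\lambda))_s$, you integrate $(\log(b/\lambda))_s$ via \eqref{eq:closing-bootstrap-claim4-1}. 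Both routes are equally valid.

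One small inaccuracy: your claim that the $\lambda'$ argument goes through ``with no modification'' is not quite right. Your route to $\lambda$ passes through the initial ratio $b_0/\lambda_0=1$; for $u'$ this ratio is $b_0'/\lambda_0' = b_0' s_0 = b_0'/b_0$, which equals $1+O(s_0^{-1/2})$ only under the same closeness constraint $|b_0-b_0'|\lesssim s_0^{-3/2}$ that you already identified for $b'$. So the $\lambda'$ estimate is not free of the constraint---it inherits it. This does not affect correctness, since you do impose the constraint, and the paper's proof (which routes $\lambda'$ through the asymptotic for $b'$ as well) tacitly relies on the same hypothesis from Proposition~\ref{prop:Lipschitz-estimate-modulo-scale-phase}. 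Your explicit flagging of this point is in fact more careful than the paper's presentation.
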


\begin{proof}
We first prove asymptotics of $b(s)$ and $\lambda(s)$. We start
from writing the modulation equation $b_{s}+b^{2}=O(b^{\frac{5}{2}})$
in the form 
\[
|(b^{-1})_{s}-1|\lesssim b^{\frac{1}{2}}.
\]
Using $b^{\frac{1}{2}}=-b^{-\frac{3}{2}}b_{s}+O(b)$ and integration
by parts, we have $\int_{s_{0}}^{s}b^{\frac{1}{2}}(\sigma)d\sigma\leq b^{-\frac{1}{2}}(s)+O((b^{\ast})^{\frac{1}{2}})\int_{s_{0}}^{s}b^{\frac{1}{2}}(\sigma)d\sigma$.
Thus $\int_{s_{0}}^{s}b^{\frac{1}{2}}(\sigma)d\sigma\lesssim b^{-\frac{1}{2}}(s)$
so $|b^{-1}(s)-s|\lesssim b^{-\frac{1}{2}}(s)$. In other words, $b(s)=s^{-1}(1+O(s^{-\frac{1}{2}}))$.
Next, from 
\[
|(\log(s\lambda))_{s}|=\Big|\Big(\frac{1}{s}-b\Big)+\Big(\frac{\lambda_{s}}{\lambda}+b\Big)\Big|\lesssim s^{-\frac{3}{2}},
\]
we get $|s\lambda(s)-1|\lesssim s_{0}^{-\frac{1}{2}}$.

We now focus on $s'(s)$ and $b'(s'(s))$. Applying the argument in
the previous paragraph for $b'(s')$, we have $b'(s')=(s')^{-1}(1+O((s')^{-\frac{1}{2}}))$
and $|s'\lambda'(s')-1|\lesssim s_{0}^{-\frac{1}{2}}$. Combining
these with $|s\lambda(s)-1|\lesssim s_{0}^{-\frac{1}{2}}$ and $\lambda'(s'(s))=\lambda(s)$,
we have 
\begin{align*}
|s'(s)-s|\lambda(s) & \leq|s'(s)\lambda(s)-1|+|s\lambda(s)-1|\\
 & =|s'(s)\lambda'(s'(s))-1|+|s\lambda(s)-1|\lesssim s_{0}^{-\frac{1}{2}}.
\end{align*}
Since $\lambda(s)\approx s^{-1}$, we get $|s'(s)-s|\lesssim s_{0}^{-\frac{1}{2}}s$.
Substituting this time difference estimate into $b'(s')=(s')^{-1}(1+O((s')^{-\frac{1}{2}}))$,
we get $b'(s'(s))=s^{-1}(1+O(s_{0}^{-\frac{1}{2}}))$ as desired.
\end{proof}
For $s\in[s_{0},\infty)$, let 
\begin{align*}
\delta b(s) & \coloneqq b(s)-b'(s'(s)),\\
\delta\eta(s) & \coloneqq\eta(s)-\eta'(s'(s)),\\
\delta\epsilon(s) & \coloneqq\epsilon(s)-\epsilon'(s'(s)).
\end{align*}
The adapted derivatives are defined similarly. We note that $(\delta\epsilon)_{k}=\delta(\epsilon_{k})$.

We define the distances of stable/unstable parameters 
\begin{align*}
D^{s}(s) & \coloneqq s|\delta b|+\|\delta\epsilon\|_{L^{2}}+s^{\frac{5}{2}}\|\delta\epsilon_{3}\|_{L^{2}},\\
D^{u}(s) & \coloneqq s^{\frac{3}{2}}|\delta\eta|,\\
D(s) & \coloneqq D^{s}(s)+D^{u}(s).
\end{align*}
Taking the supremum over $[s_{0},\infty)$, we define 
\begin{align*}
D^{s}(s_{0},\infty) & \coloneqq\sup_{s\in[s_{0},\infty)}D^{s}(s),\\
D^{u}(s_{0},\infty) & \coloneqq\sup_{s\in[s_{0},\infty)}D^{u}(s).
\end{align*}
We note that $D^{s}(s_{0},\infty)$ and $D^{u}(s_{0},\infty)$ are
finite due to \eqref{eq:def-H5-trapped}.

We now motivate the weights in front of the differences $|\delta b|,|\delta\eta|,\|\delta\epsilon\|_{L^{2}},\|\delta\epsilon_{3}\|_{L^{2}}$.
Indeed, it is designed to have an $s$-independent bound 
\[
D^{s}(s)+D^{u}(s)\lesssim D^{s}(s_{0}).
\]
First, due to scalings, $\|\delta\epsilon\|_{L^{2}}\lesssim\|\epsilon_{0}-\epsilon_{0}'\|_{L^{2}}$
and $\|\delta\epsilon_{3}\|_{L^{2}}\lesssim(\frac{s_{0}}{s})^{3}\|\epsilon_{3}(s_{0})-\epsilon_{3}'(s_{0})\|_{L^{2}}$
are expected at best. However, due to a limitation of the method,
we will only have $\|\delta\epsilon_{3}\|_{L^{2}}\lesssim s^{-(3-)}\|\epsilon_{3}(0)-\epsilon_{3}'(0)\|_{L^{2}}$,
so we choose $s^{-\frac{5}{2}}$ to work with. Next, it will turn
out that $(\delta b)_{s}+b(\delta b)\approx0$ so $(\delta b)\approx s^{-1}(b_{0}-b_{0}')$
using $b(s)\approx s^{-1}$. Finally, since $|(\delta\eta)_{s}|\lesssim\|\delta\epsilon_{3}\|_{L^{2}}+error$,
and $\eta(s)$ and $\eta'(s'(s))$ go to zero as $s\to\infty$, integrating
backwards in time roughly yields $|\delta\eta(s)|\lesssim s\|\delta\epsilon_{3}\|_{L^{2}}\lesssim s^{-\frac{3}{2}}$.
\begin{prop}[Forward/backward controls]
\label{prop:ForwardBackwardControl}If $s_{0}=b_{0}^{-1}$ is chosen
sufficiently large, then we have
\begin{enumerate}
\item (Forward-in-time control for stable modes) 
\begin{equation}
D^{s}(s_{0},\infty)\lesssim D^{s}(s_{0})+s_{0}^{-\frac{1}{4}}D(s_{0},\infty).\label{eq:ForwardControlStableModes}
\end{equation}
\item (Backward-in-time control for unstable modes)
\begin{equation}
D^{u}(s_{0},\infty)\lesssim D^{s}(s_{0},\infty)+s_{0}^{-\frac{1}{2}}D(s_{0},\infty).\label{eq:BackwardControlUnstableModes}
\end{equation}
\end{enumerate}
In particular, we have 
\[
D(s_{0},\infty)\lesssim D^{s}(s_{0}).
\]
\end{prop}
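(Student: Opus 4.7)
The plan is to mimic the bootstrap analysis of Section~\ref{sec:Main-bootstrap} at the level of differences, splitting the work into a forward-in-time energy/modulation analysis for $(\delta b, \delta\epsilon, \delta\epsilon_3)$ and a backward-in-time integration for $\delta\eta$, and then absorbing cross-terms using the smallness of $s_0^{-1}\sim b^\ast$. As a first step I would derive the equation satisfied by $\delta\epsilon(s)=\epsilon(s)-\epsilon'(s'(s))$. Using $\partial_s = \tfrac{ds'}{ds}\partial_{s'}$ together with the normalization $\lambda'(s'(s))=\lambda(s)$, the two renormalized equations \eqref{eq:e-eq-cor} combine to give
\begin{equation*}
\Big(\partial_s-\tfrac{\lambda_s}{\lambda}\Lambda+\gamma_s i\Big)\delta\epsilon + i\mathcal{L}_Q\,\delta\epsilon = \delta(\tilde{\mathbf{Mod}}\cdot \mathbf{v}) - i\delta\tilde R_{\mathrm{L-L}} - i\delta\tilde R_{\mathrm{NL}} - i\delta\Psi - i\Big(\tfrac{ds'}{ds}-1\Big)\mathcal{L}_Q\epsilon' + (\mathrm{rot.\ terms}),
\end{equation*}
where the rotational terms collect the differences of $\tfrac{\lambda_s}{\lambda}$ and $\gamma_s$ acting on $\epsilon'$. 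The crucial new feature, compared to the single-solution analysis, is the term $-i(\tfrac{ds'}{ds}-1)\mathcal{L}_Q\epsilon'$: since $\tfrac{ds'}{ds}-1 = O(s_0^{-1/2})$ by Lemma~\ref{lem:RoughAsymptotics} and Lemma~\ref{lem:mod-est}, and $\mathcal{L}_Q$ costs two derivatives, this forces us to use the a priori $\dot{\mathcal{H}}_m^5$ control on $\epsilon'$ granted by the $H_m^5$-trapped regime.

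For the forward bound \eqref{eq:ForwardControlStableModes} I would first establish a difference modulation estimate by differentiating the orthogonality conditions $(\delta\epsilon, \mathcal{Z}_k)_r = -(\epsilon', \mathcal{Z}_k^{s\ne s'})_r$ (accounting for the slight mismatch coming from $s\ne s'$) and inverting the transversality matrix \eqref{eq:ortho-jacobian}; this mimics Lemma~\ref{lem:mod-est} and yields
\begin{equation*}
|\delta \tilde{\mathbf{Mod}}| \lesssim o_{M\to\infty}(1)\|\delta\epsilon_3\|_{L^2} + M^C b\,\|\delta\epsilon\|_{\dot{\mathcal{H}}_m^3} + M^C s_0^{-1/2}(|\delta b|+|\delta\eta|) + (\text{higher order in }\epsilon,\epsilon').
\end{equation*}
Next, I would write the $L^2$-energy identity for $\delta\epsilon$ and the modified energy identity at the $\dot{\mathcal{H}}_m^3$-level for $\delta\epsilon_3$, in direct analogy with Section~\ref{subsec:energy-identity} (using the same $\mathcal{F}_3$-type virial correction and the averaging argument over $M_2$). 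The nonlinear differences $\delta\tilde R_{\mathrm{L-L}}$ and $\delta\tilde R_{\mathrm{NL}}$ split into a linear-in-$\delta\epsilon$ part with factor $b$ (bounded as in Lemma~\ref{lem:degenerate-linear}), plus terms proportional to $\delta b,\delta\eta$ with coefficients like $\partial_b P,\partial_\eta P$ times an $\epsilon'$-factor that are $O(\|\epsilon'\|_{\dot{\mathcal{H}}_m^3})=O(b^{5/2})$ small, which feeds cleanly into $D^u$. Integrating the resulting modified energy inequality from $s_0$ forward, using $b(s)\sim s^{-1}$, produces the weights $s$, $1$, $s^{5/2}$ of the definition of $D^s(s)$ and closes \eqref{eq:ForwardControlStableModes}, with the $s_0^{-1/4}D(s_0,\infty)$ absorbing both the $(\tfrac{ds'}{ds}-1)\mathcal{L}_Q\epsilon'$ contribution (controlled by $\|\epsilon'\|_{\dot{\mathcal{H}}_m^5}\lesssim b^{9/2}$) and the contributions of $\delta\eta$ through the $\partial_\eta P$ channel.

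For \eqref{eq:BackwardControlUnstableModes} I would integrate the $\delta\eta$-component of the modulation estimate from $s=+\infty$ down to $s$. Since both $\eta(s)$ and $\eta'(s'(s))$ satisfy $|\eta|\lesssim s^{-3/2}$, the difference $\delta\eta$ vanishes at infinity with enough rate to make the improper integral convergent. The bound on $(\delta\eta)_s$ contains (i) $o_{M\to\infty}(1)\|\delta\epsilon_3\|_{L^2}\lesssim s^{-5/2}D^s(s)$, (ii) a $b\|\delta\epsilon\|_{\dot{\mathcal{H}}_m^3}\lesssim s^{-1}\cdot s^{-5/2}D^s(s)$ piece, and (iii) a piece proportional to $(|\delta b|+|\delta\eta|)$ with an $O(b^{3/2})$ coefficient; integrating $\int_s^\infty(\cdot)d\sigma$ and multiplying by $s^{3/2}$ gives $D^u(s)\lesssim D^s(s_0,\infty)+s_0^{-1/2}D(s_0,\infty)$. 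Combining with \eqref{eq:ForwardControlStableModes} and choosing $s_0$ large to absorb the $s_0^{-1/4}D(s_0,\infty)$ term on the right yields $D(s_0,\infty)\lesssim D^s(s_0)$.

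The main obstacle will be closing the forward energy estimate for $\delta\epsilon_3$ in the presence of the linear error $(\tfrac{ds'}{ds}-1)\mathcal{L}_Q\epsilon'$. A naive bound costs a derivative we do not have at the $\dot{\mathcal{H}}_m^3$-level of $\delta\epsilon$; the saving device is exactly why we assume $u,u'$ are $H_m^5$-trapped, so that $\|\epsilon_5'\|_{L^2}\lesssim b^{9/2}$ combined with $|\tfrac{ds'}{ds}-1|\lesssim s^{-1/2}$ and the conjugation identity $\mathcal{L}_Q=L_Q^\ast L_Q=L_Q^\ast(iA_Q^\ast A_Q)^{1/2}\cdots$ lets us rewrite $A_Q^\ast A_Q L_Q i\mathcal{L}_Q\epsilon'$ as a controlled object of size $O(s^{-9/2})$, small enough to be absorbed by $s_0^{-1/4}D(s_0,\infty)$. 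The secondary obstacle is bookkeeping the many $\partial_b,\partial_\eta$-differences of the profile $P(\cdot;b,\eta)$ and of the orthogonality functionals $\mathcal{Z}_k$, for which one uses Proposition~\ref{prop:modified-profile}(5).
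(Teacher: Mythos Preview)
Your strategy matches the paper's, but there is a genuine gap in how you handle the factor $\tfrac{ds'}{ds}-1$. You bound it a priori by $O(s_0^{-1/2})$ (or $O(s^{-1/2})$) via Lemma~\ref{lem:RoughAsymptotics}. That bound is correct but \emph{not proportional to $D$}, and this is fatal: the terms $(\tfrac{ds'}{ds}-1)i\mathcal{L}_Q\epsilon'$, $(\tfrac{ds'}{ds}-1)(i\tilde R_{\mathrm{L-L}}'+i\tilde R_{\mathrm{NL}}'+i\Psi')$, and their analogues at the $\epsilon_3$-level then contribute to the right-hand side of the energy inequalities with no $D$-factor. After pairing with $\delta\epsilon$ or $\delta\epsilon_3$ and integrating, you obtain at best
\[
(D^{s}(s_0,\infty))^{2}\ \lesssim\ (D^{s}(s_0))^{2}+s_{0}^{-\alpha}\,D(s_0,\infty),
\]
which, upon taking square roots and combining with the backward bound, only yields $D\lesssim D^{s}(s_0)+s_0^{-\alpha}$. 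The absolute term $s_0^{-\alpha}$ does not vanish as $D^{s}(s_0)\to 0$, so neither uniqueness of $\eta_0$ nor Lipschitz dependence follows.

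What the paper does instead is observe that $\tfrac{ds'}{ds}-1$ is itself controlled by $D$. From $\tilde{\mathrm{Mod}}_1-\tfrac{ds'}{ds}\tilde{\mathrm{Mod}}_1'=\delta b-(\tfrac{ds'}{ds}-1)b'$ one gets
\[
\Big|\tfrac{ds'}{ds}-1\Big|\ \lesssim\ \frac{|\delta b|}{b'}+\frac{1}{b'}\Big|\tilde{\mathbf{Mod}}-\tfrac{ds'}{ds}\tilde{\mathbf{Mod}}'\Big|\ \lesssim\ D,
\]
once the difference modulation estimate is proved (this is \eqref{eq:Diff-dsds-Estimate} in the paper). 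The subtlety is that the $\delta\epsilon$-equation itself contains $(\tfrac{ds'}{ds}-1)$-terms, so one must first substitute this identity back into the equation and move the resulting contributions to the left-hand side before inverting the transversality matrix (see the rewriting \eqref{eq:DiffModEst-tmp1}). Only then does every forcing term in the $\delta\epsilon$, $\delta\epsilon_3$ and $\delta b$ estimates carry a factor of $D$, and the bootstrap closes as $D\lesssim D^{s}(s_0)$.

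Two minor remarks: the quantity you should estimate is $\tilde{\mathbf{Mod}}-\tfrac{ds'}{ds}\tilde{\mathbf{Mod}}'$, not $\delta\tilde{\mathbf{Mod}}$ (they differ by $(\tfrac{ds'}{ds}-1)\tilde{\mathbf{Mod}}'$, again requiring the $D$-bound on $\tfrac{ds'}{ds}-1$); and the ``conjugation identity'' you invoke for the $\mathcal{L}_Q\epsilon'$ term should read $A_Q^\ast A_Q L_Q\, i\mathcal{L}_Q\epsilon' = A_Q^\ast A_Q\,(L_Q i L_Q^\ast)\,L_Q\epsilon' = iA_Q^\ast A_Q A_Q^\ast A_Q\,\epsilon_1' = i\epsilon_5'$, which is where the a priori $\|\epsilon_5'\|_{L^2}$ bound enters.
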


\begin{rem}
In fact, we can show 
\begin{align*}
D^{s}(s_{0},s) & \lesssim D^{s}(s_{0})+s_{0}^{-\frac{1}{4}}D(s_{0},s),\\
D^{u}(s,\infty) & \lesssim D^{s}(s,\infty)+s^{-\frac{1}{2}}D(s,\infty),
\end{align*}
where we denoted $D^{s}(I)\coloneqq\sup_{s\in I}D^{s}(s)$ and $D^{u}(I)\coloneqq\sup_{s\in I}D^{u}(s)$
for an interval $I\subseteq[s_{0},\infty)$. The first estimate is
obtained by the forward-in-time integration on $[s_{0},s]$. The second
estimate is obtained by the backward-in-time integration on $[s,\infty)$
with $\delta\eta(\infty)=0$. We note that it suffices to have negative
powers for $s_{0}$ in the second term (time integral term) of each
RHS. These terms are perturbative.
\end{rem}

Proposition \ref{prop:ForwardBackwardControl} implies the conclusion
of Proposition \ref{prop:Lipschitz-estimate-modulo-scale-phase} from
the inequality 
\[
|\delta\eta(s_{0})|\lesssim s_{0}^{-\frac{1}{2}}|\delta b(s_{0})|+s_{0}\|\delta\epsilon(s_{0})\|_{H_{m}^{3}}\lesssim_{b_{0}}|\delta b(s_{0})|+\|\delta\epsilon(s_{0})\|_{H_{m}^{3}}.
\]

Therefore, it remains to prove Proposition \ref{prop:ForwardBackwardControl}.
The strategy is quite similar to that of Proposition \ref{prop:main-bootstrap}.
For the forward-in-time control, we write the equation of $\delta\epsilon$,
prove modulation estimates for the difference of modulation parameters,
and perform energy estimates for adaptive derivatives of $\delta\epsilon$.
The control of $\delta\eta$ will follow from integrating the modulation
estimates backwards in time.

\subsection{Equation of $\delta\epsilon$}

We start by writing the equation for $\delta\epsilon(s)=\epsilon(s)-\epsilon'(s'(s))$.
We recall the equation of $\epsilon$: 
\[
(\partial_{s}-\frac{\lambda_{s}}{\lambda}\Lambda+\gamma_{s}i)\epsilon+i\mathcal{L}_{Q}\epsilon=\tilde{\mathbf{Mod}}\cdot\mathbf{v}-i\tilde R_{\mathrm{L-L}}-i\tilde R_{\mathrm{NL}}-i\Psi.
\]
Next, we write the equation of $\epsilon'$ in the \emph{$s$-variable}
using $\partial_{s}=\frac{ds'}{ds}\partial_{s'}$: 
\[
(\partial_{s}-\frac{\lambda_{s}}{\lambda}\Lambda+\gamma'_{s}i)\epsilon'+\frac{ds'}{ds}\cdot i\mathcal{L}_{Q}\epsilon'=\frac{ds'}{ds}\Big(\tilde{\mathbf{Mod}}'\cdot\mathbf{v}'-i\tilde R_{\mathrm{L-L}}'-i\tilde R_{\mathrm{NL}}'-i\Psi'\Big).
\]
Here, we use prime notations canonically; for an expression $f=f(b,\eta,\epsilon,\dots)$
for $u$, we denote the corresponding object for $u'$ by $f'\coloneqq f(b',\eta',\epsilon',\dots)$
evaluated at time $s'=s'(s)$. Now we can write the equation of $\delta\epsilon(s)=\epsilon(s)-\epsilon'(s'(s))$:
\begin{align}
(\partial_{s}-\frac{\lambda_{s}}{\lambda}\Lambda+\gamma_{s}i)\delta\epsilon+i\mathcal{L}_{Q}\delta\epsilon & =-(\delta\gamma)_{s}i\epsilon'+\Big(\frac{ds'}{ds}-1\Big)i\mathcal{L}_{Q}\epsilon'\label{eq:Diff-e-Eq}\\
 & \quad+\tilde{\mathbf{Mod}}\cdot(\delta\mathbf{v})+(\tilde{\mathbf{Mod}}-\frac{ds'}{ds}\tilde{\mathbf{Mod}}')\cdot\mathbf{v}'\nonumber \\
 & \quad-i\delta\tilde R{}_{\mathrm{L-L}}-i\delta\tilde R_{\mathrm{NL}}-i\delta\Psi\nonumber \\
 & \quad+\Big(\frac{ds'}{ds}-1\Big)(i\tilde R_{\mathrm{L-L}}'+i\tilde R_{\mathrm{NL}}'+i\Psi').\nonumber 
\end{align}
Here, we use the notation $\delta f(s)\coloneqq f(s)-f'(s'(s))$ to
denote the difference of the objects $f$ for $u$ at time $s$ and
$f'$ for $u'$ at time $s'(s)$. For example, 
\[
(\delta\gamma)_{s}\coloneqq\tfrac{d}{ds}\big(\gamma(s)-\gamma'(s'(s))\big),\quad\delta\mathbf{v}\coloneqq\mathbf{v}(s)-\mathbf{v}'(s'(s)),\quad\delta\Psi\coloneqq\Psi(s)-\Psi'(s'(s)).
\]

Due to $\partial_{s}=\frac{ds'}{ds}\partial_{s'}$, $\frac{ds'}{ds}\tilde{\mathbf{Mod}}'$
encodes the evolution laws of $\lambda',\gamma',b',\eta'$ in the
$s$-variable: 
\[
\frac{ds'}{ds}\tilde{\mathbf{Mod}}'=(\frac{\lambda_{s}}{\lambda}+\frac{ds'}{ds}b',\tilde{\gamma}_{s}'-\frac{ds'}{ds}\eta'\theta_{\eta'},b'_{s}+\frac{ds'}{ds}((b')^{2}+(\eta')^{2}),\eta'_{s})^{t}.
\]
Thus $\tilde{\mathbf{Mod}}-\frac{ds'}{ds}\tilde{\mathbf{Mod}}'$ encodes
the evolution laws of $\frac{ds'}{ds},\delta\tilde{\gamma},\delta b,\delta\eta$
in the $s$-variable: 
\begin{align*}
\tilde{\mathrm{Mod}}_{1}-\tfrac{ds'}{ds}\tilde{\mathrm{Mod}}_{1}' & =\delta b-(\tfrac{ds'}{ds}-1)b',\\
\tilde{\mathrm{Mod}}_{2}-\tfrac{ds'}{ds}\tilde{\mathrm{Mod}}_{2}' & =(\delta\tilde{\gamma})_{s}-\eta\theta_{\eta}+\tfrac{ds'}{ds}\eta'\theta_{\eta'},\\
\tilde{\mathrm{Mod}}_{3}-\tfrac{ds'}{ds}\tilde{\mathrm{Mod}}_{3}' & =(\delta b)_{s}+b^{2}+\eta^{2}-\tfrac{ds'}{ds}((b')^{2}+(\eta')^{2}),\\
\tilde{\mathrm{Mod}}_{4}-\tfrac{ds'}{ds}\tilde{\mathrm{Mod}}_{4}' & =(\delta\eta)_{s}.
\end{align*}

\subsection{Difference estimates}

We begin the difference estimates of the RHS of \eqref{eq:Diff-e-Eq}.
\begin{lem}[Difference estimates for profiles, modulation vectors, radiation terms]
\label{lem:Diff-Est-ProfilesModRad}We have 
\begin{align*}
|\delta P|_{3} & \lesssim(|\delta b|+|\delta\eta|)y^{2}Q\mathbf{1}_{y\lesssim b^{-1/2}},\\
|\delta\mathbf{v}|_{3} & \lesssim(|\delta b|+|\delta\eta|)y^{4}Q\mathbf{1}_{y\lesssim b^{-1/2}},\\
|\delta(\eta\theta_{\eta})| & \lesssim|\eta b^{m}(\delta b)|+|\delta\eta|\lesssim s^{-\frac{3}{2}}D,\\
|\delta\theta_{\Psi}| & \lesssim b^{m+1}(|\delta b|+|\delta\eta|),\\
|\delta\Psi|_{3} & \lesssim b^{\frac{m}{2}+1}(|\delta b|+|\delta\eta|)\mathbf{1}_{y\sim b^{-1/2}}.
\end{align*}
\end{lem}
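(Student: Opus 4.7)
The plan is to view each quantity $f(b,\eta)$ appearing on the left-hand side as a function of the parameters and to express the difference as a line integral
\[
\delta f = f(b,\eta) - f(b',\eta') = \int_0^1 \big[(\partial_b f)(b_\tau,\eta_\tau)\,\delta b + (\partial_\eta f)(b_\tau,\eta_\tau)\,\delta\eta\big]\,d\tau
\]
with $(b_\tau,\eta_\tau) = (b'+\tau\delta b,\eta'+\tau\delta\eta)$. By Lemma \ref{lem:RoughAsymptotics} together with the definition of $H^5_m$-trapped solutions, $b,b'\sim s^{-1}$ and $|\eta|,|\eta'|\lesssim s^{-3/2}$ for $s_0$ large, so $b_\tau \sim b$ and $|\eta_\tau|\ll b_\tau$ uniformly in $\tau\in[0,1]$. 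Hence the interpolating path lies in the regime where Proposition \ref{prop:modified-profile} applies uniformly, and in particular the cutoff region $\{y\leq 2b_\tau^{-1/2}\}$ can be replaced by $\{y\lesssim b^{-1/2}\}$ at no cost.

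Given this, the estimate for $|\delta P|_3$ is immediate from the first-order bound $|\partial_b P|_5 + |\partial_\eta P|_5 \lesssim \mathbf{1}_{y\leq 2b^{-1/2}} y^2 Q$ in Proposition \ref{prop:modified-profile}(2). The estimate for $|\delta\mathbf{v}|_3$ splits by components: the $\Lambda P$ and $-iP$ entries reduce to $|\delta P|_4$ and thus give the $y^2 Q$ weight, while the $-\partial_b P,-\partial_\eta P$ entries, by the same FTC applied one derivative higher, absorb the second-order bound $|\partial_{bb}P|_5+|\partial_{b\eta}P|_5+|\partial_{\eta\eta}P|_5 \lesssim \mathbf{1}_{y\leq 2b^{-1/2}}y^4 Q$, producing the $y^4 Q$ weight.

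For the scalar quantities, $|\delta\theta_\Psi|$ and $|\delta\Psi|_3$ follow at once from the difference estimates \eqref{eq:Diff-theta-Psi-est}--\eqref{eq:Diff-Psi-est}; note that $\partial_b\Psi,\partial_\eta\Psi$ are supported in the annulus $\{b_\tau^{-1/2}\leq y\leq 2b_\tau^{-1/2}\}\subset\{y\sim b^{-1/2}\}$, which yields the indicator in the last bound. For $\delta(\eta\theta_\eta)$ I split $\delta(\eta\theta_\eta) = (\delta\eta)\theta_\eta + \eta'(\delta\theta_\eta)$, use $|\theta_\eta| \lesssim 1$ from \eqref{eq:decomp-P-temp1} and the FTC together with \eqref{eq:db-theta-eta-est}--\eqref{eq:deta-theta-eta-est} to get $|\delta\theta_\eta|\lesssim b^m|\delta b| + |\delta\eta|$, obtaining $|\delta(\eta\theta_\eta)|\lesssim |\delta\eta| + |\eta b^m \delta b|$. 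The final refinement $\lesssim s^{-3/2}D$ then follows from $|\delta\eta|\leq s^{-3/2}D$, $|\delta b|\leq s^{-1}D$, and $|\eta|b^m\lesssim s^{-3/2-m}$, which is $o(s^{-1/2})$ for $m\geq 3$.

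The only non-routine point is justifying that the FTC path $(b_\tau,\eta_\tau)$ stays uniformly within the regime of validity of Proposition \ref{prop:modified-profile} (in particular $|\eta_\tau|\ll b_\tau$), and this is exactly what Lemma \ref{lem:RoughAsymptotics} delivers; no new analytic ingredient is needed beyond the derivative estimates already supplied by Proposition \ref{prop:modified-profile}.
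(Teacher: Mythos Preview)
Your proof is correct and follows essentially the same approach as the paper's, which simply records the bound $|\delta f|\lesssim |\delta b||\partial_b f|+|\delta\eta||\partial_\eta f|$ and then invokes the derivative estimates from Proposition~\ref{prop:modified-profile}. Your additional care in justifying that the interpolating path $(b_\tau,\eta_\tau)$ remains in the regime of validity (via Lemma~\ref{lem:RoughAsymptotics}), and in splitting $\delta\mathbf{v}$ component-by-component to distinguish the $y^2Q$ and $y^4Q$ weights, is a welcome improvement in rigor over the paper's terse argument.
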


\begin{proof}
For each of the above quantitites, we use 
\[
|\delta f|\lesssim|\delta b||\partial_{b}f|+|\delta\eta||\partial_{\eta}f|.
\]
Thus the estimate for $|\delta P|_{3}$ and $|\delta\mathbf{v}|_{3}$
follow from global bounds of Proposition \ref{prop:modified-profile}.
The estimates for $\delta(\eta\theta_{\eta})$, $\delta\theta_{\Psi}$,
and $|\delta\Psi|_{3}$ follow from \eqref{eq:db-theta-eta-est},
\eqref{eq:deta-theta-eta-est}, \eqref{eq:Diff-theta-Psi-est}, and
\eqref{eq:Diff-Psi-est}.
\end{proof}
\begin{lem}[Difference estimate for $R_{\mathrm{L-L}}$]
\label{lem:Diff-RLL-Est}We have
\begin{align}
|\delta\theta_{\mathrm{L-L}}| & \lesssim s^{-3}D,\label{eq:Diff-theta-LL-est}\\
\|\delta\tilde R_{\mathrm{L-L}}\|_{\dot{H}_{m}^{3}} & \lesssim b\Big(\|\delta\epsilon\|_{\dot{\mathcal{H}}_{m,\leq M_{2}}^{3}}+o_{M_{2}\to\infty}(1)\|\delta\epsilon\|_{\dot{H}_{m}^{3}}+s^{-\frac{1}{2}}(s^{-\frac{5}{2}}D)\Big).\label{eq:Diff-RLL-est}
\end{align}
\end{lem}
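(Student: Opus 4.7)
The strategy is to mirror the decomposition and case analysis in the proof of Lemma~\ref{lem:degenerate-linear}, now applied to differences rather than to $\tilde R_{\mathrm{L-L}}$ itself. Recall from that proof that each summand in \eqref{eq:R-tilde-L-L} is a multilinear expression $N(\psi_{1},\dots,\psi_{k})$ in arguments drawn from $\{P,Q,\epsilon\}$, containing at least one factor of $P-Q$ and at least one factor of $\epsilon$. Writing $\delta N$ as a telescoping sum
\[
\delta N = \sum_{j} N\big(\psi_{1}',\dots,\psi_{j-1}',\,\delta\psi_{j},\,\psi_{j+1},\dots,\psi_{k}\big),
\]
the task reduces to estimating each term in which exactly one factor has been replaced by its $\delta$-variant, with the others being either $\epsilon,P,Q$ or $\epsilon',P',Q$. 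An analogous decomposition applies to the explicit formula defining $\theta_{\mathrm{L-L}}$ in Step~1 of the proof of Lemma~\ref{lem:degenerate-linear}.

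First I would handle the pieces in which $\epsilon$ is replaced by $\delta\epsilon$. Since $\delta\epsilon\in\mathcal{Z}^{\perp}$ and enjoys coercivity via Lemma~\ref{lem:Coercivity-AAL-Section2}, the proofs of Cases A--C in Lemma~\ref{lem:degenerate-linear} go through verbatim with $\epsilon$ replaced by $\delta\epsilon$, yielding the contribution
\[
b\big(\|\delta\epsilon\|_{\dot{\mathcal{H}}_{m,\le M_{2}}^{3}}+o_{M_{2}\to\infty}(1)\|\delta\epsilon\|_{\dot{H}_{m}^{3}}\big).
\]
For the remaining pieces, in which one factor of $P$ (or $P-Q$, or an $A_{\theta}[P]$-type potential) is replaced by its $\delta$-version, I would exploit the pointwise bound $|\delta P|_{3}\lesssim(|\delta b|+|\delta\eta|)y^{2}Q\,\mathbf{1}_{y\lesssim b^{-1/2}}$ from Lemma~\ref{lem:Diff-Est-ProfilesModRad}. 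Comparing with the bound $|P-Q|_{5}\lesssim by^{2}Q$ used throughout the proof of Lemma~\ref{lem:degenerate-linear}, one sees that $\delta P$ has the same pointwise and support structure as $P-Q$, but with the prefactor $b$ replaced by $|\delta b|+|\delta\eta|\lesssim s^{-1}D$. The same case analysis therefore produces factors of $(|\delta b|+|\delta\eta|)\,\|\epsilon'\|_{\dot{\mathcal{H}}_{m}^{3}}$ in place of $b\,\|\epsilon\|_{\dot{\mathcal{H}}_{m}^{3}}$, which using the trapped bound $\|\epsilon'\|_{\dot{\mathcal{H}}_{m}^{3}}\lesssim\|\epsilon_{3}'\|_{L^{2}}\lesssim b^{3}\sim s^{-3}$ and $|\delta b|+|\delta\eta|\lesssim s^{-1}D$ gives a bound of order $s^{-4}D = b\cdot s^{-1/2}(s^{-5/2}D)$, matching the third term in the target estimate.

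The scalar estimate $|\delta\theta_{\mathrm{L-L}}|\lesssim s^{-3}D$ is handled by the same telescoping applied to the three integral terms in the definition of $\theta_{\mathrm{L-L}}$. The $\delta\epsilon$-pieces contribute $b^{1/2}\|\delta\epsilon\|_{\dot{\mathcal{H}}_{m}^{3}}\lesssim s^{-1/2}\cdot s^{-5/2}D=s^{-3}D$ by reusing Step~2 of the proof of Lemma~\ref{lem:degenerate-linear}, while the $\delta P$-pieces contribute $(|\delta b|+|\delta\eta|)\cdot\|\epsilon'\|_{\dot{\mathcal{H}}_{m}^{3}}^{\,\cdot}s^{1/2}\lesssim s^{-1}D\cdot s^{-5/2}\cdot s^{1/2}=s^{-3}D$ after trading one power of $b^{1/2}$ for the smaller prefactor $|\delta b|+|\delta\eta|$, as above.

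\paragraph{Main obstacle.} The bookkeeping is the principal difficulty: the expression \eqref{eq:R-tilde-L-L} for $\tilde R_{\mathrm{L-L}}$ already contains many terms, and variations of nonlocal quantities such as $A_{\theta}[P]$ and $\tilde A_{0,k}[\dots]$ produce further subterms (e.g.\ $\delta A_{\theta}[P]=A_{\theta}[\delta P,P+P']$) that must be handled without losing the sharp smallness factor $(|\delta b|+|\delta\eta|)$. The saving grace is that $\delta P$ and $P-Q$ have identical pointwise structure up to the replacement $b\mapsto|\delta b|+|\delta\eta|$, so once this is verified for each $A_{\theta}$, $\tilde A_{0,3}$, $\tilde A_{0,5}$ variation, the subcoercivity machinery from Lemma~\ref{lem:degenerate-linear} carries through without modification.
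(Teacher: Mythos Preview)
Your proposal is correct and follows essentially the same approach as the paper: telescoping the multilinear expressions so that $\delta$ lands on exactly one factor, then observing that when $\delta$ hits $\epsilon$ the proof of Lemma~\ref{lem:degenerate-linear} applies verbatim with $\epsilon\to\delta\epsilon$, while when $\delta$ hits $P$ or $P-Q$ the pointwise bound $|\delta P|_{3}\lesssim(|\delta b|+|\delta\eta|)y^{2}Q\mathbf{1}_{y\lesssim b^{-1/2}}\lesssim b^{-1}(|\delta b|+|\delta\eta|)\cdot|P-Q|_{3}$ simply multiplies the original bound by $b^{-1}(|\delta b|+|\delta\eta|)$. The paper phrases the result slightly differently---writing the intermediate bound as $b^{-1/2}(|\delta b|+|\delta\eta|)(\|\epsilon\|_{\dot{H}_{m}^{3}}+\|\epsilon'\|_{\dot{H}_{m}^{3}})$ for $\delta\theta_{\mathrm{L-L}}$ and $(|\delta b|+|\delta\eta|)(\|\epsilon\|_{\dot{H}_{m}^{3}}+\|\epsilon'\|_{\dot{H}_{m}^{3}})$ for $\delta\tilde R_{\mathrm{L-L}}$ before substituting the a priori bounds---but the mechanism and the final outcome are identical.
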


\begin{rem}
Indeed $s^{-\frac{1}{2}}(s^{-\frac{5}{2}}D)=s^{-3}D$. But we want
to emphasize that $s^{-\frac{1}{2}}(s^{-\frac{5}{2}}D)$ is perturbative
relative to $\|\delta\epsilon\|_{\dot{H}_{m}^{3}}$-part ($\sim s^{-\frac{5}{2}}D$).
\end{rem}

\begin{proof}
Lemma \ref{lem:Diff-RLL-Est} is an analogue of Lemma \ref{lem:degenerate-linear}
for the difference. The proof relies on that of Lemma \ref{lem:degenerate-linear}.

We start with $\delta\theta_{\mathrm{L-L}}$-estimate. In the proof
of the $\theta_{\mathrm{L-L}}$ estimate \eqref{eq:LL-theta-est},
we viewed $\theta_{\mathrm{L-L}}$ as a linear combination of 
\[
{\textstyle \int_{0}^{\infty}}m\Re(\overline{\psi_{1}}\psi_{2})\tfrac{dy}{y}\quad\text{and}\quad{\textstyle \int_{0}^{\infty}}A_{\theta}[\psi_{1},\psi_{2}]\Re(\overline{\psi_{3}}\psi_{4})\tfrac{dy}{y},
\]
where $\psi_{j}=\epsilon$ and $\psi_{k}=P-Q$ for some $j$ and $k$,
and $\psi_{\ell}\in\{P,Q\}$ for $\ell\neq j,k$. To show the $\delta\theta_{\mathrm{L-L}}$
estimate, we view $\delta\theta_{\mathrm{L-L}}$ as the same linear
combination, where
\begin{itemize}
\item $\psi_{j}\in\{\epsilon,\epsilon',\delta\epsilon\}$ and $\psi_{k}\in\{P-Q,P'-Q,\delta P\}$
for some $j$ and $k$,
\item $\psi_{\ell}\in\{P,P',Q,\delta P\}$ for $\ell\neq j,k$, and
\item $\delta$ should appear \emph{exactly once} among $\psi_{i}$'s.
\end{itemize}
If $\delta$ hits $\epsilon$, i.e. $\psi_{j}=\delta\epsilon$, then
the proof of \eqref{eq:LL-theta-est} works by replacing $\epsilon$
by $\delta\epsilon$ and yields the bound $b^{\frac{1}{2}}\|\delta\epsilon\|_{\dot{H}_{m}^{3}}$.
If $\delta$ hits $P-Q$, i.e. $\psi_{k}=\delta P$, then $|\delta P|\lesssim(|\delta b|+|\delta\eta|)y^{2}Q\mathbf{1}_{y\lesssim b^{-1/2}}\lesssim b^{-1}(|\delta b|+|\delta\eta|)\min\{by^{2},1\}Q$.
Compared to $|P-Q|\lesssim\min\{by^{2},1\}Q$ used in the proof of
\eqref{eq:LL-theta-est}, it suffices to multiply $b^{-1}(|\delta b|+|\delta\eta|)$
to the bound \eqref{eq:LL-theta-est}. This yields the bound $b^{-1}(|\delta b|+|\delta\eta|)(\|\epsilon\|_{\dot{H}_{m}^{3}}+\|\epsilon'\|_{\dot{H}_{m}^{3}})$.
If $\delta$ hits $P$, i.e. $\psi_{\ell}=\delta P$, then $|\delta P|\lesssim b^{-1}(|\delta b|+|\delta\eta|)Q$.
Thus we need to multiply $b^{-1}(|\delta b|+|\delta\eta|)$ to the
bound obtained in \eqref{eq:LL-theta-est}. As a result, 
\[
|\delta\theta_{\mathrm{L-L}}|\lesssim b^{\frac{1}{2}}\|\delta\epsilon\|_{\dot{H}_{m}^{3}}+b^{-\frac{1}{2}}(|\delta b|+|\delta\eta|)(\|\epsilon\|_{\dot{H}_{m}^{3}}+\|\epsilon'\|_{\dot{H}_{m}^{3}}).
\]
Substituting the a priori $\dot{H}_{m}^{3}$-bound for $\epsilon$
and $\epsilon'$ \eqref{eq:bootstrap-hyp-H5} and using the definition
of $D$ yield the $\delta\theta_{\mathrm{L-L}}$ bound.

For the $\delta\tilde R_{\mathrm{L-L}}$-estimate, we similarly argue
as in the $\delta\theta_{\mathrm{L-L}}$ estimate. We view $\delta\tilde R_{\mathrm{L-L}}$
as a linear combination of 
\[
V_{3}[\psi_{1},\psi_{2}]\psi_{3}\quad\text{and}\quad V_{5}[\psi_{1},\psi_{2},\psi_{3},\psi_{4}]\psi_{5},
\]
(replace $V_{3}$ by $\tilde V_{3}$ if $\psi_{3}\in\{P,P'\}$ and
similarly for $V_{5}$) where $\psi_{j}\in\{\epsilon,\epsilon',\delta\epsilon\}$
and $\psi_{k}\in\{P-Q,P'-Q,\delta P\}$ for some $j$ and $k$, $\psi_{\ell}\in\{P,P',Q,\delta P\}$
for $\ell\neq j,k$, and $\delta$ should appear \emph{exactly once}
among $\psi_{i}$'s. If $\psi_{j}=\delta\epsilon$, the proof of \eqref{eq:LL-Hdot3-est}
works by replacing $\epsilon$ by $\delta\epsilon$. If $\delta$
hits $\psi_{k}$ or $\psi_{\ell}$, we need to multiply $b^{-1}(|\delta b|+|\delta\eta|)$
to the bound \eqref{eq:LL-Hdot3-est}. As a result,
\[
\|\delta\tilde R_{\mathrm{L-L}}\|_{\dot{H}_{m}^{3}}\lesssim b(\|\delta\epsilon\|_{\dot{\mathcal{H}}_{m,\leq M_{2}}^{3}}+o_{M_{2}\to\infty}(1)\|\delta\epsilon\|_{\dot{H}_{m}^{3}})+(|\delta b|+|\delta\eta|)(\|\epsilon\|_{\dot{H}_{m}^{3}}+\|\epsilon'\|_{\dot{H}_{m}^{3}}).
\]
Substituting the a priori $\dot{H}_{m}^{3}$-bound \eqref{eq:bootstrap-hyp-H5}
and using the definition of $D$, we get the conclusion.
\end{proof}
\begin{lem}[Difference estimate for $R_{\mathrm{NL}}$]
\label{lem:Diff-RNL-Est}We have 
\begin{align}
|\delta\theta_{\mathrm{NL}}| & \lesssim s^{-\frac{3}{2}}D,\label{eq:Diff-theta-NL-est}\\
\|\delta\tilde R_{\mathrm{NL}}\|_{\dot{H}_{m}^{3}} & \lesssim bs^{-\frac{1}{2}}(s^{-\frac{5}{2}}D).\label{eq:Diff-RNL-est}
\end{align}
\end{lem}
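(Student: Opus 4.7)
\medskip

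The proof will follow the same multilinear/Leibniz strategy as in Lemma~\ref{lem:Diff-RLL-Est}. Recall from the proof of Lemma~\ref{lem:nonlinear} that $R_{\mathrm{NL}}$ decomposes as a sum of multilinear expressions $V_k[\psi_1,\dots,\psi_{k-1}]\psi_k$ ($k\in\{3,5\}$) with $\psi_j\in\{P,\epsilon\}$ and at least two $\epsilon$'s, and that $\theta_{\mathrm{NL}}$ is a corresponding sum of scalar integrals in the same building blocks. Thus I plan to write the difference $\delta R_{\mathrm{NL}}$ (and $\delta\theta_{\mathrm{NL}}$) as a multilinear Leibniz expansion in which the unmarked factors are chosen from $\{P,P',\epsilon,\epsilon'\}$ and exactly one factor is $\delta$'d, with $\delta P$ or $\delta\epsilon$.

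For each summand I will invoke the corresponding estimate from the proof of Lemma~\ref{lem:nonlinear} under the single-factor replacement: when $\delta$ hits an $\epsilon$-slot, one $\epsilon$-Sobolev norm is replaced by the corresponding $\delta\epsilon$-norm; when $\delta$ hits a $P$-slot, the pointwise bound $|P|_k\lesssim Q$ is replaced by $|\delta P|_k\lesssim(|\delta b|+|\delta\eta|)\,y^2 Q$ from Lemma~\ref{lem:Diff-Est-ProfilesModRad}. The required a priori inputs are the bootstrap bounds $\|\epsilon\|_{\dot H^1_m}\lesssim b\sim s^{-1}$, $\|\epsilon\|_{\dot{\mathcal H}^3_m}\lesssim b^3$ (valid for both $\epsilon$ and $\epsilon'$), together with the difference bounds $\|\delta\epsilon\|_{L^2}\lesssim D$, $\|\delta\epsilon_3\|_{L^2}\lesssim s^{-5/2}D$, $|\delta b|\lesssim s^{-1}D$, $|\delta\eta|\lesssim s^{-3/2}D$ built into the definition of $D$. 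The key point is that $\delta\epsilon$ inherits orthogonality to each fixed $\mathcal{Z}_k$ from $\epsilon(s)$ and $\epsilon'(s'(s))$, so the coercivity of Lemma~\ref{lem:Coercivity-AAL-Section2} applies and gives $\|\delta\epsilon\|_{\dot{\mathcal H}^3_m}\lesssim\|\delta\epsilon_3\|_{L^2}$; combined with $\|\delta\epsilon\|_{L^2}\lesssim D$, interpolation yields $\|\delta\epsilon\|_{\dot H^1_m}\lesssim\|\delta\epsilon\|_{L^2}^{2/3}\|\delta\epsilon_3\|_{L^2}^{1/3}\lesssim s^{-5/6}D$.

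Substituting into the multilinear bounds, the worst contributions arise when $\delta$ hits an $\epsilon$: for $\delta\theta_{\mathrm{NL}}$ one gets $\|\epsilon\|_{\dot H^1_m}\|\delta\epsilon\|_{\dot H^1_m}\lesssim b\cdot s^{-5/6}D=s^{-11/6}D\leq s^{-3/2}D$, while for $\delta\tilde R_{\mathrm{NL}}$ the terms $\|\epsilon\|_{\dot{\mathcal H}^3_m}\|\delta\epsilon\|_{\dot{\mathcal H}^3_m}$ and $\|\epsilon\|_{\dot H^1_m}^2\|\delta\epsilon\|_{\dot{\mathcal H}^3_m}$ are controlled respectively by $b^3\cdot s^{-5/2}D=s^{-11/2}D$ and $b^2\cdot s^{-5/2}D=s^{-9/2}D$, both of which are $\lesssim s^{-4}D=bs^{-1/2}(s^{-5/2}D)$. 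The $\delta P$-contributions are strictly smaller because the factor $|\delta b|+|\delta\eta|\lesssim s^{-1}D$ is extracted pointwise: they inherit the undifferentiated bound multiplied by $s^{-1}D$, giving at worst $s^{-1}D\cdot b^2=s^{-3}D$ for $\delta\theta_{\mathrm{NL}}$ and $s^{-1}D\cdot b^5=s^{-6}D$ for $\delta\tilde R_{\mathrm{NL}}$.

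The calculations themselves are essentially a mechanical re-run of the proof of Lemma~\ref{lem:nonlinear}, so the main point of care is the $\delta P$-case: one must verify that the extra $y^2$ growth in $|\delta P|_k\lesssim(|\delta b|+|\delta\eta|)y^2 Q$ (relative to $|P|_k\lesssim Q$) is absorbed by the weighted norms appearing throughout Cases~A--E of the proof of Lemma~\ref{lem:nonlinear}. This is the step I expect to be the most delicate, but since $Q$ has rapid polynomial decay, $m\geq3$, and the critical weights used in Case~B and Case~D already carry factors such as $y^2\langle y\rangle^{-4}$, the extra $y^2$ is systematically controlled. Beyond this, the proof is a bookkeeping exercise that reproduces each Case~A--E estimate with the single-factor replacement and collects the resulting powers of $s$.
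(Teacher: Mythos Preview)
Your proposal is correct and follows essentially the same route as the paper: a Leibniz expansion of the multilinear building blocks of $\theta_{\mathrm{NL}}$ and $\tilde R_{\mathrm{NL}}$ with exactly one $\delta$-factor, combined with coercivity for $\delta\epsilon$ (which indeed satisfies the orthogonality conditions) and interpolation to get $\|\delta\epsilon\|_{\dot H^1_m}\lesssim_M s^{-5/6}D$.

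Two small points where the paper proceeds slightly differently. First, for the $\delta P$ slot the paper does not try to absorb the extra $y^2$ into the weighted norms case by case; instead it uses the support of $\delta P$ in $\{y\lesssim b^{-1/2}\}$ to write $|\delta P|_3\lesssim b^{-1}(|\delta b|+|\delta\eta|)Q$, so the replacement costs a clean multiplicative factor $b^{-1}(|\delta b|+|\delta\eta|)\lesssim D$ relative to the undifferentiated bound. This is simpler than your proposed $y^2$-absorption (and incidentally explains why your final $\delta P$ tally should read $D\cdot b^2$ and $D\cdot b^5$ rather than $s^{-1}D\cdot b^2$ and $s^{-1}D\cdot b^5$; either way the terms are well within the target). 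Second, the coercivity constant in $\|\delta\epsilon\|_{\dot{\mathcal H}^3_m}\lesssim\|\delta\epsilon_3\|_{L^2}$ depends on $M$, and the paper absorbs this $M$-dependence by sacrificing a small power of $s$, writing $\|\delta\epsilon\|_{\dot H^1_m}\lesssim s^{-5/6+}D$ and $\|\delta\epsilon\|_{\dot H^3_m}\lesssim s^{-5/2+}D$; you should do the same so that the final bounds are uniform in $M$.
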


\begin{proof}
We argue as in the proof of Lemma \ref{lem:Diff-RLL-Est}. We view
$\delta\theta_{\mathrm{NL}}$ as a linear combination of 
\[
{\textstyle \int_{0}^{\infty}}m\Re(\overline{\psi_{1}}\psi_{2})\tfrac{dy}{y}\quad\text{and}\quad{\textstyle \int_{0}^{\infty}}A_{\theta}[\psi_{1},\psi_{2}]\Re(\overline{\psi_{3}}\psi_{4})\tfrac{dy}{y},
\]
where at least two $\psi_{j}$'s belong to $\{\epsilon,\epsilon',\delta\epsilon\}$,
remaining $\psi_{\ell}$'s are filled with $P,P',\delta P$, and $\delta$
should appear \emph{exactly once} among all $\psi_{i}$'s.

Let us rewrite the bound \eqref{eq:NL-theta-est} of $\theta_{\mathrm{NL}}$
as 
\[
|\theta_{\mathrm{NL}}|\lesssim\|\epsilon\|_{\dot{H}_{m}^{1}}^{2}(1+\|\epsilon\|_{L^{2}})^{2},
\]
by inspecting the proof of \eqref{eq:NL-theta-est} without assuming
$\|\epsilon\|_{L^{2}}\leq1$. If $\delta$ hits $\epsilon$, i.e.
one of $\psi_{j}$ is $\delta\epsilon$, the proof of \eqref{eq:NL-theta-est}
works by replacing \emph{one} $\epsilon$ by $\delta\epsilon$. Thus
the resulting bound is \footnote{In fact, the upper bound should contain the contributions of $\epsilon'$.
We omit them for readability. The same remark applies for $\delta\tilde R_{\mathrm{NL}}$.} 
\[
\|\epsilon\|_{\dot{H}_{m}^{1}}\|\delta\epsilon\|_{\dot{H}_{m}^{1}}(1+\|\epsilon\|_{L^{2}})^{2}+\|\epsilon\|_{\dot{H}_{m}^{1}}^{2}\|\delta\epsilon\|_{L^{2}}(1+\|\epsilon\|_{L^{2}}).
\]
Applying the interpolation and coercivity (Lemma \ref{lem:Coercivity-AAL-Section2}),
we estimate $\|\delta\epsilon\|_{\dot{H}_{m}^{1}}\lesssim\|\delta\epsilon\|_{L^{2}}^{\frac{2}{3}}\|\delta\epsilon\|_{\dot{H}_{m}^{3}}^{\frac{1}{3}}\lesssim_{M}s^{-\frac{5}{6}}D$.
To remove the $M$-dependence, we lose a small power of $s$. Applying
a priori bounds of $\epsilon$, the above bound is dominated by $s^{-\frac{3}{2}}D$,
as desired. If $\delta$ hits $P$, i.e. one of $\psi_{\ell}$ is
$\delta P$, then we lose $b^{-1}(|\delta b|+|\delta\eta|)$ to the
bound \eqref{eq:NL-theta-est}. This yields $s^{-2}D$.

We turn to $\delta\tilde R_{\mathrm{NL}}$. We view $\delta\tilde R_{\mathrm{NL}}$
as a linear combination of 
\[
V_{3}[\psi_{1},\psi_{2}]\psi_{3}\quad\text{and}\quad V_{5}[\psi_{1},\psi_{2},\psi_{3},\psi_{4}]\psi_{5},
\]
(replace $V_{3}$ by $\tilde V_{3}$ if $\psi_{3}\in\{P,P'\}$ and
similarly for $V_{5}$) where at least two $\psi_{j}$'s belong to
$\{\epsilon,\epsilon',\delta\epsilon\}$, remaining $\psi_{\ell}$'s
are filled with $P,P',\delta P$, and $\delta$ should appear \emph{exactly
once} among all $\psi_{i}$'s.

Let us rewrite the bound \eqref{eq:NL-Hdot3-est} of $\tilde R_{\mathrm{NL}}$
as 
\[
\|\tilde R_{\mathrm{NL}}\|_{\dot{H}_{m}^{3}}\lesssim\|\epsilon\|_{\dot{H}_{m}^{3}}^{2}(1+\|\epsilon\|_{H_{m}^{3}})^{3}+\|\epsilon\|_{\dot{H}_{m}^{1}}^{2}\|\epsilon\|_{\dot{H}_{m}^{3}}(1+\|\epsilon\|_{H_{m}^{3}})^{2}+\|\epsilon\|_{\dot{H}_{m}^{1}}^{5}.
\]
If $\psi_{j}=\delta\epsilon$, the proof of \eqref{eq:NL-Hdot3-est}
works by replacing \emph{one} $\epsilon$ by $\delta\epsilon$, i.e.
the resulting bound (with applying $\|\epsilon\|_{H_{m}^{3}}\leq1$)
is 
\begin{multline*}
(\|\epsilon\|_{\dot{H}_{m}^{3}}^{2}+\|\epsilon\|_{\dot{H}_{m}^{1}}^{2}\|\epsilon\|_{\dot{H}_{m}^{3}})\|\delta\epsilon\|_{H_{m}^{3}}+(\|\epsilon\|_{\dot{H}_{m}^{1}}\|\epsilon\|_{\dot{H}_{m}^{3}}+\|\epsilon\|_{\dot{H}_{m}^{1}}^{4})\|\delta\epsilon\|_{\dot{H}_{m}^{1}}\\
+(\|\epsilon\|_{\dot{H}_{m}^{3}}+\|\epsilon\|_{\dot{H}_{m}^{1}}^{2})\|\delta\epsilon\|_{\dot{H}_{m}^{3}}.
\end{multline*}
Using $\|\delta\epsilon\|_{H_{m}^{3}}\leq\|\delta\epsilon\|_{L^{2}}+\|\delta\epsilon\|_{\dot{H}_{m}^{3}}$,
$\|\delta\epsilon\|_{\dot{H}_{m}^{1}}\lesssim s^{-\frac{5}{6}+}D$,
$\|\delta\epsilon\|_{\dot{H}_{m}^{3}}\lesssim s^{-\frac{5}{2}+}D$
(by coercivity), and applying a priori bounds of $\epsilon$, the
above bound is dominated by $s^{-\frac{9}{2}+}D$, which is enough.
If $\psi_{\ell}=\delta P$, then from $|\delta P|_{3}\lesssim b^{-1}(|\delta b|+|\delta\eta|)Q$,
so we lose $b^{-1}(|\delta b|+|\delta\eta|)$ to the bound \eqref{eq:NL-Hdot3-est}.
This yields $s^{-5+}D$, which is enough.
\end{proof}

\subsection{Modulation estimates}

In this subsection, we prove the difference analogue of the modulation
estimates (Lemma \ref{lem:mod-est}). More precisely, we estimate
$\tilde{\mathbf{Mod}}-\frac{ds'}{ds}\tilde{\mathbf{Mod}}'$, which
encodes the evolution laws of the difference of the modulation parameters.
\begin{lem}[Modulation estimates]
\label{lem:DiffModEst}We have 
\begin{equation}
\Big|\tilde{\mathbf{Mod}}-\frac{ds'}{ds}\tilde{\mathbf{Mod}}'\Big|\lesssim o_{M\to\infty}(1)\|\delta\epsilon_{3}\|_{L^{2}}+s^{-\frac{1}{2}}(s^{-\frac{5}{2}}D)\label{eq:DiffModEst}
\end{equation}
In particular, 
\begin{align}
\Big|\frac{ds'}{ds}-1\Big| & \lesssim\frac{|\delta b|}{b'}+s^{-\frac{3}{2}}D\lesssim D,\label{eq:Diff-dsds-Estimate}\\
|(\delta\gamma)_{s}| & \lesssim s^{-\frac{3}{2}}D\label{eq:DiffGammaEstimate}
\end{align}
\end{lem}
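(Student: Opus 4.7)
The proof mirrors that of Lemma \ref{lem:mod-est}: I pair the difference equation \eqref{eq:Diff-e-Eq} with each $\mathcal{Z}_k$, $k\in\{1,2,3,4\}$, and use the identity $(\delta\epsilon,\mathcal{Z}_k)_r\equiv 0$ (which follows from $\epsilon(s),\epsilon'(s'(s))\in\mathcal{Z}^\perp$, and in particular implies $(\partial_s\delta\epsilon,\mathcal{Z}_k)_r=0$) to extract a linear system for the four components of $\tilde{\mathbf{Mod}}-\tfrac{ds'}{ds}\tilde{\mathbf{Mod}}'$. As in \eqref{eq:mod-temp}, the coefficient matrix is an $O(1)$ perturbation of the almost-diagonal matrix \eqref{eq:ortho-jacobian} and hence uniformly invertible. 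The dominant term on the RHS is $-(\delta\epsilon,\mathcal{L}_Q i\mathcal{Z}_k)_r$: it vanishes for $k\in\{1,2\}$ by construction of $\mathcal{Z}_3,\mathcal{Z}_4$, while for $k\in\{3,4\}$ one rewrites $\mathcal{L}_Q i\mathcal{Z}_k=L_Q^\ast iA_Q^\ast A_Q L_Q i\mathcal{Z}_{k-2}$, applies the approximate null-space relation \eqref{eq:Orthogonality-approx-gen-null-H3}, and uses positivity of $A_Q^\ast A_Q$ to obtain the main contribution $o_{M\to\infty}(1)\|\delta\epsilon_3\|_{L^2}$.

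The other contributions on the RHS of \eqref{eq:Diff-e-Eq} are all perturbative. The self-modulation correction $\tilde{\mathbf{Mod}}\cdot(\delta\mathbf{v},\mathcal{Z}_k)_r$ is bounded by $|\tilde{\mathbf{Mod}}|\,(|\delta b|+|\delta\eta|)M^C$, which by Lemma \ref{lem:mod-est} combined with the profile-difference estimate of Lemma \ref{lem:Diff-Est-ProfilesModRad} is $O(s^{-1/2}\cdot s^{-5/2}D)$. The difference-remainder terms $(\delta\tilde R_{\mathrm{L-L}},\mathcal{Z}_k)_r$, $(\delta\tilde R_{\mathrm{NL}},\mathcal{Z}_k)_r$, and $(\delta\Psi,\mathcal{Z}_k)_r$ are controlled by Lemmas \ref{lem:Diff-RLL-Est}, \ref{lem:Diff-RNL-Est}, and \ref{lem:Diff-Est-ProfilesModRad} via the $(\dot{\mathcal{H}}_m^3)^\ast$-bound \eqref{eq:Orthog-Hdot3-est}. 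The two structurally new terms relative to Lemma \ref{lem:mod-est} are $(\tfrac{ds'}{ds}-1)i\mathcal{L}_Q\epsilon'$, $-(\delta\gamma)_s i\epsilon'$, and the scaled primed remainders $(\tfrac{ds'}{ds}-1)(i\tilde R'_{\mathrm{L-L}}+i\tilde R'_{\mathrm{NL}}+i\Psi')$. For the $\mathcal{L}_Q\epsilon'$ contribution, I dualize via $(\mathcal{L}_Q\epsilon',\mathcal{Z}_k)_r=(\epsilon',\mathcal{L}_Q\mathcal{Z}_k)_r$ and invoke the a priori bound $\|\epsilon'\|_{\dot{\mathcal{H}}_m^5}\lesssim b^{9/2}$ together with \eqref{eq:Orthog-Hdot5-est}; this is precisely the place where the $H_m^5$ bootstrap enters. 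The scaled primed remainders are estimated directly using Section \ref{subsec:Estimates-of-remainders-H3} and Proposition \ref{prop:modified-profile}.

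The main obstacle is the circular appearance of $(\delta\gamma)_s$ and $(\tfrac{ds'}{ds}-1)$, which occur simultaneously as outputs of the modulation system and as coefficients on the RHS of \eqref{eq:Diff-e-Eq}. I resolve this algebraically: the first-row identity $\tilde{\mathrm{Mod}}_1-\tfrac{ds'}{ds}\tilde{\mathrm{Mod}}_1'=\delta b-(\tfrac{ds'}{ds}-1)b'$ combined with the main estimate \eqref{eq:DiffModEst} and $b'\sim s^{-1}$ immediately gives \eqref{eq:Diff-dsds-Estimate}. For $-(\delta\gamma)_s i\epsilon'$, I unfold $(\delta\gamma)_s=(\tilde{\mathrm{Mod}}_2-\tfrac{ds'}{ds}\tilde{\mathrm{Mod}}_2')+\delta(\eta\theta_\eta)+\delta\theta_\Psi+\delta\theta_{\mathrm{L-L}}+\delta\theta_{\mathrm{NL}}+(\tfrac{ds'}{ds}-1)(\eta'\theta_{\eta'}+\theta_{\Psi'}+\theta'_{\mathrm{L-L}}+\theta'_{\mathrm{NL}})$, move the $(\tilde{\mathrm{Mod}}_2-\tfrac{ds'}{ds}\tilde{\mathrm{Mod}}_2')(i\epsilon',\mathcal{Z}_k)_r$ piece onto the LHS where it induces only an $O(\|\epsilon'\|_{L^2})=o_{s_0\to\infty}(1)$ perturbation of the modulation matrix (harmless by uniform invertibility at large $s_0$), and estimate the remaining phase-correction differences via Lemmas \ref{lem:Diff-Est-ProfilesModRad}--\ref{lem:Diff-RNL-Est} together with \eqref{eq:Diff-dsds-Estimate}. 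Inverting the perturbed matrix then yields \eqref{eq:DiffModEst}, and reading off the second component together with the unfolded identity gives \eqref{eq:DiffGammaEstimate}.
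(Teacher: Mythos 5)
Your proposal follows the paper's overall approach: pair the difference equation with the $\mathcal{Z}_k$'s, invert a perturbed modulation matrix, and exploit the degeneracy $(\delta\epsilon,\mathcal{L}_Q i\mathcal{Z}_k)_r$ for the leading error. You also correctly identify the circular appearance of $(\delta\gamma)_s$ and $\tfrac{ds'}{ds}-1$ as the main structural obstacle. However, you resolve the circularity only for $(\delta\gamma)_s$: you unfold $(\delta\gamma)_s=(\tilde{\mathrm{Mod}}_2-\tfrac{ds'}{ds}\tilde{\mathrm{Mod}}_2')+\dots$ and move the output-dependent piece onto the LHS. For $\tfrac{ds'}{ds}-1$ you instead announce \eqref{eq:Diff-dsds-Estimate} as a consequence of \eqref{eq:DiffModEst} and then use \eqref{eq:Diff-dsds-Estimate} to bound $(\tfrac{ds'}{ds}-1)i\mathcal{L}_Q\epsilon'$, $(\tfrac{ds'}{ds}-1)(i\tilde R'_{\mathrm{L-L}}+\ldots)$, and $(\tfrac{ds'}{ds}-1)(\eta'\theta_{\eta'}+\theta_{cor}')$ — all living on the RHS of the very system you need to invert in order to prove \eqref{eq:DiffModEst}. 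That is a genuine logical circle, not merely a presentational shortcut.

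The fix is the symmetric one that the paper performs in display \eqref{eq:DiffModEst-tmp1}: substitute the first-row identity $\tfrac{ds'}{ds}-1=\tfrac{1}{b'}\big(\delta b-(\tilde{\mathrm{Mod}}_1-\tfrac{ds'}{ds}\tilde{\mathrm{Mod}}_1')\big)$ everywhere $\tfrac{ds'}{ds}-1$ appears on the RHS, move the $(\tilde{\mathrm{Mod}}_1-\tfrac{ds'}{ds}\tilde{\mathrm{Mod}}_1')$-proportional pieces onto the LHS (this produces the $\tfrac{e_1}{b'}$-correction to the modulation vector), and keep the $\tfrac{\delta b}{b'}\cdot(\ldots)$ pieces on the RHS as honest error terms. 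The LHS perturbation generated by $\tfrac{1}{b'}(i\mathcal{L}_Q\epsilon'+i\tilde R'_{\mathrm{L-L}}+\ldots,\mathcal{Z}_k)_r$ is of size $O(b^2)$ (the paper uses $b^{-1}\|\epsilon_3'\|_{L^2}\lesssim b^2$, which is where the $H^5_m$ bootstrap enters), so uniform invertibility survives, and the $\tfrac{\delta b}{b'}$-pieces contribute only $O(s^{-3}D)$. Without this substitution the argument does not close.

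A secondary inaccuracy: you estimate $(\mathcal{L}_Q\epsilon',\mathcal{Z}_k)_r=(\epsilon',\mathcal{L}_Q\mathcal{Z}_k)_r$ by invoking \eqref{eq:Orthog-Hdot5-est}, but that display controls $\|\mathcal{Z}_k\|_{(\dot{\mathcal{H}}_m^5)^\ast}$, not $\|\mathcal{L}_Q\mathcal{Z}_k\|_{(\dot{\mathcal{H}}_m^5)^\ast}$ — the bound you want is true (the $\mathcal{Z}_k$'s are smooth and $M$-localized) but it is not the statement cited. The structurally cleaner route, which the paper uses, is to estimate $(i\mathcal{L}_Q\epsilon',\mathcal{Z}_k)_r$ directly: it vanishes for $k\in\{1,2\}$ because $\mathcal{L}_Q i\mathcal{Z}_k=\mathcal{Z}_{k+2}$ and $\epsilon'\in\mathcal{Z}^\perp$, and for $k\in\{3,4\}$ the conjugation identity plus the approximate null-space relation gives $\lesssim M^{-1}\|\epsilon_3'\|_{L^2}$, which is the bound actually needed.
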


\begin{proof}
Let us denote by $\theta_{cor}\coloneqq-(\theta_{\Psi}+\theta_{\mathrm{L-L}}+\theta_{\mathrm{NL}})$.
Then, we have 
\begin{align*}
(\delta\gamma)_{s} & =\Big(\tilde{\mathrm{Mod}}_{2}-\frac{ds'}{ds}\tilde{\mathrm{Mod}}_{2}'\Big)+\delta(\eta\theta_{\eta}+\theta_{cor})-\Big(\frac{ds'}{ds}-1\Big)(\eta'\theta_{\eta'}+\theta_{cor}'),\\
\frac{ds'}{ds}-1 & =\frac{1}{b'}\Big(\delta b-\Big(\tilde{\mathrm{Mod}}_{1}-\frac{ds'}{ds}\tilde{\mathrm{Mod}}_{1}'\Big)\Big).
\end{align*}
Thus \eqref{eq:Diff-dsds-Estimate} and \eqref{eq:DiffGammaEstimate}
follow from \eqref{eq:DiffModEst}, the estimates for $\delta(\eta\theta_{\eta})$
and $\delta\theta_{cor}$ (Lemma \ref{lem:Diff-Est-ProfilesModRad},
\eqref{eq:Diff-theta-LL-est}, and \eqref{eq:Diff-theta-NL-est}),
and the estimates for $\eta\theta_{\eta}$ and $\theta_{cor}$ (\eqref{eq:decomp-P-temp1},
\eqref{eq:decomp-P-temp2}, \eqref{eq:LL-theta-est}, and \eqref{eq:NL-theta-est}).

Henceforth, we focus on the proof of \eqref{eq:DiffModEst}. We rewrite
\eqref{eq:Diff-e-Eq} as 
\begin{align}
 & (\tilde{\mathbf{Mod}}-\frac{ds'}{ds}\tilde{\mathbf{Mod}}')\cdot(\mathbf{v}'-\frac{e_{1}}{b'}((\eta'\theta_{\eta'}+\theta_{cor}')i\epsilon'+i\mathcal{L}_{Q}\epsilon'+i\tilde R_{\mathrm{L-L}}'+i\tilde R_{\mathrm{NL}}'+i\Psi')-e_{2}(i\epsilon'))\label{eq:DiffModEst-tmp1}\\
 & =(\partial_{s}-\frac{\lambda_{s}}{\lambda}\Lambda+\gamma_{s}i)\delta\epsilon+i\mathcal{L}_{Q}\delta\epsilon\nonumber \\
 & \quad+\delta(\eta\theta_{\eta}+\theta_{cor})i\epsilon'-\frac{\delta b}{b'}((\eta'\theta_{\eta'}+\theta_{cor}')i\epsilon'+i\mathcal{L}_{Q}\epsilon'+i\tilde R_{\mathrm{L-L}}'+i\tilde R_{\mathrm{NL}}'+i\Psi')\nonumber \\
 & \quad-\tilde{\mathbf{Mod}}\cdot(\delta\mathbf{v})+(i\delta\tilde R_{\mathrm{L-L}}+i\delta\tilde R_{\mathrm{NL}}+i\delta\Psi).\nonumber 
\end{align}
As in the proof of Lemma \ref{lem:mod-est}, we take the inner product
of \eqref{eq:DiffModEst-tmp1} and $\mathcal{Z}_{k}$.

We first consider the inner product of $\mathcal{Z}_{k}$ and the
LHS of \eqref{eq:DiffModEst-tmp1}. By \eqref{eq:ortho-jacobian},
the inner product coming from $\mathbf{v}'$ becomes the main term.
The remaining contributions are treated as errors: (see the proof
of Lemma \ref{lem:mod-est}) 
\begin{align*}
(b')^{-1}|(\eta'\theta_{\eta'}+\theta_{cor}')(i\epsilon',\mathcal{Z}_{k})_{r}| & \lesssim b^{\frac{1}{2}}M^{C}\|\epsilon'\|_{\dot{H}_{m}^{3}},\\
(b')^{-1}|(i\mathcal{L}_{Q}\epsilon',\mathcal{Z}_{k})_{r}| & \lesssim b^{-1}\|\epsilon_{3}'\|_{L^{2}},\\
(b')^{-1}|(i\tilde R_{\mathrm{L-L}}'+i\tilde R_{\mathrm{NL}}'+i\Psi',\mathcal{Z}_{k})_{r}| & \lesssim b^{-1}M^{C}\|\tilde R_{\mathrm{L-L}}'+\tilde R_{\mathrm{NL}}'+\Psi'\|_{\dot{H}_{m}^{3}},\\
|(i\epsilon',\mathcal{Z}_{k})_{r}| & \lesssim M^{C}\|\epsilon'\|_{\dot{H}_{m}^{3}}.
\end{align*}
These are all bounded by $b^{2}$. Therefore, the matrix formed by
taking the inner product of 
\[
(\mathbf{v}'-\frac{e_{1}}{b'}((\eta'\theta_{\eta'}+\theta_{cor}')i\epsilon'+i\mathcal{L}_{Q}\epsilon'+i\tilde R_{\mathrm{L-L}}'+i\tilde R_{\mathrm{NL}}'+i\Psi')-e_{2}(i\epsilon'))
\]
and $\mathcal{Z}_{k}$ has uniformly bounded inverse.

Henceforth, it suffices to consider the inner product of $\mathcal{Z}_{k}$
and the RHS of \eqref{eq:DiffModEst-tmp1}. As in the proof of Lemma
\ref{lem:mod-est}, the leading term comes from $(\delta\epsilon,\mathcal{L}_{Q}i\mathcal{Z}_{k})_{r}$:
\[
|(\delta\epsilon,\mathcal{L}_{Q}i\mathcal{Z}_{k})_{r}|\lesssim o_{M\to\infty}(1)\|\delta\epsilon_{3}\|_{L^{2}}.
\]
All the remaining terms will be considered as errors. We have 
\begin{align*}
(\partial_{s}\delta\epsilon,\mathcal{Z}_{k})_{r} & =\partial_{s}(\delta\epsilon,\mathcal{Z}_{k})_{r}=0,\\
|\tfrac{\lambda_{s}}{\lambda}(\delta\epsilon,\Lambda\mathcal{Z}_{k})_{r}|+|\gamma_{s}(\delta\epsilon,i\mathcal{Z}_{k})_{r}| & \lesssim b\cdot M^{C}\|\delta\epsilon\|_{\dot{H}_{m}^{3}}\lesssim s^{-3}D.
\end{align*}
Next, we have 
\begin{align*}
|(\delta(\eta\theta_{\eta}+\theta_{cor})i\epsilon',\mathcal{Z}_{k})_{r}| & \lesssim|\delta(\eta\theta_{\eta}+\theta_{cor})|\cdot M^{C}\|\epsilon'\|_{\dot{H}_{m}^{3}}\lesssim s^{-3}D.
\end{align*}
Next, by the previous paragraph, we have 
\[
|\frac{\delta b}{b'}((\eta'\theta_{\eta'}+\theta_{cor}')i\epsilon'+i\mathcal{L}_{Q}\epsilon'+i\tilde R_{\mathrm{L-L}}'+i\tilde R_{\mathrm{NL}}'+i\Psi',\mathcal{Z}_{k})_{r}|\lesssim b^{2}|\delta b|\lesssim s^{-3}D.
\]
Next, by the modulation estimate (Lemma \ref{lem:mod-est}), estimates
of $\delta\mathbf{v}$ (Lemma \ref{lem:Diff-Est-ProfilesModRad}),
and pointwise bounds of $\mathcal{Z}_{k}$ \eqref{eq:Orthog-Pointwise},
we have 
\begin{align*}
|\tilde{\mathbf{Mod}}||(\delta\mathbf{v},\mathcal{Z}_{k})_{r}| & \lesssim b^{\frac{5}{2}}\cdot M^{C}(|\delta b|+|\delta\eta|)\lesssim s^{-3}D.
\end{align*}
Finally, by \eqref{eq:Diff-RLL-est}, \eqref{eq:Diff-RNL-est}, and
Lemma \ref{lem:Diff-Est-ProfilesModRad}, we have 
\begin{align*}
|(i\delta\tilde R_{\mathrm{L-L}}+i\delta\tilde R_{\mathrm{NL}}+i\delta\Psi,\mathcal{Z}_{k})_{r}| & \lesssim M^{C}\|\delta\tilde R_{\mathrm{L-L}}+\delta\tilde R_{\mathrm{NL}}+\delta\Psi\|_{\dot{H}_{m}^{3}}\\
 & \lesssim M^{C}b(\|\delta\epsilon\|_{\dot{H}_{m}^{3}}+s^{-\frac{1}{2}}(s^{-\frac{5}{2}}D))\lesssim s^{-3}D,
\end{align*}
which is absorbed into $s^{-3}D$. This completes the proof of \eqref{eq:DiffModEst}.
\end{proof}

\subsection{Energy estimate in $\dot{H}_{m}^{3}$}

In this subsection we propagate $\|\delta\epsilon_{3}\|_{L^{2}}$
forwards in time. The argument is similar to Section \ref{subsec:energy-identity}.
We again use local virial corrections to derive a modified energy
inequality.

We start with the equation of $\delta\epsilon_{2}$ obtained by taking
$A_{Q}L_{Q}$ to \eqref{eq:Diff-e-Eq}: 
\begin{align}
 & (\partial_{s}-\frac{\lambda_{s}}{\lambda}\Lambda_{-2}+\gamma_{s}i)\delta\epsilon_{2}+iA_{Q}A_{Q}^{\ast}\delta\epsilon_{2}\label{eq:Diff-e2-Eq}\\
 & =-(\delta\gamma)_{s}i\epsilon_{2}'+\Big(\frac{ds'}{ds}-1\Big)i\epsilon_{4}'+\frac{\lambda_{s}}{\lambda}\partial_{\lambda}(A_{Q_{\lambda}}L_{Q_{\lambda}})\delta\epsilon-\gamma_{s}A_{Q}[L_{Q},i]\delta\epsilon\nonumber \\
 & \quad+\tilde{\mathbf{Mod}}\cdot A_{Q}L_{Q}(\delta\mathbf{v})+(\tilde{\mathbf{Mod}}-\frac{ds'}{ds}\tilde{\mathbf{Mod}}')\cdot A_{Q}L_{Q}\mathbf{v}'\nonumber \\
 & \quad-A_{Q}L_{Q}(i\delta\tilde R{}_{\mathrm{L-L}}+i\delta\tilde R_{\mathrm{NL}}+i\delta\Psi)\nonumber \\
 & \quad+\Big(\frac{ds'}{ds}-1\Big)A_{Q}L_{Q}(i\tilde R_{\mathrm{L-L}}'+i\tilde R_{\mathrm{NL}}'+i\Psi').\nonumber 
\end{align}
Similarly to Lemma \ref{lem:E3-Identity}, we prove the following.
\begin{lem}[Energy identity for $\dot{H}_{m}^{3}$]
\label{lem:Diff-H3-Identity}We have 
\begin{align*}
 & \Big|\Big(\partial_{s}-6\frac{\lambda_{s}}{\lambda}\Big)\|\delta\epsilon_{3}\|_{L^{2}}^{2}\Big|\lesssim b\|\delta\epsilon_{3}\|_{L^{2}}\\
 & \qquad\times\Big(o_{M\to\infty}(1)\|\delta\epsilon_{3}\|_{L^{2}}+\|\delta\epsilon\|_{\dot{\mathcal{H}}_{m,\leq M_{2}}^{3}}+o_{M_{2}\to\infty}(1)\|\delta\epsilon\|_{\dot{H}_{m}^{3}}+s^{-\frac{1}{2}}(s^{-\frac{5}{2}}D)\Big).
\end{align*}
\end{lem}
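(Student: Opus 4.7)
The plan is to mimic the proof of Lemma \ref{lem:E3-Identity}, but starting from the equation \eqref{eq:Diff-e2-Eq} for $\delta\epsilon_{2}$ instead of \eqref{eq:e2-eq}. First, apply $A_Q^{*}$ to \eqref{eq:Diff-e2-Eq}; using $[A_Q, i]=0$ and the conjugation identity $A_Q A_Q^{*}\delta\epsilon_{2} = \delta\epsilon_{4}$, $A_Q^{*}\epsilon_{4}' = \epsilon_{5}'$, we obtain
\begin{align*}
&(\partial_{s}-\tfrac{\lambda_{s}}{\lambda}\Lambda_{-3}+\gamma_{s}i)\delta\epsilon_{3}+iA_{Q}^{*}A_{Q}\delta\epsilon_{3} \\
&\quad=-(\delta\gamma)_{s}i\epsilon_{3}'+\bigl(\tfrac{ds'}{ds}-1\bigr)i\epsilon_{5}'+\tfrac{\lambda_{s}}{\lambda}\partial_{\lambda}(A_{Q_{\lambda}}^{*}A_{Q_{\lambda}}L_{Q_{\lambda}})\delta\epsilon-\gamma_{s}A_{Q}^{*}A_{Q}[L_{Q},i]\delta\epsilon \\
&\qquad+\tilde{\mathbf{Mod}}\cdot A_{Q}^{*}A_{Q}L_{Q}(\delta\mathbf{v})+\bigl(\tilde{\mathbf{Mod}}-\tfrac{ds'}{ds}\tilde{\mathbf{Mod}}'\bigr)\cdot A_{Q}^{*}A_{Q}L_{Q}\mathbf{v}' \\
&\qquad-A_{Q}^{*}A_{Q}L_{Q}(i\delta\tilde R_{\mathrm{L-L}}+i\delta\tilde R_{\mathrm{NL}}+i\delta\Psi)+\bigl(\tfrac{ds'}{ds}-1\bigr)A_{Q}^{*}A_{Q}L_{Q}(i\tilde R_{\mathrm{L-L}}'+i\tilde R_{\mathrm{NL}}'+i\Psi').
\end{align*}
Taking the real $L^{2}$-inner product with $\delta\epsilon_{3}$ and using symmetry of $A_{Q}^{*}A_{Q}$ to kill the linear dispersive term, we reduce the proof to
\[
\|\mathrm{RHS}\|_{L^{2}}\lesssim b\Big(o_{M\to\infty}(1)\|\delta\epsilon_{3}\|_{L^{2}}+\|\delta\epsilon\|_{\dot{\mathcal{H}}_{m,\leq M_{2}}^{3}}+o_{M_{2}\to\infty}(1)\|\delta\epsilon\|_{\dot{\mathcal{H}}_{m}^{3}}+s^{-\frac{1}{2}}(s^{-\frac{5}{2}}D)\Big).
\]

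The RHS is then estimated term by term. The phase term $(\delta\gamma)_{s}i\epsilon_{3}'$ uses \eqref{eq:DiffGammaEstimate} and the a priori bound $\|\epsilon_{3}'\|_{L^{2}}\lesssim b^{3}$. The commutator terms $\tfrac{\lambda_{s}}{\lambda}\partial_{\lambda}(A_{Q_{\lambda}}^{*}A_{Q_{\lambda}}L_{Q_{\lambda}})\delta\epsilon$ and $\gamma_{s}A_{Q}^{*}A_{Q}[L_{Q},i]\delta\epsilon$ are handled by applying Lemma \ref{lem:commutator} with $\epsilon$ replaced by $\delta\epsilon$ together with $|\lambda_{s}/\lambda|+|\gamma_{s}|\lesssim b$, producing exactly the desired local $\dot{\mathcal{H}}_{m,\leq M_{2}}^{3}$ structure. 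The modulation term $\tilde{\mathbf{Mod}}\cdot A_{Q}^{*}A_{Q}L_{Q}(\delta\mathbf{v})$ is controlled by the bound $|\tilde{\mathbf{Mod}}|\lesssim b^{9/2}$ from Lemma \ref{lem:mod-est} combined with $\|A_{Q}^{*}A_{Q}L_{Q}\delta\mathbf{v}\|_{L^{2}}\lesssim|\delta b|+|\delta\eta|$ (differentiating \eqref{eq:gen-null-rel} in $b,\eta$ and using Lemma \ref{lem:Diff-Est-ProfilesModRad}). The difference-of-modulation term uses \eqref{eq:DiffModEst} together with \eqref{eq:gen-null-rel} applied to $\mathbf{v}'$. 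The difference remainder terms $\delta\tilde R_{\mathrm{L-L}},\delta\tilde R_{\mathrm{NL}},\delta\Psi$ are handled by \eqref{eq:Diff-RLL-est}, \eqref{eq:Diff-RNL-est}, and the last estimate in Lemma \ref{lem:Diff-Est-ProfilesModRad}, combined with the boundedness of $A_{Q}^{*}A_{Q}L_{Q}:\dot{\mathcal{H}}_{m}^{3}\to L^{2}$. All of these contributions fit within the claimed RHS bound.

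The main obstacle is the new term $\bigl(\tfrac{ds'}{ds}-1\bigr)i\epsilon_{5}'$, which has no analog in the proof of Lemma \ref{lem:E3-Identity}. It arises precisely because we re-parametrize $\epsilon'$ by the adapted time $s$, and it costs two additional derivatives on $\epsilon'$. To control it we use \eqref{eq:Diff-dsds-Estimate}, which gives $|\tfrac{ds'}{ds}-1|\lesssim D$, combined with the a priori bound $\|\epsilon_{5}'\|_{L^{2}}\lesssim b^{9/2}\lesssim s^{-9/2}$ available because $u'$ is an $H_{m}^{5}$-trapped solution. Thus
\[
\Bigl\|\bigl(\tfrac{ds'}{ds}-1\bigr)i\epsilon_{5}'\Bigr\|_{L^{2}}\lesssim D\cdot s^{-\frac{9}{2}} = b\cdot s^{-\frac{1}{2}}\cdot s^{-3}D \ll b\cdot s^{-\frac{1}{2}}(s^{-\frac{5}{2}}D),
\]
which is perturbative. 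This is exactly the place where the $H_{m}^{5}$ a priori bound is unavoidable, and it motivates (together with the analogous term in the $\dot{\mathcal{H}}_{m}^{3}$-difference energy) the whole reason for trading the regularity $H_{m}^{3}$ of Theorem \ref{thm:Existence} for $H_{m}^{5}$ in Theorem \ref{thm:BlowupManifold}. The analogous rescaling contribution $(\tfrac{ds'}{ds}-1)A_{Q}^{*}A_{Q}L_{Q}(i\tilde R_{\mathrm{L-L}}'+i\tilde R_{\mathrm{NL}}'+i\Psi')$ is much better by Lemmas \ref{lem:degenerate-linear}, \ref{lem:nonlinear} applied to the primed solution and the bound on $\tfrac{ds'}{ds}-1$.
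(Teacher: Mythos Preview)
Your proof is correct and follows essentially the same approach as the paper's: derive the $\delta\epsilon_{3}$-equation by applying $A_{Q}^{*}$ to \eqref{eq:Diff-e2-Eq}, take the inner product with $\delta\epsilon_{3}$, and bound the right-hand side in $L^{2}$ term by term using exactly the ingredients you list. The paper's proof is organized identically, with the same term-by-term citations (Lemmas \ref{lem:commutator}, \ref{lem:mod-est}, \ref{lem:Diff-Est-ProfilesModRad}, \ref{lem:DiffModEst}, \eqref{eq:gen-null-rel}, \eqref{eq:Diff-RLL-est}, \eqref{eq:Diff-RNL-est}, \eqref{eq:Diff-dsds-Estimate}, \eqref{eq:DiffGammaEstimate}, and the a~priori $\epsilon_{5}'$ bound), and it likewise singles out $(\tfrac{ds'}{ds}-1)i\epsilon_{5}'$ in a remark as the reason one needs the $H_{m}^{5}$ framework.
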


\begin{rem}
Here the reader can observe why we need a priori $H^{5}$-control
of $\epsilon$. In \eqref{eq:Diff-e3-Eq}, the term $(\frac{ds'}{ds}-1)i\epsilon_{5}'$
appears. Thus we need to perform an energy estimate even for $\epsilon_{5}$
and hence obtain $H_{m}^{5}$-trapped solutions.
\end{rem}

\begin{proof}
Taking $A_{Q}^{\ast}$ to \eqref{eq:Diff-e2-Eq}, we get the equation
of $\delta\epsilon_{3}$: 
\begin{align}
 & (\partial_{s}-\frac{\lambda_{s}}{\lambda}\Lambda_{-3}+\gamma_{s}i)\delta\epsilon_{3}+iA_{Q}^{\ast}A_{Q}\delta\epsilon_{3}\nonumber \\
 & =-(\delta\gamma)_{s}i\epsilon_{3}'+\Big(\frac{ds'}{ds}-1\Big)i\epsilon_{5}'+\frac{\lambda_{s}}{\lambda}\partial_{\lambda}(A_{Q_{\lambda}}^{\ast}A_{Q_{\lambda}}L_{Q_{\lambda}})\delta\epsilon-\gamma_{s}A_{Q}^{\ast}A_{Q}[L_{Q},i]\delta\epsilon\label{eq:Diff-e3-Eq}\\
 & \quad+\tilde{\mathbf{Mod}}\cdot A_{Q}^{\ast}A_{Q}L_{Q}(\delta\mathbf{v})+(\tilde{\mathbf{Mod}}-\frac{ds'}{ds}\tilde{\mathbf{Mod}}')\cdot A_{Q}^{\ast}A_{Q}L_{Q}\mathbf{v}'\nonumber \\
 & \quad-A_{Q}^{\ast}A_{Q}L_{Q}(i\delta\tilde R{}_{\mathrm{L-L}}+i\delta\tilde R_{\mathrm{NL}}+i\delta\Psi)\nonumber \\
 & \quad+\Big(\frac{ds'}{ds}-1\Big)A_{Q}^{\ast}A_{Q}L_{Q}(i\tilde R_{\mathrm{L-L}}'+i\tilde R_{\mathrm{NL}}'+i\Psi').\nonumber 
\end{align}
We take the inner product of \eqref{eq:Diff-e3-Eq} with $\delta\epsilon_{3}$
to get 
\[
\Big|\frac{1}{2}\Big(\partial_{s}-6\frac{\lambda_{s}}{\lambda}\Big)\|\delta\epsilon_{3}\|_{L^{2}}^{2}\Big|=\Big|(\delta\epsilon_{3},\text{RHS of }\eqref{eq:Diff-e3-Eq})_{r}\Big|\lesssim\|\delta\epsilon_{3}\|_{L^{2}}\|\text{RHS of }\eqref{eq:Diff-e3-Eq}\|_{L^{2}}.
\]

It now suffices to estimate $\|\text{RHS of }\eqref{eq:Diff-e3-Eq}\|_{L^{2}}$.
By \eqref{eq:DiffGammaEstimate} and a priori $\epsilon_{3}$ bound,
we have 
\[
\|(\delta\gamma)_{s}i\epsilon_{3}'\|_{L^{2}}\lesssim bs^{-3}D.
\]
By \eqref{eq:Diff-dsds-Estimate} and \emph{a priori $\epsilon_{5}$
bound}, we have 
\[
\|\Big(\frac{ds'}{ds}-1\Big)i\epsilon_{5}'\|_{L^{2}}\lesssim bs^{-3}D.
\]
By Lemma \ref{lem:commutator}, we have 
\[
\|\frac{\lambda_{s}}{\lambda}\partial_{\lambda}(A_{Q_{\lambda}}^{\ast}A_{Q_{\lambda}}L_{Q_{\lambda}})\delta\epsilon-\gamma_{s}A_{Q}^{\ast}A_{Q}[L_{Q},i]\delta\epsilon\|_{L^{2}}\lesssim b(\|\delta\epsilon\|_{\dot{\mathcal{H}}_{m,\leq M_{2}}^{3}}+o_{M_{2}\to\infty}(1)\|\delta\epsilon\|_{\dot{H}_{m}^{3}}).
\]
By Lemma \ref{lem:mod-est} and Lemma \ref{lem:Diff-Est-ProfilesModRad},
we have 
\[
\|\tilde{\mathbf{Mod}}\cdot A_{Q}^{\ast}A_{Q}L_{Q}(\delta\mathbf{v})\|_{L^{2}}\lesssim bs^{-3}D.
\]
By Lemma \ref{lem:DiffModEst} and \eqref{eq:gen-null-rel}, we have
\[
\|(\tilde{\mathbf{Mod}}-\frac{ds'}{ds}\tilde{\mathbf{Mod}}')\cdot A_{Q}^{\ast}A_{Q}L_{Q}\mathbf{v}'\|_{L^{2}}\lesssim b(o_{M\to\infty}(1)\|\delta\epsilon_{3}\|_{L^{2}}+s^{-\frac{1}{2}}(s^{-\frac{5}{2}}D)).
\]
Next, by \eqref{eq:Diff-RLL-est}, \eqref{eq:Diff-RNL-est}, and Lemma
\ref{lem:Diff-Est-ProfilesModRad}, we have 
\begin{align*}
 & \|A_{Q}^{\ast}A_{Q}L_{Q}(i\delta\tilde R{}_{\mathrm{L-L}}+i\delta\tilde R_{\mathrm{NL}}+i\delta\Psi)\|_{L^{2}}\\
 & \lesssim b(\|\delta\epsilon\|_{\dot{\mathcal{H}}_{m,\leq M_{2}}^{3}}+o_{M_{2}\to\infty}(1)\|\delta\epsilon\|_{\dot{H}_{m}^{3}}+s^{-\frac{1}{2}}(s^{-\frac{5}{2}}D)).
\end{align*}
Finally, by \eqref{eq:Diff-dsds-Estimate}, \eqref{eq:LL-E3-est-m-geq-3},
\eqref{eq:NL-E3-est}, and \eqref{eq:Radiation-Hdot3-est}, we have
\[
\|\Big(\frac{ds'}{ds}-1\Big)A_{Q}^{\ast}A_{Q}L_{Q}(i\tilde R_{\mathrm{L-L}}'+i\tilde R_{\mathrm{NL}}'+i\Psi')\|_{L^{2}}\lesssim bs^{-3}D.
\]
This completes the proof.
\end{proof}
\begin{lem}[Local virial control]
\label{lem:DiffLocalVirialControlH3}We have 
\begin{align}
 & \Big|\frac{b}{\log M_{2}}\int_{M_{2}}^{M_{2}^{2}}(\delta\epsilon_{2},-i\Lambda_{M_{2}'}\delta\epsilon_{2})_{r}\frac{dM_{2}'}{M_{2}'}\Big|\lesssim M_{2}^{C}b\|\delta\epsilon_{3}\|_{L^{2}}^{2},\label{eq:DiffLocalVirialBdryH3}\\
 & \Big(\partial_{s}-6\frac{\lambda_{s}}{\lambda}\Big)\Big[\frac{b}{\log M_{2}}\int_{M_{2}}^{M_{2}^{2}}(\delta\epsilon_{2},-i\Lambda_{M_{2}'}\delta\epsilon_{2})_{r}\frac{dM_{2}'}{M_{2}'}\Big]\label{eq:DiffLocalVirialDerivH3}\\
 & \geq b\Big(c_{M}\|\delta\epsilon\|_{\dot{\mathcal{H}}_{m,\leq M_{2}}^{3}}^{2}-o_{M_{2}\to\infty}(1)\|\delta\epsilon\|_{\dot{\mathcal{H}}_{m}^{3}}^{2}-\|\delta\epsilon_{3}\|_{L^{2}}(M_{2}^{C}b(s^{-\frac{5}{2}}D))\Big).\nonumber 
\end{align}
\end{lem}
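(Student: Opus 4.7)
The plan is to mirror the strategy of Lemma \ref{lem:LocalVirialControl}, replacing the evolution of $\epsilon_2$ by that of $\delta\epsilon_2$ governed by \eqref{eq:Diff-e2-Eq}, and reusing the localized repulsivity Lemma \ref{lem:LocalizedRepulsivity}. First I will establish the unaveraged (pointwise in $M_2$) bounds
\begin{align*}
|(\delta\epsilon_2, -i\Lambda_{M_2}\delta\epsilon_2)_r| &\lesssim M_2^2\|\delta\epsilon_3\|_{L^2}^2, \\
\Bigl(\partial_s - 6\tfrac{\lambda_s}{\lambda}\Bigr)(\delta\epsilon_2, -i\Lambda_{M_2}\delta\epsilon_2)_r &\geq c_M\|\delta\epsilon\|_{\dot{\mathcal{H}}_{m,\leq M_2}^3}^2 - O\bigl(\|\mathbf{1}_{y\sim M_2}|\delta\epsilon|_{-3}\|_{L^2}^2\bigr) \\
& \qquad- o_{M_2\to\infty}(1)\|\delta\epsilon\|_{\dot{\mathcal{H}}_m^3}^2 - M_2\|\delta\epsilon_3\|_{L^2}\cdot O\bigl(b\cdot s^{-5/2}D\bigr).
\end{align*}
The boundedness follows from Cauchy--Schwarz, $|y\Lambda_{M_2}\delta\epsilon_2|\lesssim \mathbf{1}_{y\leq 2M_2}y^2|\delta\epsilon_2|_{-1}$, and positivity of $A_Q^{\ast}A_Q$. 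For the differential inequality, since $-i\Lambda_{M_2}$ is symmetric,
\[
\tfrac{1}{2}\Bigl(\partial_s - 6\tfrac{\lambda_s}{\lambda}\Bigr)(\delta\epsilon_2, -i\Lambda_{M_2}\delta\epsilon_2)_r = (A_Q^{\ast}A_Q\delta\epsilon_2, \Lambda_{M_2}\delta\epsilon_2)_r + \tfrac{\lambda_s}{\lambda}(y\partial_y\delta\epsilon_2, -i\Lambda_{M_2}\delta\epsilon_2)_r + (\text{RHS of }\eqref{eq:Diff-e2-Eq}, -i\Lambda_{M_2}\delta\epsilon_2)_r.
\]
Lemma \ref{lem:LocalizedRepulsivity} applied to $\delta\epsilon_2$ controls the first term from below by $c_M\|\delta\epsilon\|_{\dot{\mathcal{H}}_{m,\leq M_2}^3}^2$ up to the stated error; the coercivity argument there carries over verbatim because $\delta\epsilon$ almost satisfies the orthogonality conditions (in fact exactly, since each of $\epsilon,\epsilon'$ does). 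The second scaling term is bounded by $bM_2^2\|\delta\epsilon_3\|_{L^2}^2$ exactly as in Lemma \ref{lem:LocalVirialControl}.

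The key new step is the bound
\[
\bigl|(\text{RHS of }\eqref{eq:Diff-e2-Eq}, -i\Lambda_{M_2}\delta\epsilon_2)_r\bigr| \leq \|y^{-1}(\text{RHS of }\eqref{eq:Diff-e2-Eq})\|_{L^2}\cdot M_2\|\delta\epsilon_3\|_{L^2},
\]
for which I need to upgrade Lemma \ref{lem:Diff-H3-Identity} to the weight $y^{-1}$. Term-by-term: the contribution $(\delta\gamma)_s i\epsilon_2'$ is handled by $\|y^{-1}\epsilon_2'\|_{L^2}=\|\epsilon_3'\|_{L^2}\lesssim b^3$ combined with \eqref{eq:DiffGammaEstimate}; the contribution $(\frac{ds'}{ds}-1)i\epsilon_4'$ is handled by $\|y^{-1}\epsilon_4'\|_{L^2}=\|\epsilon_5'\|_{L^2}\lesssim b^{9/2}$ and \eqref{eq:Diff-dsds-Estimate} (this is precisely where a priori $H_m^5$-control of $\epsilon'$ is essential); the commutator and scaling terms use the $y^{-1}$-weighted bounds of Lemma \ref{lem:commutator}; the modulation vector contributions use Lemma \ref{lem:mod-est}, Lemma \ref{lem:DiffModEst}, Lemma \ref{lem:Diff-Est-ProfilesModRad}, and \eqref{eq:gen-null-rel}; the remainder contributions use Lemmas \ref{lem:Diff-RLL-Est}, \ref{lem:Diff-RNL-Est} together with \eqref{eq:LL-E3-est-m-geq-3}, \eqref{eq:NL-E3-est}, and \eqref{eq:Radiation-Hdot3-est} (the latter three used for the $(\frac{ds'}{ds}-1)$-coefficient terms). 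Assembling these pieces gives $\|y^{-1}(\text{RHS of }\eqref{eq:Diff-e2-Eq})\|_{L^2}\lesssim b\cdot s^{-5/2}D$ up to an $o_{M\to\infty}(1)\|\delta\epsilon_3\|_{L^2}$ contribution (absorbable into the $o_{M_2\to\infty}(1)\|\delta\epsilon\|_{\dot{\mathcal{H}}_m^3}^2$ term via Cauchy--Schwarz with the $M_2\|\delta\epsilon_3\|_{L^2}$ factor and the parameter ordering $M\ll M_1\ll M_2$).

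Once the unaveraged estimates are in hand, the conclusions \eqref{eq:DiffLocalVirialBdryH3}--\eqref{eq:DiffLocalVirialDerivH3} follow by the same Fubini-style averaging argument used at the end of the proof of Lemma \ref{lem:LocalVirialControl}: averaging over $M_2'\in[M_2,M_2^2]$ with respect to $\frac{dM_2'}{M_2'}$ turns the localized boundary term $\|\mathbf{1}_{y\sim M_2'}|\delta\epsilon|_{-3}\|_{L^2}^2$ into $\frac{1}{\log M_2}\|\mathbf{1}_{M_2\lesssim y\lesssim M_2^2}|\delta\epsilon|_{-3}\|_{L^2}^2 = o_{M_2\to\infty}(1)\|\delta\epsilon\|_{\dot{\mathcal{H}}_m^3}^2$, while the $b_s$-correction from $(\partial_s - 6\lambda_s/\lambda)(b\cdot(\cdots))$ contributes $O(b^2 M_2^C\|\delta\epsilon_3\|_{L^2}^2)$, which is perturbative. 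The main obstacle is the careful $y^{-1}$-weighted bookkeeping for the new term $(\frac{ds'}{ds}-1)i\epsilon_4'$, since this is the only place one must invoke the $H_m^5$-regularity of the comparison solution $u'$; fortunately, the a priori bound $\|\epsilon_5'\|_{L^2}\lesssim b^{9/2}$ from \eqref{eq:def-H5-trapped} precisely absorbs the loss of a derivative caused by the change of time variable.
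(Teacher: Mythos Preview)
Your proposal is correct and follows essentially the same approach as the paper: reduce to an unaveraged estimate, decompose $(\partial_s - 6\lambda_s/\lambda)(\delta\epsilon_2, -i\Lambda_{M_2}\delta\epsilon_2)_r$ into the repulsive term (handled by Lemma~\ref{lem:LocalizedRepulsivity} with $\delta\epsilon$ in place of $\epsilon$), the scaling commutator, and the pairing with the RHS of \eqref{eq:Diff-e2-Eq}, then average. One small imprecision: your claimed bound $\|y^{-1}(\text{RHS of }\eqref{eq:Diff-e2-Eq})\|_{L^2}\lesssim b\cdot s^{-5/2}D$ hides an $M$-dependent constant coming from the commutator term $\frac{\lambda_s}{\lambda}\partial_\lambda(A_{Q_\lambda}L_{Q_\lambda})\delta\epsilon$ and from $\delta\tilde R_{\mathrm{L-L}}$, which are only bounded by $b\|\delta\epsilon\|_{\dot{H}_m^3}$ (the paper states the bound in this form); converting to $s^{-5/2}D$ uses coercivity and introduces a $C_M$ factor, which is harmlessly absorbed into $M_2^C$ since $M\ll M_2$.
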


\begin{proof}
We note that \eqref{eq:DiffLocalVirialBdryH3} is same as \eqref{eq:LocalVirialBdryH3},
after replacing $\epsilon_{2}$ by $\delta\epsilon_{2}$. Henceforth,
we focus on \eqref{eq:DiffLocalVirialDerivH3}. By the averaging argument
(see the proof of Lemma \ref{lem:LocalVirialControl}), it suffices
to show the unaveraged version of \eqref{eq:DiffLocalVirialDerivH3}:
\begin{align*}
 & \Big(\partial_{s}-6\frac{\lambda_{s}}{\lambda}\Big)(\delta\epsilon_{2},-i\Lambda_{M_{2}}\delta\epsilon_{2})_{r}\\
 & \geq c_{M}\|\delta\epsilon\|_{\dot{\mathcal{H}}_{m,\leq M_{2}}^{3}}^{2}-O\Big(\int\mathbf{1}_{y\sim M_{2}}|\delta\epsilon|_{-3}^{2}\Big)\\
 & \quad-o_{M_{2}\to\infty}(1)\|\delta\epsilon\|_{\dot{H}_{m}^{3}}^{2}-\|\delta\epsilon_{3}\|_{L^{2}}\cdot O(M_{2}^{2}b\|\delta\epsilon\|_{\dot{H}_{m}^{3}}+M_{2}^{2}bs^{-\frac{1}{2}}(s^{-\frac{5}{2}}D)).
\end{align*}

To show this, we start with 
\begin{align*}
 & \frac{1}{2}\Big(\partial_{s}-6\frac{\lambda_{s}}{\lambda}\Big)(\delta\epsilon_{2},-i\Lambda_{M_{2}}\delta\epsilon_{2})_{r}=(\partial_{s}\delta\epsilon_{2},-i\Lambda_{M_{2}}\delta\epsilon_{2})_{r}\\
 & =(A_{Q}^{\ast}A_{Q}\delta\epsilon_{2},\Lambda_{M_{2}}\delta\epsilon_{2})_{r}+\frac{\lambda_{s}}{\lambda}(y\partial_{y}\delta\epsilon_{2},-i\Lambda_{M_{2}}\delta\epsilon_{2})_{r}+(\text{RHS of }\eqref{eq:Diff-e2-Eq},-i\Lambda_{M_{2}}\delta\epsilon_{2})_{r}
\end{align*}
The first term of the RHS is treated in Lemma \ref{lem:LocalizedRepulsivity},
after replacing $\epsilon_{2}$ by $\delta\epsilon_{2}$. The second
term of the RHS is bounded by 
\[
\Big|\frac{\lambda_{s}}{\lambda}(y\partial_{y}\delta\epsilon_{2},-i\Lambda_{M_{2}}\delta\epsilon_{2})_{r}\Big|\lesssim b(M_{2})^{2}\|\delta\epsilon_{3}\|_{L^{2}}^{2}.
\]
Next, proceeding as in the proof of Lemma \ref{lem:Diff-H3-Identity},
we have 
\[
\|y^{-1}(\text{RHS of }\eqref{eq:Diff-e2-Eq})\|_{L^{2}}\lesssim b\|\delta\epsilon\|_{\dot{H}_{m}^{3}}+bs^{-\frac{1}{2}}(s^{-\frac{5}{2}}D).
\]
Thus 
\begin{align*}
|(\text{RHS of }\eqref{eq:Diff-e2-Eq},-i\Lambda_{M_{2}}\delta\epsilon_{2})_{r}| & \lesssim\|y^{-1}(\text{RHS of }\eqref{eq:Diff-e2-Eq})\|_{L^{2}}\cdot M_{2}^{2}\|\delta\epsilon_{3}\|_{L^{2}}\\
 & \lesssim\|\delta\epsilon_{3}\|_{L^{2}}(M_{2}^{2}b\|\delta\epsilon\|_{\dot{H}_{m}^{3}}+M_{2}^{2}bs^{-\frac{1}{2}}(s^{-\frac{5}{2}}D)).
\end{align*}
Summing up the above estimates completes the proof.
\end{proof}
Define the modified energy by 
\[
\mathcal{F}_{3}[\delta\epsilon]\coloneqq\|\delta\epsilon_{3}\|_{L^{2}}^{2}-M_{1}\frac{b}{\log M_{2}}\int_{M_{2}}^{M_{2}^{2}}(\delta\epsilon_{2},-i\Lambda_{M_{2}'}\delta\epsilon_{2})_{r}\frac{dM_{2}'}{M_{2}'}.
\]

\begin{prop}[Modified energy inequality for $\dot{H}_{m}^{3}$]
We have 
\begin{align}
|\mathcal{F}_{3}[\delta\epsilon]-\|\delta\epsilon_{3}\|_{L^{2}}^{2}| & \leq\frac{1}{100}\|\delta\epsilon_{3}\|_{L^{2}}^{2},\label{eq:DiffModifiedEnergyBdryH3}\\
\Big(\partial_{s}-6\frac{\lambda_{s}}{\lambda}\Big)\mathcal{F}_{3}[\delta\epsilon] & \leq b\Big(\frac{1}{100}\|\delta\epsilon_{3}\|_{L^{2}}^{2}+Cs^{-1}(s^{-5}D^{2})\Big).\label{eq:DiffModifiedEnergyDerivH3}
\end{align}
\end{prop}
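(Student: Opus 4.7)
The plan is to follow essentially verbatim the proof of Proposition \ref{prop:ModifiedEnergyInequalityH3}(1), replacing $\epsilon$ by $\delta\epsilon$ and the profile-radiation error $b^{5/2}$ by the difference-radiation error $s^{-1/2}(s^{-5/2}D)$. The two inputs are Lemma \ref{lem:Diff-H3-Identity} (the raw energy identity) and Lemma \ref{lem:DiffLocalVirialControlH3} (localized/averaged virial control). Before starting, I first note that $\delta\epsilon(s)\in\mathcal{Z}^{\perp}$: since the orthogonality functionals $\mathcal{Z}_{k}$ are $s$-independent, both $\epsilon(s)$ and $\epsilon'(s'(s))$ annihilate them, so the coercivity estimate Lemma \ref{lem:Coercivity-AAL-Section2} applies to $\delta\epsilon$ and gives $\|\delta\epsilon\|_{\dot{\mathcal{H}}_{m}^{3}}\lesssim_{M}\|\delta\epsilon_{3}\|_{L^{2}}$.

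For the boundary bound \eqref{eq:DiffModifiedEnergyBdryH3}, I would just quote \eqref{eq:DiffLocalVirialBdryH3}: the correction term is $O(M_{1}M_{2}^{C}b\|\delta\epsilon_{3}\|_{L^{2}}^{2})$, and the parameter hierarchy $1\ll K\ll M\ll M_{1}\ll M_{2}\ll(b^{\ast})^{-1}$ (Remark \ref{rem:ParameterDependence}) makes $M_{1}M_{2}^{C}b^{\ast}$ arbitrarily small, so the factor is $\leq 1/100$.

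For the derivative bound \eqref{eq:DiffModifiedEnergyDerivH3}, I would combine \eqref{eq:Diff-H3-Identity} and \eqref{eq:DiffLocalVirialDerivH3} and then apply Young's inequality term by term. From the energy identity, $b\|\delta\epsilon_{3}\|_{L^{2}}\cdot\|\delta\epsilon\|_{\dot{\mathcal{H}}_{m,\leq M_{2}}^{3}}$ is split as $\tfrac{b}{200}\|\delta\epsilon_{3}\|_{L^{2}}^{2}+Cb\|\delta\epsilon\|_{\dot{\mathcal{H}}_{m,\leq M_{2}}^{3}}^{2}$; the term $b\|\delta\epsilon_{3}\|_{L^{2}}\cdot s^{-1/2}(s^{-5/2}D)$ splits as $\tfrac{b}{200}\|\delta\epsilon_{3}\|_{L^{2}}^{2}+Cbs^{-1}(s^{-5}D^{2})$, using $(s^{-3}D)^{2}=s^{-6}D^{2}$; and the term $b\|\delta\epsilon_{3}\|_{L^{2}}\cdot o_{M_{2}\to\infty}(1)\|\delta\epsilon\|_{\dot{\mathcal{H}}_{m}^{3}}$ is similarly handled with an extra Young step. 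From the virial control the critical gain is $-M_{1}bc_{M}\|\delta\epsilon\|_{\dot{\mathcal{H}}_{m,\leq M_{2}}^{3}}^{2}$; the error $M_{1}M_{2}^{C}b^{2}\|\delta\epsilon_{3}\|_{L^{2}}(s^{-5/2}D)$ becomes, via Young, $\tfrac{b}{200}\|\delta\epsilon_{3}\|_{L^{2}}^{2}+CM_{1}^{2}M_{2}^{2C}b^{3}s^{-5}D^{2}$, the latter absorbable into $Cbs^{-1}(s^{-5}D^{2})$ because $M_{1}^{2}M_{2}^{2C}b^{2}\lesssim s^{-1}$ (from $b\lesssim s^{-1}$ and $s\geq s_{0}=b_{0}^{-1}\geq(b^{\ast})^{-1}\gg M_{1}^{2}M_{2}^{2C}$). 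Summing everything yields
\[
\Big(\partial_{s}-6\frac{\lambda_{s}}{\lambda}\Big)\mathcal{F}_{3}[\delta\epsilon]\leq b(C-M_{1}c_{M})\|\delta\epsilon\|_{\dot{\mathcal{H}}_{m,\leq M_{2}}^{3}}^{2}+M_{1}bo_{M_{2}\to\infty}(1)\|\delta\epsilon\|_{\dot{\mathcal{H}}_{m}^{3}}^{2}+\tfrac{b}{50}\|\delta\epsilon_{3}\|_{L^{2}}^{2}+Cbs^{-1}(s^{-5}D^{2}).
\]
The first term is nonpositive because $M_{1}\gg M$; the second term is absorbed into $\tfrac{b}{100}\|\delta\epsilon_{3}\|_{L^{2}}^{2}$ using coercivity and $M_{1}o_{M_{2}\to\infty}(1)\ll 1$ (from $M_{2}\gg M_{1}$), producing \eqref{eq:DiffModifiedEnergyDerivH3}.

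There is no genuine new obstacle here; the argument is a direct transcription of the scalar case. The only point that requires attention is confirming that the $s$-factor bookkeeping in the difference-radiation error $s^{-1/2}(s^{-5/2}D)$ produces exactly the target RHS $Cs^{-1}(s^{-5}D^{2})$ after Young, and that the virial-induced error $M_{1}M_{2}^{C}b^{2}(s^{-5/2}D)$ does not overwhelm this target — both reduce to the parameter hierarchy of Remark \ref{rem:ParameterDependence} combined with the a priori bound $b\lesssim s^{-1}$ from Lemma \ref{lem:RoughAsymptotics}.
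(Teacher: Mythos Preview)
Your proposal is correct and follows exactly the approach the paper takes: the paper's own proof simply reads ``This follows as the same fashion as the proof of Proposition \ref{prop:ModifiedEnergyInequalityH3},'' and your write-up is precisely that transcription, with the inputs replaced by Lemmas \ref{lem:Diff-H3-Identity} and \ref{lem:DiffLocalVirialControlH3} and the coercivity applied to $\delta\epsilon\in\mathcal{Z}^{\perp}$. Your bookkeeping of the $s$-powers and the absorption of the virial-induced error via the parameter hierarchy $M_{1}M_{2}^{C}\ll (b^{\ast})^{-1}\leq s$ is accurate.
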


\begin{proof}
This follows as the same fashion as the proof of Proposition \ref{prop:ModifiedEnergyInequalityH3}.
\end{proof}

\subsection{Energy estimate in $L^{2}$}

In this subsection, we propagate $\|\delta\epsilon\|_{L^{2}}$. Similarly
to Section \ref{subsec:EnergyEstimateL2}, we replace the linear part
$i\mathcal{L}_{Q}\delta\epsilon$ by $-i\Delta_{m}\delta\epsilon$,
as the $i\mathcal{L}_{Q}$-flow does not conserve $L^{2}$-norm. Thus
we rewrite the equation of $\delta\epsilon$ as 
\begin{align}
(\partial_{s}-\frac{\lambda_{s}}{\lambda}\Lambda+\gamma_{s}i)\delta\epsilon-i\Delta_{m}\delta\epsilon & =-(\delta\gamma)_{s}i\epsilon'-\Big(\frac{ds'}{ds}-1\Big)i\Delta_{m}\epsilon'\label{eq:Diff-e-Eq-L2}\\
 & \quad+\mathbf{Mod}\cdot(\delta\mathbf{v})+(\mathbf{Mod}-\frac{ds'}{ds}\mathbf{Mod}')\cdot\mathbf{v}'\nonumber \\
 & \quad-i\delta[\mathcal{N}(P+\epsilon)-\mathcal{N}(P)]-i\delta\Psi\nonumber \\
 & \quad+\Big(\frac{ds'}{ds}-1\Big)(i[\mathcal{N}(P'+\epsilon')-\mathcal{N}(P')]+i\Psi').\nonumber 
\end{align}
By an analogue of Lemma \ref{lem:L2-EstimateRemainders}, we record
the following.
\begin{lem}[Some remainder estimates for $L^{2}$ difference]
\label{lem:L2Diff-Remainders}We have 
\begin{align}
\|(\mathbf{Mod}-\tfrac{ds'}{ds}\mathbf{Mod}')\cdot\mathbf{v}'\|_{L^{2}} & \lesssim s^{-2}D,\label{eq:L2Diff-Mod}\\
\|\delta[\mathcal{N}(P+\epsilon)-\mathcal{N}(P)]\|_{L^{2}} & \lesssim_{M}s^{-\frac{5}{3}}D.\label{eq:L2Diff-Nonlin}
\end{align}
\end{lem}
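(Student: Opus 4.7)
The plan is to handle the two bounds as difference-type analogues of Lemma \ref{lem:L2-EstimateRemainders}, recycling the machinery already developed for the single-solution estimates and for the differences in Lemmas \ref{lem:Diff-Est-ProfilesModRad}, \ref{lem:Diff-RLL-Est}, \ref{lem:Diff-RNL-Est}, and \ref{lem:DiffModEst}. The key inputs are (i) the a priori smallness $|\delta b|\lesssim s^{-1}D$, $\|\delta\epsilon\|_{L^{2}}\lesssim D$, $\|\delta\epsilon_{3}\|_{L^{2}}\lesssim s^{-5/2}D$ built into the definition of $D$; (ii) the assumption $m\geq 3$, which makes $\|\mathbf{v}'\|_{L^{2}}\lesssim 1$ (no logarithmic loss, in contrast to \eqref{eq:L2-Mod-est}) because $\langle y\rangle^{2}Q\in L^{2}$; and (iii) the interpolation $\|\delta\epsilon\|_{\dot{H}_{m}^{1}}\lesssim_{M}\|\delta\epsilon\|_{L^{2}}^{2/3}\|\delta\epsilon\|_{\dot{\mathcal{H}}_{m}^{3}}^{1/3}\lesssim_{M}s^{-5/6}D$ coming from the coercivity Lemma \ref{lem:Coercivity-AAL-Section2}.

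For \eqref{eq:L2Diff-Mod}, I would split
\[
\mathbf{Mod}-\tfrac{ds'}{ds}\mathbf{Mod}'=\big(\tilde{\mathbf{Mod}}-\tfrac{ds'}{ds}\tilde{\mathbf{Mod}}'\big)+e_{2}\big[\delta(\eta\theta_{\eta}+\theta_{\Psi}+\theta_{\mathrm{L-L}}+\theta_{\mathrm{NL}})-(\tfrac{ds'}{ds}-1)(\eta'\theta_{\eta'}+\theta_{\Psi'}+\theta_{\mathrm{L-L}}'+\theta_{\mathrm{NL}}')\big].
\]
Lemma \ref{lem:DiffModEst} dominates the first parenthesis by $o_{M\to\infty}(1)\|\delta\epsilon_{3}\|_{L^{2}}+s^{-3}D$; Lemma \ref{lem:Diff-Est-ProfilesModRad} together with \eqref{eq:Diff-theta-LL-est}--\eqref{eq:Diff-theta-NL-est} bounds the $\delta\theta$'s; and \eqref{eq:Diff-dsds-Estimate} combined with the a priori sizes $|\eta'\theta_{\eta'}|\lesssim s^{-3/2}$, $|\theta_{\mathrm{L-L}}'|+|\theta_{\mathrm{NL}}'|\lesssim s^{-2}$ handles the $(\tfrac{ds'}{ds}-1)$-factor. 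Pairing with $\|\mathbf{v}_{k}'\|_{L^{2}}\lesssim 1$ (all four components, thanks to $m\geq 3$) yields the claim after using $\|\delta\epsilon_{3}\|_{L^{2}}\lesssim s^{-5/2}D$ to turn the $\|\delta\epsilon_{3}\|_{L^{2}}$ contribution into $s^{-5/2}D$.

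For \eqref{eq:L2Diff-Nonlin}, I would expand
\[
\mathcal{N}(P+\epsilon)-\mathcal{N}(P)=\sum_{k\in\{3,5\}}\sum_{\substack{\psi_{i}\in\{P,\epsilon\}\\ \#\{i:\psi_{i}=\epsilon\}\geq 1}}\mathcal{N}_{k}(\psi_{1},\ldots,\psi_{k}),
\]
and view $\delta[\mathcal{N}(P+\epsilon)-\mathcal{N}(P)]$ as the same sum with exactly one argument replaced by $\delta\psi_{i}\in\{\delta P,\delta\epsilon\}$. Invoking the multilinear $L^{2}$-estimate for $\mathcal{N}_{k}$ (two $\dot{H}_{m}^{1}$-norms, remaining arguments in $L^{2}$) in exactly the same way as in the proof of \eqref{eq:L2-nonlinear-est} and Lemma \ref{lem:Diff-RNL-Est}, each resulting product contains either $\|\delta\epsilon\|_{\dot{H}_{m}^{1}}$ or, when $\delta$ hits a $P$, $\|\delta P\|_{\dot{H}_{m}^{1}}\lesssim |\delta b|+|\delta\eta|\lesssim D$; the remaining factors are bounded by a priori quantities. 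The dominant term is of the form $\|\epsilon\|_{\dot{H}_{m}^{1}}\|\delta\epsilon\|_{\dot{H}_{m}^{1}}\lesssim_{M}s^{-1}\cdot s^{-5/6}D\lesssim_{M}s^{-11/6}D$; after rounding to the stated $s^{-5/3}D$, this closes the bound.

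The main obstacle is the second estimate, specifically the single-$\epsilon$ cubic/quintic interactions $\mathcal{N}_{k}(P,\ldots,P,\epsilon)$ of type $V_{3}[P,\epsilon]P$, where one cannot place both heavy factors in $\dot{H}_{m}^{1}$ on $\epsilon$. These force the use of the coercivity/interpolation identity $\|\delta\epsilon\|_{\dot{H}_{m}^{1}}\lesssim_{M}s^{-5/6}D$, which is responsible for the $M$-dependence $\lesssim_{M}$ in the final bound and for the suboptimal exponent $s^{-5/3}$ (instead of $s^{-2}$). In contrast, the first bound requires only algebraic manipulations once the difference modulation estimate Lemma \ref{lem:DiffModEst} is in hand; the restriction $m\geq 3$ is what makes it work without logarithmic loss.
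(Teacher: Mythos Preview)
Your approach to \eqref{eq:L2Diff-Mod} is essentially the paper's, with one harmless slip: the difference $\mathbf{Mod}-\tilde{\mathbf{Mod}}$ is $e_{2}(\theta_{\mathrm{L-L}}+\theta_{\mathrm{NL}})$ only, since $\eta\theta_{\eta}+\theta_{\Psi}$ appears in \emph{both} $\mathbf{Mod}$ and $\tilde{\mathbf{Mod}}$ and hence cancels. Your extra $\delta(\eta\theta_{\eta})$ and $(\tfrac{ds'}{ds}-1)\eta'\theta_{\eta'}$ contributions are spurious; once removed the argument goes through as you describe.

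For \eqref{eq:L2Diff-Nonlin} there is a genuine gap. The worst term is not $\|\epsilon\|_{\dot{H}_{m}^{1}}\|\delta\epsilon\|_{\dot{H}_{m}^{1}}$ but the single-$\epsilon$ contribution $\mathcal{N}_{k}(P,\ldots,P,\delta\epsilon)$, i.e.\ the terms $V_{3}[P]\delta\epsilon$, $V_{5}[P]\delta\epsilon$, and $V_{k}[P,\delta\epsilon,\ldots]P$. The multilinear estimate (two $\dot{H}_{m}^{1}$ factors, rest $L^{2}$) applied to these yields only $\|P\|_{\dot{H}_{m}^{1}}\|\delta\epsilon\|_{\dot{H}_{m}^{1}}\sim\|\delta\epsilon\|_{\dot{H}_{m}^{1}}\lesssim_{M}s^{-5/6}D$, which misses the target $s^{-5/3}D$ by a full power of $s^{-5/6}$. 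The proof of \eqref{eq:L2-nonlinear-est} you invoke does \emph{not} use the multilinear estimate here; it switches to the $V\psi$ form and bounds $\|V\delta\epsilon\|_{L^{2}}\lesssim\|\langle y\rangle^{-3}\delta\epsilon\|_{L^{2}}^{1/2}\|\langle y\rangle^{-1}\delta\epsilon\|_{L^{2}}^{1/2}\lesssim\|\delta\epsilon\|_{\dot{\mathcal{H}}_{m}^{3}}^{1/2}\|\delta\epsilon\|_{\dot{H}_{m}^{1}}^{1/2}$ (and similarly $\|VP\|_{L^{2}}\lesssim\|P\delta\epsilon\|_{L^{\infty}}\lesssim\|\delta\epsilon\|_{\dot{\mathcal{H}}_{m}^{3}}^{1/2}\|\delta\epsilon\|_{\dot{H}_{m}^{1}}^{1/2}$). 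Plugging in $\|\delta\epsilon\|_{\dot{H}_{m}^{1}}\lesssim_{M}s^{-5/6}D$ and $\|\delta\epsilon\|_{\dot{\mathcal{H}}_{m}^{3}}\lesssim_{M}s^{-5/2}D$ gives exactly $s^{-5/12-5/4}D=s^{-5/3}D$; this is the true origin of the exponent $5/3$, not the $s^{-11/6}$ term you identified.
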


\begin{proof}
First, \eqref{eq:L2Diff-Mod} follows from writing 
\[
|\mathbf{Mod}-\tfrac{ds'}{ds}\mathbf{Mod}'|\lesssim|\tilde{\mathbf{Mod}}-\tfrac{ds'}{ds}\tilde{\mathbf{Mod}}'|+|\delta\theta_{\mathrm{L-L}}+\delta\theta_{\mathrm{NL}}|+|\tfrac{ds'}{ds}-1||\theta_{\mathrm{L-L}}'+\theta_{\mathrm{NL}}'|
\]
and applying \eqref{eq:DiffModEst}, \eqref{eq:Diff-theta-LL-est},
\eqref{eq:Diff-theta-NL-est}, and \eqref{eq:Diff-dsds-Estimate}.
and $\|\mathbf{v}'\|_{L^{2}}\lesssim1$ (as $m\geq3$).

It remains to show \eqref{eq:L2Diff-Nonlin}. Rewrite the bound of
\eqref{eq:L2-nonlinear-est} as 
\[
\|\mathcal{N}(P+\epsilon)-\mathcal{N}(P)\|_{L^{2}}\lesssim\|\epsilon\|_{\dot{H}_{m}^{1}}^{\frac{1}{2}}\|\epsilon\|_{\dot{H}_{m}^{3}}^{\frac{1}{2}}+\|\epsilon\|_{\dot{H}_{m}^{1}}^{2}(1+\|\epsilon\|_{L^{2}})^{3}.
\]
Here, the first term corresponds to the estimates for a multilinear
expression of $\mathcal{N}(P+\epsilon)-\mathcal{N}(P)$ having exactly
one $\epsilon$; the second term corresponds to those having two or
more $\epsilon$'s.

We proceed similarly to earlier proofs of difference estimates (Lemmas
\ref{lem:Diff-RLL-Est} and \ref{lem:Diff-RNL-Est}). If $\delta$
hits $P$, then we lose $b^{-1}(|\delta b|+|\delta\eta|)$, yielding
the bound 
\[
b^{-1}(|\delta b|+|\delta\eta|)(\|\epsilon\|_{\dot{H}_{m}^{1}}^{\frac{1}{2}}\|\epsilon\|_{\dot{H}_{m}^{3}}^{\frac{1}{2}}+\|\epsilon\|_{\dot{H}_{m}^{1}}^{2})\lesssim_{M}s^{-2}D.
\]
If $\delta$ hits $\epsilon$, then we replace one $\epsilon$ by
$\delta\epsilon$, yielding the bound (use $\|\delta\epsilon\|_{\dot{H}_{m}^{1}}\lesssim\|\delta\epsilon\|_{L^{2}}^{\frac{2}{3}}\|\delta\epsilon\|_{\dot{H}_{m}^{3}}^{\frac{1}{3}}\lesssim_{M}s^{-\frac{5}{6}}D$)
\[
\|\delta\epsilon\|_{\dot{H}_{m}^{1}}^{\frac{1}{2}}\|\delta\epsilon\|_{\dot{H}_{m}^{3}}^{\frac{1}{2}}+\|\delta\epsilon\|_{L^{2}}\|\epsilon\|_{\dot{H}_{m}^{1}}^{2}+\|\delta\epsilon\|_{\dot{H}_{m}^{1}}\|\epsilon\|_{\dot{H}_{m}^{1}}\lesssim_{M}s^{-\frac{5}{3}}D.
\]
This shows \eqref{eq:L2Diff-Nonlin}.
\end{proof}
\begin{lem}[Energy identity for $L^{2}$]
We have 
\begin{equation}
\partial_{s}\|\delta\epsilon\|_{L^{2}}^{2}\lesssim bs^{-\frac{1}{2}}D^{2}.\label{eq:DiffEnergyDerivL2}
\end{equation}
\end{lem}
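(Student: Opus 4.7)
The plan is to run a direct energy identity for $\|\delta\epsilon\|_{L^2}^2$ on the equation \eqref{eq:Diff-e-Eq-L2}. Since the scaling vector field $\Lambda$, multiplication by $i$, and $-i\Delta_m$ are each anti-symmetric with respect to the real inner product $(\cdot,\cdot)_r$, pairing \eqref{eq:Diff-e-Eq-L2} with $\delta\epsilon$ collapses the left-hand side to $\tfrac12\partial_s\|\delta\epsilon\|_{L^2}^2$. Combined with $\|\delta\epsilon\|_{L^2}\le D$ and Cauchy--Schwarz, the claim reduces to showing
\[
\|\text{RHS of \eqref{eq:Diff-e-Eq-L2}}\|_{L^2}\lesssim bs^{-1/2}D=s^{-3/2}D.
\]

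The remainder of the proof is a term-by-term inspection drawing on the already-established difference estimates. The modulation pieces $(\delta\gamma)_s\,i\epsilon'$ and $\mathbf{Mod}\cdot(\delta\mathbf{v})$ are dispatched by \eqref{eq:DiffGammaEstimate} together with Lemma \ref{lem:mod-est} and the pointwise bound on $\delta\mathbf{v}$ in Lemma \ref{lem:Diff-Est-ProfilesModRad}, producing $s^{-3/2}D$ and $b|\log b|^{1/2}s^{-1}D$ respectively (the latter uses $m\geq3$ to ensure $\|\delta\mathbf{v}\|_{L^2}$ is at most logarithmically large after integrating over $y\lesssim b^{-1/2}$). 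The cross-term $(\mathbf{Mod}-\tfrac{ds'}{ds}\mathbf{Mod}')\cdot\mathbf{v}'$ and the nonlinear difference $\delta[\mathcal N(P+\epsilon)-\mathcal N(P)]$ are exactly the content of \eqref{eq:L2Diff-Mod}--\eqref{eq:L2Diff-Nonlin}, yielding $s^{-2}D$ and $s^{-5/3}D$, both safely below $s^{-3/2}D$. The radiation difference $\delta\Psi$ is of order $b^{m/2+1/2}(|\delta b|+|\delta\eta|)$ by Lemma \ref{lem:Diff-Est-ProfilesModRad}, while the two $(\tfrac{ds'}{ds}-1)$-correction terms for $\Psi'$ and $\mathcal N(P'+\epsilon')-\mathcal N(P')$ combine \eqref{eq:Diff-dsds-Estimate} with \eqref{eq:L2-nonlinear-est}--\eqref{eq:L2-Radiation-est} to give $(b^2+b^{m/2+3/2})D$; again all well within target.

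The one line that is not immediately covered by the earlier lemmas, and therefore the main thing to check, is $(\tfrac{ds'}{ds}-1)\,i\Delta_m\epsilon'$. Here \eqref{eq:Diff-dsds-Estimate} supplies $|\tfrac{ds'}{ds}-1|\lesssim D$, and one must still control $\|\Delta_m\epsilon'\|_{L^2}\lesssim\|\epsilon'\|_{\dot H_m^2}$ by a sufficient power of $b$. The plan is to use the interpolation
\[
\|\epsilon'\|_{\dot H_m^2}\lesssim\|\epsilon'\|_{L^2}^{1/3}\|\epsilon'\|_{\dot H_m^3}^{2/3}\lesssim b^2,
\]
where the final bound invokes the sharpened $H_m^5$-trapped hypothesis $\|\epsilon'_3\|_{L^2}\lesssim b^3$ from \eqref{eq:def-H5-trapped}. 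This yields $\lesssim b^2D\lesssim bs^{-1/2}D$ and closes the estimate. This interpolation step is the only genuine obstacle, and it is also precisely the trade-off advertised when $s'=s'(s)$ is introduced: using the adapted time creates the linear error $(\tfrac{ds'}{ds}-1)\mathcal L_Q\epsilon'$ in the difference equation, which is the structural reason why the modulation analysis for the Lipschitz estimate is performed at two Sobolev derivatives higher than in Proposition \ref{prop:main-bootstrap}.
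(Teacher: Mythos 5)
Your proof is correct and follows essentially the same route as the paper: pair \eqref{eq:Diff-e-Eq-L2} with $\delta\epsilon$, use anti-symmetry of $-\tfrac{\lambda_s}{\lambda}\Lambda+\gamma_si-i\Delta_m$ to collapse the left side to $\tfrac12\partial_s\|\delta\epsilon\|_{L^2}^2$, and then bound the right side in $L^2$ by $s^{-3/2}D$ term by term, invoking \eqref{eq:DiffGammaEstimate}, Lemma \ref{lem:Diff-Est-ProfilesModRad}, Lemma \ref{lem:L2Diff-Remainders}, \eqref{eq:Diff-dsds-Estimate}, and \eqref{eq:L2-nonlinear-est}--\eqref{eq:L2-Radiation-est}. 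The one place where you diverge from the paper is the $(\tfrac{ds'}{ds}-1)i\Delta_m\epsilon'$ term: you interpolate $\|\epsilon'\|_{\dot H_m^2}\lesssim\|\epsilon'\|_{L^2}^{1/3}\|\epsilon'\|_{\dot H_m^3}^{2/3}$, whereas the paper uses $\|\epsilon'\|_{\dot H_m^2}\lesssim\|\epsilon'\|_{\dot H_m^1}^{1/2}\|\epsilon'\|_{\dot H_m^3}^{1/2}$. Both are valid and both give $\lesssim b^2$ (yours with an extra harmless factor $(b^*)^{1/12}$ since $\|\epsilon'\|_{L^2}$ is merely bounded by a small constant rather than decaying), so the discrepancy is cosmetic. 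One small imprecision: for $m\geq3$ the weight $y^4Q$ in $|\delta\mathbf{v}|_3$ is already $L^2$ at infinity, so $\|\delta\mathbf{v}\|_{L^2}$ is bounded without a logarithm; your ``at most logarithmically large'' remark is more conservative than necessary but does not affect the estimate.
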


\begin{proof}
We have 
\[
\tfrac{1}{2}\partial_{s}\|\delta\epsilon\|_{L^{2}}^{2}=(\delta\epsilon,\text{RHS of }\eqref{eq:Diff-e-Eq-L2})_{r}\lesssim\|\delta\epsilon\|_{L^{2}}\|\text{RHS of }\eqref{eq:Diff-e-Eq-L2}\|_{L^{2}}.
\]
Thus it suffices to show 
\[
\|\text{RHS of }\eqref{eq:Diff-e-Eq-L2}\|_{L^{2}}\lesssim s^{-\frac{3}{2}}D.
\]

First, by \eqref{eq:DiffGammaEstimate}, we have 
\[
\|(\delta\gamma)_{s}i\epsilon'\|_{L^{2}}\lesssim s^{-\frac{3}{2}}D.
\]
By \eqref{eq:Diff-dsds-Estimate} and a priori bound of $\epsilon$',
we have 
\[
\|(\tfrac{ds'}{ds}-1)i\Delta_{m}\epsilon'\|_{L^{2}}\lesssim D\cdot\|\epsilon'\|_{\dot{H}_{m}^{1}}^{\frac{1}{2}}\|\epsilon'\|_{\dot{H}_{m}^{3}}^{\frac{1}{2}}\lesssim_{M}s^{-2}D\lesssim s^{-\frac{3}{2}}D.
\]
By \eqref{eq:non-tilde-mod-est} and Lemma \ref{lem:Diff-Est-ProfilesModRad},
\[
\|\mathbf{Mod}\cdot(\delta\mathbf{v})\|_{L^{2}}\lesssim_{M}s^{-2}\cdot|\log s|^{\frac{1}{2}}(|\delta b|+|\delta\eta|)\lesssim s^{-\frac{3}{2}}D.
\]
By Lemma \ref{lem:L2Diff-Remainders}, we have 
\[
\|(\mathbf{Mod}-\frac{ds'}{ds}\mathbf{Mod}')\cdot\mathbf{v}'\|_{L^{2}}+\|i\delta[\mathcal{N}(P+\epsilon)-\mathcal{N}(P)]\|_{L^{2}}\lesssim s^{-\frac{3}{2}}D.
\]
By Lemma \ref{lem:Diff-Est-ProfilesModRad}, 
\[
\|i\delta\Psi\|_{L^{2}}\lesssim s^{-3}D.
\]
Finally, by \eqref{eq:Diff-dsds-Estimate}, \eqref{eq:L2-nonlinear-est},
and \eqref{eq:L2-Radiation-est}, we have 
\[
\|(\tfrac{ds'}{ds}-1)(i[\mathcal{N}(P'+\epsilon')-\mathcal{N}(P')]+i\Psi')\|_{L^{2}}\lesssim_{M}s^{-2}D\lesssim s^{-\frac{3}{2}}D.
\]
This completes the proof.
\end{proof}

\subsection{Proof of Proposition \ref{prop:ForwardBackwardControl}}
\begin{proof}[Proof of Proposition \ref{prop:ForwardBackwardControl}]
We first show the forward-in-time control \eqref{eq:ForwardControlStableModes}.
It suffices to show 
\begin{align}
\sup_{s\in[s_{0},\infty)}s^{5}\|\delta\epsilon_{3}(s)\|_{L^{2}}^{2} & \lesssim s_{0}^{5}\|\delta\epsilon_{3}(s_{0})\|_{L^{2}}^{2}+s_{0}^{-\frac{1}{2}}\sup_{s\in[s_{0},\infty)}D^{2}(s),\label{eq:ForwardBackwardClaim1}\\
\sup_{s\in[s_{0},\infty)}\|\delta\epsilon(s)\|_{L^{2}}^{2} & \lesssim\|\delta\epsilon(s_{0})\|_{L^{2}}^{2}+s_{0}^{-\frac{1}{2}}\sup_{s\in[s_{0},\infty)}D^{2}(s),\label{eq:ForwardBackwardClaim2}\\
\sup_{s\in[s_{0},\infty)}s|\delta b|(s) & \lesssim s_{0}|\delta b|(s_{0})+s_{0}^{-\frac{1}{2}}\sup_{s\in[s_{0},\infty)}D(s).\label{eq:ForwardBackwardClaim3}
\end{align}

To show \eqref{eq:ForwardBackwardClaim1}, we integrate \eqref{eq:DiffModifiedEnergyDerivH3}
on $[s_{0},s]$ to obtain 
\[
\frac{\mathcal{F}_{3}[\delta\epsilon](s)}{\lambda^{6}(s)}\leq\frac{\mathcal{F}_{3}[\delta\epsilon](s_{0})}{\lambda^{6}(s_{0})}+\int_{s_{0}}^{s}\frac{b}{\lambda^{6}}\Big(\frac{1}{100}\|\delta\epsilon_{3}\|_{L^{2}}^{2}+\sigma^{-\frac{1}{2}}(\sigma^{-5}D^{2})\Big)d\sigma.
\]
Applying \eqref{eq:DiffModifiedEnergyBdryH3} and $b(\sigma)\approx\lambda(\sigma)\approx\sigma^{-1}$
of Lemma \ref{lem:RoughAsymptotics}, we have 
\[
\frac{98}{100}s^{6}\|\delta\epsilon_{3}(s)\|_{L^{2}}^{2}\leq\frac{102}{100}s_{0}^{6}\|\delta\epsilon_{3}(s_{0})\|_{L^{2}}^{2}+\frac{2}{100}s\sup_{\sigma\in[s_{0},s]}(\sigma^{5}\|\delta\epsilon_{3}(\sigma)\|_{L^{2}}^{2})+3s^{\frac{1}{2}}\sup_{\sigma\in[s_{0},s]}D^{2}(\sigma).
\]
We divide both sides by $s$ and take the supremum over $s\in[s_{0},\infty)$
to get the claim \eqref{eq:ForwardBackwardClaim1}.

To show \eqref{eq:ForwardBackwardClaim2}, we integrate \eqref{eq:DiffEnergyDerivL2}
on $[s_{0},s]$ and use $b(\sigma)\approx\sigma^{-1}$ to obtain 
\[
\|\delta\epsilon(s)\|_{L^{2}}^{2}\lesssim\|\delta\epsilon(s_{0})\|_{L^{2}}^{2}+s_{0}^{-\frac{1}{2}}\sup_{\sigma\in[s_{0},s]}D^{2}(\sigma).
\]
Taking the supremum over $s\in[s_{0},\infty)$ yields the claim \eqref{eq:ForwardBackwardClaim2}.

To show \eqref{eq:ForwardBackwardClaim3}, we observe that 
\begin{align*}
(\delta b)_{s}+b(\delta b) & =(\delta b)_{s}+b^{2}-\tfrac{ds'}{ds}(b')^{2}-b'(b-\tfrac{ds'}{ds}b')\\
 & =(\tilde{\mathrm{Mod}}-\tfrac{ds'}{ds}\tilde{\mathrm{Mod}}')_{3}-b'(\tilde{\mathrm{Mod}}-\tfrac{ds'}{ds}\tilde{\mathrm{Mod}}')_{1}-(\eta^{2}-\tfrac{ds'}{ds}(\eta')^{2}).
\end{align*}
Applying the modulation estimate (Lemma \ref{lem:DiffModEst}) and
the definition of $D$, we have 
\[
|(\delta b)_{s}+b(\delta b)|\lesssim s^{-\frac{5}{2}}D.
\]
Notice that $b\approx-\frac{\lambda_{s}}{\lambda}$. Combining this
with the modulation estimate (Lemma \ref{lem:mod-est}), we obtain
\[
\lambda\Big|\Big(\frac{\delta b}{\lambda}\Big)_{s}\Big|\leq\Big|(\delta b)_{s}+b(\delta b)\Big|+\Big|\Big(\frac{\lambda_{s}}{\lambda}+b\Big)(\delta b)\Big|\lesssim s^{-\frac{5}{2}}D.
\]
Integrating this and applying $\lambda(s)\approx s^{-1}$ yield the
claim \eqref{eq:ForwardBackwardClaim3}.

We turn to \eqref{eq:BackwardControlUnstableModes}. By the modulation
estimate (Lemma \ref{lem:DiffModEst}), we have 
\[
|(\delta\eta)_{s}|\lesssim s^{-\frac{5}{2}}D^{s}+s^{-\frac{1}{2}}(s^{-\frac{5}{2}}D).
\]
Integrating this backwards in time yields \eqref{eq:BackwardControlUnstableModes}.
\end{proof}

\appendix

\section{\label{sec:HardyInequalities}Hardy inequalities and adapted function
spaces}

In this section, we assume $m\ge1$ as before. Here we collect results
related to Hardy's inequality for equivariant functions. We also present
proofs of the facts regarding to the adapted function spaces introduced
in Section \ref{subsec:Adapted-function-spaces}. Most importantly,
we prove (sub-)coercivity estimates of Lemmas \ref{lem:Mapping-L-Section2}-\ref{lem:Coercivity-AAL-Section2}.

\subsection{Weighted Hardy's inequality}

In Section \ref{subsec:Adapted-function-spaces}, we motivated the
adapted function space $\dot{\mathcal{H}}_{m}^{3}$, in spirit that
how much weighted Hardy's inequality we can obtain from the operators
$L_{Q}$, $A_{Q}$, and $A_{Q}^{\ast}$. We formulate and prove the
weighted Hardy's inequality, which is the main tool in Section \ref{subsec:Adapted-function-spaces}.
\begin{lem}[Weighted Hardy's inequality for $\partial_{r}$]
\label{lem:Monotonicity-dr}Let $0<r_{1}<r_{2}<\infty$; let $\varphi:[r_{1},r_{2}]\to\R_{+}$
be a $C^{1}$ weight function such that $\partial_{r}\varphi$ is
nonvanishing and $\varphi\lesssim|r\partial_{r}\varphi|$. Then, for
smooth $f:[r_{1},r_{2}]\to\C$, we have 
\[
\int_{r_{1}}^{r_{2}}\Big|\frac{f}{r}\Big|^{2}|r\partial_{r}\varphi|rdr\lesssim\int_{r_{1}}^{r_{2}}|\partial_{r}f|^{2}\varphi\,rdr+\begin{cases}
\varphi(r_{2})|f(r_{2})|^{2} & \text{if }\partial_{r}\varphi>0,\\
\varphi(r_{1})|f(r_{1})|^{2} & \text{if }\partial_{r}\varphi<0.
\end{cases}
\]
\end{lem}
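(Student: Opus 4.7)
The plan is to prove the inequality by integration by parts, treating the two sign cases in parallel. The left-hand side, after cancelling one power of $r$, is $\int_{r_1}^{r_2}|f|^2|\partial_r\varphi|\,dr$, so the natural starting point is
\[
\int_{r_1}^{r_2}|f|^2\partial_r\varphi\,dr = \bigl[|f|^2\varphi\bigr]_{r_1}^{r_2} - 2\int_{r_1}^{r_2}\Re(\overline{f}\partial_r f)\,\varphi\,dr.
\]
In the case $\partial_r\varphi>0$, the absolute value is transparent and the boundary contribution at $r_1$ has a favorable sign, so it can simply be discarded, leaving the allowed boundary term $\varphi(r_2)|f(r_2)|^2$ on the right; in the case $\partial_r\varphi<0$, I will multiply through by $-1$ and instead keep the boundary term at $r_1$.

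The main point will then be to absorb the bulk integral $\int|f||\partial_r f|\varphi\,dr$ by the left-hand side plus the desired energy $\int|\partial_r f|^2\varphi\,r\,dr$. This is where the hypothesis $\varphi\lesssim|r\partial_r\varphi|$ enters: using weighted Cauchy--Schwarz (equivalently AM--GM) with weights chosen so that the $|f|^2$-piece carries $|\partial_r\varphi|$, I split
\[
2|f||\partial_r f|\varphi \leq \epsilon\,|f|^2|\partial_r\varphi| + \epsilon^{-1}|\partial_r f|^2\,\frac{\varphi^2}{|\partial_r\varphi|},
\]
for any $\epsilon>0$. The hypothesis $\varphi\lesssim r|\partial_r\varphi|$ rewrites as $\varphi/|\partial_r\varphi|\lesssim r$, so $\varphi^2/|\partial_r\varphi|\lesssim\varphi\cdot r$, and the second term is controlled by the desired energy $\int|\partial_r f|^2\varphi\,r\,dr$ up to a universal constant.

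Inserting this split back into the integration-by-parts identity gives
\[
\int_{r_1}^{r_2}|f|^2|\partial_r\varphi|\,dr \leq \varphi(r_{\ast})|f(r_{\ast})|^2 + \epsilon\int_{r_1}^{r_2}|f|^2|\partial_r\varphi|\,dr + C\epsilon^{-1}\int_{r_1}^{r_2}|\partial_r f|^2\varphi\,r\,dr,
\]
where $r_{\ast}=r_2$ or $r_{\ast}=r_1$ according to the sign of $\partial_r\varphi$. Fixing $\epsilon$ sufficiently small (say $\epsilon=\tfrac{1}{2}$) allows the $\epsilon$-term to be absorbed into the left-hand side, and after converting $\int|f|^2|\partial_r\varphi|\,dr = \int|f/r|^2|r\partial_r\varphi|\,r\,dr$ the claimed inequality follows. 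There is no substantive obstacle here; the only minor care needed is to track the boundary signs correctly in the two cases, which is why the structural hypothesis that $\partial_r\varphi$ be nonvanishing (hence of fixed sign) is essential.
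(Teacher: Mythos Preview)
Your proof is correct and follows essentially the same approach as the paper: integrate $\partial_r(\varphi|f|^2)$ by parts, discard the favorable boundary term, and use Cauchy--Schwarz (equivalently AM--GM) together with the hypothesis $\varphi\lesssim|r\partial_r\varphi|$ to absorb the cross term into the left-hand side plus the allowed energy term.
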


\begin{proof}
We only consider the case with $\partial_{r}\varphi>0$ and leave
the case $\partial_{r}\varphi<0$ to the readers. We compute 
\[
\partial_{r}(\varphi|f|^{2})=(\partial_{r}\varphi)|f|^{2}+2\varphi\Re(\overline{f}\partial_{r}f).
\]
Integrating on the interval $[r_{1},r_{2}]$ and using the fundamental
theorem of calculus, we have 
\[
\varphi(r_{2})|f(r_{2})|^{2}\geq\int_{r_{1}}^{r_{2}}(r\partial_{r}\varphi)\Big|\frac{f}{r}\Big|^{2}rdr+O\Big(\int_{r_{1}}^{r_{2}}\varphi\Big|\frac{f}{r}\Big||\partial_{r}f|rdr\Big).
\]
Applying the Cauchy-Schwarz to the last term and using $\varphi\lesssim|r\partial_{r}\varphi|$,
we obtain 
\[
\int_{r_{1}}^{r_{2}}(r\partial_{r}\varphi)\Big|\frac{f}{r}\Big|^{2}rdr\lesssim\varphi(r_{2})|f(r_{2})|^{2}+\int_{r_{1}}^{r_{2}}|\partial_{r}f|^{2}\varphi\,rdr,
\]
which is the desired estimate.
\end{proof}
\begin{rem}[Monotonicity of $\partial_{r}$ in weighted space]
Lemma \ref{lem:Monotonicity-dr} can be understood as the monotonicity
of the operator $\partial_{r}$ in various weighted spaces. The LHS,
$r^{-1}f$, is the lower bound obtained from the monotonicity. The
first term of the RHS, $\partial_{r}f$, is the inhomogeneous term
of the equation $\partial_{r}f=F$. The last term $\varphi(r)|f(r)|^{2}$
at either $r=r_{1}$ or $r=r_{2}$ are the initial or final data.
Roughly speaking, the boundary value controls $r^{-1}f$ (with an
error of inhomogeneous term $\partial_{r}f$) \emph{to the left} when
$\varphi$ is increasing, and \emph{to the right} when $\varphi$
is decreasing.
\end{rem}

We then apply the techniques of conjugation by $r^{\ell}$ weights
and using logarithmically decreasing weight functions to Lemma \ref{lem:Monotonicity-dr}
to get the following estimates.
\begin{cor}[Weighted Hardy's inequality for $\partial_{r}-\frac{\ell}{r}$]
\label{cor:WeightedHardy}Let $\ell,k\in\R$; let $0<r_{1}<r_{2}<\infty$.
Let $f:[r_{1},r_{2}]\to\C$ be a smooth function.
\begin{itemize}
\item (Noncritical case) If $\ell\neq k$, then 
\[
\int_{r_{1}}^{r_{2}}\Big|\frac{f}{r^{k+1}}\Big|^{2}rdr\lesssim_{\ell-k}\int_{r_{1}}^{r_{2}}\Big|\frac{(\partial_{r}-\frac{\ell}{r})f}{r^{k}}\Big|^{2}rdr+\begin{cases}
|(r_{2})^{-k}f(r_{2})|^{2} & \text{if }\ell>k,\\
|(r_{1})^{-k}f(r_{1})|^{2} & \text{if }\ell<k.
\end{cases}
\]
\item (Critical case, or logarithmic Hardy's inequality) If $\ell=k$, then
\[
\int_{r_{1}}^{r_{2}}\Big|\frac{f}{r^{k+1}\langle\log r\rangle}\Big|^{2}rdr\lesssim\int_{r_{1}}^{r_{2}}\Big|\frac{(\partial_{r}-\frac{k}{r})f}{r^{k}}\Big|^{2}rdr+\begin{cases}
|f(1)|^{2} & \text{if }1\in[r_{1},r_{2}],\\
|(r_{2})^{-k}f(r_{2})|^{2} & \text{if }r_{2}\leq1,\\
|(r_{1})^{-k}f(r_{1})|^{2} & \text{if }r_{1}\geq1.
\end{cases}
\]
\end{itemize}
\end{cor}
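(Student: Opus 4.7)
My plan is to reduce both cases of Corollary~\ref{cor:WeightedHardy} to Lemma~\ref{lem:Monotonicity-dr} via the conjugation $g(r)\coloneqq r^{-\ell}f(r)$, under which $(\partial_{r}-\tfrac{\ell}{r})f=r^{\ell}\partial_{r}g$. Setting $\alpha\coloneqq\ell-k$, the LHS transforms as
\[
\int_{r_{1}}^{r_{2}}\Big|\frac{f}{r^{k+1}}\Big|^{2}r\,dr=\int_{r_{1}}^{r_{2}}|g|^{2}\,r^{2\alpha-1}\,dr,\qquad\int_{r_{1}}^{r_{2}}\Big|\frac{(\partial_{r}-\tfrac{\ell}{r})f}{r^{k}}\Big|^{2}r\,dr=\int_{r_{1}}^{r_{2}}|\partial_{r}g|^{2}\,r^{2\alpha+1}\,dr,
\]
and the boundary quantity $|r_{j}^{-k}f(r_{j})|^{2}=r_{j}^{2\alpha}|g(r_{j})|^{2}$ appears naturally as $\varphi(r_{j})|g(r_{j})|^{2}$ for the power weight $\varphi=r^{2\alpha}$.

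For the noncritical case ($\alpha\neq 0$), I apply Lemma~\ref{lem:Monotonicity-dr} with $\varphi(r)=r^{2\alpha}$. Then $r\partial_{r}\varphi=2\alpha\,r^{2\alpha}$, so $\varphi=|r\partial_{r}\varphi|/(2|\alpha|)$ verifies the hypothesis $\varphi\lesssim|r\partial_{r}\varphi|$ and $\partial_{r}\varphi$ is nonvanishing. The inequality of the lemma reads
\[
\int_{r_{1}}^{r_{2}}|g/r|^{2}\,|r\partial_{r}\varphi|\,r\,dr\lesssim\int_{r_{1}}^{r_{2}}|\partial_{r}g|^{2}\,\varphi\,r\,dr+\varphi(r_{\ast})|g(r_{\ast})|^{2},
\]
where $r_{\ast}=r_{2}$ if $\partial_{r}\varphi>0$ (i.e.\ $\alpha>0$, i.e.\ $\ell>k$) and $r_{\ast}=r_{1}$ if $\partial_{r}\varphi<0$ (i.e.\ $\ell<k$). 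Dividing by $2|\alpha|$ yields the desired estimate with the correct boundary assignment and implicit constant $\sim1/|\ell-k|$, explaining the subscript ``$\lesssim_{\ell-k}$''.

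The critical case ($\alpha=0$) is the main obstacle because any power weight now has $r\partial_{r}\varphi\equiv0$ and Lemma~\ref{lem:Monotonicity-dr} no longer applies. I would replace the power weight by a bounded logarithmic weight, using the identity
\[
\partial_{r}\arctan(\log r)=\frac{1}{r\langle\log r\rangle^{2}},
\]
and integrate by parts to obtain
\[
\int_{r_{1}}^{r_{2}}\frac{|g|^{2}}{r\langle\log r\rangle^{2}}\,dr=\bigl[|g|^{2}\arctan(\log r)\bigr]_{r_{1}}^{r_{2}}-2\int_{r_{1}}^{r_{2}}\Re(\bar g\,\partial_{r}g)\arctan(\log r)\,dr.
\]
Since $|\arctan(\log r)|\leq\pi/2$, the boundary piece is controlled by $|g(r_{1})|^{2}+|g(r_{2})|^{2}$; if $1\in[r_{1},r_{2}]$ one first splits at $r=1$ (where $\arctan(\log 1)=0$) and runs the argument separately on $[r_{1},1]$ and $[1,r_{2}]$, which both anchors at $|f(1)|^{2}$ and ensures $\log r$ has a definite sign on each half. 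For the cross-term one applies Cauchy--Schwarz splitting the weight as $\arctan(\log r)=\bigl[\arctan(\log r)\langle\log r\rangle\bigr]\cdot\langle\log r\rangle^{-1}$ so that the $\langle\log r\rangle^{-2}$ factor survives on the $|\partial_{r}g|^{2}$ side, followed by an absorption of the resulting $(\mathrm{LHS})^{1/2}$ factor into the left-hand side. The principal technical point is arranging this Cauchy--Schwarz so that the logarithmic weight appears symmetrically on both sides; this is the ``logarithmic loss'' that distinguishes the critical endpoint from the noncritical one and is the only part of the proof requiring genuinely new input beyond Lemma~\ref{lem:Monotonicity-dr}.
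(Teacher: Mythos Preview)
Your noncritical case is exactly the paper's: conjugate by $r^{-\ell}$ and apply Lemma~\ref{lem:Monotonicity-dr} with the power weight $\varphi=r^{2(\ell-k)}$. For the critical case your route is again the paper's up to a constant shift---the paper applies Lemma~\ref{lem:Monotonicity-dr} with the weight determined by $r\partial_r\varphi=\langle\log r\rangle^{-2}$ and $\varphi(0^+)=0$, which is precisely $\varphi(r)=\arctan(\log r)+\tfrac{\pi}{2}$, so your integration-by-parts identity is the same one.

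There is, however, a genuine gap in your Cauchy--Schwarz step. You propose to write $\arctan(\log r)=\bigl[\arctan(\log r)\langle\log r\rangle\bigr]\cdot\langle\log r\rangle^{-1}$, pair the bracketed factor with $|g|$, and then absorb the resulting $|g|$-integral back into the left-hand side. But $|\arctan(\log r)|\,\langle\log r\rangle\sim\tfrac{\pi}{2}|\log r|\to\infty$ as $r\to 0^+$, so after Cauchy--Schwarz the $|g|$-factor is
\[
\Bigl(\int |g|^{2}\,|\arctan(\log r)|^{2}\langle\log r\rangle^{2}\,r^{-1}\,dr\Bigr)^{1/2},
\]
whose weight behaves like $|\log r|^{2}\,r^{-1}$ for large $|\log r|$, not like the LHS weight $\langle\log r\rangle^{-2}r^{-1}$; absorption is impossible. (Concretely, test with $g(r)=\log r$ on $[r_1,1]$ and send $r_1\to 0$.) A secondary issue: with the unshifted $\arctan$, the boundary contribution on $[r_1,1]$ is $-|g(r_1)|^{2}\arctan(\log r_1)>0$, i.e.\ it lands at $r_1$, not at $r=1$ as you claim, because $\arctan(\log 1)=0$ kills the \emph{wrong} endpoint. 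The paper's additive shift $+\tfrac{\pi}{2}$ repairs this by making the weight vanish at $r\to 0^+$ rather than at $r=1$, so that Lemma~\ref{lem:Monotonicity-dr} places the boundary term at the right endpoint.
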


\begin{rem}
In Corollary \ref{cor:WeightedHardy}, if $\ell>k$ and $f$ is assumed
to decay rapidly (this is in general easy to assume in practice, thanks
to the density argument), then we may ignore the boundary term. Indeed,
we send $r_{2}\to\infty$ and $r_{1}\to0$ to obtain 
\[
\int_{0}^{\infty}\Big|\frac{f}{r^{k+1}}\Big|^{2}rdr\lesssim_{\ell-k}\int_{0}^{\infty}\Big|\frac{1}{r^{k}}\Big(\partial_{r}-\frac{\ell}{r}\Big)f\Big|^{2}rdr.
\]
\end{rem}

\begin{proof}[Proof of Corollary \ref{cor:WeightedHardy}]
For the noncritical case, we simply apply the method of conjugation
as 
\[
\partial_{r}-\tfrac{\ell}{r}=r^{\ell}\partial_{r}r^{-\ell}.
\]
The noncritical case follows from an application of Lemma \ref{lem:Monotonicity-dr}
with the weight 
\[
\varphi=r^{2\ell-2k}
\]
and substituting $r^{-\ell}f$ in place of $f$.

For the critical case, by separating the integral $\int_{r_{1}}^{r_{2}}=\int_{r_{1}}^{1}+\int_{1}^{r_{2}}$
if $1\in[r_{1},r_{2}]$, we may assume either $r_{2}\leq1$ or $r_{1}\geq1$.
Let us only consider the case $r_{2}\leq1$ and leave the case $r_{1}\geq1$
for the readers. Again, by the method of conjugation $\partial_{r}-\frac{k}{r}=r^{k}\partial_{r}r^{-k}$,
it suffices to show 
\[
\int_{r_{1}}^{r_{2}}\Big|\frac{f}{r\langle\log r\rangle}\Big|^{2}rdr\lesssim\int_{r_{1}}^{r_{2}}|\partial_{r}f|^{2}rdr+|f(r_{2})|^{2}.
\]
Define the weight $\varphi:(0,1]\to\R_{+}$ solving the differential
equation 
\[
r\partial_{r}\varphi=\langle\log r\rangle^{-2}\quad\text{with}\quad\lim_{r\to0^{+}}\varphi(r)=0.
\]
Note that $\varphi(r)\sim\langle\log r\rangle^{-1}$, so $\varphi\lesssim r\partial_{r}\varphi$
\emph{does not hold}. However, one can follow the proof of Lemma \ref{lem:Monotonicity-dr}
and uses 
\[
\int_{r_{1}}^{r_{2}}\varphi\Big|\frac{f}{r}\Big||\partial_{r}f|rdr\lesssim\Big(\int_{r_{1}}^{r_{2}}|\partial_{r}f|^{2}rdr\Big)^{\frac{1}{2}}\Big(\int_{r_{1}}^{r_{2}}\Big|\frac{f}{\langle\log r\rangle}\Big|^{2}rdr\Big)^{\frac{1}{2}}
\]
instead to obtain the desired estimate.
\end{proof}

\subsection{Equivariant Sobolev spaces}

Here we recall some basic notations and facts mentioned in Section
\ref{subsec:Adapted-function-spaces}. For an $m$-equivariant function
$f$, its \emph{radial part} $g:\R_{+}\to\C$ is defined to satisfy
$f(x)=g(r)e^{im\theta}$, under the usual polar coordinates relation
$x_{1}+ix_{2}=re^{i\theta}$. We use an abuse of notation that $g$
is often considered as an $m$-equivariant function. For example,
we say that $g$ belongs to some $m$-equivariant function space if
its $m$-equivariant extension belongs to that. Associated function
norms are also used.

For $s\geq0$, denote by $H_{m}^{s}$ the set of $m$-equivariant
$H^{s}(\R^{2})$ functions. The set of $m$-equivariant Schwartz functions,
denoted by $\mathcal{S}_{m}$, is dense in $H_{m}^{s}$. The $H_{m}^{s}$-norm
and $\dot{H}_{m}^{s}$-norm mean the usual $H^{s}(\R^{2})$-norm and
$\dot{H}^{s}(\R^{2})$-norm, but we use the subscript $m$ to indicate
the equivariance index. When $0\leq k\leq m$, we have \emph{generalized
Hardy's inequality} \cite[Lemma A.7]{KimKwon2019arXiv}:
\begin{equation}
\||f|_{-k}\|_{L^{2}}\sim_{k,m}\|f\|_{\dot{H}_{m}^{k}},\qquad\forall f\in\mathcal{S}_{m}.\label{eq:GenHardyAppendix}
\end{equation}
As a special case, when $m\geq1$ and $k=1$, we have the \emph{Hardy-Sobolev
inequality} \cite[Lemma A.6]{KimKwon2019arXiv}:
\begin{equation}
\|r^{-1}f\|_{L^{2}}+\|f\|_{L^{\infty}}\lesssim\|f\|_{\dot{H}_{m}^{1}}.\label{eq:HardySobolevAppendix}
\end{equation}
The generalized Hardy's inequality \eqref{eq:GenHardyAppendix} allows
us define the space $\dot{H}_{m}^{k}$ when $0\leq k\leq m$ by taking
the completion of $\mathcal{S}_{m}$ under the $\dot{H}_{m}^{k}$-norm,
with the embedding properties 
\[
\mathcal{S}_{m}\hookrightarrow H_{m}^{k}\hookrightarrow\dot{H}_{m}^{k}\hookrightarrow L_{\mathrm{loc}}^{2}.
\]

\subsection{Adapted function space $\dot{H}_{m}^{1}$}
\begin{lem}[Boundedness and subcoercivity for $L_{Q}$ on $\dot{H}_{m}^{1}$;
Lemma \ref{lem:Mapping-L-Section2}]
\label{lem:mapping-L-Appendix}For $v\in\dot{H}_{m}^{1}$, we have
\begin{align*}
\|L_{Q}v\|_{L^{2}}+\|Qv\|_{L^{2}} & \sim\|v\|_{\dot{H}_{m}^{1}}.
\end{align*}
Moreover, the kernel of $L_{Q}:\dot{H}_{m}^{1}\to L^{2}$ is $\mathrm{span}_{\R}\{\Lambda Q,iQ\}$.
\end{lem}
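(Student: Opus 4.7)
The plan is to establish the two-sided norm equivalence first and then identify the kernel. By density of $\mathcal{S}_m$ in $\dot{H}_m^1$ (with $\|v\|_{\dot{H}_m^1}\sim \||v|_{-1}\|_{L^2}$ from \eqref{eq:GenHardyAppendix}), I would verify the estimates on $v\in\mathcal{S}_m$ and extend by continuity. For the upper bound, I expand $L_Qv=\D_+^{(Q)}v+QB_Qv$ and use the uniform bounds $|m+A_\theta[Q]|\lesssim 1$ and $Q(r)\cdot r\lesssim 1$ together with the generalized Hardy inequality \eqref{eq:GenHardyAppendix}. The nonlocal term is handled by Cauchy--Schwarz inside $B_Q$ followed by Fubini, yielding
\[
\|QB_Qv\|_{L^2}^2\lesssim \int_0^\infty |v(r')|^2\,r'\Big(\int_{r'}^\infty Q^2\tfrac{dr}{r}\Big)\,dr'\lesssim \|r^{-1}v\|_{L^2}^2,
\]
since the tail satisfies $\int_{r'}^\infty Q^2\,dr/r\lesssim \langle r'\rangle^{-2m-4}$.

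For the subcoercivity, I would treat $QB_Q$ as a subordinate perturbation and apply Corollary \ref{cor:WeightedHardy} to $\D_+^{(Q)}v$ on the two regions $r\le 1$ and $r\ge 1$ separately. Since $m+A_\theta[Q]=m+O(r^{2m+2})$ at the origin and $-(m+2)+O(r^{-2m-2})$ at infinity, I invoke the noncritical case with $(\ell,k)=(m,0)$ on the inner region (so $\ell>k$ produces a boundary term at $r=1$) and with $(\ell,k)=(-(m+2),0)$ on the outer region (so $\ell<k$ again gives a boundary term at $r=1$). The pointwise discrepancy between $\D_+^{(Q)}$ and the model operator $\partial_r-m/r$ (resp.\ $\partial_r+(m+2)/r$) is $O(Q(r))$, which absorbs into $\|Qv\|_{L^2}$. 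Combining the two regions yields
\[
\|r^{-1}v\|_{L^2}\lesssim \|\D_+^{(Q)}v\|_{L^2}+\|Qv\|_{L^2}+|v(1)|,
\]
and then $\|\partial_rv\|_{L^2}$ is recovered via $\partial_rv=\D_+^{(Q)}v+\frac{m+A_\theta[Q]}{r}v$.

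The boundary term $|v(1)|$ is handled by an averaging argument: since $Q\sim 1$ uniformly on $[1,2]$, there exists $r_\ast\in[1,2]$ with $|v(r_\ast)|\lesssim \|Qv\|_{L^2}$, and the fundamental theorem of calculus gives $|v(1)|\le |v(r_\ast)|+\int_1^2|\partial_rv|\,dr\lesssim \|Qv\|_{L^2}+\|\partial_rv\|_{L^2(r\sim 1)}$. The last term is reabsorbed on the left via a small-constant bootstrap, exploiting that it is only a fraction of the global $\dot{H}_m^1$-norm. The nonlocal contribution $\|QB_Qv\|_{L^2}$ appearing in the bound $\|\D_+^{(Q)}v\|_{L^2}\le \|L_Qv\|_{L^2}+\|QB_Qv\|_{L^2}$ is likewise absorbed into $\|Qv\|_{L^2}$ plus a small constant times $\|r^{-1}v\|_{L^2}$, using the Fubini estimate above.

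For the kernel, the scale invariance $\D_+^{(Q_\lambda)}Q_\lambda=0$ differentiated at $\lambda=1$ gives $L_Q\Lambda Q=0$, and the phase invariance gives $L_Q(iQ)=i\D_+^{(Q)}Q=0$, so $\mathrm{span}_\R\{\Lambda Q,iQ\}\subseteq\ker L_Q$. Conversely, $L_Qv=0$ forces $\mathcal{L}_Qv=L_Q^*L_Qv=0$, and standard elliptic regularity for $\mathcal{L}_Q$ on $m$-equivariant functions (exploiting the ellipticity of $\partial_1+i\partial_2$, as in the proof of Lemma \ref{lem:Def-rho}) shows $v$ is smooth, so the classification \eqref{eq:GenNullSpaceRelation} of $\ker\mathcal{L}_Q$ closes the argument. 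The hard part will be the coherent handling of the boundary term at $r\sim 1$ together with the nonlocal perturbation $QB_Q$: both must be absorbed using $\|Qv\|_{L^2}$ and a careful bootstrap that exploits the fast decay of $Q$ away from $r\sim 1$.
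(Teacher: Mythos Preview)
Your subcoercivity argument via region-splitting Hardy inequalities is valid but takes a longer route than the paper's. The paper bypasses boundary terms entirely by expanding
\[
\|\D_+^{(Q)}v\|_{L^2}^2=\|\partial_r v\|_{L^2}^2+\int\big((m+A_\theta[Q])^2-\tfrac{1}{2}r^2Q^2\big)\Big|\tfrac{v}{r}\Big|^2,
\]
and noting that $(m+A_\theta[Q])^2$ is bounded below away from zero (limits $m$ and $-(m+2)$), so the only deficit is the compactly-supported term $\|rQv\|_{L^2}^2$. This is the cleaner approach. Your route works too, but the justification you give for absorbing the boundary term is not correct: ``a small-constant bootstrap, exploiting that it is only a fraction of the global $\dot{H}_m^1$-norm'' fails because $\|\partial_r v\|_{L^2(r\sim1)}$ can equal the full $\|\partial_r v\|_{L^2}$. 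The real reason it is harmless is that on $r\sim1$ one has $\partial_r v=\D_+^{(Q)}v+O(v)$ pointwise and $|v|\sim Q|v|$ there, so $\|\partial_r v\|_{L^2(r\sim1)}\lesssim\|\D_+^{(Q)}v\|_{L^2}+\|Qv\|_{L^2}$ directly.

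There is a genuine gap in the kernel argument. The relations \eqref{eq:GenNullSpaceRelation} are generalized-null-space identities for $i\mathcal{L}_Q$; they do \emph{not} constitute a classification of $\ker\mathcal{L}_Q$. You still need the statement that any smooth $m$-equivariant solution of $L_Qv=0$ lies in $\mathrm{span}_\R\{\Lambda Q,iQ\}$, which is an ODE-type uniqueness result for the first-order integro-differential equation and is imported from \cite[Lemma~3.3]{KimKwon2019arXiv} in the paper. Note also that the detour through $\mathcal{L}_Q$ is vacuous: since $\mathcal{L}_Q=L_Q^*L_Q$, one has $\ker\mathcal{L}_Q=\ker L_Q$ on $\dot{H}_m^1$, so reducing to $\ker\mathcal{L}_Q$ gains nothing. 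The paper instead applies elliptic regularity directly to $L_Q$ (written in Cartesian form with principal part $\partial_1+i\partial_2$) to bootstrap $v\in\dot{H}_m^1$ to smooth, and then invokes the cited uniqueness lemma.
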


\begin{proof}
By density, we may assume $v\in\mathcal{S}_{m}$. We first express
$L_{Q}=\D_{+}^{(Q)}+QB_{Q}$ and treat $QB_{Q}$-term as an error:
\[
\|QB_{Q}v\|_{L^{2}}\lesssim\|rQ\|_{L^{\infty}}\|r^{-1}B_{Q}v\|_{L^{2}}\lesssim\|Qv\|_{L^{2}}.
\]
Thus we have 
\begin{align*}
 & \|L_{Q}v\|_{L^{2}}\lesssim\|\D_{+}^{(Q)}v\|_{L^{2}}+\|Qv\|_{L^{2}}\lesssim\|\D_{+}^{(Q)}v\|_{L^{2}}+\|v\|_{\dot{H}_{m}^{1}},\\
 & \|L_{Q}v\|_{L^{2}}+\|Qv\|_{L^{2}}\gtrsim\|\D_{+}^{(Q)}v\|_{L^{2}}.
\end{align*}

We now focus on $\D_{+}^{(Q)}v$. Since $\D_{+}^{(Q)}\approx\partial_{r}-\frac{m}{r}$
for small $r$ and $\D_{+}^{(Q)}\approx\partial_{r}+\frac{m+2}{r}$
for large $r$, the Hardy lower bound $|v|_{-1}$ is expected in view
of Corollary \ref{cor:WeightedHardy} as explained in Section \ref{subsec:Adapted-function-spaces}.
But there is a simple but different route to achieve this; we merely
integrate by parts as 
\begin{align*}
\|\D_{+}^{(Q)}v\|_{L^{2}}^{2} & =\|\partial_{r}v\|_{L^{2}}^{2}-2(\partial_{r}v,\tfrac{1}{r}(m+A_{\theta}[Q])v)_{r}+\|\tfrac{1}{r}(m+A_{\theta}[Q])v\|_{L^{2}}^{2}\\
 & =\|\partial_{r}v\|_{L^{2}}^{2}+{\textstyle \int}((m+A_{\theta}[Q])^{2}-\tfrac{1}{2}r^{2}Q^{2})|\tfrac{1}{r}v|^{2}.
\end{align*}
Since both of the limits $m+A_{\theta}[Q]$ as $r\to0$ and $r\to\infty$
do not vanish, we get 
\begin{align*}
 & \|\D_{+}^{(Q)}v\|_{L^{2}}\lesssim\||v|_{-1}\|_{L^{2}}\lesssim\|v\|_{\dot{H}_{m}^{1}},\\
 & \|\D_{+}^{(Q)}v\|_{L^{2}}+\|r^{2}\langle r\rangle^{-2}Qv\|_{L^{2}}\gtrsim\|v\|_{\dot{H}_{m}^{1}}.
\end{align*}

It now suffices to characterize the kernel of $L_{Q}:\dot{H}_{m}^{1}\to L^{2}$.
Let $f\in\dot{H}_{m}^{1}$ be such that $L_{Q}f=0$. It is shown in
\cite[Lemma 3.3]{KimKwon2019arXiv} that $f\in\mathrm{span}_{\R}\{\Lambda Q,iQ\}$
when $f$ is smooth (on $\R^{2}$). To use this result, we want to
apply the elliptic regularity on $\R^{2}$, so that $f\in\dot{H}_{m}^{1}$
can be upgraded to a smooth function. We consider the cartesian representation
of $L_{Q}$; we obtain $L_{Q}f$ by taking the linear part of $\tilde{\D}_{+}^{(Q+f)}(Q+f)$
and recalling $\tilde{\D}_{+}=\D_{1}+i\D_{2}$ as 
\begin{align*}
L_{Q}f & =\tilde{\D}_{+}^{(Q)}f+2i(A_{1}[Q,f]+iA_{2}[Q,f])Q,\\
A_{j}[f,g] & \coloneqq\tfrac{1}{2}\epsilon_{jk}\Delta^{-1}\partial_{k}\Re(\overline{f}g),
\end{align*}
where the anti-symmetric tensor $\epsilon_{jk}$ is defined by $\epsilon_{12}=1$.
Thus $L_{Q}f=0$ means that 
\[
(\partial_{1}+i\partial_{2})f=\tfrac{A_{\theta}[Q](x)}{|x|^{2}}(x_{1}+ix_{2})f-2i(A_{1}[Q,f]+iA_{2}[Q,f])Q.
\]
Note that the radial part of the above display corresponds to $(\partial_{r}-\frac{m}{r})f=\frac{A_{\theta}[Q]}{r}f-QB_{Q}f$.
Starting from $f\in\dot{H}_{m}^{1}$ (thus a priori having $\nabla f\in L^{2}$,
$|x|^{-1}f\in L^{2}$, and $f\in L_{\mathrm{loc}}^{2}$), iterating
the above display concludes that $f\in H_{\mathrm{loc}}^{\infty}$.
Therefore, $f$ is indeed a smooth solution to $L_{Q}f=0$. By \cite[Lemma 3.3]{KimKwon2019arXiv},
we finish the proof.
\end{proof}
\begin{lem}[Coercivity for $L_{Q}$ on $\dot{H}_{m}^{1}$; Lemma \ref{lem:Coercivity-L-Section2}]
\label{lem:Coercivity-L-Appendix}Let $\psi_{1}$ and $\psi_{2}$
be elements of $(\dot{H}_{m}^{1})^{\ast}$, which is the dual space
of $\dot{H}_{m}^{1}$. If the $2\times2$ matrix $(a_{ij})$ defined
by $a_{i1}=(\psi_{i},\Lambda Q)_{r}$ and $a_{i2}=(\psi_{i},iQ)_{r}$
has nonzero determinant, then we have a coercivity estimate 
\[
\|v\|_{\dot{H}_{m}^{1}}\gtrsim\|L_{Q}v\|_{L^{2}}\gtrsim_{\psi_{1},\psi_{2}}\|v\|_{\dot{H}_{m}^{1}},\qquad\forall v\in\dot{H}_{m}^{1}\cap\{\psi_{1},\psi_{2}\}^{\perp}.
\]
\end{lem}

\begin{proof}
This is indeed shown in \cite[Lemma 3.9]{KimKwon2019arXiv}, but let
us give a sketch of the proof for convenience.

Suppose not. We can choose a sequence $\{v_{n}\}_{n\in\N}\subseteq\dot{H}_{m}^{1}$
such that $\|L_{Q}v_{n}\|_{L^{2}}=\frac{1}{n}$, $\|v_{n}\|_{\dot{H}_{m}^{1}}=1$,
and $(\psi_{1},v_{n})_{r}=(\psi_{2},v_{n})_{r}=0$. In particular,
$\{v_{n}\}_{n\in\N}$ is bounded in $\dot{H}_{m}^{1}$. After passing
to a subsequence, there exists $v\in\dot{H}_{m}^{1}$ such that $v_{n}$
converges to $v_{\infty}$ weakly in $\dot{H}_{m}^{1}$ and strongly
in $L_{\mathrm{loc}}^{2}$. By weak convergence, we have $L_{Q}v_{\infty}=0$
and $(\psi_{1},v_{\infty})_{r}=(\psi_{2},v_{\infty})_{r}=0$.

On one hand, $v_{\infty}=0$ by the kernel characterization of Lemma
\ref{lem:mapping-L-Appendix} and the orthogonality conditions. On
the other hand, the subcoercivity estimate says that $\|Qv_{n}\|_{L^{2}}\gtrsim1$
uniformly for all large $n$, and the strong $L_{\mathrm{loc}}^{2}$-convergence
(and $\dot{H}_{m}^{1}$-boundedness) says that $\|Qv_{n}\|_{L^{2}}\to\|Qv_{\infty}\|_{L^{2}}\gtrsim1$.
This yields $v_{\infty}\neq0$, a contradiction.
\end{proof}

\subsection{Adapted function space $\dot{\mathcal{H}}_{m}^{3}$}

Recall from Section \ref{subsec:Adapted-function-spaces} that the
$\dot{\mathcal{H}}_{m}^{3}$-norm is defined by 
\begin{equation}
\|f\|_{\dot{\mathcal{H}}_{m}^{3}}\coloneqq\|\partial_{+}f\|_{\dot{H}_{m+1}^{2}}+\begin{cases}
\||f|_{-3}\|_{L^{2}} & \text{if }m\geq3,\\
\|\partial_{rrr}f\|_{L^{2}}+\|r^{-1}\langle\log_{-}r\rangle^{-1}|f|_{-2}\|_{L^{2}} & \text{if }m=2,\\
\||\partial_{rr}f|_{-1}\|_{L^{2}}+\|r^{-1}\langle r\rangle^{-1}\langle\log_{-}r\rangle^{-1}|f|_{-1}\|_{L^{2}} & \text{if }m=1,
\end{cases}\label{eq:Def-Hdot3-Appendix}
\end{equation}
initially for $m$-equivariant Schwartz function $f$. The space $\dot{\mathcal{H}}_{m}^{3}$
is obtained by taking the completion of $\mathcal{S}_{m}$ under the
$\dot{\mathcal{H}}_{m}^{3}$ norm. As all $L^{2}$-norms are involved,
one can equip $\dot{\mathcal{H}}_{m}^{3}$ with a Hilbert space structure
(by modifying the $\dot{\mathcal{H}}_{m}^{3}$-norm by some comparable
one). The choice of $\dot{\mathcal{H}}_{m}^{3}$ is motivated to have
boundedness/subcoercivity estimates of the operator $A_{Q}^{\ast}A_{Q}L_{Q}$.

\subsubsection*{Comparison of $\dot{H}_{m}^{3}$ and $\dot{\mathcal{H}}_{m}^{3}$}

$ $\\ One may naturally ask how much the $\dot{H}_{m}^{3}$-norm
and $\dot{\mathcal{H}}_{m}^{3}$-norm differ. It turns out that they
are equivalent when $m\geq3$, but the $\dot{\mathcal{H}}_{m}^{3}$-norm
is\emph{ stronger }than the $\dot{H}_{m}^{3}$-norm when $m\in\{1,2\}$.
\begin{lem}[Comparison of $\dot{H}_{m}^{3}$ and $\dot{\mathcal{H}}_{m}^{3}$]
\label{lem:Comparison-H3-Hdot3}For $f\in\mathcal{S}_{m}$, we have
\[
\|f\|_{\dot{H}_{m}^{3}}\sim\|\partial_{+}f\|_{\dot{H}_{m+1}^{2}}
\]
and 
\[
\|f\|_{\dot{\mathcal{H}}_{m}^{3}}\sim\begin{cases}
\|f\|_{\dot{H}_{m}^{3}} & \textrm{if }m\geq3,\\
\|f\|_{\dot{H}_{m}^{3}}+\|\mathbf{1}_{r\geq1}\tfrac{1}{r^{3}}f\|_{L^{2}} & \textrm{if }m=2,\\
\|f\|_{\dot{H}_{m}^{3}}+\|\mathbf{1}_{r\sim1}f\|_{L^{2}} & \textrm{if }m=1.
\end{cases}
\]
Due to $\|\mathbf{1}_{r\gtrsim1}\frac{1}{r^{3}}f\|_{L^{2}}\lesssim\|f\|_{L^{2}}$,
we have 
\[
L^{2}\cap\dot{\mathcal{H}}_{m}^{3}=H_{m}^{3}.
\]
\end{lem}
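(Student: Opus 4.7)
The plan is to establish the three claims in sequence, leveraging the generalized Hardy's inequality \eqref{eq:GenHardyAppendix} and the weighted (including logarithmic) Hardy's inequality of Corollary \ref{cor:WeightedHardy}. By density, it suffices to work with $f\in\mathcal{S}_m$.

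For the first equivalence $\|f\|_{\dot{H}_m^3}\sim\|\partial_+f\|_{\dot{H}_{m+1}^2}$, I would use the factorization $\Delta=\partial_-\partial_+$, where $\partial_+$ shifts $m$-equivariance to $(m+1)$-equivariance and $\partial_-$ shifts it back. Since $m+1\geq 2$, the generalized Hardy's inequality is available on $(m+1)$-equivariant functions at least up to order $2$, so $\|\partial_+f\|_{\dot{H}_{m+1}^2}$ is well controlled. The upper bound $\|\partial_+ f\|_{\dot{H}_{m+1}^2}\lesssim \|f\|_{\dot{H}_m^3}$ follows from writing $\partial_+ = \partial_r - m/r$ and expanding Leibniz. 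For the lower bound, I would use $\|f\|_{\dot{H}_m^3}\sim\|\Delta f\|_{\dot{H}_m^1}=\|\partial_-(\partial_+ f)\|_{\dot{H}_m^1}$, then invoke that on positively indexed equivariant functions the operator $\partial_-$ has trivial kernel and satisfies the elliptic lower bound $\|\partial_- g\|_{\dot{H}_m^1}\gtrsim\|g\|_{\dot{H}_{m+1}^2}$ (a consequence of applying Corollary~\ref{cor:WeightedHardy} in the noncritical case with $\ell=-(m+1)$, $k=1$, together with Hardy control for the $\partial_r g$ part).

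For the second equivalence, the case $m\geq 3$ is immediate: generalized Hardy with $k=3$ gives $\||f|_{-3}\|_{L^2}\sim\|f\|_{\dot{H}_m^3}$, so the extra term in the definition of $\dot{\mathcal{H}}_m^3$ is already contained in $\|f\|_{\dot{H}_m^3}$. When $m\in\{1,2\}$ I would split the additional Hardy-type terms into a near-origin piece and a far-field piece. For the near-origin piece, the critical case of Corollary~\ref{cor:WeightedHardy} (when $m=2$) and the noncritical case with a boundary term at $r\sim 1$ (when $m=1$) together show that the logarithmically weighted terms $\|r^{-1}\langle\log_- r\rangle^{-1}|f|_{-k}\mathbf{1}_{r\leq 1}\|_{L^2}$ are controlled by $\|f\|_{\dot{H}_m^3}$, modulo a boundary contribution $\|\mathbf{1}_{r\sim 1}f\|_{L^2}$ in the $m=1$ case. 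For the far-field piece, the log weight becomes trivial and the weight collapses to $r^{-3}$ (for $m=2$) or to the $r\sim 1$ boundary term (for $m=1$); this is the precise origin of the stated additional $\|\mathbf{1}_{r\geq 1}r^{-3}f\|_{L^2}$ and $\|\mathbf{1}_{r\sim 1}f\|_{L^2}$ terms, both of which are genuinely stronger than anything $\dot{H}_m^3$ can control because generalized Hardy fails at $k>m$. The redundant special terms $\|\partial_{rrr}f\|_{L^2}$ and $\||\partial_{rr}f|_{-1}\|_{L^2}$ are absorbed into $\|\partial_+f\|_{\dot{H}_{m+1}^2}$ via the pointwise comparisons mentioned just after \eqref{eq:Def-Hdot3-Appendix}.

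The final identification $L^2\cap\dot{\mathcal{H}}_m^3=H_m^3$ then follows quickly: one inclusion uses $\|f\|_{\dot{H}_m^3}\lesssim\|f\|_{\dot{\mathcal{H}}_m^3}$, so $L^2\cap\dot{\mathcal{H}}_m^3\subseteq H_m^3$. The reverse inclusion reduces by the preceding comparison to checking that the exotic far-field corrections are $L^2$-controlled, which is trivial since $\|\mathbf{1}_{r\geq 1}r^{-3}f\|_{L^2}+\|\mathbf{1}_{r\sim 1}f\|_{L^2}\lesssim\|f\|_{L^2}$. The main technical obstacle in the whole argument is the case $m=1$, where the $\dot{\mathcal{H}}_1^3$-norm uses the doubly weighted Hardy term with both $r^{-1}\langle r\rangle^{-1}$ and $\langle\log_- r\rangle^{-1}$; here the noncritical case of Corollary~\ref{cor:WeightedHardy} with $\ell=1$, $k=1$ unavoidably produces a boundary contribution at $r\sim 1$ that cannot be absorbed, which is precisely why the correction $\|\mathbf{1}_{r\sim 1}f\|_{L^2}$ appears in the statement and must be tracked carefully through the localization.
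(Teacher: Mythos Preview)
Your proposal is correct and follows essentially the same approach as the paper: the first equivalence via $\Delta=\partial_-\partial_+$, then the case-by-case comparison using the (critical or noncritical) weighted Hardy inequality of Corollary~\ref{cor:WeightedHardy} with a near-origin/far-field splitting. Two minor slips to correct: the case $\ell=k=1$ you invoke for $m=1$ is the \emph{critical} (logarithmic) case of Corollary~\ref{cor:WeightedHardy}, not the noncritical one; and for the easy direction $\|\partial_+f\|_{\dot{H}_{m+1}^2}\lesssim\|f\|_{\dot{H}_m^3}$ you should work with the Cartesian form $\partial_+=\partial_1+i\partial_2$ rather than expanding $\partial_r-\tfrac{m}{r}$ via Leibniz, since the latter produces $r^{-3}f$ terms that are not controlled by $\dot{H}_m^3$ when $m<3$. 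The paper handles both directions of the first equivalence at once by writing $\|f\|_{\dot{H}_m^3}\sim\|\nabla f\|_{\dot{H}^2}\sim\|\partial_+f\|_{\dot{H}_{m+1}^2}+\|\partial_-f\|_{\dot{H}_{m-1}^2}$ and then absorbing the $\partial_-$ piece via the identity $\Delta\partial_-=\partial_-\partial_-\partial_+$, which is slightly cleaner than your Hardy-based coercivity of $\partial_-$ but equivalent in substance.
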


\begin{rem}
\label{rem:OptimalityComparisonH3H3}One cannot replace $\|\mathbf{1}_{r\geq1}\tfrac{1}{r^{3}}f\|_{L^{2}}$
by $\|\mathbf{1}_{r\sim1}f\|_{L^{2}}$ when $m=2$; and one cannot
eliminate $\|\mathbf{1}_{r\sim1}f\|_{L^{2}}$ when $m=1$. When $m=2$,
the estimate $\|f\|_{\dot{H}_{2}^{3}}\gtrsim\|\mathbf{1}_{r\geq r_{0}}r^{-3}f\|_{L^{2}}$
is \emph{false} for any $r_{0}>0$, which can be seen by considering
$f(x)=(x_{1}+ix_{2})^{2}\chi_{\leq R}(|x|)$ and take $R\to\infty$.
When $m=1$, the estimate $\|f\|_{\dot{H}_{1}^{3}}\gtrsim\|\mathbf{1}_{r\sim1}f\|_{L^{2}}$
is \emph{false}, which can be seen by the example $f(x)=R(x_{1}+ix_{2})\chi_{\leq R}(|x|)$.
\end{rem}

\begin{proof}
We first show for $f\in\mathcal{S}_{m}$: 
\[
\|f\|_{\dot{H}_{m}^{3}}\sim\|\partial_{+}f\|_{\dot{H}_{m+1}^{2}}\lesssim\|f\|_{\dot{\mathcal{H}}_{m}^{3}}.
\]
This follows from 
\[
\|f\|_{\dot{H}_{m}^{3}}\sim\|\nabla f\|_{\dot{H}^{2}}\sim\|\partial_{+}f\|_{\dot{H}_{m+1}^{2}}+\|\partial_{-}f\|_{\dot{H}_{m-1}^{2}}\sim\|\partial_{+}f\|_{\dot{H}_{m+1}^{2}}+\|\Delta\partial_{-}f\|_{L^{2}}.
\]
and the observation $\Delta\partial_{-}=\partial_{-}\Delta=\partial_{-}\partial_{-}\partial_{+}$.

We now consider the reverse inequality for $f\in\mathcal{S}_{m}$.
When $m\geq3$, we have $\|f\|_{\dot{H}_{m}^{3}}\sim\||f|_{-3}\|_{L^{2}}$
by \eqref{eq:GenHardyAppendix}. Thus $\|f\|_{\dot{H}_{m}^{3}}\gtrsim\|f\|_{\dot{\mathcal{H}}_{m}^{3}}$
easily follows.

When $m=2$, we apply \eqref{eq:GenHardyAppendix} to $\|\partial_{+}f\|_{\dot{H}_{3}^{2}}$
and use the pointwise estimate $|\partial_{+}f|_{-2}+|\frac{1}{r^{3}}f|\gtrsim|f|_{-3}$
to get $\|\partial_{+}f\|_{\dot{H}_{3}^{2}}+\|\mathbf{1}_{r\geq1}\tfrac{1}{r^{3}}f\|_{L^{2}}\gtrsim\|\mathbf{1}_{r\geq1}|f|_{-3}\|_{L^{2}}$.
This treats the $r\geq1$ part. Thus it suffices to show the $r\leq1$
contribution: 
\begin{align*}
\|r^{-1}|\partial_{+}f|_{-1}\|_{L^{2}}+\|\mathbf{1}_{r\sim1}f\|_{L^{2}} & \gtrsim\|\mathbf{1}_{r\leq1}r^{-1}\langle\log_{-}r\rangle^{-1}|f|_{-2}\|_{L^{2}},\\
\|\partial_{+}f\|_{\dot{H}_{3}^{2}} & \gtrsim\|\mathbf{1}_{r\leq1}\partial_{rrr}f\|_{L^{2}}.
\end{align*}
To show the first assertion, an application of Corollary \ref{cor:WeightedHardy}
with $\ell=k=2$ and averaging the boundary term yield 
\[
\|r^{-2}\partial_{+}f\|_{L^{2}}+\|\mathbf{1}_{r\sim1}f\|_{L^{2}}\gtrsim\|\mathbf{1}_{r\leq1}r^{-3}\langle\log_{-}r\rangle^{-1}f\|_{L^{2}}.
\]
Combining this with the pointwise estimates 
\begin{align*}
\mathbf{1}_{r\leq1}r^{-2}\langle\log_{-}r\rangle^{-1}|\partial_{r}f| & \leq\mathbf{1}_{r\leq1}r^{-2}\langle\log_{-}r\rangle^{-1}|\partial_{+}f|+O(\mathbf{1}_{r\leq1}r^{-3}\langle\log_{-}r\rangle^{-1}|f|),\\
\mathbf{1}_{r\leq1}r^{-1}\langle\log_{-}r\rangle^{-1}|\partial_{rr}f| & \leq\mathbf{1}_{r\leq1}r^{-1}\langle\log_{-}r\rangle^{-1}|\partial_{+}f|_{-1}+O(\mathbf{1}_{r\leq1}r^{-2}\langle\log_{-}r\rangle^{-1}|f|_{-1}),
\end{align*}
we get 
\[
\|r^{-1}|\partial_{+}f|_{-1}\|_{L^{2}}+\|\mathbf{1}_{r\sim1}f\|_{L^{2}}\gtrsim\|\mathbf{1}_{r\leq1}r^{-1}\langle\log_{-}r\rangle^{-1}|f|_{-2}\|_{L^{2}},
\]
showing the first assertion. The second assertion follows by observing
the special algebra 
\[
\partial_{rrr}=(\partial_{r}+\tfrac{1}{r})^{2}(\partial_{r}-\tfrac{2}{r})
\]
and noticing that $\partial_{r}-\tfrac{2}{r}$ is the radial part
of $\partial_{+}$ acting on $2$-equivariant functions.

When $m=1$, it suffices to show the controls 
\begin{align*}
\|\mathbf{1}_{r\leq2}r^{-1}\partial_{+}f\|_{L^{2}}+\|\mathbf{1}_{r\sim1}f\|_{L^{2}} & \gtrsim\|\mathbf{1}_{r\leq1}r^{-1}\langle\log_{-}r\rangle^{-1}|f|_{-1}\|_{L^{2}},\\
\|\mathbf{1}_{r\geq\frac{1}{2}}r^{-2}\partial_{+}f\|_{L^{2}}+\|\mathbf{1}_{r\sim1}f\|_{L^{2}} & \gtrsim\|\mathbf{1}_{r\geq1}r^{-2}|f|_{-1}\|_{L^{2}},\\
\|\partial_{+}f\|_{\dot{H}_{2}^{2}} & \gtrsim\||\partial_{rr}f|_{-1}\|_{L^{2}}.
\end{align*}
To show the first assertion, we apply Corollary \ref{cor:WeightedHardy}
with $\ell=k=1$ to get 
\[
\|\mathbf{1}_{r\leq2}r^{-1}\partial_{+}f\|_{L^{2}}+\|\mathbf{1}_{r\sim1}f\|_{L^{2}}\gtrsim\|\mathbf{1}_{r\leq1}r^{-2}\langle\log_{-}r\rangle^{-1}f\|_{L^{2}}.
\]
Combining this with the pointwise estimate 
\[
\mathbf{1}_{r\leq1}r^{-1}\langle\log_{-}r\rangle^{-1}|\partial_{r}f|\leq\mathbf{1}_{r\leq1}r^{-1}\langle\log_{-}r\rangle^{-1}|\partial_{+}f|+\mathbf{1}_{r\leq1}r^{-2}\langle\log_{-}r\rangle^{-1}|f|
\]
yields 
\[
\|\mathbf{1}_{r\leq2}r^{-1}\partial_{+}f\|_{L^{2}}+\|\mathbf{1}_{r\sim1}f\|_{L^{2}}\gtrsim\|\mathbf{1}_{r\leq1}r^{-1}\langle\log_{-}r\rangle^{-1}|f|_{-1}\|_{L^{2}},
\]
showing the first assertion. To show the second assertion, we apply
Corollary \ref{cor:WeightedHardy} with $\ell=1$ and $k=2$. Here
notice that we can control the region $r\geq1$ by the values of $f$
on $r\sim1$. We left this case to the interested readers. Finally,
the third assertion follows from observing the special algebra 
\[
\partial_{rr}=(\partial_{r}+\tfrac{1}{r})(\partial_{r}-\tfrac{1}{r})
\]
and noticing that $\partial_{r}-\tfrac{1}{r}$ is the radial part
of $\partial_{+}$ acting on $1$-equivariant functions.
\end{proof}

\subsubsection*{Subcoercivity estimates for $A_{Q}^{\ast}A_{Q}L_{Q}$}

$ $\\ From now on, we turn to prove (sub-)coercivity estimates of
$A_{Q}^{\ast}A_{Q}L_{Q}$ in $\dot{\mathcal{H}}_{m}^{3}$: Lemmas
\ref{lem:Mapping-AAL-Section2} and \ref{lem:Coercivity-AAL-Section2}.
We start by proving subcoercivity estimates of each differential operator
$A_{Q}^{\ast}$, $A_{Q}$, and $L_{Q}$.
\begin{lem}[Boundedness and positivity for $A_{Q}^{\ast}$]
\label{lem:mapping-AastQ}For $v\in\dot{H}_{m+2}^{1}$, we have 
\[
\|A_{Q}^{\ast}v\|_{L^{2}}\sim\|v\|_{\dot{H}_{m+2}^{1}}.
\]
\end{lem}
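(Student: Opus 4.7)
The plan is to prove both inequalities by density and by a direct computation, working with smooth $(m+2)$-equivariant $v$. The upper bound $\|A_Q^\ast v\|_{L^2}\lesssim \|v\|_{\dot H^1_{m+2}}$ is a soft consequence of the explicit formula for the adjoint: a routine computation using $\int f=2\pi\int_0^\infty f(r)r\,dr$ gives
\[
A_Q^\ast v=-\partial_r v-\tfrac{m+2+A_\theta[Q]}{r}v,
\]
so the triangle inequality combined with the pointwise bound $|A_\theta[Q]|\lesssim 1$ and the generalized Hardy inequality \eqref{eq:GenHardyAppendix} (valid since $m+2\geq 3$) reduces matters to $\|\partial_r v\|_{L^2}+\|r^{-1}v\|_{L^2}\lesssim \|v\|_{\dot H^1_{m+2}}$.

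The lower bound is the content of the ``positivity of $A_QA_Q^\ast$'' already recorded in Section~\ref{subsec:Conjugation-identities}. I would simply square the formula for $A_Q^\ast v$, integrate by parts to convert the cross term using $\partial_r A_\theta[Q]=-\tfrac12 rQ^2$, and arrive at
\[
\|A_Q^\ast v\|_{L^2}^{2}=\int|\partial_r v|^{2}+\int\frac{\tilde V}{r^{2}}|v|^{2},\qquad \tilde V=(m+2+A_\theta[Q])^{2}+\tfrac12 r^{2}Q^{2}.
\]
Then I would verify $\tilde V\gtrsim 1$ uniformly in $r$: at $r=0$ and $r=\infty$ the factor $m+2+A_\theta[Q]$ equals $m+2$ and $-m$ respectively, and if it vanishes at some intermediate radius $r_0$ (as happens for $m\geq 1$) then $r_0^2Q^2(r_0)>0$ fills the gap by the explicit form of $Q$. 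This yields $\|A_Q^\ast v\|_{L^2}^2\gtrsim\|\partial_rv\|_{L^2}^2+\|r^{-1}v\|_{L^2}^2$, and one more application of \eqref{eq:GenHardyAppendix} gives $\|A_Q^\ast v\|_{L^2}\gtrsim \|v\|_{\dot H^1_{m+2}}$.

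The only step with any content is the pointwise lower bound $\tilde V\gtrsim 1$: the potential $(m+2+A_\theta[Q])^2$ alone is not bounded away from zero (it has a simple zero for $m\geq 1$), and it is precisely the self-dual term $\tfrac12 r^2Q^2$ that restores coercivity there. Everything else is a straightforward integration by parts plus the generalized Hardy inequality on equivariant functions, which is at our disposal since $m+2\geq 3$.
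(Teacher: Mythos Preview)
Your proposal is correct and takes essentially the same approach as the paper: both use density and the integration-by-parts identity $\|A_Q^\ast v\|_{L^2}^2=\int|\partial_r v|^2+\int\frac{\tilde V}{r^2}|v|^2$ together with the pointwise bound $\tilde V\sim 1$. The only difference is cosmetic: the paper writes this as $\|A_Q^\ast v\|_{L^2}^2=(v,A_Q A_Q^\ast v)_r$ and invokes the formula $A_Q A_Q^\ast=-\partial_{rr}-\tfrac1r\partial_r+\tfrac{\tilde V}{r^2}$ with $\tilde V\sim 1$ in one stroke, whereas you separate the upper bound (triangle inequality) from the lower bound and spell out the verification of $\tilde V\gtrsim 1$ at the zero of $m+2+A_\theta[Q]$.
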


\begin{proof}
By density, we may assume $v\in\mathcal{S}_{m+2}$. Thus the integration
by parts $\|A_{Q}^{\ast}v\|_{L^{2}}^{2}=(v,A_{Q}A_{Q}^{\ast}v)_{r}$
is justified. As $A_{Q}A_{Q}^{\ast}=-\partial_{rr}-\frac{1}{r}\partial_{r}+\frac{\tilde V}{r^{2}}$
with $\tilde V\sim1$ (this property holds \emph{only }when $m\geq1$),
we conclude 
\[
\|A_{Q}^{\ast}v\|_{L^{2}}^{2}=(v,A_{Q}A_{Q}^{\ast}v)_{r}\sim\||v|_{-1}\|_{L^{2}}^{2}.
\]
This completes the proof.
\end{proof}
\begin{lem}[Boundedness and subcoercivity for $A_{Q}$]
\label{lem:mapping-AQ}For $v\in\dot{H}_{m+1}^{2}$, we have 
\begin{align*}
 & \|A_{Q}v\|_{\dot{H}_{m+2}^{1}}+\|r\langle r\rangle^{-2}Qv\|_{L^{2}}\sim\|v\|_{\dot{H}_{m+1}^{2}}.
\end{align*}
Moreover, the kernel of $A_{Q}:\dot{H}_{m+1}^{2}\to\dot{H}_{m+2}^{1}$
is $\mathrm{span}_{\C}\{rQ\}$.
\end{lem}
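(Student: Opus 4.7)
The plan is to mimic the proof of Lemma \ref{lem:mapping-L-Appendix} for $L_Q$, but exploiting an especially clean algebraic factorization for $A_Q^\ast A_Q$ that reduces the problem to the flat Laplacian. By density I restrict to $v\in\mathcal{S}_{m+1}$, so all integration-by-parts is justified. The central computation is the identity
\[
A_Q^\ast A_Q \;=\; -\Delta_{m+1}\;-\;Q^2
\]
on $(m+1)$-equivariant functions. To derive it, I expand $A_Q^\ast A_Q=-\partial_{rr}-\tfrac{1}{r}\partial_r+\tfrac{1}{r^2}\{(m+1+A_\theta[Q])^2-\tfrac{1}{2}r^2Q^2\}$ using $\partial_r A_\theta[Q]=-\tfrac{r}{2}Q^2$ and $\beta-\alpha=\tfrac{1}{r}$ with $\alpha=\tfrac{m+1+A_\theta[Q]}{r},\ \beta=\tfrac{m+2+A_\theta[Q]}{r}$. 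Then, inserting the explicit $A_\theta[Q]=-2(m+1)\tfrac{r^{2m+2}}{1+r^{2m+2}}$ and $rQ=\sqrt{8}(m+1)\tfrac{r^{m+1}}{1+r^{2m+2}}$, the substitution $s=r^{2m+2}$ yields $(m+1+A_\theta[Q])^2-\tfrac{1}{2}r^2Q^2=(m+1)^2\tfrac{1-6s+s^2}{(1+s)^2}=(m+1)^2-r^2Q^2$, which produces the claimed identity.

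With the identity in hand, Lemma \ref{lem:mapping-AastQ} applied to $A_Q v \in \dot{H}_{m+2}^1$ gives $\|A_Q v\|_{\dot{H}_{m+2}^1}\sim\|A_Q^\ast A_Q v\|_{L^2}=\|\Delta_{m+1} v+Q^2 v\|_{L^2}$. Combined with the standard equivalence $\|\Delta_{m+1} v\|_{L^2}\sim\|v\|_{\dot{H}_{m+1}^2}$ (via $\|\Delta f\|_{L^2}=\|\nabla^2 f\|_{L^2}$ on $\R^2$) and the pointwise bound $Q^2\lesssim r\langle r\rangle^{-2}Q$ (which follows from $Q\lesssim r\langle r\rangle^{-2}$ for all $m\geq 1$, checked separately near $0$ and $\infty$), I obtain both directions
\[
\|A_Q v\|_{\dot{H}_{m+2}^1}+\|r\langle r\rangle^{-2}Qv\|_{L^2}\;\sim\;\|v\|_{\dot{H}_{m+1}^2}.
\]

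For the kernel characterization, I first verify that $rQ\in\dot{H}_{m+1}^2$ using the decay $rQ\sim r^{m+1}$ near $0$ and $rQ\sim r^{-m-1}$ at infinity, and that $A_Q(rQ)=Q+r\partial_r Q-(m+1+A_\theta[Q])Q=0$ by the Bogomol'nyi identity $\partial_r Q=\tfrac{m+A_\theta[Q]}{r}Q$. Since $A_Q v=0$ is a first-order linear ODE $\partial_r v=\tfrac{m+1+A_\theta[Q]}{r}v$, its solution space is one-dimensional; direct integration using $\int\tfrac{A_\theta[Q]}{r}dr=-\log(1+r^{2m+2})+\mathrm{const}$ shows every solution is a scalar multiple of $rQ$. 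For $v\in\dot{H}_{m+1}^2$ solving the equation a priori only in the distributional sense, ellipticity of $\partial_1+i\partial_2$ (applied to the ambient $\R^2$ form of $\tilde{\D}_+^{(Q)}$, as in Lemma \ref{lem:mapping-L-Appendix}) upgrades $v$ to a smooth function, so the ODE analysis applies.

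The main obstacle I anticipate is the algebraic verification of $A_Q^\ast A_Q=-\Delta_{m+1}-Q^2$: it is a miraculous cancellation tied to self-duality and the Bogomol'nyi relation, and any slip in keeping track of the shift in equivariance index (from $m+1$ to $m+2$ under $A_Q$, back to $m+1$ under $A_Q^\ast$) or in handling the $\partial_r A_\theta[Q]$ term would leave a stray potential that destroys the clean equivalence and would force a noticeably more delicate argument based on weighted Hardy estimates (Corollary \ref{cor:WeightedHardy}) applied separately in the regimes $r\lesssim 1$ and $r\gtrsim 1$.
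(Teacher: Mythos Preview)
Your proof is correct and takes a genuinely different route from the paper's. The paper reduces to the $\D_+^{(Q)}$ analysis via the conjugation $r^{-1}A_Qv=\D_+^{(Q)}(r^{-1}v)$, then treats $\|r^{-1}A_Qv\|_{L^2}$ by the integration-by-parts computation of Lemma~\ref{lem:mapping-L-Appendix} and handles $\|\partial_r A_Qv\|_{L^2}$ separately by a pointwise commutator estimate. You instead exploit the exact identity $A_Q^\ast A_Q=-\Delta_{m+1}-Q^2$ (which you verify correctly: the potential $(m+1+A_\theta[Q])^2-\tfrac12 r^2Q^2$ does collapse to $(m+1)^2-r^2Q^2$), combine it with the already-established positivity $\|A_Q^\ast w\|_{L^2}\sim\|w\|_{\dot H_{m+2}^1}$ of Lemma~\ref{lem:mapping-AastQ}, and conclude by a single triangle inequality against $\|\Delta_{m+1}v\|_{L^2}=\|v\|_{\dot H_{m+1}^2}$. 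Your approach is shorter and more conceptual, directly tying the subcoercivity to the self-dual structure; the paper's approach is more hands-on and does not require checking the precise potential cancellation, which makes it marginally more robust to perturbations of the setting. The kernel argument is essentially the same in both.
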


\begin{proof}
By density, we may assume $v\in\mathcal{S}_{m+1}$. Note that $r^{-1}A_{Q}v=\D_{+}^{(Q)}(r^{-1}v)$.
Thus we can proceed as in the proof of Lemma \ref{lem:mapping-L-Appendix}
to have 
\begin{align*}
 & \|r^{-1}A_{Q}v\|_{L^{2}}\lesssim\||r^{-1}v|_{-1}\|_{L^{2}},\\
 & \|r^{-1}A_{Q}v\|_{L^{2}}+\|r\langle r\rangle^{-2}Qv\|_{L^{2}}\gtrsim\||r^{-1}v|_{-1}\|_{L^{2}}.
\end{align*}
From 
\[
\partial_{r}A_{Q}v=\partial_{rr}v-\tfrac{1}{r}(m+A_{\theta}[Q])\partial_{r}v+\tfrac{1}{2}Q^{2}v=\partial_{rr}v+O(r^{-1}|v|_{-1}),
\]
we have 
\begin{align*}
 & \|\partial_{r}A_{Q}v\|_{L^{2}}\lesssim\||v|_{-2}\|_{L^{2}},\\
 & \|\partial_{r}A_{Q}v\|_{L^{2}}+\||r^{-1}v|_{-1}\|_{L^{2}}\gtrsim\|\partial_{rr}v\|_{L^{2}}.
\end{align*}
Summing up the above results, we have 
\begin{align*}
 & \|A_{Q}v\|_{\dot{H}_{m+2}^{1}}\lesssim\|v\|_{\dot{H}_{m+1}^{2}},\\
 & \|A_{Q}v\|_{\dot{H}_{m+2}^{1}}+\|r\langle r\rangle^{-2}Qv\|_{L^{2}}\gtrsim\|v\|_{\dot{H}_{m+1}^{2}}.
\end{align*}

Finally, we characterize the kernel of $A_{Q}:\dot{H}_{m+1}^{2}\to\dot{H}_{m+2}^{1}$.
If $A_{Q}f=0$ for some $f\in\dot{H}_{m+1}^{2}$, then $(\partial_{r}-\frac{m+1+A_{\theta}[Q]}{r})f(r)=0$
on $(0,\infty)$. Since $A_{Q}$ is a first-order differential operator
and $A_{Q}(rQ)=0$, $f\in\mathrm{span}_{\C}\{rQ\}$ follows by the
ODE uniqueness result.
\end{proof}
It remains to obtain the boundedness/subcoercivity properties of $L_{Q}$
at the $\dot{\mathcal{H}}_{m}^{3}$-level. We first show that we can
always ignore $QB_{Q}$ part of $L_{Q}$.
\begin{lem}[Contribution of $QB_{Q}$]
\label{lem:Contribution-QBQ}Let $v\in\dot{\mathcal{H}}_{m}^{3}$.
We have 
\[
\|QB_{Q}v\|_{\dot{H}_{m+1}^{2}}\lesssim\|r\langle r\rangle^{-2}Q|v|_{-1}\|_{L^{2}}.
\]
\end{lem}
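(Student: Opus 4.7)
The plan is first to reduce the target norm to a pointwise weighted quantity. Since $m+1\geq 2$, the generalized Hardy inequality \eqref{eq:GenHardyAppendix} applied to the $(m+1)$-equivariant function $QB_Qv$ gives
\[
\|QB_Qv\|_{\dot{H}_{m+1}^{2}}\sim\||QB_Qv|_{-2}\|_{L^{2}}.
\]
Next, using smoothness and decay of $Q$ (so that $|Q|_{k}\lesssim Q$ for all $k$) together with the Leibniz-type rule $|fg|_{-k}\lesssim|f|_{k}|g|_{-k}$, I reduce to
\[
\||QB_Qv|_{-2}\|_{L^{2}}\lesssim\|Q\,|B_Qv|_{-2}\|_{L^{2}}.
\]

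The heart of the argument is a pointwise bound on $|B_Qv|_{-2}$. Using the first-order ODE $\partial_{r}(rB_{Q}v)=r\Re(\overline{Q}v)$, i.e. $\partial_{r}B_{Q}v=\Re(\overline{Q}v)-r^{-1}B_{Q}v$, one computes the three pieces of $|B_Qv|_{-2}$:
\begin{align*}
r^{-2}|B_Qv| &\leq r^{-3}\textstyle\int_{0}^{r}Q|v|\,r'dr',\\
r^{-1}|\partial_{r}B_Qv| &\leq r^{-1}Q|v|+r^{-2}|B_Qv|,\\
|\partial_{rr}B_Qv| &\leq |\partial_{r}(Qv)|+r^{-1}Q|v|+r^{-2}|B_Qv|.
\end{align*}
Combining these and using $|Qv|_{1}\lesssim Q\cdot r|v|_{-1}$ (from $|Q|_{1}\lesssim Q$ and $|v|\leq r|v|_{-1}$), I get the pointwise bound
\[
|B_{Q}v|_{-2}(r)\lesssim Q(r)|v|_{-1}(r)+r^{-3}\textstyle\int_{0}^{r}Q(r')(r')^{2}|v|_{-1}(r')\,dr'.
\]
After multiplication by $Q$, the local term $Q^{2}|v|_{-1}$ is bounded by $r\langle r\rangle^{-2}Q|v|_{-1}$ since $Q\lesssim r\langle r\rangle^{-2}$ for $m\geq1$ (at the origin $Q\sim r^{m}\lesssim r$; at infinity $Q\sim r^{-m-2}\lesssim r^{-3}$).

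The remaining and most delicate step is the nonlocal term
\[
\Big\|Q(r)\,r^{-3}\textstyle\int_{0}^{r}Q(r')(r')^{2}|v|_{-1}(r')\,dr'\Big\|_{L^{2}}\lesssim\|r\langle r\rangle^{-2}Q|v|_{-1}\|_{L^{2}}.
\]
This is a one-sided weighted integral inequality, which I plan to establish by a Schur-test (or Cauchy--Schwarz + weighted Hardy) argument, splitting into the regions $r\leq1$ and $r\geq1$ and using the explicit asymptotics $Q\sim r^{m}\langle r\rangle^{-(2m+2)}$. Concretely, Cauchy--Schwarz inside the integral against a profile $(r')^{\alpha}$ with $\alpha$ chosen so that $\int_{0}^{r}(r')^{\alpha}dr'$ has the right growth, followed by Fubini, reduces the bound to a manifestly finite weighted $L^{2}$ integral.

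The main obstacle is this last Schur-type step: one must track the weights carefully so that both the singular behavior at the origin (where $Q$ is small but $r^{-3}$ is large) and the decay at infinity (where the tail $\int_{0}^{r}$ could accumulate) match the RHS weight $r\langle r\rangle^{-2}Q$. Everything else is Leibniz and an ODE identity for $B_{Q}v$.
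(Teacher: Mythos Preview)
Your approach is correct and essentially matches the paper's. For the nonlocal step you flag as delicate, the paper avoids any Schur test: since $\langle r'\rangle\le\langle r\rangle$ for $r'\le r$, one pulls $\langle r\rangle^{2}$ outside the integral, absorbs it into the bounded factor $\|r^{-1}\langle r\rangle^{2}Q\|_{L^{\infty}}$, and then applies the standard $L^{2}$-boundedness of the averaging operator $f\mapsto r^{-2}\int_{0}^{r}f\,r'\,dr'$.
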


\begin{rem}
The proof only uses the property $|Q|_{2}\lesssim Q$. Thus the above
boundedness property holds if we replace some $Q$ in $QB_{Q}$ by
$\Lambda Q$, as $|\Lambda Q|_{2}\lesssim Q$ also holds. This fact
is used in the proof of Lemma \ref{lem:commutator}.
\end{rem}

\begin{proof}
Note the pointwise inequality 
\begin{align*}
|QB_{Q}v|_{-2} & \lesssim r^{-2}Q|B_{Q}v|+|Q^{2}v|_{-1}\\
 & \lesssim r^{-1}\langle r\rangle^{2}Q\cdot r^{-1}B_{Q}|\langle r\rangle^{-2}v|+Q^{2}|v|_{-1}.
\end{align*}
We estimate its $L^{2}$-norm by 
\[
\||QB_{Q}v|_{-2}\|_{L^{2}}\lesssim\|r^{-1}\langle r\rangle^{2}Q\|_{L^{\infty}}\|r\langle r\rangle^{-2}Q|v|_{-1}\|_{L^{2}}\lesssim\|r\langle r\rangle^{-2}Q|v|_{-1}\|_{L^{2}},
\]
which is the desired estimate.
\end{proof}
\begin{lem}[Boundedness and subcoercivity for $L_{Q}$ on $\dot{\mathcal{H}}_{m}^{3}$]
\label{lem:mapping-LQ-H3}For $v\in\dot{\mathcal{H}}_{m}^{3}$, we
have 
\[
\|L_{Q}v\|_{\dot{H}_{m+1}^{2}}+\|Q|v|_{-2}\|_{L^{2}}\sim\|v\|_{\dot{\mathcal{H}}_{m}^{3}}.
\]
Moreover, the kernel of $L_{Q}:\dot{\mathcal{H}}_{m}^{3}\to\dot{H}_{m+1}^{2}$
is $\mathrm{span}_{\R}\{\Lambda Q,iQ\}$.
\end{lem}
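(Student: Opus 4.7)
The proof will follow the heuristic already laid out in Section~\ref{subsec:Adapted-function-spaces}. My strategy is to absorb the nonlocal piece $QB_Q$ of $L_Q$ into the $\|Q|v|_{-2}\|_{L^2}$ correction, reducing everything to a boundedness/subcoercivity statement for the first-order operator $\D_+^{(Q)}$, and then analyze $\D_+^{(Q)}$ by splitting space into $\{r \le 1\}$ and $\{r \ge 1\}$ and invoking Corollary~\ref{cor:WeightedHardy} in each region. The kernel will be identified by a bootstrap to smoothness followed by the classification already obtained in \cite[Lemma~3.3]{KimKwon2019arXiv}.

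\textbf{Reduction to $\D_+^{(Q)}$.} By density I take $v\in\mathcal{S}_m$ and write $L_Q = \D_+^{(Q)} + QB_Q$. From Lemma~\ref{lem:Contribution-QBQ} together with the elementary pointwise estimate $r\langle r\rangle^{-2}|v|_{-1}\lesssim r^2\langle r\rangle^{-2}|v|_{-2}\lesssim|v|_{-2}$ one gets $\|QB_Q v\|_{\dot H^2_{m+1}}\lesssim\|Q|v|_{-2}\|_{L^2}$, so the $QB_Q$-piece can be absorbed on both sides. Since $m+1\ge 2$, the generalized Hardy estimate \eqref{eq:GenHardyAppendix} yields $\|\D_+^{(Q)}v\|_{\dot H^2_{m+1}}\sim\||\D_+^{(Q)}v|_{-2}\|_{L^2}$, so the entire estimate reduces to a pointwise analysis of $\D_+^{(Q)}v$.

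\textbf{Upper and lower bounds.} For the upper bound $\lesssim$, I split $\D_+^{(Q)}v = \partial_+v - r^{-1}A_\theta[Q]v$. The first piece is part of $\|v\|_{\dot{\mathcal{H}}_m^3}$ by the definition \eqref{eq:Def-Hdot3-Appendix}, while the pointwise estimate $|r^{-1}A_\theta[Q]|_k\lesssim Q$ (which holds because $A_\theta[Q](r)\sim -r^{2m+2}$ near $0$ and $A_\theta[Q]\to-2(m+1)$ at infinity, along with all its derivatives) gives $|r^{-1}A_\theta[Q]v|_{-2}\lesssim Q|v|_{-2}$. For the lower bound I work separately in $\{r\ge 1\}$ and $\{r\le 1\}$. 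On $\{r\ge 1\}$ we have $\D_+^{(Q)} = \partial_r + \frac{m+2}{r} + O(r^{-2m-3})$, and Corollary~\ref{cor:WeightedHardy} with $\ell=-(m+2)$, $k=2$ (noncritical, $\ell<k$) produces $\|\mathbf{1}_{r\ge 1}r^{-3}v\|_{L^2}$ modulo a boundary term at $r\sim 1$ which is absorbable in $\|Q|v|_{-2}\|_{L^2}$; combined with $|\partial_+ v|_{-2}$ this recovers the $r\ge 1$ portion of \eqref{eq:Def-Hdot3-Appendix}. On $\{r\le 1\}$, where $A_\theta[Q]$ is negligible, $\D_+^{(Q)}\approx \partial_r-\frac{m}{r}$ and I apply Corollary~\ref{cor:WeightedHardy} with $\ell=m$: the noncritical case $k=2$ when $m\ge 3$, the critical $\ell=k=2$ logarithmic case when $m=2$, and the weaker $(\ell,k)=(1,1)$ together with an additional logarithmic weight when $m=1$. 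The auxiliary terms $\partial_{rrr}v$ (for $m=2$) and $|\partial_{rr}v|_{-1}$ (for $m=1$) in \eqref{eq:Def-Hdot3-Appendix} are then recovered from the algebraic identities $\partial_{rrr}=(\partial_r+r^{-1})^2(\partial_r-\tfrac{2}{r})$ and $\partial_{rr}=(\partial_r+r^{-1})(\partial_r-\tfrac{1}{r})$ noted in Section~\ref{subsec:Adapted-function-spaces}.

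\textbf{Kernel and main obstacle.} For the kernel statement, if $v\in\dot{\mathcal{H}}_m^3$ with $L_Q v=0$, the upper bound just proved combined with Lemma~\ref{lem:mapping-L-Appendix} shows $v\in\dot H^1_m$; rewriting the equation $L_Q v=0$ in cartesian form and using ellipticity of $\partial_1+i\partial_2$ as in the proof of Lemma~\ref{lem:mapping-L-Appendix} bootstraps $v$ to $H^\infty_{\mathrm{loc}}$, and then \cite[Lemma~3.3]{KimKwon2019arXiv} forces $v\in\mathrm{span}_\R\{\Lambda Q,iQ\}$. The main technical obstacle is the subcoercivity step in the case $m=1$: the natural exponents $(\ell,k)=(1,2)$ lie in the noncritical regime but yield a boundary term at the wrong end $r=\infty$, which is precisely why $\dot{\mathcal{H}}_1^3$ must be defined with a logarithmic weakening near the origin and with $|\partial_{rr}v|_{-1}$ in place of $|v|_{-3}$; carefully tracking the borderline boundary contributions generated at $r\sim 1$ and confirming that they are all dominated by $\|Q|v|_{-2}\|_{L^2}$ is the delicate part of the argument.
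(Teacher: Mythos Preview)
Your approach is correct and follows exactly the heuristic outlined in Section~\ref{subsec:Adapted-function-spaces}, but it differs from the paper's actual argument in a notable way. After the common reduction to $\D_+^{(Q)}$ via Lemma~\ref{lem:Contribution-QBQ}, the paper does \emph{not} split into regions and invoke Corollary~\ref{cor:WeightedHardy} case by case. Instead it performs a single integration-by-parts computation
\[
\|\tfrac{1}{r^{2}}\D_{+}^{(Q)}v\|_{L^{2}}^{2}=\|\tfrac{1}{r^{2}}\partial_{+}v\|_{L^{2}}^{2}+{\textstyle \int}\big((2m-4+A_{\theta}[Q])A_{\theta}[Q]-\tfrac{1}{2}r^{2}Q^{2}\big)|\tfrac{1}{r^{3}}v|^{2},
\]
which in one stroke yields $\|\tfrac{1}{r^{2}}\D_{+}^{(Q)}v\|_{L^{2}}+\|\tfrac{1}{r^{2}}Qv\|_{L^{2}}\sim\|\tfrac{1}{r^{2}}\partial_{+}v\|_{L^{2}}+\|\mathbf{1}_{r\geq1}\tfrac{1}{r^{3}}v\|_{L^{2}}$, then adds the higher-derivative pieces trivially and concludes by invoking Lemma~\ref{lem:Comparison-H3-Hdot3}. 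The case-by-case Hardy analysis you carry out is thus factored entirely into that comparison lemma, which the paper proves separately. Your route is more self-contained but repeats that work; the paper's route is shorter here because the burden has been moved elsewhere.

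There is one genuine slip in your kernel argument: the sentence ``the upper bound just proved combined with Lemma~\ref{lem:mapping-L-Appendix} shows $v\in\dot H^1_m$'' is not justified, since $\dot{\mathcal{H}}_m^3$ does not embed in $\dot H^1_m$ (the $r\to\infty$ decay is too weak). What one actually has is the local embedding $\dot{\mathcal{H}}_m^3\hookrightarrow\dot H^1_{m,\mathrm{loc}}$, and the paper exploits this together with a cutoff trick: since $QB_Q$ involves only $\int_0^r$, one has $L_Q(\chi_{\le R}v)=0$ on $\{|x|<R\}$, so the elliptic bootstrap from Lemma~\ref{lem:mapping-L-Appendix} applies to $\chi_{\le R}v\in\dot H^1_m$ and gives smoothness on $\{|x|<R\}$ for every $R$. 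Your direct elliptic-bootstrap idea can be made to work as well, but you must start from $\dot H^1_{m,\mathrm{loc}}$ rather than $\dot H^1_m$, and you need to verify that the nonlocal term $A_j[Q,v]$ is well-defined (this follows since $Qv\in L^2$ by the upper bound you established).
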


\begin{proof}
By density, we may assume $v\in\mathcal{S}_{m}$. By Lemma \ref{lem:Contribution-QBQ},
we can remove the $QB_{Q}$-part of $L_{Q}$ with an error of the
size $\|r\langle r\rangle^{-2}Q|v|_{-1}\|_{L^{2}}$, so it suffices
to show the boundedness/subcoercivity estimates for $\D_{+}^{(Q)}$.

From now on, we focus on $\|\D_{+}^{(Q)}v\|_{\dot{H}_{m+1}^{2}}$.
We integrate by parts as 
\begin{align*}
\|\tfrac{1}{r^{2}}\D_{+}^{(Q)}v\|_{L^{2}}^{2} & =\|\tfrac{1}{r^{2}}\partial_{+}v\|_{L^{2}}^{2}-2(\tfrac{1}{r^{2}}\partial_{+}v,\tfrac{1}{r^{3}}A_{\theta}[Q]v)_{r}+\|\tfrac{1}{r^{3}}A_{\theta}[Q]v\|_{L^{2}}^{2}\\
 & =\|\tfrac{1}{r^{2}}\partial_{+}v\|_{L^{2}}^{2}+{\textstyle \int}((2m-4+A_{\theta}[Q])A_{\theta}[Q]-\tfrac{1}{2}r^{2}Q^{2})|\tfrac{1}{r^{3}}v|^{2}.
\end{align*}
Using the asymptotics of $A_{\theta}[Q]$ as $r\to0$ or $r\to\infty$,
we are led to 
\[
\|\tfrac{1}{r^{2}}\D_{+}^{(Q)}v\|_{L^{2}}+\|\tfrac{1}{r^{2}}Qv\|_{L^{2}}\sim\|\tfrac{1}{r^{2}}\partial_{+}v\|_{L^{2}}+\|\mathbf{1}_{r\geq1}\tfrac{1}{r^{3}}v\|_{L^{2}}.
\]
Notice the $\|\mathbf{1}_{r\geq1}\frac{1}{r^{3}}v\|_{L^{2}}$ term,
which cannot be obtained for the $\dot{H}_{m}^{3}$-norm when $m=2$;
see Lemma \ref{lem:Comparison-H3-Hdot3}. Noting brief computations
(where we used $|A_{\theta}[Q]|\gtrsim Q^{2}$) 
\begin{align*}
\tfrac{1}{r}\partial_{r}\D_{+}^{(Q)}v & =\tfrac{1}{r}\partial_{r}\partial_{+}v+O(\tfrac{|A_{\theta}[Q]|}{r^{2}}|v|_{-1}),\\
\partial_{rr}\D_{+}^{(Q)}v & =\partial_{rr}\partial_{+}v+O(\tfrac{|A_{\theta}[Q]|}{r}|v|_{-2}),
\end{align*}
and (by \eqref{eq:Def-Hdot3-Appendix})
\[
\|Q|v|_{-2}\|_{L^{2}}+\|\tfrac{A_{\theta}[Q]}{r}|v|_{-2}\|_{L^{2}}\lesssim\|v\|_{\dot{\mathcal{H}}_{m}^{3}},
\]
we obtain 
\begin{align*}
 & \|\D_{+}^{(Q)}v\|_{\dot{H}_{m+1}^{2}}+\|Q|v|_{-2}\|_{L^{2}}\lesssim\|v\|_{\dot{\mathcal{H}}_{m}^{3}},\\
 & \|\D_{+}^{(Q)}v\|_{\dot{H}_{m+1}^{2}}+\|Q|v|_{-2}\|_{L^{2}}\gtrsim\|\partial_{+}v\|_{\dot{H}_{m+1}^{2}}+\|\mathbf{1}_{r\geq1}|v|_{-3}\|_{L^{2}}\gtrsim\|v\|_{\dot{\mathcal{H}}_{m}^{3}}.
\end{align*}
The first estimate establishes the boundedness property of $\D_{+}^{(Q)}$.
The second estimate establishes the subcoercivity property of $\D_{+}^{(Q)}$
by Lemma \ref{lem:Comparison-H3-Hdot3}.

Finally, we characterize the kernel of $L_{Q}:\dot{\mathcal{H}}_{m}^{3}\to\dot{H}_{m+1}^{2}$.
Note the embedding $\dot{\mathcal{H}}_{m}^{3}\hookrightarrow\dot{H}_{m,\mathrm{loc}}^{1}$.
We have seen in Lemma \ref{lem:mapping-L-Appendix} that the kernel
of $L_{Q}:\dot{H}_{m}^{1}\to L^{2}$ is $\mathrm{span}_{\R}\{\Lambda Q,iQ\}$.
Here we combine a cutoff argument to extend this kernel characterization
to $L_{Q}:\dot{H}_{m,\mathrm{loc}}^{1}\to L_{\mathrm{loc}}^{2}$.
This is possible as the nonlocal term $QB_{Q}$ of $L_{Q}$ contains
only the $\int_{0}^{r}$-integral. Let $f\in\dot{H}_{m,\mathrm{loc}}^{1}$
be such that $L_{Q}f=0$. Then, for any $R>0$, $L_{Q}(\chi_{\leq R}f)(x)=0$
for all $|x|\leq R$, thanks to the expression $L_{Q}=\D_{+}^{(Q)}+QB_{Q}$.
Since $\chi_{\leq R}f\in\dot{H}_{m}^{1}$, as in the proof of Lemma
\ref{lem:mapping-L-Appendix}, $f$ is smooth in $|x|<R$. As $R$
was arbitrary, $f$ is a smooth solution to $L_{Q}f=0$ and hence
$f\in\mathrm{span}_{\R}\{\Lambda Q,iQ\}$.
\end{proof}
We now collect the above lemmas to have our main subcoercivity property
at the $\dot{\mathcal{H}}_{m}^{3}$ level.
\begin{lem}[Boundedness and subcoercivity for $A_{Q}^{\ast}A_{Q}L_{Q}$ on $\dot{\mathcal{H}}_{m}^{3}$;
Lemma \ref{lem:Mapping-AAL-Section2}]
\label{lem:subcoer-H3}For $v\in\dot{\mathcal{H}}_{m}^{3}$, we have
\begin{align*}
 & \|A_{Q}^{\ast}A_{Q}L_{Q}v\|_{L^{2}}+\|Q|v|_{-2}\|_{L^{2}}\sim\|v\|_{\dot{\mathcal{H}}_{m}^{3}}.
\end{align*}
Moreover, the kernel of $A_{Q}^{\ast}A_{Q}L_{Q}:\dot{\mathcal{H}}_{m}^{3}\to L^{2}$
is $\mathrm{span}_{\R}\{\Lambda Q,iQ,\rho,ir^{2}Q\}$.
\end{lem}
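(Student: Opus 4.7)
The strategy is to bootstrap the subcoercivity of $A_Q^\ast A_Q L_Q$ out of the three single-operator estimates established immediately above, namely Lemmas \ref{lem:mapping-AastQ}, \ref{lem:mapping-AQ}, and \ref{lem:mapping-LQ-H3}, and then to peel off the kernel of each factor in turn. By density it suffices to work with $v\in\mathcal{S}_m$. Chaining the three lemmas gives on one hand the boundedness
\[
\|A_Q^\ast A_Q L_Q v\|_{L^2}\sim\|A_Q L_Q v\|_{\dot H_{m+2}^1}\lesssim\|L_Q v\|_{\dot H_{m+1}^2}\lesssim\|v\|_{\dot{\mathcal H}_m^3},
\]
and on the other hand the near-converse
\[
\|A_Q^\ast A_Q L_Q v\|_{L^2}+\|r\langle r\rangle^{-2}Q L_Q v\|_{L^2}+\|Q|v|_{-2}\|_{L^2}\gtrsim\|v\|_{\dot{\mathcal H}_m^3}.
\]
The single new issue in the lower bound is to absorb the extra term $\|r\langle r\rangle^{-2}Q L_Q v\|_{L^2}$ coming from the kernel element $rQ$ of $A_Q$. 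I would do this by the pointwise estimate $|L_Q v|\lesssim|\D_+^{(Q)}v|+|QB_Q v|\lesssim|v|_{-1}$, so that
\[
r\langle r\rangle^{-2}Q\,|L_Q v|\lesssim r^2\langle r\rangle^{-2}Q\,|v|_{-2}\lesssim Q|v|_{-2},
\]
which is already counted on the LHS of the target inequality.

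\textbf{Kernel characterization.} Suppose $v\in\dot{\mathcal H}_m^3$ satisfies $A_Q^\ast A_Q L_Q v=0$. By the positivity half of Lemma \ref{lem:mapping-AastQ} applied to $w=A_Q L_Q v\in\dot H_{m+2}^1$, we have $w=0$, i.e. $A_Q L_Q v=0$. Lemma \ref{lem:mapping-AQ} then forces $L_Q v=c\,rQ$ for some $c=c_1+ic_2\in\C$. To produce explicit preimages under $L_Q$ I would use (i) the identity $L_Q\rho=\tfrac{1}{2(m+1)}rQ$ from Lemma \ref{lem:Def-rho}, and (ii) the direct computation $L_Q(ir^2Q)=2irQ$, which follows from $\D_+^{(Q)}(r^2Q)=r^2\D_+^{(Q)}Q+2rQ=2rQ$ together with $B_Q(ir^2Q)=0$ (the real part of $\overline{Q}\cdot ir^2Q$ vanishes). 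Consequently
\[
L_Q\bigl(v-2(m+1)c_1\rho-\tfrac{c_2}{2}ir^2Q\bigr)=0,
\]
and Lemma \ref{lem:mapping-LQ-H3} identifies the $\dot{\mathcal H}_m^3$-kernel of $L_Q$ as $\mathrm{span}_\R\{\Lambda Q,iQ\}$, yielding $v\in\mathrm{span}_\R\{\Lambda Q,iQ,\rho,ir^2Q\}$. Conversely each of these four generators lies in $\dot{\mathcal H}_m^3$ and is annihilated by $A_Q^\ast A_Q L_Q$: $\Lambda Q$ and $iQ$ vanish already under $L_Q$, while $\rho$ and $ir^2Q$ are sent by $L_Q$ into $\mathrm{span}_\C\{rQ\}=\ker A_Q$.

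\textbf{Main obstacle.} The only genuinely delicate point is checking that $\rho$ and $ir^2Q$ actually belong to $\dot{\mathcal H}_m^3$ for the low equivariance indices $m\in\{1,2\}$, where the space is defined by logarithmically weakened Hardy terms near the origin and by the truncated weight $\mathbf{1}_{r\ge 1}r^{-3}$ at infinity. I would verify membership by substituting the pointwise bounds $|\rho|_k\lesssim r^2Q$ from Lemma \ref{lem:Def-rho} and $|r^2Q|_k\lesssim r^2 Q$ directly into the defining (semi-)norm \eqref{eq:Def-Hdot3-Appendix}, using the decay $Q\lesssim r^{m}\langle r\rangle^{-2(m+1)}$ to ensure $L^2$ integrability of each weighted piece. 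Once this bookkeeping is done, and the $QB_Q$ term is absorbed via Lemma \ref{lem:Contribution-QBQ} wherever needed, the full statement of Lemma \ref{lem:subcoer-H3} follows by chaining the three subcoercivity estimates together with the kernel argument above.
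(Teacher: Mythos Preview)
Your approach is essentially that of the paper: chain Lemmas \ref{lem:mapping-AastQ}, \ref{lem:mapping-AQ}, \ref{lem:mapping-LQ-H3} for the norm equivalence, absorb the extra term $\|r\langle r\rangle^{-2}Q\,L_Qv\|_{L^2}$ into $\|Q|v|_{-2}\|_{L^2}$, and unwind the kernels factor by factor exactly as you describe. One small correction: the pointwise inequality $|QB_Qv|\lesssim|v|_{-1}$ is not valid since $B_Q$ is nonlocal, so this piece must be handled in $L^2$ (the paper does it via $\|r\langle r\rangle^{-2}Q\cdot QB_Qv\|_{L^2}\lesssim\|rQ\|_{L^\infty}\|\langle r\rangle^{-2}QB_Qv\|_{L^2}$ and a Hardy bound for $B_Q$, exactly in the spirit of Lemma \ref{lem:Contribution-QBQ} which you already cite), while your $\D_+^{(Q)}$ part goes through as written.
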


\begin{proof}
By collecting the boundedness estimates of the previous lemmas, we
see that $A_{Q}^{\ast}A_{Q}L_{Q}:\dot{\mathcal{H}}_{m}^{3}\to L^{2}$
bounded. For the subcoercivity estimate, applying the previous lemmas
yields 
\[
\|A_{Q}^{\ast}A_{Q}L_{Q}v\|_{L^{2}}+\|r\langle r\rangle^{-2}Q(L_{Q}v)\|_{L^{2}}+\|Q|v|_{-2}\|_{L^{2}}\gtrsim\|v\|_{\dot{\mathcal{H}}_{m}^{3}}.
\]
It now suffices to show that 
\[
\|r\langle r\rangle^{-2}Q(L_{Q}v)\|_{L^{2}}\lesssim\|r^{2}\langle r\rangle^{-2}Q|v|_{-2}\|_{L^{2}}\lesssim\|Q|v|_{-2}\|_{L^{2}}.
\]
This is an easy consequence of the following estimates: 
\begin{align*}
\|r\langle r\rangle^{-2}Q(QB_{Q}v)\|_{L^{2}} & \lesssim\|rQ\|_{L^{\infty}}\|r^{-2}QB_{Q}v\|_{L^{2}}\lesssim\|r\langle r\rangle^{-2}Q|v|_{-1}\|_{L^{2}},\\
\|r\langle r\rangle^{-2}Q\D_{+}^{(Q)}v\|_{L^{2}} & \lesssim\|r\langle r\rangle^{-2}Q|v|_{-1}\|_{L^{2}}.
\end{align*}
Therefore, the subcoercivity estimate is proved.

We turn to the characterization of the kernel of $A_{Q}^{\ast}A_{Q}L_{Q}$
in $\dot{\mathcal{H}}_{m}^{3}$. Let $v$ be an element of the kernel.
Since $A_{Q}L_{Q}v\in\dot{H}_{m+2}^{1}$, the positivity in Lemma
\ref{lem:mapping-AastQ} says that $A_{Q}L_{Q}v=0$. Since $L_{Q}v\in\dot{H}_{m+1}^{2}$,
the kernel characterization of $A_{Q}:\dot{H}_{m+1}^{2}\to\dot{H}_{m+2}^{1}$
(Lemma \ref{lem:mapping-AQ}) says that $L_{Q}v\in\mathrm{span}_{\C}\{rQ\}$.
Now, combining the facts $L_{Q}\rho=\frac{1}{2(m+1)}rQ$, $L_{Q}ir^{2}Q=2irQ$,
and the kernel characterization of $L_{Q}$ in $\dot{\mathcal{H}}_{m}^{3}$
completes the proof.
\end{proof}
\begin{lem}[Coercivity for $A_{Q}^{\ast}A_{Q}L_{Q}$ on $\dot{\mathcal{H}}_{m}^{3}$;
Lemma \ref{lem:Coercivity-AAL-Section2}]
\label{lem:coer-H3}Let $\psi_{1},\psi_{2},\psi_{3},\psi_{4}$ be
elements of $(\dot{\mathcal{H}}_{m}^{3})^{\ast}$, which is the dual
space of $\dot{\mathcal{H}}_{m}^{3}$. If the $4\times4$ matrix $(a_{ij})$
defined by $a_{i1}=(\psi_{i},\Lambda Q)_{r}$, $a_{i2}=(\psi_{i},iQ)_{r}$,
$a_{i3}=(\psi_{i},ir^{2}Q)_{r}$, and $a_{i4}=(\psi_{i},\rho)_{r}$
has nonzero determinant, then we have a coercivity estimate 
\[
\|v\|_{\dot{\mathcal{H}}_{m}^{3}}\gtrsim\|A_{Q}^{\ast}A_{Q}L_{Q}v\|_{L^{2}}\gtrsim_{\psi_{1},\psi_{2},\psi_{3},\psi_{4}}\|v\|_{\dot{\mathcal{H}}_{m}^{3}},\qquad\forall v\in\dot{\mathcal{H}}_{m}^{3}\cap\{\psi_{1},\psi_{2},\psi_{3},\psi_{4}\}^{\perp}.
\]
\end{lem}

\begin{proof}
Suppose not. Choose a sequence $\{v_{n}\}_{n\in\N}\subseteq\dot{\mathcal{H}}_{m}^{3}$
such that $\|A_{Q}^{\ast}A_{Q}L_{Q}v_{n}\|_{L^{2}}=\frac{1}{n}$,
$\|v_{n}\|_{\dot{\mathcal{H}}_{m}^{3}}=1$, and $(\psi_{i},v_{n})_{r}=0$
for $i\in\{1,2,3,4\}$. In particular, $\{v_{n}\}_{n\in\N}$ is bounded
in $\dot{\mathcal{H}}_{m}^{3}$. After passing to a subsquence, there
exists $v\in\dot{\mathcal{H}}_{m}^{3}$ such that $v_{n}$ converges
to $v_{\infty}$ weakly in $\dot{\mathcal{H}}_{m}^{3}$ and strongly
in $H_{\mathrm{loc}}^{2}$. By the weak convergence, we have $A_{Q}^{\ast}A_{Q}L_{Q}v_{\infty}=0$
and $(\psi_{i},v_{\infty})_{r}=0$ for all $i$.

On one hand, $v_{\infty}=0$ by the kernel characterization of Lemma
\ref{lem:subcoer-H3} and orthogonality conditions. On the other hand,
the subcoercivity estimate of Lemma \ref{lem:subcoer-H3} says that
$\|Q|v_{n}|_{-2}\|_{L^{2}}\gtrsim1$ uniformly for all large $n$,
and the strong $H_{\mathrm{loc}}^{2}$-convergence says that $\|Q|v_{n}|_{-2}\|_{L^{2}}\to\|Q|v_{\infty}|_{-2}\|_{L^{2}}\gtrsim1$.
This yields $v_{\infty}\neq0$, a contradiction.
\end{proof}

\subsubsection*{Interpolation and $L^{\infty}$ estimates}
\begin{lem}
\label{lem:interpolation-L-infty}For $v\in H_{m}^{3}$, the following
estimates hold. When $m\geq1$, 
\begin{align}
\||v|_{-1}\|_{L^{\infty}}+\|\partial_{rr}v\|_{L^{2}} & \lesssim\|v\|_{\dot{H}_{m}^{1}}^{\frac{1}{2}}\|v\|_{\dot{\mathcal{H}}_{m}^{3}}^{\frac{1}{2}},\label{eq:interpolation}\\
\|r\langle r\rangle^{-1}|v|_{-2}\|_{L^{\infty}} & \lesssim\|v\|_{\dot{\mathcal{H}}_{m}^{3}}.\label{eq:L-infty-weighted-hardy}
\end{align}
In fact, we have stronger estimates when $m\geq2$ 
\begin{align*}
\||v|_{-1}\|_{L^{\infty}}+\||v|_{-2}\|_{L^{2}} & \lesssim\|v\|_{\dot{H}_{m}^{2}}\lesssim\|v\|_{\dot{H}_{m}^{1}}^{\frac{1}{2}}\|v\|_{\dot{\mathcal{H}}_{m}^{3}}^{\frac{1}{2}},\\
\|\langle\log_{-}r\rangle^{-1}|v|_{-2}\|_{L^{\infty}} & \lesssim\|v\|_{\dot{\mathcal{H}}_{m}^{3}},
\end{align*}
and when $m\geq3$ 
\[
\||v|_{-2}\|_{L^{\infty}}\lesssim\|v\|_{\dot{\mathcal{H}}_{m}^{3}}.
\]
\end{lem}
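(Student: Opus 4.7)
The plan is to split the argument according to the equivariance index $m$, with the higher-$m$ cases being the easiest and $m=1$ requiring the most care. The two main tools are the generalized Hardy inequality \eqref{eq:GenHardyAppendix} (which converts weighted $L^2$ norms into $\dot{H}_m^k$ norms for $k \le m$) and fundamental-theorem-of-calculus arguments for pointwise bounds, in the spirit of the Hardy--Sobolev embedding \eqref{eq:HardySobolevAppendix}. The interpolation piece $\|v\|_{\dot{H}_m^2} \lesssim \|v\|_{\dot{H}_m^1}^{1/2} \|v\|_{\dot{\mathcal{H}}_m^3}^{1/2}$ will follow in all cases from the classical 2D interpolation $\|v\|_{\dot{H}^2} \lesssim \|v\|_{\dot{H}^1}^{1/2} \|v\|_{\dot{H}^3}^{1/2}$ combined with the comparison $\|v\|_{\dot{H}_m^3} \lesssim \|v\|_{\dot{\mathcal{H}}_m^3}$ of Lemma~\ref{lem:Comparison-H3-Hdot3}.

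For $m \ge 3$, Lemma~\ref{lem:Comparison-H3-Hdot3} yields $\dot{\mathcal{H}}_m^3 = \dot{H}_m^3$ and \eqref{eq:GenHardyAppendix} gives $\||v|_{-3}\|_{L^2} \sim \|v\|_{\dot{H}_m^3}$. The strongest $L^\infty$ bound $\||v|_{-2}\|_{L^\infty} \lesssim \|v\|_{\dot{\mathcal{H}}_m^3}$ will then be obtained by applying the identity $|h(r)|^2 = -2\,\mathrm{Re}\int_r^\infty \bar h\, \partial_s h \,ds$ to each component $h \in \{r^{-2}v, r^{-1}\partial_r v, \partial_{rr}v\}$, and using Cauchy--Schwarz with respect to the $L^2(rdr)$ measure to dominate by $\||v|_{-3}\|_{L^2}\||v|_{-2}\|_{L^2}$, finally absorbing both factors into $\|v\|_{\dot{\mathcal{H}}_m^3}^2$ via generalized Hardy. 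For $m=2$, the $L^2$-piece $\||v|_{-2}\|_{L^2} \lesssim \|v\|_{\dot{H}_m^2}$ is again generalized Hardy; the logarithmically weighted $\||\langle\log_- r\rangle^{-1}|v|_{-2}\|_{L^\infty}$ bound comes from the same FTC scheme, but splitting at $r = 1$: on $\{r \ge 1\}$ we use the non-logarithmic tail control $\|\mathbf{1}_{r\ge 1}|v|_{-3}\|_{L^2}$ hidden in the $\dot{\mathcal{H}}_2^3$ norm via Lemma~\ref{lem:Comparison-H3-Hdot3}, and on $\{r \le 1\}$ the logarithmic weight $\langle \log_- r\rangle^{-1}$ absorbs precisely the logarithmic loss that the critical Hardy inequality (Corollary~\ref{cor:WeightedHardy}) would otherwise produce.

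For the $m \ge 1$ statements, the difficulty is that $\partial_r v$ need not vanish at $r=0$, so FTC from the origin is unavailable. Instead one runs FTC from infinity: $|\partial_r v|^2(r) = -2\,\mathrm{Re}\int_r^\infty \overline{\partial_s v}\,\partial_{ss}v\,ds$, and analogously for $r^{-1}v$. A Cauchy--Schwarz split $\int |\partial_r v||\partial_{rr}v|ds = \int (|\partial_r v|/s^{1/2})(|\partial_{rr}v|\,s^{1/2})\,ds$ produces the bound $|\partial_r v|^2 \lesssim \||v|_{-1}\|_{L^2}\|\partial_{rr}v\|_{L^2}$, and similarly for $r^{-1}v$ on the tail; the near-origin contribution for $m=1$ is controlled by the dedicated weight $r^{-1}\langle r\rangle^{-1}\langle \log_- r\rangle^{-1}|v|_{-1}$ present in the definition \eqref{eq:Def-Hdot3-Appendix} of the $\dot{\mathcal{H}}_1^3$ norm. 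Then $\||v|_{-1}\|_{L^2} \lesssim \|v\|_{\dot{H}_m^1}$ (Hardy) and $\|\partial_{rr}v\|_{L^2} \lesssim \|v\|_{\dot{\mathcal{H}}_m^3}$ (from the definition) finish the interpolation estimate. The bound $\|\partial_{rr}v\|_{L^2} \lesssim \|v\|_{\dot{H}_m^1}^{1/2}\|v\|_{\dot{\mathcal{H}}_m^3}^{1/2}$ is the $\dot{H}^2$-interpolation already addressed, where for $m=1$ one uses the algebraic identity $\partial_{rr} = (\partial_r+\tfrac{1}{r})(\partial_r-\tfrac{1}{r})$ from the proof of Lemma~\ref{lem:Comparison-H3-Hdot3} to express $\partial_{rr}v$ in terms of $\partial_+$ of a 1-equivariant function, whose $\dot{H}^1$ norm is controlled directly.

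The main obstacle I anticipate is the case $m=1$: one has to juggle the logarithmic weights in the $\dot{\mathcal{H}}_1^3$-norm against the FTC tails at infinity while avoiding the failure of the critical Hardy inequality. In particular, great care is required to ensure that every term appearing after the Cauchy--Schwarz split is actually controlled by the bespoke Hardy piece $\|r^{-1}\langle r\rangle^{-1}\langle\log_- r\rangle^{-1}|v|_{-1}\|_{L^2}$ in \eqref{eq:Def-Hdot3-Appendix}, rather than a stronger norm which would be false (cf.\ Remark~\ref{rem:OptimalityComparisonH3H3}).
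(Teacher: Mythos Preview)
Your overall strategy matches the paper's: FTC from infinity combined with Cauchy--Schwarz against the $L^2(rdr)$ measure, together with the algebraic identity $\partial_{rr}=(\partial_r+\tfrac1r)(\partial_r-\tfrac1r)$ for $m=1$ and generalized Hardy for $m\ge 2$. The paper's proof is organized slightly differently---it passes systematically through $\partial_+ v$ (so that $\partial_r(r^{-1}v)=r^{-1}\partial_+ v$ when $m=1$, and $\|\partial_\pm v\|_{L^\infty}\lesssim\|\partial_\pm v\|_{\dot H^1_{m\pm1}}$ when $m\ge 2$)---but the ideas are the same.

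There is, however, a genuine error in your $m\ge 1$ paragraph. Your split
\[
\int |\partial_r v|\,|\partial_{rr}v|\,ds
=\int \bigl(|\partial_r v|\,s^{-1/2}\bigr)\bigl(|\partial_{rr}v|\,s^{1/2}\bigr)\,ds
\]
produces, after Cauchy--Schwarz and converting to the 2D measure $s\,ds$, the product $\|r^{-1}\partial_r v\|_{L^2}\,\|\partial_{rr}v\|_{L^2}$, which is $\dot H^2\cdot\dot H^2$, not the $\||v|_{-1}\|_{L^2}\,\|\partial_{rr}v\|_{L^2}$ you claim. Worse, for $m=1$ the factor $\|r^{-1}\partial_r v\|_{L^2}$ is generically infinite (a smooth $1$-equivariant $v$ has $\partial_r v(0)\neq 0$). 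And the subsequent step ``$\|\partial_{rr}v\|_{L^2}\lesssim\|v\|_{\dot{\mathcal H}^3_m}$ (from the definition)'' is false on scaling grounds: the left side is $\dot H^2$-homogeneous, the right $\dot H^3$-homogeneous.

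The fix is to reverse the weight placement: split as $(|\partial_r v|\,s^{1/2})(|\partial_{rr}v|\,s^{-1/2})$, which yields
\[
\|\partial_r v\|_{L^\infty}^2
\lesssim \|\partial_r v\|_{L^2}\,\|r^{-1}\partial_{rr}v\|_{L^2}
\lesssim \|v\|_{\dot H^1_m}\,\|v\|_{\dot{\mathcal H}^3_m},
\]
since $\|r^{-1}\partial_{rr}v\|_{L^2}\le\||\partial_{rr}v|_{-1}\|_{L^2}$ is explicitly part of the $\dot{\mathcal H}^3_1$ norm \eqref{eq:Def-Hdot3-Appendix}. Likewise for $r^{-1}v$: using $\partial_r(r^{-1}v)=r^{-1}\partial_+ v$ when $m=1$ (this is the algebraic point the paper exploits), the FTC argument gives $\|r^{-1}v\|_{L^\infty}^2\lesssim\|r^{-1}v\|_{L^2}\,\|r^{-2}\partial_+ v\|_{L^2}$, and the second factor is again in the $\dot{\mathcal H}^3_1$ norm. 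With this correction your argument goes through.
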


\begin{proof}
By density, we may assume $v\in\mathcal{S}_{m}$.

When $m=1$, $\partial_{+}v=(\partial_{r}-\tfrac{1}{r})v$ so 
\begin{align*}
\|\tfrac{1}{r}v\|_{L^{\infty}}^{2} & \lesssim{\textstyle \int_{0}^{\infty}}|(\tfrac{1}{r}(\partial_{r}-\tfrac{1}{r})v)(\tfrac{1}{r}v)|dr'\lesssim\|\tfrac{1}{r^{2}}\partial_{+}v\|_{L^{2}}\|\tfrac{1}{r}v\|_{L^{2}},\\
\|\partial_{+}v\|_{L^{\infty}}^{2} & \lesssim{\textstyle \int_{0}^{\infty}}|\partial_{r}\partial_{+}v||\partial_{+}v|dr'\lesssim\|\tfrac{1}{r}\partial_{r}\partial_{+}v\|_{L^{2}}\|\partial_{+}v\|_{L^{2}}.
\end{align*}
Thus we have 
\[
\||v|_{-1}\|_{L^{\infty}}^{2}\lesssim\||\partial_{+}v|_{-2}\|_{L^{2}}\||v|_{-1}\|_{L^{2}}\lesssim\|v\|_{\dot{H}_{1}^{1}}\|v\|_{\dot{\mathcal{H}}_{1}^{3}}.
\]
Next, we use $\partial_{rr}v=(\partial_{r}+\tfrac{1}{r})\partial_{+}v$,
the interpolation $\dot{H}_{2}^{1}$ by $L^{2}$ and $\dot{H}_{2}^{2}$,
and the embedding $\dot{\mathcal{H}}_{1}^{3}\hookrightarrow\dot{H}_{1}^{3}$
(Lemma \ref{lem:Comparison-H3-Hdot3})
\[
\|\partial_{rr}v\|_{L^{2}}^{2}\lesssim\|\partial_{+}v\|_{\dot{H}_{2}^{1}}^{2}\lesssim\|\partial_{+}v\|_{L^{2}}\|\partial_{+}v\|_{\dot{H}_{2}^{2}}\lesssim\|v\|_{\dot{H}_{1}^{1}}\|v\|_{\dot{\mathcal{H}}_{1}^{3}}.
\]
For $r\langle r\rangle^{-1}|v|_{-2}$, we again perform the FTC argument
as in $\||v|_{-1}\|_{L^{\infty}}^{2}$ to get 
\begin{align*}
\|r^{-1}\langle r\rangle^{-1}v\|_{L^{\infty}}^{2} & \lesssim\|\min\{\tfrac{1}{r},\tfrac{1}{r^{2}}\}v\|_{L^{\infty}}^{2}\\
 & \lesssim\|\mathbf{1}_{r\leq1}\tfrac{1}{r^{2}}\partial_{+}v\|_{L^{2}}\|\mathbf{1}_{r\leq1}\tfrac{1}{r}v\|_{L^{2}}+\|\mathbf{1}_{r\geq1}\tfrac{1}{r^{2}}|v|_{-1}\|_{L^{2}}^{2}\lesssim\|v\|_{\dot{\mathcal{H}}_{1}^{3}}^{2},
\end{align*}
and 
\[
\|r\langle r\rangle^{-1}|\partial_{+}v|_{-1}\|_{L^{\infty}}^{2}\lesssim\||\partial_{+}v|_{-1}\|_{L^{\infty}}^{2}\lesssim\||\partial_{+}v|_{-2}\|_{L^{2}}\|r^{-1}|\partial_{+}v|_{-1}\|_{L^{2}}\lesssim\|v\|_{\dot{\mathcal{H}}_{1}^{3}}^{2}.
\]
Thus $\|r\langle r\rangle^{-1}|v|_{-2}\|_{L^{\infty}}\lesssim\|v\|_{\dot{\mathcal{H}}_{m}^{3}}$
follows.

When $m\geq2$, we can use \eqref{eq:HardySobolevAppendix} using
$m-1\geq1$ to have 
\[
\||v|_{-1}\|_{L^{\infty}}\lesssim\|\partial_{+}v\|_{L^{\infty}}+\|\partial_{-}v\|_{L^{\infty}}\lesssim\|\partial_{+}v\|_{\dot{H}_{m+1}^{1}}+\|\partial_{-}v\|_{\dot{H}_{m-1}^{1}}\lesssim\|v\|_{\dot{H}_{m}^{2}}
\]
and also by \eqref{eq:GenHardyAppendix} 
\[
\||v|_{-2}\|_{L^{2}}\lesssim\|v\|_{\dot{H}_{m}^{2}}.
\]
We again perform the FTC argument to get 
\[
\|\langle\log_{-}r\rangle^{-1}|v|_{-2}\|_{L^{\infty}}^{2}\lesssim\|\langle\log_{-}r\rangle^{-1}|v|_{-3}\|_{L^{2}}\|r^{-1}\langle\log_{-}r\rangle^{-1}|v|_{-2}\|_{L^{2}}\lesssim\|v\|_{\dot{\mathcal{H}}_{m}^{3}}^{2}.
\]

When $m\geq3$, we can perform the FTC argument to get 
\[
\||v|_{-2}\|_{L^{\infty}}^{2}\lesssim\||v|_{-3}\|_{L^{2}}\|r^{-1}|v|_{-2}\|_{L^{2}}\lesssim\|v\|_{\dot{\mathcal{H}}_{m}^{3}}^{2}.
\]
This completes the proof.
\end{proof}

\subsection{Adapted function space $\dot{\mathcal{H}}_{m}^{5}$}

In this subsection, we assume $m\geq3$.

Recall from Section \ref{subsec:Adapted-function-spaces} that the
$\dot{\mathcal{H}}_{m}^{5}$-norm is defined by

\begin{equation}
\|f\|_{\dot{\mathcal{H}}_{m}^{5}}\coloneqq\|\partial_{+}f\|_{\dot{H}_{m+1}^{4}}+\begin{cases}
\||f|_{-5}\|_{L^{2}} & \text{if }m\geq5,\\
\|\partial_{rrrrr}f\|_{L^{2}}+\|r^{-1}\langle\log_{-}r\rangle^{-1}|f|_{-4}\|_{L^{2}} & \text{if }m=4,\\
\||\partial_{rrrr}f|_{-1}\|_{L^{2}}+\|r^{-1}\langle r\rangle^{-1}\langle\log_{-}r\rangle^{-1}|f|_{-3}\|_{L^{2}} & \text{if }m=3,
\end{cases}\label{eq:Def-Hdot5-Appendix}
\end{equation}
initially for $m$-equivariant Schwartz function $f$. The space $\dot{\mathcal{H}}_{m}^{5}$
is obtained by taking the completion of $\mathcal{S}_{m}$ under the
$\dot{\mathcal{H}}_{m}^{5}$ norm. The choice of $\dot{\mathcal{H}}_{m}^{5}$
is motivated to have boundedness/subcoercivity estimates of the operator
$A_{Q}^{\ast}A_{Q}A_{Q}^{\ast}A_{Q}L_{Q}$.

Most of the results and their proofs in this subsection follow by
shifting the equivariance index by $2$ (i.e. $m\geq1$ to $m\geq3$)
and the regularity index by $2$ (i.e. from $3$ to $5$). Because
the equivariance index and the regularity index are shifted by the
same amount $2$, weighted Hardy's inequality (Corollary \ref{cor:WeightedHardy})
applies in the same way. Henceforth, we shall record the facts and
sketch (or omit) their proofs.

\subsubsection*{Comparison of $\dot{H}_{m}^{5}$ and $\dot{\mathcal{H}}_{m}^{5}$}

$ $\\ We start by comparing the $\dot{H}_{m}^{5}$-norm and $\dot{\mathcal{H}}_{m}^{5}$-norm.
It turns out that they are equivalent when $m\geq5$, but the $\dot{\mathcal{H}}_{m}^{5}$-norm
is\emph{ stronger }than the $\dot{H}_{m}^{5}$-norm when $m\in\{3,4\}$.
\begin{lem}[Comparison of $\dot{H}_{m}^{5}$ and $\dot{\mathcal{H}}_{m}^{5}$]
\label{lem:ComparisonH5H5Appendix}For $f\in\mathcal{S}_{m}$, we
have 
\[
\|f\|_{\dot{H}_{m}^{5}}\sim\|\partial_{+}f\|_{\dot{H}_{m+1}^{4}}
\]
and 
\[
\|f\|_{\dot{\mathcal{H}}_{m}^{5}}\sim\begin{cases}
\|f\|_{\dot{H}_{m}^{5}} & \textrm{if }m\geq5,\\
\|f\|_{\dot{H}_{m}^{5}}+\|\mathbf{1}_{r\geq1}\tfrac{1}{r^{5}}f\|_{L^{2}} & \textrm{if }m=4,\\
\|f\|_{\dot{H}_{m}^{5}}+\|\mathbf{1}_{r\sim1}f\|_{L^{2}} & \textrm{if }m=3.
\end{cases}
\]
Due to $\|\mathbf{1}_{r\gtrsim1}\frac{1}{r^{5}}f\|_{L^{2}}\lesssim\|f\|_{L^{2}}$,
we have 
\[
L^{2}\cap\dot{\mathcal{H}}_{m}^{5}=H_{m}^{5}.
\]
\end{lem}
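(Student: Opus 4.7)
\medskip

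\noindent\textbf{Proof plan.} The proof follows the same template as Lemma \ref{lem:Comparison-H3-Hdot3}, with the equivariance and regularity indices simultaneously shifted by two. The first equivalence $\|f\|_{\dot H_m^5}\sim\|\partial_+ f\|_{\dot H_{m+1}^4}$ is immediate from the factorization $\Delta=\partial_-\partial_+$ together with \eqref{eq:GenHardyAppendix}: since $\partial_+ f$ is $(m+1)$-equivariant with $m+1\geq 4$ and $\Delta\partial_- = \partial_-\partial_-\partial_+$, one finds
\[
\|f\|_{\dot H_m^5}\sim\|\nabla f\|_{\dot H^4}\sim\|\partial_+ f\|_{\dot H_{m+1}^4}+\|\Delta \partial_- f\|_{L^2}\sim\|\partial_+ f\|_{\dot H_{m+1}^4}.
\]
The inequality $\|f\|_{\dot{\mathcal H}_m^5}\gtrsim\|f\|_{\dot H_m^5}$ is then built into the definition \eqref{eq:Def-Hdot5-Appendix}. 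Only the reverse control
\[
\|f\|_{\dot H_m^5}+(\text{extra term})\gtrsim\|f\|_{\dot{\mathcal H}_m^5}
\]
requires work, and I would prove it case by case.

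\smallskip

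When $m\geq 5$ the extra term is absent and one simply applies \eqref{eq:GenHardyAppendix} to get $\|f\|_{\dot H_m^5}\gtrsim\||f|_{-5}\|_{L^2}$. When $m=4$, the region $r\geq 1$ follows from the pointwise bound $|\partial_+ f|_{-4}+r^{-5}|f|\gtrsim|f|_{-5}$ combined with \eqref{eq:GenHardyAppendix} applied to $\partial_+ f$ and the assumed $\mathbf 1_{r\geq 1}r^{-5}f\in L^2$. For the region $r\leq 1$ the task splits into two pieces:
\[
\|\mathbf 1_{r\leq 1}r^{-1}\langle\log_- r\rangle^{-1}|f|_{-4}\|_{L^2}\quad\text{and}\quad \|\mathbf 1_{r\leq 1}\partial_{rrrrr}f\|_{L^2}.
\]
The first piece comes from the critical weighted Hardy inequality (Corollary \ref{cor:WeightedHardy} with $\ell=k=4$) applied to $\partial_+ f$, combined with pointwise estimates chaining $|\partial_+ f|_{-j}\gtrsim|\partial_r f|_{-j}-O(r^{-1}|\partial_+ f|_{-(j-1)}+\ldots)$ as in the $m=2$ argument of Lemma \ref{lem:Comparison-H3-Hdot3}. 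The second piece rests on the algebraic identity
\[
\partial_{rrrrr}=\Bigl(\partial_r+\tfrac1r\Bigr)^{4}\Bigl(\partial_r-\tfrac4r\Bigr),
\]
which I would verify by a direct telescoping computation; since $\partial_r-\tfrac4r$ is the radial part of $\partial_+$ on $4$-equivariant functions, this gives $|\partial_{rrrrr}f|\lesssim|\partial_+ f|_{-4}$ and hence control by $\|\partial_+ f\|_{\dot H_5^4}$.

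\smallskip

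The case $m=3$ is analogous to (and the hardest case, parallel to) the $m=1$ case of Lemma \ref{lem:Comparison-H3-Hdot3}. I would decompose the two $\dot{\mathcal H}_3^5$-terms on the regions $r\leq 1$ and $r\geq 1$ and apply Corollary \ref{cor:WeightedHardy} in each. For the logarithmic Hardy term $r^{-1}\langle r\rangle^{-1}\langle\log_- r\rangle^{-1}|f|_{-3}$, the critical case ($\ell=k=3$) handles the piece $r\leq 1$ after averaging the boundary contribution at $r\sim 1$, while the noncritical case with $\ell=3$, $k=4$ handles the piece $r\geq 1$ with a boundary term at $r\sim 1$. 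For the $|\partial_{rrrr}f|_{-1}$ term I would use the telescoping identity
\[
\partial_{rrrr}=\Bigl(\partial_r+\tfrac1r\Bigr)^{3}\Bigl(\partial_r-\tfrac3r\Bigr),
\]
so $|\partial_{rrrr}f|_{-1}\lesssim|\partial_+ f|_{-4}$. The $\|\mathbf 1_{r\sim 1}f\|_{L^2}$ summand absorbs all the boundary contributions created by both applications of Corollary \ref{cor:WeightedHardy}. Finally, the identification $L^2\cap\dot{\mathcal H}_m^5=H_m^5$ is immediate from $\|\mathbf 1_{r\gtrsim 1}r^{-5}f\|_{L^2}\lesssim\|f\|_{L^2}$.

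\smallskip

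The main obstacle is the $m=3$ case, since it is there that the critical and noncritical versions of Corollary \ref{cor:WeightedHardy} must be patched together on overlapping annuli and the extra $\|\mathbf 1_{r\sim 1}f\|_{L^2}$ term is genuinely needed (the analogue of Remark \ref{rem:OptimalityComparisonH3H3} shows one cannot drop it). The algebraic identities $\partial_{rrrrr}=(\partial_r+r^{-1})^4(\partial_r-4r^{-1})$ and $\partial_{rrrr}=(\partial_r+r^{-1})^3(\partial_r-3r^{-1})$ are mechanical but load-bearing, as they alone allow the high pure-radial derivatives to be absorbed into $\|\partial_+ f\|_{\dot H_{m+1}^4}$ without any loss.
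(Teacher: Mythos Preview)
Your proposal is correct and follows essentially the same approach as the paper's proof: both lift the argument of Lemma~\ref{lem:Comparison-H3-Hdot3} by shifting the equivariance and regularity indices by two, use Corollary~\ref{cor:WeightedHardy} with the same parameter choices ($\ell=k=4$ critical for $m=4$; $\ell=k=3$ critical and $\ell=3,\,k=4$ noncritical for $m=3$), and invoke the same telescoping identities $\partial_{rrrrr}=(\partial_r+\tfrac1r)^4(\partial_r-\tfrac4r)$ and $\partial_{rrrr}=(\partial_r+\tfrac1r)^3(\partial_r-\tfrac3r)$. One small wording slip: in the $m=4$ case the critical Hardy inequality is applied to $f$ (giving control in terms of $\partial_+f$), not to $\partial_+f$ itself---but your subsequent description of the chaining makes clear you have the right picture.
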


\begin{rem}
Similarly as in Remark \ref{rem:OptimalityComparisonH3H3}, one cannot
replace $\|\mathbf{1}_{r\geq1}\tfrac{1}{r^{5}}f\|_{L^{2}}$ by $\|\mathbf{1}_{r\sim1}f\|_{L^{2}}$
when $m=4$; and one cannot eliminate $\|\mathbf{1}_{r\sim1}f\|_{L^{2}}$
when $m=3$.
\end{rem}

\begin{proof}
Similarly proceeding as in the proof of Lemma \ref{lem:Comparison-H3-Hdot3},
we have 
\[
\|f\|_{\dot{H}_{m}^{5}}\sim\|\partial_{+}f\|_{\dot{H}_{m+1}^{4}}\lesssim\|f\|_{\dot{\mathcal{H}}_{m}^{5}}.
\]

We now consider the reverse inequality for $f\in\mathcal{S}_{m}$.
When $m\geq5$, we have $\|f\|_{\dot{H}_{m}^{5}}\sim\||f|_{-5}\|_{L^{2}}$
by \eqref{eq:GenHardyAppendix}. Thus $\|f\|_{\dot{H}_{m}^{5}}\gtrsim\|f\|_{\dot{\mathcal{H}}_{m}^{5}}$
easily follows.

When $m=4$, we apply \eqref{eq:GenHardyAppendix} to $\|\partial_{+}f\|_{\dot{H}_{5}^{4}}$
to get $\|\partial_{+}f\|_{\dot{H}_{5}^{4}}+\|\mathbf{1}_{r\geq1}\tfrac{1}{r^{5}}f\|_{L^{2}}\gtrsim\|\mathbf{1}_{r\geq1}|f|_{-5}\|_{L^{2}}$.
This treats the $r\geq1$ part. Thus it suffices to show the $r\leq1$
contribution: 
\begin{align*}
\|r^{-1}|\partial_{+}f|_{-3}\|_{L^{2}}+\|\mathbf{1}_{r\sim1}f\|_{L^{2}} & \gtrsim\|\mathbf{1}_{r\leq1}r^{-1}\langle\log_{-}r\rangle^{-1}|f|_{-4}\|_{L^{2}},\\
\|\partial_{+}f\|_{\dot{H}_{5}^{4}} & \gtrsim\|\mathbf{1}_{r\leq1}\partial_{rrrrr}f\|_{L^{2}}.
\end{align*}
To show the first assertion, an application of Corollary \ref{cor:WeightedHardy}
with $\ell=k=4$ and averaging the boundary term yield 
\[
\|r^{-4}\partial_{+}f\|_{L^{2}}+\|\mathbf{1}_{r\sim1}f\|_{L^{2}}\gtrsim\|\mathbf{1}_{r\leq1}r^{-5}\langle\log_{-}r\rangle^{-1}f\|_{L^{2}}.
\]
Starting from this, one can proceed as in the proof of Lemma \ref{lem:Comparison-H3-Hdot3}
to control higher derivatives, hence getting the first assertion.
The second assertion follows by observing the special algebra 
\[
\partial_{rrrrr}=(\partial_{r}+\tfrac{1}{r})^{4}(\partial_{r}-\tfrac{4}{r}),
\]
and noticing that $\partial_{r}-\tfrac{4}{r}$ is the radial part
of $\partial_{+}$ acting on $4$-equivariant functions.

When $m=3$, it suffices to show the controls 
\begin{align*}
\|\mathbf{1}_{r\leq2}r^{-1}|\partial_{+}f|_{-2}\|_{L^{2}}+\|\mathbf{1}_{r\sim1}f\|_{L^{2}} & \gtrsim\|\mathbf{1}_{r\leq1}r^{-1}\langle\log_{-}r\rangle^{-1}|f|_{-3}\|_{L^{2}},\\
\|\mathbf{1}_{r\geq\frac{1}{2}}r^{-2}|\partial_{+}f|_{-2}\|_{L^{2}}+\|\mathbf{1}_{r\sim1}f\|_{L^{2}} & \gtrsim\|\mathbf{1}_{r\geq1}r^{-2}|f|_{-3}\|_{L^{2}},\\
\|\partial_{+}f\|_{\dot{H}_{4}^{4}} & \gtrsim\||\partial_{rrrr}f|_{-1}\|_{L^{2}}.
\end{align*}
To show the first assertion, apply Corollary \ref{cor:WeightedHardy}
with $\ell=k=3$ and proceed as in the proof of Lemma \ref{lem:Comparison-H3-Hdot3}
to control higher derivatives. To show the second assertion, apply
Corollary \ref{cor:WeightedHardy} with $\ell=3$ and $k=4$. To show
the third assertion, use the special algebra 
\[
\partial_{rrrr}=(\partial_{r}+\tfrac{1}{r})^{3}(\partial_{r}-\tfrac{3}{r})
\]
and notice that $\partial_{r}-\tfrac{3}{r}$ is the radial part of
$\partial_{+}$ acting on $3$-equivariant functions.
\end{proof}

\subsubsection*{Subcoercivity estimates for $A_{Q}^{\ast}A_{Q}A_{Q}^{\ast}A_{Q}L_{Q}$}

$ $\\ We turn to prove (sub-)coercivity estimates of $A_{Q}^{\ast}A_{Q}A_{Q}^{\ast}A_{Q}L_{Q}$
in $\dot{\mathcal{H}}_{m}^{5}$. As previously, we prove (sub-)coercivity
estimates for $A_{Q}^{\ast}A_{Q}A_{Q}^{\ast}$, $A_{Q}$, and $L_{Q}$.
The latter two subcoercivity estimates $A_{Q}:\dot{H}_{m+1}^{4}\to\dot{H}_{m+2}^{3}$
and $L_{Q}:\dot{\mathcal{H}}_{m}^{5}\to\dot{H}_{m+1}^{4}$ are very
similar to $A_{Q}:\dot{H}_{m+1}^{2}\to\dot{H}_{m+2}^{1}$ and $L_{Q}:\dot{\mathcal{H}}_{m}^{3}\to\dot{H}_{m+1}^{2}$
in the previous subsection and their proofs are omitted. As like $A_{Q}^{\ast}$,
the operator $A_{Q}^{\ast}A_{Q}A_{Q}^{\ast}$ turns out to be positive
for $m\geq3$. (See also Remark \ref{rem:Positivity-AAA-small-m}.)
\begin{lem}[Boundedness and positivity for $A_{Q}^{\ast}A_{Q}A_{Q}^{\ast}$]
\label{lem:Positivity-AAA-Appendix}For $v\in\dot{H}_{m+2}^{3}$,
we have 
\begin{equation}
\|A_{Q}^{\ast}A_{Q}A_{Q}^{\ast}v\|_{L^{2}}\sim\|v\|_{\dot{H}_{m+2}^{3}}.\label{eq:Positivity-AAA}
\end{equation}
\end{lem}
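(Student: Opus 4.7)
By density we reduce to $v \in \mathcal{S}_{m+2}$. Setting $u := A_Q A_Q^* v$, which is again a Schwartz $(m+2)$-equivariant function, and applying Lemma \ref{lem:mapping-AastQ} to $u$ yields
\[
\|A_Q^* A_Q A_Q^* v\|_{L^2} = \|A_Q^* u\|_{L^2} \sim \|u\|_{\dot H_{m+2}^1} = \|A_Q A_Q^* v\|_{\dot H_{m+2}^1},
\]
reducing the problem to the equivalence
\begin{equation}\label{eq:red-AAA-plan}
\|A_Q A_Q^* v\|_{\dot H_{m+2}^1} \sim \|v\|_{\dot H_{m+2}^3}.
\end{equation}

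Recognizing that on $(m+2)$-equivariant functions $-\Delta_{m+2}$ has radial part $-\partial_{rr} - r^{-1}\partial_r + (m+2)^2 r^{-2}$, I would decompose $A_Q A_Q^* = -\Delta_{m+2} + W$ where
\[
W = \frac{2(m+2)A_\theta[Q] + A_\theta[Q]^2 + \tfrac{1}{2}r^2 Q^2}{r^2}.
\]
The explicit formula $A_\theta[Q] = -2(m+1)r^{2m+2}/(1+r^{2m+2})$ and the decay of $Q$ give $|W|_k \lesssim_k \langle r \rangle^{-2}$ for every $k \in \N$. Since $m \geq 3$ implies $m+2 \geq 5$, the generalized Hardy inequality \eqref{eq:GenHardyAppendix} provides the equivalences $\|w\|_{\dot H_{m+2}^j} \sim \||w|_{-j}\|_{L^2}$ for $j \in \{1,3\}$ together with the elliptic isomorphism $\|\Delta_{m+2} v\|_{\dot H_{m+2}^1} \sim \|v\|_{\dot H_{m+2}^3}$. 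The upper bound in \eqref{eq:red-AAA-plan} then follows from the pointwise estimates $|A_Q A_Q^* v|_{-1} \leq |\Delta_{m+2} v|_{-1} + |Wv|_{-1} \lesssim |v|_{-3} + \langle r \rangle^{-2}|v|_{-1}$, combined with the elementary bound $|v|_{-1} \leq r^2 |v|_{-3}$ which shows that $\langle r\rangle^{-2}|v|_{-1} \lesssim |v|_{-3}$.

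The main obstacle is the lower bound in \eqref{eq:red-AAA-plan}. Writing $-\Delta_{m+2} v = A_Q A_Q^* v - Wv$ and applying the triangle inequality yields $\|v\|_{\dot H_{m+2}^3} \lesssim \|A_Q A_Q^* v\|_{\dot H_{m+2}^1} + \|Wv\|_{\dot H_{m+2}^1}$, but the pointwise argument above only gives $\|Wv\|_{\dot H_{m+2}^1} \lesssim \|v\|_{\dot H_{m+2}^3}$ with an $O(1)$ constant, so direct absorption fails. To close the estimate I plan to argue by contradiction via compactness: assume a sequence $v_n \in \mathcal S_{m+2}$ with $\|v_n\|_{\dot H_{m+2}^3} = 1$ and $\|A_Q A_Q^* v_n\|_{\dot H_{m+2}^1} \to 0$, pass to a subsequence converging weakly in $\dot H_{m+2}^3$ and strongly in $H^2_{\mathrm{loc}}$ to some limit $v_\infty \in \dot H_{m+2}^3$ satisfying $A_Q A_Q^* v_\infty = 0$ distributionally, and deduce via elliptic regularity that $v_\infty$ is smooth. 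The positivity of $A_Q^*$ in Lemma \ref{lem:mapping-AastQ} then forces $A_Q^* v_\infty = 0$, and ODE uniqueness for the first-order operator $A_Q^* = -\partial_r + r^{-1}(m+2+A_\theta[Q])$ gives $v_\infty = 0$; strong local convergence then yields a contradiction. The hard part will be making this compactness step work in the scale-invariant space $\dot H_{m+2}^3$, where lower Sobolev norms of $v_n$ are not a priori controlled, so a scale-adapted localization will likely be needed to apply Rellich-Kondrachov cleanly.
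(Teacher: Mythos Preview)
Your reduction to \eqref{eq:red-AAA-plan} via Lemma~\ref{lem:mapping-AastQ} is fine, and the upper bound follows as you say. The gap is in the lower bound: the perturbation $W=A_QA_Q^{\ast}+\Delta_{m+2}$ is \emph{not} lower order. Since $A_{\theta}[Q]\to-2(m+1)$ as $r\to\infty$, the potential $\tilde V=(m+2+A_{\theta}[Q])^{2}+\tfrac12 r^{2}Q^{2}$ tends to $m^{2}$, not $(m+2)^{2}$, so $W(r)\sim -4(m+1)r^{-2}$ at infinity. Consequently $\|Wv\|_{\dot H_{m+2}^{1}}\sim\||Wv|_{-1}\|_{L^{2}}$ picks up a tail of size $\|r^{-2}|v|_{-1}\|_{L^{2}(r\geq R)}\sim\||v|_{-3}\|_{L^{2}(r\geq R)}$, which is exactly the $\dot H_{m+2}^{3}$ norm with no gain in $R$. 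Thus $v\mapsto\|Wv\|_{\dot H_{m+2}^{1}}$ is not compact relative to $\dot H_{m+2}^{3}$, and your contradiction step (``strong local convergence forces the error to vanish'') cannot close: even with $v_{\infty}=0$, nothing prevents $\|Wv_{n}\|_{\dot H_{m+2}^{1}}$ from staying bounded away from zero on a sequence escaping to infinity. The difficulty is not the scale-invariance of $\dot H_{m+2}^{3}$ per se---the scale is pinned by $Q$---but that $A_QA_Q^{\ast}$ has \emph{different} asymptotic Laplacians at the origin ($-\Delta_{m+2}$) and at infinity ($-\Delta_{m}$), so no single Laplacian comparison yields a compact remainder.

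The paper avoids this by peeling off the three factors one at a time rather than treating $A_QA_Q^{\ast}$ as a block. After your first step, apply Lemma~\ref{lem:mapping-AQ} to $A_Q^{\ast}v\in\dot H_{m+1}^{2}$ to get
\[
\|A_QA_Q^{\ast}v\|_{\dot H_{m+2}^{1}}+\|r\langle r\rangle^{-2}Q\,A_Q^{\ast}v\|_{L^{2}}\sim\|A_Q^{\ast}v\|_{\dot H_{m+1}^{2}},
\]
and then invert the remaining $A_Q^{\ast}$ directly via Corollary~\ref{cor:WeightedHardy}: near infinity $-A_Q^{\ast}\approx\partial_{r}-\tfrac{m}{r}$ (here $m\geq3$ makes $\ell=m$, $k=2$ noncritical), and near the origin $-A_Q^{\ast}\approx\partial_{r}+\tfrac{m+2}{r}$. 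This yields the subcoercivity
\[
\|A_Q^{\ast}A_QA_Q^{\ast}v\|_{L^{2}}+\|r^{2}\langle r\rangle^{-2}Q\,|v|_{-2}\|_{L^{2}}\sim\|v\|_{\dot H_{m+2}^{3}},
\]
whose error term now carries the weight $Q\sim r^{-(m+2)}$ and is therefore genuinely compact. The compactness argument you outlined---with kernel triviality coming from $A_Q^{\ast}g=0\Rightarrow g\sim r^{-(m+2)}\notin\dot H_{m+2}^{3}$---then goes through cleanly.
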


\begin{proof}
We first claim the subcoercivity estimate for $A_{Q}^{\ast}A_{Q}A_{Q}^{\ast}$:
\begin{equation}
\|A_{Q}^{\ast}A_{Q}A_{Q}^{\ast}v\|_{L^{2}}+\|r^{2}\langle r\rangle^{-2}Q|v|_{-2}\|_{L^{2}}\sim\|v\|_{\dot{H}_{m+2}^{3}}.\label{eq:Positivity-AAA-claim}
\end{equation}
The perturbative term $\|r^{2}\langle r\rangle^{-2}Q|v|_{-2}\|_{L^{2}}$
will be deleted at the end of the proof. By density, we may assume
$v\in\mathcal{S}_{m+2}$. By Lemmas \ref{lem:mapping-AastQ} and \ref{lem:mapping-AQ},
we have 
\[
\|A_{Q}^{\ast}A_{Q}A_{Q}^{\ast}v\|_{L^{2}}+\|r\langle r\rangle^{-2}Q|v|_{-1}\|_{L^{2}}\sim\|A_{Q}^{\ast}v\|_{\dot{H}_{m+1}^{2}}.
\]
It now suffices to invert $A_{Q}^{\ast}$. Note that $-A_{Q}^{\ast}\approx\partial_{r}-\tfrac{m}{r}$
near the infinity. As $m\geq3$, we can apply Corollary \ref{cor:WeightedHardy}
with $\ell=m$ and $k=2$ (noncritical case) near the infinity. Near
the origin, $-A_{Q}^{\ast}\approx\partial_{r}+\tfrac{m+2}{r}$ so
we can apply Corollary \ref{cor:WeightedHardy} with $\ell=-m-2$
and $k=2$ (noncritical case). Thus one has 
\[
\|\tfrac{1}{r^{2}}A_{Q}^{\ast}v\|_{L^{2}}+\|\langle r\rangle^{-2}Qv\|_{L^{2}}\sim\|\tfrac{1}{r^{2}}|v|_{-1}\|_{L^{2}}.
\]
Starting from this, one can control higher derivatives as before to
get \eqref{eq:Positivity-AAA-claim}. We omit the proof.

To delete the perturbative term of \eqref{eq:Positivity-AAA-claim},
one can proceed as in the proof of Lemma \ref{lem:coer-H3}, but in
this case $A_{Q}^{\ast}A_{Q}A_{Q}^{\ast}:\dot{H}_{m+2}^{3}\to L^{2}$
does not have nontrivial kernel; see Remark \ref{rem:Positivity-AAA-small-m}.
Thus the coercivity estimate follows without any orthogonality conditions.
We omit the proof.
\end{proof}
Having settled the coercivity estiamtes of $A_{Q}^{\ast}A_{Q}A_{Q}^{\ast}$,
it suffices to obtain subcoercivity estimates of $A_{Q}$ and $L_{Q}$
at $\dot{H}^{4}$ and $\dot{H}^{5}$ levels, respectively. Compared
to the previous subsection, both the equivariance index and regularity
index are shifted by the same amount $2$. Thus the following lemmas
are obtained in the same way as for Lemmas \ref{lem:mapping-AQ}-\ref{lem:coer-H3},
after adding two more derivatives to the perturbative terms. We omit
the proofs.
\begin{lem}[Boundedness and subcoercivity for $A_{Q}$]
\label{lem:mapping-AQ-1}For $v\in\dot{H}_{m+1}^{4}$, we have 
\begin{align*}
 & \|A_{Q}v\|_{\dot{H}_{m+2}^{3}}+\|r\langle r\rangle^{-2}Q|v|_{-2}\|_{L^{2}}\sim\|v\|_{\dot{H}_{m+1}^{4}}.
\end{align*}
Moreover, the kernel of $A_{Q}:\dot{H}_{m+1}^{4}\to\dot{H}_{m+2}^{3}$
is $\mathrm{span}_{\C}\{rQ\}$.
\end{lem}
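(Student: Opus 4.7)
The plan is to parallel the proof of Lemma \ref{lem:mapping-AQ} verbatim, gaining two additional derivatives at each step. By density I would reduce to $v\in\mathcal{S}_{m+1}$. The two algebraic tools are the conjugation $r^{-1}A_{Q}v=\D_{+}^{(Q)}(r^{-1}v)$ and the differentiated identity
\[
\partial_{r}A_{Q}v=\partial_{rr}v-\tfrac{m+1+A_{\theta}[Q]}{r}\partial_{r}v+\Big(\tfrac{m+1+A_{\theta}[Q]}{r^{2}}+\tfrac{Q^{2}}{2}\Big)v,
\]
using $\partial_{r}A_{\theta}[Q]=-\tfrac{1}{2}rQ^{2}$. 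The first identity will convert control of $|A_{Q}v|_{-k}$ into control of $|v|_{-k-1}$ via a weighted Hardy bound, while the second, iterated, will recover $\partial_{rr}v,\partial_{rrr}v,\partial_{rrrr}v$ modulo perturbative lower-order weighted terms.

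For the upper bound, I would apply Leibniz to $|A_{Q}v|_{-3}\lesssim|v|_{-4}+\tfrac{|A_{\theta}[Q]|}{r}|v|_{-3}$; both pieces are dominated by $|v|_{-4}$ after integration, so $\|A_{Q}v\|_{\dot{H}_{m+2}^{3}}\lesssim\|v\|_{\dot{H}_{m+1}^{4}}$ by \eqref{eq:GenHardyAppendix} (applicable since $m+2\geq 5>3$). For the lower bound, I would apply Corollary \ref{cor:WeightedHardy} with weights $r^{\pm(m+1)}$ at height $k=3$ to the conjugated equation $\D_{+}^{(Q)}(r^{-1}v)=r^{-1}A_{Q}v$; since $m\geq3$, both endpoints $\ell=\pm(m+1)$ fall in the noncritical regime ($\ell\neq k$), so the inequality yields the desired $\||v|_{-4}\|_{L^{2}}$ bound modulo a perturbative $Q$-weighted term, absorbed by the $\|r\langle r\rangle^{-2}Q|v|_{-2}\|_{L^{2}}$ summand exactly as in Lemma \ref{lem:mapping-AQ}. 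The higher pure-derivative terms $\|\partial_{rr}v\|_{L^{2}},\ldots,\|\partial_{rrrr}v\|_{L^{2}}$ are then recovered by iterating the $\partial_{r}A_{Q}v$ identity, with the error terms of the form $r^{-1}|v|_{-k}+Q^{2}|v|_{-k+1}$ already controlled by the previous step.

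For the kernel characterization, if $f\in\dot{H}_{m+1}^{4}$ satisfies $A_{Q}f=0$, then the cutoff / elliptic regularity argument from Lemma \ref{lem:mapping-AQ} applies (since $\dot{H}_{m+1}^{4}\hookrightarrow\dot{H}_{m+1,\mathrm{loc}}^{2}$ and $A_{Q}$ is local), yielding $f\in\mathrm{span}_{\C}\{rQ\}$. I would then verify that $rQ$ does belong to $\dot{H}_{m+1}^{4}$ under $m\geq3$: $rQ$ is smooth and $(m+1)$-equivariant, with $|rQ|_{-4}\lesssim r^{m-3}\mathbf{1}_{r\leq 1}+r^{-m-5}\mathbf{1}_{r\geq 1}$, which lies in $L^{2}(\R^{2})$ precisely when $m\geq3$.

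The proof presents no conceptual obstacle; it is a bookkeeping translation of Lemma \ref{lem:mapping-AQ} with two additional derivatives. The one place requiring care is confirming that Corollary \ref{cor:WeightedHardy} remains in its noncritical regime at the height $k=3$ at both endpoints, and this holds precisely because $m+1\geq 4$, matching the standing assumption $m\geq3$ of the subsection; it is also exactly this assumption that makes $rQ$ lie in the domain, so that the kernel does not degenerate.
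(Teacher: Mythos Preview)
Your proposal is correct and matches the paper's approach; the paper explicitly omits the proof, stating that Lemmas \ref{lem:mapping-AQ-1}--\ref{lem:coer-H5} ``are obtained in the same way as for Lemmas \ref{lem:mapping-AQ}--\ref{lem:coer-H3}, after adding two more derivatives to the perturbative terms.'' One minor slip: the kernel argument in Lemma \ref{lem:mapping-AQ} is not a cutoff/elliptic regularity argument (that is used for the nonlocal $L_{Q}$ in Lemma \ref{lem:mapping-L-Appendix}) but simply ODE uniqueness, since $A_{Q}$ is a local first-order operator---your conclusion is nonetheless correct.
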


\begin{lem}[Contribution of $QB_{Q}$]
\label{lem:Contribution-QBQ-H5}Let $v\in\dot{\mathcal{H}}_{m}^{5}$.
We have 
\[
\|QB_{Q}v\|_{\dot{H}_{m+1}^{4}}\lesssim\|r\langle r\rangle^{-2}Q|v|_{-3}\|_{L^{2}}.
\]
\end{lem}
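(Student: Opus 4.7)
I intend to mirror the proof of Lemma~\ref{lem:Contribution-QBQ}, with two bookkeeping adjustments reflecting the passage from order $2$ to order $4$ and the passage from equivariance $m\ge1$ to $m\ge3$. First, since $m+1\ge4$, the generalized Hardy inequality \eqref{eq:GenHardyAppendix} gives $\|QB_Qv\|_{\dot H_{m+1}^4}\sim\||QB_Qv|_{-4}\|_{L^2}$, reducing the task to bounding the latter by $\|r\langle r\rangle^{-2}Q|v|_{-3}\|_{L^2}$. To obtain a pointwise decomposition of $|QB_Qv|_{-4}$, I will combine the Leibniz-type inequality $|fg|_{-k}\lesssim|f|_{-k}|g|_k$ from the end of the notations with $|Q|_{-4}\lesssim y^{-4}Q$, together with the recursion $\partial_yB_Qv=Qv-y^{-1}B_Qv$ iterated to yield $|B_Qv|_4\lesssim|B_Qv|+y|Qv|_3$. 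Substituting and using $|Qv|_3\lesssim Q|v|_3=y^3Q|v|_{-3}$, I expect to arrive at
\[
|QB_Qv|_{-4}\lesssim y^{-4}Q\,|B_Qv|+Q^2|v|_{-3}.
\]

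The algebraic term $Q^2|v|_{-3}$ will then be absorbed via the elementary pointwise inequality $Q\lesssim r\langle r\rangle^{-2}$ (valid for $m\ge1$), giving at once $Q^2|v|_{-3}\lesssim r\langle r\rangle^{-2}Q|v|_{-3}$. For the nonlocal term $y^{-4}Q|B_Qv|$, I will follow the original strategy of exploiting the monotonicity of $\langle r\rangle^{-2}$ inside the Hardy integral: since $\langle r'\rangle\le\langle y\rangle$ for $r'\le y$, one has $\langle y\rangle^{-2}|B_Qv(y)|\le B_Q(\langle r\rangle^{-2}|v|)(y)$. Factoring out $y^{-3}\langle y\rangle^2Q$ and applying the standard $L^2(r\,dr)$-boundedness of the operator $f\mapsto y^{-2}\int_0^y f\,r'\,dr'$ (provable by a one-line integration by parts followed by Cauchy--Schwarz) reduces matters to controlling $\|Q\langle r\rangle^{-2}|v|\|_{L^2}$. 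The crude substitution $|v|\le r^3|v|_{-3}$ overshoots the target weight by two powers of $r$, so the deficit must be absorbed by the decay of $Q$; writing the Cauchy--Schwarz estimate inside $\int_0^y Qv\,r'\,dr'$ in terms of $r'\langle r'\rangle^{-2}Q|v|_{-3}$ produces the outer weight $y^{-2}\langle y\rangle^2Q$, whose finite $L^2(y\,dy)$-norm demands $m\ge2$, while the alternative $L^\infty$ route uses $y^{-3}\langle y\rangle^2Q\sim y^{m-3}\langle y\rangle^{-2m}$, which is bounded exactly when $m\ge3$.

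The only real obstacle is the balance of weights in the nonlocal term: the crude inequality $|v|\le r^3|v|_{-3}$ is two powers of $r$ worse than what the target weight $r\langle r\rangle^{-2}$ asks for, and the two missing powers must be paid by the decay of $Q$ at the origin. This is precisely where the hypothesis $m\ge3$ is indispensable, and is exactly the point at which the $\dot{\mathcal{H}}_m^5$ story breaks down for $m\in\{1,2\}$ (cf.\ Remark~\ref{rem:Positivity-AAA-small-m}). Everything else is a routine upgrade of the proof of Lemma~\ref{lem:Contribution-QBQ}.
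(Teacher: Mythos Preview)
Your proposal is correct and follows the same overall strategy as the paper (which omits the proof as a routine index shift from Lemma~\ref{lem:Contribution-QBQ}); in particular your pointwise decomposition $|QB_Qv|_{-4}\lesssim y^{-4}Q|B_Qv|+Q^2|v|_{-3}$ and the treatment of the local term via $Q\lesssim r\langle r\rangle^{-2}$ are exactly right, and your Cauchy--Schwarz route for the nonlocal term does close for $m\ge2$.

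One small clarification: your ``alternative $L^\infty$ route'' as written does not actually close. The boundedness of $y^{-3}\langle y\rangle^2Q$ for $m\ge3$ handles the outer factor, but after the Hardy step you still need $\|Q\langle r\rangle^{-2}|v|\|_{L^2}\lesssim\|r\langle r\rangle^{-2}Q|v|_{-3}\|_{L^2}$, and the crude bound $|v|\le r^3|v|_{-3}$ leaves an unabsorbed $r^2$ at infinity. The cleanest direct translation of the paper's $\dot{\mathcal H}_m^3$ argument---and probably what the paper has in mind by ``shift by $2$''---is simply to replace the monotonicity weight $\langle r\rangle^{-2}$ by $\langle r\rangle^{-4}$: then
\[
r^{-4}Q|B_Qv|\le r^{-3}\langle r\rangle^{4}Q\cdot r^{-1}B_Q(\langle r\rangle^{-4}|v|),
\]
the factor $r^{-3}\langle r\rangle^{4}Q\sim r^{m-3}\langle r\rangle^{-(2m-2)}$ lies in $L^\infty$ exactly for $m\ge3$, and after the Hardy bound one has $Q\langle r\rangle^{-4}|v|\le r^{3}\langle r\rangle^{-4}Q|v|_{-3}=(r^{2}\langle r\rangle^{-2})\cdot r\langle r\rangle^{-2}Q|v|_{-3}\le r\langle r\rangle^{-2}Q|v|_{-3}$ with no deficit. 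Either this or your Cauchy--Schwarz fix works; the former is a more literal ``add two derivatives'' upgrade.
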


\begin{lem}[Boundedness and subcoercivity for $L_{Q}$ on $\dot{\mathcal{H}}_{m}^{5}$]
\label{lem:mapping-LQ-H5}For $v\in\dot{\mathcal{H}}_{m}^{5}$, we
have 
\[
\|L_{Q}v\|_{\dot{H}_{m+1}^{4}}+\|Q|v|_{-4}\|_{L^{2}}\sim\|v\|_{\dot{\mathcal{H}}_{m}^{5}}.
\]
Moreover, the kernel of $L_{Q}:\dot{\mathcal{H}}_{m}^{5}\to\dot{H}_{m+1}^{4}$
is $\mathrm{span}_{\R}\{\Lambda Q,iQ\}$.
\end{lem}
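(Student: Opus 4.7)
The plan is to mimic the proof of Lemma \ref{lem:mapping-LQ-H3}, with the regularity and equivariance indices both shifted up by two. By density it suffices to work with $v\in\mathcal{S}_m$. First, the $QB_Q$ part of $L_Q$ is perturbative at the $\dot{H}_{m+1}^4$ level, thanks to Lemma \ref{lem:Contribution-QBQ-H5}, which gives $\|QB_Q v\|_{\dot{H}_{m+1}^4}\lesssim \|r\langle r\rangle^{-2}Q|v|_{-3}\|_{L^2}$ — a lower-order expression controlled by $\|Q|v|_{-4}\|_{L^2}$ on the left and by $\|v\|_{\dot{\mathcal{H}}_m^5}$ on the right. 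Hence the boundedness/subcoercivity of $L_Q$ reduces to the same statement for $\D_+^{(Q)}$.

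To analyze $\D_+^{(Q)}$, I integrate by parts in the weighted $L^2$ norm at the highest-order weight, i.e.\ I compute
\[
\|\tfrac{1}{r^4}\D_+^{(Q)} v\|_{L^2}^2 = \|\tfrac{1}{r^4}\partial_+ v\|_{L^2}^2 + \int\bigl((2m-8+A_\theta[Q])A_\theta[Q] - \tfrac12 r^2 Q^2\bigr)\bigl|\tfrac{1}{r^5}v\bigr|^2,
\]
exactly as in the $\dot{\mathcal{H}}_m^3$ case but with weight $r^{-4}$ instead of $r^{-2}$. Since $A_\theta[Q]\to 0$ as $r\to 0$ and $A_\theta[Q]\to -2(m+1)$ as $r\to\infty$, the coefficient $(2m-8+A_\theta[Q])A_\theta[Q]$ is bounded below away from zero at infinity (using $m\geq 3$), while at the origin the potential contribution is controlled by $Q^2$. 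This yields
\[
\|\tfrac{1}{r^4}\D_+^{(Q)}v\|_{L^2} + \|\tfrac{1}{r^4}Qv\|_{L^2} \sim \|\tfrac{1}{r^4}\partial_+ v\|_{L^2} + \|\mathbf{1}_{r\geq 1}\tfrac{1}{r^5}v\|_{L^2}.
\]
Then one bootstraps to lower-order weighted derivatives of $\D_+^{(Q)}v$: for each derivative one writes, schematically,
\[
(\partial_r)^\ell \tfrac{1}{r^{4-\ell}} \D_+^{(Q)} v = (\partial_r)^\ell \tfrac{1}{r^{4-\ell}} \partial_+ v + O\bigl(\tfrac{|A_\theta[Q]|}{r^{5-\ell}}|v|_{-(\ell-1)}\bigr),
\]
and the $O$-terms are absorbed into $\|Q|v|_{-4}\|_{L^2}$ using $|A_\theta[Q]|\lesssim r^2\langle r\rangle^{-2}$. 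Combining these estimates with the definition \eqref{eq:Def-Hdot5-Appendix} of the $\dot{\mathcal{H}}_m^5$-norm and Lemma \ref{lem:ComparisonH5H5Appendix} yields
\[
\|\D_+^{(Q)} v\|_{\dot{H}_{m+1}^4} + \|Q|v|_{-4}\|_{L^2} \sim \|v\|_{\dot{\mathcal{H}}_m^5},
\]
which together with the perturbative $QB_Q$ bound gives both directions of the claimed equivalence.

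For the kernel characterization, suppose $v\in\dot{\mathcal{H}}_m^5$ satisfies $L_Q v = 0$. Note the embedding $\dot{\mathcal{H}}_m^5\hookrightarrow \dot{H}_{m,\mathrm{loc}}^1$. Since the nonlocal part $QB_Q$ of $L_Q$ only involves $\int_0^r$, for every $R>0$ the cutoff $\chi_{\leq R} v \in \dot{H}_m^1$ satisfies $L_Q(\chi_{\leq R}v)(x)=0$ for all $|x|\leq R$. By the elliptic regularity argument of Lemma \ref{lem:mapping-L-Appendix} (which uses the ellipticity of $\partial_1+i\partial_2$ in the ambient space $\R^2$), $v$ is smooth in $|x|<R$, hence smooth globally. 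Then \cite[Lemma 3.3]{KimKwon2019arXiv} gives $v\in\mathrm{span}_\R\{\Lambda Q, iQ\}$, and both of these elements indeed lie in $\dot{\mathcal{H}}_m^5$ (since $m\geq 3$ ensures sufficient decay). The main obstacle in this entire argument is ensuring the integration-by-parts identity with weight $r^{-4}$ is truly coercive in the small-$r$ regime, which requires tracking the behavior of $A_\theta[Q]$ carefully and relies crucially on $m\geq 3$ so that no logarithmic weights are needed; otherwise the analysis is a routine adaptation of the $\dot{\mathcal{H}}_m^3$ case.
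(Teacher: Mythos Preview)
Your proof is correct and follows exactly the approach the paper intends: it explicitly states that Lemmas \ref{lem:mapping-AQ-1}--\ref{lem:mapping-LQ-H5} are obtained ``in the same way as for Lemmas \ref{lem:mapping-AQ}--\ref{lem:coer-H3}, after adding two more derivatives to the perturbative terms,'' and your argument is precisely this shift applied to the proof of Lemma \ref{lem:mapping-LQ-H3}. One small correction to your closing remark: the restriction $m\geq 3$ does \emph{not} eliminate logarithmic weights---the $\dot{\mathcal{H}}_m^5$-norm in \eqref{eq:Def-Hdot5-Appendix} still carries logarithmic factors for $m\in\{3,4\}$, and these are handled by Lemma \ref{lem:ComparisonH5H5Appendix} (which you correctly invoke); the reason for $m\geq 3$ lies instead in the positivity of $A_Q^\ast A_Q A_Q^\ast$ (Lemma \ref{lem:Positivity-AAA-Appendix} and Remark \ref{rem:Positivity-AAA-small-m}), not in the $L_Q$ step.
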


Combining the above subcoercivity lemmas for $A_{Q}$ and $L_{Q}$
with the positivity estimates of $A_{Q}^{\ast}A_{Q}A_{Q}^{\ast}$,
we finally obtain the (sub-)coercivity estimates for $A_{Q}^{\ast}A_{Q}A_{Q}^{\ast}A_{Q}L_{Q}$.
\begin{lem}[Boundedness and subcoercivity for $A_{Q}^{\ast}A_{Q}A_{Q}^{\ast}A_{Q}L_{Q}$
on $\dot{\mathcal{H}}_{m}^{5}$; Lemma \ref{lem:Mapping-AAAAL-Section2}]
\label{lem:subcoer-H5}For $v\in\dot{\mathcal{H}}_{m}^{5}$, we have
\[
\|A_{Q}^{\ast}A_{Q}A_{Q}^{\ast}A_{Q}L_{Q}v\|_{L^{2}}+\|Q|v|_{-4}\|_{L^{2}}\sim\|v\|_{\dot{\mathcal{H}}_{m}^{5}}.
\]
Moreover, the kernel of $A_{Q}^{\ast}A_{Q}A_{Q}^{\ast}A_{Q}L_{Q}:\dot{\mathcal{H}}_{m}^{5}\to L^{2}$
is $\mathrm{span}_{\R}\{\Lambda Q,iQ,\rho,ir^{2}Q\}$.
\end{lem}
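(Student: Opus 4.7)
The plan is to mimic the chain-of-operators strategy used for Lemma \ref{lem:subcoer-H3}, now with the building blocks $L_Q:\dot{\mathcal{H}}_m^5\to\dot{H}_{m+1}^4$ (Lemma \ref{lem:mapping-LQ-H5}), $A_Q:\dot{H}_{m+1}^4\to\dot{H}_{m+2}^3$ (Lemma \ref{lem:mapping-AQ-1}), and the positivity estimate $A_Q^\ast A_Q A_Q^\ast:\dot{H}_{m+2}^3\to L^2$ (Lemma \ref{lem:Positivity-AAA-Appendix}). The boundedness direction is immediate from composing these three bounded maps, together with the estimate $\|Q|v|_{-4}\|_{L^2}\lesssim \|v\|_{\dot{\mathcal{H}}_m^5}$, which follows from the definition \eqref{eq:Def-Hdot5-Appendix} of the $\dot{\mathcal{H}}_m^5$-norm (using $m\geq 3$ and pointwise bound $Q\lesssim \langle y\rangle^{-(2m+2)}$).

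For the subcoercivity direction, I will chain the three subcoercivity estimates. Positivity of $A_Q^\ast A_Q A_Q^\ast$ gives
\[
\|A_Q^\ast A_Q A_Q^\ast A_Q L_Q v\|_{L^2}\sim \|A_Q L_Q v\|_{\dot{H}_{m+2}^3}.
\]
Then by Lemma \ref{lem:mapping-AQ-1},
\[
\|A_Q L_Q v\|_{\dot{H}_{m+2}^3}+\|r\langle r\rangle^{-2}Q|L_Q v|_{-2}\|_{L^2}\sim \|L_Q v\|_{\dot{H}_{m+1}^4},
\]
and finally by Lemma \ref{lem:mapping-LQ-H5},
\[
\|L_Q v\|_{\dot{H}_{m+1}^4}+\|Q|v|_{-4}\|_{L^2}\sim \|v\|_{\dot{\mathcal{H}}_m^5}.
\]
The main obstacle is the intermediate perturbative term $\|r\langle r\rangle^{-2}Q|L_Q v|_{-2}\|_{L^2}$ — it is not yet in the form stated in the conclusion. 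The remedy is the same as in the proof of Lemma \ref{lem:subcoer-H3}: write $L_Q=\D_+^{(Q)}+QB_Q$ and use the pointwise bounds $|\D_+^{(Q)}v|_{-2}\lesssim |v|_{-3}$ together with Lemma \ref{lem:Contribution-QBQ-H5} (applied at two derivatives lower) to get $|L_Q v|_{-2}\lesssim |v|_{-3}+Q B_Q|v|$. Multiplying by the decaying weight $r\langle r\rangle^{-2}Q$, using the fast decay of $Q$ at infinity and $|x|^{m}$-vanishing at the origin (available since $m\geq 3$), one obtains $\|r\langle r\rangle^{-2}Q|L_Q v|_{-2}\|_{L^2}\lesssim \|Q|v|_{-4}\|_{L^2}$, absorbing the perturbative term.

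For the kernel characterization, let $v\in\dot{\mathcal{H}}_m^5$ satisfy $A_Q^\ast A_Q A_Q^\ast A_Q L_Q v=0$. Since $A_Q L_Q v\in \dot{H}_{m+2}^3$, Lemma \ref{lem:Positivity-AAA-Appendix} forces $A_Q L_Q v=0$. Since $L_Q v\in \dot{H}_{m+1}^4$, Lemma \ref{lem:mapping-AQ-1} then gives $L_Q v\in\operatorname{span}_{\C}\{rQ\}$, i.e. $L_Q v=(\alpha+i\beta)rQ$ for some $\alpha,\beta\in\R$. Using the algebraic identities $L_Q\rho=\tfrac{1}{2(m+1)}rQ$ and $L_Q(ir^2Q)=2irQ$ from \eqref{eq:GenNullSpaceRelation}–Lemma \ref{lem:Def-rho}, we can subtract an appropriate real combination $2(m+1)\alpha\,\rho+\tfrac{\beta}{2}\,ir^2Q$ from $v$ and reduce to the case $L_Q v=0$. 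Lemma \ref{lem:mapping-LQ-H5} then yields $v\in\operatorname{span}_{\R}\{\Lambda Q, iQ\}$, so altogether $v\in\operatorname{span}_{\R}\{\Lambda Q,iQ,\rho,ir^2Q\}$. The reverse inclusion is clear from $\mathcal{L}_Q\Lambda Q=\mathcal{L}_Q iQ=0$ together with $A_Q L_Q\rho=A_Q L_Q(ir^2Q)=0$, the latter following because $L_Q\rho$ and $L_Q(ir^2Q)$ are both multiples of $rQ\in\ker A_Q$.
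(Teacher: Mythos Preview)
Your proof is correct and follows exactly the approach indicated by the paper: chain the positivity of $A_Q^\ast A_Q A_Q^\ast$ (Lemma \ref{lem:Positivity-AAA-Appendix}) with the subcoercivity of $A_Q$ (Lemma \ref{lem:mapping-AQ-1}) and $L_Q$ (Lemma \ref{lem:mapping-LQ-H5}), absorb the intermediate perturbative term $\|r\langle r\rangle^{-2}Q|L_Qv|_{-2}\|_{L^2}$ into $\|Q|v|_{-4}\|_{L^2}$ via the splitting $L_Q=\D_+^{(Q)}+QB_Q$, and run the kernel argument exactly as in Lemma \ref{lem:subcoer-H3}. The paper in fact omits the proof entirely, simply noting that it follows from combining these lemmas as in the $\dot{\mathcal{H}}_m^3$ case, so your write-up is precisely the intended argument with the details filled in.
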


\begin{lem}[Coercivity for $A_{Q}^{\ast}A_{Q}A_{Q}^{\ast}A_{Q}L_{Q}$ on $\dot{\mathcal{H}}_{m}^{5}$;
Lemma \ref{lem:Coercivity-AAAAL-Section2}]
\label{lem:coer-H5}Let $\psi_{1},\psi_{2},\psi_{3},\psi_{4}$ be
elements of $(\dot{\mathcal{H}}_{m}^{5})^{\ast}$, which is the dual
space of $\dot{\mathcal{H}}_{m}^{5}$. If the $4\times4$ matrix $(a_{ij})$
defined by $a_{i1}=(\psi_{i},\Lambda Q)_{r}$, $a_{i2}=(\psi_{i},iQ)_{r}$,
$a_{i3}=(\psi_{i},ir^{2}Q)_{r}$, and $a_{i4}=(\psi_{i},\rho)_{r}$
has nonzero determinant, then we have a coercivity estimate 
\[
\|v\|_{\dot{\mathcal{H}}_{m}^{5}}\gtrsim\|A_{Q}^{\ast}A_{Q}A_{Q}^{\ast}A_{Q}L_{Q}v\|_{L^{2}}\gtrsim_{\psi_{1},\psi_{2},\psi_{3},\psi_{4}}\|v\|_{\dot{\mathcal{H}}_{m}^{5}},\qquad\forall v\in\dot{\mathcal{H}}_{m}^{5}\cap\{\psi_{1},\psi_{2},\psi_{3},\psi_{4}\}^{\perp}.
\]
\end{lem}

\subsubsection*{Interpolation and $L^{\infty}$ estimates}
\begin{lem}
\label{lem:interpolation-L-infty-m-geq-3}For $v\in H_{m}^{3}$, the
following estimates hold. When $m\geq3$, 
\begin{align}
\||v|_{-3}\|_{L^{\infty}}+\|\partial_{rrrr}v\|_{L^{2}} & \lesssim\|v\|_{\dot{H}_{m}^{3}}^{\frac{1}{2}}\|v\|_{\dot{\mathcal{H}}_{m}^{5}}^{\frac{1}{2}},\label{eq:interpolation-H5}\\
\|r\langle r\rangle^{-1}|v|_{-4}\|_{L^{\infty}} & \lesssim\|v\|_{\dot{\mathcal{H}}_{m}^{5}}.\label{eq:Weighted-L-infty-H5}
\end{align}
In fact, we have stronger estimates when $m\geq4$
\begin{align*}
\||v|_{-3}\|_{L^{\infty}}+\||v|_{-4}\|_{L^{2}} & \lesssim\|v\|_{\dot{H}_{m}^{4}}\lesssim\|v\|_{\dot{H}_{m}^{3}}^{\frac{1}{2}}\|v\|_{\dot{\mathcal{H}}_{m}^{5}}^{\frac{1}{2}},\\
\|\langle\log_{-}r\rangle^{-1}|v|_{-4}\|_{L^{\infty}} & \lesssim\|v\|_{\dot{\mathcal{H}}_{m}^{5}},
\end{align*}
and when $m\geq5$
\[
\||v|_{-4}\|_{L^{\infty}}\lesssim\|v\|_{\dot{\mathcal{H}}_{m}^{5}}.
\]
\end{lem}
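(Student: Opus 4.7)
The plan is to mirror the proof of Lemma \ref{lem:interpolation-L-infty}, shifting both the regularity index (from $3$ to $5$) and the equivariance index (from $1,2,3$ to $3,4,5$) upward by two. By density I may assume $v \in \mathcal{S}_m$ throughout, and the main engines are one-variable fundamental-theorem-of-calculus (FTC) arguments on $|f|^2$ plus the special algebraic identity $\partial_{rrrr} = (\partial_r + \tfrac{1}{r})^3 (\partial_r - \tfrac{3}{r})$, exactly as $\partial_{rr} = (\partial_r + \tfrac{1}{r})(\partial_r - \tfrac{1}{r})$ was used in the $m=1$ case there.

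For the endpoint case $m=3$ of \eqref{eq:interpolation-H5}, I would first control $\||v|_{-3}\|_{L^\infty}$ by FTC: noting that $\partial_+ = \partial_r - \tfrac{3}{r}$ when acting on $3$-equivariant functions, I differentiate $|r^{-\ell} \partial_r^{3-\ell} v|^2$ for $\ell \in \{0,1,2,3\}$, integrate, and apply Cauchy--Schwarz to get
\[
\||v|_{-3}\|_{L^\infty}^2 \lesssim \||\partial_+ v|_{-4}\|_{L^2} \cdot \||v|_{-3}\|_{L^2} \lesssim \|v\|_{\dot{\mathcal{H}}_3^5} \|v\|_{\dot{H}_3^3},
\]
where the last step uses \eqref{eq:GenHardyAppendix}, which is available since $3 \leq m$. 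For $\|\partial_{rrrr} v\|_{L^2}$ I would exploit the algebra above, so $\partial_{rrrr} v = (\partial_r + \tfrac{1}{r})^3 \partial_+ v$; since $\partial_+ v$ is $4$-equivariant this reduces to $\|\partial_+ v\|_{\dot{H}_4^3}$, which I interpolate between $\|\partial_+ v\|_{L^2}$ and $\|\partial_+ v\|_{\dot{H}_4^4}$ to obtain the desired bound by $\|v\|_{\dot{H}_3^3}^{1/2} \|v\|_{\dot{\mathcal{H}}_3^5}^{1/2}$. For \eqref{eq:Weighted-L-infty-H5} I would split into $\{r \leq 1\}$ and $\{r \geq 1\}$ and run two weighted FTC arguments parallel to those at the end of the $m=1$ case of Lemma \ref{lem:interpolation-L-infty}: pairing $\|r^{-1}\langle \log_- r\rangle^{-1} |v|_{-3}\|_{L^2}$ against $\|r^{-1}\langle \log_- r\rangle^{-1} |\partial_+ v|_{-3}\|_{L^2}$ on the inner region, and using $\|r^{-2} |\partial_+ v|_{-3}\|_{L^2}$ on the outer region. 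The logarithmic weights cancel cleanly in the FTC differentiation, just as in the proof of Lemma \ref{lem:Comparison-H3-Hdot3}.

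For $m \geq 4$ the analysis simplifies because both $\partial_+ v$ and $\partial_- v$ now lie in equivariance classes $\geq 3$, so the Hardy--Sobolev embedding \eqref{eq:HardySobolevAppendix} and the generalized Hardy inequality \eqref{eq:GenHardyAppendix} both apply freely to them. This gives $\||v|_{-3}\|_{L^\infty} + \||v|_{-4}\|_{L^2} \lesssim \|v\|_{\dot{H}_m^4}$, which I then interpolate between $\dot{H}_m^3$ and the stronger norm $\dot{\mathcal{H}}_m^5$ (using $\dot{\mathcal{H}}_m^5 \hookrightarrow \dot{H}_m^5$ from Lemma \ref{lem:ComparisonH5H5Appendix}). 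The $\langle \log_- r\rangle^{-1}|v|_{-4}$ bound again comes from an FTC argument, now pairing $\||v|_{-5}\|_{L^2}$ against $\|r^{-1}\langle\log_- r\rangle^{-1}|v|_{-4}\|_{L^2}$, both controlled by $\|v\|_{\dot{\mathcal{H}}_4^5}$ via \eqref{eq:Def-Hdot5-Appendix}. For $m \geq 5$, \eqref{eq:GenHardyAppendix} yields $\||v|_{-5}\|_{L^2} + \|r^{-1}|v|_{-4}\|_{L^2} \lesssim \|v\|_{\dot{\mathcal{H}}_m^5}$ outright, and the FTC argument closes $\||v|_{-4}\|_{L^\infty}$.

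The hard part will be the $m=3$ endpoint: there is no embedding $\dot{H}_3^3 \hookrightarrow L^\infty$, and the $\dot{\mathcal{H}}_3^5$-norm deliberately only controls $|\partial_{rrrr} v|_{-1}$ (not the full $|v|_{-5}$) near the origin, with logarithmic weights attached to the lower-order Hardy pieces. Navigating this asymmetry forces me to always extract one factor of $\partial_+$ (shifting to equivariance $4$) before invoking any Hardy or FTC manipulation, and to keep the logarithmic weight matched on both sides of each FTC identity so that it cancels in the boundary evaluation. The other cases $m\geq 4,5$ are then essentially bookkeeping on top of the standard embeddings.
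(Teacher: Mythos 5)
Your plan is essentially identical to the paper's proof, which is itself only a pointer: ``proceed as in the proof of Lemma~\ref{lem:interpolation-L-infty}'' together with the same algebraic tools you cite ($\partial_{+}=\partial_{r}-\tfrac{3}{r}$, $\partial_{rrrr}=(\partial_{r}+\tfrac{1}{r})^{3}\partial_{+}$, Lemma~\ref{lem:ComparisonH5H5Appendix}, and \eqref{eq:GenHardyAppendix}). The FTC arguments for the $L^\infty$ bounds and the separate treatment of the regimes $m=3$, $m\geq 4$, $m\geq 5$ match what the cited earlier lemma does two levels down.

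One bookkeeping slip needs fixing: for $\|\partial_{rrrr}v\|_{L^2}$ you reduce to $\|\partial_+ v\|_{\dot H_4^3}$ and then ``interpolate between $\|\partial_+ v\|_{L^2}$ and $\|\partial_+ v\|_{\dot H_4^4}$,'' claiming this yields the $1/2$--$1/2$ bound $\|v\|_{\dot H_3^3}^{1/2}\|v\|_{\dot{\mathcal H}_3^5}^{1/2}$. That interpolation actually gives exponents $1/4$--$3/4$ (since $3=\tfrac14\cdot 0+\tfrac34\cdot 4$), i.e.\ a bound by $\|v\|_{\dot H_3^1}^{1/4}\|v\|_{\dot{\mathcal H}_3^5}^{3/4}$, which is \emph{weaker} than the claimed estimate and does not imply it. You need to interpolate between $\|\partial_+ v\|_{\dot H_4^2}$ and $\|\partial_+ v\|_{\dot H_4^4}$, where $\|\partial_+ v\|_{\dot H_4^2}\lesssim\|v\|_{\dot H_3^3}$; in other words, the $L^2$ lower endpoint from the $m=1$ case of Lemma~\ref{lem:interpolation-L-infty} should be shifted up to $\dot H^2$, just as you shifted every other index by two.
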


\begin{proof}
By density, we may assume $v\in\mathcal{S}_{m}$. One can proceed
as in the proof of Lemma \ref{lem:interpolation-L-infty} with suitable
modifications. For example, one uses Lemma \ref{lem:ComparisonH5H5Appendix},
\eqref{eq:GenHardyAppendix}, and the algebras $\partial_{+}=\partial_{r}-\frac{3}{r}$
and $\partial_{rrrr}=(\partial_{r}+\frac{1}{r})^{3}\partial_{+}$
when $m=3$.
\end{proof}
\bibliographystyle{abbrv}
\bibliography{References}

\begin{thebibliography}{10}

\bibitem{Beceanu2012CPAM}
M.~Beceanu.
\newblock A critical center-stable manifold for {S}chr\"{o}dinger's equation in
  three dimensions.
\newblock {\em Comm. Pure Appl. Math.}, 65(4):431--507, 2012.

\bibitem{BejenaruKriegerTataru2013AnalPDE}
I.~Bejenaru, J.~Krieger, and D.~Tataru.
\newblock A codimension-two stable manifold of near soliton equivariant wave
  maps.
\newblock {\em Anal. PDE}, 6(4):829--857, 2013.

\bibitem{BergeDeBouardSaut1995Nonlinearity}
L.~Berg\'{e}, A.~De~Bouard, and J.-C. Saut.
\newblock Blowing up time-dependent solutions of the planar, {C}hern-{S}imons
  gauged nonlinear {S}chr\"{o}dinger equation.
\newblock {\em Nonlinearity}, 8(2):235--253, 1995.

\bibitem{BergeDeBouardSaut1995PRL}
L.~Berg\'{e}, A.~De~Bouard, and J.-C. Saut.
\newblock Collapse of {C}hern-{S}imons-gauged matter fields.
\newblock {\em Phys. Rev. Lett.}, 74(20):3907--3911, 1995.

\bibitem{BourgainWang1997}
J.~Bourgain and W.~Wang.
\newblock Construction of blowup solutions for the nonlinear {S}chr\"odinger
  equation with critical nonlinearity.
\newblock {\em Ann. Scuola Norm. Sup. Pisa Cl. Sci. (4)}, 25(1-2):197--215
  (1998), 1997.
\newblock Dedicated to Ennio De Giorgi.

\bibitem{BurzioKrieger2022MAMS}
S.~Burzio and J.~Krieger.
\newblock Type {II} blow up solutions with optimal stability properties for the
  critical focussing nonlinear wave equation on {$\Bbb R^{3+1}$}.
\newblock {\em Mem. Amer. Math. Soc.}, 278(1369):iii+75, 2022.

\bibitem{ByeonHuhSeok2012JFA}
J.~Byeon, H.~Huh, and J.~Seok.
\newblock Standing waves of nonlinear {S}chr\"{o}dinger equations with the
  gauge field.
\newblock {\em J. Funct. Anal.}, 263(6):1575--1608, 2012.

\bibitem{ByeonHuhSeok2016JDE}
J.~Byeon, H.~Huh, and J.~Seok.
\newblock On standing waves with a vortex point of order {$N$} for the
  nonlinear {C}hern-{S}imons-{S}chr\"{o}dinger equations.
\newblock {\em J. Differential Equations}, 261(2):1285--1316, 2016.

\bibitem{ChouWan1994PacificJMath}
K.~S. Chou and T.~Y.-H. Wan.
\newblock Asymptotic radial symmetry for solutions of {$\Delta u+e^u=0$} in a
  punctured disc.
\newblock {\em Pacific J. Math.}, 163(2):269--276, 1994.

\bibitem{Collot2017AnalPDE}
C.~Collot.
\newblock Nonradial type {II} blow up for the energy-supercritical semilinear
  heat equation.
\newblock {\em Anal. PDE}, 10(1):127--252, 2017.

\bibitem{Collot2018MemAMS}
C.~Collot.
\newblock Type {II} blow up manifolds for the energy supercritical semilinear
  wave equation.
\newblock {\em Mem. Amer. Math. Soc.}, 252(1205):v+163, 2018.

\bibitem{DonningerKriegerSzeftelWong2016Duke}
R.~Donninger, J.~Krieger, J.~Szeftel, and W.~Wong.
\newblock Codimension one stability of the catenoid under the vanishing mean
  curvature flow in {M}inkowski space.
\newblock {\em Duke Math. J.}, 165(4):723--791, 2016.

\bibitem{Glassey1977JMP}
R.~T. Glassey.
\newblock On the blowing up of solutions to the {C}auchy problem for nonlinear
  {S}chr\"{o}dinger equations.
\newblock {\em J. Math. Phys.}, 18(9):1794--1797, 1977.

\bibitem{GustafsonKangTsai2007CPAM}
S.~Gustafson, K.~Kang, and T.-P. Tsai.
\newblock Schr\"{o}dinger flow near harmonic maps.
\newblock {\em Comm. Pure Appl. Math.}, 60(4):463--499, 2007.

\bibitem{GustafsonKangTsai2008Duke}
S.~Gustafson, K.~Kang, and T.-P. Tsai.
\newblock Asymptotic stability of harmonic maps under the {S}chr\"{o}dinger
  flow.
\newblock {\em Duke Math. J.}, 145(3):537--583, 2008.

\bibitem{GustafsonNakanishiTsai2010CMP}
S.~Gustafson, K.~Nakanishi, and T.-P. Tsai.
\newblock Asymptotic stability, concentration, and oscillation in harmonic map
  heat-flow, {L}andau-{L}ifshitz, and {S}chr\"{o}dinger maps on {$\Bbb R^2$}.
\newblock {\em Comm. Math. Phys.}, 300(1):205--242, 2010.

\bibitem{HillairetRaphael2012AnalPDE}
M.~Hillairet and P.~Rapha\"{e}l.
\newblock Smooth type {II} blow-up solutions to the four-dimensional
  energy-critical wave equation.
\newblock {\em Anal. PDE}, 5(4):777--829, 2012.

\bibitem{Huh2009Nonlinearity}
H.~Huh.
\newblock Blow-up solutions of the {C}hern-{S}imons-{S}chr\"{o}dinger
  equations.
\newblock {\em Nonlinearity}, 22(5):967--974, 2009.

\bibitem{Huh2013Abstr.Appl.Anal}
H.~Huh.
\newblock Energy solution to the {C}hern-{S}imons-{S}chr\"{o}dinger equations.
\newblock {\em Abstr. Appl. Anal.}, pages Art. ID 590653, 7, 2013.

\bibitem{HuhSeok2013JMP}
H.~Huh and J.~Seok.
\newblock The equivalence of the {C}hern-{S}imons-{S}chr\"{o}dinger equations
  and its self-dual system.
\newblock {\em J. Math. Phys.}, 54(2):021502, 5, 2013.

\bibitem{JackiwPi1990PRD}
R.~Jackiw and S.-Y. Pi.
\newblock Classical and quantal nonrelativistic {C}hern-{S}imons theory.
\newblock {\em Phys. Rev. D (3)}, 42(10):3500--3513, 1990.

\bibitem{JackiwPi1990PRL}
R.~Jackiw and S.-Y. Pi.
\newblock Soliton solutions to the gauged nonlinear {S}chr\"{o}dinger equation
  on the plane.
\newblock {\em Phys. Rev. Lett.}, 64(25):2969--2972, 1990.

\bibitem{JackiwPi1991PRD}
R.~Jackiw and S.-Y. Pi.
\newblock Time-dependent {C}hern-{S}imons solitons and their quantization.
\newblock {\em Phys. Rev. D (3)}, 44(8):2524--2532, 1991.

\bibitem{JackiwPi1992Progr.Theoret.}
R.~Jackiw and S.-Y. Pi.
\newblock Self-dual {C}hern-{S}imons solitons.
\newblock {\em Progr. Theoret. Phys. Suppl.}, (107):1--40, 1992.
\newblock Low-dimensional field theories and condensed matter physics (Kyoto,
  1991).

\bibitem{KimKwon2019arXiv}
K.~Kim and S.~Kwon.
\newblock On pseudoconformal blow-up solutions to the self-dual
  {C}hern-{S}imons-{S}chr\"{o}dinger equation: existence, uniqueness, and
  instability.
\newblock {\em arXiv e-prints 1909.01055, to appear in Mem. Amer. Math. Soc.},
  2019.

\bibitem{KriegerNakanishiSchlag2015MathAnn}
J.~Krieger, K.~Nakanishi, and W.~Schlag.
\newblock Center-stable manifold of the ground state in the energy space for
  the critical wave equation.
\newblock {\em Math. Ann.}, 361(1-2):1--50, 2015.

\bibitem{KriegerSchlag2006JAMS}
J.~Krieger and W.~Schlag.
\newblock Stable manifolds for all monic supercritical focusing nonlinear
  {S}chr\"{o}dinger equations in one dimension.
\newblock {\em J. Amer. Math. Soc.}, 19(4):815--920, 2006.

\bibitem{KriegerSchlag2009JEMS}
J.~Krieger and W.~Schlag.
\newblock Non-generic blow-up solutions for the critical focusing {NLS} in
  1-{D}.
\newblock {\em J. Eur. Math. Soc. (JEMS)}, 11(1):1--125, 2009.

\bibitem{Lim2018JDE}
Z.~M. Lim.
\newblock Large data well-posedness in the energy space of the
  {C}hern-{S}imons-{S}chr\"{o}dinger system.
\newblock {\em J. Differential Equations}, 264(4):2553--2597, 2018.

\bibitem{LiuSmith2016}
B.~Liu and P.~Smith.
\newblock Global wellposedness of the equivariant
  {C}hern-{S}imons-{S}chr\"{o}dinger equation.
\newblock {\em Rev. Mat. Iberoam.}, 32(3):751--794, 2016.

\bibitem{LiuSmithTataru2014IMRN}
B.~Liu, P.~Smith, and D.~Tataru.
\newblock Local wellposedness of {C}hern-{S}imons-{S}chr\"{o}dinger.
\newblock {\em Int. Math. Res. Not. IMRN}, (23):6341--6398, 2014.

\bibitem{MartelMerleNakanishiRaphael2016CMP}
Y.~Martel, F.~Merle, K.~Nakanishi, and P.~Rapha\"{e}l.
\newblock Codimension one threshold manifold for the critical g{K}d{V}
  equation.
\newblock {\em Comm. Math. Phys.}, 342(3):1075--1106, 2016.

\bibitem{MerleRaphael2003GAFA}
F.~Merle and P.~Rapha\"{e}l.
\newblock Sharp upper bound on the blow-up rate for the critical nonlinear
  {S}chr\"{o}dinger equation.
\newblock {\em Geom. Funct. Anal.}, 13(3):591--642, 2003.

\bibitem{MerleRaphael2004InventMath}
F.~Merle and P.~Rapha\"{e}l.
\newblock On universality of blow-up profile for {$L^2$} critical nonlinear
  {S}chr\"{o}dinger equation.
\newblock {\em Invent. Math.}, 156(3):565--672, 2004.

\bibitem{MerleRaphael2005AnnMath}
F.~Merle and P.~Rapha\"{e}l.
\newblock The blow-up dynamic and upper bound on the blow-up rate for critical
  nonlinear {S}chr\"{o}dinger equation.
\newblock {\em Ann. of Math. (2)}, 161(1):157--222, 2005.

\bibitem{MerleRaphael2005CMP}
F.~Merle and P.~Rapha\"{e}l.
\newblock Profiles and quantization of the blow up mass for critical nonlinear
  {S}chr\"{o}dinger equation.
\newblock {\em Comm. Math. Phys.}, 253(3):675--704, 2005.

\bibitem{MerleRaphael2006JAMS}
F.~Merle and P.~Rapha\"{e}l.
\newblock On a sharp lower bound on the blow-up rate for the {$L^2$} critical
  nonlinear {S}chr\"{o}dinger equation.
\newblock {\em J. Amer. Math. Soc.}, 19(1):37--90, 2006.

\bibitem{MerleRaphaelRodnianski2013InventMath}
F.~Merle, P.~Rapha\"{e}l, and I.~Rodnianski.
\newblock Blowup dynamics for smooth data equivariant solutions to the critical
  {S}chr\"{o}dinger map problem.
\newblock {\em Invent. Math.}, 193(2):249--365, 2013.

\bibitem{MerleRaphaelRodnianski2015CambJMath}
F.~Merle, P.~Rapha\"{e}l, and I.~Rodnianski.
\newblock Type {II} blow up for the energy supercritical {NLS}.
\newblock {\em Camb. J. Math.}, 3(4):439--617, 2015.

\bibitem{MerleRaphaelSzeftel2013AJM}
F.~Merle, P.~Rapha\"{e}l, and J.~Szeftel.
\newblock The instability of {B}ourgain-{W}ang solutions for the {$L^2$}
  critical {NLS}.
\newblock {\em Amer. J. Math.}, 135(4):967--1017, 2013.

\bibitem{OhPusateri2015}
S.-J. Oh and F.~Pusateri.
\newblock Decay and scattering for the {C}hern-{S}imons-{S}chr\"{o}dinger
  equations.
\newblock {\em Int. Math. Res. Not. IMRN}, (24):13122--13147, 2015.

\bibitem{Raphael2005MathAnn}
P.~Rapha\"{e}l.
\newblock Stability of the log-log bound for blow up solutions to the critical
  non linear {S}chr\"{o}dinger equation.
\newblock {\em Math. Ann.}, 331(3):577--609, 2005.

\bibitem{RaphaelRodnianski2012Publ.Math.}
P.~Rapha\"{e}l and I.~Rodnianski.
\newblock Stable blow up dynamics for the critical co-rotational wave maps and
  equivariant {Y}ang-{M}ills problems.
\newblock {\em Publ. Math. Inst. Hautes \'{E}tudes Sci.}, 115:1--122, 2012.

\bibitem{RaphaelSchweyer2013CPAM}
P.~Rapha\"{e}l and R.~Schweyer.
\newblock Stable blowup dynamics for the 1-corotational energy critical
  harmonic heat flow.
\newblock {\em Comm. Pure Appl. Math.}, 66(3):414--480, 2013.

\bibitem{RaphaelSchweyer2014AnalPDE}
P.~Rapha\"{e}l and R.~Schweyer.
\newblock Quantized slow blow-up dynamics for the corotational energy-critical
  harmonic heat flow.
\newblock {\em Anal. PDE}, 7(8):1713--1805, 2014.

\bibitem{RodnianskiSterbenz2010Ann.Math.}
I.~Rodnianski and J.~Sterbenz.
\newblock On the formation of singularities in the critical {${\rm O}(3)$}
  {$\sigma$}-model.
\newblock {\em Ann. of Math. (2)}, 172(1):187--242, 2010.

\bibitem{BergWilliams2013}
J.~B. van~den Berg and J.~F. Williams.
\newblock ({I}n-)stability of singular equivariant solutions to the
  {L}andau-{L}ifshitz-{G}ilbert equation.
\newblock {\em European J. Appl. Math.}, 24(6):921--948, 2013.

\end{thebibliography}

\end{document}